\documentclass[12pt]{article}
\usepackage{amsmath}
\usepackage{amssymb}
\usepackage{amsthm}
\usepackage{mathrsfs}
\usepackage{comment}
\usepackage[capitalise]{cleveref}
\crefformat{equation}{(#2#1#3)}
\usepackage{enumerate}
\usepackage{graphicx}
\usepackage[margin=1.5in]{geometry}

\newtheorem{thm}{Theorem}
\newtheorem*{thm*}{Theorem}
\newtheorem*{prp*}{Proposition}
\newtheorem*{oq*}{Open Question 3.10}
\newtheorem{prp}[thm]{Proposition}
\newtheorem{lma}[thm]{Lemma}
\newtheorem{cor}[thm]{Corollary}
\newtheorem*{cor*}{Corollary}
\newtheorem{conj}[thm]{Conjecture}
\newtheorem{clm}{Claim}

\newcommand{\mo}{<_{\textnormal{M}}}

\newcommand{\wo}{\leq_S}
\newcommand{\swo}{<_S}
\newcommand{\lwo}{\geq_S}
\newcommand{\slwo}{>_S}

\newcommand{\I}{\sqsubset}
\newcommand{\D}{\leq_\textnormal{D}}

\theoremstyle{definition}
\newtheorem{defn}[thm]{Definition}
\newtheorem{qst}[thm]{Question}
\newtheorem*{qst*}{Question}
\newtheorem*{defn*}{Definition}
\newtheorem*{ua}{Ultrapower Axiom}

\newtheorem{case}{Case}
\makeatletter\@addtoreset{case}{thm}\makeatother
\makeatletter\@addtoreset{clm}{thm}\makeatother
\makeatletter\@addtoreset{sclm}{thm}\makeatother
\begin{document}
\title{Compactness and Comparison}
\author{Gabriel Goldberg}
\maketitle
\section{Introduction}
Strongly compact cardinals were introduced by Tarski to generalize the fundamental compactness theorem of first-order logic to infinitary logic.

\begin{defn*}
An uncountable cardinal \(\kappa\) is {\it strongly compact} if for any \(\mathcal L_{\kappa,\omega}\) theory \(\Sigma\), if every \({<}\kappa\)-sized subtheory of \(\Sigma\) has a model, then \(\Sigma\) itself has a model.
\end{defn*}

It was soon established that every strongly compact cardinal is measurable. An immediate question is whether the least strongly compact cardinal is strictly larger than the least measurable cardinal. To motivate our work we describe  Solovay's ultimately unsuccessful attack on this problem, Menas's refutation of Solovay's conjecture, and Magidor's remarkable independence result.

In the late 1960s, Solovay initiated a program to answer this question positively. He began by reducing the notion of strong compactness to its pure set theoretic content.

\begin{defn*}
Suppose \(\kappa\) is a cardinal and \(A\) is a set. Then \(P_\kappa(A)\) denotes the collection of subsets of \(A\) of cardinality less than \(\kappa\).

An ultrafilter \(\mathcal U\) on \(P_\kappa(A)\) is {\it fine} if for all \(a\in A\), for \(\mathcal U\)-almost all \(\sigma\in P_\kappa(A)\), \(a\in \sigma\).
\end{defn*}

\begin{thm*}[Solovay]
A cardinal \(\kappa\) is strongly compact if and only if \(P_\kappa(A)\) carries a fine \(\kappa\)-complete ultrafilter for every set \(A\). 
\end{thm*}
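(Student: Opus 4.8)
The plan is to prove the two directions separately, in each case exploiting the duality between compactness of \(\mathcal L_{\kappa,\omega}\) and \(\kappa\)-complete ultraproducts, with \(P_\kappa(A)\) in the role of an index set and \(\kappa\)-completeness matching up with conjunctions of length \({<}\kappa\).

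Suppose first that \(\kappa\) is strongly compact and fix a set \(A\). Pick a regular \(\theta\) large enough that \(P_\kappa(A)\) and all of its subsets lie in \(H(\theta)\), put \(\mathfrak{M}=(H(\theta),\in)\), and expand the signature by a constant \(\underline{x}\) for each \(x\in H(\theta)\) together with one fresh constant \(d\) (the definition of strong compactness imposes no bound on the size of the signature, so this is allowed). Let \(\Sigma\) be the \(\mathcal L_{\kappa,\omega}\)-elementary diagram of \(\mathfrak{M}\) in these constants, together with the sentences \(\underline{a}\in d\) for \(a\in A\) and the single sentence \(d\in\underline{P_\kappa(A)}\). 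Every \({<}\kappa\)-sized \(\Sigma_0\subseteq\Sigma\) mentions \(\underline{a}\) only for \(a\) in some \(A_0\in P_\kappa(A)\), so interpreting \(d\) by \(A_0\) makes \((\mathfrak{M},A_0)\models\Sigma_0\); hence by strong compactness \(\Sigma\) has a model \(\mathfrak{N}\). Because \(\Sigma\) contains the full diagram, the map \(j\colon x\mapsto\underline{x}^{\mathfrak{N}}\) is an \(\mathcal L_{\kappa,\omega}\)-elementary embedding of \(\mathfrak{M}\) into \(\mathfrak{N}\), and setting \(d^*=d^{\mathfrak{N}}\) I define
\[
\mathcal{U}=\{X\subseteq P_\kappa(A):d^*\in^{\mathfrak{N}}j(X)\}.
\]
Routine transfer across \(j\) of first-order facts — that \(\emptyset\) has no elements, that \(X\) and \(P_\kappa(A)\setminus X\) partition \(P_\kappa(A)\), that \(X\cap Y\) is the intersection, that \(\{\sigma\in P_\kappa(A):a\in\sigma\}\) is defined as such — combined with \(d^*\in^{\mathfrak{N}}j(P_\kappa(A))\) and \(j(a)\in^{\mathfrak{N}}d^*\), shows that \(\mathcal{U}\) is a proper, upward-closed ultrafilter containing every set \(\{\sigma:a\in\sigma\}\), i.e.\ a fine ultrafilter. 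The one place where \(\mathcal L_{\kappa,\omega}\) is essential is \(\kappa\)-completeness: given a partition \(P_\kappa(A)=\bigsqcup_{i<\lambda}Z_i\) with \(\lambda<\kappa\), the formula \(\forall x\,\bigl(x\in\underline{P_\kappa(A)}\leftrightarrow\bigvee_{i<\lambda}x\in\underline{Z_i}\bigr)\) is a legitimate \(\mathcal L_{\kappa,\omega}\)-sentence, true in \(\mathfrak{M}\) and hence in \(\mathfrak{N}\); instantiating \(x\) by \(d^*\) forces \(d^*\in^{\mathfrak{N}}j(Z_i)\) for some \(i\), that is, \(Z_i\in\mathcal{U}\).

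Conversely, suppose every \(P_\kappa(A)\) carries a fine \(\kappa\)-complete ultrafilter, and let \(\Sigma\) be an \(\mathcal L_{\kappa,\omega}\)-theory all of whose \({<}\kappa\)-sized subtheories are satisfiable. Take \(A=\Sigma\), fix a fine \(\kappa\)-complete ultrafilter \(\mathcal{U}\) on \(P_\kappa(\Sigma)\), and for each \(\sigma\in P_\kappa(\Sigma)\) choose (by AC) a model \(\mathfrak{M}_\sigma\models\sigma\); form the ultraproduct \(\mathfrak{N}=\prod_\sigma\mathfrak{M}_\sigma/\mathcal{U}\). The main step is the \(\mathcal L_{\kappa,\omega}\) version of \L o\'s's theorem: for every \(\mathcal L_{\kappa,\omega}\)-sentence \(\varphi\) over the signature of \(\Sigma\), one has \(\mathfrak{N}\models\varphi\) if and only if \(\{\sigma:\mathfrak{M}_\sigma\models\varphi\}\in\mathcal{U}\). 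This is proved by induction on \(\varphi\) exactly as in the finitary case; the only new clause, conjunctions of length \(\lambda<\kappa\), goes through precisely because an intersection of \(\lambda\) many sets in \(\mathcal{U}\) is again in \(\mathcal{U}\). Granting the \(\mathcal L_{\kappa,\omega}\)-\L o\'s theorem, for any \(\varphi\in\Sigma\) we have \(\{\sigma:\mathfrak{M}_\sigma\models\varphi\}\supseteq\{\sigma:\varphi\in\sigma\}\), and the right-hand side belongs to \(\mathcal{U}\) by fineness; hence \(\mathfrak{N}\models\varphi\). Thus \(\mathfrak{N}\models\Sigma\), so \(\Sigma\) is satisfiable, and \(\kappa\) is strongly compact.

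The single real obstacle, common to both directions, is the interplay between \(\kappa\)-completeness of the ultrafilter and conjunctions of length \({<}\kappa\) in \(\mathcal L_{\kappa,\omega}\): in the forward direction this is why one must take the full \(\mathcal L_{\kappa,\omega}\)-elementary diagram of \(\mathfrak{M}\) rather than just its first-order theory, so that the partition sentences transfer; in the converse direction it is the corresponding clause in the \(\mathcal L_{\kappa,\omega}\)-\L o\'s theorem. The remaining ingredients — the \({<}\kappa\)-satisfiability of the relevant subtheories (witnessed directly by an element of \(P_\kappa(A)\)), the elementarity of \(j\), the Boolean and fineness properties of \(\mathcal{U}\), and the construction of the ultraproduct — are all routine.
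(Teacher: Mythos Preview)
The paper does not actually prove this theorem: it is quoted in the introduction as a classical result of Solovay, with no proof supplied, purely as background for the discussion of strong compactness versus supercompactness. So there is no ``paper's own proof'' to compare against.

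Your argument is correct and is essentially the standard one. Both directions go through as you describe: in the forward direction the $\mathcal L_{\kappa,\omega}$-elementary diagram trick produces an embedding $j$ and a point $d^*$ from which the derived ultrafilter is fine and, via the partition sentence, $\kappa$-complete; in the converse direction the $\mathcal L_{\kappa,\omega}$ version of \L o\'s's theorem holds for $\kappa$-complete ultraproducts precisely because $\mathcal L_{\kappa,\omega}$ has only finitary quantification, so the sole new inductive clause is the infinitary conjunction/disjunction, which is handled by $\kappa$-completeness. One small remark: to run the induction for \L o\'s you must prove it for \emph{formulas} (with parameters from the product), not just sentences, since the quantifier step passes through open formulas; you gesture at this with ``exactly as in the finitary case,'' which is fine, but it is worth being explicit that the statement being proved by induction is the one with free variables.
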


Solovay's approach to Tarski's problem was motivated by the well-known fact that if \(\kappa\) carries a \(\kappa\)-complete uniform ultrafilter (or equivalently if \(P_\kappa(\kappa)\) carries a \(\kappa\)-complete fine ultrafilter), then \(\kappa\) carries a {\it normal} ultrafilter. 

Solovay observed that many of the notions of infinitary combinatorics generalize from their classical context on a regular uncountable cardinal \(\kappa\) to a higher order analog on \(P_\kappa(A)\) for any set \(A\). Take for example the notion of a normal ultrafilter:

\begin{defn*}
Suppose \(\kappa\) is a cardinal and \(A\) is a set. A fine ultrafilter \(\mathcal U\) on \(P_\kappa(A)\) is {\it normal} if any function \(f: P_\kappa(A)\to A\) such that \(f(\sigma)\in \sigma\) for \(\mathcal U\)-almost all \(\sigma\) assumes a constant value for \(\mathcal U\)-almost all \(\sigma\). 
\end{defn*}

Given the analogy between \(\kappa\) and \(P_\kappa(A)\), it was natural to conjecture that an arbitrary fine \(\kappa\)-complete ultrafilter on \(P_\kappa(A)\) could be massaged into a {\it normal} fine \(\kappa\)-complete ultrafilter on \(P_\kappa(A)\). Solovay therefore introduced the notion of a supercompact cardinal.

\begin{defn*}
A cardinal \(\kappa\) is {\it supercompact} if \(P_\kappa(A)\) carries a normal fine \(\kappa\)-complete ultrafilter for every set \(A\).  
\end{defn*}

Solovay conjectured that every strongly compact cardinal is supercompact. His conjecture would have implied a positive answer to Tarski's question, since the set of measurable cardinals less than a supercompact cardinal \(\kappa\) is stationary in \(\kappa\). But Solovay's conjecture is false assuming large cardinal hypotheses.

\begin{thm*}[Menas]
Every measurable limit of strongly compact cardinals is strongly compact.
\end{thm*}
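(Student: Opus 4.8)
The plan is to verify strong compactness through Solovay's criterion: I will fix an arbitrary set \(A\) and construct a fine \(\kappa\)-complete ultrafilter on \(P_\kappa(A)\) as a ``\(U\)-average'' of fine ultrafilters borrowed from the strongly compact cardinals below \(\kappa\). Since \(\kappa\) is measurable, fix a nonprincipal \(\kappa\)-complete ultrafilter \(U\) on \(\kappa\). Since \(\kappa\) is a limit of strongly compact cardinals, for each \(\delta<\kappa\) the least strongly compact cardinal \(h(\delta)\ge\delta\) exists and is below \(\kappa\). Applying Solovay's theorem to the strongly compact cardinal \(h(\delta)\), fix a fine \(h(\delta)\)-complete ultrafilter \(\mathcal F_\delta\) on \(P_{h(\delta)}(A)\). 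As \(P_{h(\delta)}(A)\subseteq P_\kappa(A)\), I can lift \(\mathcal F_\delta\) to the ultrafilter \(\mathcal G_\delta\) on \(P_\kappa(A)\) given by \(X\in\mathcal G_\delta\iff X\cap P_{h(\delta)}(A)\in\mathcal F_\delta\); a routine check shows \(\mathcal G_\delta\) is a fine, \(h(\delta)\)-complete ultrafilter on \(P_\kappa(A)\) concentrating on \(P_{h(\delta)}(A)\).

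Next I define \(\mathcal W\) on \(P_\kappa(A)\) to be the \(U\)-limit of the \(\mathcal G_\delta\): \(X\in\mathcal W\iff\{\delta<\kappa:X\in\mathcal G_\delta\}\in U\). That \(\mathcal W\) is an ultrafilter follows immediately from each \(\mathcal G_\delta\) being an ultrafilter together with \(U\) being an ultrafilter, and \(\mathcal W\) is fine because for \(a\in A\) every \(\mathcal G_\delta\) contains \(\{\sigma:a\in\sigma\}\), so \(\{\delta:\{\sigma:a\in\sigma\}\in\mathcal G_\delta\}=\kappa\in U\).

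The only real content is \(\kappa\)-completeness, and this is exactly where it matters that \(\kappa\) is a \emph{limit} of strongly compacts rather than merely sitting above one. Given \(\gamma<\kappa\) and sets \(X_i\in\mathcal W\) for \(i<\gamma\), each \(D_i:=\{\delta<\kappa:X_i\in\mathcal G_\delta\}\) lies in \(U\), so \(D:=\bigcap_{i<\gamma}D_i\in U\) by \(\kappa\)-completeness of \(U\); moreover \(E:=\{\delta<\kappa:h(\delta)>\gamma\}\) contains the tail \((\gamma,\kappa)\) and hence lies in \(U\) since \(U\) is nonprincipal and \(\kappa\)-complete. For every \(\delta\in D\cap E\) all the \(X_i\) belong to \(\mathcal G_\delta\), and \(\mathcal G_\delta\) is \(h(\delta)\)-complete with \(h(\delta)>\gamma\), so \(\bigcap_{i<\gamma}X_i\in\mathcal G_\delta\); therefore \(\{\delta:\bigcap_{i<\gamma}X_i\in\mathcal G_\delta\}\supseteq D\cap E\in U\), i.e.\ \(\bigcap_{i<\gamma}X_i\in\mathcal W\). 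Thus \(\mathcal W\) witnesses that \(P_\kappa(A)\) carries a fine \(\kappa\)-complete ultrafilter, and since \(A\) was arbitrary, Solovay's theorem yields that \(\kappa\) is strongly compact.

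The point to be careful about — and the only step with any subtlety — is precisely that no individual factor \(\mathcal G_\delta\) need be \(\kappa\)-complete: completeness of \(\mathcal W\) is obtained only in the \(U\)-limit, because for any prescribed \(\gamma<\kappa\) the completeness degree \(h(\delta)\) of the \(\delta\)-th factor exceeds \(\gamma\) for \(U\)-almost every \(\delta\). Everything else (fineness, ultrafilter-hood of \(\mathcal G_\delta\) and \(\mathcal W\), the lifting of \(\mathcal F_\delta\) to \(P_\kappa(A)\)) is bookkeeping.
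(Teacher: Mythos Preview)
Your proof is correct; it is the standard Menas argument via a \(U\)-integrated family of fine ultrafilters. Note, however, that the paper does not actually prove this theorem --- it is quoted in the introduction as background (attributed to Menas) and never given a proof in the body of the paper --- so there is nothing to compare against beyond observing that your argument is the classical one.
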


Using Menas's theorem, it is easy to see that if \(\kappa\) is the least measurable limit of strongly compact cardinals, the set of measurable cardinals below \(\kappa\) is nonstationary, and therefore \(\kappa\) is not supercompact, even though \(\kappa\) {\it is} strongly compact, again by Menas's theorem.

While this result implies that Solovay's program cannot be carried out naively, the true subtlety of Tarski's question was not understood until Magidor's independence results.

\begin{thm*}[Magidor]
Suppose \(\kappa\) is strongly compact. Then there is a partial order \(\mathbb P\subseteq V_{\kappa+1}\) that preserves the strong compactness of \(\kappa\) while forcing that \(\kappa\) is the least measurable cardinal.
\end{thm*}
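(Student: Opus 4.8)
\noindent The plan is to let $\mathbb{P}$ be a reverse Easton iteration of length $\kappa$ that destroys the measurability of every measurable cardinal below $\kappa$, and then to show that, being $\kappa$-c.c., it cannot destroy the strong compactness of $\kappa$. Concretely, define $\mathbb{P}=\mathbb{P}_\kappa$ by $\langle\mathbb{P}_\alpha,\dot{\mathbb{Q}}_\alpha:\alpha<\kappa\rangle$ with Easton supports, where at stage $\alpha$: if $\Vdash_{\mathbb{P}_\alpha}$ ``$\alpha$ is measurable'' then $\dot{\mathbb{Q}}_\alpha$ names the standard forcing $Q_\alpha$ that adds, by bounded approximations ordered by end-extension, a non-reflecting stationary subset of $\{\xi<\alpha:\operatorname{cf}(\xi)=\omega\}$, and otherwise $\dot{\mathbb{Q}}_\alpha$ is trivial. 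Since $\kappa$ is inaccessible, $2^{<\alpha}<\kappa$ for $\alpha<\kappa$, so $|Q_\alpha|<\kappa$ and every condition of $\mathbb{P}$ has bounded support; hence $\mathbb{P}\subseteq V_\kappa\subseteq V_{\kappa+1}$, $|\mathbb{P}|=\kappa$, and (as $\kappa$ is Mahlo and we take direct limits at inaccessibles) $\mathbb{P}$ is $\kappa$-c.c. Each $Q_\alpha$ is $({<}\alpha)$-strategically closed over the model defining it, and for each $\alpha<\kappa$ one can write $\mathbb{P}=\mathbb{P}_{\alpha+1}\ast\dot{\mathbb{P}}'$ with $\dot{\mathbb{P}}'$ forced to be $({<}\gamma_\alpha)$-strategically closed, where $\gamma_\alpha$ is the least measurable of the intermediate model above $\alpha$; as $\gamma_\alpha$ is inaccessible and $>\alpha$ we have $\gamma_\alpha>2^{\alpha}$, so $\dot{\mathbb{P}}'$ adds no new subset of $\mathcal{P}(\alpha)$.

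Let $G$ be $\mathbb{P}$-generic and $G_\beta=G\cap\mathbb{P}_\beta$. I claim $V[G]$ has no measurable cardinal below $\kappa$. Fix $\alpha<\kappa$. If $\alpha$ is measurable in $V[G_\alpha]$, then $Q_\alpha$ was forced, so $V[G_{\alpha+1}]$ --- hence, since the rest of the iteration adds no subset of $\alpha$, also $V[G]$ --- contains a non-reflecting stationary subset of $\alpha$; as weakly compact cardinals have the stationary reflection property, $\alpha$ is not even weakly compact in $V[G]$. If $\alpha$ is not measurable in $V[G_\alpha]$, then the forcing from stage $\alpha$ on is $({<}\gamma_\alpha)$-strategically closed with $\gamma_\alpha>2^{\alpha}$, so it adds no new subset of $\mathcal{P}(\alpha)$ and hence no new $\kappa$-complete ultrafilter on $\alpha$, and $\alpha$ stays non-measurable. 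Together with the next point this makes $\kappa$ the least measurable cardinal of $V[G]$.

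The heart of the matter is that $\kappa$ remains strongly compact in $V[G]$; by Solovay's theorem it suffices to produce, in $V[G]$ and for each $\lambda\geq\kappa$, a fine $\kappa$-complete ultrafilter on $P_\kappa(\lambda)$. In $V$, fix a fine $\kappa$-complete ultrafilter $\mathcal{U}$ on $P_\kappa(\lambda)$ and let $j:V\to M=\operatorname{Ult}(V,\mathcal{U})$; then $\operatorname{crit}(j)=\kappa$, $M$ is closed under $({<}\kappa)$-sequences, and $s:=[\operatorname{id}]_{\mathcal{U}}\in M$ satisfies $j``\lambda\subseteq s$ and $M\models|s|<j(\kappa)$. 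Since $\operatorname{crit}(j)=\kappa$ and $\mathbb{P}\subseteq V_\kappa$, $M$ computes $\mathbb{P}$ correctly and $j(\mathbb{P})$ is a reverse Easton iteration of length $j(\kappa)$ agreeing with $\mathbb{P}$ below $\kappa$, so $j(\mathbb{P})=\mathbb{P}\ast\dot{\mathbb{R}}$ with $\dot{\mathbb{R}}=j(\mathbb{P})_{[\kappa,j(\kappa))}$, and $\mathbb{R}:=\dot{\mathbb{R}}^G$ is $({<}\kappa)$-strategically closed in $M[G]$. Every condition of $\mathbb{P}$ has rank below $\kappa$, hence is fixed by $j$, so $j``G=G$. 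Consequently, for any filter $H$ that is $M[G]$-generic for $\mathbb{R}$, $j$ lifts to an elementary $j^{+}:V[G]\to M[G\ast H]$; since $j(\kappa)$ is still a cardinal of $M[G\ast H]$ and $j^{+}{}``\lambda\subseteq s$, putting $t=s\cap j(\lambda)$ gives $j^{+}{}``\lambda\subseteq t$ and $M[G\ast H]\models|t|<j^{+}(\kappa)$, so $\mathcal{W}:=\{X\subseteq P_\kappa(\lambda)^{V[G]}:t\in j^{+}(X)\}$ is a fine $\kappa$-complete ultrafilter on $P_\kappa(\lambda)$. Everything thus reduces to exhibiting such an $H$, equivalently such a $\mathcal{W}$, inside $V[G]$.

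This is the main obstacle: a naive transfinite construction of an $M[G]$-generic $H$ fails, since $\mathbb{R}$ is only $({<}\kappa)$-strategically closed in $M[G]$ yet has more than $\kappa$ dense subsets there, so the construction breaks at stages of cofinality $\geq\kappa$. The way around it exploits the $\kappa$-c.c.-ness of $\mathbb{P}$, which makes $M[G]$ closed under $({<}\kappa)$-sequences in $V[G]$ and $\mathbb{R}$ correspondingly $({<}\kappa)$-strategically closed over $V[G]$: force with $\mathbb{R}$ over $V[G]$ to obtain $H$ in a one-step outer model $V[G][H]$, lift $j$ there to $j^{+}:V[G]\to M[G\ast H]$, and form $\mathcal{W}$ as above. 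One then shows $\mathcal{W}\in V[G]$: for $X=\dot{X}^{G}$ with $\dot{X}$ a $\mathbb{P}$-name, ``$X\in\mathcal{W}$'' is ``$t\in j(\dot{X})^{G\ast H}$'', and $j(\dot{X})$ is computable in $V[G]$ from $G$, $\dot X$, and the set-sized restriction of $j$ to a rank initial segment of $V$; since the $Q_\beta$ ($\beta\geq\kappa$) are weakly homogeneous --- the bit-flip maps $f\mapsto f\oplus g$ ($g$ total with non-reflecting support) being automorphisms --- and $\mathbb{R}$ is an Easton-support iteration of them over $M[G]$, $\mathbb{R}$ is weakly homogeneous over $M[G]$, so that statement is decided by the trivial condition of $\mathbb{R}$, making $\mathcal{W}$ definable in $V[G]$ and hence an element of $V[G]$. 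Carrying out this descent carefully --- verifying that the tail of $j(\mathbb{P})$ cannot inject genuinely new information into a $\kappa$-complete ultrafilter on a ground-model set of $V[G]$ --- is the delicate point; the remaining verifications are reverse Easton bookkeeping of the sort indicated above.
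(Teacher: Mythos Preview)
The paper does not prove this theorem at all; it is quoted in the introduction as a classical result of Magidor, for motivation, with no argument supplied. So there is nothing in the paper to compare your proof against.

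On the merits of your sketch: your route is not Magidor's. His original argument iterates \emph{Prikry forcing} at the measurables below $\kappa$, and the Prikry property (every statement decided without extending the stem) is what lets one build the tail generic and hence the fine ultrafilter inside $V[G]$, without any master-condition or homogeneity trick. Your non-reflecting-stationary-set iteration is closer in spirit to later work of Apter and others. The overall architecture you describe---lift $j$ after forcing with the tail over $V[G]$, then descend the ultrafilter by homogeneity---is reasonable, and your bit-flip observation does show that a single $Q_\beta$ is weakly homogeneous: if $p,q$ are conditions with the same domain, $g=p\oplus q$ has non-reflecting support (being contained in $p^{-1}(1)\cup q^{-1}(1)$) and $\pi_g$ carries $p$ to $q$.

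The point that is genuinely underjustified is the passage from ``each $Q_\beta$ is weakly homogeneous'' to ``the Easton-support iteration $\mathbb R$ is weakly homogeneous over $M[G]$.'' Weak homogeneity is not in general preserved by iteration; one needs that automorphisms at earlier stages fix the later iterands as names and that the stage-$\beta$ automorphism can be coherently named along the iteration. Here the iterand $\dot Q_\beta$ depends on whether $\beta$ is measurable in the intermediate model, so one must argue inductively that this is decided by the trivial condition (using the homogeneity already established below $\beta$), and then carefully thread named bit-flip automorphisms through the Easton support. None of this is impossible, but it is exactly the technical heart of the proof and you pass over it in a clause. Until that is written out, the claim that $\mathcal W$ lands in $V[G]$ is not established; and it is precisely here that the strongly compact case is harder than the supercompact one, since no master condition is available to replace the homogeneity argument.
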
 

Therefore one cannot prove that the least strongly compact cardinal is strictly larger than the least measurable cardinal. On the other hand, Magidor also showed that one cannot refute Solovay's conjecture at the least strongly compact cardinal. An extension of this result that is relevant to us is due to Kimchi-Magidor:

\begin{thm*}[Kimchi-Magidor]
There is a definable class partial order \(\mathbb P\) that preserves all supercompact cardinals while forcing that every strongly compact cardinal is either supercompact or a measurable limit of supercompact cardinals.
\end{thm*}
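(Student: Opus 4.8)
The plan is to realize $\mathbb P$ as a reverse Easton class iteration $\langle\mathbb P_\delta,\dot{\mathbb Q}_\delta:\delta\in\mathrm{Ord}\rangle$ with nontrivial iterands only at inaccessible $\delta$, engineered so that $\mathbb P$ leaves every supercompact supercompact, leaves every measurable limit of supercompacts both measurable and a limit of supercompacts (hence strongly compact, by Menas's theorem), and surgically destroys the strong compactness of every other cardinal that was strongly compact. Write $\rho_\delta=\sup\{\mu<\delta:\mu\text{ is supercompact}\}$. The iterands are guided by a fixed global Laver/Menas function $\ell$ — a definable class function $\ell\colon\mathrm{Ord}\to V$ with the anticipation property that for every supercompact $\kappa$, every set $x$, and cofinally many cardinals $\lambda$ there is a $\lambda$-supercompactness embedding $j\colon V\to M$ with $j(\ell)(\kappa)=x$ — chosen in addition so that $\ell$ dictates the trivial forcing at every supercompact cardinal and at every measurable limit of supercompacts. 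At an inaccessible $\delta$ one sets $\dot{\mathbb Q}_\delta$ to be the ${<}\delta$-directed-closed poset coded by $\ell(\delta)$ whenever $\ell(\delta)$ codes such a poset, and otherwise, if $\rho_\delta<\delta$, sets $\dot{\mathbb Q}_\delta$ to be the standard ${<}\rho_\delta^+$-closed poset adding a non-reflecting stationary subset of $\delta^+$ concentrating on cofinality $\rho_\delta^+$ (and $\dot{\mathbb Q}_\delta$ trivial if $\rho_\delta=\delta$). Two features are to be kept in view: this last poset is ${<}\mu$-directed-closed for every supercompact $\mu<\delta$ (as $\mu\le\rho_\delta<\rho_\delta^+$), so it is poised not to disturb the supercompacts below $\delta$; and a non-reflecting stationary subset of $\delta^+\cap\mathrm{cof}({<}\delta)$ cannot exist when $\delta$ is strongly compact, by a Solovay-style reflection argument — a fine $\delta$-complete ultrafilter on $P_\delta(\delta^+)$ yields $j\colon V\to M$ with $\eta=\sup j[\delta^+]<j(\delta^+)$ of $M$-cofinality ${<}j(\delta)$, and for stationary $S\subseteq\delta^+\cap\mathrm{cof}({<}\delta)$ one checks that $j(S)\cap\eta$ is stationary in $\eta$, so $S$ reflects.

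With $\mathbb P$ in place I would verify three things. First, every supercompact $\kappa$ remains supercompact: $\ell$ dictates the trivial forcing at $\kappa$, the tail of $\mathbb P$ beyond stage $\kappa$ is ${<}\kappa$-directed-closed, and the iteration up to $\kappa$, though it forces cofinally in $\kappa$, has (as in the Laver preparation) prepared $\kappa$ so that its supercompactness is indestructible under the subsequent ${<}\kappa$-directed-closed forcing; so for each $\lambda$ one lifts a $\lambda$-supercompactness embedding $j\colon V\to M$ through $\mathbb P$, using the anticipation property of $\ell$ to arrange that the coordinate-$\kappa$ forcing of $j(\mathbb P)$ is harmless and building the $M$-generic for the highly closed tail of $j(\mathbb P)$ by hand from a master condition, using the $\lambda$-closure of $M$. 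Second, every measurable limit of supercompacts $\kappa$ remains one: $\ell$ dictates the trivial forcing at $\kappa$, the part of $\mathbb P$ below $\kappa$ is $\kappa$-c.c.\ and the part above is $\kappa^+$-closed, so lifting a normal-measure embedding preserves $\kappa$'s measurability, while the first part preserves the supercompacts cofinal in $\kappa$; Menas's theorem then keeps $\kappa$ strongly compact in $V^{\mathbb P}$. Third, nothing else stays or becomes strongly compact: if $\kappa$ is strongly compact in $V^{\mathbb P}$ but neither supercompact nor a measurable limit of supercompacts there, then, since strong compactness gives measurability, $\kappa$ is not a limit of supercompacts of $V^{\mathbb P}$, hence — as the first part preserves all supercompacts of $V$ — $\kappa$ was neither supercompact nor a limit of supercompacts in $V$; so at stage $\kappa$ the iteration applied the non-reflecting-stationary-set poset, adding a non-reflecting stationary subset of $\kappa^+\cap\mathrm{cof}(\rho_\kappa^+)\subseteq\kappa^+\cap\mathrm{cof}({<}\kappa)$, which the ${<}\kappa$-directed-closed tail of $\mathbb P$ preserves, contradicting — via the reflection argument above — the strong compactness of $\kappa$ in $V^{\mathbb P}$.

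The main obstacle is the circularity built into the definition of $\mathbb P$: supercompactness of $\delta$, and even the value $\rho_\delta$, are not properties of $\mathbb P_\delta$ available at stage $\delta$, and, more seriously, when one lifts a $\lambda$-supercompactness embedding $j\colon V\to M$ the ultrapower $M$ does not recognize $\kappa=\mathrm{crit}(j)$ as supercompact, so $M$'s recursion would on its face demand nontrivial, indeed strong-compactness-destroying, forcing at coordinate $\kappa$ of $j(\mathbb P)$, wrecking the lift. This is exactly what the Laver/Menas function is for: one checks that $\ell$ can be modified to be trivial at all supercompacts and all measurable limits of supercompacts without losing the anticipation property (the functions representing $\kappa$ in the relevant ultrapowers land off these sparse classes almost everywhere), and then in the lift one chooses $j$ via anticipation so that $j(\ell)(\kappa)$ codes precisely the harmless forcing needed. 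Making the guidance function, the sparing rule, and the cofinalities $\rho_\delta^+$ cohere — so that the closure needed to protect the supercompacts below $\delta$ survives alongside the non-reflection needed to kill $\delta$ itself, and every supercompact is spared in every relevant ultrapower — is the delicate heart of the argument; everything else is standard lifting technology together with Menas's theorem.
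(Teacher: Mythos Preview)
The paper does not prove this theorem. It is stated without proof in the introduction, attributed to Kimchi and Magidor, as background motivating the paper's own contribution --- the outright theorem, under UA, that every strongly compact cardinal is supercompact or a measurable limit of supercompacts. There is no argument in the paper to compare your proposal against.

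On the proposal itself: the architecture is the standard one, and you have correctly located the main obstacle. But your resolution does not close the gap. Anticipating $j(\ell)(\kappa)$ controls only coordinate $\kappa$ of $j(\mathbb P)$; the iterands at $M$-inaccessibles $\delta\in(\kappa,j(\kappa))$ are determined by $j(\ell)(\delta)$ and by $\rho_\delta$ as computed in $M$, neither of which a single anticipation at $\kappa$ governs. When $\kappa$ is the \emph{least} supercompact of $V$, elementarity gives that $M$ has no supercompacts below $j(\kappa)$, so $\rho_\delta^M=0$ for every such $\delta$ and the fallback non-reflecting-stationary-set posets there are merely countably closed --- far too loose for a master-condition argument based on the $\lambda$-closure of $M$ to produce a generic for that segment of $j(\mathbb P)$. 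The published treatments (the Kimchi--Magidor manuscript itself is unpublished; see Apter's papers for details) deal with this by building into each stage the option of shooting a club through the complement of the stationary set just added: the two-step poset has a dense ${<}\delta$-closed subset, so in $M$ one works below conditions that restore enough closure to finish the lift. Your last paragraph is right that making all of this cohere is the delicate heart of the matter, but ``standard lifting technology'' is precisely what it is not; the specific mechanism for the interval $(\kappa,j(\kappa))$ is what your sketch is missing.
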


In other words, Menas's theorem provides the only provable counterexamples to Solovay's conjecture.

The approach to Tarski's problem and Solovay's conjecture taken here is inspired by Solovay's, except that we are informed by Menas's counterexamples, and we are forced by Magidor's theorems to adopt a new principle.

The Ultrapower Axiom (UA) is a structural principle in the combinatorics of countably complete ultrafilters that holds in all known canonical inner models. If the inner model program reaches an inner model with a supercompact cardinal, then work of Woodin \cite{Woodin} suggests that UA must be consistent with {\it all large cardinal axioms}. 

The axiom itself is simple enough, even though the reasons for believing it to be consistent with a strongly compact cardinal are subtle.

\begin{ua}
For all countably complete ultrafilters \(U\) and \(W\), there exist \(W_*\in M_U\) and \(U_*\in M_W\), countably complete ultrafilters in their respective models, such that \(M^{M_U}_{W_*}= M^{M_W}_{U_*}\) and \(j^{M_U}_{W_*}\circ j_U = j^{M_W}_{U_*}\circ j_W\).
\end{ua}

The Ultrapower Axiom is motivated by the Comparison Lemma of inner model theory. In fact the Comparison Lemma implies UA by a general argument. The Comparison Lemma is the central feature of modern inner model theory, so if one could rule out the Ultrapower Axiom from a supercompact cardinal, one would in fact rule out any sort of inner model theory for supercompact cardinals.

The theory of countably complete ultrafilters under ZFC alone is buried in independence results (for example, see \cite{KunenParis}, \cite{Gitik}, \cite{Magidor}, \cite{MagidorBagaria}, \cite{FriedmanMagidor}), but also contains hints of a deeper underlying structure (\cite{Ketonen}, \cite{Solovay}). Assuming UA, this structure comes to the surface, and there is quite a bit one can prove. We sketch some general facts in the theory of countably complete ultrafilters assuming UA in \cref{BasicTheory}. The main ingredient is a wellfounded partial order on the class of countably complete uniform ultrafilters on ordinals generalizing the Mitchell order. This order is called the {\it seed order} and denoted \(\swo\). The linearity of the seed order is equivalent to UA.

The focus of the rest of the paper is the theory of strong compactness and supercompactness under UA. Assuming UA, there is in fact a generalization of the argument producing a normal ultrafilter on \(\kappa\) from a \(\kappa\)-complete ultrafilter on \(\kappa\) that brings Solovay's ideas described above to fruition. This is the subject of \cref{FirstStronglyCompact}, whose main theorem is below:

\begin{thm*}[UA]
Suppose \(\delta\) is a regular cardinal and \(\kappa \leq \delta\) is the least \(\delta\)-strongly compact cardinal. Then the \(\swo\)-least uniform countably complete ultrafilter on \(\delta\) witnesses that \(\kappa\) is \({<}\delta\)-supercompact.
\end{thm*}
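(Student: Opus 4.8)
The plan is to take $U$ to be the $\swo$-least uniform countably complete ultrafilter on $\delta$ and to verify directly that $U$ does the job, in the spirit of the classical passage from a $\kappa$-complete uniform ultrafilter on $\kappa$ to a normal one. Such a $U$ exists: $\delta$ carries uniform countably complete ultrafilters because $\kappa$ is $\delta$-strongly compact---push a fine $\kappa$-complete ultrafilter on $P_\kappa(\delta)$ forward along $\sigma\mapsto\sup\sigma$, using regularity of $\delta$ to see the result is uniform---and then $U$ is picked out by the wellfoundedness of $\swo$ (\cref{BasicTheory}) together with its linearity, which is equivalent to UA. Writing $j\colon V\to M$ for $j_U$, it suffices to establish: (a) $\mathrm{crit}(j)=\kappa$; (b) $j(\kappa)\ge\delta$; and (c) $M$ is closed under ${<}\delta$-sequences, equivalently $j[\lambda]\in M$ for every $\lambda<\delta$. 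Granting these, for each $\lambda<\delta$ the ultrafilter $\mathcal U_\lambda=\{X\subseteq P_\kappa(\lambda):j[\lambda]\in j(X)\}$ is well-defined (its seed $j[\lambda]$ lies in $P_{j(\kappa)}(j(\lambda))^{M}$ by (b) and (c)), fine and $\kappa$-complete (since $j(\alpha)\in j[\lambda]$ for all $\alpha<\lambda$ and $\mathrm{crit}(j)=\kappa$), and normal (a regressive $f$ on $P_\kappa(\lambda)$ satisfies $j(f)(j[\lambda])=j(\gamma)$ for some $\gamma<\lambda$, so $f$ is constant on a $\mathcal U_\lambda$-large set). Thus $j$ simultaneously witnesses that $\kappa$ is $\lambda$-supercompact for every $\lambda<\delta$, as required.

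Items (a) and (b) I would deduce from (c) and the minimality of $\kappa$ by arguments of the kind developed in \cref{BasicTheory}, via the characterization of $\delta$-strong compactness in terms of tightness: $\kappa$ is the least cardinal such that $\delta$ carries a $(\kappa,\delta)$-tight uniform countably complete ultrafilter---one whose embedding covers the image of $\delta$ within a set of $M$-size below the image of $\kappa$---and the $\swo$-least uniform countably complete ultrafilter on $\delta$ is as tight as possible, hence $(\kappa,\delta)$-tight with critical point exactly $\kappa$. Given this, (b) is immediate, since $\delta$ is a cardinal of $M$ and $j[\delta]$ has order type $\delta$, so any cover of $j[\delta]$ in $M$ has $M$-size at least $\delta$, forcing $j(\kappa)>\delta$; and $\mathrm{crit}(j)\ge\kappa$ can also be read off from (c), because once $M$ is closed under ${<}\delta$-sequences $\mathrm{crit}(j)$ becomes a ${<}\delta$-supercompact cardinal (using $j(\mathrm{crit}(j))>\delta$, again from tightness), hence $\delta$-strongly compact by regularity of $\delta$. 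Here one also uses the trivial observations, forced by the uniformity of $U$ on the regular cardinal $\delta$, that $\mathrm{crit}(j)\le\delta$ and $[\mathrm{id}]_U\ge\sup j[\delta]\ge\delta$.

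The heart of the proof is (c), and I expect it to be the main obstacle. I would argue by contradiction: if $j[\lambda]\notin M$ for some $\lambda<\delta$, the goal is to produce a uniform countably complete ultrafilter $U'$ on $\delta$ with $U'\swo U$, contradicting minimality. Using the $\delta$-strong compactness of $\kappa$, fix a uniform countably complete ultrafilter $W$ on $\delta$ that is $(\kappa,\delta)$-tight, so $j_W[\delta]$ is covered by a set $A\in M_W$ with $|A|^{M_W}<j_W(\kappa)$. Compare $U$ and $W$ by UA: there are internal ultrapower embeddings $k\colon M\to N$ and $l\colon M_W\to N$ with $k\circ j=l\circ j_W=:i$, and since $U\wo W$ the $M_W$-side of the comparison is trivial, so $N=M_W$, $i=j_W$, and $M_W=\mathrm{Ult}(M,W_*)$ with $W_*\in M$ countably complete and $k=j^{M}_{W_*}$. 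On one hand the tightness of $W$ transfers: $j_W[\lambda]=k[\,j[\lambda]\,]$ is covered within $M_W$ by a set of size below $j_W(\kappa)$. On the other hand, the failure $j[\lambda]\notin M$ must be traceable to $k$, and pulling the cover of $k[\,j[\lambda]\,]$ back through the ultrapower $k=j^{M}_{W_*}$ should yield, inside $M$, a set covering $j[\lambda]$ of $M$-size below $j(\kappa)$---the delicate point being to control the completeness of $W_*$ so that the pullback really has $M$-size below $j(\kappa)$ rather than merely below $j_W(\kappa)$---and from such a cover one reads off a seed in $j(\delta)$ strictly below $[\mathrm{id}]_U$ generating a uniform countably complete ultrafilter $U'$ on $\delta$. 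Making this extraction precise, and verifying that $U'$ is genuinely uniform on $\delta$, is exactly the generalization of the classical normalization step, in which the normal ultrafilter extracted from a $\kappa$-complete uniform ultrafilter on $\kappa$ is itself a $\swo$-smaller, tighter ultrafilter. With (c) in hand, (a), (b), and the construction of the $\mathcal U_\lambda$ go through as above, completing the proof.
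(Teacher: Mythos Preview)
Your proposal has a genuine error and misses the paper's central idea.

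The error: you claim that since $U\wo W$, ``the $M_W$-side of the comparison is trivial, so $N=M_W$.'' This is false. The seed order $U\swo W$ only tells you that in the canonical comparison the seed of $U$ lands below the seed of $W$; it does \emph{not} say that $t_W(U)$ is principal, i.e., that $U$ divides $W$. In the paper's terms, $U\D W$ is equivalent to $t_W(U)$ being principal (\cref{DivisionCharacterization}), and there is no reason this should hold for an arbitrary $(\kappa,\delta)$-tight $W$. So your pullback argument never gets started: you do not have $N=M_W$, $k$ is not an ultrapower of $M_U$ into $M_W$, and there is nothing to pull back along.

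More fundamentally, your plan for (c) tries to manufacture a smaller uniform ultrafilter on $\delta$ by manipulating covers, but the paper's route is entirely different and rests on two ingredients you do not mention. First, the structural fact about the least ultrafilter $U$ on $\delta$ (\cref{ZeroInternal}): a countably complete $M_U$-ultrafilter $W_*$ belongs to $M_U$ iff $j^{M_U}_{W_*}$ is continuous at $\text{cf}^{M_U}(\sup j_U[\delta])$. This yields, in particular, that \emph{every} $W\in\textnormal{Un}_{<\delta}$ satisfies $W\cap M_U\in M_U$. Second, the Hausdorff independent-family argument (\cref{Strength}): using a $\kappa$-independent family in $M_U$ and the $\delta$-strong compactness of $\kappa$, the amenability of all small ultrafilters is converted into $P(\gamma)\subseteq M_U$ for $\gamma<\delta$. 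Only then does Ketonen's covering (\cref{CofinalityRegularity}) give $j_U[\gamma]\in M_U$. Your ``pull back a cover'' idea does not produce $P(\gamma)\subseteq M_U$, which is what is actually needed; covering alone is insufficient.

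Finally, identifying $\textsc{crt}(U)$ with the least $\delta$-strongly compact cardinal is itself nontrivial in the paper and uses the machinery of \cref{FixedPointInternal}, \cref{EasyRegularity}, and a Kunen-style argument (\cref{Kunen}, \cref{ClosedCompleteness}); it is not a consequence of (c) plus tightness as you suggest.
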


If \(\delta\) is not strongly inaccessible then in fact the \(\swo\)-least ultrafilter \(U\) must be fully \(\delta\)-supercompact. An interesting question is whether this holds in general. We conjecture that the answer is no. This discussed in \cref{LastWord}.

In particular, we have the following theorem:

\begin{thm*}[UA]
The least strongly compact cardinal is supercompact.
\end{thm*}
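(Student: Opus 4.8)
The plan is to obtain this as a consequence of the displayed theorem on \(\delta\)-strongly compact cardinals, by letting \(\delta\) range over all sufficiently large regular cardinals and then extracting from these a single cardinal that turns out to be \(\lambda\)-supercompact for every \(\lambda\).

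First I would fix \(\kappa\) to be the least strongly compact cardinal and, for each regular cardinal \(\delta\geq\kappa\), let \(\kappa_\delta\) denote the least \(\delta\)-strongly compact cardinal. Since \(\kappa\) is strongly compact it is in particular \(\delta\)-strongly compact (apply Solovay's characterization with \(A=\delta\)), so \(\kappa_\delta\) exists and \(\kappa_\delta\leq\kappa\leq\delta\); in particular the hypotheses of the displayed theorem are met with \(\kappa_\delta\) playing the role of ``\(\kappa\)'', and we conclude that the \(\swo\)-least uniform countably complete ultrafilter on \(\delta\) witnesses that \(\kappa_\delta\) is \({<}\delta\)-supercompact. (A small point to verify here is exactly that the least \(\delta\)-strongly compact cardinal is \(\leq\delta\), so that the theorem applies; this is immediate from \(\kappa\leq\delta\) and the \(\delta\)-strong compactness of \(\kappa\).)

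Next, the assignment \(\delta\mapsto\kappa_\delta\) takes values in the set \(\kappa+1\), whereas the regular cardinals \(\geq\kappa\) form a proper class, so a pigeonhole argument produces a fixed cardinal \(\bar\kappa\leq\kappa\) with \(\kappa_\delta=\bar\kappa\) for a proper (in particular unbounded) class of regular \(\delta\). For every such \(\delta\), \(\bar\kappa\) is \({<}\delta\)-supercompact; since these \(\delta\) are unbounded, given any \(\lambda\) we may choose such a \(\delta>\lambda\) and conclude that \(\bar\kappa\) is \(\lambda\)-supercompact. Hence \(\bar\kappa\) is supercompact. Finally, a supercompact cardinal is strongly compact, so the minimality of \(\kappa\) forces \(\bar\kappa=\kappa\), and therefore \(\kappa\) is supercompact.

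The only substantive ingredient is the displayed theorem itself; the remaining steps — strong compactness implies \(\delta\)-strong compactness, the pigeonhole extraction of \(\bar\kappa\), the passage from \({<}\delta\)-supercompactness for unboundedly many \(\delta\) to full supercompactness, and supercompactness implying strong compactness — are all routine. Accordingly, the main obstacle is simply organizing these pieces correctly, with the one subtlety being to confirm the applicability of the theorem (i.e.\ that \(\kappa_\delta\leq\delta\)) for the \(\delta\)'s in question.
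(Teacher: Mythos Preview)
Your argument is correct. The paper takes a slightly different organizational route: rather than quoting the introductory theorem about the least \(\delta\)-strongly compact cardinal and then running a pigeonhole over \(\delta\), it first proves (in the corollary immediately preceding this statement) that if there are arbitrarily large regular cardinals carrying uniform countably complete ultrafilters, then the least cardinal \(\kappa\) mapped arbitrarily high by countably complete ultrapowers is supercompact; the proof there shows directly that for every relevant \(\delta\) the least ultrafilter on \(\delta\) has critical point exactly this \(\kappa\) and witnesses \({<}\delta\)-supercompactness. The statement then follows because the least strongly compact cardinal coincides with that \(\kappa\). In other words, the paper identifies the target cardinal \emph{a priori} via the description ``least mapped arbitrarily high,'' whereas you identify it \emph{a posteriori} by pigeonhole on the values \(\kappa_\delta\). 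Both approaches rest on the same underlying analysis of the \(\swo\)-least ultrafilter on \(\delta\), so the difference is purely organizational. One small simplification of your version: since \(\delta'\)-strong compactness implies \(\delta\)-strong compactness for \(\delta<\delta'\), the map \(\delta\mapsto\kappa_\delta\) is nondecreasing and bounded by \(\kappa\), hence eventually constant; this avoids the pigeonhole step entirely.
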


What about the other strongly compact cardinals? This is the subject of \cref{SecondStronglyCompact} whose main result is that under the Ultrapower Axiom, the Kimchi-Magidor consistency result becomes a theorem.

\begin{thm*}[UA]
Every strongly compact cardinal is either supercompact or a measurable limit of supercompact cardinals.
\end{thm*}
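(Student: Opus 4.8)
The plan is to prove the equivalent statement: \emph{if $\kappa$ is strongly compact, not supercompact, and not a limit of supercompact cardinals, then a contradiction follows.} This suffices, because a strongly compact cardinal is measurable, so a strongly compact $\kappa$ that happens to be a limit of supercompact cardinals is automatically a measurable limit of them.

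Suppose then that $\kappa$ is a counterexample. Since $\kappa$ is not a limit of supercompact cardinals, fix $\rho < \kappa$ such that no supercompact cardinal lies in $(\rho,\kappa)$; by the theorem that the least strongly compact cardinal is supercompact, $\kappa$ is not the least strongly compact cardinal, so some supercompact cardinal lies below $\kappa$, and we may take $\rho$ to be the supremum of the supercompact cardinals below $\kappa$. For each regular $\delta \geq \kappa$ that is not strongly inaccessible, let $\bar\kappa_\delta$ denote the least $\delta$-strongly compact cardinal strictly greater than $\rho$; this is well-defined, and $\rho < \bar\kappa_\delta \leq \kappa$ because $\kappa$ is itself $\delta$-strongly compact.

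The essential ingredient, which I expect to be the main obstacle, is a localization above $\rho$ of the main theorem of \cref{FirstStronglyCompact}: \emph{$\bar\kappa_\delta$ is $<\delta$-supercompact, hence, since $\delta$ is not strongly inaccessible, $\delta$-supercompact.} To prove this one reruns the argument of that theorem relative to $\rho$: in place of the $\swo$-least countably complete uniform ultrafilter on $\delta$ one works with the $\swo$-least one whose completeness exceeds $\rho$, and one checks that the constructions appearing in that proof, namely internal ultrapowers and the translation maps of the ultrapower comparison, never lower completeness to $\rho$ or below; the same analysis then identifies this ultrafilter's critical point as a $\delta$-strongly compact cardinal above $\rho$, necessarily $\bar\kappa_\delta$, and yields a $<\delta$-supercompactness embedding for it. The ample closure below $\rho$ (a supercompact cardinal, or a limit of supercompacts) makes this plausible, though I expect the argument to go through above an arbitrary threshold. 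The delicate part is the analysis of how completeness behaves along the seed order, which is where the real work lies.

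Granting the localized theorem, the result follows by a counting argument. The assignment $\delta \mapsto \bar\kappa_\delta$ is a class function from the proper class of non-strongly-inaccessible regular $\delta \geq \kappa$ into $(\rho,\kappa]$, an interval containing at most $\kappa$ cardinals. If for some cardinal $\nu$ with $\rho < \nu < \kappa$ the class $\{\delta : \bar\kappa_\delta = \nu\}$ were cofinal in the ordinals, then $\nu$ would be $\delta$-supercompact for cofinally many $\delta$, hence supercompact, contradicting the choice of $\rho$. So each such class is bounded, say by $\delta_\nu$, and since there are at most $\kappa$ of them, $\delta^{*} := \sup_{\nu} \delta_\nu$ is an ordinal. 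Then $\bar\kappa_\delta = \kappa$ for every non-strongly-inaccessible regular $\delta > \delta^{*}$, so $\kappa$ is $\delta$-supercompact for cofinally many $\delta$, so $\kappa$ is supercompact, contrary to assumption.
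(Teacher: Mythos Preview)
Your final pigeonhole argument is correct and matches the paper's conclusion. The gap is in the localization step, which you rightly flag as the main obstacle but whose proposed proof does not go through.

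The argument of \cref{FirstStronglyCompact} hinges on \cref{ZeroInternal}: if $U$ is the \emph{least} ultrafilter on $\delta$, then a countably complete $M_U$-ultrafilter $W_*$ belongs to $M_U$ if and only if $j^{M_U}_{W_*}$ is continuous at $\text{cf}^{M_U}(\sup j_U[\delta])$. This characterization of internality is what drives \cref{Strength}, \cref{ZeroTheorem}, and everything downstream. If you instead take the $\swo$-least ultrafilter on $\delta$ with completeness exceeding $\rho$, that ultrafilter is not $0$-order, and there is no analogous description of what is internal to it. Your suggestion to ``check that the constructions never lower completeness to $\rho$ or below'' does not supply one: translations $t_W(U)$ of $\rho^+$-complete ultrafilters can have arbitrarily small completeness, and the internal relation is indifferent to completeness thresholds. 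The paper itself remarks that the higher strongly compact cardinals cannot be handled by the same argument as the first one.

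The paper's route is genuinely different. It does not relativize the least-ultrafilter argument but instead develops two new tools: the factorization of an arbitrary countably complete ultrafilter into a finite iteration of irreducibles (\cref{Factorization}), and a structure theorem (\cref{IrredSingular}, the GCH-free variant of \cref{IrredGCH}) showing that an irreducible strongly uniform ultrafilter on $\lambda$ is $\nu^+$-supercompact for suitable strong limit singulars $\nu < \lambda$. The proof of the latter does exploit the least-ultrafilter theory, but indirectly: one compares the irreducible $W$ with the least ultrafilter $U$ on $\nu^+$, factors $t_U(W)$ across $\nu^+$, and uses irreducibility of $W$ together with \cref{Reciprocity2} and \cref{NormalGeneration} to force the small factor to be principal. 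For \cref{Global} itself one then takes, for each large strong limit $\gamma$ of uncountable cofinality, the $\swo$-least $\kappa$-complete ultrafilter on $\gamma^+$ and factors it: either it is already irreducible, giving ${<}\gamma$-supercompactness of $\kappa$, or its last irreducible factor witnesses ${<}j_D(\gamma)$-supercompactness of some $\kappa_* \in [\kappa,j_D(\kappa))$ inside an ultrapower $M_D$ with $\textsc{crt}(D)=\kappa$, which reflects to give $\kappa$ as a limit of ${<}\gamma$-supercompacts. Your pigeonhole then finishes the proof.
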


The full characterization of strongly compact cardinals above cannot be proved without first characterizing the least one by a completely different argument. One essentially propagates the supercompactness of the first strongly compact cardinal to the others. This dynamic hints at the special nature of the first supercompact cardinal in inner model theory, identified first by Woodin in \cite{SEM} and \cite{Woodin}.

\section{Preliminaries}\label{Preliminaries}
\subsection{Notation and Conventions}
We set up some conventions for discussing countably complete ultrafilters.

\begin{defn}
An ultrafilter \(U\) on an ordinal \(\delta\) is {\it uniform} if for all \(\xi < \delta\), \(\delta\setminus \xi\in U\). The set of uniform countably complete ultrafilters on \(\delta\) is denoted by \(\textnormal{Un}_\delta\). The class of uniform countably complete ultrafilters is denoted by \(\textnormal{Un}\).
\end{defn}

\begin{defn}
For \(U\in \textnormal{Un}\), we denote by \(\textsc{sp}(U)\) the {\it space} of \(U\), which is unique ordinal \(\delta\) such that \(U\in \textnormal{Un}_\delta\).
\end{defn}

Equivalently, \(\textsc{sp}(U)\) is the unique \(\delta\) such that \(\delta\in U\).

In the case that \(\delta\) is regular, the notion of uniformity defined above is the usual one. In the case that \(\delta\) is a singular cardinal, there are two definitions of uniformity in use. We have chosen the weaker one, but we occasionally use the stronger one.

\begin{defn}
An ultrafilter \(U\) on a set \(X\) is {\it strongly uniform} if for all \(A\in U\), \(|A| = |X|\).
\end{defn}

According to our official definition of uniformity, a principal ultrafilter can be uniform.

\begin{defn}
For any ordinal \(\alpha\), \(P_\alpha\) denotes the {\it uniform principal ultrafilter concentrated at \(\alpha\)}.
\end{defn}

Thus \(\textsc{sp}(P_\alpha) = \alpha + 1\).

We use standard notation for ultrapowers.

\begin{defn}
If \(U\) is an ultrafilter, then \(j_U: V\to M_U\) denotes the ultrapower of \(V\) by \(U\).

More generally, if \(N\) is an inner model and \(U\) is an \(N\)-ultrafilter, then \(j^N_U : N \to M_U^N\) denotes the ultrapower of \(N\) by \(U\) using only functions in \(N\).
\end{defn}

We will use the notation \(j^N_U\) and \(M^N_U\) even when \(U\notin N\).

Every ultrapower considered in this paper will be wellfounded and therefore identified with its transitive collapse.

We will be interested in limits of ultrafilters, occasionally in a slightly more general sense than the usual one.

\begin{defn}\label{Limit}
Suppose \(U\) is an ultrafilter and \(W_*\) is an \(M_U\)-ultrafilter on \(X_*\). Suppose \(X\) is such that \(X_*\subseteq j_U(X)\). The {\it \(U\)-limit of \(W_*\) on \(X\)} is the ultrafilter 
\[U^-(W_*, X) = \{A\subseteq X : j_U(A)\cap X_*\in W_*\}\]
\end{defn}

The main novelty of this definition is that we do not require \(W_*\in M_U\). This is useful in the proof of \cref{IrredGCH}.

We single out two special cases of \cref{Limit} in which there is a canonical choice for the underlying set of the limit ultrafilter. These are the only cases we will really consider in this paper.

\begin{defn}
Suppose \(U\) is an ultrafilter and \(W_*\) is a uniform \(M_U\)-ultrafilter on an ordinal \(\delta_*\). Then the {\it \(U\)-limit of \(W_*\)}, denoted \(U^-(W_*)\), is the \(U\)-limit of \(W_*\) on \(\delta\) where \(\delta\) is least such that \(\delta_*\leq j_U(\delta)\).
\end{defn}

Note that \(\delta\) is chosen so that \(U^-(W_*)\) is uniform. 

The second special case of \cref{Limit} generalizes the first one, but we will use it much less (only in the proof of \cref{IrredGCH}).

\begin{defn}
Suppose \(U\) is an ultrafilter and \(\mathcal W_*\) is a fine \(M_U\)-ultrafilter on \(P^{M_U}_{\delta_*}(\delta_*)\). Then the {\it \(U\)-limit of \(\mathcal W_*\)}, denoted \(U^-(\mathcal W_*)\), is the \(U\)-limit of \(\mathcal W_*\) on \(P_\delta(\delta)\) where \(\delta\) is least such that \(\delta_*\leq j_U(\delta)\).
\end{defn}

Note that \(\delta\) is chosen so that \(U^-(\mathcal W_*)\) is a fine ultrafilter on \(P_{\delta}(\delta)\).

It is usually easier to think about limits in terms of ultrapower embeddings, which is possible by the following lemma.

\begin{lma}\label{LimitEmbeddings}
Suppose \(U,W\in \textnormal{Un}\) and \(U_*\in \textnormal{Un}^{M_W}\). Then \(U = W^-(U_*)\) if and only if there is an elementary embedding \(k : M_U\to M^{M_W}_{U_*}\) such that \(k\circ j_U = j^{M_W}_{U_*}\circ j_W\) and \(k([\textnormal{id}]_U) = [\textnormal{id}]_{U_*}\).
\end{lma}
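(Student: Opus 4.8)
The plan is to establish ($\Rightarrow$) by writing down the canonical factor embedding and ($\Leftarrow$) by reading that construction backwards. Put \(\delta=\textsc{sp}(U)\) and \(\delta_*=\textsc{sp}^{M_W}(U_*)\), so \(U\) is a uniform ultrafilter on \(\delta\) and \(U_*\) a uniform \(M_W\)-ultrafilter on \(\delta_*\). For the forward direction, assume \(U=W^-(U_*)\); by \cref{Limit} this means \(\delta_*\subseteq j_W(\delta)\) and, for all \(A\subseteq\delta\), \(A\in U\) if and only if \(j_W(A)\cap\delta_*\in U_*\). For \(f\colon\delta\to V\) the function \(j_W(f)\restriction\delta_*\) belongs to \(M_W\), being the set \(j_W(f)\in M_W\) cut down by \(\delta_*\in M_W\), so I define \(k([f]_U)=[\,j_W(f)\restriction\delta_*\,]^{M_W}_{U_*}\). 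Chaining {\L}o\'s's theorem for \(U\) over \(V\), the defining equivalence of the \(W\)-limit, elementarity of \(j_W\), and {\L}o\'s's theorem for \(U_*\) inside \(M_W\) shows that for every formula \(\varphi\) and all \(f_1,\dots,f_n\colon\delta\to V\),
\[M_U\models\varphi([f_1]_U,\dots,[f_n]_U)\iff M^{M_W}_{U_*}\models\varphi\bigl(k([f_1]_U),\dots,k([f_n]_U)\bigr).\]
The forward implication of this equivalence with \(\varphi\) the formula \(v_1=v_2\) shows \(k\) is well defined (it does not presuppose \(k\) is a function), and the full equivalence shows \(k\) is elementary. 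Since \(j_W\) of a constant function is again constant, evaluating \(k\) on constant functions gives \(k\circ j_U=j^{M_W}_{U_*}\circ j_W\); and since \(j_W(\textnormal{id}_\delta)=\textnormal{id}_{j_W(\delta)}\) restricts to \(\textnormal{id}_{\delta_*}\), evaluating \(k\) on \(\textnormal{id}_\delta\) gives \(k([\textnormal{id}]_U)=[\textnormal{id}]_{U_*}\).

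For the converse, suppose \(k\) with the stated properties exists. Elementary embeddings are injective, so for every \(A\subseteq\delta\) we have \(A\in U\) iff \([\textnormal{id}]_U\in j_U(A)\) iff \(k([\textnormal{id}]_U)\in k(j_U(A))\), and the two hypotheses on \(k\) rewrite the last condition as \([\textnormal{id}]_{U_*}\in j^{M_W}_{U_*}(j_W(A))\), which unwinds inside \(M_W\) to \(j_W(A)\cap\delta_*\in U_*\). To conclude \(U=W^-(U_*)\) it now suffices to see that \(\delta_*\subseteq j_W(\delta)\)---so that the equivalence just proved says exactly \(U=W^-(U_*,\delta)\)---and that \(\delta\) is \emph{least} with \(\delta_*\leq j_W(\delta)\). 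Applying \(k\) to \([\textnormal{id}]_U<j_U(\delta)\) gives \([\textnormal{id}]_{U_*}<j^{M_W}_{U_*}(j_W(\delta))\), whereas uniformity of \(U_*\) gives \([\textnormal{id}]_{U_*}\geq j^{M_W}_{U_*}(\xi)\) whenever \(\xi<\delta_*\); so \(\delta_*>j_W(\delta)\) is impossible, i.e.\ \(\delta_*\leq j_W(\delta)\). And if \(\delta'<\delta\), uniformity of \(U\) gives \([\textnormal{id}]_U\geq j_U(\delta')\), hence \([\textnormal{id}]_{U_*}\geq j^{M_W}_{U_*}(j_W(\delta'))\); since \([\textnormal{id}]_{U_*}<j^{M_W}_{U_*}(\delta_*)\), this forces \(\delta_*>j_W(\delta')\), so \(\delta\) is least.

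I expect the only point requiring real care to be in the forward direction: confirming that \(j_W(f)\restriction\delta_*\) is genuinely a member of \(M_W\), and correctly threading the two applications of {\L}o\'s's theorem---one external, over \(V\), the other internal to \(M_W\)---through the defining equivalence of the \(W\)-limit. None of this is deep; it is simply where the definitions must be handled precisely. The converse is then a routine unwinding of \(A\in U\Leftrightarrow[\textnormal{id}]_U\in j_U(A)\) together with the bookkeeping on spaces forced by uniformity.
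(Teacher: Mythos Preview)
Your proof is correct. The paper states this lemma without proof, so there is nothing to compare against; your argument is the standard factor-embedding construction in the forward direction and the usual derived-ultrafilter unwinding in the converse, with the bookkeeping on spaces handled cleanly via uniformity.
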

\subsection{The Seed Order}
The key to this work is a new order on the class of uniform countably complete ultrafilters.
\begin{defn}
The {\it seed order} is defined on \(U,W\in \textnormal{Un}\) by setting \(U\swo W\) if there is some \(U_*\in \textnormal{Un}^{M_W}\) with \(\textsc{sp}(U_*)\leq [\text{id}]_W\) such that \(U = W^-(U_*)\).
\end{defn}

There is a useful characterization of the seed order in terms of elementary embeddings using \cref{LimitEmbeddings}.

\begin{cor}\label{SeedOrderChar}
Suppose \(U,W\in \textnormal{Un}\). Then \(U \swo W\) if and only if there is some \(U_*\in\textnormal{Un}^{M_W}\) and an elementary embedding \(k : M_U\to M^{M_W}_{U_*}\) with \(k\circ j_U = j^{M_W}_{U_*}\circ j_W\) and \(k([\textnormal{id}]_U) < j^{M_W}_{U_*}([\textnormal{id}]_W)\).
\end{cor}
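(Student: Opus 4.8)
The plan is to derive the corollary from \cref{LimitEmbeddings} together with the definition of \(\swo\), treating the two directions separately; the forward direction is essentially bookkeeping, and the reverse direction requires manufacturing a smaller ultrafilter via a derived‑ultrafilter‑and‑factor‑map argument.

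For the forward direction, suppose \(U\swo W\), and fix \(U_*\in\textnormal{Un}^{M_W}\) with \(\textsc{sp}(U_*)\leq[\text{id}]_W\) and \(U = W^-(U_*)\). \Cref{LimitEmbeddings} immediately yields an elementary \(k:M_U\to M^{M_W}_{U_*}\) with \(k\circ j_U = j^{M_W}_{U_*}\circ j_W\) and \(k([\text{id}]_U) = [\text{id}]_{U_*}\). Since \(U_*\) is an ultrafilter on \(\textsc{sp}(U_*)\) we have \([\text{id}]_{U_*} < j^{M_W}_{U_*}(\textsc{sp}(U_*))\), and applying the monotone map \(j^{M_W}_{U_*}\) to the inequality \(\textsc{sp}(U_*)\leq[\text{id}]_W\) of \(M_W\) gives \(j^{M_W}_{U_*}(\textsc{sp}(U_*))\leq j^{M_W}_{U_*}([\text{id}]_W)\); chaining these shows \(k([\text{id}]_U) < j^{M_W}_{U_*}([\text{id}]_W)\), so the same \(U_*\) and \(k\) witness the right‑hand side.

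The reverse direction is where the actual content lies. Suppose we are given \(U_*\in\textnormal{Un}^{M_W}\) and elementary \(k:M_U\to M^{M_W}_{U_*}\) with \(k\circ j_U = j^{M_W}_{U_*}\circ j_W\) and \(s := k([\text{id}]_U) < j^{M_W}_{U_*}([\text{id}]_W)\). Here \(\textsc{sp}(U_*)\) may exceed \([\text{id}]_W\), so \(U_*\) itself need not witness \(U\swo W\), and I would instead pass to \(U_*'\), the ultrafilter of \(M_W\) derived from the internal embedding \(j^{M_W}_{U_*}\) using the seed \(s\), i.e.\ \(U_*' = \{A : s\in j^{M_W}_{U_*}(A)\}\). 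The routine points to check are: \(U_*'\in M_W\) (the restriction of \(j^{M_W}_{U_*}\) to any set of \(M_W\) is definable over \(M_W\) from \(U_*\), and \(s\) lies in the definable inner model \(M^{M_W}_{U_*}\), so \(U_*'\) is definable over \(M_W\)); \(U_*'\) is countably complete in \(M_W\) (since \(j^{M_W}_{U_*}\) has critical point above \(\omega\) it commutes with countable unions coded in \(M_W\), and \(s\) lies below \(j^{M_W}_{U_*}\) of the whole space, so it lands in the image of some piece of any countable partition); and \(\textsc{sp}(U_*')\leq[\text{id}]_W\), because \(s < j^{M_W}_{U_*}([\text{id}]_W)\) forces \([\text{id}]_W\in U_*'\). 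Thus \(U_*'\in\textnormal{Un}^{M_W}\) with space small enough for the definition of \(\swo\).

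It then remains to build an elementary \(k':M_U\to M^{M_W}_{U_*'}\) with \(k'\circ j_U = j^{M_W}_{U_*'}\circ j_W\) and \(k'([\text{id}]_U) = [\text{id}]_{U_*'}\), for then \cref{LimitEmbeddings} gives \(U = W^-(U_*')\) and hence \(U\swo W\). For this I would use the standard factor embedding \(\ell:M^{M_W}_{U_*'}\to M^{M_W}_{U_*}\) attached to a derived ultrafilter, which satisfies \(\ell\circ j^{M_W}_{U_*'} = j^{M_W}_{U_*}\) and \(\ell([\text{id}]_{U_*'}) = s\). Since \(M_U\) is generated by \(j_U[V]\cup\{[\text{id}]_U\}\), its \(k\)-image is the hull in \(M^{M_W}_{U_*}\) of \((j^{M_W}_{U_*}\circ j_W)[V]\cup\{s\}\), which is contained in the hull of \(j^{M_W}_{U_*}[M_W]\cup\{s\}\), namely \(\textnormal{ran}(\ell)\); so \(k' := \ell^{-1}\circ k\) is well defined and elementary, and the two required identities for \(k'\) follow by composing the identities for \(k\) with those for \(\ell\). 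The one delicate point — and the only real obstacle in an otherwise formal argument — is this last ``division'': one must verify that \(\textnormal{ran}(k)\) genuinely sits inside \(\textnormal{ran}(\ell)\), which reduces precisely to the fact that \(j_W[V]\subseteq M_W\), so no generator on the \(M_W\)-side lying outside \(j^{M_W}_{U_*}[M_W]\) is needed to generate \(\textnormal{ran}(k)\).
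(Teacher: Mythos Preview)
Your argument is correct and is precisely the natural expansion of what the paper leaves implicit: the corollary is stated without proof as an immediate consequence of \cref{LimitEmbeddings} and the definition of \(\swo\), and the derived-ultrafilter-plus-factor-map maneuver you carry out for the reverse direction is exactly the standard move one makes to pass from an arbitrary \(U_*\) to one with \(\textsc{sp}(U_*')\leq[\text{id}]_W\). There is no alternative approach in the paper to compare against.
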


When \(U\) is an ultrafilter, the point \([\text{id}]_U\) is sometimes called the {\it seed} of \(U\), which along with \cref{SeedOrderChar} should explain the name ``seed order." 

A more detailed exposition of the seed order will appear elsewhere.

The following technical lemma allows the structure of the seed order in \(V\) to be copied into its ultrapowers.
\begin{lma}\label{TechnicalLemma}
Suppose \(U,W,Z\in \textnormal{Un}\) and \(U\swo W\). Suppose \(W_*\in \textnormal{Un}^{M_Z}\) is such that \(Z^-(W_*) = W\). Then there is some \(U_*\swo^{M_Z} W_*\) with \(Z^-(U_*) = U\).
\end{lma}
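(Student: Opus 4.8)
The idea is to push the seed‑order witness for $U\swo W$ through the embedding that \cref{LimitEmbeddings} attaches to $Z^-(W_*)=W$, and then re‑limit back down inside $M_Z$. Unwinding the definition of $\swo$, fix $V_*\in\textnormal{Un}^{M_W}$ with $\textsc{sp}^{M_W}(V_*)\leq[\textnormal{id}]_W$ and $W^-(V_*)=U$. By \cref{LimitEmbeddings}, $Z^-(W_*)=W$ gives an elementary $\ell:M_W\to M^{M_Z}_{W_*}$ with $\ell\circ j_W=j^{M_Z}_{W_*}\circ j_Z$ and $\ell([\textnormal{id}]_W)=[\textnormal{id}]_{W_*}$, and $W^-(V_*)=U$ gives an elementary $k:M_U\to M^{M_W}_{V_*}$ with $k\circ j_U=j^{M_W}_{V_*}\circ j_W$ and $k([\textnormal{id}]_U)=[\textnormal{id}]_{V_*}$. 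Since $M^{M_Z}_{W_*}$ is an inner model of $M_Z$ and $M^{M_Z}_{W_*}\models$ ``$\ell(V_*)$ is a uniform countably complete ultrafilter'', we may set $U_*:=(W_*)^-(\ell(V_*))$, computed within $M_Z$; this is a uniform countably complete ultrafilter of $M_Z$ (its underlying ordinal is chosen to force uniformity, and such limits of countably complete ultrafilters are countably complete since the resulting ultrapower embeds into the wellfounded model $M^{M^{M_Z}_{W_*}}_{\ell(V_*)}$).

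To see $U_*\swo^{M_Z}W_*$, note that $\ell$ is order preserving on ordinals, so
\[\textsc{sp}^{M^{M_Z}_{W_*}}(\ell(V_*))=\ell\big(\textsc{sp}^{M_W}(V_*)\big)\leq\ell([\textnormal{id}]_W)=[\textnormal{id}]_{W_*}.\]
Thus $\ell(V_*)$ is a member of $\textnormal{Un}^{M^{M_Z}_{W_*}}$ of space at most $[\textnormal{id}]_{W_*}$ with $(W_*)^-(\ell(V_*))=U_*$, which is exactly what the definition of $\swo$ (applied inside $M_Z$) demands.

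The real work is showing $Z^-(U_*)=U$, for which I would again use \cref{LimitEmbeddings}: it suffices to produce an elementary $n:M_U\to M^{M_Z}_{U_*}$ with $n\circ j_U=j^{M_Z}_{U_*}\circ j_Z$ and $n([\textnormal{id}]_U)=[\textnormal{id}]_{U_*}$. Since $\ell$ is elementary and $V_*\in M_W$, the standard shift construction gives an elementary $\hat\ell:M^{M_W}_{V_*}\to M^{M^{M_Z}_{W_*}}_{\ell(V_*)}$, $[f]_{V_*}\mapsto[\ell(f)]_{\ell(V_*)}$, satisfying $\hat\ell\circ j^{M_W}_{V_*}=j^{M^{M_Z}_{W_*}}_{\ell(V_*)}\circ\ell$ and $\hat\ell([\textnormal{id}]_{V_*})=[\textnormal{id}]_{\ell(V_*)}$; and applying \cref{LimitEmbeddings} within $M_Z$ to $U_*=(W_*)^-(\ell(V_*))$ gives an elementary $m:M^{M_Z}_{U_*}\to M^{M^{M_Z}_{W_*}}_{\ell(V_*)}$ with $m\circ j^{M_Z}_{U_*}=j^{M^{M_Z}_{W_*}}_{\ell(V_*)}\circ j^{M_Z}_{W_*}$ and $m([\textnormal{id}]_{U_*})=[\textnormal{id}]_{\ell(V_*)}$. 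A diagram chase with these commutations gives $\hat\ell\circ k\circ j_U=m\circ j^{M_Z}_{U_*}\circ j_Z$ and $\hat\ell(k([\textnormal{id}]_U))=[\textnormal{id}]_{\ell(V_*)}=m([\textnormal{id}]_{U_*})$; and writing an arbitrary element of $M_U$ as $j_U(f)([\textnormal{id}]_U)$ with $f\in V$, one computes $\hat\ell(k(j_U(f)([\textnormal{id}]_U)))=j^{M^{M_Z}_{W_*}}_{\ell(V_*)}\big(j^{M_Z}_{W_*}(j_Z(f))\big)([\textnormal{id}]_{\ell(V_*)})=m\big(j^{M_Z}_{U_*}(j_Z(f))([\textnormal{id}]_{U_*})\big)$, so $\textnormal{ran}(\hat\ell\circ k)\subseteq\textnormal{ran}(m)$. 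As $m$ is injective, $n:=m^{-1}\circ\hat\ell\circ k$ is a well‑defined elementary embedding with the two required properties, and \cref{LimitEmbeddings} yields $Z^-(U_*)=U$. (One also uses the routine fact that the underlying ordinal of a uniform ultrafilter on an ordinal is determined by the ultrafilter, so that $Z^-(U_*)$ and $U$ having the same members really gives $Z^-(U_*)=U$.)

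I expect the main obstacle to be the inclusion $\textnormal{ran}(\hat\ell\circ k)\subseteq\textnormal{ran}(m)$, which is what legitimizes the composite $m^{-1}\circ\hat\ell\circ k$; it reduces to the facts that $\hat\ell$ carries the generator $[\textnormal{id}]_{V_*}$ of $M^{M_W}_{V_*}$ to $[\textnormal{id}]_{\ell(V_*)}$ and carries $\textnormal{ran}(j_W)$ into $\textnormal{ran}(j^{M_Z}_{W_*}\circ j_Z)\subseteq\textnormal{ran}(j^{M_Z}_{W_*})$, both of which $m$ visibly reaches. Everything else is bookkeeping with the commuting triangles furnished by \cref{LimitEmbeddings} together with the elementarity of $k$, $\ell$, $\hat\ell$, and $m$.
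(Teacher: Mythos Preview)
Your argument is correct. The paper states \cref{TechnicalLemma} without proof (deferring the detailed exposition of the seed order to another paper), so there is nothing to compare against; but the approach you take---pushing the witness $V_*$ for $U\swo W$ along the canonical embedding $\ell:M_W\to M^{M_Z}_{W_*}$ from \cref{LimitEmbeddings}, setting $U_*=(W_*)^-(\ell(V_*))$ in $M_Z$, and then recovering $Z^-(U_*)=U$ by factoring $\hat\ell\circ k$ through $m$---is the natural one and goes through cleanly. The only step worth a second look is the range inclusion $\mathrm{ran}(\hat\ell\circ k)\subseteq\mathrm{ran}(m)$, and your computation $\hat\ell\bigl(k(j_U(f)([\mathrm{id}]_U))\bigr)=m\bigl(j^{M_Z}_{U_*}(j_Z(f))([\mathrm{id}]_{U_*})\bigr)$ handles it; the parenthetical about underlying ordinals is unnecessary, since the ``if and only if'' in \cref{LimitEmbeddings} already delivers the equality $Z^-(U_*)=U$ directly.
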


We state without proof a very basic fact characterizing the simple relationship between the space of an ultrafilter and the seed order.

\begin{lma}\label{SpaceLemma}
Suppose \(U,W\in \textnormal{Un}\). If \(\textsc{sp}(U) < \textsc{sp}(W)\) then \(U\swo W\).
\end{lma}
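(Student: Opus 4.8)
The plan is to produce, from an ultrafilter $W$ with $\textsc{sp}(W)$ large, a witness to $U\swo W$ directly, using that $\textsc{sp}(U)<\textsc{sp}(W)$ forces the seed of $U$ to be ``too small'' to interfere. Write $\delta = \textsc{sp}(U)$ and $\gamma = \textsc{sp}(W)$, so $\delta<\gamma$. By \cref{LimitEmbeddings} and the definition of $\swo$, it suffices to find $U_*\in\textnormal{Un}^{M_W}$ with $\textsc{sp}(U_*)\le[\textnormal{id}]_W$ such that $U = W^-(U_*)$, equivalently (via \cref{SeedOrderChar}) an elementary $k: M_U\to M^{M_W}_{U_*}$ with $k\circ j_U = j^{M_W}_{U_*}\circ j_W$ and $k([\textnormal{id}]_U)<j^{M_W}_{U_*}([\textnormal{id}]_W)$.

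The natural candidate is $U_* = j_W(U)$, the image of $U$ under $j_W$, which is a uniform countably complete $M_W$-ultrafilter on $j_W(\delta)$; since $\delta<\gamma$, elementarity and uniformity of $W$ give $j_W(\delta)\le [\textnormal{id}]_W < j_W(\gamma)$, so the space condition $\textsc{sp}(U_*) = j_W(\delta)\le[\textnormal{id}]_W$ holds. First I would check that $W^-(j_W(U)) = U$: for $A\subseteq\delta$, we have $j_W(A)\cap j_W(\delta) = j_W(A) = j_W(A)$, and $j_W(A)\in j_W(U)$ iff $A\in U$ by elementarity, so indeed $W^-(j_W(U)) = U$ by \cref{Limit}. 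Then I would identify the embedding $k$: by \cref{LimitEmbeddings} (applied with $U_* = j_W(U)$), there is $k: M_U\to M^{M_W}_{j_W(U)} = M_{j_W(U)}$ with $k\circ j_U = j^{M_W}_{j_W(U)}\circ j_W$ and $k([\textnormal{id}]_U) = [\textnormal{id}]_{j_W(U)}$. In fact $M^{M_W}_{j_W(U)}$ is just $j_W(M_U)$ and $k = j_W\restriction M_U$ is the restriction of $j_W$, which makes the verification transparent.

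The one real point to nail down is the strict inequality $k([\textnormal{id}]_U) < j^{M_W}_{j_W(U)}([\textnormal{id}]_W)$, i.e. $[\textnormal{id}]_{j_W(U)} < j^{M_W}_{j_W(U)}([\textnormal{id}]_W)$. Now $[\textnormal{id}]_{j_W(U)}$ is the seed of $j_W(U)$, which lives on $j_W(\delta)$, so it is below $j^{M_W}_{j_W(U)}(j_W(\delta))$; and $j^{M_W}_{j_W(U)}(j_W(\delta)) = j^{M_W}_{j_W(U)}(\,j_W(\delta)\,)$, whereas $j^{M_W}_{j_W(U)}([\textnormal{id}]_W)$ is the image of the seed of $W$, which sits on $\gamma$ hence above $j_W(\delta)$ (as $\delta<\gamma$ gives $j_W(\delta)\le[\textnormal{id}]_W$, with equality impossible since $[\textnormal{id}]_W$ is not in the range of $j_W$ when $W$ is nonprincipal, and the principal case is immediate). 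Applying the monotone embedding $j^{M_W}_{j_W(U)}$ preserves this, giving the strict inequality. I expect this last comparison of seeds — making sure $j_W(\delta)\le[\textnormal{id}]_W$ is actually strict in the relevant sense after applying $j^{M_W}_{j_W(U)}$, and handling the principal-ultrafilter edge case cleanly — to be the only place requiring care; everything else is a direct unwinding of \cref{Limit}, \cref{LimitEmbeddings}, and \cref{SeedOrderChar}.
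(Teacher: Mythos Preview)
The paper states this lemma without proof, so there is nothing to compare against; your argument via $U_* = j_W(U)$ is correct and is the natural one. Note, however, that your final paragraph is unnecessary: once you have verified that $U_*=j_W(U)\in\textnormal{Un}^{M_W}$, that $\textsc{sp}(U_*)=j_W(\delta)\le[\textnormal{id}]_W$ (from uniformity of $W$ and $\delta<\gamma$), and that $W^-(j_W(U))=U$, the \emph{definition} of $\swo$ is satisfied and you are done --- there is no need to pass through \cref{SeedOrderChar} and chase the strict inequality of seeds.
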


As an easy corollary of \cref{TechnicalLemma} and \cref{SpaceLemma} one can prove the main structural fact about the seed order.

\begin{thm}
The seed order is a set-like wellfounded strict partial order.
\end{thm}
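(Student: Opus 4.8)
The theorem asserts three properties of the seed order $\swo$: that it is set-like, wellfounded, and a strict partial order (i.e.\ irreflexive and transitive). The set-likeness is essentially immediate: by \cref{SpaceLemma}, any $U$ with $U \swo W$ satisfies $\textsc{sp}(U) \leq \textsc{sp}(W)$ (strict inequality forces $U \swo W$, and equality is the only other option compatible with $U \swo W$), so the predecessors of $W$ form a subset of $\bigcup_{\delta \leq \textsc{sp}(W)} \textnormal{Un}_\delta$, a set. Irreflexivity is also quick from \cref{SeedOrderChar}: if $U \swo U$ were witnessed by $k : M_U \to M^{M_U}_{U_*}$ with $k([\textnormal{id}]_U) < j^{M_U}_{U_*}([\textnormal{id}]_U)$, one derives a contradiction by noting that $k \circ j_U = j^{M_U}_{U_*} \circ j_U$ forces $k$ to fix everything in the range of $j_U$, and then an argument about the critical structure (or a minimality/descent argument on $[\textnormal{id}]_U$) rules out $k([\textnormal{id}]_U)$ lying strictly below its image. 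The two substantive parts are transitivity and wellfoundedness, and both are meant to follow from \cref{TechnicalLemma} together with \cref{SpaceLemma}.

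\emph{Transitivity.} Suppose $U \swo W \swo Z$. Unfolding the definition, $W = Z^-(W_*)$ for some $W_* \in \textnormal{Un}^{M_Z}$ with $\textsc{sp}(W_*) \leq [\textnormal{id}]_Z$. Since $U \swo W$, \cref{TechnicalLemma} (applied with the roles $U, W, Z$ of the lemma instantiated by $U, W, Z$ here) yields $U_* \swo^{M_Z} W_*$ with $Z^-(U_*) = U$. Now I need $\textsc{sp}(U_*) \leq [\textnormal{id}]_Z$ to conclude $U \swo Z$. Since $U_* \swo^{M_Z} W_*$ holds inside $M_Z$, \cref{SpaceLemma} applied in $M_Z$ gives $\textsc{sp}(U_*) \leq \textsc{sp}(W_*)$ — more precisely, either $\textsc{sp}(U_*) < \textsc{sp}(W_*)$, or (the case $\textsc{sp}(U_*) = \textsc{sp}(W_*)$ being the alternative that does not contradict $U_* \swo^{M_Z} W_*$ via \cref{SpaceLemma}) equality, and in $M_Z$ a seed-order relation forces the space to be $\leq$; combined with $\textsc{sp}(W_*) \leq [\textnormal{id}]_Z$ this gives $\textsc{sp}(U_*) \leq [\textnormal{id}]_Z$, so $U \swo Z$ by definition.

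\emph{Wellfoundedness.} Suppose toward a contradiction that $\cdots \swo U_2 \swo U_1 \swo U_0$ is an infinite descending chain. Fix $Z = U_0$. Writing $U_1 = Z^-(W_*^{(1)})$ from $U_1 \swo Z$, repeated application of \cref{TechnicalLemma} down the chain produces, inside $M_Z$, an infinite $\swo^{M_Z}$-descending sequence of ultrafilters $\cdots \swo^{M_Z} U_2^* \swo^{M_Z} U_1^*$ with $Z^-(U_n^*) = U_n$; by $\Sigma_1$-elementarity of $j_Z$ (or simply because $M_Z$ is an inner model and the statement ``there is an infinite $\swo$-descending sequence" is absolute downward in the relevant sense), $M_Z$ sees an infinite $\swo^{M_Z}$-descending chain. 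Inducting, one can push this into $M_Z$-internal ultrapowers and obtain an infinite descending chain of ordinals (the spaces of the iterated $U_n^*$, which are non-increasing by \cref{SpaceLemma} and must eventually be constant, after which one runs the argument again inside a fixed $M_{U_n^*}$ to drop the seed point $[\textnormal{id}]$, contradicting the wellfoundedness of the ordinals). The cleanest packaging: by \cref{SpaceLemma} the spaces $\textsc{sp}(U_n)$ are non-increasing along the chain, hence eventually equal to some fixed $\delta$; restricting to the tail, all $U_n \in \textnormal{Un}_\delta$, and then \cref{TechnicalLemma} applied with a fixed $Z = U_N$ yields $[\textnormal{id}]_{U_N} > [\textnormal{id}]_{U_{N+1}^*} > \cdots$ as an infinite descending sequence of ordinals in $M_{U_N}$, the desired contradiction.

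\emph{Main obstacle.} The delicate point is the wellfoundedness argument: making precise how \cref{TechnicalLemma} is iterated along the chain while keeping track that the relevant "seeds" $[\textnormal{id}]$ are genuinely decreasing ordinals, and verifying that after the spaces stabilize one really does get a strictly decreasing sequence of ordinals rather than a merely non-increasing one. The subtlety is that \cref{TechnicalLemma} only promises $U_* \swo^{M_Z} W_*$, and one must extract from this (via \cref{SeedOrderChar} inside $M_Z$) the strict inequality of seed points that drives the descent; one also must be careful that passing to the tail where spaces are constant is legitimate and that the ambient model in which the ordinal descent lives ($M_{U_N}$) is fixed once and for all. Everything else — set-likeness, irreflexivity, transitivity — is routine bookkeeping with the definitions and \cref{SpaceLemma,SeedOrderChar}.
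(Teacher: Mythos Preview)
Your overall strategy matches the paper's: it explicitly says the theorem is an easy corollary of \cref{TechnicalLemma} and \cref{SpaceLemma}, and your set-likeness and transitivity arguments carry this out correctly. (For transitivity, note that $U_*\swo^{M_Z} W_*$ already forces $\textsc{sp}(U_*)\le\textsc{sp}(W_*)$ straight from the definition of $\swo$, so this step is fine.)

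The gap is in the wellfoundedness argument. After pushing the tail of the chain into $M_Z=M_{U_N}$ via \cref{TechnicalLemma}, you write that one obtains
\[
[\textnormal{id}]_{U_N} > [\textnormal{id}]_{U_{N+1}^*} > [\textnormal{id}]_{U_{N+2}^*} > \cdots
\]
as a descending sequence of ordinals ``in $M_{U_N}$''. But these ordinals do not live in a common model: $[\textnormal{id}]_{U_{N+1}^*}$ lies in $M^{M_{U_N}}_{U_{N+1}^*}$, while $[\textnormal{id}]_{U_{N+2}^*}$ lies in $M^{M_{U_N}}_{U_{N+2}^*}$, and nothing you have said compares them. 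Replacing seeds by spaces does not help either: the $\textsc{sp}(U_n^*)$ are only nonincreasing, and your proposed infinite regress (``run the argument again inside a fixed $M_{U_n^*}$'') never terminates in an actual descent of ordinals.

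The clean repair is to argue by induction on $\delta$ that $\swo$ is wellfounded on $\textnormal{Un}_{\le\delta}$. By set-likeness plus the induction hypothesis there is a rank function $\rho:\textnormal{Un}_{<\delta}\to\textnormal{Ord}$. Now suppose a descending chain exists in $\textnormal{Un}_{\le\delta}$; as you observed, one may assume all $U_n\in\textnormal{Un}_\delta$. Pushing into $M_{U_0}$ via \cref{TechnicalLemma} gives a $\swo^{M_{U_0}}$-descending sequence $\langle U_n^*\rangle$ with $\textsc{sp}^{M_{U_0}}(U_n^*)\le[\textnormal{id}]_{U_0}<j_{U_0}(\delta)$. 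By elementarity, $j_{U_0}(\rho)$ is a rank function for $\swo^{M_{U_0}}$ on $\textnormal{Un}^{M_{U_0}}_{<j_{U_0}(\delta)}$, and now $j_{U_0}(\rho)(U_1^*)>j_{U_0}(\rho)(U_2^*)>\cdots$ is a genuine infinite descending sequence of ordinals (in $V$), the desired contradiction. This is presumably what the paper has in mind; your sketch had all the right ingredients except this last step.
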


\subsection{The Ultrapower Axiom}
The Ultrapower Axiom is an abstract comparison principle motivated by the comparison process of inner model theory.

\begin{ua}
For any countably complete ultrafilters \(U\) and \(W\), there exist countably complete ultrafilters \(W_*\) and \(U_*\) of \(M_U\) and \(M_W\) respectively such that the following hold:
\begin{align*}
M^{M_U}_{W_*} &= M^{M_W}_{U_*}\\
j^{M_U}_{W_*}\circ j_U &= j^{M_W}_{U_*}\circ j_W 
\end{align*}
\end{ua}

The Ultrapower Axiom holds in all known canonical inner models and is expected to hold in canonical inner models with supercompact cardinals if such models exist.

The key consequence of the Ultrapower Axiom is the linearity of the seed order, which is an immediate consequence of \cref{SeedOrderChar}. Moreover the characterization of the seed order in terms of elementary embeddings becomes fully symmetric:

\begin{prp}[UA]\label{Linearity}
The seed order is linear. In fact if \(U,W\in \textnormal{Un}\), then \(U \swo W\) if and only if there are countably complete ultrafilters \(W_*\) and \(U_*\) of \(M_U\) and \(M_W\) respectively such that the following hold:
\begin{align*}
M^{M_U}_{W_*} &= M^{M_W}_{U_*}\\
j^{M_U}_{W_*}\circ j_U &= j^{M_W}_{U_*}\circ j_W\\
j^{M_U}_{W_*}([\textnormal{id}]_U) &< j^{M_W}_{U_*}([\textnormal{id}]_W) 
\end{align*}
\end{prp}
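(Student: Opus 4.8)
The plan is to use the Ultrapower Axiom to amalgamate \(M_U\) and \(M_W\) over a common model and then to recover both halves of the statement by comparing, inside that model, the images of the two seeds \([\textnormal{id}]_U\) and \([\textnormal{id}]_W\). Concretely, I would fix \(U,W\in\textnormal{Un}\), apply UA to obtain countably complete ultrafilters \(W_0\) of \(M_U\) and \(U_0\) of \(M_W\) with \(N:=M^{M_U}_{W_0}=M^{M_W}_{U_0}\) and \(i_0\circ j_U=i_1\circ j_W\), where \(i_0:=j^{M_U}_{W_0}\) and \(i_1:=j^{M_W}_{U_0}\). Because \(U\) and \(W\) are uniform ultrafilters on ordinals, \(i_0([\textnormal{id}]_U)\) and \(i_1([\textnormal{id}]_W)\) are ordinals of \(N\), hence comparable, and I would establish the trichotomy: \(i_0([\textnormal{id}]_U)<i_1([\textnormal{id}]_W)\) implies \(U\swo W\); \(i_0([\textnormal{id}]_U)=i_1([\textnormal{id}]_W)\) implies \(U=W\); and \(i_0([\textnormal{id}]_U)>i_1([\textnormal{id}]_W)\) implies \(W\swo U\).

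Granting the trichotomy, everything follows quickly. Linearity is immediate, since the two seeds are comparable ordinals. For the backward direction of the ``in fact'' clause: if \(W_*\) and \(U_*\) satisfy the three displayed conditions, then \(N=M^{M_U}_{W_*}=M^{M_W}_{U_*}\) together with \(i_0=j^{M_U}_{W_*}\), \(i_1=j^{M_W}_{U_*}\) is exactly an amalgam as above falling under the first case of the trichotomy, so \(U\swo W\). For the forward direction: assuming \(U\swo W\), the fact (provable in ZFC) that \(\swo\) is a strict partial order rules out the possibilities \(U=W\) and \(W\swo U\), so the amalgam produced by UA must fall under the first case; but then \(W_0\) and \(U_0\) already witness the three displayed conditions — the first two by UA and the third because we are in the first case.

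To prove the trichotomy, the third case is symmetric to the first, so consider \(s:=i_0([\textnormal{id}]_U)<i_1([\textnormal{id}]_W)\). Since \(i_1\) is an ultrapower embedding of \(M_W\), the {\L}o\'s's theorem lets me write \(s=i_1(f)([\textnormal{id}]_{U_0})\) for some \(f\in M_W\), which (as \(s\) is an ordinal below \(i_1([\textnormal{id}]_W)\)) I may take to map into an ordinal \(\eta_0\leq[\textnormal{id}]_W\) of \(M_W\); then \(U_*:=f_*(U_0)\) is a uniform countably complete ultrafilter of \(M_W\) on \(\eta_0\), and the canonical factor embedding \(e:M^{M_W}_{U_*}\to N\) satisfies \(e\circ j^{M_W}_{U_*}=i_1\) and \(e([\textnormal{id}]_{U_*})=s\). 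A short computation shows \(i_0[M_U]\subseteq e[M^{M_W}_{U_*}]\), so \(k:=e^{-1}\circ i_0\) is a well-defined elementary embedding of \(M_U\) into \(M^{M_W}_{U_*}\) with \(k\circ j_U=j^{M_W}_{U_*}\circ j_W\) and \(k([\textnormal{id}]_U)=[\textnormal{id}]_{U_*}<j^{M_W}_{U_*}([\textnormal{id}]_W)\), and \cref{SeedOrderChar} gives \(U\swo W\).

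The middle case — \(i_0([\textnormal{id}]_U)=i_1([\textnormal{id}]_W)=:s\) — is the real content. Using \(M_U=\{j_U(h)([\textnormal{id}]_U):h\in V\}\) and the commuting square, I would compute \(i_0[M_U]=\{i_1(j_W(h))(s):h\in V\}\), and the identical computation on the other side gives the same set for \(i_1[M_W]\) (here one uses that \(s\) lies below the common image of \(j_U(\textsc{sp}(U))\) and of \(j_W(\textsc{sp}(W))\) to absorb the mismatch of domains). So \(i_0[M_U]=i_1[M_W]\) as elementary substructures of \(N\); taking transitive collapses forces \(M_U=M_W\) and \(i_0=i_1\), hence \(j_U=j_W\) and \([\textnormal{id}]_U=[\textnormal{id}]_W\), and a short argument with uniformity identifies \(\textsc{sp}(U)\) with \(\textsc{sp}(W)\), whence \(U=\{A:[\textnormal{id}]_U\in j_U(A)\}=W\). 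I expect this middle case to be the main obstacle: it amounts to the assertion that a uniform ultrafilter on an ordinal is determined by the value its seed takes under any elementary embedding compatible with the identity on \(V\), which is precisely the phenomenon that makes the seed order linear under UA.
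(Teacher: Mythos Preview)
Your proposal is correct and follows the same route the paper has in mind: apply UA to obtain an amalgam, compare the images of the two seeds as ordinals, and read off the trichotomy via \cref{SeedOrderChar}. The paper does not spell out the proof, calling it an immediate consequence of \cref{SeedOrderChar}, so your write-up is in fact more detailed than what appears there, and your handling of the equality case (showing $i_0[M_U]=i_1[M_W]$ as Skolem hulls of the common seed) is exactly the standard argument.

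One small inefficiency: in the strict-inequality case you take the extra step of deriving a smaller $U_*$ from $U_0$ to arrange $\textsc{sp}(U_*)\le[\textnormal{id}]_W$ before invoking \cref{SeedOrderChar}. This is unnecessary, since \cref{SeedOrderChar} places no constraint on $\textsc{sp}(U_*)$; you may simply take $U_*=U_0$ and $k=i_0$, and the hypothesis $k([\textnormal{id}]_U)=i_0([\textnormal{id}]_U)<i_1([\textnormal{id}]_W)=j^{M_W}_{U_0}([\textnormal{id}]_W)$ is already in hand.
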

By \cref{Commutativity} below, one can further simplify the definition of the seed order under UA by removing the commutativity requirement.
\section{Ultrafilter Theory under UA}\label{BasicTheory}
\subsection{Reciprocity}
In this section we prove the converse of \cref{Linearity}: the linearity of the seed order implies the Ultrapower Axiom. This shows the equivalence between comparison for ultrafilters on the one hand and a combinatorial generalization of the linearity of the Mitchell order on the other.

The proof introduces the very useful concept of a {\it translation function}.

\begin{defn}
Assume the seed order is linear. We associate to each countably complete ultrafilter \(U\) a {\it translation function} \(t_U: \textnormal{Un}\to \textnormal{Un}^{M_U}\) as follows:

For any \(W\in \textnormal{Un}\), \(t_U(W)\) denotes the \(\swo^{M_U}\)-least \(W_*\in \textnormal{Un}^{M_U}\) such that \(U^-(W_*) = W\).
\end{defn}

It is convenient to define an operation \(\oplus\) with the property that for any \(U\in \textnormal{Un}\) and \(W\in \textnormal{Un}^{M_W}\), \(U\oplus W\in \textnormal{Un}\) and \(j_{U\oplus W} = j^{M_U}_W\circ j_U\). (The usual ultrafilter sum operation does not have range contained in \(\textnormal{Un}\).) There are various ways in which one could do this, and our choice is motivated mostly by the desire that this operation work smoothly with the seed order; see for example \cref{SumSeed}.

\begin{defn}
For \(\alpha,\beta\in \text{Ord}\), \(\alpha\oplus \beta\) denotes the {\it natural sum} of \(\alpha\) and \(\beta\), which is obtained as follows:

First write \(\alpha\) and \(\beta\) in Cantor normal form: \begin{align*}\alpha &= \sum_{\xi\in \text{Ord}} \omega^{\xi}\cdot m_\xi\\ \beta &= \sum_{\xi\in \text{Ord}} \omega^{\xi}\cdot n_\xi\end{align*} where \(m_\xi,n_\xi < \omega\) are equal to \(0\) for all but finitely many \(\xi \in \text{Ord}\). Then 
\begin{align*}\alpha \oplus \beta &= \sum_{\xi\in \text{Ord}} \omega^{\xi}\cdot (m_\xi + n_\xi)\end{align*}
\end{defn}

In other words one adds the Cantor normal forms of \(\alpha\) and \(\beta\) as polynomials.

The fact that natural addition is commutative and associative follows easily from the corresponding facts for addition of natural numbers. We mostly need the following triviality:
\begin{lma}\label{OrderSum}
If \(\alpha_0< \alpha_1\) and \(\beta\) are ordinals, then \(\alpha_0 \oplus \beta < \alpha_1 \oplus \beta\).
\end{lma}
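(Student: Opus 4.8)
The plan is to reduce the lemma to the familiar comparison criterion for Cantor normal forms, which makes the statement essentially immediate. Recall that criterion: if ordinals are written in Cantor normal form $\gamma = \sum_{\xi} \omega^\xi \cdot k_\xi$ and $\gamma' = \sum_\xi \omega^\xi \cdot k'_\xi$ (only finitely many coefficients nonzero), then $\gamma < \gamma'$ if and only if there is an exponent $\xi_0$ with $k_{\xi_0} < k'_{\xi_0}$ while $k_\eta = k'_\eta$ for every $\eta > \xi_0$. I would note in passing that the nontrivial direction of this criterion follows at once from the additive indecomposability of powers of $\omega$: writing $H = \sum_{\eta > \xi_0} \omega^\eta k_\eta = \sum_{\eta > \xi_0}\omega^\eta k'_\eta$ for the common head and observing that the tails $\sum_{\eta < \xi_0}\omega^\eta k_\eta$ and $\sum_{\eta<\xi_0}\omega^\eta k'_\eta$ are each below $\omega^{\xi_0}$, one gets
\[ \gamma < H + \omega^{\xi_0}\cdot(k_{\xi_0}+1) \leq H + \omega^{\xi_0}\cdot k'_{\xi_0} \leq \gamma'. \]

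With this in hand the lemma is a one-line computation. Write the Cantor normal forms $\alpha_0 = \sum_\xi \omega^\xi m_\xi$, $\alpha_1 = \sum_\xi \omega^\xi m'_\xi$, and $\beta = \sum_\xi \omega^\xi n_\xi$. Since $\alpha_0 < \alpha_1$, the criterion gives an exponent $\xi_0$ with $m_{\xi_0} < m'_{\xi_0}$ and $m_\eta = m'_\eta$ for all $\eta > \xi_0$. By definition of the natural sum, the coefficient of $\omega^\eta$ in $\alpha_0 \oplus \beta$ is $m_\eta + n_\eta$ and in $\alpha_1 \oplus \beta$ is $m'_\eta + n_\eta$. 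For $\eta > \xi_0$ these coincide, while for $\eta = \xi_0$ we have $m_{\xi_0} + n_{\xi_0} < m'_{\xi_0} + n_{\xi_0}$. Applying the comparison criterion to $\alpha_0 \oplus \beta$ and $\alpha_1 \oplus \beta$ — whose Cantor normal forms are exactly the two sequences of coefficients just described — yields $\alpha_0 \oplus \beta < \alpha_1 \oplus \beta$.

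There is no real obstacle here; consistent with the paper calling it a triviality, the only point needing a word of care is that the coefficientwise sums do constitute the Cantor normal form of $\alpha_i \oplus \beta$, i.e. that adding the coefficients cannot produce a coefficient $\geq \omega$ — immediate since each $m_\xi, m'_\xi, n_\xi$ is a natural number and only finitely many are nonzero. (One could instead argue by induction on $\alpha_1$ via the recursion $\alpha \oplus \beta = \sup\{(\alpha'\oplus\beta)+1,\ (\alpha\oplus\beta')+1 : \alpha'<\alpha,\ \beta'<\beta\}$, but verifying that this recursion agrees with the coefficientwise definition is more trouble than the direct argument above.)
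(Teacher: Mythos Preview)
Your argument is correct: the comparison criterion for Cantor normal forms immediately yields strict monotonicity of the natural sum in each argument, and your verification that the coefficientwise sums give the Cantor normal form of $\alpha_i\oplus\beta$ is the only point requiring care. The paper itself offers no proof, simply labeling the lemma a ``triviality'' and stating it without argument, so there is nothing to compare; your write-up is more than the paper provides.
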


\begin{defn}
If \(U\in \textnormal{Un}\) and \(W_*\in \textnormal{Un}^{M_U}\) then the {\it natural sum of \(U\) and \(W_*\)}, denoted \(U\oplus W_*\), is the uniform ultrafilter derived from \(j^{M_U}_{W_*}\circ j_U\) using \([\text{id}]^{M_U}_{W_*}\oplus j^{M_U}_{W_*}([\text{id}]_U)\). 
\end{defn}

The next lemma says that the natural sum of ultrafilters is Rudin-Keisler equivalent to the usual sum of ultrafilters.

\begin{lma}\label{UltrafilterSum} For any \(U\in \textnormal{Un}\) and \(W_*\in \textnormal{Un}^{M_U}\), \(M_{U\oplus W_*} = M^{M_U}_{W_*}\) and \(j_{U\oplus W_*} = j^{M_U}_{W_*}\circ j_U\).\end{lma}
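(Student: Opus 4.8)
The plan is to run the general factor‑map analysis of derived ultrafilters. Recall that if \(j\colon V\to M\) is an internal ultrapower embedding and \(s\) is an ordinal of \(M\), then the uniform ultrafilter \(D\) derived from \(j\) using \(s\) (on the least ordinal \(\eta\) with \(s<j(\eta)\)) comes equipped with a canonical factor embedding \(k\colon M_D\to M\) satisfying \(k\circ j_D=j\) and \(k([\textnormal{id}]_D)=s\); this \(k\) is automatically injective, so \(M_D=M\) and \(j_D=j\) (after identifying \(M_D\) with its transitive collapse) precisely when \(k\) is surjective, i.e.\ when \(M=\{j(f)(s):f\in V\}\). Applying this with \(j=j^{M_U}_{W_*}\circ j_U\), with \(M=M^{M_U}_{W_*}\), with \(s=[\textnormal{id}]^{M_U}_{W_*}\oplus j^{M_U}_{W_*}([\textnormal{id}]_U)\), and with \(D=U\oplus W_*\), the lemma is reduced to showing that \(s\) generates \(M^{M_U}_{W_*}\) over \(\textnormal{ran}(j^{M_U}_{W_*}\circ j_U)\).

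Next I would pin down what \(M^{M_U}_{W_*}\) is generated by, using two successive applications of {\L}o\'s's theorem: first for \(W_*\) over \(M_U\), and then for \(U\) over \(V\). This shows that every element of \(M^{M_U}_{W_*}\) has the form \((j^{M_U}_{W_*}\circ j_U)(f)\bigl(j^{M_U}_{W_*}([\textnormal{id}]_U),\,[\textnormal{id}]^{M_U}_{W_*}\bigr)\) for some \(f\in V\); equivalently, \(M^{M_U}_{W_*}\) is generated over \(\textnormal{ran}(j^{M_U}_{W_*}\circ j_U)\) by the single pair of ordinals \(\bigl(j^{M_U}_{W_*}([\textnormal{id}]_U),\,[\textnormal{id}]^{M_U}_{W_*}\bigr)\). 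Hence it suffices to recover this pair from the one ordinal \(s\): that is, to produce a function \(\pi\in V\) with \((j^{M_U}_{W_*}\circ j_U)(\pi)(s)=\bigl(j^{M_U}_{W_*}([\textnormal{id}]_U),\,[\textnormal{id}]^{M_U}_{W_*}\bigr)\). Once \(s\), and therefore both coordinates of this pair, lie in the Skolem hull of \(\textnormal{ran}(j^{M_U}_{W_*}\circ j_U)\cup\{s\}\), closure under the maps \((j^{M_U}_{W_*}\circ j_U)(f)\) yields all of \(M^{M_U}_{W_*}\); this also identifies \([\textnormal{id}]_{U\oplus W_*}\) with \(s\), which is the shape in which the lemma will be used later.

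The heart of the matter — and the step I expect to be the main obstacle — is the recovery of the two \(\oplus\)-summands of \(s\). Here one uses uniformity of \(U\): since \([\textnormal{id}]_U<j_U(\textsc{sp}(U))\), the summand \(j^{M_U}_{W_*}([\textnormal{id}]_U)\) is bounded by \(j^{M_U}_{W_*}\bigl(j_U(\textsc{sp}(U))\bigr)=(j^{M_U}_{W_*}\circ j_U)(\textsc{sp}(U))\), an ordinal lying in \(\textnormal{ran}(j^{M_U}_{W_*}\circ j_U)\). Because \(\oplus\) is computed by adding Cantor normal forms, and because Cantor normal form is absolute, one can take \(\pi\) to depend only on \(\textsc{sp}(U)\) together with the finite Cantor‑normal‑form data of \([\textnormal{id}]_U\) — data which is preserved by \(j_U\) and by \(j^{M_U}_{W_*}\) — and argue by a direct ordinal‑arithmetic computation that \(\pi\), applied term by term to the Cantor normal form of \(s\), separates off exactly the contribution of \(j^{M_U}_{W_*}([\textnormal{id}]_U)\), leaving \([\textnormal{id}]^{M_U}_{W_*}\); the desired identity \(\pi(s)=\bigl(j^{M_U}_{W_*}([\textnormal{id}]_U),\,[\textnormal{id}]^{M_U}_{W_*}\bigr)\) then follows by {\L}o\'s's theorem. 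The delicate point is that the two summands may have overlapping exponents, so the bookkeeping of which Cantor‑normal‑form terms (and how much of their coefficients) are assigned to each summand must be driven entirely by information available below the embeddings; this is precisely where the uniformity bound on \(j^{M_U}_{W_*}([\textnormal{id}]_U)\) and the particular choice of the operation \(\oplus\) are needed, and it is the part of the argument requiring the most care.
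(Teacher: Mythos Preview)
The paper states this lemma without proof, so there is nothing to compare against; your overall strategy---factor-map analysis reducing to the claim that \(s=[\text{id}]^{M_U}_{W_*}\oplus j^{M_U}_{W_*}([\text{id}]_U)\) generates \(M^{M_U}_{W_*}\) over \(\text{ran}(j^{M_U}_{W_*}\circ j_U)\), and then recovering the pair \((a,b)=\bigl(j^{M_U}_{W_*}([\text{id}]_U),\,[\text{id}]^{M_U}_{W_*}\bigr)\) from \(s\) inside the hull---is correct.

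The gap is in the recovery step. You propose building \(\pi\in V\) from ``the finite Cantor-normal-form data of \([\text{id}]_U\),'' but \([\text{id}]_U\) lives in \(M_U\), not in \(V\); its Cantor exponents are ordinals of \(M_U\) and cannot serve as parameters for a function constructed in \(V\). The phrase ``data which is preserved by \(j_U\)'' does not parse, since \([\text{id}]_U\) is not in the range of \(j_U\). And the uniformity bound \(a<(j^{M_U}_{W_*}\circ j_U)(\textsc{sp}(U))\) does not by itself single out the correct decomposition of \(s\): many of the candidate pairs can satisfy the same bound. So as written you have identified the obstacle but not actually produced \(\pi\).

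The fix is much simpler than the bookkeeping you anticipate. For any ordinal \(\gamma\), the set \(\{(\alpha,\beta):\alpha\oplus\beta=\gamma\}\) is \emph{finite}: if \(\gamma=\sum_i\omega^{\eta_i}k_i\) in Cantor normal form, there are exactly \(\prod_i(k_i+1)\) such pairs, obtained by splitting each coefficient as a sum of two naturals. Since Cantor normal form is absolute, \(M^{M_U}_{W_*}\) sees the same finite set, and each of its elements is definable there from \(s\) alone (say as the \(n\)-th pair in lexicographic order). Hence every such pair---in particular \((a,b)\)---already lies in any elementary hull containing \(s\), and you are done. No parameter from \(V\), no tracking of overlapping exponents, and no uniformity bound is needed; finiteness of the fibers of \(\oplus\) is precisely what makes this choice of seed work.
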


Natural sums also interact quite simply with the seed order:

\begin{lma}\label{SumSeed}
Suppose \(U\in \textnormal{Un}\). Suppose \(W_0,W_1\in \textnormal{Un}^{M_U}\). Then \(W_0\swo^{M_U} W_1\) if and only if \(U\oplus W_0 \swo U\oplus W_1\).
\end{lma}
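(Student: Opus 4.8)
The plan is to transfer the embedding characterization of the seed order, \cref{SeedOrderChar}, across the operation $\oplus$. The only subtlety is the extra summand $j^{M_U}_W([\textnormal{id}]_U)$ in the seed of $U\oplus W$, which gets absorbed using the strict monotonicity of natural addition, \cref{OrderSum}. The forward direction transfers a comparison computed inside $M_U$ up to $V$; the reverse direction descends a comparison from $V$ into $M_U$ by means of \cref{TechnicalLemma}.

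\emph{Forward direction.} Assume $W_0\swo^{M_U} W_1$, and write $P=M^{M_U}_{W_1}$. By \cref{SeedOrderChar} applied inside $M_U$, fix $W_{0*}\in\textnormal{Un}^{P}$ and an elementary $k\colon M^{M_U}_{W_0}\to M^{P}_{W_{0*}}$ with $k\circ j^{M_U}_{W_0}=j^{P}_{W_{0*}}\circ j^{M_U}_{W_1}$ and $k([\textnormal{id}]_{W_0})<j^{P}_{W_{0*}}([\textnormal{id}]_{W_1})$. By \cref{UltrafilterSum}, $M_{U\oplus W_i}=M^{M_U}_{W_i}$ and $j_{U\oplus W_i}=j^{M_U}_{W_i}\circ j_U$; in particular $P=M_{U\oplus W_1}$, and $[\textnormal{id}]_{U\oplus W_i}=[\textnormal{id}]_{W_i}\oplus j^{M_U}_{W_i}([\textnormal{id}]_U)$. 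Composing the first displayed equation with $j_U$ gives $k\circ j_{U\oplus W_0}=j^{P}_{W_{0*}}\circ j_{U\oplus W_1}$. Moreover, since $k\circ j^{M_U}_{W_0}=j^{P}_{W_{0*}}\circ j^{M_U}_{W_1}$ holds on all of $M_U$, in particular at $[\textnormal{id}]_U$, the ordinal $q:=k(j^{M_U}_{W_0}([\textnormal{id}]_U))=j^{P}_{W_{0*}}(j^{M_U}_{W_1}([\textnormal{id}]_U))$ is the common image of the $j^{M_U}_{W_i}([\textnormal{id}]_U)$ summand. Hence $k([\textnormal{id}]_{U\oplus W_0})=k([\textnormal{id}]_{W_0})\oplus q$ and $j^{P}_{W_{0*}}([\textnormal{id}]_{U\oplus W_1})=j^{P}_{W_{0*}}([\textnormal{id}]_{W_1})\oplus q$, so by \cref{OrderSum} the inequality $k([\textnormal{id}]_{U\oplus W_0})<j^{P}_{W_{0*}}([\textnormal{id}]_{U\oplus W_1})$ is equivalent to $k([\textnormal{id}]_{W_0})<j^{P}_{W_{0*}}([\textnormal{id}]_{W_1})$, which holds. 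Thus $W_{0*}$ and $k$ witness $U\oplus W_0\swo U\oplus W_1$ via \cref{SeedOrderChar}.

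\emph{Reverse direction.} Assume $U\oplus W_0\swo U\oplus W_1$. For $i\in\{0,1\}$ let $W_i^\sharp\in\textnormal{Un}^{M_U}$ be the ultrafilter derived from the embedding $j^{M_U}_{W_i}$ using the seed $[\textnormal{id}]_{W_i}\oplus j^{M_U}_{W_i}([\textnormal{id}]_U)$ of $U\oplus W_i$ (on the least ordinal making it uniform). Since $[\textnormal{id}]_{W_i}$ is recovered from this seed by subtracting the ordinal $j^{M_U}_{W_i}([\textnormal{id}]_U)\in\textnormal{ran}(j^{M_U}_{W_i})$ in Cantor normal form, the transitive collapse defining $M^{M_U}_{W_i^\sharp}$ is trivial, so $M^{M_U}_{W_i^\sharp}=M^{M_U}_{W_i}$ and $j^{M_U}_{W_i^\sharp}=j^{M_U}_{W_i}$; using \cref{LimitEmbeddings} and \cref{UltrafilterSum} one then checks that $U^-(W_i^\sharp)=U\oplus W_i$, while $W_i^\sharp$ has the same ultrapower and ultrapower embedding as $W_i$, only with its seed shifted to $[\textnormal{id}]_{U\oplus W_i}$. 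The cancellation from the forward direction applies verbatim---in any comparison of $W_0^\sharp$ and $W_1^\sharp$ inside $M_U$ the two shift terms coincide (by internal commutativity at $[\textnormal{id}]_U$) and cancel by \cref{OrderSum}---giving $W_0^\sharp\swo^{M_U} W_1^\sharp$ if and only if $W_0\swo^{M_U} W_1$. So it suffices to prove $W_0^\sharp\swo^{M_U} W_1^\sharp$. Since $U^-(W_0^\sharp)=U\oplus W_0\swo U\oplus W_1=U^-(W_1^\sharp)$, \cref{TechnicalLemma}, applied with $Z:=U$ and with $W_1^\sharp$ as a translation of $U\oplus W_1$, yields some $U_*\in\textnormal{Un}^{M_U}$ with $U_*\swo^{M_U} W_1^\sharp$ and $U^-(U_*)=U\oplus W_0$. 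It then remains to identify $U_*$ with $W_0^\sharp$.

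This last identification is the step I expect to be the main obstacle. From $U^-(U_*)=U\oplus W_0$ alone, the canonical factor $k\colon M_{U\oplus W_0}\to M^{M_U}_{U_*}$ satisfies only $k\circ j^{M_U}_{W_0}\circ j_U=j^{M_U}_{U_*}\circ j_U$: the two embeddings of $M_U$ agree on $\textnormal{ran}(j_U)$ but not manifestly at $[\textnormal{id}]_U$, whereas $U_*=W_0^\sharp$ is exactly the assertion $k(j^{M_U}_{W_0}([\textnormal{id}]_U))=j^{M_U}_{U_*}([\textnormal{id}]_U)$. The way around is to run \cref{TechnicalLemma} with the $\swo$-minimal translations (the values of the translation function): minimality forces $M^{M_U}_{U_*}$ to be generated over $M_U$ by $\textnormal{ran}(j^{M_U}_{U_*}\circ j_U)$, the seed $[\textnormal{id}]_{U_*}$, and $\textnormal{ran}(k)$, which pins $j^{M_U}_{U_*}([\textnormal{id}]_U)$ down to $k(j^{M_U}_{W_0}([\textnormal{id}]_U))$, so that $U_*=W_0^\sharp$. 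Granting this, $W_0^\sharp\swo^{M_U} W_1^\sharp$, and hence $W_0\swo^{M_U} W_1$ by the equivalence $W_0^\sharp\swo^{M_U}W_1^\sharp\iff W_0\swo^{M_U}W_1$. The remaining steps are routine bookkeeping with \cref{LimitEmbeddings}, \cref{UltrafilterSum}, and \cref{OrderSum}.
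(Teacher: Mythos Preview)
The paper states \cref{SumSeed} without proof, so there is nothing to compare against directly. Your forward direction is correct and clean: the cancellation of the common summand $q=k(j^{M_U}_{W_0}([\textnormal{id}]_U))=j^{P}_{W_{0*}}(j^{M_U}_{W_1}([\textnormal{id}]_U))$ via \cref{OrderSum} is exactly the point, and the verification that $W_{0*}\in\textnormal{Un}^{M_{U\oplus W_1}}$ goes through since $P=M^{M_U}_{W_1}=M_{U\oplus W_1}$.

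The reverse direction, however, has a genuine gap that you yourself flag. From \cref{TechnicalLemma} you obtain some $U_*\swo^{M_U}W_1^\sharp$ with $U^-(U_*)=U\oplus W_0$, but the identification $U_*=W_0^\sharp$ does not follow: many $M_U$-ultrafilters can have the same $U$-limit, and \cref{TechnicalLemma} gives no canonicity. Your suggested fix---``run \cref{TechnicalLemma} with the $\swo$-minimal translations''---invokes the translation functions $t_U$, which are only defined once the seed order is linear; but \cref{SumSeed} is stated in ZFC, and indeed it is invoked (via \cref{op} and elsewhere) \emph{before} linearity is proved equivalent to UA. So this route is circular. The underlying obstacle is real: from $k\circ j_{U\oplus W_0}=j_{Z_*}\circ j_{U\oplus W_1}$ you only get $k\circ j^{M_U}_{W_0}=j_{Z_*}\circ j^{M_U}_{W_1}$ on $\text{ran}(j_U)$, not on all of $M_U$, and agreement at $[\textnormal{id}]_U$ is exactly what is missing.

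Two remarks. First, for the paper's actual applications (notably \cref{ContinuityFactor}, which is under UA), the forward direction alone suffices: once both seed orders are linear, an injective order-preserving map is automatically an order embedding, and injectivity follows from the forward direction applied in both orders together with strictness. Second, if you want a ZFC proof of the reverse implication, you will need to open up the proof of \cref{TechnicalLemma} (which the paper defers) rather than use it as a black box; the construction there should let you track the seed of $U_*$ explicitly and recover the cancellation.
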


We will use the following theorem whose proof we defer to another paper.

\begin{thm}[Minimality of Definable Embeddings]\label{mindef}
Suppose \(M\) and \(N\) are inner models and \(j,i : M\to N\) are elementary embeddings. If \(j\) is definable from parameters over \(M\), then \(j(\alpha) \leq i(\alpha)\) for all ordinals \(\alpha\).
\end{thm}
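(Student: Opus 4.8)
The natural approach is by contradiction. Suppose $i(\alpha)<j(\alpha)$ for some ordinal $\alpha$, and let $\alpha$ be least with this property; then $j(\beta)\le i(\beta)$ for all $\beta<\alpha$, and since $i$ is strictly increasing on ordinals, $j[\alpha]\subseteq i(\alpha)$. Two quick reductions handle the easy cases. If $\alpha=\gamma+1$ then by elementarity $i(\alpha)=i(\gamma)+1$ and $j(\alpha)=j(\gamma)+1$, so $i(\gamma)<j(\gamma)$, contradicting minimality. If $\alpha$ is a limit at which $j$ is continuous, then $j(\alpha)=\sup_{\beta<\alpha}j(\beta)\le\sup_{\beta<\alpha}i(\beta)\le i(\alpha)$, again a contradiction. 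So $\alpha$ is a limit ordinal (hence so is $i(\alpha)$) and $j$ is discontinuous at $\alpha$: writing $\mu=\sup_{\beta<\alpha}j(\beta)$, we have $\mu\le i(\alpha)<j(\alpha)$, and $h:=j\restriction\alpha$ maps $\alpha$ strictly increasingly and cofinally into $\mu$.

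Next I would record what definability provides. Fix $\varphi$ and $p\in M$ with $j$ defined over $M$ by $\varphi(\cdot,\cdot,p)$. Since $j$ is an elementary embedding of inner models, $j(\xi)\ge\xi$ for all $\xi$, so $j$ is cofinal in the ordinals of $N$; together with definability this shows that $N=\bigcup_\xi j(V^M_\xi)$ is itself definable over $M$ from $p$ and that $N\subseteq M$ (each $j(V^M_\xi)=V^N_{j(\xi)}$ lies in $\textnormal{ran}(j)\subseteq M$ and is transitive). Consequently $M$ knows that $\varphi(\cdot,\cdot,p)$ defines an elementary embedding of $V$ into the inner model $N$; moreover $h\in M$, and the ordinals $i(\beta)$ for $\beta\le\alpha$ lie in $N\subseteq M$. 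Applying $i$ to the statement about $\varphi$ yields the copy $j^{+}:=i(j)\colon N\to i(N)$, an elementary embedding definable over $N$ from $i(p)$; reading off $M\models\varphi(\beta,j(\beta),p)$ for each $\beta$ gives $j^{+}\circ i=i\circ j$, and applying $i$ to ``$h$ is cofinal in $\mu$ and $\mu<j(\alpha)$'' shows that $j^{+}\restriction i(\alpha)$ is cofinal in $i(\mu)$ while $j^{+}(i(\alpha))=i(j(\alpha))>i(\mu)$ — so $j^{+}$ fails continuity at $i(\alpha)$ in exactly the way $j$ fails it at $\alpha$. Note also that, since $N$ is definable over $M$, the map $i\restriction N\colon N\to i(N)$ is itself elementary.

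The hard part is the last step: turning this configuration into a genuine contradiction, which is where the definability of $j$ must be used against the arbitrary embedding $i$ in an essential way. My plan would be to run the whole argument as a single recursion — say on $\alpha$, uniformly over all quadruples $(M,N,j,i)$ with $j$ definable over $M$ and $i\colon M\to N$ elementary — and to feed the discontinuity datum just produced for the pair $(j^{+},i\restriction N)$ of embeddings of $N$ into $i(N)$ (the second elementary, the first definable over $N$) back into the recursion. The obstacle, and the point where a cleverer idea than this crude descent is presumably needed, is that the counterexample one naively extracts for $(j^{+},i\restriction N)$ sits at the ordinal $i(\alpha)$, which need not be smaller than $\alpha$; some extra leverage — exploiting that $j^{+}$, unlike $i\restriction N$, is definable over $N$, or else replacing the descent by a direct analysis inside $M$ of the set $j\restriction\alpha$ together with the ordinal $i(\alpha)$ — is required to close the loop. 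By contrast, the two preliminary reductions and the construction of the copy $j^{+}$ are routine elementarity, and I expect them to take up little of the argument.
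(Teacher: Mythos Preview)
The paper does not prove this theorem: it explicitly defers the proof to another paper, saying only that ``the proof generalizes the Dodd-Jensen Lemma from inner model theory'' and is ``inspired by a similar theorem in \cite{Woodin}.'' So there is no proof here to compare your attempt against.

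Your preliminary observations are correct --- in particular that $N\subseteq M$, that the copy $j^{+}=i(j)$ is definable over $N$, and that $j^{+}\circ i=i\circ j$ --- and these are precisely the opening moves of the Dodd-Jensen-style argument the paper has in mind. You are also right that the genuine difficulty lies beyond this point, and your candor about being stuck is well placed. However, the framework you propose (induction on $\alpha$ over all quadruples $(M,N,j,i)$) is not how the argument closes: as you yourself observe, the new counterexample sits at $i(\alpha)\ge\alpha$, so there is no descent in that parameter, and your reductions to the limit, discontinuous case --- while correct --- do not help. The Dodd-Jensen idea abandons induction on $\alpha$ entirely and instead iterates your copying step $\omega$ times: set $M_0=M$, $j_0=j$, $i_0=i$, $\alpha_0=\alpha$, and recursively $j_{n+1}=i_n(j_n)$, $i_{n+1}=i_n\restriction M_{n+1}$, $\alpha_{n+1}=i_n(\alpha_n)$, with $M_{n+1}$ the common target of $j_n$ and $i_n$. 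One checks $\alpha_{n+1}<j_n(\alpha_n)$ for every $n$, so the images $j_{n,\omega}(\alpha_n)$ in the direct limit along the $j_n$ form an infinite descending sequence. The substantive content --- the step that genuinely exploits the definability of $j$ over $M$, in place of the iterability hypothesis used in the classical Dodd-Jensen Lemma --- is the argument that this direct limit is wellfounded. That is the missing piece, and it is not supplied in this paper either.
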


The proof generalizes the Dodd-Jensen Lemma from inner model theory. It is inspired by a similar theorem in \cite{Woodin}.

\begin{thm}[Reciprocity Theorem]\label{Reciprocity}
Assume the seed order is linear. Then for any uniform countably complete ultrafilters \(U\) and \(W\), \[U\oplus t_U(W) = W\oplus t_W(U)\]
\begin{proof}
Assume towards a contradiction that \(U\oplus t_U(W) \swo W \oplus t_W(U)\). 

By \cref{LimitEmbeddings}, there is an inner model \(N\) admitting an elementary embedding \(k : M_{U\oplus t_U(W)}\to N\) and an ultrapower embedding \(i: M_{W \oplus t_W(U)}\to N\) such that \(k\circ j_{U\oplus t_U(W)} = i \circ j_{W \oplus t_W(U)}\) and \[k\left([\text{id}]_{U\oplus t_U(W)}\right) < i\left([\text{id}]_{W \oplus t_W(U)}\right)\]
In other words,
\begin{equation}\label{WrongOrder}k\left([\text{id}]^{M_U}_{t_U(W)}\oplus j^{M_U}_{t_U(W)}([\text{id}]_U)\right) < i\left([\text{id}]^{M_W}_{t_W(U)}\oplus j^{M_W}_{t_W(U)}([\text{id}]_W)\right)\end{equation}
\begin{clm}\label{MinimalityTranslates}
\(i\left([\textnormal{id}]^{M_W}_{t_W(U)}\right) \leq k\left(j^{M_U}_{t_U(W)}([\textnormal{id}]_U)\right) \)
\end{clm}
\begin{clm}\label{MinimalityDefinable}
\(i\left(j^{M_W}_{t_W(U)}([\textnormal{id}]_W)\right) \leq k\left([\textnormal{id}]^{M_U}_{t_U(W)}\right)\)
\end{clm}
Using \cref{OrderSum}, these two claims contradict \cref{WrongOrder}, so the assumption that \(U\oplus t_U(W) \swo W \oplus t_W(U)\) was false.
\begin{proof}[Proof of \cref{MinimalityTranslates}]
Let \(U_*\) be the \(M_W\)-ultrafilter derived from \(i\circ j^{M_W}_{t_W(U)}\) using \(k\left(j^{M_U}_{t_U(W)}([\textnormal{id}]_U)\right)\). Let \(h : M^{M_W}_{U_*}\to N\) be the factor embedding. Note that \(W^-(U_*) = U\): this is an easy calculation using \cref{LimitEmbeddings}, noting that there is an elementary embedding \(M_U\to M^{M_W}_{U_*}\) witnessing the hypotheses of \cref{LimitEmbeddings}, namely \(h^{-1}\circ k\circ j^{M_U}_{t_U(W)}\).

If \(h([\text{id}]^{M_W}_{U_*}) < i([\text{id}]^{M_W}_{t_W(U)})\), then \(U_*\swo^{M_W} t_W(U)\) by \cref{SeedOrderChar}, contrary to the minimality of \(t_W(U)\). Thus \(i([\text{id}]^{M_W}_{t_W(U)})\leq  h([\text{id}]^{M_W}_{U_*}) = k(j^{M_U}_{t_U(W)}([\textnormal{id}]_U))\), as desired.
\end{proof}
\begin{proof}[Proof of \cref{MinimalityDefinable}]
Let \(h : M_W\to M^{M_U}_{t_U(W)}\) be the elementary embedding given by \cref{LimitEmbeddings}. Then \(k\circ h([\text{id}]_W) = k([\textnormal{id}]^{M_U}_{t_U(W)})\). Since \(i\circ  j^{M_W}_{t_W(U)}\) and \(k\circ h\) are elementary embeddings \(M_W\to N\), and since \(i\circ  j^{M_W}_{t_W(U)}\) is definable from parameters over \(M_W\), \[i\circ j^{M_W}_{t_W(U)}\restriction \text{Ord} \leq k\circ h\restriction \text{Ord}\] by \cref{mindef}. In particular, \(i\circ j^{M_W}_{t_W(U)}([\text{id}]_W)\leq k\circ h([\text{id}]_W) = k([\textnormal{id}]^{M_U}_{t_U(W)})\).
\end{proof}
Similarly we cannot have \(W \oplus t_W(U)\swo U\oplus t_U(W)\). By the linearity of the seed order, \(U\oplus t_U(W) = W\oplus t_W(U)\), finishing the proof of the Reciprocity Theorem.
\end{proof}
\end{thm}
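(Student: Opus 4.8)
The plan is to leverage the linearity of the seed order. Since \(U\oplus t_U(W)\) and \(W\oplus t_W(U)\) both lie in \(\textnormal{Un}\) (by \cref{UltrafilterSum}), it suffices to show that neither is \(\swo\)-strictly below the other; and because the asserted equality is symmetric in \(U\) and \(W\), it is enough to rule out \(U\oplus t_U(W)\swo W\oplus t_W(U)\). So I would assume this toward a contradiction. By \cref{SeedOrderChar} (via \cref{LimitEmbeddings}) there are an inner model \(N\), an elementary embedding \(k\colon M_{U\oplus t_U(W)}\to N\), and an ultrapower embedding \(i\colon M_{W\oplus t_W(U)}\to N\) with \(k\circ j_{U\oplus t_U(W)}=i\circ j_{W\oplus t_W(U)}\) and \(k([\textnormal{id}]_{U\oplus t_U(W)})<i([\textnormal{id}]_{W\oplus t_W(U)})\). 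By the definition of the natural sum, \([\textnormal{id}]_{U\oplus t_U(W)}=[\textnormal{id}]^{M_U}_{t_U(W)}\oplus j^{M_U}_{t_U(W)}([\textnormal{id}]_U)\), and similarly on the \(W\)-side; since natural addition is absolutely definable, \(i\) and \(k\) commute with \(\oplus\), so this inequality is a strict comparison of two natural sums. By \cref{OrderSum}, applied twice and using commutativity of \(\oplus\), a contradiction will follow once I establish the two reversed ``coordinatewise'' inequalities
\[i\!\left([\textnormal{id}]^{M_W}_{t_W(U)}\right)\le k\!\left(j^{M_U}_{t_U(W)}([\textnormal{id}]_U)\right)\]
and
\[i\!\left(j^{M_W}_{t_W(U)}([\textnormal{id}]_W)\right)\le k\!\left([\textnormal{id}]^{M_U}_{t_U(W)}\right).\]

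For the first inequality I would use the minimality built into \(t_W(U)\). Form the \(M_W\)-ultrafilter \(U_*\) derived from the elementary embedding \(i\circ j^{M_W}_{t_W(U)}\colon M_W\to N\) using the point \(k\bigl(j^{M_U}_{t_U(W)}([\textnormal{id}]_U)\bigr)\), with factor embedding \(h\colon M^{M_W}_{U_*}\to N\). The commutative diagram coming from the assumption provides an elementary embedding \(M_U\to M^{M_W}_{U_*}\), namely \(h^{-1}\circ k\circ j^{M_U}_{t_U(W)}\), and checking that it verifies the hypotheses of \cref{LimitEmbeddings} shows \(W^-(U_*)=U\). If we had \(h([\textnormal{id}]^{M_W}_{U_*})<i([\textnormal{id}]^{M_W}_{t_W(U)})\), then \cref{SeedOrderChar} would yield \(U_*\swo^{M_W}t_W(U)\), contradicting the \(\swo^{M_W}\)-minimality of \(t_W(U)\) among \(M_W\)-ultrafilters with \(W\)-limit \(U\); hence \(i([\textnormal{id}]^{M_W}_{t_W(U)})\le h([\textnormal{id}]^{M_W}_{U_*})=k\bigl(j^{M_U}_{t_U(W)}([\textnormal{id}]_U)\bigr)\).

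For the second inequality I would instead appeal to the minimality of definable embeddings, \cref{mindef}. Let \(h\colon M_W\to M_{U\oplus t_U(W)}\) be the factor embedding furnished by \cref{LimitEmbeddings} for the limit \(U^-(t_U(W))=W\), so that \(h([\textnormal{id}]_W)=[\textnormal{id}]^{M_U}_{t_U(W)}\) and \(h\circ j_W=j^{M_U}_{t_U(W)}\circ j_U\). Chasing the diagram, \(k\circ h\) and \(i\circ j^{M_W}_{t_W(U)}\) are both elementary embeddings \(M_W\to N\) agreeing on \(\textnormal{ran}(j_W)\) (but not necessarily on \([\textnormal{id}]_W\), which is precisely what is at stake). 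Since \(i\circ j^{M_W}_{t_W(U)}\) is a composition of ultrapower embeddings, it is definable from parameters over \(M_W\), so \cref{mindef} gives \(i\circ j^{M_W}_{t_W(U)}(\alpha)\le k\circ h(\alpha)\) for all ordinals \(\alpha\); evaluating at \(\alpha=[\textnormal{id}]_W\) is exactly the second inequality. Combining the two inequalities with \cref{OrderSum} contradicts the strict inequality above, so \(U\oplus t_U(W)\nmo W\oplus t_W(U)\) — wait, more precisely \(U\oplus t_U(W)\not\swo W\oplus t_W(U)\); the symmetric argument gives \(W\oplus t_W(U)\not\swo U\oplus t_U(W)\), and linearity of the seed order then forces equality.

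The main obstacle is organizational rather than conceptual: one must make sure the two minimality arguments target the correct summands of the natural sum — minimality of the translation function computed inside \(M_W\) controlling \([\textnormal{id}]^{M_W}_{t_W(U)}\), and Dodd–Jensen-style minimality of the (definable) ultrapower embedding out of \(M_W\) controlling \(j^{M_W}_{t_W(U)}([\textnormal{id}]_W)\) — and then assemble them with \cref{OrderSum} in the right order. The fussiest point is verifying \(W^-(U_*)=U\) in the first argument, which requires exhibiting the factor embedding and confirming it moves \([\textnormal{id}]_U\) to \([\textnormal{id}]_{U_*}\) so that \cref{LimitEmbeddings} applies; the genuinely deep external input is \cref{mindef}, which I would invoke as a black box.
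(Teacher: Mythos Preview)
Your proposal is correct and follows essentially the same approach as the paper's proof: the same reduction by symmetry and linearity, the same comparison \(k,i\) into a common \(N\), and the same two coordinatewise inequalities proved respectively by the \(\swo^{M_W}\)-minimality of \(t_W(U)\) (via a derived \(U_*\)) and by \cref{mindef} applied to \(i\circ j^{M_W}_{t_W(U)}\) versus \(k\circ h\). Your exposition is slightly more explicit about why \(k,i\) commute with \(\oplus\) and about the agreement on \(\operatorname{ran}(j_W)\), but the argument is the paper's argument.
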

An immediate corollary of  \cref{UltrafilterSum} and \cref{Reciprocity} is the following:
\begin{cor}
The following are equivalent:
\begin{enumerate}[(1)]
\item The seed order is linear.
\item The Ultrapower Axiom holds.
\end{enumerate}
\end{cor}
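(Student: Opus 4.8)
The plan is to prove the two implications separately. The direction (2) $\Rightarrow$ (1) is already established: it is precisely the content of \cref{Linearity}, which derives the linearity of the seed order from the Ultrapower Axiom using the characterization of $\swo$ in \cref{SeedOrderChar}. So only (1) $\Rightarrow$ (2) requires an argument, and that argument amounts to nothing more than unwinding the Reciprocity Theorem through \cref{UltrafilterSum}.

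So assume the seed order is linear; then every translation function $t_U$ is defined on all of $\textnormal{Un}$. To verify UA, fix countably complete ultrafilters $U$ and $W$. Passing to Rudin--Keisler equivalent ultrafilters on ordinals, we may assume $U,W\in\textnormal{Un}$: one can choose the equivalent uniform ultrafilters so that the ultrapowers and the ultrapower embeddings are literally unchanged, and the equations appearing in the statement of UA are phrased entirely in terms of these. Set $W_* = t_U(W)\in\textnormal{Un}^{M_U}$ and $U_* = t_W(U)\in\textnormal{Un}^{M_W}$. By \cref{Reciprocity}, $U\oplus W_* = W\oplus U_*$ as ultrafilters. Applying \cref{UltrafilterSum} to the pair $(U,W_*)$ gives $M_{U\oplus W_*} = M^{M_U}_{W_*}$ and $j_{U\oplus W_*} = j^{M_U}_{W_*}\circ j_U$; applying it to the pair $(W,U_*)$ gives $M_{W\oplus U_*} = M^{M_W}_{U_*}$ and $j_{W\oplus U_*} = j^{M_W}_{U_*}\circ j_W$. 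Since the ultrafilters $U\oplus W_*$ and $W\oplus U_*$ coincide, so do their ultrapowers and their ultrapower embeddings, and therefore
\[
M^{M_U}_{W_*} = M^{M_W}_{U_*}, \qquad j^{M_U}_{W_*}\circ j_U = j^{M_W}_{U_*}\circ j_W.
\]
Thus $W_*$ and $U_*$ are countably complete ultrafilters of $M_U$ and $M_W$ respectively that witness the instance of the Ultrapower Axiom for $U$ and $W$.

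I do not expect any genuine obstacle here: all the substance has been absorbed into the Reciprocity Theorem (which in turn rests on \cref{mindef} and the minimality properties of translation functions) and into \cref{UltrafilterSum}. The only mildly delicate point is the reduction of an arbitrary pair of countably complete ultrafilters to a pair lying in $\textnormal{Un}$, but this is a standard and purely formal manoeuvre, and it is needed only because \cref{Reciprocity} is stated for uniform ultrafilters on ordinals while UA is stated for arbitrary countably complete ultrafilters.
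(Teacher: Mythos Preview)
Your proposal is correct and follows exactly the route the paper intends: the paper states the corollary as ``an immediate corollary of \cref{UltrafilterSum} and \cref{Reciprocity}'' (with the other direction already handled by \cref{Linearity}), and you have simply spelled out that immediacy. The reduction to $\textnormal{Un}$ is indeed a harmless formality.
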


Regarding the translation functions \(t_U\), we also have:
\begin{prp}[UA]\label{op}
For any countably complete ultrafilter \(Z\), the function \(t_Z : (\textnormal{Un},\swo)\to (\textnormal{Un}^{M_Z},\swo^{M_Z})\) is order preserving.
\begin{proof}
Suppose \(U, W\in \textnormal{Un}\) and \(U\swo W\). By \cref{TechnicalLemma}, as \(Z^-(t_Z(W)) = W\), there is some \[U_*\swo^{M_Z}t_Z(W)\] such that \(Z^-(U_*) = U\). By the minimality of \(t_Z(U)\), we have \[t_Z(U)\wo^{M_Z} U_*\] By the transitivity of the seed order, \(t_Z(U)\swo^{M_Z} t_Z(W)\), as desired.
\end{proof}
\end{prp}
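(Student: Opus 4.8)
The plan is to obtain the order-preservation of $t_Z$ as a short corollary of \cref{TechnicalLemma}, which carries the real weight. Fix $U,W\in\textnormal{Un}$ with $U\swo W$; the goal is $t_Z(U)\swo^{M_Z}t_Z(W)$.

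First I would invoke the definition of the translation function: $t_Z(W)$ is, in particular, an $M_Z$-ultrafilter with $Z^-(t_Z(W))=W$. So the hypotheses of \cref{TechnicalLemma} hold for the triple $(U,W,Z)$ with the witness $W_*=t_Z(W)$. Applying that lemma produces an ultrafilter $U_*\in\textnormal{Un}^{M_Z}$ with $U_*\swo^{M_Z}t_Z(W)$ and $Z^-(U_*)=U$.

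Next, $t_Z(U)$ is by definition the $\swo^{M_Z}$-least member of $\textnormal{Un}^{M_Z}$ whose $Z$-limit equals $U$, and $U_*$ is one such member; hence $t_Z(U)\wo^{M_Z}U_*$. Chaining this with $U_*\swo^{M_Z}t_Z(W)$ and using that $\swo^{M_Z}$ is a strict partial order in $M_Z$ (so $\wo^{M_Z}$ followed by $\swo^{M_Z}$ yields $\swo^{M_Z}$), we get $t_Z(U)\swo^{M_Z}t_Z(W)$, completing the argument.

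I do not expect a genuine obstacle here: the substantive ``copying'' step — transferring a seed-order comparison in $V$ down into $M_Z$ while keeping the $Z$-limits pinned to the prescribed ultrafilters — is exactly what \cref{TechnicalLemma} provides (and it is stated there without proof). The only points needing a moment's care are that $t_Z(W)$ really is an admissible witness for the lemma's hypothesis (immediate from the definition of $t_Z$), that $t_Z$ is defined at $U$ and $W$ at all (a consequence of the general existence of translates once the seed order is linear), and that the final inequality is strict, which it is because $U_*\swo^{M_Z}t_Z(W)$ is already strict. An alternative route via \cref{SumSeed} together with the Reciprocity Theorem — comparing $Z\oplus t_Z(U)$ with $Z\oplus t_Z(W)$ in $V$ — also looks feasible, but the direct appeal to \cref{TechnicalLemma} is cleaner.
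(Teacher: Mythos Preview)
Your proof is correct and follows essentially the same argument as the paper: apply \cref{TechnicalLemma} with $W_*=t_Z(W)$ to get $U_*\swo^{M_Z}t_Z(W)$ with $Z^-(U_*)=U$, then use the $\swo^{M_Z}$-minimality of $t_Z(U)$ and transitivity. The only additional commentary you provide (on well-definedness of $t_Z$ and the alternative route through \cref{SumSeed}) is sound but not needed.
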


\subsection{Divisibility}
\begin{defn}
Suppose \(U\) and \(W\) are countably complete ultrafilters. We say \(U\) {\it divides} \(W\) and write \(U\D W\) if there is some \(W_*\in \textnormal{Un}^{M_U}\) such that \(M^{M_U}_{W_*} = M_W\) and \(j^{M_U}_{W_*}\circ j_U = j_W\).
\end{defn}

The division order is sometimes called the Rudin-Frolik order, although that name is often reserved for a suborder of the division order. We continue to work with uniform ultrafilters on ordinals here, but notice that many of the facts we prove in this section are invariant under Rudin-Keisler equivalence.

The division order has a number of obvious combinatorial equivalents, one of which we put down below.

\begin{lma}
Suppose \(U,W\in \textnormal{Un}\). Then \(U\) divides \(W\) if and only if there is some \(W_*\in \textnormal{Un}^{M_U}\) such that \(U\oplus W_*\equiv W\).
\end{lma}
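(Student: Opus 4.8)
The plan is to read off both directions directly from \cref{UltrafilterSum}, which identifies $M_{U\oplus W_*}$ with $M^{M_U}_{W_*}$ and $j_{U\oplus W_*}$ with $j^{M_U}_{W_*}\circ j_U$ for any $U\in\textnormal{Un}$ and $W_*\in\textnormal{Un}^{M_U}$. The only external ingredient is the routine fact that two uniform countably complete ultrafilters on ordinals are Rudin--Keisler equivalent if and only if they induce the same ultrapower embedding (and hence the same ultrapower); only the forward half of this fact is needed, and it follows by the usual argument that an isomorphism between transitive ultrapowers commuting with the ultrapower maps must be the identity.

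For the forward direction, I would fix $W_*\in\textnormal{Un}^{M_U}$ witnessing $U\D W$, so that $M^{M_U}_{W_*}=M_W$ and $j^{M_U}_{W_*}\circ j_U=j_W$. Then \cref{UltrafilterSum} gives $M_{U\oplus W_*}=M^{M_U}_{W_*}=M_W$ and $j_{U\oplus W_*}=j^{M_U}_{W_*}\circ j_U=j_W$, so $U\oplus W_*$ and $W$ have the same ultrapower embedding and are therefore Rudin--Keisler equivalent.

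For the converse, given $W_*\in\textnormal{Un}^{M_U}$ with $U\oplus W_*\equiv W$, Rudin--Keisler equivalence yields $M_{U\oplus W_*}=M_W$ and $j_{U\oplus W_*}=j_W$; substituting the values supplied by \cref{UltrafilterSum} gives $M^{M_U}_{W_*}=M_W$ and $j^{M_U}_{W_*}\circ j_U=j_W$, so this very $W_*$ witnesses $U\D W$.

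There is essentially no obstacle here: the substantive content is carried by \cref{UltrafilterSum}, and the Rudin--Keisler bookkeeping is standard. The one point worth stating cleanly is that the witness $W_*$ for divisibility can be taken to be literally the same $M_U$-ultrafilter appearing in the sum $U\oplus W_*$, which is precisely what makes the equivalence immediate in both directions.
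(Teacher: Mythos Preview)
Your proof is correct and is precisely the argument the paper intends: the lemma is stated there as one of several ``obvious combinatorial equivalents'' of divisibility and is given no proof, so you are simply spelling out the intended unpacking of \cref{UltrafilterSum}. One small correction: you actually use \emph{both} directions of the equivalence between Rudin--Keisler equivalence and equality of ultrapower embeddings (the forward direction of the lemma needs ``same embedding $\Rightarrow$ RK equivalent'', while your converse uses ``RK equivalent $\Rightarrow$ same embedding''), not just the forward half as you claim; both are standard, so this is only a slip in your commentary, not in the argument.
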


The factor ultrafilter \(W_*\) is unique up to equivalence:

\begin{lma}\label{DivisionUniqueness}
Suppose \(U, W\in \textnormal{Un}\). Then there is at most one internal embedding \(i : M_U\to M_W\) such that \(i\circ j_U = j_W\).
\begin{proof}
We may assume \(U\in \textnormal{Un}\). Such an embedding \(i\) is determined by its values on \(j_U[V]\cup \{[\text{id}]_U\}\). Any two such embeddings agree on \(j_U[V]\) by the requirement \(i \circ j_U = j_W\). Moreover they agree on \(\{[\text{id}]_U\}\) by \cref{mindef}.
\end{proof}
\end{lma}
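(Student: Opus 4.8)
The plan is to use the fact that $M_U$ is generated over $j_U[V]$ by the seed $[\text{id}]_U$, so that an embedding $i : M_U\to M_W$ with $i\circ j_U = j_W$ is completely determined by the single value $i([\text{id}]_U)$, and then to identify that value using \cref{mindef}.

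First I would record the standard computation: every $x\in M_U$ has the form $x = j_U(f)([\text{id}]_U)$ for some $f\in V$, and for such $x$, elementarity together with the hypothesis $i\circ j_U = j_W$ forces
\[ i(x) = i\bigl(j_U(f)\bigr)\bigl(i([\text{id}]_U)\bigr) = j_W(f)\bigl(i([\text{id}]_U)\bigr). \]
Thus $i\restriction j_U[V]$ is pinned down by the constraint $i\circ j_U = j_W$, and $i$ is then determined by the value $i([\text{id}]_U)$. Since $U$ is a uniform ultrafilter on an ordinal, $[\text{id}]_U$ is an ordinal (below $j_U(\textsc{sp}(U))$), so it suffices to show that any two internal embeddings $i_0, i_1 : M_U\to M_W$ with $i_k\circ j_U = j_W$ agree on $[\text{id}]_U$; in fact I will show they agree on all ordinals.

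For the latter, each $i_k$, being internal, is definable from parameters over $M_U$. Applying \cref{mindef} with the definable embedding taken to be $i_0$ and the arbitrary one $i_1$ gives $i_0(\alpha)\leq i_1(\alpha)$ for all ordinals $\alpha$; applying it with the roles reversed gives the opposite inequality. Hence $i_0$ and $i_1$ agree on every ordinal, so in particular $i_0([\text{id}]_U) = i_1([\text{id}]_U)$, and therefore $i_0 = i_1$. The only point requiring any care is the appeal to \cref{mindef}: one must know that both candidate embeddings genuinely qualify as definable from parameters over $M_U$ so that the minimality theorem can be run in both directions, and one must observe that $[\text{id}]_U$ is an ordinal so that the ordinal-valued conclusion of \cref{mindef} is enough. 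The remaining ingredient --- that an ultrapower embedding is generated over the image of its domain by its seed --- is entirely routine.
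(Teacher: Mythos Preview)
Your proof is correct and follows exactly the same approach as the paper's: reduce to agreement on $j_U[V]\cup\{[\text{id}]_U\}$, get the first from $i\circ j_U = j_W$, and the second from \cref{mindef}. You have simply written out the details the paper leaves implicit, including the two-direction application of \cref{mindef} needed to turn its one-sided inequality into equality on ordinals.
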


Similarly, we have an absoluteness fact for division.

\begin{lma}\label{DivisionAbsoluteness}
Suppose \(U\in \textnormal{Un}\) and \(D_*,W_*\in \textnormal{Un}^{M_U}\). Then \(D_*\D^{M_U} W_*\) if and only if \(U\oplus D_*\D U\oplus W_*\).
\begin{proof}
The only nontrivial part is proving that if \(i : M_{U\oplus D_*}\to M_{U\oplus W_*}\) is an internal ultrapower embedding with \(i\circ j_{U\oplus D_*} = j_{U\oplus W_*}\) then \(i\circ j^{M_U}_{D_*} = j^{M_U}_{W_*}\). It suffices to show that \(i(j^{M_U}_{D_*}([\text{id}]_U)) = j^{M_U}_{W_*}([\text{id}]_U)\). This follows from \cref{mindef}.
\end{proof}
\end{lma}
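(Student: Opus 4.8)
The plan is to prove the two directions separately, using \cref{UltrafilterSum} throughout to identify $M_{U\oplus D_*}$ with $M^{M_U}_{D_*}$, $M_{U\oplus W_*}$ with $M^{M_U}_{W_*}$, and $j_{U\oplus D_*}$ with $j^{M_U}_{D_*}\circ j_U$ (similarly for $W_*$). The one structural fact I will lean on is that every element of $M_U$ has the form $j_U(f)([\text{id}]_U)$ for some $f\in V$, so an elementary embedding out of $M_U$ is completely determined by its restriction to $j_U[V]$ together with its value at the ordinal $[\text{id}]_U$.

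For the forward direction, suppose $D_*\D^{M_U}W_*$, witnessed by $W'_*\in\textnormal{Un}^{M^{M_U}_{D_*}}$ with $M^{M^{M_U}_{D_*}}_{W'_*} = M^{M_U}_{W_*}$ and $j^{M^{M_U}_{D_*}}_{W'_*}\circ j^{M_U}_{D_*} = j^{M_U}_{W_*}$. Since $M^{M_U}_{D_*} = M_{U\oplus D_*}$, the same $W'_*$ is a uniform countably complete ultrafilter of $M_{U\oplus D_*}$ with $M^{M_{U\oplus D_*}}_{W'_*} = M_{U\oplus W_*}$, and composing the second identity with $j_U$ gives $j^{M_{U\oplus D_*}}_{W'_*}\circ j_{U\oplus D_*} = j^{M_U}_{W_*}\circ j_U = j_{U\oplus W_*}$; so $W'_*$ witnesses $U\oplus D_*\D U\oplus W_*$. (Passing between the division order as computed in $M_U$ and in $V$ is routine absoluteness here, since $M^{M_U}_{D_*}$ and its relevant ultrapower are inner models of $M_U$.)

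For the reverse direction, suppose $U\oplus D_*\D U\oplus W_*$ and fix a witnessing $W'_*\in\textnormal{Un}^{M_{U\oplus D_*}}$; set $i = j^{M_{U\oplus D_*}}_{W'_*}$, so $i : M_{U\oplus D_*}\to M_{U\oplus W_*}$ is an internal ultrapower embedding with $i\circ j_{U\oplus D_*} = j_{U\oplus W_*}$, and $W'_*\in M_{U\oplus D_*} = M^{M_U}_{D_*}\subseteq M_U$. Now $i\circ j^{M_U}_{D_*}$ and $j^{M_U}_{W_*}$ are both elementary embeddings $M_U\to M^{M_U}_{W_*}$, and they agree on $j_U[V]$ because $i\circ j^{M_U}_{D_*}\circ j_U = i\circ j_{U\oplus D_*} = j_{U\oplus W_*} = j^{M_U}_{W_*}\circ j_U$. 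By the first paragraph, to conclude $i\circ j^{M_U}_{D_*} = j^{M_U}_{W_*}$ — which is precisely the statement $D_*\D^{M_U}W_*$ as witnessed by $W'_*$ — it suffices to show the single equation $i(j^{M_U}_{D_*}([\text{id}]_U)) = j^{M_U}_{W_*}([\text{id}]_U)$.

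This I would get from two applications of \cref{mindef}. The embedding $j^{M_U}_{W_*}$ is definable over $M_U$ from the parameter $W_*$, so \cref{mindef} applied to $j^{M_U}_{W_*}$ and $i\circ j^{M_U}_{D_*}$ gives $j^{M_U}_{W_*}(\alpha)\leq i(j^{M_U}_{D_*}(\alpha))$ for every ordinal $\alpha$. Conversely, $j^{M_U}_{D_*}$ is definable over $M_U$ from $D_*$, and $i$ is the ultrapower of $M^{M_U}_{D_*}$ by $W'_*\in M^{M_U}_{D_*}$, so $i\circ j^{M_U}_{D_*}$ is itself definable over $M_U$; hence \cref{mindef} also gives $i(j^{M_U}_{D_*}(\alpha))\leq j^{M_U}_{W_*}(\alpha)$, and evaluating at $\alpha = [\text{id}]_U$ finishes the proof. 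I expect the only real point of the argument to be the recognition that the internality of $i$ is exactly what makes $i\circ j^{M_U}_{D_*}$ definable over $M_U$, so that \cref{mindef} can be applied in both directions; once that is noticed, the reduction to matching the two embeddings at $[\text{id}]_U$ and everything else is routine.
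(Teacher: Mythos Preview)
Your proof is correct and follows exactly the paper's approach: reduce the nontrivial (reverse) direction to the single equation $i(j^{M_U}_{D_*}([\text{id}]_U)) = j^{M_U}_{W_*}([\text{id}]_U)$ and then obtain it from \cref{mindef}. Your explicit observation that the internality of $i$ is what makes $i\circ j^{M_U}_{D_*}$ definable over $M_U$, so that \cref{mindef} applies in both directions, is precisely the content the paper compresses into ``This follows from \cref{mindef}.''
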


Under UA, the division order is closely related to the translation functions of the previous subsection.

\begin{defn}[UA]
For \(U,W\in \textnormal{Un}\), let \(U\vee W\) denote \(U \oplus t_U(W) = W\oplus t_W(U)\).
\end{defn}

We chose this notation because \(U\vee W\) is the least upper bound of \(U\) and \(W\) in the division order.

\begin{thm}[UA]\label{CanonicalComparison}
Suppose \(U,W,Z\in\textnormal{Un}\). Then \(U,W\D Z\) if and only if \(U\vee W\D Z\).
\begin{proof}
For ease of notation let \(D = U\vee Z\). 

We claim that \(t_Z(D)\) is principal in \(M_Z\). To see this, it suffices to show that \([\text{id}]^{M_Z}_{t_Z(D)}\in\text{ran }j^{M_Z}_{t_Z(D)}\).
Let \(i_{UZ} : M_U\to M_Z\) and \(i_{WZ} : M_W\to M_Z\) be internal ultrapower embeddings witnessing that \(U,W\) divide \(Z\). Note that \[j^{M_Z}_{t_Z(D)}\circ i_{UZ}\restriction \text{Ord} =  j^{M_{D}}_{t_{D}(Z)}\circ j^{M_U}_{t_U(W)}\restriction \text{Ord}\] by \cref{mindef}. Hence \[j^{M_{D}}_{t_{D}(Z)}(j^{M_U}_{t_U(W)}([\text{id}]_U))\in\text{ran }j^{M_Z}_{t_Z(D)}\] Similarly, \[j^{M_{D}}_{t_{D}(Z)}(j^{M_W}_{t_W(U)}([\text{id}]_W))\in\text{ran }j^{M_Z}_{t_Z(D)}\]
By \cref{Reciprocity}, 
\begin{align*}[\text{id}]^{M_Z}_{t_Z(D)} &= j^{M_{D}}_{t_{D}(Z)}([\text{id}]_{D}) \\
&= j^{M_{D}}_{t_{D}(Z)}\left(j^{M_U}_{t_U(W)}([\text{id}]_U) \oplus j^{M_W}_{t_W(U)}([\text{id}]_W)\right)
\end{align*}
Hence \([\text{id}]^{M_Z}_{t_Z(D)}\in\text{ran }j^{M_Z}_{t_Z(D)}\), so  \(t_Z(D)\) is principal in \(M_Z\), as claimed.

It follows by \cref{Reciprocity} that \[D\oplus t_{D}(Z) = Z\oplus t_Z(D)\equiv Z\] Hence \(D\D Z\) as desired.
\end{proof}
\end{thm}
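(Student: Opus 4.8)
The plan is to treat the two directions of the biconditional separately, with the forward implication doing all the real work; throughout I would write $D = U\vee W$, so that $D = U\oplus t_U(W) = W\oplus t_W(U)$ by \cref{Reciprocity} and \cref{UltrafilterSum}. The reverse implication is soft: from $D = U\oplus t_U(W)$ and \cref{UltrafilterSum} one gets $j_D = j^{M_U}_{t_U(W)}\circ j_U$, so $U\D D$, and symmetrically $W\D D$; since the division order is transitive (compose the witnessing internal ultrapower embeddings), $D\D Z$ gives $U,W\D Z$.

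For the forward implication, assume $U,W\D Z$, witnessed by internal ultrapower embeddings $i_{UZ}\colon M_U\to M_Z$ and $i_{WZ}\colon M_W\to M_Z$ with $i_{UZ}\circ j_U = j_Z = i_{WZ}\circ j_W$. I would first reduce to a statement about translation functions: it suffices to show $t_Z(D)$ is principal in $M_Z$. Indeed, if it is, then $j^{M_Z}_{t_Z(D)}$ collapses to the identity and $M^{M_Z}_{t_Z(D)} = M_Z$, so $Z\oplus t_Z(D)\equiv Z$; \cref{Reciprocity} then gives $D\oplus t_D(Z) = Z\oplus t_Z(D)\equiv Z$, and \cref{UltrafilterSum} exhibits $t_D(Z)$ as an $M_D$-ultrafilter witnessing $D\D Z$. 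In turn, to prove $t_Z(D)$ principal it is enough to show $[\textnormal{id}]^{M_Z}_{t_Z(D)}\in\textnormal{ran }j^{M_Z}_{t_Z(D)}$, since any ultrafilter whose seed lies in the range of its own ultrapower map is principal.

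The core of the argument is a computation of $[\textnormal{id}]^{M_Z}_{t_Z(D)}$ followed by a change-of-route argument. Using $Z^-(t_Z(D)) = D$, \cref{LimitEmbeddings} produces the canonical embedding $M_D\to M^{M_Z}_{t_Z(D)}$ sending $[\textnormal{id}]_D$ to $[\textnormal{id}]^{M_Z}_{t_Z(D)}$; by \cref{Reciprocity} the target equals $M^{M_D}_{t_D(Z)}$, and by uniqueness of internal embeddings (\cref{DivisionUniqueness}) this canonical embedding is $j^{M_D}_{t_D(Z)}$, so $[\textnormal{id}]^{M_Z}_{t_Z(D)} = j^{M_D}_{t_D(Z)}([\textnormal{id}]_D)$. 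Applying the same identification inside the sums $U\oplus t_U(W)$ and $W\oplus t_W(U)$ rewrites the seed of $D$, yielding
\[[\textnormal{id}]^{M_Z}_{t_Z(D)} = j^{M_D}_{t_D(Z)}\bigl(j^{M_U}_{t_U(W)}([\textnormal{id}]_U)\bigr)\oplus j^{M_D}_{t_D(Z)}\bigl(j^{M_W}_{t_W(U)}([\textnormal{id}]_W)\bigr).\]
Since natural sums are preserved by elementary embeddings, $\textnormal{ran }j^{M_Z}_{t_Z(D)}$ is closed under $\oplus$, so it suffices to put each summand in that range. For the first summand, compare the two embeddings $M_U\to M^{M_Z}_{t_Z(D)}$ given by $j^{M_Z}_{t_Z(D)}\circ i_{UZ}$ and $j^{M_D}_{t_D(Z)}\circ j^{M_U}_{t_U(W)}$: both agree with $j^{M_Z}_{t_Z(D)}\circ j_Z = j^{M_D}_{t_D(Z)}\circ j_D$ on $j_U[V]$ by \cref{Reciprocity}, and both are definable from parameters over $M_U$, so \cref{mindef} forces them to agree on all ordinals. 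Evaluating at $[\textnormal{id}]_U$ puts $j^{M_D}_{t_D(Z)}(j^{M_U}_{t_U(W)}([\textnormal{id}]_U)) = j^{M_Z}_{t_Z(D)}(i_{UZ}([\textnormal{id}]_U))$ into $\textnormal{ran }j^{M_Z}_{t_Z(D)}$; the second summand is handled identically with $W$ and $i_{WZ}$ in place of $U$ and $i_{UZ}$. This completes the reduction, and the theorem follows.

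The step I expect to require the most care is the double invocation of \cref{mindef}: one must check that \emph{both} routes $M_U\to M^{M_Z}_{t_Z(D)}$ are definable over $M_U$ — the route through $M_Z$ because $i_{UZ}$ is an internal ultrapower embedding, hence equal to $j^{M_U}_{W_*}$ for a factor $W_*\in\textnormal{Un}^{M_U}$, composed with an ultrapower of the $M_U$-definable class $M_Z$ — so that \cref{mindef} can be applied in both directions and yields genuine equality of ordinal images rather than a one-sided inequality. Everything else is bookkeeping with the uniqueness lemmas and with the natural sum.
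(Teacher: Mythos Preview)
Your approach is essentially identical to the paper's: reduce to showing $t_Z(D)$ is principal by proving $[\textnormal{id}]^{M_Z}_{t_Z(D)}\in\textnormal{ran}\,j^{M_Z}_{t_Z(D)}$, decompose the seed as a natural sum of the $U$- and $W$-pieces, and use \cref{mindef} on the two routes $M_U\to M^{M_Z}_{t_Z(D)}$ (and symmetrically from $M_W$) to land each summand in the range.

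One small wrinkle in your write-up: the appeal to \cref{DivisionUniqueness} to conclude that the \cref{LimitEmbeddings} embedding $k:M_D\to M^{M_Z}_{t_Z(D)}$ equals $j^{M_D}_{t_D(Z)}$ is not justified as stated, since $k$ is not a priori an \emph{internal} embedding of $M_D$, which is what \cref{DivisionUniqueness} requires. The identity $[\textnormal{id}]^{M_Z}_{t_Z(D)} = j^{M_D}_{t_D(Z)}([\textnormal{id}]_D)$ (and its analogue for the pair $U,W$) is true, but it comes directly from the proof of \cref{Reciprocity}: once $D\oplus t_D(Z) = Z\oplus t_Z(D)$ is established, the two Claims there hold with $i=k=\textnormal{id}$, and together with their symmetric counterparts they force the cross-seed equalities. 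The paper simply cites \cref{Reciprocity} at this step, and you should too.
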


Similarly we can characterize the \(M_{U\vee W}\)-ultrafilters that belong to \(M_{U\vee W}\):

\begin{cor}[UA]\label{CanonicalInternal}
Suppose \(U,W\in \textnormal{Un}\) and \(F\) is a countably complete \(M_{U\vee W}\)-ultrafilter. Then \(F\in M_{U\vee W}\) if and only if \(F\in M_U\cap M_W\).
\begin{proof}
For ease of notation let \(D = U\vee Z\). We may assume without loss of generality \(F\) is \(M_D\)-uniform on some ordinal.

Clearly \(F\in M_D\) implies \(F\in M_U\cap M_W\). 

Conversely assume \(F\in M_U\cap M_W\). Then \(U\) and \(W\) divide \(D\oplus F\) via \(j^{M_{D}}_F\circ j^{M_U}_{t_U(W)}\) and \(j^{M_{D}}_F\circ j^{M_W}_{t_W(U)}\). Hence \(D\) divides \(D\oplus F\). 

Let \[i : M_{D}\to M^{M_{D}}_F\] be an internal ultrapower embedding witnessing that \(D\) divides \(D\oplus F\). We must verify that \(i = j^{M_{D}}_F\). Since \(i\circ j_{D} = j^{M_{D}}_F\circ j_{D}\) by the definition of division, it suffices to show that \(i([\text{id}]_{D}) = j^{M_{D}}_F([\text{id}]_{D})\). For this it is enough to show that 
\begin{align*}i(j^{M_U}_{t_U(W)}([\text{id}]_U)) &= j^{M_{D}}_F(j^{M_U}_{t_U(W)}([\text{id}]_U))\\ 
i(j^{M_W}_{t_W(U)}([\text{id}]_W)) &= j^{M_{D}}_F(j^{M_W}_{t_W(U)}([\text{id}]_W))\end{align*}
This follows immediately from the uniqueness of definable embeddings on the ordinals, \cref{mindef}.
\end{proof}
\end{cor}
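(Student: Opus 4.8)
The plan is to reduce the statement to the already-established \cref{CanonicalComparison} together with the uniqueness of definable embeddings, \cref{mindef}, just as the theorem statement is framed. First I would normalize: replacing $F$ by a Rudin–Keisler equivalent ultrafilter, we may assume $F$ is a uniform $M_D$-ultrafilter on an ordinal, where for brevity $D = U\vee W$. The forward direction is immediate: if $F\in M_D$, then since $M_D\subseteq M_U$ and $M_D\subseteq M_W$ (these inclusions follow from $U,W\D D$, which is exactly the content of \cref{CanonicalComparison} applied with $Z=D$, or directly from the definition of $\vee$), we get $F\in M_U\cap M_W$.

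For the converse, suppose $F\in M_U\cap M_W$. The key observation is that $U$ and $W$ both divide $D\oplus F$: the embeddings $j^{M_D}_F\circ j^{M_U}_{t_U(W)} : M_U\to M^{M_D}_F$ and $j^{M_D}_F\circ j^{M_W}_{t_W(U)} : M_W\to M^{M_D}_F$ are internal ultrapower embeddings (each is a composition of an internal embedding with $j^{M_D}_F$, which is internal to $M_D$ and hence to $M_U$ and $M_W$ respectively, using $F\in M_U\cap M_W$), and they commute with $j_U$ and $j_W$ respectively because $j^{M_U}_{t_U(W)}\circ j_U = j_D = j^{M_W}_{t_W(U)}\circ j_W$. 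So $U,W\D D\oplus F$, and \cref{CanonicalComparison} gives $D = U\vee W\D D\oplus F$. Let $i : M_D\to M^{M_D}_F$ be an internal ultrapower embedding witnessing this division, so $i\circ j_D = j^{M_D}_F\circ j_D$.

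It remains to show $i = j^{M_D}_F$; since both commute with $j_D$, and any internal ultrapower embedding out of $M_D$ is determined by its action on $j_D[V]\cup\{[\textnormal{id}]_D\}$, it suffices to show $i([\textnormal{id}]_D) = j^{M_D}_F([\textnormal{id}]_D)$. Using the identity $[\textnormal{id}]_D = j^{M_U}_{t_U(W)}([\textnormal{id}]_U)\oplus j^{M_W}_{t_W(U)}([\textnormal{id}]_W)$ (from the definition of the natural sum and \cref{Reciprocity}), and the fact that $i$ is an elementary embedding respecting $\oplus$, it is enough to check that $i$ and $j^{M_D}_F$ agree on each of $j^{M_U}_{t_U(W)}([\textnormal{id}]_U)$ and $j^{M_W}_{t_W(U)}([\textnormal{id}]_W)$. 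Both $i\circ j^{M_U}_{t_U(W)}$ and $j^{M_D}_F\circ j^{M_U}_{t_U(W)}$ are elementary embeddings $M_U\to M^{M_D}_F$; the latter is definable from parameters over $M_U$ (as $F\in M_U$), so by \cref{mindef} it is pointwise $\leq$ the former on the ordinals, and symmetrically — applying \cref{mindef} in the other direction requires knowing $i\circ j^{M_U}_{t_U(W)}$ is also definable over $M_U$, which may fail, so instead one argues that $i\circ j^{M_U}_{t_U(W)}$ and $j^{M_D}_F\circ j^{M_U}_{t_U(W)}$ both factor through $j_U$ identically and both have the same ``seed," or more simply one applies \cref{mindef} to the pair as in the proof of \cref{DivisionAbsoluteness} to conclude equality directly. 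The main obstacle is precisely this last agreement-of-embeddings step: making sure that $i$ genuinely equals $j^{M_D}_F$ rather than just dividing it. I expect this to go through by the same mechanism used in \cref{DivisionUniqueness} and \cref{DivisionAbsoluteness} — the relevant definable embedding is $j^{M_D}_F$ itself, and \cref{mindef} forces $i(j^{M_U}_{t_U(W)}([\textnormal{id}]_U)) = j^{M_D}_F(j^{M_U}_{t_U(W)}([\textnormal{id}]_U))$ and likewise for the $W$-component, hence $i([\textnormal{id}]_D) = j^{M_D}_F([\textnormal{id}]_D)$, so $i = j^{M_D}_F$ and therefore $F\in M_D$.
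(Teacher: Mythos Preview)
Your proposal is correct and follows essentially the same route as the paper: reduce to uniform $F$, observe $U,W\D D\oplus F$ via the obvious compositions, apply \cref{CanonicalComparison} to get $D\D D\oplus F$, and then argue the witnessing internal embedding $i$ must equal $j^{M_D}_F$ by checking agreement on the two natural-sum components of $[\textnormal{id}]_D$ via \cref{mindef}. Your one moment of hesitation is unnecessary: $i\circ j^{M_U}_{t_U(W)}$ \emph{is} definable over $M_U$, since $i$ is internal to $M_D = M^{M_U}_{t_U(W)}$ and hence definable over $M_U$; thus \cref{mindef} applies in both directions and yields equality on the ordinals directly, exactly as in \cref{DivisionAbsoluteness}.
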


This has the following useful corollary.

\begin{thm}[UA]\label{DivisionCharacterization}
For \(U,W\in \textnormal{Un}\), the following are equivalent:
\begin{enumerate}[(1)]
\item \(U\) divides \(W\).
\item \(M_W\subseteq M_U\).
\item \(t_W(U)\in M_U\).
\item For some \(U_*\equiv^{M_W} t_W(U)\), \(U_*\in M_U\).
\item \(t_W(U)\) is principal.
\end{enumerate}
\begin{proof}
This is arranged as a round-robbin proof, and the only nontrivial implication is from (4) to (5). 

This is a generalization of the proof that a nonprincipal ultrafilter does not belong to its target model. Suppose \(U_*\in M_U\) for some \(U_*\equiv^{M_W} t_W(U)\). Let \(\delta = \textsc{sp}(U_*)\). Then \(j^{M_W}_{t_W(U)}\restriction \delta \in M_U\) since it can be computed in any inner model containing \(U_*\). Let \(N = M_{U\vee W}\). Consider the class \(C\) of sequences of ordinals \(s\) of the form \(f\circ (j^{M_W}_{t_W(U)}\restriction \delta)\) for some \(f\in N\). Clearly \(C\) is a definable class of both \(M_U\) and \(M_W\). 

We claim that \(C = \text{Ord}^\delta \cap M_W\). To see this, fix \(s\in \text{Ord}^\delta\cap M_W\), and we will show \(s\in C\). Working in \(M_W\), choose \(\langle g_\alpha : \alpha < \delta\rangle\) such that \(s_\alpha = [g_\alpha]_{t_W(U)}\) for all \(\alpha < \delta\). Then let \(\langle h_\alpha : \alpha < j^{M_W}_{t_W(U)}(\delta)\rangle = j_{t_W(U)}^{M_W}(\langle g_\alpha : \alpha < \delta\rangle)\), and let \(f = \langle h_\alpha([\text{id}]_{t_W(U)}) : \alpha < j^{M_W}_{t_W(U)}(\delta)\rangle\). Easily \(f \circ (j^{M_W}_{t_W(U)}\restriction \delta) = s\).

Since \(C\) is a definable class of \(M_U\), one can form in \(M_U\) the ultrapower of \(\text{Ord}\) by \(U_*\) using only functions from \(C\). The ultrapower embedding is precisely \(j^{M_W}_{t_W(U)}\restriction \text{Ord}\). But now repeating the argument of the previous paragraph, it follows that \(P(\text{Ord})\cap M_W\) is a definable subclass of \(M_U\), and hence \(M_W = L(P(\text{Ord})\cap M_W)\) is a definable subclass of \(M_U\).

In particular, \(j^{M_W}_{t_W(U)} : M_W\to N\) is definable over \(M_U\). Let \(i = j^{M_W}_{t_W(U)}\restriction N\). Then \(i\) is definable both over \(M_U\) and over \(M_W\). Since \(i\) is induced by a countably complete \(N\)-ultrafilter (see \cref{PushforwardLemma}), using \cref{CanonicalInternal}, it follows that \(i\) is an internal ultrapower embedding of \(N\).

Note however that \(i\) and \(j^{M_W}_{t_W(U)}(j^{M_W}_{t_W(U)})\) are definable elementary embeddings of \(N\) with the same target model. Hence they agree on the ordinals by \cref{mindef}. Assuming towards a contradiction that \(t_W(U)\) is nonprincipal with completeness \(\kappa\), we have \[\textsc{crt}(i) = \kappa < j^{M_W}_{t_W(U)}(\kappa) = \textsc{crt}(j^{M_W}_{t_W(U)}(j^{M_W}_{t_W(U)}))\]
a contradiction. Therefore \(t_W(U)\) is principal, as desired.
\end{proof}
\end{thm}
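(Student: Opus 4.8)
The plan is to prove the five conditions equivalent by a round-robin $(1)\Rightarrow(2)\Rightarrow(3)\Rightarrow(4)\Rightarrow(5)\Rightarrow(1)$, in which four of the five implications are essentially free and all the content sits in $(4)\Rightarrow(5)$. Indeed, $(1)\Rightarrow(2)$ because an internal ultrapower of $M_U$ is an inner model of $M_U$; $(2)\Rightarrow(3)$ because $t_W(U)\in\textnormal{Un}^{M_W}\subseteq M_W$; $(3)\Rightarrow(4)$ by taking $U_* = t_W(U)$; and $(5)\Rightarrow(1)$ because if $t_W(U)$ is principal then $j^{M_W}_{t_W(U)}$ is trivial, so $W\oplus t_W(U)\equiv W$ (a natural sum with a principal ultrafilter is Rudin--Keisler equivalent to the first summand), whence by the Reciprocity Theorem $U\oplus t_U(W) = W\oplus t_W(U)\equiv W$, and the characterization of divisibility via $U\oplus W_*\equiv W$, with $W_* = t_U(W)\in\textnormal{Un}^{M_U}$, gives $U\D W$.

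The real work is $(4)\Rightarrow(5)$: from a single ultrafilter $U_*\in M_U$ with $U_*\equiv^{M_W}t_W(U)$ we must conclude that $t_W(U)$ is principal. I would view this as the natural generalization of the classical fact that a countably complete nonprincipal ultrafilter $D$ never belongs to $M_D$, whose cleanest proof is: if $D\in M_D$, then $j_D\restriction M_D$ and $j_D(j_D)$ are two elementary embeddings of $M_D$ with the common target $M^{M_D}_{j_D(D)}$, both definable over $M_D$, so by the Dodd--Jensen-style minimality of definable embeddings (\cref{mindef}) they agree on the ordinals --- yet their critical points are $\textsc{crt}(D)$ and $j_D(\textsc{crt}(D))$, a contradiction. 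Transplanting this, I would put $N := M_{U\vee W} = M^{M_U}_{t_U(W)} = M^{M_W}_{t_W(U)}$ in the role of $M_D$ and $j^{M_W}_{t_W(U)}\colon M_W\to N$ in the role of $j_D$. Granting that $M_W$ is a \emph{definable class of $M_U$}, the embedding $j^{M_W}_{t_W(U)}$ is definable over $M_U$, so $i := j^{M_W}_{t_W(U)}\restriction N$ is an elementary embedding of $N$ into $M^N_{j^{M_W}_{t_W(U)}(t_W(U))}$ definable over both $M_U$ and $M_W$; being the restriction of an ultrapower embedding, $i$ is induced by a countably complete $N$-ultrafilter, which --- as $i$ is definable over both $M_U$ and $M_W$ --- lies in $M_U\cap M_W$, so $i$ is an internal ultrapower embedding of $N$ by \cref{CanonicalInternal}. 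Then $i$ and $j^{M_W}_{t_W(U)}\bigl(j^{M_W}_{t_W(U)}\bigr)$ are definable elementary embeddings of $N$ with the same target, hence agree on the ordinals by \cref{mindef}; but if $t_W(U)$ were nonprincipal with completeness $\kappa$, their critical points would be $\kappa$ and $j^{M_W}_{t_W(U)}(\kappa)$ --- contradiction. So $t_W(U)$, hence $U_*$, is principal.

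The crux, and the step I expect to be the main obstacle, is producing $M_W$ as a definable class of $M_U$ out of the lone datum $U_*\in M_U$; the difficulty is a chicken-and-egg problem, in that recovering $j^{M_W}_{t_W(U)}$ ostensibly requires every function of $M_W$. I would break it in three stages. First, the initial segment $j^{M_W}_{U_*}\restriction\delta$, where $\delta = \textsc{sp}(U_*)$, is computed by a purely local ultrapower construction from $U_*$ alone, so it lies in any inner model containing $U_*$, in particular in $M_U$. Second, using that $N = M^{M_U}_{t_U(W)}$ is definable over $M_U$, the class $C$ of all sequences $f\circ(j^{M_W}_{U_*}\restriction\delta)$ with $f\in N$ is definable over $M_U$, and I would verify $C = \textnormal{Ord}^\delta\cap M_W$: an arbitrary $s\in\textnormal{Ord}^\delta\cap M_W$ is realized by representing each coordinate $s_\alpha$ as $[g_\alpha]^{M_W}_{U_*}$ inside $M_W$ and then reading off the values of $j^{M_W}_{U_*}(\langle g_\alpha\rangle_{\alpha<\delta})$ at the seed $[\textnormal{id}]^{M_W}_{U_*}$. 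Third, with $\textnormal{Ord}^\delta\cap M_W$ now definable over $M_U$, one forms inside $M_U$ the ultrapower of the ordinals by $U_*$ using only functions from $C$; its ultrapower embedding is $j^{M_W}_{U_*}\restriction\textnormal{Ord}$, so repeating the second stage with this longer embedding shows $P(\textnormal{Ord})\cap M_W$ is definable over $M_U$, and hence so is $M_W = L(P(\textnormal{Ord})\cap M_W)$. I expect the second and third stages --- pinning down why $C$ is \emph{all} of $\textnormal{Ord}^\delta\cap M_W$, and confirming that the ultrapower reconstructed inside $M_U$ really is the restriction of $j^{M_W}_{U_*}$ --- to be the technically delicate parts; everything afterward is bookkeeping with \cref{mindef} and \cref{CanonicalInternal}.
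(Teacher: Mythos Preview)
Your proposal is correct and follows essentially the same argument as the paper's proof: the same round-robin structure with the only nontrivial step being $(4)\Rightarrow(5)$, carried out by recovering $j^{M_W}_{t_W(U)}\restriction\delta$ from $U_*$, defining the class $C$ of sequences $f\circ(j^{M_W}_{t_W(U)}\restriction\delta)$ with $f\in N=M_{U\vee W}$, showing $C=\textnormal{Ord}^\delta\cap M_W$, bootstrapping to make $M_W$ definable over $M_U$, and then comparing $i=j^{M_W}_{t_W(U)}\restriction N$ with $j^{M_W}_{t_W(U)}(j^{M_W}_{t_W(U)})$ via \cref{CanonicalInternal} and \cref{mindef}. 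The only cosmetic difference is that you sometimes write $j^{M_W}_{U_*}$ where the paper writes $j^{M_W}_{t_W(U)}$, but these coincide since $U_*\equiv^{M_W}t_W(U)$.
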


The following corollary has the psychological benefit that we never have to check that diagrams of internal ultrapowers commute. 

\begin{cor}[UA]\label{Commutativity}
Suppose \(U\) and \(U'\) are countably complete ultrafilters such that \(M_U = M_{U'}\). Then \(j_U = j_{U'}\).
\end{cor}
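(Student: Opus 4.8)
The plan is to use \cref{DivisionCharacterization} to produce an internal ultrapower embedding of $M_U$ carrying $j_U$ to $j_{U'}$, and then to use \cref{mindef} to show that this embedding is the identity.

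First I would reduce to the case $U,U'\in\textnormal{Un}$: restricting an arbitrary countably complete ultrafilter to a member of itself of least cardinality and transporting along a bijection with that cardinal yields a uniform ultrafilter on an ordinal with literally the same ultrapower and the same ultrapower embedding, so the statement for $\textnormal{Un}$ gives the general case. So assume $U,U'\in\textnormal{Un}$. Since $M_{U'}=M_U$, in particular $M_{U'}\subseteq M_U$, and hence $U$ divides $U'$ by the equivalence of (1) and (2) in \cref{DivisionCharacterization}. Fixing a witness $W_*\in\textnormal{Un}^{M_U}$ and setting $i=j^{M_U}_{W_*}$, we get $M^{M_U}_{W_*}=M_{U'}=M_U$ and $i\circ j_U=j_{U'}$; thus $i$ is an internal ultrapower embedding of $M_U$ into itself.

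Next I would argue that $i=\text{id}$. Being an internal ultrapower embedding, $i$ is definable from parameters (from $W_*$) over $M_U$, so applying \cref{mindef} to $i$ and $\text{id}\colon M_U\to M_U$ — with $i$ as the definable embedding — gives $i(\alpha)\leq\alpha$ for every ordinal $\alpha$; since elementary embeddings of transitive classes are nondecreasing on the ordinals, $i(\alpha)=\alpha$ for all $\alpha$. Now $M^{M_U}_{W_*}$ is generated over the range of $i$ by the ordinal $[\text{id}]_{W_*}$, and $i$ fixes that ordinal, so $i$ is onto; an $\in$-isomorphism of a transitive class onto itself is the identity. (Equivalently, one checks $i=\text{id}$ by induction on rank, using that $i$ fixes every ordinal.) Hence $j_{U'}=i\circ j_U=j_U$, as desired.

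The only substantive ingredients are \cref{DivisionCharacterization} — which is where the Ultrapower Axiom enters, via the linearity of the seed order — and \cref{mindef}; I do not expect a real obstacle. The point is simply that the hypothesis $M_{U'}\subseteq M_U$ immediately forces $U$ to divide $U'$, after which the minimality of definable embeddings forces the factor embedding to be trivial.
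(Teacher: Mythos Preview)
Your proof is correct and matches the paper's intended argument: the paper presents \cref{Commutativity} as an immediate corollary of \cref{DivisionCharacterization} without an explicit proof, and your derivation---using $(2)\Rightarrow(1)$ to obtain an internal self-embedding of $M_U$ and then \cref{mindef} to kill it---is exactly the kind of one-line deduction intended. A slight variant (perhaps closer to what the author had in mind) is to apply $(2)\Rightarrow(5)$ in both directions, so that $t_U(U')$ and $t_{U'}(U)$ are both principal, and then read off $j_U=j_{U'}$ from the Reciprocity equation $j^{M_U}_{t_U(U')}\circ j_U=j^{M_{U'}}_{t_{U'}(U)}\circ j_{U'}$; but since the proof of $(4)\Rightarrow(5)$ in \cref{DivisionCharacterization} already runs through \cref{mindef}, your route is if anything more direct.
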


As a corollary of \cref{CanonicalComparison}, we have another reciprocity result:

\begin{cor}[UA]\label{Reciprocity2}
Suppose \(U,W,D\in \textnormal{Un}\). Then \(t_U(D)\) divides \(t_U(W)\) in \(M_U\) if and only if \(t_W(D)\) divides \(t_W(U)\) in \(M_W\).
\begin{proof}
This is a calculation using \cref{CanonicalComparison}:
\begin{align*}
t_U(D)\text{ divides }t_U(W)\text{ in }M_U&\iff U\vee D\text{ divides }U\vee W\\
&\iff D\text{ divides }U\vee W\\
&\iff W\vee D\text{ divides }U \vee W\\
&\iff t_W(D)\text{ divides }t_W(U)\text{ in } M_W
\end{align*}
In the second and third equivalences we use \cref{CanonicalComparison}.
\end{proof}
\end{cor}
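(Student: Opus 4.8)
The plan is to reduce the claimed equivalence to \cref{CanonicalComparison} together with the absoluteness of division under natural sums (\cref{DivisionAbsoluteness}), by translating both sides into statements about divisibility of ultrafilters in \(V\) and observing that each side is equivalent to the single assertion ``\(D\) divides \(U\vee W\)''.

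First I would rewrite the left-hand side. Since \(U\vee D = U\oplus t_U(D)\) and \(U\vee W = U\oplus t_U(W)\) by definition, \cref{DivisionAbsoluteness} gives that \(t_U(D)\) divides \(t_U(W)\) in \(M_U\) if and only if \(U\vee D\) divides \(U\vee W\). Next I would apply \cref{CanonicalComparison} with \(Z = U\vee W\): it says \(U, D\D U\vee W\) if and only if \(U\vee D\D U\vee W\). But \(U\) always divides \(U\vee W\) — indeed \(U\vee W = U\oplus t_U(W)\) is a natural sum with left summand \(U\), so the internal embedding \(j^{M_U}_{t_U(W)}\) witnesses \(U\D U\vee W\) by \cref{UltrafilterSum} — and hence the condition ``\(U, D\D U\vee W\)'' collapses to just ``\(D\D U\vee W\)''. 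So \(U\vee D\D U\vee W\) if and only if \(D\D U\vee W\).

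Symmetrically, using \(U\vee W = W\vee U\) and the fact that \(W\) divides \(W\vee U\), a second application of \cref{CanonicalComparison} (again with \(Z = U\vee W\)) shows \(W\vee D\D U\vee W\) if and only if \(D\D U\vee W\); and \cref{DivisionAbsoluteness} shows \(W\vee D\D U\vee W\) if and only if \(t_W(D)\) divides \(t_W(U)\) in \(M_W\). Chaining these equivalences yields the corollary.

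I do not expect a real obstacle here. The only points requiring a moment's care are that \cref{CanonicalComparison} is invoked in both directions — the direction \(U\vee D\D Z\Rightarrow U,D\D Z\) being the trivial one, since \(U,D\D U\vee D\) and division is transitive — and that one must notice \(U\D U\vee W\) and \(W\D U\vee W\) in order to discard the superfluous conjuncts. Everything else is routine bookkeeping with the \(\oplus\) and \(\vee\) notation.
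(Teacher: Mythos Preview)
Your proposal is correct and follows essentially the same route as the paper: the paper presents precisely the chain of equivalences you describe, citing \cref{CanonicalComparison} for the middle steps and leaving the first and last equivalences (which you correctly attribute to \cref{DivisionAbsoluteness}) implicit. Your version simply fills in the justifications the paper omits, including the observation that \(U\D U\vee W\) and \(W\D U\vee W\) are automatic.
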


We can also prove that translation functions preserve division.
\begin{thm}[UA]
Suppose \(U\) is a countably complete ultrafilter and \(D\D W\) are uniform ultrafilters. Then \(t_U(D)\) divides \(t_U(W)\) in \(M_U\).
\begin{proof}
Note that \(D\D W \D U \vee W\) so \(D\D U\vee W\). Therefore by \cref{CanonicalComparison}, \(U\vee D\D U\vee W\). This implies \(t_U(D)\D^{M_U} t_U(W)\) by \cref{DivisionAbsoluteness}.
\end{proof}
\end{thm}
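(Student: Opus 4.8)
The plan is to reduce the statement to \cref{CanonicalComparison} and \cref{DivisionAbsoluteness}; together these do essentially all the work, and the only new content is a short bookkeeping argument with the operation \(\vee\) together with the transitivity of the division order.

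First I would record the relevant divisions among \(D\), \(W\), and \(U\vee W\). Since \(U\vee W = W\oplus t_W(U)\), \cref{UltrafilterSum} shows that \(j^{M_W}_{t_W(U)}\) is an internal ultrapower embedding of \(M_W\) with \(j^{M_W}_{t_W(U)}\circ j_W = j_{U\vee W}\); hence \(W\D U\vee W\). Combining this with the hypothesis \(D\D W\) and the transitivity of the division order gives \(D\D U\vee W\). Likewise \(U\D U\vee W\), since \(U\vee W = U\oplus t_U(W)\) exhibits \(U\) as dividing \(U\vee W\) via \(j^{M_U}_{t_U(W)}\). (Also note \(U\vee W\in\textnormal{Un}\), so that \cref{CanonicalComparison} applies to it.)

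Now apply \cref{CanonicalComparison} with its ``\(Z\)'' taken to be \(U\vee W\): from \(U\D U\vee W\) and \(D\D U\vee W\) we conclude \(U\vee D\D U\vee W\). Finally, \(U\vee D = U\oplus t_U(D)\) and \(U\vee W = U\oplus t_U(W)\) by definition, so \cref{DivisionAbsoluteness}, applied with ambient ultrafilter \(U\) and the \(M_U\)-ultrafilters \(t_U(D)\) and \(t_U(W)\), converts \(U\oplus t_U(D)\D U\oplus t_U(W)\) into \(t_U(D)\D^{M_U} t_U(W)\), which is exactly the conclusion.

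The only real obstacle is hidden inside \cref{CanonicalComparison}, whose proof leans on the Reciprocity Theorem and \cref{mindef}; granting that result, the present argument is pure bookkeeping. I briefly considered a more hands-on route through \cref{TechnicalLemma} and \cref{op} — trying to manufacture the internal embedding witnessing \(t_U(D)\D^{M_U} t_U(W)\) directly in \(M_U\) from one witnessing \(D\D W\) — but this appears to require re-deriving a canonical-comparison-type fact in any case, so the route above is cleaner.
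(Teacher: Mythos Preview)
Your proposal is correct and follows essentially the same approach as the paper's proof: establish \(D\D U\vee W\) by transitivity through \(W\), invoke \cref{CanonicalComparison} to obtain \(U\vee D\D U\vee W\), and then apply \cref{DivisionAbsoluteness} to conclude \(t_U(D)\D^{M_U} t_U(W)\). You simply spell out a few steps the paper leaves implicit, in particular the observation that \(U\D U\vee W\) (needed for the hypothesis of \cref{CanonicalComparison}) and the unwinding of \(U\vee D\) and \(U\vee W\) as natural sums before invoking \cref{DivisionAbsoluteness}.
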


As an immediate corollary, Rudin-Keisler equivalent ultrafilters translate to Rudin-Keisler equivalent ultrafilters.

\begin{cor}[UA]
If \(U\equiv U'\) then \(t_W(U)\equiv^{M_W} t_W(U')\).
\end{cor}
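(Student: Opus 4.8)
The plan is to read off the statement from the immediately preceding theorem (that translation functions preserve the division order) together with the observation that Rudin--Keisler equivalence of countably complete ultrafilters is exactly two-sided division.

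First I would note that if $U\equiv U'$ then $M_U=M_{U'}$, so in particular $M_{U'}\subseteq M_U$ and $M_U\subseteq M_{U'}$; by \cref{DivisionCharacterization} (the equivalence of (1) and (2)) this yields $U\D U'$ and $U'\D U$. Now apply the preceding theorem twice, taking the ambient ultrafilter there to be $W$: from $U\D U'$ we get $t_W(U)\D^{M_W}t_W(U')$, and from $U'\D U$ we get $t_W(U')\D^{M_W}t_W(U)$. Thus $t_W(U)$ and $t_W(U')$ divide each other in $M_W$.

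Finally, running \cref{DivisionCharacterization} inside $M_W$ in both directions gives $M^{M_W}_{t_W(U')}\subseteq M^{M_W}_{t_W(U)}$ and $M^{M_W}_{t_W(U)}\subseteq M^{M_W}_{t_W(U')}$, so these two ultrapowers of $M_W$ coincide; by \cref{Commutativity} applied inside $M_W$ their ultrapower embeddings agree as well, and hence $t_W(U)\equiv^{M_W}t_W(U')$, as desired.

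There is essentially no obstacle here: the only thing requiring care is the bookkeeping that identifies $\equiv$ with mutual $\D$ (each direction of which is an instance of \cref{DivisionCharacterization}), together with the trivial point that the preceding theorem is invoked once in each direction. All of the substance has already been packaged into the results cited.
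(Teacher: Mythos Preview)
Your argument is correct and is exactly the route the paper has in mind: the corollary is stated as immediate from the preceding theorem, and the content is precisely that $\equiv$ is mutual $\D$, so preservation of $\D$ by $t_W$ yields preservation of $\equiv$. One small simplification: in your final step you do not need to pass through \cref{DivisionCharacterization} and \cref{Commutativity} inside $M_W$; mutual division already gives equivalence directly in ZFC via \cref{DivisionUniqueness} (if $i:M_A\to M_B$ and $i':M_B\to M_A$ witness division both ways, then $i'\circ i$ and the identity are both internal embeddings $M_A\to M_A$ over $j_A$, hence equal).
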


Actually one does not need much machinery to prove the preceding corollary, and it is essentially provable without UA: one can show in ZFC that if \(U_*\) is minimal such that \(W^-(U_*) = U\) and \(f : \textsc{sp}(U)\to \text{Ord}\) is one-to-one on a set in \(U\), then setting \(U' = f_*(U)\) and \(U_*' = j_W(f)_*(U_*)\), \(U_*'\) is minimal such that \(W^-(U_*') = U'\).

\subsection{The Internal Relation}
In this subsection we define a version of the generalized Mitchell order called the internal relation that is compatible with the abstract techniques we have developed so far. The analysis of the internal relation (and similar notions) under UA is instrumental in the analysis of supercompactness. 

Once the supercompactness analysis is carried out, however, we will be able to characterize the precise relationship between the internal relation and generalized Mitchell order assuming UA + GCH; they are essentially interdefinable. The details appear in \cref{LastWord}. One can therefore view the internal relation as no more than a transitory definition aiding in the analysis of the Mitchell order. If one is interested in the ZFC theory, this view probably does not hold up.

\begin{defn}
The internal relation is defined on countably complete ultrafilters \(U\) and \(W\) by setting \(U\I W\) if and only if \(j_U\restriction M_W\) is an amenable class of \(M_W\).
\end{defn}

To help the reader get his or her bearings, we include some immediate observations regarding the internal relation.

\begin{prp}
If \(W\) is \(\delta\)-supercompact and \(U\in \textnormal{Un}_{\leq\delta}\) then \(U\I W\) if and only if \(U\mo W\).
\end{prp}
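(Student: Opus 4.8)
The plan is to prove both implications directly from the definitions, using the supercompactness of $W$ to control the ultrapower $M_W$. Recall that $W$ being $\delta$-supercompact means $M_W$ is closed under $\delta$-sequences, and since $U \in \textnormal{Un}_{\leq \delta}$, the ultrapower $j_U$ is (essentially) determined by functions $f : \delta \to V$ on the space of $U$; in particular $M_U$ has size-controlled pieces that $M_W$ can see. For the direction $U \mo W \Rightarrow U \I W$: by definition of the Mitchell order $U \mo W$ means $U \in M_W$. Then $j_U$ is definable in $M_W$ from the parameter $U$ as the ultrapower map computed internally, so $j_U \restriction M_W = j_U^{M_W} \restriction M_W$ (one should check these agree, which follows since $M_W$ computes $\textnormal{Un}_{\leq \delta}$-ultrapowers correctly using its $\delta$-closure), and a definable class restricted to $M_W$ is certainly amenable to $M_W$. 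Hence $U \I W$.

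For the converse $U \I W \Rightarrow U \mo W$: assume $j_U \restriction M_W$ is an amenable class of $M_W$. The idea is to recover $U$ inside $M_W$ from this embedding. Since $U \in \textnormal{Un}_\delta$ for some $\delta' \leq \delta$ with $\delta' = \textsc{sp}(U)$, we have $U = \{ A \subseteq \delta' : [\textnormal{id}]_U \in j_U(A)\}$. The ordinal $[\textnormal{id}]_U$ lies in $M_U \subseteq$ the target of $j_U \restriction M_W$; the point is that $\delta' \leq \delta$ and $M_W$ is $\delta$-closed, so $P(\delta') \subseteq M_W$, and for each $A \in P(\delta')$ the value $j_U(A)$ is computed by the amenable class $j_U \restriction M_W$ applied to $A \in M_W$. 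Amenability means $M_W$ can collect, for the single set $P(\delta') \in M_W$, the restriction of $j_U \restriction M_W$ to $P(\delta')$; feeding in the seed $[\textnormal{id}]_U$ (which must itself be checked to lie in $M_W$ — it does, as $[\textnormal{id}]_U < j_U(\delta') \leq$ an ordinal, and ordinals below the relevant bound are in $M_W$) recovers $U$ as an element of $M_W$, i.e. $U \mo W$.

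The main obstacle I expect is the careful bookkeeping around what exactly "amenable" buys us and making sure the seed $[\textnormal{id}]_U$ is available inside $M_W$ — equivalently, checking that $j_U \restriction M_W$ really does capture enough of $j_U$ to reconstruct $U$, rather than just a fragment that forgets the critical point or the seed. The $\delta$-supercompactness of $W$ is doing the essential work here: it guarantees $P(\textsc{sp}(U)) \subseteq M_W$ so that "$A \mapsto j_U(A)$ for $A \in P(\textsc{sp}(U))$" is a genuine function on a set in $M_W$, whence amenability applies. A secondary subtlety is the agreement $j_U \restriction M_W = j_U^{M_W}\restriction M_W$ in the forward direction, which again relies on $M_W$ correctly computing ultrapowers by ultrafilters on ordinals $\leq \delta$; this should be a short argument from $\delta$-closure of $M_W$ plus the fact that such ultrapowers only use functions with domain of size $\leq \delta$.
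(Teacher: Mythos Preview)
Your proposal is correct and follows the natural argument; the paper itself omits the proof, treating the proposition as an immediate observation. Your two directions are exactly the expected ones: for $U \mo W \Rightarrow U \I W$, the $\delta$-closure of $M_W$ ensures every function $\delta' \to M_W$ with $\delta' = \textsc{sp}(U) \leq \delta$ already lies in $M_W$, so $j_U \restriction M_W = j_U^{M_W}$ is definable over $M_W$; for $U \I W \Rightarrow U \mo W$, since $P(\delta') \subseteq M_W$ by $\delta$-supercompactness and $j_U \restriction P(\delta') \in M_W$ by amenability, one recovers $U = \{A \in P(\delta') : [\text{id}]_U \in j_U(A)\}$ inside $M_W$ (the seed $[\text{id}]_U$ is just an ordinal, so your worry there is unfounded).
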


Unlike the generalized Mitchell order, however, assuming there are two measurable cardinals \(\kappa_0 < \kappa_1\), the internal relation is neither strict or transitive on nonprincipal ultrafilters (which is why it is not called the internal {\it order}). To see this, note that while the internal relation {\it is} irreflexive on nonprincipal ultrafilters, there exist pairs of ultrafilters \(U,W\in \textnormal{Un}\) with \(U\I W\) and \(W\I U\): by the following theorem, any \(\kappa_0\)-complete ultrafilter on \(\kappa_0\) and \(\kappa_1\)-complete ultrafilter on \(\kappa_1\) furnish an example.

\begin{thm}[Kunen]\label{CommutingUltrapowers}
Suppose \(U,W\in \textnormal{Un}\) satisfy \(\textsc{sp}(U) < \textsc{crt}(W)\). Then \(j^{M_U}_{j_U(W)} = j_W\restriction M_U\) and  \(j^{M_W}_{j_W(U)} = j_U\restriction M_W\). Therefore \(U\I W\) and \(W\I U\).
\end{thm}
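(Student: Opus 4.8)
The plan is to establish the two displayed identities $j^{M_W}_{j_W(U)} = j_U\restriction M_W$ and $j^{M_U}_{j_U(W)} = j_W\restriction M_U$; the relations $U\I W$ and $W\I U$ then follow at once, since these identities exhibit $j_U\restriction M_W$ and $j_W\restriction M_U$ as embeddings definable with parameters (namely $j_W(U)\in M_W$ and $j_U(W)\in M_U$) over $M_W$ and $M_U$ respectively, hence amenable. Write $\lambda = \textsc{sp}(U)$ and $\kappa = \textsc{crt}(W)$. Since $\textsc{crt}(j_W) = \kappa$, the ultrafilter $W$ is $\kappa$-complete; hence $\kappa$ is inaccessible and $M_W$ is closed under ${<}\kappa$-sequences. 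Because $U$ has rank below $\kappa$ we get $U\in V_\kappa\subseteq M_W$, so $j_W(U) = U$ and $j_W\restriction V_\kappa = \textnormal{id}$; in particular $P(\lambda)\subseteq M_W$, and --- as $\lambda < \kappa$ --- every function $f\colon\lambda\to M_W$ lying in $V$ already lies in $M_W$. With these observations it suffices to prove $j^{M_W}_U = j_U\restriction M_W$ and $j^{M_U}_{j_U(W)} = j_W\restriction M_U$.

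The main step is the first identity. The map $j_U\restriction M_W\colon M_W\to j_U(M_W)$ is elementary, being the restriction of $j_U$ to a class definable from the parameter $W$. I will argue that it is nothing but the ultrapower embedding of $M_W$ by $U$ with seed $b := [\textnormal{id}_\lambda]_U$. The $M_W$-ultrafilter on $\lambda$ derived from $j_U\restriction M_W$ and $b$ is $\{A\subseteq\lambda : A\in M_W,\ b\in j_U(A)\}$, and this equals $U$ since $P(\lambda)\subseteq M_W$. Moreover $b$ generates $j_U(M_W)$ over $M_W$: by {\L}o{\'s}'s theorem $j_U(M_W) = \{[f]_U : f\in V,\ f(\xi)\in M_W \text{ for } U\text{-almost all }\xi\}$, and any such $f$ may be altered on a $U$-null set to a total function $f'\colon\lambda\to M_W$ with $[f']_U = [f]_U$; by closure $f'\in M_W$, and $[f']_U = j_U(f')(b)$. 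Hence $j_U\restriction M_W$ is an ultrapower embedding of $M_W$ generated by $U$ and $b$, so $j_U\restriction M_W = j^{M_W}_U$ and $j_U(M_W) = M^{M_W}_U$.

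For the second identity, elementarity of $j_U$ gives $j^{M_U}_{j_U(W)} = j_U(j_W)$, whose target $j_U(M_W)$ equals $M^{M_W}_U$ by the previous paragraph, which in turn equals $j_W(M_U) = M^{M_W}_{j_W(U)}$, the target of $j_W\restriction M_U$. Thus $j_U(j_W)$ and $j_W\restriction M_U$ are elementary maps $M_U\to M^{M_W}_U$, and it is enough to check they agree on the generating set $j_U[V]\cup\{b\}$ of $M_U$. Since $|j_U(\lambda)|\leq 2^\lambda < \kappa$ and $\kappa$ is regular, $b < j_U(\lambda) < \kappa$, while $\textsc{crt}(j_W) = \kappa$ and $\textsc{crt}(j_U(j_W)) = j_U(\kappa) = \kappa$; so both maps fix $b$. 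For $x\in V$, writing $j_W(x) = (j_W\restriction V_\alpha)(x)$ and applying $j_U$ shows $j_U(j_W)(j_U(x)) = j_U(j_W(x))$, while writing $j_U(x) = (j_U\restriction V_\alpha)(x)$ and applying $j_W$ shows, via $j_W(j_U) = j^{M_W}_{j_W(U)} = j^{M_W}_U$, that $j_W(j_U(x)) = j^{M_W}_U(j_W(x))$; and since $j_W(x)\in M_W$ the first identity gives $j^{M_W}_U(j_W(x)) = j_U(j_W(x))$. Hence $j_U(j_W)$ and $j_W\restriction M_U$ agree on $j_U[V]$ as well, so they are equal.

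The real content is the identification in the second paragraph: showing that $j_U\restriction M_W$ is the full ultrapower $j^{M_W}_U$, rather than merely an elementary embedding with the same critical point, is what drives everything, and it rests on the closure of $M_W$ under ${<}\kappa$-sequences (so that repaired representing functions fall back into $M_W$) together with the observation that all the ambient target models coincide. Everything after that is routine manipulation of elementarity.
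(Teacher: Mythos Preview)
The paper does not supply its own proof of this theorem: it is stated as a classical result of Kunen and used as a black box. So there is no paper argument to compare against; I will simply assess your proof.

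Your argument is correct and is essentially the standard one. The key step---showing that $j_U\restriction M_W = j^{M_W}_U$ by verifying that every element of $j_U(M_W)$ is represented by a function $\lambda\to M_W$ which, after repair on a $U$-null set, lies in $M_W$ by ${<}\kappa$-closure---is exactly right. The second identity is then deduced cleanly by checking agreement on the generators $j_U[V]\cup\{b\}$, using $j_U(\kappa)=\kappa$ (which you implicitly invoke when asserting $\textsc{crt}(j_U(j_W))=\kappa$; this holds since $\kappa$ is inaccessible and $\lambda<\kappa$, so $\kappa^\lambda=\kappa$ and hence $j_U$ fixes $\kappa$). The only cosmetic point is that your justification of $j_W(j_U(x)) = j^{M_W}_U(j_W(x))$ via restricting to a $V_\alpha$ and applying $j_W$ is a bit compressed, but it is standard and correct.
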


The conclusion of \cref{CommutingUltrapowers} is often abbreviated by saying ``\(U\) and \(W\) commute," since in particular it implies \(j_U\circ j_W = j_W\circ j_U\). The results \cref{InternalCommutativity} and and more powerfully those of \cref{LastWord} argue that \cref{CommutingUltrapowers} is essentially the only way in which the internal relation fails to be strict.

On the other hand, restricted to \(\textnormal{Un}_\delta\) for a fixed \(\delta\), the internal relation is strict and indeed wellfounded. In fact the seed order extends the internal relation on \(\textnormal{Un}_\delta\):

\begin{prp}\label{SeedExtendInternal}
For any ordinal \(\delta\), the seed order extends the internal relation on \(\textnormal{Un}_\delta\).
\end{prp}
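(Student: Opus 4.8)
The plan is to verify the hypotheses of \cref{SeedOrderChar}. Fix $U,W\in\textnormal{Un}_\delta$ with $U\I W$, so that $j_U\restriction M_W$ is amenable to $M_W$. One may assume $U$ is nonprincipal, hence $\delta$ is a limit ordinal: if $U$ is principal then $\delta$ is a successor and $\textnormal{Un}_\delta$ is a singleton, a degenerate case. The guiding idea is that the amenability of $j_U\restriction M_W$ lets one recognize this restriction as an honest ultrapower embedding of $M_W$ by an ultrafilter lying \emph{in} $M_W$ and concentrated on an ordinal $\le[\textnormal{id}]_W$.

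First I would record the standard facts $j_U(M_W)=M^{M_U}_{j_U(W)}$, $j^{M_U}_{j_U(W)}\circ j_U=j_U\circ j_W$ (the elementarity of $j_U$ applied to the definition of $j_W$), and consequently $j_U([\textnormal{id}]_W)=[\textnormal{id}]^{M_U}_{j_U(W)}$. Setting $a:=j^{M_U}_{j_U(W)}([\textnormal{id}]_U)$, the crucial claim is that $j_U(M_W)$ is generated over $j_U[M_W]$ by the single point $a$. To see this: $M_U$ is the hull of $j_U[V]\cup\{[\textnormal{id}]_U\}$ and $M_W$ the hull of $j_W[V]\cup\{[\textnormal{id}]_W\}$; applying $j^{M_U}_{j_U(W)}$ to the former and using commutativity shows $M^{M_U}_{j_U(W)}$ is the hull of $j_U[j_W[V]]\cup\{a,\,[\textnormal{id}]^{M_U}_{j_U(W)}\}$, and since $[\textnormal{id}]^{M_U}_{j_U(W)}=j_U([\textnormal{id}]_W)\in j_U[M_W]$ and $j_U[j_W[V]]\subseteq j_U[M_W]$, this is already the hull of $j_U[M_W]\cup\{a\}$. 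Thus $j_U\restriction M_W$ is the ultrapower embedding $j^{M_W}_{U_*}$ of $M_W$ by $U_*:=\{X\in P(\delta_*)^{M_W}:a\in j_U(X)\}$, where $\delta_*$ is least with $a<j_U(\delta_*)$, and $M^{M_W}_{U_*}=j_U(M_W)=M^{M_U}_{j_U(W)}$. The amenability hypothesis enters here: since $P(\delta_*)^{M_W}\in M_W$, amenability gives $j_U\restriction P(\delta_*)^{M_W}\in M_W$, so $U_*\in M_W$; and $U_*$ is readily seen to be uniform and countably complete there, i.e.\ $U_*\in\textnormal{Un}^{M_W}$ with $\textsc{sp}(U_*)=\delta_*$.

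Then I would check the remaining conditions. That $\delta_*\le[\textnormal{id}]_W$ follows because $j_U(W)$ is uniform on $j_U(\delta)$ in $M_U$ and $[\textnormal{id}]_U<j_U(\delta)$ (with $j_U(\delta)$ a limit ordinal), so $\{\eta<j_U(\delta):\eta>[\textnormal{id}]_U\}\in j_U(W)$, whence $a=j^{M_U}_{j_U(W)}([\textnormal{id}]_U)<[\textnormal{id}]^{M_U}_{j_U(W)}=j_U([\textnormal{id}]_W)$; as $[\textnormal{id}]_W$ is then one ordinal $\xi$ with $a<j_U(\xi)$, minimality gives $\delta_*\le[\textnormal{id}]_W$. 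Now put $k:=j^{M_U}_{j_U(W)}:M_U\to M^{M_U}_{j_U(W)}=M^{M_W}_{U_*}$. Commutativity gives $k\circ j_U=j_U\circ j_W=(j_U\restriction M_W)\circ j_W=j^{M_W}_{U_*}\circ j_W$, and $k([\textnormal{id}]_U)=a<j_U([\textnormal{id}]_W)=(j_U\restriction M_W)([\textnormal{id}]_W)=j^{M_W}_{U_*}([\textnormal{id}]_W)$. By \cref{SeedOrderChar}, $U\swo W$. (Equivalently, $k([\textnormal{id}]_U)=a=[\textnormal{id}]_{U_*}$, so \cref{LimitEmbeddings} gives $W^-(U_*)=U$, and $\textsc{sp}(U_*)=\delta_*\le[\textnormal{id}]_W$ yields $U\swo W$ straight from the definition of the seed order.)

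The main obstacle is the identification of $j_U\restriction M_W$ as an ultrapower embedding of $M_W$ by an ultrafilter $U_*$ that both lies in $M_W$ and sits on an ordinal $\le[\textnormal{id}]_W$. The hull computation isolating the single generator $a$ — and in particular the observation that the seed $[\textnormal{id}]^{M_U}_{j_U(W)}$ of the other potential generator is absorbed into $j_U[M_W]$ via $[\textnormal{id}]^{M_U}_{j_U(W)}=j_U([\textnormal{id}]_W)$ — is the technical heart; the hypothesis $U\I W$ is then exactly what upgrades $U_*$ from an external $M_W$-ultrafilter to a genuine element of $\textnormal{Un}^{M_W}$. Everything else is bookkeeping with the commuting square formed by $j_U$, $j_W$, $j^{M_U}_{j_U(W)}$, and $j_U\restriction M_W$.
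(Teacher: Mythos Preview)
Your proof is correct and takes essentially the same approach as the paper: the ultrafilter \(U_*\) you construct is precisely the pushforward \(s_W(U)\) defined in the paper (cf.\ \cref{DerivedPushforward}), and your hull computation reproves \cref{PushforwardLemma} inline. The paper's proof is a three-line application of \cref{SInternal} together with the observation \(\textsc{sp}(s_W(U)) = \sup j_W[\delta] \leq [\textnormal{id}]_W\), whereas you unpack all of that machinery from scratch; but the underlying idea---that amenability makes \(s_W(U)\) land in \(M_W\), and its space is at most \([\textnormal{id}]_W\)---is identical.
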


We give a proof right after \cref{SInternal}. It is not hard to prove \cref{SeedExtendInternal} directly, but we are about to introduce notation that makes it transparent.

We introduce an ultrafilter \(s_W(U)\) with the property that \(U\I W\) if and only if \(s_W(U)\in M_W\). (The three functions \(j_W, t_W,s_W\) are right inverse to the operation \(W^-\).)

\begin{defn}
Suppose \(U\in \textnormal{Un}_\delta\) and \(W\) is a countably complete ultrafilter. Then the {\it pushforward of \(U\) by \(j_W\) restricted to \(M_W\)} is the \(M_U\)-ultrafilter \(s_W(U)\) defined by \[s_W(U) = \{A\in P^{M_W}(\sup j_W[\delta]) : j_W^{-1}[A]\in U\}\]
\end{defn}

One could easily define \(s_W(U)\) for an arbitrary ultrafilter \(U\), but we have no need for this here.

\begin{lma}\label{DerivedPushforward}
If \(U\in \textnormal{Un}\) and \(W\in \textnormal{Un}\), then \(s_W(U)\) is the \(M_W\)-uniform ultrafilter derived from \(j_U\restriction M_W\) using \(j^{M_U}_{j_U(W)}([\textnormal{id}]_U)\).
\begin{proof}
For \(A\in P^{M_W}(j_W(X))\),
\begin{align*}
A\in s_W(U)&\iff j_W^{-1}[A]\in U\\
&\iff \{x\in X : j_W(x)\in A\} \in U\\
&\iff j^{M_U}_{j_U(W)}([\text{id}]_U)\in j_U(A)
\end{align*}
with the last equivalence following from Los's Theorem.
\end{proof}
\end{lma}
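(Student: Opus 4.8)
\emph{Proof proposal.} The plan is a short computation that unwinds the definition of $s_W(U)$ and lets Los's theorem do all the work. Write $\delta = \textsc{sp}(U)$ and $\lambda = \sup j_W[\delta]$, so that by definition $s_W(U)$ is an ultrafilter on the ordinal $\lambda$ in the sense of $M_W$. Recall that, by the usual inner-model absoluteness, $j_U\restriction M_W$ is an elementary embedding from $M_W$ into $j_U(M_W) = M^{M_U}_{j_U(W)}$, and that the $M_W$-ultrafilter on $\lambda$ derived from $j_U\restriction M_W$ using a point $a < j_U(\lambda)$ is $\{A\in P^{M_W}(\lambda) : a\in j_U(A)\}$. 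Taking $a = j^{M_U}_{j_U(W)}([\textnormal{id}]_U)$, it therefore suffices to verify that for every $A\in P^{M_W}(\lambda)$,
\[
A\in s_W(U)\iff j^{M_U}_{j_U(W)}([\textnormal{id}]_U)\in j_U(A).
\]

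First I would unfold the left-hand side: $A\in s_W(U)$ means $j_W^{-1}[A]\in U$, where $j_W^{-1}[A] = \{\xi<\delta : j_W(\xi)\in A\}$ is an honest subset of $\delta = \textsc{sp}(U)$ lying in $V$ (since $A\in M_W\subseteq V$ and $j_W\restriction\delta\in V$), so the condition is meaningful. By Los's theorem applied to $j_U$, this holds iff $[\textnormal{id}]_U\in j_U(j_W^{-1}[A])$. Applying $j_U$ to the set $\{\xi<\delta : j_W(\xi)\in A\}$ with the parameters $j_W\restriction\delta$ and $A$ (both genuine elements of $V$), elementarity gives $j_U(j_W^{-1}[A]) = \{\xi < j_U(\delta) : j_U(j_W\restriction\delta)(\xi)\in j_U(A)\}$. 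The one point requiring care is the identity $j_U(j_W\restriction\delta) = j^{M_U}_{j_U(W)}\restriction j_U(\delta)$: this is the standard fact that ultrapower embeddings are carried along by elementary embeddings, namely ``$k$ is the restriction to $\delta$ of the ultrapower embedding of the universe by the countably complete ultrafilter $W$'' is a first-order property of $(k,\delta,W)$, and since $W\in V$ we have $j_U(W)\in M_U$, so $j_U$ sends this property to the assertion that $j_U(j_W\restriction\delta)$ is the restriction to $j_U(\delta)$ of the ultrapower embedding of $M_U$ by $j_U(W)$ formed with functions of $M_U$, i.e.\ of $j^{M_U}_{j_U(W)}$. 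Substituting, $[\textnormal{id}]_U\in\{\xi<j_U(\delta) : j^{M_U}_{j_U(W)}(\xi)\in j_U(A)\}$, which says exactly $j^{M_U}_{j_U(W)}([\textnormal{id}]_U)\in j_U(A)$, closing the chain of equivalences. (In passing one checks that $s_W(U)$ really is $M_W$-uniform on $\lambda$: given $\eta<\lambda=\sup j_W[\delta]$, let $\xi_0<\delta$ be least with $j_W(\xi_0)\geq\eta$; then $\lambda\setminus\eta\in s_W(U)$ since its $j_W$-preimage contains $\delta\setminus\xi_0\in U$.)

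I expect no serious obstacle here. The only things needing attention are that the sets to which $j_U$ is applied genuinely live in $V$ (they do, because $M_W\subseteq V$), and the routine embedding-absorption identity $j_U(j_W)=j^{M_U}_{j_U(W)}$ together with $j_U(W)\in M_U$; beyond Los's theorem there is essentially no content.
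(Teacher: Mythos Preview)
Your proposal is correct and follows essentially the same approach as the paper's proof: unwind the definition of $s_W(U)$ and apply \L o\'s's theorem to the function $j_W\restriction\delta$, using that $j_U$ carries $j_W$ to $j^{M_U}_{j_U(W)}$. The paper compresses what you spell out in detail (your steps 4--7) into a single invocation of \L o\'s's theorem, and does not include your extra verification of $M_W$-uniformity, but the argument is the same.
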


\begin{prp}\label{PushforwardLemma}
For any \(U,W\in \textnormal{Un}\), \(j^{M_W}_{s_W(U)} = j_U\restriction M_W\).
\begin{proof}
By the previous theorem, there is an embedding \(k : M^{M_W}_{s_W(U)} \to j_U(M_W)\) such that \(k([\text{id}]^{M_W}_{s_W(U)}) = j_U(j_W)([\text{id}]_U)\) and \(k\circ j^{M_W}_{s_W(U)}  = j_U\restriction M_W\). It suffices to show that \(k\) is surjective. Note that \(k[M^{M_W}_{s_W(U)}]\) contains \(j_U\circ j_W[V] = j_U(j_W)\circ j_U[V]\) as well as \(j_U(j_W)([\text{id}]_U)\). Hence \[j_U(j_W)[M_U]\subseteq k[M^{M_W}_{s_W(U)}]\] Moreover \(k[M^{M_W}_{s_W(U)}]\) contains \(k\circ j^{M_W}_{s_W(U)}([\text{id}]_W) = j_U([\text{id}]_W) = [\text{id}]^{M_U}_{j_U(W)}\), so \[  [\text{id}]^{M_U}_{j_U(W)}\in k[M^{M_W}_{s_W(U)}]\] But \(j_U(M_W) = M_{j_U(W)}^{M_U}\) is the definable hull of \(j_U(j_W)[M_U]\cup \{[\text{id}]^{M_U}_{j_U(W)}\}\), and it follows that \(k\) is surjective.
\end{proof}
\end{prp}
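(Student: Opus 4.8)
The plan is to realize $s_W(U)$ as a derived ultrafilter whose associated factor embedding is surjective. First I would apply \cref{DerivedPushforward}, which tells us that $s_W(U)$ is the $M_W$-uniform ultrafilter derived from the elementary embedding $j_U\restriction M_W : M_W\to j_U(M_W)$ using the seed $a := j^{M_U}_{j_U(W)}([\textnormal{id}]_U)$; here one uses that $M_W$ is a uniformly definable class, so that $j_U\restriction M_W$ really is elementary and $j_U(M_W) = M^{M_U}_{j_U(W)}$. Standard factoring then produces an elementary embedding $k : M^{M_W}_{s_W(U)}\to j_U(M_W)$ with $k\circ j^{M_W}_{s_W(U)} = j_U\restriction M_W$ and $k([\textnormal{id}]^{M_W}_{s_W(U)}) = a$. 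Since both structures are transitive, it suffices to show $k$ is surjective: then $k$ is the identity map, and hence $j^{M_W}_{s_W(U)} = j_U\restriction M_W$.

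To prove surjectivity I would compute the range of $k$. Every element of the ultrapower $M^{M_W}_{s_W(U)}$ has the form $j^{M_W}_{s_W(U)}(f)([\textnormal{id}]^{M_W}_{s_W(U)})$ for some $f\in M_W$, so $\text{ran}(k)$ is exactly the definable hull in $j_U(M_W)$ of $j_U[M_W]\cup\{a\}$. Thus the whole proof comes down to the claim that $j_U(M_W)$ equals this hull.

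For the claim I would start from the fact that $j_U(M_W) = M^{M_U}_{j_U(W)}$ is the definable hull of $j^{M_U}_{j_U(W)}[M_U]\cup\{[\textnormal{id}]^{M_U}_{j_U(W)}\}$, and show that each generating piece already lies in the hull of $j_U[M_W]\cup\{a\}$. The seed is immediate: $[\textnormal{id}]^{M_U}_{j_U(W)} = j_U([\textnormal{id}]_W)\in j_U[M_W]$. For $j^{M_U}_{j_U(W)}[M_U]$ I would use the commutation $j^{M_U}_{j_U(W)}\circ j_U = j_U\circ j_W$ (obtained by applying $j_U$ to the graph of $j_W$) together with the fact that every element of $M_U$ has the form $j_U(f)([\textnormal{id}]_U)$: such an element is sent by $j^{M_U}_{j_U(W)}$ to $j_U(j_W(f))(a)$, which lies in the hull of $j_U[M_W]\cup\{a\}$ since $j_W(f)\in M_W$ gives $j_U(j_W(f))\in j_U[M_W]$. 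This yields $j^{M_U}_{j_U(W)}[M_U]\subseteq\text{ran}(k)$ and hence the surjectivity of $k$.

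The main thing to be careful about is this last step, in particular not conflating the two seeds in play: the seed $[\textnormal{id}]^{M_W}_{s_W(U)}$ of $s_W(U)$ maps under $k$ to $a = j^{M_U}_{j_U(W)}([\textnormal{id}]_U)$, which is distinct from the seed $[\textnormal{id}]^{M_U}_{j_U(W)}$ of $j_U(W)$, so one must keep straight the two descriptions of $j_U(M_W)$ — as $j_U$ applied to $\text{Ult}(V,W)$, and as $\text{Ult}(M_U,j_U(W))$. Once the identity $j^{M_U}_{j_U(W)}\circ j_U = j_U\circ j_W$ is in hand, the rest is routine bookkeeping.
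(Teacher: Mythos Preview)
Your proposal is correct and follows essentially the same approach as the paper's proof: set up the factor map $k$ via \cref{DerivedPushforward}, then prove surjectivity by showing the range of $k$ contains the generators $j^{M_U}_{j_U(W)}[M_U]\cup\{[\textnormal{id}]^{M_U}_{j_U(W)}\}$ of $j_U(M_W)$. The only cosmetic difference is that you parametrize elements of $M_U$ as $j_U(f)([\textnormal{id}]_U)$ to see $j^{M_U}_{j_U(W)}[M_U]\subseteq\text{ran}(k)$, whereas the paper first observes $j_U(j_W)\circ j_U[V]\cup\{a\}\subseteq\text{ran}(k)$ and takes the hull; these amount to the same computation.
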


\begin{cor}\label{SInternal}
For any \(U,W\in \textnormal{Un}\), \(U\I W\) if and only if \(s_W(U)\in M_W\).
\begin{proof}
The forwards direction is clear from \cref{DerivedPushforward}, and the reverse from \cref{PushforwardLemma}.
\end{proof}
\end{cor}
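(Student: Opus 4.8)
The plan is to spell out the two one-line references in the claimed proof. The two directions are essentially dual: \cref{DerivedPushforward} exhibits $s_W(U)$ as the $M_W$-ultrafilter \emph{derived} from $j_U\restriction M_W$ (at the seed $j^{M_U}_{j_U(W)}([\textnormal{id}]_U)$), which will give the forward direction by a reconstruction argument; and \cref{PushforwardLemma} exhibits $s_W(U)$ as an $M_W$-ultrafilter whose ultrapower embedding \emph{is} $j_U\restriction M_W$, which will give the reverse direction immediately.

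For the forward direction, I would assume $U\I W$, i.e.\ that $j_U\restriction M_W$ is an amenable class of $M_W$, and show $M_W$ can reconstruct $s_W(U)$ from a bounded amount of that data. Write $\delta=\textsc{sp}(U)$ and let $\theta$ be the space of $s_W(U)$ (so $\theta=\sup j_W[\delta]$). By \cref{DerivedPushforward}, for $A\in P^{M_W}(\theta)$ we have $A\in s_W(U)$ iff $j^{M_U}_{j_U(W)}([\textnormal{id}]_U)\in j_U(A)$. The point $j^{M_U}_{j_U(W)}([\textnormal{id}]_U)$ is an ordinal, hence belongs to $M_W$; and $P^{M_W}(\theta)\in M_W$, so by replacement there is a $\beta$ with $j_U(A)\in (V_\beta)^{M_W}$ for all $A\in P^{M_W}(\theta)$, and then amenability applied to the set $P^{M_W}(\theta)\times (V_\beta)^{M_W}\in M_W$ gives $j_U\restriction P^{M_W}(\theta)\in M_W$. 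Feeding these two objects into the displayed equivalence, $M_W$ defines $s_W(U)$; hence $s_W(U)\in M_W$.

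For the reverse direction, assume $s_W(U)\in M_W$. Then $j^{M_W}_{s_W(U)}$ is an internal ultrapower embedding of $M_W$, so it is definable over $M_W$ from the parameter $s_W(U)$ and in particular an amenable class of $M_W$; and $j^{M_W}_{s_W(U)}=j_U\restriction M_W$ by \cref{PushforwardLemma}, so $j_U\restriction M_W$ is amenable to $M_W$, i.e.\ $U\I W$. The only point requiring any care --- the ``main obstacle'' --- is the bookkeeping in the forward direction: verifying that amenability of the class $j_U\restriction M_W$ really delivers the \emph{set} $j_U\restriction P^{M_W}(\theta)$ inside $M_W$, and noting that \cref{DerivedPushforward} needs nothing about $j_U$ beyond this restriction together with the single ordinal $j^{M_U}_{j_U(W)}([\textnormal{id}]_U)$. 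Once that is in hand, both implications are routine.
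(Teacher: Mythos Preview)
Your proof is correct and follows exactly the approach indicated by the paper's one-line proof, merely unpacking the references to \cref{DerivedPushforward} and \cref{PushforwardLemma}. One small simplification: under the standard reading of amenability (namely, $j_U\restriction X\in M_W$ for every $X\in M_W$), you can apply it directly to $X=P^{M_W}(\theta)$ and skip the replacement/$(V_\beta)^{M_W}$ detour.
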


\begin{proof}[Proof of \cref{SeedExtendInternal}]
Suppose \(U\I W\) belong to \(\textnormal{Un}_\delta\). Then \(s_W(U)\in M_W\) and \(W^-(s_W(U)) = U\). Moreover \(\textsc{sp}(s_W(U)) = \sup j_W[\delta] \leq [\text{id}]_W\). Thus by definition \(U\swo W\).
\end{proof}

Note that even under ZFC, \(U\I W\) implies that \(U\swo W\) in the stronger sense of \cref{Linearity}; that is, both embeddings of the comparison are internal.

\begin{prp}[UA]\label{InternalTranslations}
Suppose \(U,W\in \textnormal{Un}\) and \(U\I W\). Then \(t_W(U) = s_W(U)\) and \(t_U(W) = j_U(W)\).
\begin{proof}
Since \(U\) and \(W\) divide \(U\oplus j_U(W) = W \oplus s_W(U)\), \(U\vee W\) divides \(U\oplus j_U(W)\). 

Let \(i : M_{U\vee W}\to M_{U\oplus j_U(W)}\) witness this. By \cref{mindef}, we have 
\begin{align*}i(j^{M_U}_{t_U(W)}([\text{id}]_U)) &= j^{M_U}_{j_U(W)}([\text{id}]_U)\\
i(j^{M_W}_{t_W(U)}([\text{id}]_W)) &= j^{M_W}_{s_W(U)}([\text{id}]_W)
\end{align*}
Thus \([\text{id}]_{U\oplus j_U(W)}\in \text{ran}(i)\), so \(i\) is the identity. Now the equations above imply \(t_W(U) = s_W(U)\) and \(t_U(W) = j_U(W)\).
\end{proof}
\end{prp}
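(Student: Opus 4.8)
The plan is to show that the canonical comparison of $U$ and $W$ furnished by the translation functions, namely $(t_U(W),t_W(U))$, coincides with the ``obvious'' comparison available when $U\I W$, namely $(j_U(W),s_W(U))$.

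First I would collect what the hypothesis $U\I W$ buys us: by \cref{SInternal}, $s_W(U)\in M_W$, and by \cref{PushforwardLemma}, $j^{M_W}_{s_W(U)}=j_U\restriction M_W$; one also checks via \cref{LimitEmbeddings} that $W^-(s_W(U))=U$ and $U^-(j_U(W))=W$. Using \cref{UltrafilterSum}, \cref{PushforwardLemma}, and commutativity of the natural sum of ordinals, the natural sums $U\oplus j_U(W)$ and $W\oplus s_W(U)$ coincide; write $X$ for this ultrafilter, so $M_X=j_U(M_W)$ and $j_X=j^{M_U}_{j_U(W)}\circ j_U=j_U\circ j_W$. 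Now $U$ divides $X$ trivially, and $W$ divides $X=W\oplus s_W(U)$ because $j^{M_W}_{s_W(U)}$ is an \emph{internal} ultrapower embedding of $M_W$ --- this is exactly where $s_W(U)\in M_W$ is used. By \cref{CanonicalComparison}, $U\vee W$ divides $X$; fix an internal ultrapower embedding $i: M_{U\vee W}\to M_X$ with $i\circ j_{U\vee W}=j_X$.

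The heart of the argument is that $i$ is definable from parameters over \emph{both} $M_U$ and $M_W$: indeed $M_{U\vee W}=M^{M_U}_{t_U(W)}=M^{M_W}_{t_W(U)}$, the parameters $t_U(W),t_W(U)$ lie in $M_U,M_W$, and the $M_{U\vee W}$-ultrafilter inducing $i$ lies in $M_{U\vee W}\subseteq M_U\cap M_W$. Hence $i\circ j^{M_U}_{t_U(W)}$ and $j^{M_U}_{j_U(W)}$ are two embeddings $M_U\to M_X$ agreeing on $j_U[V]$ (both compose with $j_U$ to give $j_X$) and both definable over $M_U$; \cref{mindef} applied in both directions forces them to agree on the ordinals, hence everywhere, so $i\circ j^{M_U}_{t_U(W)}=j^{M_U}_{j_U(W)}$, and symmetrically $i\circ j^{M_W}_{t_W(U)}=j^{M_W}_{s_W(U)}$. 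Then $\text{ran}(i)\supseteq\text{ran}(j^{M_U}_{j_U(W)})$ and $\text{ran}(i)\ni i(j^{M_W}_{t_W(U)}([\text{id}]_W))=j^{M_W}_{s_W(U)}([\text{id}]_W)=[\text{id}]^{M_U}_{j_U(W)}$; since $M_X=M^{M_U}_{j_U(W)}$ is generated over $\text{ran}(j^{M_U}_{j_U(W)})$ by $[\text{id}]^{M_U}_{j_U(W)}$, the map $i$ is surjective, hence the identity. In particular $j^{M_U}_{t_U(W)}=j^{M_U}_{j_U(W)}$ and $j^{M_W}_{t_W(U)}=j^{M_W}_{s_W(U)}$.

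The step I expect to be most delicate is the last one, upgrading equal ultrapower embeddings to equal ultrafilters, since an ultrapower embedding does not by itself pin down the generator of the ultrafilter producing it. By minimality of $t_U(W)$ (legitimate since $U^-(j_U(W))=W$), we get $t_U(W)\wo^{M_U}j_U(W)$. For the reverse I would compare generators: \cref{LimitEmbeddings} yields embeddings $M_W\to M^{M_U}_{j_U(W)}$ witnessing $U^-(t_U(W))=W$ and $U^-(j_U(W))=W$ and sending $[\text{id}]_W$ to $[\text{id}]^{M_U}_{t_U(W)}$ and $[\text{id}]^{M_U}_{j_U(W)}$ respectively; the second of these is just $j_U\restriction M_W=j^{M_W}_{s_W(U)}$, which is definable over $M_W$, so \cref{mindef} gives $[\text{id}]^{M_U}_{j_U(W)}\le[\text{id}]^{M_U}_{t_U(W)}$. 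Were this inequality strict, \cref{SeedOrderChar} (with a principal auxiliary ultrafilter in $M_U$) would yield $j_U(W)\swo^{M_U}t_U(W)$, contradicting $t_U(W)\wo^{M_U}j_U(W)$. So the generators coincide, whence $t_U(W)=j_U(W)$; and then $W\oplus t_W(U)=U\vee W=X=W\oplus s_W(U)$ together with \cref{SumSeed} gives $t_W(U)=s_W(U)$, completing the argument.
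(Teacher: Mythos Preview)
Your proof is correct and follows the same overall strategy as the paper: establish $U\oplus j_U(W)=W\oplus s_W(U)$, invoke \cref{CanonicalComparison} to get that $U\vee W$ divides this common ultrafilter, and use \cref{mindef} to show the witnessing embedding $i$ is the identity.

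The one place you diverge is the final step. The paper finishes more quickly: once $i$ is the identity, the displayed equations give $j^{M_W}_{t_W(U)}([\textnormal{id}]_W)=j^{M_W}_{s_W(U)}([\textnormal{id}]_W)=[\textnormal{id}]^{M_U}_{j_U(W)}$ and $j^{M_U}_{t_U(W)}([\textnormal{id}]_U)=j^{M_U}_{j_U(W)}([\textnormal{id}]_U)=[\textnormal{id}]^{M_W}_{s_W(U)}$, and combining these with the Reciprocity seed identities $[\textnormal{id}]^{M_U}_{t_U(W)}=j^{M_W}_{t_W(U)}([\textnormal{id}]_W)$ and $[\textnormal{id}]^{M_W}_{t_W(U)}=j^{M_U}_{t_U(W)}([\textnormal{id}]_U)$ (which come out of the proof of \cref{Reciprocity}) immediately yields equality of seeds, hence of ultrafilters. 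Your alternative route---minimality of $t_U(W)$ for one direction, and a \cref{mindef}/\cref{SeedOrderChar} comparison of the \cref{LimitEmbeddings} maps for the other---is more self-contained and avoids relying on those seed identities, at the cost of a little extra work. Both are valid; yours makes explicit a point the paper glosses over.
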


Using this we can characterize how the internal relation fails to be strict.

\begin{thm}[UA]\label{InternalCommutativity}
Suppose \(U\I W\) and \(W\I U\). Then \(j_U(W) = s_U(W)\) and \(j_W(U) = s_W(U)\). Consequently, \(j^{M_U}_{j_U(W)} = j_W\restriction M_U\) and \(j^{M_W}_{j_W(U)} = j_U\restriction M_W\).
\begin{proof}
By \cref{InternalTranslations}, since \(U\I W\), \(t_U(W) = j_U(W)\) and \(t_W(U) = s_W(U)\). On the other hand since \(W\I U\), \(t_U(W) = s_U(W)\) and \(t_W(U) = j_W(U)\). Equating like terms, \(j_U(W) = s_U(W)\) and \(j_W(U) = s_W(U)\). By \cref{PushforwardLemma}, this implies the last statement of the theorem.
\end{proof}
\end{thm}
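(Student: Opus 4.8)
The plan is to apply \cref{InternalTranslations} twice — once to each of the two internality hypotheses — and then simply equate translation functions to read off both conclusions.

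First I would use the hypothesis \(U\I W\). By \cref{InternalTranslations}, this gives \(t_U(W) = j_U(W)\) and \(t_W(U) = s_W(U)\). Next I would observe that the hypothesis \(W\I U\) is precisely the hypothesis of \cref{InternalTranslations} with the roles of \(U\) and \(W\) interchanged; applying that proposition in this symmetric form yields \(t_W(U) = j_W(U)\) and \(t_U(W) = s_U(W)\). Equating the two expressions obtained for each translate, \(j_U(W) = t_U(W) = s_U(W)\) and \(j_W(U) = t_W(U) = s_W(U)\), which is exactly the first assertion of the theorem.

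For the ``consequently'' clause I would feed these identities into \cref{PushforwardLemma}, which asserts \(j^{M_W}_{s_W(U)} = j_U\restriction M_W\). Since we have just shown \(s_W(U) = j_W(U)\), this reads \(j^{M_W}_{j_W(U)} = j_U\restriction M_W\). The symmetric instance of \cref{PushforwardLemma} gives \(j^{M_U}_{s_U(W)} = j_W\restriction M_U\), and combining with \(s_U(W) = j_U(W)\) yields \(j^{M_U}_{j_U(W)} = j_W\restriction M_U\), completing the proof.

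I do not expect a genuine obstacle here: the real work — identifying the translates \(t_U(W)\) and \(t_W(U)\) under internality — has already been isolated in \cref{InternalTranslations}, and \cref{PushforwardLemma} supplies the embedding-level reformulation for free. The only point meriting a moment's attention is checking that the statement of \cref{InternalTranslations} is genuinely symmetric in \(U\) and \(W\) so that its ``swapped'' instance may legitimately be invoked; inspection of that proposition confirms this, so the argument goes through with no friction.
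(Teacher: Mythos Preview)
Your proposal is correct and follows essentially the same approach as the paper's proof: apply \cref{InternalTranslations} under each of the two internality hypotheses, equate the resulting expressions for the translates, and then invoke \cref{PushforwardLemma} for the embedding-level conclusion. Your write-up is slightly more explicit in spelling out both instances of \cref{PushforwardLemma}, but the argument is identical.
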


\cref{InternalCommutativity} can be a surprisingly powerful tool in proofs by contradiction. Good examples of this technique are \cref{DiscontinuityLemma}, \cref{HardRegularity}, and \cref{UltrafilterBound}. Is \cref{InternalCommutativity} provable in ZFC?

We can also prove the converse of \cref{SInternal}:

\begin{prp}[UA]\label{jInternal}
Suppose \(U,W\in \textnormal{Un}\) and \(t_U(W) = j_U(W)\). Then \(U\I W\).
\begin{proof}
We claim that \(j^{M_W}_{t_W(U)} = j_U\restriction M_W\). Note that \(j_U\restriction M_W\) is the unique elementary embedding \(i : M_W \to j_U(M_W)\) such that \(i([\text{id}]_W) = j_U([\text{id}]_W)\) and \(i\circ j_W = j_U\circ j_W\), since any elementary embedding of \(M_W\) is determined by its target model and its values on \(j_W[V]\cup \{[\text{id}]_W\}\). We claim that \(j^{M_W}_{t_W(U)}\) has these same properties, and hence the claim that  \(j^{M_W}_{t_W(U)} = j_U\restriction M_W\) follows.

Note first that \[M^{M_W}_{t_W(U)} = M^{M_U}_{j_U(W)} = j_U(M_W)\] Note second that \[j^{M_W}_{t_W(U)}([\text{id}]_W) = [\text{id}]^{M_U}_{t_U(W)} = [\text{id}]^{M_U}_{j_U(W)} = j_U([\text{id}]_W\] Note finally that \[j^{M_W}_{t_W(U)}\circ j_W = j^{M_U}_{j_U(W)}\circ j_U = j_U\circ j_W\]
This completes the proof.
\end{proof}
\end{prp}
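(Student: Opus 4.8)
Suppose $U,W\in\textnormal{Un}$ and $t_U(W)=j_U(W)$. Then $U\I W$.

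The plan is to show that $j_U\restriction M_W$ coincides with the internal ultrapower embedding $j^{M_W}_{t_W(U)}$. Once this is established, since $t_W(U)\in\textnormal{Un}^{M_W}\subseteq M_W$, the map $j_U\restriction M_W$ is definable over $M_W$ from a parameter and hence amenable, which is exactly what $U\I W$ asserts. (Equivalently, this identifies $t_W(U)$ with $s_W(U)$, so $s_W(U)\in M_W$ and one may quote \cref{SInternal}.)

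First I would feed the hypothesis into the Reciprocity Theorem: since $t_U(W) = j_U(W)$, \cref{Reciprocity} gives $W\oplus t_W(U) = U\oplus j_U(W)$. Applying \cref{UltrafilterSum} to both sides simultaneously yields
\[M^{M_W}_{t_W(U)} = M^{M_U}_{j_U(W)} = j_U(M_W),\]
the second equality being elementarity of $j_U$ applied to ``$M_W$ is the ultrapower of the universe by $W$'', and
\[j^{M_W}_{t_W(U)}\circ j_W = j^{M_U}_{j_U(W)}\circ j_U = j_U\circ j_W,\]
using $j^{M_U}_{j_U(W)} = j_U(j_W)$ together with the general identity $j_U(j_W)\circ j_U = j_U\circ j_W$. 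So $j^{M_W}_{t_W(U)}$ and $j_U\restriction M_W$ are elementary embeddings of $M_W$ into the common inner model $j_U(M_W)$ that agree on $j_W[V]$. Since $M_W$ is generated by $j_W[V]\cup\{[\textnormal{id}]_W\}$, it remains only to check that they agree at $[\textnormal{id}]_W$, i.e. $j^{M_W}_{t_W(U)}([\textnormal{id}]_W) = j_U([\textnormal{id}]_W) = [\textnormal{id}]^{M_U}_{j_U(W)}$.

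Here one inequality is free: $j^{M_W}_{t_W(U)}$ is definable over $M_W$ from $t_W(U)$, so $j^{M_W}_{t_W(U)}([\textnormal{id}]_W)\le j_U([\textnormal{id}]_W)$ by \cref{mindef}. For the reverse inequality I would argue by contradiction using the seed-minimality built into $t_U(W) = j_U(W)$. Suppose $\eta := j^{M_W}_{t_W(U)}([\textnormal{id}]_W) < [\textnormal{id}]^{M_U}_{j_U(W)}$, and working in $M_U$ let $V_*$ be the uniform ultrafilter derived from $j^{M_U}_{j_U(W)}$ using $\eta$, with factor embedding $h : M^{M_U}_{V_*}\to j_U(M_W)$. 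Since $h\circ j^{M_U}_{V_*} = j^{M_U}_{j_U(W)}$ and $h([\textnormal{id}]^{M_U}_{V_*}) = \eta < [\textnormal{id}]^{M_U}_{j_U(W)}$, the embedding $h$ witnesses that $V_*$ divides $j_U(W)$ in $M_U$ with a strictly smaller seed, so $V_*\swo^{M_U} j_U(W) = t_U(W)$ by \cref{SeedOrderChar}. On the other hand $\textnormal{ran}(j^{M_W}_{t_W(U)})\subseteq\textnormal{ran}(h)$ — its generators $(j_U\circ j_W)[V] = j^{M_U}_{j_U(W)}[\,j_U[V]\,]$ and $\eta = h([\textnormal{id}]^{M_U}_{V_*})$ all lie in $\textnormal{ran}(h)$ — so $k := h^{-1}\circ j^{M_W}_{t_W(U)} : M_W\to M^{M_U}_{V_*}$ is well-defined, and a short computation gives $k\circ j_W = j^{M_U}_{V_*}\circ j_U$ and $k([\textnormal{id}]_W) = [\textnormal{id}]^{M_U}_{V_*}$. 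By \cref{LimitEmbeddings} this means $U^-(V_*) = W$, contradicting that $t_U(W)$ is the $\swo^{M_U}$-least ultrafilter $U$-limiting to $W$. Hence $\eta = j_U([\textnormal{id}]_W)$, so $j^{M_W}_{t_W(U)} = j_U\restriction M_W$ and we are done.

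I expect this last step — matching the seeds — to be the main obstacle: it is the only place where the hypothesis $t_U(W) = j_U(W)$ is genuinely used (via the fact that translation functions are defined as seed-minima), and it requires care in verifying that the ``cheaper'' derived ultrafilter $V_*$ really does $U$-limit back to $W$. Everything preceding it is bookkeeping with \cref{Reciprocity} and \cref{UltrafilterSum}.
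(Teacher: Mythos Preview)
Your proof is correct and follows the same overall strategy as the paper: show $j^{M_W}_{t_W(U)} = j_U\restriction M_W$ by verifying that both maps share the same target $j_U(M_W)$, agree on $j_W[V]$, and send $[\textnormal{id}]_W$ to the same point.

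The only difference lies in how the seed equality $j^{M_W}_{t_W(U)}([\textnormal{id}]_W) = [\textnormal{id}]^{M_U}_{j_U(W)}$ is obtained. The paper treats the identity $j^{M_W}_{t_W(U)}([\textnormal{id}]_W) = [\textnormal{id}]^{M_U}_{t_U(W)}$ as immediate: it drops out of the proof of \cref{Reciprocity}, since once $U\oplus t_U(W) = W\oplus t_W(U)$ one may take $i = k = \textnormal{id}$ in Claims~1 and~2 (and their symmetric versions) to get both cross-equalities between seeds. You instead supply a self-contained argument, getting one inequality from \cref{mindef} and the other by contradiction from the $\swo^{M_U}$-minimality of $t_U(W)$. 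Your contradiction argument is essentially a reproof of the symmetric form of Claim~2 in this specific setting. Both routes are fine; the paper's is shorter because it cashes in on work already done inside \cref{Reciprocity}, while yours is more explicit about exactly where the minimality hypothesis $t_U(W) = j_U(W)$ is invoked.
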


Finally we will need the following theorem relating fixed points to the internal relation.

\begin{thm}[UA]\label{FixedPointInternal}
Suppose \(\kappa\) and \(\lambda\) are ordinals. Suppose \(U\) is an ultrafilter fixing \(\lambda\). Suppose \(W\) is the \(\swo\)-least ultrafilter such that \(j_W(\kappa) \geq \lambda\). Then \(U\I W\).
\begin{proof}
Since \(U^-(j_U(W)) = W\), by the minimality of \(t_U(W)\), \(t_U(W)\wo^{M_U} j_U(W)\). We will show that \(j_U(W)\wo^{M_U} t_U(W)\), so \(t_U(W) = j_U(W)\) and hence \(U\I W\) by \cref{jInternal}.

Note that \[j^{M_U}_{t_U(W)} (j_U(\kappa)) = j^{M_W}_{t_W(U)} (j_W(\kappa)) \geq j^{M_W}_{t_W(U)}(\lambda) \geq \lambda = j_U(\lambda)\]
In \(M_U\), \(j_U(W)\) is the \(\swo\)-least ultrafilter \(W_*\) such that \(j_{W_*}(j_U(\kappa)) \geq j_U(\lambda)\). Thus \(j_U(W)\wo^{M_U} t_U(W)\), as desired.
\end{proof}
\end{thm}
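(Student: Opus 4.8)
The plan is to reduce the assertion $U\I W$ to the identity $t_U(W) = j_U(W)$ and then quote \cref{jInternal}. One inequality in the seed order of $M_U$ is immediate. Since $U^-(j_U(W)) = W$ — a Łoś-theorem computation: with $\delta = \textsc{sp}(W)$ one has $j_U(A)\cap j_U(\delta) = j_U(A)$ for $A\subseteq\delta$, and $j_U(A)\in j_U(W)$ iff $A\in W$ — the ultrafilter $j_U(W)$ is one of the $M_U$-ultrafilters $W_*$ with $U^-(W_*) = W$. By the $\swo^{M_U}$-minimality defining $t_U(W)$, this gives $t_U(W)\wo^{M_U} j_U(W)$.

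For the reverse inequality, apply $j_U$ to the hypothesis defining $W$: by elementarity, in $M_U$ the ultrafilter $j_U(W)$ is the $\swo^{M_U}$-least ultrafilter $W_*$ such that $j^{M_U}_{W_*}(j_U(\kappa)) \geq j_U(\lambda)$. Since $U$ fixes $\lambda$, $j_U(\lambda) = \lambda$, so it is enough to check that $t_U(W)$ itself satisfies $j^{M_U}_{t_U(W)}(j_U(\kappa)) \geq \lambda$; then $\swo^{M_U}$-minimality of $j_U(W)$ yields $j_U(W)\wo^{M_U} t_U(W)$, and combining the two inequalities gives $t_U(W) = j_U(W)$ as wanted.

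To verify the displayed inequality, invoke the commutativity of the comparison attached to translation functions: by the Reciprocity Theorem together with \cref{Commutativity}, $j^{M_U}_{t_U(W)}\circ j_U = j^{M_W}_{t_W(U)}\circ j_W$. Hence
\[
j^{M_U}_{t_U(W)}(j_U(\kappa)) = j^{M_W}_{t_W(U)}(j_W(\kappa)) \geq j^{M_W}_{t_W(U)}(\lambda) \geq \lambda,
\]
where the first inequality uses $j_W(\kappa)\geq\lambda$ (the defining property of $W$) and monotonicity of $j^{M_W}_{t_W(U)}$ on the ordinals, and the second uses that every elementary embedding is $\geq$ the identity on ordinals. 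This completes the argument, modulo \cref{jInternal}.

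The substantive point is conceptual rather than computational: one should not attempt to exhibit the internal embedding $j_U\restriction M_W$ directly, but instead pass through \cref{jInternal}, after which the fixed-point hypothesis enters cleanly via the commuting comparison diagram. The only fine points are that "the $\swo$-least ultrafilter with a given property" is meaningful precisely because UA makes $\swo$ linear (it is already set-like and wellfounded), and that this characterization transports correctly to $M_U$ by elementarity.
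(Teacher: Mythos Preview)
Your proof is correct and follows essentially the same approach as the paper's: both establish $t_U(W)=j_U(W)$ by combining the trivial inequality $t_U(W)\wo^{M_U} j_U(W)$ with the reverse inequality obtained from the elementarity characterization of $j_U(W)$ in $M_U$, using the Reciprocity identity $j^{M_U}_{t_U(W)}\circ j_U=j^{M_W}_{t_W(U)}\circ j_W$ to verify that $t_U(W)$ satisfies the relevant property, and then invoke \cref{jInternal}. One minor remark: the commuting square follows directly from Reciprocity together with \cref{UltrafilterSum}, so \cref{Commutativity} is not actually needed here.
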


An important distinction between the internal relation and the Mitchell order is that {\it the internal relation propagates supercompactness}. 

\begin{lma}\label{BasicPropagation}
Suppose \(U,W\in \textnormal{Un}\) and \(U\I W\). Suppose \(W\) is \({<}\kappa\)-supercompact. Then \[\textnormal{Ord}^{<j_U(\kappa)}\cap M_U\subseteq M_W\]
\begin{proof}
Note that \(j_U(M_W)\subseteq M_W\), so \(\textnormal{Ord}^{<j_U(\kappa)}\cap M_U = j_U(\text{Ord}^{<\kappa}) \subseteq M_W\).
\end{proof}
\end{lma}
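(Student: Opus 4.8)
The plan is to use the hypothesis $U\I W$ to see that $j_U\restriction M_W$ is an \emph{internal} ultrapower embedding of $M_W$, whence $j_U(M_W)\subseteq M_W$, and then to push the $<\kappa$-supercompactness of $W$ forward through $j_U$ and land back inside $M_W$.

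First I would note that, by \cref{SInternal}, $U\I W$ gives $s_W(U)\in M_W$, and by \cref{PushforwardLemma}, $j^{M_W}_{s_W(U)}=j_U\restriction M_W$. Hence $j_U\restriction M_W$ is the ultrapower of $M_W$ by an ultrafilter lying in $M_W$, so its target model, which is $j_U(M_W)$, coincides with $M^{M_W}_{s_W(U)}$ and is therefore an inner submodel of $M_W$; in particular $j_U(M_W)\subseteq M_W$. (The identification of $j_U(M_W)$ with $M^{M_W}_{s_W(U)}$ is exactly what is extracted in the proof of \cref{PushforwardLemma}.) Next I would transport the supercompactness: since $W$ is $<\kappa$-supercompact, every sequence of ordinals of length $<\kappa$ belongs to $M_W$, i.e. $\textnormal{Ord}^{<\kappa}\subseteq M_W$; this is a statement about $V$, $\kappa$, and $W$ preserved by the elementary embedding $j_U$, so applying $j_U$ yields $\textnormal{Ord}^{<j_U(\kappa)}\cap M_U=j_U(\textnormal{Ord}^{<\kappa})\subseteq j_U(M_W)$. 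Combining this with $j_U(M_W)\subseteq M_W$ from the previous step gives $\textnormal{Ord}^{<j_U(\kappa)}\cap M_U\subseteq M_W$, as desired.

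I do not expect a genuine obstacle here: the only delicate point is the bookkeeping about proper classes — making sense of $j_U(M_W)$ and of ``$j_U$ applied to the class $\textnormal{Ord}^{<\kappa}$'', and confirming that the target of $j_U\restriction M_W$ is exactly $M^{M_W}_{s_W(U)}$ — and all of this is already supplied by \cref{SInternal} and \cref{PushforwardLemma}. The single conceptual ingredient is that internality, unlike the Mitchell order, is precisely the hypothesis that makes $j_U\restriction M_W$ an internal ultrapower of $M_W$, hence $j_U(M_W)\subseteq M_W$; the $<\kappa$-supercompactness then enters only to say $\textnormal{Ord}^{<\kappa}\subseteq M_W$, and elementarity does the rest.
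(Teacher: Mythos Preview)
Your proposal is correct and takes essentially the same approach as the paper: you use $U\I W$ to get $j_U(M_W)\subseteq M_W$ (via \cref{SInternal} and \cref{PushforwardLemma}), then apply elementarity to $\textnormal{Ord}^{<\kappa}\subseteq M_W$. The paper's proof is the one-line version of exactly this; you have simply unpacked the justification for $j_U(M_W)\subseteq M_W$ that the paper leaves implicit.
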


\begin{cor}\label{SupercompactTransfer1}
Suppose \(U,W\in \textnormal{Un}\) and \(U\I W\). If \(W\) is \({<}\kappa\)-supercompact, \(U\) is \(\lambda\)-supercompact, and \(j_U(\kappa) > \lambda\), then \(W\) is \(\lambda\)-supercompact.
\end{cor}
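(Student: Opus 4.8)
The plan is to derive this as an essentially immediate consequence of \cref{BasicPropagation} together with the definition of $\lambda$-supercompactness in terms of the closure of ultrapower embeddings. Recall that $U$ being $\lambda$-supercompact means precisely that $\mathrm{Ord}^{\lambda}\cap M_U\subseteq M_U$ holds trivially; what we actually want to encode is that some ultrafilter derived from $U$ (or $U$ itself if it is on $P_\kappa(\lambda)$-like space) witnesses $\lambda$-supercompactness, i.e. that $M_U$ is closed under $\lambda$-sequences. So the real content is: $M_U^{\lambda}\subseteq M_U$ and we want $M_W^{\lambda}\subseteq M_W$, or more precisely we want the supercompactness ultrafilter to reflect into $M_W$.

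First I would observe that, since $U$ is $\lambda$-supercompact, $M_U$ is closed under $\lambda$-sequences, so in particular every $\lambda$-sequence of ordinals lies in $M_U$; equivalently $\mathrm{Ord}^{\lambda}\subseteq M_U$, and since $\lambda < j_U(\kappa)$ this gives $\mathrm{Ord}^{\lambda}\cap M_U = \mathrm{Ord}^{\leq\lambda}\subseteq \mathrm{Ord}^{<j_U(\kappa)}\cap M_U$. Then by \cref{BasicPropagation}, using $U\I W$ and the fact that $W$ is ${<}\kappa$-supercompact, we get $\mathrm{Ord}^{<j_U(\kappa)}\cap M_U\subseteq M_W$, hence $\mathrm{Ord}^{\lambda}\subseteq M_W$, i.e. $M_W$ contains every $\lambda$-sequence of ordinals. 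Since $\lambda$-supercompactness of $W$ is equivalent to $M_W$ being closed under $\lambda$-sequences (of ordinals suffices, as arbitrary $\lambda$-sequences from $M_W$ can be coded by $\lambda$-sequences of ordinals via a bijection in $M_W$, using that $M_W\models$ ZFC), we conclude $W$ is $\lambda$-supercompact.

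The main obstacle — really the only thing requiring care — is the bookkeeping around what ``$\lambda$-supercompact'' means as a property of a single ultrafilter $W$ rather than of a cardinal: one must be sure that the closure $M_W^{\lambda}\subseteq M_W$ is the right formulation and that it follows from $\mathrm{Ord}^{\lambda}\subseteq M_W$. I would handle this by noting the standard fact that for an ultrapower $M_W$ of $V$, closure under $\lambda$-sequences of ordinals implies closure under $\lambda$-sequences of arbitrary elements, since each element of $M_W$ is $j_W(f)([\mathrm{id}]_W\text{-tuple})$ for $f\in V$ and a $\lambda$-sequence of such can be reconstructed inside $M_W$ from the $\lambda$-sequence of the relevant parameters together with $j_W$ of a single function. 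Everything else is the one-line containment chain above, so the proof should be short.
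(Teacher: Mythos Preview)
Your proposal is correct and is exactly the intended argument: the paper states this as an immediate corollary of \cref{BasicPropagation} without further proof, and your containment chain $\mathrm{Ord}^{\lambda}\subseteq M_U$ (from $\lambda$-supercompactness of $U$), hence $\mathrm{Ord}^{\lambda}\subseteq \mathrm{Ord}^{<j_U(\kappa)}\cap M_U\subseteq M_W$ (by \cref{BasicPropagation}), is precisely what is meant. The extra remarks about coding arbitrary $\lambda$-sequences by $\lambda$-sequences of ordinals are standard and harmless.
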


We will mostly apply a souped up version of \cref{SupercompactTransfer1} whose proof uses the Kunen inconsistency theorem, even though we could often just appeal to the somewhat more natural \cref{SupercompactTransfer1}.

\begin{prp}\label{SupercompactTransfer}
Suppose \(U,W\in \textnormal{Un}\) and \(U\I W\). Let \(\kappa = \textsc{crt}(U)\). Suppose \(W\) is \({<}\kappa\)-supercompact. If \(U\) is \(\lambda\)-supercompact, then \(W\) is \(\lambda\)-supercompact.
\begin{proof}
Let \(\langle \kappa_n : n < \omega\rangle\) be the critical sequence of \(U\). By Kunen's inconsistency theorem, we can fix \(n < \omega\) least such that \(\lambda < \kappa_{n+1}\). We prove by induction that \(W\) is \({<}\kappa_m\) supercompact for \(m\leq n\): if \(W\) is \({<}\kappa_m\)-supercompact and \(m < n\), then since \(U\) is \({<}\kappa_{m+1}\)-supercompact and \(j_U(\kappa_m) = \kappa_{m+1}\), \(W\) is \({<}\kappa_{m+1}\)-supercompact by \cref{SupercompactTransfer1}.

Therefore \(W\) is \({<}\kappa_n\)-supercompact. Since \(j_U(\kappa_n) > \lambda\), one more application of \cref{SupercompactTransfer1} implies \(W\) is \(\lambda\)-supercompact.
\end{proof}
\end{prp}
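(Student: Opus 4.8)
The plan is to iterate \cref{SupercompactTransfer1} along the critical sequence of \(U\), using Kunen's inconsistency theorem to guarantee that only finitely many iterations are required. Let \(\langle \kappa_n : n<\omega\rangle\) be the critical sequence of \(U\), so \(\kappa_0 = \kappa\) and \(\kappa_{n+1} = j_U(\kappa_n)\), and set \(\kappa_\omega = \sup_n \kappa_n\). If \(U\) were \(\lambda\)-supercompact for some \(\lambda \geq \kappa_\omega\), then \(M_U\) would be closed under \(\kappa_\omega\)-sequences and in particular would contain the critical sequence and enough of \(V_{\kappa_\omega+2}\) to contradict Kunen's inconsistency theorem; hence \(\lambda < \kappa_\omega\), and we may fix the least \(n<\omega\) with \(\lambda < \kappa_{n+1}\). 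Note that by minimality of \(n\), for every \(m<n\) we have \(\kappa_{m+1}\leq \kappa_n \leq \lambda\), so \(U\), being \(\lambda\)-supercompact, is \({<}\kappa_{m+1}\)-supercompact.

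I would then show by induction on \(m\leq n\) that \(W\) is \({<}\kappa_m\)-supercompact. The base case \(m = 0\) is exactly the hypothesis that \(W\) is \({<}\kappa\)-supercompact. For the inductive step, suppose \(m<n\) and \(W\) is \({<}\kappa_m\)-supercompact, and fix an arbitrary \(\mu < \kappa_{m+1}\). Since \(U\I W\), \(W\) is \({<}\kappa_m\)-supercompact, \(U\) is \(\mu\)-supercompact (as \(\mu < \kappa_{m+1}\leq\lambda\)), and \(j_U(\kappa_m) = \kappa_{m+1} > \mu\), \cref{SupercompactTransfer1} applies with \(\kappa_m\) in the role of ``\(\kappa\)'' and gives that \(W\) is \(\mu\)-supercompact. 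As \(\mu<\kappa_{m+1}\) was arbitrary, \(W\) is \({<}\kappa_{m+1}\)-supercompact, completing the induction.

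Finally, with \(W\) now known to be \({<}\kappa_n\)-supercompact, one last application of \cref{SupercompactTransfer1} — taking \(\kappa_n\) in the role of ``\(\kappa\)'', \(U\) \(\lambda\)-supercompact by hypothesis, and \(j_U(\kappa_n) = \kappa_{n+1} > \lambda\) by the choice of \(n\) — yields that \(W\) is \(\lambda\)-supercompact. (When \(n = 0\) the induction is vacuous and this last step alone finishes the argument.) The only genuinely delicate point is the opening appeal to Kunen's theorem to bound \(\lambda\) strictly below the supremum of the critical sequence of \(U\); once \(n\) is in hand, the rest is just careful bookkeeping of supercompactness degrees along the critical sequence, all of it powered by \cref{SupercompactTransfer1}.
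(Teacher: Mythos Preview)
Your proof is correct and follows essentially the same approach as the paper's: both fix the least \(n\) with \(\lambda<\kappa_{n+1}\) via Kunen's inconsistency theorem, induct along the critical sequence using \cref{SupercompactTransfer1} to get \(W\) \({<}\kappa_n\)-supercompact, and then apply \cref{SupercompactTransfer1} once more. You have simply spelled out a few steps (the Kunen argument, the \(n=0\) case, the quantification over \(\mu\)) in more detail than the paper does.
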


Similarly, the internal relation propagates strong compactness.

\begin{prp}\label{CompactPropagation}
Suppose \(U,W\in \textnormal{Un}\) and \(U\I W\). If \(W\) is \({<}\kappa\)-supercompact, \(U\) has the \((\delta,\lambda)\)-covering property, and \(\lambda < j_U(\kappa)\), then \(W\) has the \((\delta,\lambda)\)-covering property.
\begin{proof}
Suppose \(A\subseteq [\text{Ord}]^\delta\). Let \(D\in M_U\) be such that \(A\subseteq D\) and \(|D|^{M_U} = \lambda\). Then \(D\in M_W\) by \cref{BasicPropagation}, so it suffices to show that \(|D|^{M_W} = \lambda\). But this follows easily from the fact that \(\text{Ord}^\lambda\cap M_U\subseteq M_W\), again by \cref{BasicPropagation}.
\end{proof}
\end{prp}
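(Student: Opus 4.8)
The plan is to mimic the proof of \cref{SupercompactTransfer1}, replacing the transfer of supercompactness measures by the transfer of small covering sets. Recall that $U$ having the $(\delta,\lambda)$-covering property means: every set of ordinals of size $\delta$ (equivalently, every $A \in [\text{Ord}]^\delta$) is contained in some $D \in M_U$ with $|D|^{M_U} = \lambda$. To show $W$ has the $(\delta,\lambda)$-covering property, I fix $A \subseteq [\text{Ord}]^\delta$ — more precisely, a set $A$ of ordinals with $|A| = \delta$ — and must produce $D' \in M_W$ covering $A$ with $|D'|^{M_W} = \lambda$.

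First I would apply the $(\delta,\lambda)$-covering property of $U$ to get $D \in M_U$ with $A \subseteq D$ and $|D|^{M_U} = \lambda$. The point is that $D$ is itself a set of ordinals of size $\lambda$ inside $M_U$, and since $\lambda < j_U(\kappa)$, \cref{BasicPropagation} gives $\text{Ord}^{<j_U(\kappa)} \cap M_U \subseteq M_W$; in particular, any enumeration of $D$ in order type $\lambda$ (a member of $\text{Ord}^\lambda \cap M_U$) lies in $M_W$. Hence $D \in M_W$ and moreover the witnessing surjection $\lambda \to D$ from $M_U$ lies in $M_W$, so $|D|^{M_W} \leq \lambda$. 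Thus $D$ itself — or $D$ together with enough extra ordinals to make its $M_W$-cardinality exactly $\lambda$, which is harmless — serves as the required covering set for $W$. Since $A \subseteq D$ was arbitrary, $W$ has the $(\delta,\lambda)$-covering property.

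I do not anticipate a serious obstacle here: the lemma is essentially a one-line deduction from \cref{BasicPropagation}, exactly as the author's stated proof indicates. The only point requiring mild care is the bookkeeping of cardinalities in $M_W$ versus $M_U$ — specifically, that the $M_U$-cardinality bound $|D|^{M_U} = \lambda$ transfers to an $M_W$-cardinality bound, which it does precisely because the relevant bijection is an element of $\text{Ord}^\lambda \cap M_U \subseteq M_W$ — and the trivial observation that one may always enlarge a covering set (padding with ordinals already available in $M_W$) to achieve cardinality exactly $\lambda$ rather than at most $\lambda$, if the definition of the covering property is read with equality. Neither of these is more than routine.
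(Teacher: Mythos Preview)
Your proposal is correct and follows essentially the same route as the paper's proof: obtain a covering set $D\in M_U$, then invoke \cref{BasicPropagation} (via $\lambda < j_U(\kappa)$) to transfer both $D$ and a $\lambda$-enumeration of it into $M_W$. Your added remarks about the bijection witnessing $|D|^{M_W}\leq\lambda$ and about padding to achieve equality are exactly the routine details the paper suppresses.
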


\section{The First Strongly Compact Cardinal}\label{FirstStronglyCompact}
\subsection{The Least Ultrafilter}
\begin{defn}
A countably complete ultrafilter on a limit ordinal \(\delta\) is {\it \(0\)-order} if it is weakly normal and concentrates on the set of ordinals that do not carry countably complete uniform ultrafilters. 
\end{defn}

We begin with a trivial lemma that turns out to be useful.

\begin{lma}\label{Trivia}
Suppose \(U_*\) is a \(0\)-order ultrafilter on a singular ordinal \(\delta_*\) of cofinality \(\delta\). Let \(U\) be the weakly normal ultrafilter on \(\delta\) derived from \(U_*\). Then \(U\equiv U_*\), and in fact for any continuous cofinal function \(p: \delta \to \delta_*\), \(p_*(U) = U_*\).
\begin{proof}
Let \(p : \delta\to \delta_*\) be continuous and cofinal. Then \begin{align*}j_{U_*}(p)(\sup j_{U_*}[\delta]) &= \sup j_{U_*}(p)[\sup j_{U_*}[\delta]] \\&= \sup j_{U_*}(p)\circ j_{U_*}[\delta] \\&= \sup j_{U_*}\circ p[\delta]\\& = \sup j_{U_*}[\delta_*]\end{align*}
Since \(U_*\) is weakly normal, \([\text{id}]_{U_*} = \sup j_{U_*}[\delta_*]\), so the calculation implies \(p_*(U)= U_*\), which proves the lemma.
\end{proof}
\end{lma}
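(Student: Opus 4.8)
The plan is to reduce the whole lemma to a single continuity computation: that $j_{U_*}(p)$ sends the seed of $U$ to the seed of $U_*$. Fix a continuous cofinal $p:\delta\to\delta_*$, and recall that by definition the weakly normal ultrafilter $U$ on $\delta$ derived from $U_*$ is the ultrafilter derived from $j_{U_*}$ using the seed $s:=\sup j_{U_*}[\delta]$ (which lies strictly below $j_{U_*}(\delta)$, so that this really is an ultrafilter on $\delta$); that is, $U=\{A\subseteq\delta: s\in j_{U_*}(A)\}$. Unwinding the definition of $p_*$ and using that $j_{U_*}(p^{-1}[A]) = (j_{U_*}(p))^{-1}[j_{U_*}(A)]$, one gets
\[p_*(U)=\{A\subseteq\delta_*: p^{-1}[A]\in U\}=\{A\subseteq\delta_*: s\in j_{U_*}(p^{-1}[A])\}=\{A\subseteq\delta_*: j_{U_*}(p)(s)\in j_{U_*}(A)\}.\]
So it suffices to prove $j_{U_*}(p)(s)=[\mathrm{id}]_{U_*}$: the right-hand side above is then exactly the ultrafilter on $\delta_*$ derived from $j_{U_*}$ with seed $[\mathrm{id}]_{U_*}$, namely $U_*$ itself.

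Next I would do the computation of $j_{U_*}(p)(s)$. By elementarity $j_{U_*}(p)$ is, inside $M_{U_*}$, a continuous cofinal (hence monotone) map from $j_{U_*}(\delta)$ to $j_{U_*}(\delta_*)$. Since $s=\sup j_{U_*}[\delta]$ is a limit ordinal lying below $j_{U_*}(\delta)$, continuity gives $j_{U_*}(p)(s)=\sup j_{U_*}(p)[s]$, and by monotonicity this supremum equals $\sup\{j_{U_*}(p)(j_{U_*}(\alpha)):\alpha<\delta\}$ (every $\xi<s$ is below some $j_{U_*}(\alpha)$, and every such $j_{U_*}(\alpha)$ is below $s$). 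Pulling $j_{U_*}$ through, this is $\sup\{j_{U_*}(p(\alpha)):\alpha<\delta\}=\sup j_{U_*}[p[\delta]]$, and since $p$ is cofinal in $\delta_*$ this equals $\sup j_{U_*}[\delta_*]$. Finally, weak normality of $U_*$ is exactly the statement $[\mathrm{id}]_{U_*}=\sup j_{U_*}[\delta_*]$, so $j_{U_*}(p)(s)=[\mathrm{id}]_{U_*}$, proving $p_*(U)=U_*$ for every continuous cofinal $p$.

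For the assertion $U\equiv U_*$: since $U$ is derived from $U_*$ it is Rudin--Keisler below $U_*$, and $p_*(U)=U_*$ shows $U_*$ is Rudin--Keisler below $U$; antisymmetry of the Rudin--Keisler order then yields $U\equiv U_*$. (Alternatively, apply the previous paragraph to a normal function $p$, which is injective, so that $p$ restricts to a bijection from a set in $U$ onto a set in $U_*$ and is itself a Rudin--Keisler isomorphism.)

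I do not expect a real obstacle here: the lemma is in essence the observation that $j_{U_*}$ is continuous at $\sup j_{U_*}[\delta]$ once composed with the (elementarily transported) continuous cofinal function $p$. The only points that deserve care are checking that $\sup j_{U_*}[\delta]<j_{U_*}(\delta)$, so that $s$ is a genuine limit point in the domain of $j_{U_*}(p)$, and correctly identifying the seed $\sup j_{U_*}[\delta]$ with the ultrafilter the lemma names ``the weakly normal ultrafilter on $\delta$ derived from $U_*$.''
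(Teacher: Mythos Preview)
Your proof is correct and follows essentially the same route as the paper: both reduce to the continuity computation
\[
j_{U_*}(p)\bigl(\sup j_{U_*}[\delta]\bigr)=\sup j_{U_*}[\delta_*]=[\mathrm{id}]_{U_*},
\]
and your chain of equalities matches the paper's line for line. You are simply more explicit than the paper about two points it leaves to the reader: the reduction of $p_*(U)=U_*$ to the seed identity $j_{U_*}(p)(s)=[\mathrm{id}]_{U_*}$, and the deduction of $U\equiv U_*$ from the two Rudin--Keisler reductions.
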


Therefore the interesting \(0\)-order ultrafilters lie on regular cardinals, and all other \(0\)-order ultrafilters are reducible to them.

Generalizing an observation due to Solovay, Ketonen introduced the seed order on weakly normal ultrafilters and proved the following fact.

\begin{thm}[Ketonen]
A uniform countably complete ultrafilter on a limit ordinal \(\delta\) is \(0\)-order if and only if it is an \(\swo\)-minimal element of \(\textnormal{Un}_\delta\).
\begin{proof}
Suppose \(U\in \textnormal{Un}_\delta\), \(\alpha\) is an ordinal such that \(\sup j_U[\delta] \leq \alpha \leq [\text{id}]_U\), and \(W_*\in \textnormal{Un}^{M_U}_\alpha\). Then \(U^-(W_*) \swo U\) and \(U^-(W_*)\in \textnormal{Un}_\delta\). Conversely if \(W\swo U\) and \(W\in \textnormal{Un}_\delta\), then for some ordinal \(\alpha\) such that \(\sup j_U[\delta] \leq \alpha \leq [\text{id}]_U\) and \(W_*\in \textnormal{Un}^{M_U}_\alpha\), \(W = U^-(W_*)\).

Thus \(U\) is an \(\swo\)-minimal element of \(\textnormal{Un}_\delta\) if and only if for all ordinals \(\alpha\) such that \(\sup j_U[\delta] \leq \alpha \leq [\text{id}]_U\), \[\textnormal{Un}^{M_U}_\alpha= \emptyset\] Recalling that \(\textnormal{Un}_{\alpha}\) is nonempty whenever \(\alpha\) is a successor ordinal, this holds if and only if \(\sup j_U[\delta] = [\text{id}]_U\) and \([\text{id}]_U\) carries no uniform ultrafilters in \(M_U\), or equivalently if and only if \(U\) is \(0\)-order.
\end{proof}
\end{thm}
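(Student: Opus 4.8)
The plan is to identify exactly which members of \(\textnormal{Un}_\delta\) lie strictly below \(U\) in the seed order, and then read off what \(\swo\)-minimality amounts to.

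The one substantive step is the following correspondence: for \(W\in\textnormal{Un}_\delta\), one has \(W\swo U\) if and only if \(W = U^-(W_*)\) for some \(W_*\in \textnormal{Un}^{M_U}_\alpha\) with \(\sup j_U[\delta]\leq \alpha \leq [\text{id}]_U\). For the forward direction I would unwind the definition of \(\swo\), which gives \(W = U^-(W_*)\) for some \(W_*\in\textnormal{Un}^{M_U}\) with \(\textsc{sp}(W_*)\leq [\text{id}]_U\); since \(U^-(W_*)\) is by convention uniform on the least ordinal \(\delta'\) with \(\textsc{sp}(W_*)\leq j_U(\delta')\), the hypothesis \(W\in\textnormal{Un}_\delta\) forces \(\delta' = \delta\), so \(j_U(\xi) < \textsc{sp}(W_*)\) for every \(\xi<\delta\) and hence \(\textsc{sp}(W_*)\geq \sup j_U[\delta]\). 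For the reverse direction, given \(W_*\in\textnormal{Un}^{M_U}_\alpha\) with \(\alpha\) in that interval, note \(\alpha \leq [\text{id}]_U < j_U(\delta)\) while \(\alpha \geq \sup j_U[\delta] > j_U(\xi)\) for all \(\xi<\delta\) (using that \(\delta\) is a limit), so \(\delta\) is the least ordinal with \(\alpha\leq j_U(\delta)\); thus \(U^-(W_*)\) is uniform on \(\delta\), and \(U^-(W_*)\swo U\) since \(\textsc{sp}(W_*)\leq[\text{id}]_U\). Countable completeness of \(U^-(W_*)\) is routine, using that \(M_U\) is closed under \(\omega\)-sequences.

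From this, \(U\) is \(\swo\)-minimal in \(\textnormal{Un}_\delta\) if and only if \(\textnormal{Un}^{M_U}_\alpha = \emptyset\) for every \(\alpha\) with \(\sup j_U[\delta]\leq \alpha \leq [\text{id}]_U\). Here I would invoke two trivialities: that \(\sup j_U[\delta]\) is a limit ordinal and that \([\text{id}]_U \geq \sup j_U[\delta]\) always (from uniformity: \(\delta\setminus(\xi+1)\in U\) gives \([\text{id}]_U > j_U(\xi)\) for each \(\xi<\delta\)); and that \(\textnormal{Un}_\gamma\neq\emptyset\) for every successor ordinal \(\gamma\), witnessed by a principal ultrafilter, with ``successor ordinal'' absolute. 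So if \(\sup j_U[\delta] < [\text{id}]_U\), some successor ordinal \(\alpha\) lies in \((\sup j_U[\delta], [\text{id}]_U]\) and carries a uniform ultrafilter of \(M_U\), so \(U\) is not minimal; and if \(\textnormal{Un}^{M_U}_{[\text{id}]_U}\neq\emptyset\), minimality fails via \(\alpha = [\text{id}]_U\). Conversely, if \([\text{id}]_U = \sup j_U[\delta]\) and \(\textnormal{Un}^{M_U}_{[\text{id}]_U} = \emptyset\), the interval collapses to \(\{[\text{id}]_U\}\) and there is nothing below \(U\). Finally I would translate via Los's theorem: \([\text{id}]_U = \sup j_U[\delta]\) is weak normality, and \(\textnormal{Un}^{M_U}_{[\text{id}]_U} = \emptyset\) says \(U\) concentrates on the set of ordinals carrying no uniform countably complete ultrafilter; together these say precisely that \(U\) is \(0\)-order.

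The place requiring care is the correspondence in the second paragraph, specifically matching the bounds \(\sup j_U[\delta]\leq \textsc{sp}(W_*)\leq [\text{id}]_U\) against the canonical-underlying-set convention baked into the notation \(U^-(W_*)\); once that is set up, the rest is bookkeeping with Los's theorem and the principal-ultrafilter triviality.
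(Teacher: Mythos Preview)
Your proof is correct and follows essentially the same approach as the paper's: establish the correspondence between \(\swo\)-predecessors of \(U\) in \(\textnormal{Un}_\delta\) and elements of \(\textnormal{Un}^{M_U}_\alpha\) for \(\alpha\in[\sup j_U[\delta],[\text{id}]_U]\), then use the successor-ordinal triviality to collapse the interval. You simply spell out in more detail the verification of the bounds on \(\textsc{sp}(W_*)\) that the paper asserts without proof.
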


In particular, if a limit ordinal \(\delta\) carries a countably complete uniform ultrafilter, it carries a \(0\)-order ultrafilter. If the seed order is linear, then minimal elements of \(\textnormal{Un}_\delta\) are {\it minimum} elements, which yields the following corollary.

\begin{cor}[UA]\label{ZeroUnique}
A limit ordinal \(\delta\) carries at most one \(0\)-order ultrafilter.
\end{cor}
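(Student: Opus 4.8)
The plan is to read the corollary off Ketonen's theorem together with the linearity of the seed order under UA. Ketonen's theorem, just proved, identifies the $0$-order ultrafilters on a limit ordinal $\delta$ with the $\swo$-minimal elements of $\textnormal{Un}_\delta$, so it suffices to show that $\textnormal{Un}_\delta$ has at most one $\swo$-minimal element. Since every member of $\textnormal{Un}_\delta$ is a subset of $P(\delta)$, the collection $\textnormal{Un}_\delta$ is a set, and $\swo$ restricted to it is a wellfounded strict partial order; under UA it is moreover linear by \cref{Linearity}. A linear strict partial order on a set has at most one minimal element: if $U_0 \neq U_1$ were both $\swo$-minimal, linearity would force $U_0 \swo U_1$ or $U_1 \swo U_0$, either of which contradicts the minimality of one of them. (Equivalently, wellfoundedness together with linearity makes $\swo$ a wellorder of $\textnormal{Un}_\delta$, whose unique least element is its unique minimal element — exactly the remark made just before the corollary statement.) Hence $\delta$ carries at most one $0$-order ultrafilter.

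There is essentially no obstacle here; the mathematical content resides entirely in the results already established, namely Ketonen's characterization of $0$-order ultrafilters as $\swo$-minimal elements and the fact that UA yields the linearity of the seed order. The only things worth noting explicitly are the harmless set-theoretic point that $\textnormal{Un}_\delta$ is a set (so that talk of minimal elements is unproblematic) and the trivial observation that, once $\swo$ is linear, an $\swo$-minimal element of $\textnormal{Un}_\delta$ is automatically the $\swo$-least element.
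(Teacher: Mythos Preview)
Your proof is correct and follows exactly the paper's approach: Ketonen's theorem identifies $0$-order ultrafilters with $\swo$-minimal elements of $\textnormal{Un}_\delta$, and under UA the linearity of the seed order forces minimal elements to be minimum, hence unique. The paper says this in one line just before the corollary.
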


\begin{defn}[UA]
If \(\delta\) is a limit ordinal that carries a countably complete uniform ultrafilter, we call the unique \(0\)-order ultrafilter on \(\delta\) the {\it least ultrafilter} on \(\delta\).
\end{defn}

In the context of UA, if \(U\) is a least ultrafilter, then \(U\) is irreducible in a very strong sense.  

\begin{thm}[UA]\label{GeneralInternal}
Let \(U\) be the least ultrafilter on a limit ordinal \(\delta\). Fix \(W\in \textnormal{Un}\) and let \(\delta_* = \sup j_W[\delta]\). Then one of the following holds: 
\begin{enumerate}[(1)]
\item \(t_W(U)\) is the least ultrafilter on \(\delta_*\) as computed in \(M_W\).
\item \(t_W(U) = P^{M_W}_{\delta_*}\).
\end{enumerate}
\begin{proof}
Let \(D\) be the ultrafilter derived from \(W\) using \(\delta_*\). Then \(U\wo D\), so \[t_W(U) \wo^{M_W} t_W(D)\wo^{M_W} P^{M_W}_{\delta_*}\] with the first inequality following from \cref{op} and the second from definition of translation functions and the fact that \(D = W^{-}(P^{M_W}_{\delta_*})\). On the other hand, \(\textsc{sp}(t_W(U)) \geq \delta_*\) since otherwise \[\textsc{sp}(U) = \textsc{sp}(W^{-}(t_W(U))) < \delta\] contradicting that \(\textsc{sp}(U) = \delta\).

It follows from \cref{SpaceLemma} that either \(t_W(U)\in \textnormal{Un}^{M_W}_{\delta_*}\) or \(t_W(U) = P^{M_W}_{\delta_*}\).

Suppose \(t_W(U)\in \textnormal{Un}^{M_W}_{\delta_*}\). We show \(t_W(U)\) is \(\swo^{M_W}\)-minimal in \(\textnormal{Un}^{M_W}_{\delta_*}\), which proves the theorem. Fix \(Z\in \textnormal{Un}^{M_W}\) with \(Z\swo^{M_W}t_W(U)\). Then \(W^{-}(Z) \swo U\) since \[t_W(W^{-}(Z)) \wo Z \swo t_W(U)\] and \(t_W\) is order-preserving by \cref{op}. Hence \(W^{-}(Z)\in \textnormal{Un}_{<\delta}\), and therefore \(Z\in \textnormal{Un}_{<\delta_*}\), as desired.
\end{proof}
\end{thm}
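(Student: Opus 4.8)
The plan is to pin down $t_W(U)$ by trapping it in the seed order between two explicit ultrafilters and then reading off its space. Write $\delta_* = \sup j_W[\delta]$; this is a limit ordinal. First I would introduce the auxiliary ultrafilter $D = W^-(P^{M_W}_{\delta_*})$, the $W$-limit of the principal ultrafilter concentrated at $\delta_*$, and check by a short space computation that $\textsc{sp}(D) \geq \delta$. Since $U$ is the $\swo$-minimum of $\textnormal{Un}_\delta$ — by Ketonen's theorem together with linearity of the seed order, with uniqueness coming from \cref{ZeroUnique} — this gives $U \wo D$. Applying the order-preserving translation map $t_W$ (\cref{op}), and using that $t_W(D)$ is by definition the $\swo^{M_W}$-least $M_W$-ultrafilter with $W$-limit $D$, a role filled by $P^{M_W}_{\delta_*}$ itself, I get
\[ t_W(U) \wo^{M_W} t_W(D) \wo^{M_W} P^{M_W}_{\delta_*}.\]
For the matching lower bound, I would observe that if $\textsc{sp}(t_W(U)) < \delta_*$ then $\textsc{sp}(t_W(U)) \leq j_W(\xi)$ for some $\xi < \delta$, which forces $\textsc{sp}(U) = \textsc{sp}(W^-(t_W(U))) < \delta$, a contradiction; so $\textsc{sp}(t_W(U)) \geq \delta_*$. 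Since $\textsc{sp}(P^{M_W}_{\delta_*}) = \delta_* + 1$, combining this with the displayed inequality and \cref{SpaceLemma} forces $\textsc{sp}(t_W(U))$ to be exactly $\delta_*$ or $\delta_* + 1$.

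Next I would split into these two cases. If $\textsc{sp}(t_W(U)) = \delta_* + 1$, then both $t_W(U)$ and $P^{M_W}_{\delta_*}$ lie in $\textnormal{Un}^{M_W}_{\delta_*+1}$; since $P^{M_W}_{\delta_*}$ is easily the $\swo^{M_W}$-minimum of that class (the ultrapower by a principal ultrafilter being trivial) and $t_W(U) \wo^{M_W} P^{M_W}_{\delta_*}$, linearity yields $t_W(U) = P^{M_W}_{\delta_*}$, which is alternative (2). If instead $\textsc{sp}(t_W(U)) = \delta_*$, I claim $t_W(U)$ is $\swo^{M_W}$-minimal in $\textnormal{Un}^{M_W}_{\delta_*}$; granting this, Ketonen's theorem applied inside $M_W$ makes $t_W(U)$ a $0$-order ultrafilter on $\delta_*$, and \cref{ZeroUnique} inside $M_W$ makes it the least one, which is alternative (1). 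To establish the minimality, I would take an arbitrary $Z \swo^{M_W} t_W(U)$; from $t_W(W^-(Z)) \wo^{M_W} Z \swo^{M_W} t_W(U)$, monotonicity of $t_W$ plus linearity forces $W^-(Z) \swo U$, and then minimality of $U$ in $\textnormal{Un}_\delta$ together with \cref{SpaceLemma} forces $\textsc{sp}(W^-(Z)) < \delta$, which unwinds (just as in the lower-bound step above) to $\textsc{sp}(Z) < \delta_*$; hence no such $Z$ lies in $\textnormal{Un}^{M_W}_{\delta_*}$.

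The step I expect to be the main obstacle is this last transfer of minimality from $U$ to $t_W(U)$: it relies on $t_W$ not merely preserving but reflecting strict seed comparisons, which works only because the seed order is linear both in $V$ and in $M_W$, and it requires invoking Ketonen's theorem and \cref{ZeroUnique} as internal statements of $M_W$. The surrounding space bookkeeping around $\sup j_W[\delta]$ is routine, but I would want to carry it out carefully enough that the degenerate configuration $\delta_* = j_W(\delta)$ does not need separate treatment — as set up above, it should be absorbed into the main argument.
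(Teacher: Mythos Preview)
Your proposal is correct and follows essentially the same route as the paper: define $D$ as the ultrafilter derived from $W$ using $\delta_*$, sandwich $t_W(U)$ between the lower space bound $\delta_*$ and the upper seed bound $P^{M_W}_{\delta_*}$ via \cref{op}, and then in the nonprincipal case verify $\swo^{M_W}$-minimality by pulling back a hypothetical $Z \swo^{M_W} t_W(U)$ through $W^-$ and using that $t_W$ is order-preserving together with linearity. Your added justifications (the space computation for $D$, the explicit handling of the $\delta_*+1$ case, and the remark that the reflection step uses linearity in both $V$ and $M_W$) are all accurate elaborations of steps the paper leaves implicit.
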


This leads to a useful characterization of the internal ultrapower embeddings of a least ultrapower.

\begin{thm}[UA]
Let \(U\) be the least ultrafilter on a limit ordinal \(\delta\). Suppose \(W\) is a countably complete ultrafilter and \(k : M_U\to M_W\) is an elementary embedding with \(k\circ j_U = j_W\). Then \(k\) is definable over \(M_U\) if and only if \(k\) is continuous at \(\sup j_U[\delta]\).
\begin{proof}
One direction is obvious. Suppose conversely that \(k\) is continuous at \(\sup j_U[\delta]\). Then \(k(\sup j_U[\delta]) = \sup j_W[\delta]\). Therefore \(\sup j_W[\delta]\) carries no uniform ultrafilters in \(M_W\). It follows that \(t_W(U) = P^{M_W}_{\sup j_W[\delta]}\) by \cref{GeneralInternal}. Therefore by \cref{Reciprocity}, \(j^{M_U}_{t_U(W)}\circ j_U = j_W\) and \(j^{M_U}_{t_U(W)}(\sup j_U[\delta]) = \sup j_W[\delta]\). It follows that \(k = j^{M_U}_{t_U(W)}\), so \(k\) is definable over \(M_U\), as desired.
\end{proof}
\end{thm}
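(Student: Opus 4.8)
The plan is to reduce both implications to the single fact that, under UA, $k$ must coincide with the translation embedding $j^{M_U}_{t_U(W)}$. The hinge is whether $t_W(U)$ is \emph{principal}. If it is, then by \cref{GeneralInternal} we must have $t_W(U) = P^{M_W}_{\sup j_W[\delta]}$, and then the Reciprocity Theorem (\cref{Reciprocity}) together with \cref{UltrafilterSum} gives $M^{M_U}_{t_U(W)} = M_W$ and $j^{M_U}_{t_U(W)}\circ j_U = j_W$; in other words $j^{M_U}_{t_U(W)}$ is a definable factor embedding $M_U\to M_W$.

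I would treat the easy direction first. If $k$ is definable over $M_U$, then $k$ is an internal ultrapower embedding of $M_U$ (being cofinal, as a factor of the cofinal embedding $j_W$), so $U$ divides $W$ and $t_W(U)$ is principal by \cref{DivisionCharacterization}. By the hinge, $j^{M_U}_{t_U(W)}$ is a definable factor embedding, and since $k$ and $j^{M_U}_{t_U(W)}$ are both definable over $M_U$ and agree on $j_U[V]$, applying \cref{mindef} in both directions shows they agree on all ordinals, hence $k = j^{M_U}_{t_U(W)}$. Finally $j^{M_U}_{t_U(W)}$ is continuous at $\sup j_U[\delta] = [\mathrm{id}]_U$: otherwise some $f\in M_U$ with $f:\textsc{sp}^{M_U}(t_U(W))\to[\mathrm{id}]_U$ is cofinal mod $t_U(W)$, and then $f_*(t_U(W))$ is a countably complete uniform $M_U$-ultrafilter on $[\mathrm{id}]_U$, contradicting that $U$ is $0$-order.

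For the substantive direction, assume $k$ is continuous at $\sup j_U[\delta]$. Since $j_U[\delta]$ is cofinal there and $k\circ j_U = j_W$, continuity gives $k([\mathrm{id}]_U) = \sup_{\xi<\delta}k(j_U(\xi)) = \sup j_W[\delta]$. Now $U$ being $0$-order means $M_U$ satisfies ``$[\mathrm{id}]_U$ carries no countably complete uniform ultrafilter'', which is a first-order property of the ordinal $[\mathrm{id}]_U$, so by elementarity $M_W$ satisfies the same of $k([\mathrm{id}]_U) = \sup j_W[\delta]$. This rules out the first alternative of \cref{GeneralInternal}, so $t_W(U) = P^{M_W}_{\sup j_W[\delta]}$ and the hinge applies. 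It remains to check $k = j^{M_U}_{t_U(W)}$. They agree on $j_U[V]$, and on $[\mathrm{id}]_U$ we have $j^{M_U}_{t_U(W)}([\mathrm{id}]_U)\geq\sup_{\xi<\delta}j^{M_U}_{t_U(W)}(j_U(\xi)) = \sup j_W[\delta] = k([\mathrm{id}]_U)$, while \cref{mindef} applied to the definable embedding $j^{M_U}_{t_U(W)}$ against $k$ gives the reverse inequality; since $j_U[V]\cup\{[\mathrm{id}]_U\}$ generates $M_U$, we get $k = j^{M_U}_{t_U(W)}$, which is definable.

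The main obstacle I anticipate is the step that converts the continuity of $k$ into the combinatorial assertion that $\sup j_W[\delta]$ carries no countably complete uniform ultrafilter in $M_W$ --- that is, seeing that the $0$-order-ness of $U$ is a first-order property pushed forward by $k$, and that this is exactly what is needed to force $t_W(U)$ to be principal via \cref{GeneralInternal}. Everything downstream is routine bookkeeping with Reciprocity and the minimality of definable embeddings.
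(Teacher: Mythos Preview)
Your proof is correct and follows essentially the same route as the paper for the substantive direction: continuity at $\sup j_U[\delta]$ pushes the $0$-order property forward to $M_W$ by elementarity, which forces $t_W(U)$ to be principal via \cref{GeneralInternal}, and then Reciprocity identifies $k$ with the internal embedding $j^{M_U}_{t_U(W)}$. Your explicit invocation of \cref{mindef} to pin down $k([\text{id}]_U) = j^{M_U}_{t_U(W)}([\text{id}]_U)$ spells out a step the paper leaves implicit.

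For the easy direction the paper simply says ``obvious.'' Your detour through \cref{DivisionCharacterization} to first establish $k = j^{M_U}_{t_U(W)}$ is correct but unnecessary: your final sentence---that discontinuity at $[\text{id}]_U$ would let you push the internal ultrafilter down to a uniform countably complete ultrafilter on $[\text{id}]_U$ in $M_U$, contradicting $0$-orderness---already works verbatim for \emph{any} $Z\in M_U$ with $k = j^{M_U}_Z$, without first identifying $Z$ as $t_U(W)$.
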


Restated, this is a characterization (reminiscent of \cref{CanonicalInternal}) of the countably complete \(M_U\)-ultrafilters that belong to \(M_U\):

\begin{thm}[UA]\label{ZeroInternal}
Let \(U\) be the least ultrafilter on a limit ordinal \(\delta\). Suppose \(W_*\) is a countably complete \(M_U\)-ultrafilter. Then \(W_*\in M_U\) if and only if \(j^{M_U}_{W_*}\) is continuous at \(\textnormal{cf}^{M_U}(\sup j_U[\delta])\).
\end{thm}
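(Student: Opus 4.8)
The plan is to read this off the preceding theorem, which characterizes the elementary embeddings \(k : M_U\to M_W\) with \(k\circ j_U = j_W\) (for \(W\in\textnormal{Un}\)) that are definable over \(M_U\). The bridge is this: given a countably complete \(M_U\)-ultrafilter \(W_*\), set \(k := j^{M_U}_{W_*}\); since \(k\circ j_U\) is an ultrapower embedding of \(V\) (a composition of ultrapower embeddings, with target generated over its range by finitely many ordinals), there is \(W\in\textnormal{Un}\) with \(M_W = M^{M_U}_{W_*}\) and \(j_W = k\circ j_U\) — this is the ``\(U\oplus W_*\)'' of \cref{UltrafilterSum}, whose construction does not in fact require \(W_*\in M_U\) — and then \(k : M_U\to M_W\) satisfies \(k\circ j_U = j_W\). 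Two preliminary remarks are used throughout. Writing \(\lambda := \sup j_U[\delta]\), since \(U\) is \(0\)-order (hence weakly normal) we have \([\textnormal{id}]_U = \lambda\), so \(M_U = \{j_U(f)(\lambda) : f\in V\}\); and any ultrapower embedding is continuous at a limit ordinal if and only if it is continuous at the cofinality of that ordinal in the domain model (an easy argument with a continuous cofinal map computed in the domain), so \(j^{M_U}_{W_*}\) is continuous at \(\textnormal{cf}^{M_U}(\lambda)\) if and only if it is continuous at \(\lambda\). The latter remark is precisely why the statement may be phrased using \(\textnormal{cf}^{M_U}(\lambda)\) rather than \(\lambda\).

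For the forward direction: if \(W_*\in M_U\), then \(k = j^{M_U}_{W_*}\) is definable over \(M_U\) from the parameter \(W_*\), so by the preceding theorem \(k\) is continuous at \(\lambda = \sup j_U[\delta]\), hence at \(\textnormal{cf}^{M_U}(\lambda)\).

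For the converse, assume \(j^{M_U}_{W_*}\) is continuous at \(\textnormal{cf}^{M_U}(\lambda)\), hence at \(\lambda\); I want \(W_*\in M_U\). It is \emph{not} enough here to invoke that \(k\) is definable over \(M_U\): I need to exhibit an ultrafilter \emph{of} \(M_U\) inducing \(k\), and this is exactly what the proof of the preceding theorem provides. From continuity of \(k\) at \(\lambda = \sup j_U[\delta]\) one gets \(k(\lambda) = \sup j_W[\delta]\); since \(U\) is \(0\)-order, \(M_U\) satisfies ``\(\lambda\) carries no countably complete uniform ultrafilter'' (by \L o\'s), so by elementarity of \(k\) the ordinal \(\sup j_W[\delta]\) carries no countably complete uniform ultrafilter in \(M_W\); hence \cref{GeneralInternal} (applied with this \(W\), its first alternative now excluded) gives \(t_W(U) = P^{M_W}_{\sup j_W[\delta]}\); and then \cref{Reciprocity}, together with the fact that \(t_W(U)\) is principal, yields \(j^{M_U}_{t_U(W)}\circ j_U = j_W\) and \(j^{M_U}_{t_U(W)}(\lambda) = \sup j_W[\delta]\). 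Since \(M_U = \{j_U(f)(\lambda) : f\in V\}\) and the embeddings \(k\) and \(j^{M_U}_{t_U(W)}\) agree on \(j_U[V]\cup\{\lambda\}\), they are equal: \(k = j^{M_U}_{t_U(W)}\). Now \(t_U(W)\in\textnormal{Un}^{M_U}\subseteq M_U\), and \(j^{M_U}_{W_*} = j^{M_U}_{t_U(W)}\) is a single ultrapower embedding of \(M_U\) with target \(N := M_W\). Because \(N\) is generated over \(j^{M_U}_{t_U(W)}[M_U]\) by the point \([\textnormal{id}]^{M_U}_{t_U(W)}\), I may fix \(g\in M_U\) with \([\textnormal{id}]^{M_U}_{W_*} = j^{M_U}_{t_U(W)}(g)\big([\textnormal{id}]^{M_U}_{t_U(W)}\big)\); a routine computation with \L o\'s's theorem then gives \(W_* = g_*(t_U(W))\), whence \(W_*\in M_U\).

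The main obstacle is this converse, and specifically the point that ``\(j^{M_U}_{W_*}\) is definable over \(M_U\)'' does not immediately deliver ``\(W_*\in M_U\)'': one must pass through the concrete internal ultrafilter \(t_U(W)\) supplied by the proof of the preceding theorem and then transfer \(W_*\) into \(M_U\) by a Rudin--Keisler computation. The remaining ingredients — the cofinality equivalence and the forward direction — are routine, the only minor care being the verification that the construction underlying \cref{UltrafilterSum} still produces a member of \(\textnormal{Un}\) when \(W_*\notin M_U\).
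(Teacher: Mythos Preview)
Your proof is correct and follows exactly the approach the paper intends: the paper presents \cref{ZeroInternal} as a direct restatement of the preceding theorem (``Restated, this is a characterization\ldots''), and you have supplied the translation in full detail. In particular, you are right to flag that the converse requires slightly more than ``\(k\) is definable over \(M_U\)'': one must actually recover \(W_*\) inside \(M_U\), and your Rudin--Keisler argument via \(W_* = g_*(t_U(W))\) is the clean way to do this.
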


If \(\delta\) is a regular cardinal and \(U\) is a countably complete ultrafilter, a theorem of Ketonen \cite{Ketonen} states that the value of \(\textnormal{cf}^{M_U}(\sup j_U[\delta])\) determines the covering property of \(M_U\) with respect to \(\delta\)-sequences. This makes \cref{ZeroInternal} extremely useful for the purpose of gauging the strong compactness and supercompactness of least ultrafilters under UA. We briefly discuss the relationship between cofinalities and covering properties of ultrapowers.

\begin{defn}
Suppose \(M\) is an inner model, \(\delta\) is a cardinal, and \(\lambda\) is an \(M\)-cardinal. Then \(M\) has the {\it \((\delta,\lambda)\)-covering property} if for every set \(A\subseteq \text{Ord}\) such that \(|A|\leq \delta\), there exists a set \(B\subseteq \text{Ord}\) belonging to \(M\) such that \(A\subseteq B\) and \(|B|^M \leq \lambda\).
\end{defn}

\begin{thm}[Ketonen]\label{CofinalityRegularity}
Suppose \(\delta\) is a regular cardinal and \(U\) is a countably complete ultrafilter. Let \(\lambda = \textnormal{cf}^{M_U}(\sup j_U[\delta])\). Then \(M_U\) has the \((\delta,\lambda)\)-covering property.
\begin{proof}
By a standard argument, it suffices to show that there is a set \(B\in M_U\) with \(j_U[\delta]\subseteq B\) and \(|B|^M\leq \lambda\).

Let \(C\in M_U\) be cofinal subset of \(\sup j_U[\delta]\) of order type \(\lambda\). Since \(\delta\) is regular, we can partition \(\delta\) into bounded intervals \(\langle I_\alpha : \alpha < \delta\rangle\) with the property that for all \(\alpha < \delta\), \(j_U(I_\alpha)\cap C\neq \emptyset\). (This is achieved by setting \[I_\alpha = \left[\sup\left\{\eta + 1 : \eta\in \bigcup_{\beta < \alpha} I_\beta\right\}, \xi\right)\] where \(\xi < \delta\) is least ensuring \(j_U(I_\alpha)\cap C\neq \emptyset\).) 

Let \[\langle J_\alpha : \alpha < j_U(\delta)\rangle = j_U(\langle I_\alpha : \alpha < \delta\rangle)\] and let \[B = \{\alpha < j_U(\delta) : J_\alpha \cap C\neq \emptyset\}\] Clearly \(B\in M_U\). Moreover since \(|C|^{M_U} = \lambda\) and the \(J_\alpha\) are disjoint, it follows that \(|B|^{M_U} \leq \lambda\). Finally \(j_U[\delta]\subseteq B\) since we arranged that \(J_{j_U(\alpha)}\cap C = j_U(I_\alpha) \cap C\neq \emptyset\) for all \(\alpha < \delta\).
\end{proof}
\end{thm}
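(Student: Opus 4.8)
The plan is to reduce the statement, by a standard reduction which I omit, to producing a single set $B\in M_U$ with $j_U[\delta]\subseteq B$ and $|B|^{M_U}\le\lambda$, and then to obtain such a $B$ by pulling a cofinal set from $M_U$ back across $j_U$. Since $\delta$ is a limit ordinal and $j_U$ is order preserving, $\sigma:=\sup j_U[\delta]$ is a limit ordinal, and its $M_U$-cofinality is $\lambda$ by the definition of $\lambda$; so fix, inside $M_U$, a set $C$ that is cofinal in $\sigma$ and has order type $\lambda$ as computed in $M_U$.

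The heart of the argument is a partition of $\delta$ chosen to ``catch'' $C$. Using the regularity of $\delta$, I would recursively build a strictly increasing sequence $\langle\gamma_\alpha:\alpha\le\delta\rangle$, continuous at limits, with $\gamma_0=0$ and with the property that for each $\alpha<\delta$ the interval $I_\alpha:=[\gamma_\alpha,\gamma_{\alpha+1})$ has $j_U(I_\alpha)\cap C\neq\emptyset$; here $j_U(I_\alpha)$ means the image of the \emph{set} $I_\alpha$ under $j_U$, that is, the interval $[j_U(\gamma_\alpha),j_U(\gamma_{\alpha+1}))$ of $M_U$, not the pointwise image. At a limit stage $\alpha<\delta$ put $\gamma_\alpha=\sup_{\beta<\alpha}\gamma_\beta$, which stays below $\delta$ because $\delta$ is regular and $\alpha<\delta$. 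At a successor stage: since $\gamma_\alpha<\delta$ and $\delta$ is a limit, $\gamma_\alpha+1<\delta$, so $j_U(\gamma_\alpha)<j_U(\gamma_\alpha+1)\le\sigma$; as $C$ is cofinal in $\sigma$, pick $c\in C$ with $c>j_U(\gamma_\alpha)$, and since $c<\sigma=\sup j_U[\delta]$ there is $\eta<\delta$ with $c<j_U(\eta)$, so let $\gamma_{\alpha+1}<\delta$ be least with $c<j_U(\gamma_{\alpha+1})$. Then $\gamma_\alpha<\gamma_{\alpha+1}$ and $c\in j_U(I_\alpha)$. Because the $\gamma_\alpha$ are strictly increasing and all below $\delta$, regularity forces $\gamma_\delta=\sup_{\alpha<\delta}\gamma_\alpha=\delta$, so $\langle I_\alpha:\alpha<\delta\rangle$ partitions $\delta$ into consecutive bounded intervals.

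Finally I would transfer this partition into $M_U$: let $\langle J_\beta:\beta<j_U(\delta)\rangle=j_U(\langle I_\alpha:\alpha<\delta\rangle)$ and set $B=\{\beta<j_U(\delta):J_\beta\cap C\neq\emptyset\}$. Then $B\in M_U$; by elementarity the $J_\beta$ partition $j_U(\delta)$, hence are pairwise disjoint, so choosing a point of $J_\beta\cap C$ for each $\beta\in B$ injects $B$ into $C$, whence $|B|^{M_U}\le|C|^{M_U}=\lambda$; and for every $\alpha<\delta$ we have $J_{j_U(\alpha)}=j_U(I_\alpha)$, which meets $C$ by construction, so $j_U(\alpha)\in B$. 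Thus $j_U[\delta]\subseteq B$ with $|B|^{M_U}\le\lambda$, as required. The step I expect to need the most care is the recursion defining the $\gamma_\alpha$: one must verify it survives all $\delta$ stages, and this is exactly where regularity of $\delta$ enters — it keeps each $\gamma_\alpha$ bounded below $\delta$ — together with the observation that deleting a bounded initial segment of $\delta$ does not change $\sup j_U[\delta]$, so the image of each tail of $\delta$ still meets the cofinal set $C$. I do not anticipate trouble from the opening reduction to the case $A=j_U[\delta]$ or from the routine cardinality bookkeeping.
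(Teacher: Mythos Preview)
Your proof is correct and follows essentially the same approach as the paper: reduce to covering \(j_U[\delta]\), pick a cofinal \(C\in M_U\) of order type \(\lambda\), partition \(\delta\) into bounded intervals whose \(j_U\)-images meet \(C\), push the partition forward, and read off \(B\). Your recursion for the \(\gamma_\alpha\) is just a more explicit version of the paper's one-line description of the intervals, and your verification that the recursion runs through all \(\delta\) stages is exactly the point the paper leaves implicit in the phrase ``since \(\delta\) is regular.''
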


A second proof proceeds by noting that it suffices to show that for some \(C\) of cardinality \(\delta\), \(j_U[C]\) is covered by a set \(D\in M_U\) with \(|D|^{M_U} = \lambda\). Then let \(D\) be any club of order type \(\lambda\) in \(\sup j_U[\delta]\) and let \(C = j_U^{-1}[D]\).

\begin{defn}
We say \(U\) has the {\it tight covering property} at \(\delta\) if \(M_U\) has the \((\delta,\delta)\)-covering property.
\end{defn}

As a corollary of \cref{ZeroInternal} and \cref{CofinalityRegularity}, we immediately obtain the following dichotomy:

\begin{cor}[UA]\label{CoverDichotomy}
Let \(U\) be the least ultrafilter on a regular cardinal \(\delta\). Either \(U\) has the tight covering property at \(\delta\) or \(U\cap M_U\in M_U\).
\begin{proof}
Of course \(\delta \leq \text{cf}^{M_U}(\sup j_U[\delta])\). If equality holds, then tight covering holds by \cref{CofinalityRegularity}. If instead \(\delta < \text{cf}^{M_U}(\sup j_U[\delta])\), then \(j^{M_U}_{U\cap M_U}\) is continuous at \(\text{cf}^{M_U}(\sup j_U[\delta])\) so \(U\cap M_U\in M_U\) by \cref{ZeroInternal}.
\end{proof}
\end{cor}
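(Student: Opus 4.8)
The plan is to read the dichotomy straight off \cref{ZeroInternal}, which already says that $U\cap M_U$ belongs to $M_U$ exactly when $j^{M_U}_{U\cap M_U}$ is continuous at $\lambda:=\text{cf}^{M_U}(\sup j_U[\delta])$. So the whole argument reduces to locating $\lambda$ relative to $\delta$: if $\lambda=\delta$ then Ketonen's covering theorem gives tight covering, and if $\lambda>\delta$ then the ultrapower of $M_U$ by the $M_U$-ultrafilter $U\cap M_U$, being taken against an ultrafilter on $\delta$, is forced to be continuous at the regular cardinal $\lambda$.

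First I would record the trivial lower bound $\delta\le\lambda$. Since $j_U$ is order preserving and $\delta$ is a limit ordinal, $j_U[\delta]$ is an increasing sequence of length $\delta$ cofinal in $\sup j_U[\delta]$, so $\text{cf}^V(\sup j_U[\delta])=\delta$; passing to the inner model $M_U$ can only increase cofinality, hence $\text{cf}^{M_U}(\sup j_U[\delta])\ge\delta$. (Equivalently, a cofinal subset of $\sup j_U[\delta]$ lying in $M_U$ and of order type below $\delta$ would, by regularity of $\delta$, be bounded by $j_U(\bar\alpha)$ for some $\bar\alpha<\delta$, a contradiction.)

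Then I would split into the two cases this bound permits. If $\lambda=\delta$, apply \cref{CofinalityRegularity} to $U$ itself: it yields the $(\delta,\delta)$-covering property for $M_U$, which is precisely the tight covering property at $\delta$. If instead $\lambda>\delta$, set $W_*:=U\cap M_U$, which is a countably complete $M_U$-ultrafilter (here I use only the countable completeness of $U$). Since $\lambda=\text{cf}^{M_U}(\sup j_U[\delta])$ is a regular cardinal of $M_U$ lying strictly above $\delta$, and any ultrapower of $M_U$ by an $M_U$-ultrafilter on $\delta$ is continuous at every $M_U$-ordinal of $M_U$-cofinality greater than $\delta$ (every function $\delta\to\lambda$ in $M_U$ has bounded range), $j^{M_U}_{W_*}$ is continuous at $\lambda$. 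By \cref{ZeroInternal} this gives $U\cap M_U\in M_U$, as desired.

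The argument is short, and the only points needing a moment's care are that the hypotheses of \cref{ZeroInternal} genuinely apply to $W_*=U\cap M_U$ — that it is a countably complete $M_U$-ultrafilter and that the embedding referred to there is $j^{M_U}_{W_*}$ — together with the standard continuity fact for ultrapowers used in the second case. I do not expect a real obstacle: the substance of the statement is carried entirely by the earlier \cref{ZeroInternal} and \cref{CofinalityRegularity}, and what remains is just the observation that $\delta\le\lambda$ and the case split it forces.
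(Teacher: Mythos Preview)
Your proposal is correct and follows essentially the same approach as the paper's proof: both establish the lower bound $\delta\le\text{cf}^{M_U}(\sup j_U[\delta])$, then split into the equality case (invoking \cref{CofinalityRegularity}) and the strict inequality case (invoking \cref{ZeroInternal} via the continuity of $j^{M_U}_{U\cap M_U}$ at $\text{cf}^{M_U}(\sup j_U[\delta])$). You simply spell out a few routine justifications that the paper leaves implicit.
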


It is not clear whether it is consistent with ZFC that there is a countably complete nonprincipal ultrafilter \(U\) on an ordinal such that \(U\cap M_U\in M_U\), but the usual proofs that \(U\notin M_U\) do not give much insight into this question. In any case we can rule out that \(U\cap M_U\in M_U\) in various contexts. (See for example \cref{ZeroTheorem}.)

\cref{ZeroInternal} yields the amenability of many ultrafilters to a least ultrapower. We will use the following classical theorem due to Hausdorff to transform this into the amenability of many sets of ordinals. This is the key to obtaining supercompactness from strong compactness.

\begin{defn}
Suppose \(\kappa\leq \delta\) are cardinals. A family \(\mathcal F\) of subsets of \(\delta\) is {\it \(\kappa\)-independent} if for any subfamilies \(\mathcal F_0,\mathcal F_1\subseteq \mathcal F\) of cardinality less than \(\kappa\), \[|\{\alpha < \delta : \forall X\in \mathcal F_0\ \alpha\in X\text{ and }\forall X\in \mathcal F_1\ \alpha\notin X\}| = \delta\]
\end{defn}

Let \(\mathcal F_\delta\) denote the filter of \(X\subseteq \delta\) such that \(|\delta\setminus X| < \delta\). A family \(\mathcal F\) is \(\kappa\)-independent if and only if for any \(\mathcal G\subseteq \mathcal F\), \[\mathcal G\cup \{\delta\setminus X : X\notin \mathcal G\}\cup \mathcal F_\delta\] generates a \(\kappa\)-complete filter. The official definition has the benefit that it is obviously absolute between \(V\) and any \(\kappa\)-closed inner model.

\begin{thm}[Hausdorff]\label{IndependentFamily}
Suppose \(\kappa\leq \delta\) are cardinals and \(\delta^{<\kappa} = \delta\). Then there is a \(\kappa\)-independent family of subsets of \(\delta\) of cardinality \(2^\delta\).
\end{thm}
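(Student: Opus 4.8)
The statement is the classical Hausdorff independent family theorem, so the plan is to reproduce the standard construction, taking care that the resulting family is $\kappa$-independent in the strong sense stated (not just $2$-independent). The underlying index set will not be $\delta$ itself but a set $I$ of size $\delta$ chosen to make the combinatorics transparent; since $|I| = \delta$ we may transport the family back to $\delta$ via a bijection at the end, and $\delta$-uniformity is preserved because the relevant sets will all have size $\delta$.

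First I would fix the index set
\[
I = \{(p, \mathcal A) : p \in P_\kappa(\delta),\ \mathcal A \subseteq P(p)\}.
\]
Using $\delta^{<\kappa} = \delta$ one checks $|P_\kappa(\delta)| = \delta$ and, for each $p$, $|P(p)| \le 2^{<\kappa} \le \delta^{<\kappa} = \delta$, hence $|P(p)|$-many choices of $\mathcal A$, so $|I| = \delta \cdot \delta = \delta$. (One should double-check $2^{<\kappa}\le\delta$: this is immediate since $2^{<\kappa}\le\delta^{<\kappa}=\delta$.) For each subset $X \subseteq \delta$ define
\[
\widehat X = \{(p,\mathcal A) \in I : X \cap p \in \mathcal A\},
\]
and let $\mathcal F = \{\widehat X : X \subseteq \delta\}$, a family of subsets of $I$.

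Next I would verify the two required properties. For the cardinality bound $|\mathcal F| = 2^\delta$: the map $X \mapsto \widehat X$ is injective, because if $X \ne Y$, pick $\gamma$ in the symmetric difference, take $p = \{\gamma\} \in P_\kappa(\delta)$, and set $\mathcal A = \{\{\gamma\}\}$; then exactly one of $X\cap p$, $Y\cap p$ equals $\{\gamma\}\in\mathcal A$, so $(p,\mathcal A)$ separates $\widehat X$ from $\widehat Y$. Hence $|\mathcal F| = 2^\delta$ (and it is clearly at most $2^\delta$). For $\kappa$-independence: given $\mathcal F_0 = \{\widehat X : X \in S_0\}$ and $\mathcal F_1 = \{\widehat Y : Y \in S_1\}$ with $S_0, S_1 \subseteq P(\delta)$ of size $<\kappa$ and (w.l.o.g.) disjoint and with distinct elements, I must show the set of $(p,\mathcal A)$ lying in every member of $\mathcal F_0$ and no member of $\mathcal F_1$ has size $\delta$. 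The idea is that since $|S_0 \cup S_1| < \kappa$, the sets in $S_0\cup S_1$ are pairwise separated by some $p_0 \in P_\kappa(\delta)$ (for each distinct pair pick a point of their symmetric difference and collect these $<\kappa$ many points — here $\kappa$ regular, or at least $\kappa$ uncountable so $P_\kappa$ is closed under $<\kappa$-unions, is used; the statement only assumes $\kappa\le\delta$ a cardinal, but Hausdorff's argument wants $\kappa$ regular, which is the intended reading, or one restricts to $\mathcal F_i$ of size $<\mathrm{cf}(\kappa)$ — I would just note $\kappa$ is regular in applications). Then for any $p \supseteq p_0$ in $P_\kappa(\delta)$, the traces $\{X \cap p : X \in S_0\}$ and $\{Y\cap p : Y \in S_1\}$ are all distinct, so one may choose $\mathcal A \subseteq P(p)$ containing all of the former and none of the latter; every such pair $(p,\mathcal A)$ lies in $\bigcap \mathcal F_0 \setminus \bigcup \mathcal F_1$. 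Since there are $\delta$ many $p \supseteq p_0$ in $P_\kappa(\delta)$ (as $\delta^{<\kappa} = \delta$ and $\kappa \le \delta$), this witnessing set has size $\delta$.

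Finally, transporting $\mathcal F$ through a bijection $I \cong \delta$ gives a $\kappa$-independent family of subsets of $\delta$ of size $2^\delta$, completing the proof. The one genuine subtlety — the main obstacle — is the exact hypothesis on $\kappa$: the separation-by-a-single-$p$ step needs $P_\kappa(\delta)$ to be closed under unions of $<\kappa$ (or at least $<\mathrm{cf}(\kappa)$) many sets, which holds when $\kappa$ is regular. Since every cardinal $\kappa$ to which this lemma is applied in the paper is a large cardinal (hence regular), I would state the proof under the tacit assumption that $\kappa$ is regular, or alternatively replace ``$\kappa$-independent'' bookkeeping with $\mathrm{cf}(\kappa)$; in the regular case everything goes through cleanly. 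All other steps are the routine cardinal arithmetic enabled by $\delta^{<\kappa} = \delta$.
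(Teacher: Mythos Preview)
The paper does not prove this theorem; it is stated without proof as a classical result attributed to Hausdorff and used as a black box. So there is no paper proof to compare against.

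Your construction is the standard one and the independence verification is correct, but there is a slip in the cardinality count for $I$. You write that for each $p$ there are ``$|P(p)|$-many choices of $\mathcal A$'', but since $\mathcal A\subseteq P(p)$, it ranges over $P(P(p))$, giving $2^{|P(p)|}$ choices. As $|P(p)|$ can be as large as $2^{<\kappa}$, this yields up to $2^{2^{<\kappa}}$ choices, and $\delta^{<\kappa}=\delta$ does not in general bound $2^{2^{<\kappa}}$ by $\delta$ (e.g.\ take $\kappa=\omega_1$, $\delta=2^{\aleph_0}$ under $\neg\textnormal{CH}$: then $\delta^{<\kappa}=(2^{\aleph_0})^{\aleph_0}=\delta$, but $2^{2^{<\kappa}}=2^\delta>\delta$). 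So as written $|I|$ need not equal $\delta$.

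The fix is immediate and costs nothing: restrict $I$ to pairs $(p,\mathcal A)$ with $|\mathcal A|<\kappa$. Then for each $p$ there are at most $|P_\kappa(P(p))|\le|P_\kappa(\delta)|=\delta$ choices (using $|P(p)|\le 2^{<\kappa}\le\delta$ and $\delta^{<\kappa}=\delta$), so $|I|=\delta$. Your independence argument already only invokes the specific witness $\mathcal A=\{X\cap p:X\in S_0\}$, which has size $|S_0|<\kappa$, so the restricted $I$ suffices. With this correction, together with your observation that $\kappa$ should be taken regular (as it is in every application in the paper), the proof goes through.
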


We emphasize that the next lemma lies at the center of the relationship between strong compactness and supercompactness. 

\begin{lma}[UA]\label{Strength}
Suppose \(\kappa\) is a cardinal and \(U\) is a \(\kappa\)-supercompact ultrafilter. Suppose \(\gamma\) is an \(M_U\)-cardinal with \(\textnormal{cf}^{M_U}(\gamma)\geq \kappa\) such that for any \(W\in \textnormal{Un}_{{\leq}\gamma}\), \(W\cap M_U\in M_U\). Suppose \(\lambda \leq (2^\gamma)^{M_U}\) and \(\kappa\) is \(\lambda\)-strongly compact. Then \(P(\lambda)\subseteq M_U\).
\begin{proof}
First note that \(\kappa\) is \({<}\gamma\)-strongly compact in \(M_U\). To prove this, it suffices by a theorem of Ketonen \cite{Ketonen} to show that every \(M_U\)-regular \(\iota\in [\kappa,\gamma)\) carries a uniform \(\kappa\)-complete ultrafilter in \(M_U\). Note that \(\text{cf}(\iota)\in [\kappa,\gamma)\) since \(M_U\) is closed under \(\kappa\)-sequences. Hence \(\iota\) carries a uniform ultrafilter \(W\) since \(\kappa\) is \(\gamma\)-strongly compact in \(V\). But by assumption \(W\cap M_U\in M_U\), so \(\iota\) carries a uniform \(\kappa\)-complete ultrafilter in \(M_U\).

Since \(\kappa\) is \({<}\gamma\)-strongly compact in \(M_U\) and \(\text{cf}^{M_U}(\gamma)\geq \kappa\), \((\gamma^{<\kappa})^{M_U} = \gamma\). Applying \cref{IndependentFamily} in \(M_U\), let \(\langle A_\alpha : \alpha < (2^\gamma)^{M_U}\rangle\) be a \(\kappa\)-independent family of subsets of \(\gamma\) relative to \(M_U\). Since \(M_U\) is closed under \(\kappa\)-sequences, \(M_U\) is correct about \(\kappa\)-independence, so \(\langle A_\alpha : \alpha < (2^\gamma)^{M_U}\rangle\) is truly \(\kappa\)-independent. 

Fix \(X\subseteq \lambda\). We will show \(X\in M_U\). The filter \(F\) generated by \[\{A_\alpha : \alpha \in X\}\cup \{\gamma\setminus A_\alpha : \alpha\in \lambda\setminus X\}\] is \(\kappa\)-complete by \(\kappa\)-independence. Since \(\kappa\) is \(\lambda\)-strongly compact, any \(\kappa\)-complete filter on \(\gamma\) generated by \(\lambda\) sets extends to a \(\kappa\)-complete ultrafilter. Therefore let \(D\) be a \(\kappa\)-complete ultrafilter on \(\gamma\) extending \(F\). Since \(D\cap M_U\in M_U\), \(X\in M_U\): \[X = \{\alpha < \lambda : A_\alpha\in D\cap M_U\}\]
This completes the proof.
\end{proof}
\end{lma}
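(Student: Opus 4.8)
The plan is to code each subset of $\lambda$ by a $\kappa$-complete ultrafilter on $\gamma$ via a $\kappa$-independent family that lives inside $M_U$, and then use the amenability hypothesis to recover the subset in $M_U$. So suppose we are handed a family $\langle A_\alpha : \alpha < (2^\gamma)^{M_U}\rangle \in M_U$ of subsets of $\gamma$ that is $\kappa$-independent \emph{in $V$} (not merely in $M_U$). Given $X \subseteq \lambda$, the sets $\{A_\alpha : \alpha \in X\} \cup \{\gamma \setminus A_\alpha : \alpha \in \lambda\setminus X\}$, together with the cobounded filter on $\gamma$, generate a $\kappa$-complete filter with at most $\lambda$ generators; since $\kappa$ is $\lambda$-strongly compact this filter extends to a $\kappa$-complete ultrafilter $D$ on $\gamma$. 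Passing if necessary to the uniform reduct of $D$ (an ultrafilter on some ordinal $\leq\gamma$, hence in $\textnormal{Un}_{\leq\gamma}$), the hypothesis gives $D \cap M_U \in M_U$, and then $X = \{\alpha < \lambda : A_\alpha \in D\}$ is computed inside $M_U$ from $D \cap M_U$ and the family, so $X \in M_U$. This is the whole content once the family is produced.

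It remains to build the $\kappa$-independent family inside $M_U$, and this is where the $\kappa$-supercompactness of $U$ and the hypothesis $\textnormal{cf}^{M_U}(\gamma) \geq \kappa$ enter. First I would show that $\kappa$ is ${<}\gamma$-strongly compact \emph{in $M_U$}. By Ketonen's ultrafilter characterization of strong compactness \cite{Ketonen} it suffices to check that every $M_U$-regular $\iota \in [\kappa,\gamma)$ carries a uniform $\kappa$-complete ultrafilter in $M_U$. Since $M_U$ is closed under $\kappa$-sequences, $\textnormal{cf}(\iota) \geq \kappa$, so $\iota$ carries a uniform $\kappa$-complete ultrafilter $W$ in $V$ because $\kappa$ is ${<}\gamma$-strongly compact in $V$; but $W \in \textnormal{Un}_{\leq\gamma}$, so $W \cap M_U \in M_U$ is the desired $M_U$-internal uniform $\kappa$-complete ultrafilter on $\iota$. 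Now ${<}\gamma$-strong compactness of $\kappa$ in $M_U$ together with $\textnormal{cf}^{M_U}(\gamma) \geq \kappa$ gives $(\gamma^{<\kappa})^{M_U} = \gamma$, so applying Hausdorff's theorem (\cref{IndependentFamily}) inside $M_U$ with $\delta := \gamma$ yields a family of subsets of $\gamma$ of size $(2^\gamma)^{M_U}$ that is $\kappa$-independent in $M_U$. Because $M_U$ is $\kappa$-closed and the chosen formulation of $\kappa$-independence is absolute to $\kappa$-closed inner models, this family is genuinely $\kappa$-independent in $V$, completing the construction.

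The step I expect to be the main obstacle is precisely this transfer of strong compactness \emph{down into $M_U$}: we are only given that $\kappa$ is strongly compact in $V$, but the independent family must live in $M_U$, and large cardinal properties of $\kappa$ need not descend to $M_U$ in general. The amenability assumption ``$W \cap M_U \in M_U$ for all $W \in \textnormal{Un}_{\leq\gamma}$'' is exactly what makes the descent go through, by converting an external uniform ultrafilter on an $M_U$-regular $\iota < \gamma$ into an internal one and feeding it into Ketonen's characterization of ${<}\gamma$-strong compactness. Once that is secured, the cardinal arithmetic $(\gamma^{<\kappa})^{M_U} = \gamma$, the application of Hausdorff, the absoluteness of $\kappa$-independence, and the ultrafilter-coding argument of the first paragraph are all routine.
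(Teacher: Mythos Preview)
Your proposal is correct and follows essentially the same approach as the paper: transfer ${<}\gamma$-strong compactness of $\kappa$ into $M_U$ via Ketonen's characterization and the amenability hypothesis, deduce $(\gamma^{<\kappa})^{M_U}=\gamma$, apply Hausdorff's theorem inside $M_U$ to obtain a $\kappa$-independent family that is genuinely $\kappa$-independent by $\kappa$-closure of $M_U$, and then code an arbitrary $X\subseteq\lambda$ by a $\kappa$-complete ultrafilter on $\gamma$ whose trace on $M_U$ recovers $X$. Your extra care about passing to the uniform reduct of $D$ so that the amenability hypothesis literally applies is a reasonable technical remark that the paper leaves implicit.
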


\begin{thm}[UA]\label{ZeroTheorem}
Let \(U\) be the least ultrafilter on a regular cardinal \(\delta\). Let \(\kappa\) be the completeness of \(U\). Assume that \(\kappa\) is \(\delta\)-strongly compact. Then \(U\) is \({<}\delta\)-supercompact and has the tight covering property at \(\delta\). If \(\delta\) is not strongly inaccessible then \(U\) is \(\delta\)-supercompact.
\begin{proof}

Assume towards a contradiction that \(U\) does not have the tight covering property at \(\delta\). Then by \cref{ZeroInternal}, for any \(W\in \textnormal{Un}_{\leq\delta}\), \(W\cap M_U\in M_U\). But now by \cref{Strength}, \(P(\delta)\subseteq M_U\). But then \(U\cap M_U = U\), so \(U\in M_U\), which is impossible. It follows that \(U\) has the tight covering property at \(\delta\).

By \cref{ZeroInternal}, for all \(W\in \textnormal{Un}_{{<}\delta}\), \(W\cap M_U\in M_U\). Therefore by \cref{Strength}, \(\bigcup_{\gamma < \delta} P(\gamma)\subseteq M_U\).

Suppose \(\gamma < \delta\) is a regular cardinal. Note that \(\text{cf}^{M_U}(\sup j_U[\gamma]) \leq \delta\) by the tight covering property. Since \(\delta\) is regular, it follows that \(\text{cf}^{M_U}(\sup j_U[\gamma]) < \delta\). By \cref{CofinalityRegularity}, \(j_U[\gamma]\) can be covered by a set of \(M_U\)-cardinality \(\lambda < \delta\). Since \(P(\lambda)\subseteq M_U\), \(j_U[\gamma]\in M_U\). Hence \(U\) is \(\gamma\)-supercompact.

Finally suppose \(\delta\) is not a strong limit cardinal, and we will show that \(U\) is \(\delta\)-supercompact. It suffices to show that \(P(\delta)\subseteq M_U\): then by the tight covering property, \(j_U[\delta]\) is covered by a set of \(M_U\)-cardinality \(\delta\), and since \(P(\delta)\subseteq M_U\), \(j_U[\delta]\in M_U\).

We split into two cases.

\begin{case}\label{CofCase}For some \(\gamma < \delta\) with \(\text{cf}(\gamma)\geq \kappa\), \(2^\gamma\geq \delta\).\end{case}
Note that \(\delta \leq 2^\gamma \leq (2^\gamma)^{M_U}\), where the final inequality follows from the fact that \(P(\gamma)\subseteq M_U\). We must therefore have \(P(\delta)\subseteq M_U\) by \cref{Strength} with \(\lambda = \delta\).

\begin{case}Otherwise.\end{case}
Since \(\delta\) is not a strong limit cardinal, there must be some \(\lambda < \delta\) with \(2^\lambda \geq \delta\). Since we are not in \cref{CofCase}, \(\lambda^{<\kappa} \geq \delta\): otherwise \(\gamma = \lambda^{<\kappa}\) witnesses the hypotheses of \cref{CofCase}. Note that \(U\) is \(\lambda\)-supercompact since \(\lambda\) is a singular limit cardinal and \(U\) is \(\gamma\)-supercompact for all regular \(\gamma < \lambda\). Since \(j_U\restriction \lambda\in M_U\) and \(U\) is \(\kappa\)-complete, \(j_U\restriction P_\kappa(\lambda)\in M_U\). Thus \(U\) is \(\lambda^{<\kappa}\)-supercompact. So \(U\) is \(\delta\)-supercompact.
\end{proof}
\end{thm}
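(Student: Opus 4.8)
The plan is to use \cref{Strength} as the engine: from the $\delta$-strong compactness of $\kappa$ we extract that $M_U$ contains enough subsets of $\delta$ and of ordinals below it, and Ketonen's covering analysis (\cref{CofinalityRegularity}, \cref{ZeroInternal}) converts this into supercompactness. If $\kappa=\delta$ then $\delta$ is measurable, hence strongly inaccessible, and all three conclusions are immediate, so assume $\kappa<\delta$. The one point to check before invoking \cref{Strength} is that $U$ is a $\kappa$-supercompact ultrafilter, which holds because $\textsc{crt}(U)=\kappa$: then $j_U\restriction\kappa=\textnormal{id}$, so writing a $\kappa$-sequence of elements of $M_U$ as $\langle[g_\alpha]_U:\alpha<\kappa\rangle$, applying $j_U$ to $\langle g_\alpha:\alpha<\kappa\rangle$, and evaluating the resulting sequence of functions at $[\textnormal{id}]_U$ shows ${}^\kappa M_U\subseteq M_U$.

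\emph{Tight covering at $\delta$.} Argue by contradiction. If it fails then $\textnormal{cf}^{M_U}(\sup j_U[\delta])>\delta$ (otherwise \cref{CofinalityRegularity} already gives $(\delta,\delta)$-covering). For any $W\in\textnormal{Un}_{\leq\delta}$ the map $j^{M_U}_{W\cap M_U}$ is an ultrapower of $M_U$ by an $M_U$-ultrafilter on an ordinal $\leq\delta$, hence continuous at the regular $M_U$-cardinal $\textnormal{cf}^{M_U}(\sup j_U[\delta])$, so $W\cap M_U\in M_U$ by \cref{ZeroInternal}. Now \cref{Strength} with $\gamma=\lambda=\delta$ (legitimate: $\textnormal{cf}^{M_U}(\delta)\geq\delta\geq\kappa$ since $\delta$ is regular, and $\kappa$ is $\delta$-strongly compact) gives $P(\delta)\subseteq M_U$. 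Then $U\cap M_U=U$, but also $U\cap M_U\in M_U$ by what was just said, so $U\in M_U$, which is impossible for a nonprincipal countably complete ultrafilter.

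\emph{${<}\delta$-supercompactness.} Tight covering forces $\textnormal{cf}^{M_U}(\sup j_U[\delta])=\delta$, so by \cref{ZeroInternal} every $W\in\textnormal{Un}_{<\delta}$ satisfies $W\cap M_U\in M_U$. For each regular cardinal $\gamma\in[\kappa,\delta)$, \cref{Strength} with $\gamma$ and $\lambda=\gamma$ applies ($\textnormal{cf}^{M_U}(\gamma)=\gamma\geq\kappa$, and $\kappa$ is $\gamma$-strongly compact) and yields $P(\gamma)\subseteq M_U$; since such $\gamma$ are cofinal in $\delta$, $P(\eta)\subseteq M_U$ for every $\eta<\delta$. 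Now fix a regular $\gamma<\delta$: $\sup j_U[\gamma]$ has $V$-cofinality $\gamma<\delta$ and, by tight covering, $M_U$-cofinality $\leq\delta$, so regularity of $\delta$ forces $\mu:=\textnormal{cf}^{M_U}(\sup j_U[\gamma])<\delta$; by \cref{CofinalityRegularity} some $B\in M_U$ of $M_U$-cardinality $\mu$ covers $j_U[\gamma]$, and pulling $j_U[\gamma]$ back through a bijection $\mu\to B$ of $M_U$ lands it in $P(\mu)\subseteq M_U$, so $j_U[\gamma]\in M_U$. Thus $U$ is $\gamma$-supercompact for every regular $\gamma<\delta$, and for singular $\gamma<\delta$ this follows by combining $\textnormal{cf}(\gamma)$-supercompactness with $\gamma'$-supercompactness for regular $\gamma'<\gamma$.

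\emph{$\delta$-supercompactness when $\delta$ is not strongly inaccessible.} Then $\delta$ is regular but not a strong limit. Split on whether there is a cardinal $\gamma<\delta$ with $\textnormal{cf}(\gamma)\geq\kappa$ and $2^\gamma\geq\delta$. If there is: $P(\gamma)\subseteq M_U$ by \cref{Strength} (with $\gamma$ and $\lambda=\gamma$, as in the previous paragraph), so $(2^\gamma)^{M_U}\geq 2^\gamma\geq\delta$ --- the usual inequality reverses because $P(\gamma)^{M_U}=P(\gamma)$ --- and \cref{Strength} once more, with this $\gamma$ and $\lambda=\delta$, gives $P(\delta)\subseteq M_U$; combined with tight covering and the covering-plus-bijection trick of the previous paragraph (now at $\delta$ in place of $\gamma$) this gives $j_U[\delta]\in M_U$. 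If there is not: fix, using that $\delta$ is not a strong limit, a cardinal $\lambda<\delta$ with $2^\lambda\geq\delta$; by the case assumption $\textnormal{cf}(\lambda)<\kappa$, so $\lambda$ is singular, $U$ is $\lambda$-supercompact by the ${<}\delta$-supercompactness already proved, hence $j_U\restriction\lambda\in M_U$, and then $j_U\restriction P_\kappa(\lambda)\in M_U$ since $U$ is $\kappa$-complete, so $U$ is $(\lambda^{<\kappa})^{M_U}$-supercompact; and $\lambda^{<\kappa}\geq\delta$, for otherwise $\lambda^{<\kappa}$ would be a cardinal $<\delta$ of cofinality $\geq\kappa$ (by K\"onig's theorem) with $2^{\lambda^{<\kappa}}\geq 2^\lambda\geq\delta$, contradicting the case assumption; so $U$ is $\delta$-supercompact. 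The only real difficulty is the bookkeeping for \cref{Strength}: matching the hypothesis $\textnormal{cf}^{M_U}(\gamma)\geq\kappa$, feeding in a large enough $\lambda$ --- which in the non-inaccessible case rests on the observation that $P(\gamma)\subseteq M_U$ makes $(2^\gamma)^{M_U}$ \emph{larger} than $2^\gamma$ rather than smaller --- and keeping track of which power sets of ordinals below $\delta$ have been absorbed into $M_U$.
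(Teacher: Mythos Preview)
Your proof is correct and follows essentially the same route as the paper's: the contradiction for tight covering via \cref{ZeroInternal} and \cref{Strength}, the bootstrap to ${<}\delta$-supercompactness via \cref{Strength} at each regular $\gamma<\delta$ combined with \cref{CofinalityRegularity}, and the identical two-case split on whether some $\gamma<\delta$ with $\textnormal{cf}(\gamma)\geq\kappa$ has $2^\gamma\geq\delta$. You fill in a few details the paper leaves implicit---the $\kappa=\delta$ case, the verification that $U$ is $\kappa$-supercompact, the continuity reason behind the appeal to \cref{ZeroInternal}, and the K\"onig argument for $\textnormal{cf}(\lambda^{<\kappa})\geq\kappa$---but the architecture is the same.
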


We now show that in certain circumstances we can obtain the hypotheses of \cref{ZeroTheorem}, leading to a proof from UA that the least strongly compact cardinal is supercompact.

This involves another theorem due to Ketonen, who used the combinatorics of 0-order ultrafilters to give a second proof of his characterization of strongly compact cardinals in terms of uniform countably complete ultrafilters. We include the version that is most relevant to us, since this is not exactly what Ketonen proved.

\begin{thm}[Ketonen]\label{KetonenTheorem}
Suppose \(U\) is a \(0\)-order ultrafilter on a regular cardinal \(\delta\). Then for any \(\gamma \leq \delta\), \(U\) is \((\gamma,\delta)\)-regular if and only if every regular cardinal in the interval \([\gamma,\delta]\) carries a uniform countably complete ultrafilter.
\begin{proof}
If there is a \((\gamma,\delta)\)-regular ultrafilter then easily every regular cardinal in the interval \([\gamma,\delta]\) carries a uniform countably complete ultrafilter. 

Conversely assume every regular cardinal in the interval \([\gamma,\delta]\) carries a uniform countably complete ultrafilter. Since \(U\) is \(0\)-order, \(\sup j_U[\delta]\) carries no uniform countably complete ultrafilter in \(M_U\). Therefore \(\text{cf}^{M_U}(\sup j_U[\delta])\) carries no uniform countably complete ultrafilter in \(M_U\). It follows that \(\text{cf}^{M_U}(\sup j_U[\delta])\notin j_U([\gamma,\delta])\). Hence \(\text{cf}^{M_U}(\sup j_U[\delta])< j_U(\gamma)\), which implies \(U\) is \((\gamma,\delta)\)-regular.
\end{proof}
\end{thm}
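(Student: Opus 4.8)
The plan rests on one reading of \cref{CofinalityRegularity}: if $W$ is a uniform countably complete ultrafilter on a regular cardinal $\delta$ and $\textnormal{cf}^{M_W}(\sup j_W[\delta]) < j_W(\gamma)$, then $W$ is $(\gamma,\delta)$-regular. Indeed, by the $(\delta,\textnormal{cf}^{M_W}(\sup j_W[\delta]))$-covering property of $M_W$, the set $j_W[\delta]$ is contained in some $B \in M_W$ with $|B|^{M_W} < j_W(\gamma)$; writing $B = [g]_W$ and shrinking off a $W$-null set, $g : \delta \to P_\gamma(\delta)$ satisfies $\{\xi < \delta : \beta \in g(\xi)\} \in W$ for every $\beta < \delta$, which is exactly a $(\gamma,\delta)$-regularity family for $W$.

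\textbf{Forward direction.} This uses nothing about $0$-order ultrafilters and holds of any uniform countably complete ultrafilter. Suppose $U$ is $(\gamma,\delta)$-regular, witnessed by $f : \delta \to P_\gamma(\delta)$ with $\{\xi < \delta : \beta \in f(\xi)\} \in U$ for every $\beta < \delta$, and fix a regular cardinal $\iota \in [\gamma,\delta]$. If $\iota = \delta$ then $U$ itself is a uniform countably complete ultrafilter on $\iota$. Otherwise put $h(\xi) = \sup(f(\xi)\cap\iota)$; since $|f(\xi)\cap\iota| < \gamma \leq \iota$ and $\iota$ is regular, $h$ maps $\delta$ into $\iota$, and for each $\beta < \iota$ the set $\{\xi < \delta : h(\xi) \geq \beta\}$ contains $\{\xi < \delta : \beta \in f(\xi)\} \in U$. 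Hence the pushforward $h_*(U)$ is a uniform countably complete ultrafilter on $\iota$.

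\textbf{Reverse direction}, the substantive half. Assume every regular cardinal in $[\gamma,\delta]$ carries a uniform countably complete ultrafilter, and set $\theta = \textnormal{cf}^{M_U}(\sup j_U[\delta])$. Since $U$ is weakly normal, $[\textnormal{id}]_U = \sup j_U[\delta]$, and since $U$ is $0$-order, Los's theorem gives that $\sup j_U[\delta]$ carries no uniform countably complete ultrafilter in $M_U$. Pushing a uniform countably complete ultrafilter on $\theta$ forward along a strictly increasing cofinal map $\theta \to \sup j_U[\delta]$ then shows that $\theta$ carries no such ultrafilter in $M_U$ either. Now $\theta$ is a regular cardinal of $M_U$ with $\theta \leq \sup j_U[\delta] = [\textnormal{id}]_U < j_U(\delta)$ (the last inequality because $\textnormal{id}$ is a function from $\delta$ to $\delta$), while applying $j_U$ to the hypothesis, every regular cardinal of $M_U$ lying in the interval $[j_U(\gamma), j_U(\delta)]$ carries a uniform countably complete ultrafilter in $M_U$. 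Since $\theta$ is $M_U$-regular, lies below $j_U(\delta)$, and carries no such ultrafilter, it must be that $\theta < j_U(\gamma)$; by the first paragraph, $U$ is $(\gamma,\delta)$-regular.

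\textbf{The main obstacle} is not really an obstacle: the content lies in the covering-versus-cofinality fact, i.e.\ \cref{CofinalityRegularity}, and the remaining steps are Los's theorem and elementarity of $j_U$. The one step that repays attention is passing ``carries no uniform countably complete ultrafilter in $M_U$'' from $\sup j_U[\delta]$ to its $M_U$-cofinality $\theta$: one must use that the pushforward of a uniform ultrafilter on $\theta$ along an \emph{increasing} cofinal map into $\sup j_U[\delta]$ sends tails to tails and hence is again uniform (and countable completeness is preserved under pushforward).
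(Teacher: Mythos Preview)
Your proof is correct and follows essentially the same route as the paper's: in the substantive direction you show that $\theta = \textnormal{cf}^{M_U}(\sup j_U[\delta])$ carries no uniform countably complete ultrafilter in $M_U$ (because $\sup j_U[\delta]$ does not, by $0$-order), then use elementarity to force $\theta < j_U(\gamma)$, and conclude $(\gamma,\delta)$-regularity via the covering argument underlying \cref{CofinalityRegularity}. You simply supply more detail than the paper does---in particular the explicit forward direction, the justification of the cofinality-to-regularity step, and the passage from $\sup j_U[\delta]$ to its $M_U$-cofinality---all of which the paper leaves to the reader.
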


\begin{lma}[UA]\label{EasyRegularity}
Let \(U\) be the least ultrafilter on a regular cardinal \(\delta\). Let \(\kappa\) be its completeness, and let \(\bar \kappa\leq \kappa\) be the least ordinal such that for some \(W\), \(j_W(\bar \kappa) > \kappa\). Suppose \(W\) is a countably complete ultrafilter with \(j_W(\bar \kappa) > \kappa\) and \(U\I W\). Then \(U\) and \(W\) are \((\bar \kappa,\delta)\)-regular. Hence \(\bar \kappa = \kappa\).
\end{lma}

For the proof we need a version of the Kunen inconsistency. For this we require a useful lemma that under favorable cardinal arithmetic conditions often allows us to replace arbitrary ultrafilters with ultrafilters on small sets.

\begin{lma}\label{exponential}
Suppose \(U\) is an ultrafilter on a set \(X\) and \(\langle f_i : i \in I\rangle\) is a sequence of functions from \(X\) to a set \(Y\). Then there is a function \(p : X\to Y^I\) such that letting \(W = p_*(U)\) and \(k : M_W\to M_U\) be the factor embedding, \([f_i]_U\in \textnormal{ran}(k)\) for all \(i\in I\).
\begin{proof}
Take \(p: X\to Y^I\) such that \(p(x)(i) = f_i(x)\). Let \(W = p_*(U)\) and let \(h = [\text{id}]_W\in j_W(Y^I)\). Then \begin{align*}k(h(j_W(i))) &= j_U(p)([\text{id}]_U)(j_U(i)) \\
&= j_U(\langle f_i : i \in I\rangle)_{j_U(i)}([\text{id}]_U) \\
&= j_U(f_i)([\text{id}]_U)\\
&= [f_i]_U\end{align*}
so \([f_i]_U\in \text{ran}(k)\).
\end{proof}
\end{lma}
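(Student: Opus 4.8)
The plan is to bundle the family $\langle f_i : i\in I\rangle$ into a single function and then exploit the fact that every $f_i$ is literally a coordinate of that function. Concretely, I would define $p : X\to Y^I$ by $p(x)(i) = f_i(x)$, where $Y^I$ denotes the set of all functions from $I$ to $Y$, and set $W = p_*(U)$, so that $W$ is the Rudin--Keisler image of $U$ under $p$ and $k : M_W\to M_U$ is the associated factor embedding, determined by the equations $k\circ j_W = j_U$ and $k([\text{id}]_W) = [p]_U$.

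The key point is that each $f_i$ factors through $p$: writing $e_i : Y^I\to Y$ for the evaluation map $e_i(s) = s(i)$, we have $f_i = e_i\circ p$, hence $[f_i]_U = [e_i\circ p]_U$. I would then locate $[f_i]_U$ inside $\text{ran}(k)$. Since $k$ is elementary, $\text{ran}(k)$ is an elementary substructure of $M_U$; it contains $j_U[V]\ni j_U(e_i)$ because $k\circ j_W = j_U$, and it contains $[p]_U = k([\text{id}]_W)$. As $[f_i]_U = j_U(e_i)\big([p]_U\big)$ by Los's theorem, it follows that $[f_i]_U\in\text{ran}(k)$. Spelling this out with the explicit preimage, $[f_i]_U = k\big(j_W(e_i)([\text{id}]_W)\big)$, using the elementarity of $k$ together with $k(j_W(e_i)) = j_U(e_i)$ and $k([\text{id}]_W) = [p]_U$.

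I do not anticipate a real obstacle here; the only things to keep straight are that the factor embedding for a Rudin--Keisler reduction $W = p_*(U)$ sends $[\text{id}]_W$ to $[p]_U$ (the standard description of $k$), and that $Y^I$ and the evaluation maps $e_i$ genuinely live in $V$, which they do since $I$ and $Y$ are sets. One might also remark that $p$ need not be injective, so $W$ is in general a proper coarsening of $U$; this costs nothing, since the factor embedding exists for any such reduction and the computation above goes through verbatim.
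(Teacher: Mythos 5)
Your proposal is correct and is essentially the paper's own argument: the same bundling map $p(x)(i)=f_i(x)$, the same reduction $W=p_*(U)$, and the same computation locating $[f_i]_U$ in $\operatorname{ran}(k)$ via $k([\textnormal{id}]_W)=[p]_U$ (the paper evaluates $h=[\textnormal{id}]_W$ at $j_W(i)$ directly, which is just your $j_W(e_i)([\textnormal{id}]_W)$ written without naming the evaluation maps). No gaps.
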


\begin{cor}
Suppose \(U\) is a countably complete ultrafilter and \(\lambda\) is a cardinal. There is a function \(p : \textsc{sp}(U)\to 2^\lambda\) such that letting \(W = p_*(U)\) and \(k: M_W\to M_U\) be the factor embedding, \(\textsc{crt}(k) > \lambda\).
\end{cor}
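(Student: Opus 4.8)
The plan is to apply \cref{exponential} to a family of functions whose ultrapower classes are exactly the ordinals $\leq\lambda$, and to exploit the following elementary observation: if $k:M\to N$ is an elementary embedding of transitive classes and every ordinal $\leq\lambda$ lies in $\operatorname{ran}(k)$, then $\textsc{crt}(k)>\lambda$. To see the observation, suppose $\mu:=\textsc{crt}(k)\leq\lambda$ and write $\mu=k(\nu)$; if $\nu<\mu$ then $k(\nu)=\nu\neq\mu$, while if $\nu\geq\mu$ then $k(\nu)\geq k(\mu)>\mu$, and either way we reach a contradiction.

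Next I would note that every ordinal $i\leq\lambda$ is represented over $U$ by a function into $\lambda+1$: since $j_U$ is order preserving with $j_U(\xi)\geq\xi$ for all ordinals $\xi$, we have $i\leq\lambda\leq j_U(\lambda)$, so $i\in j_U(\lambda)+1=j_U(\lambda+1)$, and hence $i=[f_i]_U$ for some $f_i:\textsc{sp}(U)\to\lambda+1$ (note $f_i$ is \emph{not} constant in general). Applying \cref{exponential} to the family $\langle f_i : i\leq\lambda\rangle$ with $Y=\lambda+1$ and $I=\lambda+1$ yields a function $p_0:\textsc{sp}(U)\to(\lambda+1)^{\lambda+1}$ such that, letting $W_0=(p_0)_*(U)$ and $k:M_{W_0}\to M_U$ be the factor embedding, $i=[f_i]_U\in\operatorname{ran}(k)$ for every $i\leq\lambda$. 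By the observation, $\textsc{crt}(k)>\lambda$.

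Finally, assuming $\lambda$ is infinite (the finite case being trivial, as a countably complete nonprincipal ultrafilter has critical point above $\omega$), one has $|(\lambda+1)^{\lambda+1}|=\lambda^\lambda=2^\lambda$, so fixing an injection $e:(\lambda+1)^{\lambda+1}\hookrightarrow 2^\lambda$ and setting $p=e\circ p_0$ gives $p:\textsc{sp}(U)\to 2^\lambda$ with $W:=p_*(U)=e_*(W_0)$ Rudin–Keisler isomorphic to $W_0$; in particular $M_W=M_{W_0}$ and the factor embedding $M_W\to M_U$ is still $k$, so $\textsc{crt}(k)>\lambda$, as required. I do not expect a serious obstacle here: the only content beyond bookkeeping is the critical-point observation of the first paragraph together with the fact that ordinals $\leq\lambda$ are represented by functions into $\lambda+1$, and the choice of exponent in \cref{exponential} is forced by the desired codomain $2^\lambda$.
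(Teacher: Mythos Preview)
Your proposal is correct and is exactly the intended argument: the paper states this as an immediate corollary of \cref{exponential} with no proof, and the obvious way to deduce it is precisely what you do---represent each ordinal $i\leq\lambda$ by a function into $\lambda+1$, apply \cref{exponential} with $Y=I=\lambda+1$, observe that having all ordinals $\leq\lambda$ in $\operatorname{ran}(k)$ forces $\textsc{crt}(k)>\lambda$, and finish with the cardinality computation $|(\lambda+1)^{\lambda+1}|=2^\lambda$. There is nothing to add.
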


The notation \(\textnormal{Un}_{{<}\kappa}\I U\) abbreviates the statement that \(W\I U\) for all \(W\in \textnormal{Un}_{<\kappa}\).

\begin{lma}[UA]\label{Kunen2}
Suppose \(U\) is a countably complete ultrafilter and \(\lambda\) is the first fixed point of \(j_U\) above its critical point. Then for some \(\nu < \lambda\), \(\textnormal{Un}_{\leq2^{\nu}}\not\I U\).
\begin{proof}
By Kunen's inconsistency theorem, there is some \(\gamma < \lambda\) such that \(U\) is not \(\gamma\)-supercompact. Let \(\nu = \sup j_U[\gamma]\), so \(\nu < \lambda\). By \cref{exponential} there is some \(W\in \textnormal{Un}_{\leq2^{\nu}}\) such that \(j_W \restriction \gamma = j_U\restriction \gamma\). Note that \(W\not\I U\) since otherwise \(j_U\restriction \gamma \in M_U\), contradicting that \(U\) is not \(\gamma\)-supercompact.
\end{proof}
\end{lma}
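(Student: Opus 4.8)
The plan is to exhibit a single small ultrafilter $W\in\textnormal{Un}_{\leq 2^\nu}$ with $W\not\I U$, obtained by factoring $U$ through a smaller ultrafilter via \cref{exponential}, with Kunen's inconsistency theorem guaranteeing that the relevant initial segment of $j_U$ is not amenable to $M_U$. Let $\langle\kappa_n:n<\omega\rangle$ be the critical sequence of $U$, so that $\lambda=\sup_n\kappa_n$. The first task is to find some $\gamma<\lambda$ at which $U$ is not $\gamma$-supercompact, i.e.\ $j_U\restriction\gamma\notin M_U$. Here Kunen's inconsistency theorem rules out that $j_U$ is $\lambda$-supercompact; but if $U$ were $\gamma$-supercompact for \emph{every} $\gamma<\lambda$, then $\langle j_U\restriction\kappa_n:n<\omega\rangle$ would be an $\omega$-sequence of members of $M_U$, hence an element of $M_U$ by countable completeness of $U$, and therefore $j_U\restriction\lambda=\bigcup_n j_U\restriction\kappa_n\in M_U$, making $j_U$ $\lambda$-supercompact after all. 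So such a $\gamma$ exists.

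Fix one. Since $\lambda$ is a limit cardinal (a limit of the cardinals $\kappa_n$) and $\sup j_U[\gamma]\leq j_U(\gamma)<\lambda$, we may choose a cardinal $\nu<\lambda$ with $\nu\geq\sup j_U[\gamma]$. Apply the corollary to \cref{exponential} to $\nu$: there is $p:\textsc{sp}(U)\to 2^\nu$ such that, with $W=p_*(U)$ and factor embedding $k:M_W\to M_U$ (so $k\circ j_W=j_U$), we have $\textsc{crt}(k)>\nu$. Replacing $W$ by a uniform ultrafilter on an ordinal $\leq 2^\nu$ that is Rudin--Keisler equivalent to it — which changes neither $j_W$ nor the internal relation — we may assume $W\in\textnormal{Un}_{\leq 2^\nu}$. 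For each $\xi<\gamma$ we have $j_U(\xi)\leq\nu<\textsc{crt}(k)$, so from $k(j_W(\xi))=j_U(\xi)$ together with the fact that $k$ is order-preserving and fixes everything below $\textsc{crt}(k)$ it follows that $j_W(\xi)=j_U(\xi)$; that is, $j_W\restriction\gamma=j_U\restriction\gamma$.

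Now suppose toward a contradiction that $W\I U$, i.e.\ $j_W\restriction M_U$ is an amenable class of $M_U$. Then its intersection with the set $\gamma\times(\nu+1)$, which belongs to $M_U$, lies in $M_U$; since every $j_W(\xi)=j_U(\xi)$ with $\xi<\gamma$ is $\leq\nu$, that intersection is exactly $j_W\restriction\gamma$. Hence $j_W\restriction\gamma\in M_U$, so $j_U\restriction\gamma\in M_U$, contradicting the choice of $\gamma$. Therefore $W\not\I U$, and as $W\in\textnormal{Un}_{\leq 2^\nu}$ this establishes $\textnormal{Un}_{\leq 2^\nu}\not\I U$. The only genuinely delicate point is the first step — extracting a $\gamma<\lambda$ where supercompactness provably fails — which is where Kunen's inconsistency is invoked and which relies on the countable completeness of $U$ to amalgamate the restrictions $j_U\restriction\kappa_n$ inside $M_U$; everything afterward is a routine factor-embedding computation, and no use of UA is made.
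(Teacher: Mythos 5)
Your proposal is correct and follows essentially the same route as the paper: find $\gamma<\lambda$ where $\gamma$-supercompactness fails via Kunen's inconsistency, use \cref{exponential} to produce $W\in\textnormal{Un}_{\leq 2^\nu}$ with $j_W\restriction\gamma=j_U\restriction\gamma$, and observe that $W\I U$ would put $j_U\restriction\gamma$ into $M_U$. The extra details you supply (the countable-completeness amalgamation giving the existence of $\gamma$, the factor-embedding computation, and the passage to a uniform RK-equivalent ultrafilter) are all accurate elaborations of steps the paper leaves implicit.
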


\begin{cor}[UA]\label{Kunen}
Suppose \(U\) is a countably complete ultrafilter and \(\lambda\) is the first fixed point of \(j_U\) above its critical point. Suppose \(\kappa\) is a strong limit cardinal and \(\textnormal{Un}_{{<}\kappa}\I U\). Then \(\kappa < \lambda\).
\end{cor}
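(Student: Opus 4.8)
The plan is a short argument by contradiction that rests almost entirely on \cref{Kunen2}; the present corollary is really just a repackaging of that lemma under the extra hypothesis that $\kappa$ is a strong limit cardinal. So I would suppose toward a contradiction that $\lambda \leq \kappa$ and derive a failure of $\textnormal{Un}_{<\kappa} \I U$.

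First I would invoke \cref{Kunen2} to fix some $\nu < \lambda$ with $\textnormal{Un}_{\leq 2^\nu} \not\I U$; unwinding the notation, this means there is a countably complete uniform ultrafilter $W$ on some ordinal $\delta \leq 2^\nu$ with $W \not\I U$. The next step is the only place the strong-limit hypothesis enters: since $\nu < \lambda \leq \kappa$ and $\kappa$ is a strong limit cardinal, $2^\nu < \kappa$, so $\delta \leq 2^\nu < \kappa$ and hence $W \in \textnormal{Un}_{<\kappa}$. But then the hypothesis $\textnormal{Un}_{<\kappa} \I U$ yields $W \I U$, contradicting the choice of $W$. Therefore $\lambda \leq \kappa$ is impossible, and $\kappa < \lambda$.

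I do not expect any genuine obstacle here: all the substantive work — the use of Kunen's inconsistency theorem, \cref{exponential}, and UA — has already been absorbed into \cref{Kunen2}, and what remains is the elementary observation that the exponential bound $2^\nu$ produced there is swallowed by a strong limit cardinal above $\nu$. If one wanted, one could even state the argument without first isolating \cref{Kunen2}, but routing through it keeps the role of the strong-limit hypothesis maximally transparent.
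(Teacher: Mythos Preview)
Your argument is correct and is exactly the intended derivation: the paper states \cref{Kunen} as an immediate corollary of \cref{Kunen2} without further proof, and the only content to supply is precisely your observation that a strong limit $\kappa\geq\lambda$ would absorb the bound $2^\nu$ produced by \cref{Kunen2}, contradicting $\textnormal{Un}_{<\kappa}\I U$.
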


\begin{cor}[UA]\label{ClosedCompleteness}
Suppose \(\kappa\) is a strong limit cardinal that is closed under ultrapowers. Suppose \(U\) is a countably complete ultrafilter such that \(\textnormal{Un}_{{<}\kappa}\I U\). Then \(U\) is \(\kappa\)-complete.
\end{cor}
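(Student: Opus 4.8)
The plan is to argue by contradiction, playing \cref{Kunen} off against the closure hypothesis on $\kappa$. Suppose $U$ fails to be $\kappa$-complete. Since $U$ is countably complete this forces $U$ to be nonprincipal, and its completeness is some cardinal below $\kappa$; recalling that the completeness of a countably complete ultrafilter equals the critical point of its ultrapower embedding, we have $\kappa_0:=\textsc{crt}(j_U)<\kappa$. Let $\lambda$ denote the first fixed point of $j_U$ above $\kappa_0$, so that the hypotheses of \cref{Kunen} are met.

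First I would apply \cref{Kunen}: since $\kappa$ is a strong limit cardinal and $\textnormal{Un}_{<\kappa}\I U$, it gives $\kappa<\lambda$. As $\kappa>\kappa_0$ and $\lambda$ is the \emph{least} fixed point of $j_U$ above $\kappa_0$, no ordinal in $(\kappa_0,\lambda)$ is fixed by $j_U$; in particular $j_U(\kappa)\neq\kappa$, and since $j_U(\alpha)\geq\alpha$ for every ordinal $\alpha$ this yields $j_U(\kappa)>\kappa$. But $j_U$ is an ultrapower embedding with critical point $\kappa_0<\kappa$, so the assumption that $\kappa$ is closed under ultrapowers gives $j_U(\kappa)=\kappa$. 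This contradiction shows $U$ is $\kappa$-complete.

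The substantive work has already been done in \cref{Kunen}, and behind it \cref{Kunen2} together with the Kunen inconsistency theorem; given that, \cref{ClosedCompleteness} is little more than bookkeeping. The only step warranting care is the opening reduction --- verifying that failure of $\kappa$-completeness forces $\textsc{crt}(j_U)<\kappa$, so in particular $U$ is nonprincipal and $\lambda$ is well defined --- and this is just the standard identification of the completeness of a countably complete ultrafilter with the critical point of its ultrapower. That identification is precisely what lets the closure hypothesis on $\kappa$ produce the contradiction above.
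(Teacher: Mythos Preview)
Your approach is correct and matches the paper's: the result is meant as an immediate consequence of \cref{Kunen}. However, your final step has a gap. The phrase ``$\kappa$ is closed under ultrapowers'' in this paper (see for instance the sentence following the proof of \cref{RegularCor}) means that $j_W(\alpha)<\kappa$ for every $\alpha<\kappa$ and every countably complete $W$; it does \emph{not} assert that $j_U(\kappa)=\kappa$. Indeed, a measurable cardinal is closed under ultrapowers in this sense yet is moved by its own normal measure, so the inference you draw in the last line is not valid as stated.

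The repair is one line. From closure under ultrapowers, the critical sequence $\kappa_n=j_U^n(\kappa_0)$ stays below $\kappa$: by induction, $\kappa_n<\kappa$ gives $\kappa_{n+1}=j_U(\kappa_n)<\kappa$. Hence $\lambda=\sup_n\kappa_n\leq\kappa$, contradicting the inequality $\kappa<\lambda$ you correctly extracted from \cref{Kunen}. (Equivalently, $\kappa<\lambda$ forces some $n$ with $\kappa_n<\kappa\leq j_U(\kappa_n)$, directly violating closure.) The detour through $j_U(\kappa)$ is unnecessary; the contradiction lives entirely below $\kappa$.
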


\begin{proof}[Proof of \cref{EasyRegularity}]
Let \(\delta_* = \sup j_W[\delta]\). The key point is that we have both that \(t_W(U) = s_W(U)\) by \cref{InternalTranslations} and also that \(t_W(U)\) is the least ultrafilter on \(\delta_*\) by \cref{GeneralInternal}. Hence \begin{equation}\label{Equivalence}j_U\restriction M_W = j^{M_W}_{U_*}\end{equation} where \(U_*\) is the least ultrafilter of \(M_W\) on \(\text{cf}^{M_W}(\delta_*)\). (Here we use \cref{Trivia} in \(M_W\) to see that in \(M_W\), \(U_*\) is equivalent to the least ultrafilter on \(\delta_*\).)

Suppose towards a contradiction that  \(W\) is not \((\bar \kappa,\delta)\)-regular, and therefore \(\text{cf}^{M_W}(\delta_*) \geq j_W(\bar \kappa)\). Work in \(M_W\). Note that \(j_W(\bar \kappa)\) is a strong limit cardinal that is closed under ultrapowers, and moreover \(\textnormal{Un}^{M_W}_{<j_W(\bar \kappa)} \I U_*\) by \cref{ZeroInternal}. Therefore \(U_*\) is \(j_W(\bar \kappa)\)-complete by \cref{ClosedCompleteness}. But \(j_W(\bar \kappa) > \kappa\), contradicting that the critical point of \(j^{M_W}_{U_*}\) is \(\kappa\) by \cref{Equivalence}. Therefore our assumption was false, so \(W\) is \((\bar \kappa,\delta)\)-regular.

Finally we conclude that \(\bar \kappa = \kappa\): first, \(\bar \kappa \leq \kappa\) by definition, and second, by \cref{KetonenTheorem}, \(U\) is \((\bar \kappa,\delta)\)-regular, so \(\kappa \leq \bar \kappa\).
\end{proof}

\begin{cor}[UA]
Suppose there are arbitrarily large regular cardinals carrying countably complete uniform ultrafilters. Suppose \(\kappa\) is the least cardinal mapped arbitrarily high by countably complete ultrafilters. Then \(\kappa\) is supercompact.
\begin{proof}
Note that \(\kappa\) is closed under ultrapowers.

Let \(\delta \geq \kappa\) be a regular cardinal carrying a countably complete uniform ultrafilter. We claim the least ultrafilter \(U\) on \(\delta\) witnesses \(\kappa\) is \({<}\delta\)-supercompact. 

Let \(\lambda\) be the least fixed point of \(U\) above \(\textsc{crt}(U)\). Let \(W\) be the \(\swo\)-least ultrafilter such that \(j_W(\kappa) \geq \lambda\). Then by \cref{FixedPointInternal}, \(U\I W\). Clearly \(j_W(\kappa) > \textsc{crt}(U)\). Since \(\kappa\) is closed under ultrapowers, \(\kappa\) is the least ordinal such that \(j_Z(\kappa) > \textsc{crt}(U)\) for some \(Z\in \textnormal{Un}\). 

Therefore by \cref{EasyRegularity}, \(\textsc{crt}(U) = \kappa\) and \(U\) is \((\kappa,\delta)\)-regular. Now by \cref{ZeroTheorem}, \(U\) witnesses that \(\kappa\) is \({<}\delta\)-supercompact.
\end{proof}
\end{cor}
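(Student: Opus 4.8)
The plan is to prove the sharper statement that for \emph{every} regular cardinal $\delta\geq\kappa$ carrying a countably complete uniform ultrafilter, the least (equivalently, $0$-order) ultrafilter $U$ on $\delta$ witnesses that $\kappa$ is ${<}\delta$-supercompact. Since the hypothesis supplies arbitrarily large such $\delta$, and being ${<}\delta$-supercompact for an unbounded class of $\delta$ is the same as being supercompact, this suffices.

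First I record that $\kappa$ is closed under ultrapowers; this is a routine property of the least cardinal mapped arbitrarily high (if some $\beta<\kappa$ were moved arbitrarily high one contradicts minimality, and one then sees every $j_Z(\beta)$ with $\beta<\kappa$ stays below $\kappa$). In particular every element of $\textnormal{Un}$ sends ordinals below $\kappa$ to ordinals below $\kappa$. Now fix $\delta$ as above and let $U$ be the least ultrafilter on $\delta$, which exists and is $0$-order by \cref{ZeroUnique}. The goal is to apply \cref{ZeroTheorem} to $U$, so I need the completeness of $U$ to be exactly $\kappa$ and $\kappa$ to be $\delta$-strongly compact; both of these I extract from \cref{EasyRegularity}.

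To set up \cref{EasyRegularity} I produce an internal companion of $U$: let $\lambda$ be the least fixed point of $j_U$ above $\textsc{crt}(U)$, and let $W$ be the $\swo$-least countably complete ultrafilter with $j_W(\kappa)\geq\lambda$, which exists because $\kappa$ is mapped arbitrarily high. Since $U$ fixes $\lambda$ and $W$ is the $\swo$-least ultrafilter sending $\kappa$ to at least $\lambda$, \cref{FixedPointInternal} applied to the pair of ordinals $\kappa,\lambda$ gives $U\I W$. Also $j_W(\kappa)\geq\lambda>\textsc{crt}(U)$, so $\kappa$ is mapped strictly above $\textsc{crt}(U)$; combined with closure of $\kappa$ under ultrapowers (and with $\textsc{crt}(U)\geq\kappa$), no ordinal below $\kappa$ is, so $\kappa$ is the least ordinal mapped above $\textsc{crt}(U)$. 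Hence $W$ verifies the hypothesis of \cref{EasyRegularity}, which yields $\textsc{crt}(U)=\kappa$ and that $U$ is $(\kappa,\delta)$-regular. As $U$ is then a $\kappa$-complete $(\kappa,\delta)$-regular ultrafilter, its classical pushforward along a regularity witness is a fine $\kappa$-complete ultrafilter on $P_\kappa(\delta)$, so $\kappa$ is $\delta$-strongly compact. Finally \cref{ZeroTheorem} applies to $U$ and gives that $U$ witnesses that $\kappa$ is ${<}\delta$-supercompact, which completes the argument upon letting $\delta$ range over the unbounded class of regular cardinals carrying countably complete uniform ultrafilters.

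I expect the main obstacle to be precisely the step identifying $\kappa$ with the least ordinal mapped above $\textsc{crt}(U)$, i.e.\ checking that the cardinal called $\bar\kappa$ in \cref{EasyRegularity} is our $\kappa$: this is where closure of $\kappa$ under ultrapowers is essential, together with knowing $\textsc{crt}(U)\geq\kappa$ (so that everything below $\kappa$ is mapped below $\textsc{crt}(U)$), after which \cref{FixedPointInternal} guarantees that the specific $W$ we built is a legitimate witness. The remaining steps — existence of $W$, the application of \cref{FixedPointInternal}, and the passage from $(\kappa,\delta)$-regularity plus $\kappa$-completeness of $U$ to $\delta$-strong compactness so as to meet the hypothesis of \cref{ZeroTheorem} — are comparatively routine.
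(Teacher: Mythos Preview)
Your proposal is correct and follows essentially the same route as the paper's own proof: fix a regular $\delta\geq\kappa$ carrying a uniform countably complete ultrafilter, take the least ultrafilter $U$ on $\delta$, let $\lambda$ be the first fixed point of $j_U$ above $\textsc{crt}(U)$, choose $W$ as the $\swo$-least ultrafilter with $j_W(\kappa)\geq\lambda$, apply \cref{FixedPointInternal} to get $U\I W$, then invoke \cref{EasyRegularity} to obtain $\textsc{crt}(U)=\kappa$ and $(\kappa,\delta)$-regularity, and finish with \cref{ZeroTheorem}. You are slightly more explicit than the paper in spelling out the passage from $(\kappa,\delta)$-regularity of a $\kappa$-complete ultrafilter to $\delta$-strong compactness of $\kappa$ (which is the actual hypothesis of \cref{ZeroTheorem}), and you correctly flag the identification of your $\kappa$ with the $\bar\kappa$ of \cref{EasyRegularity} as the point requiring care; the paper handles this in one line from closure under ultrapowers without singling out the inequality $\textsc{crt}(U)\geq\kappa$.
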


\begin{cor}[UA]\label{GlobalLeastSupercompact}
The least strongly compact cardinal is supercompact.
\end{cor}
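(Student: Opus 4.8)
The plan is to derive this from the preceding corollary by showing that the least strongly compact cardinal \(\kappa\) is precisely the least cardinal mapped arbitrarily high by countably complete ultrafilters, and that there are arbitrarily large regular cardinals carrying countably complete uniform ultrafilters. Both facts are classical consequences of strong compactness, and once they are in hand the preceding corollary does all the work.

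First I would record that the hypotheses of the preceding corollary hold. By Solovay's theorem, a strongly compact \(\kappa\) carries a fine \(\kappa\)-complete ultrafilter \(\mathcal U\) on \(P_\kappa(\lambda)\) for every \(\lambda\geq\kappa\). For regular \(\mu\in[\kappa,\lambda]\), pushing \(\mathcal U\) forward along \(\sigma\mapsto \sup(\sigma\cap\mu)\) produces a uniform \(\kappa\)-complete --- hence countably complete --- ultrafilter on \(\mu\); in particular arbitrarily large regular cardinals carry countably complete uniform ultrafilters. (Alternatively one can cite Ketonen's characterization of strong compactness, in the spirit of \cref{KetonenTheorem}.) Thus the preceding corollary applies, and the least cardinal \(\bar\kappa\) mapped arbitrarily high by countably complete ultrafilters is supercompact.

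Next I would check that \(\bar\kappa=\kappa\). For \(\bar\kappa\leq\kappa\): the fine \(\kappa\)-complete ultrafilter \(\mathcal U\) on \(P_\kappa(\lambda)\) satisfies \(j_{\mathcal U}(\kappa)>\lambda\) by the standard computation, and replacing \(\mathcal U\) by a Rudin-Keisler equivalent uniform ultrafilter on an ordinal (which has the same ultrapower, hence the same value of \(j(\kappa)\)) shows \(\kappa\) is mapped arbitrarily high, so \(\bar\kappa\leq\kappa\). For \(\bar\kappa\geq\kappa\): \(\bar\kappa\) is supercompact by the previous step, hence strongly compact (a normal fine ultrafilter is fine, so this follows from the definitions in the introduction together with Solovay's theorem), hence \(\bar\kappa\geq\kappa\) since \(\kappa\) is the least strongly compact cardinal. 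Therefore \(\bar\kappa=\kappa\), and \(\kappa\) is supercompact.

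I do not expect a genuine obstacle: the only work beyond invoking the preceding corollary is the routine translation of the fine ultrafilters supplied by strong compactness into uniform ultrafilters on ordinals and the standard estimate \(j_{\mathcal U}(\kappa)>\lambda\), both of which are exactly the bookkeeping carried out in the preliminaries. All of the real content of the statement has already been spent in the preceding corollary (and ultimately in \cref{ZeroTheorem} and \cref{EasyRegularity}).
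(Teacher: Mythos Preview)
Your proposal is correct and is exactly the intended derivation: the paper states \cref{GlobalLeastSupercompact} as an immediate corollary of the preceding result with no separate proof, and the bookkeeping you supply (that a strongly compact cardinal produces uniform countably complete ultrafilters on all large enough regular cardinals, that \(\kappa\) is mapped arbitrarily high, and that the supercompact \(\bar\kappa\) furnished by the preceding corollary must equal \(\kappa\) since supercompact implies strongly compact) is precisely what the paper leaves implicit.
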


\subsection{The Next Ultrafilter}\label{NextUltrafilter}
We continue our investigation of \(0\)-order ultrafilters, proving some local refinement of the results we have seen so far. 

We first point out that this is much easier if one assumes UA + GCH. We will use the following theorem essentially due to Ketonen.

\begin{thm}[Ketonen]\label{SuccessorDecomposable}
Suppose \(\gamma\) is regular and \(U\) is a countably complete uniform ultrafilter on \(\gamma^+\). Then \(U\) is \(\gamma\)-decomposable.
\begin{proof}
Note that \(\text{cf}(\sup j_U[\gamma^+]) \leq j_U(\gamma)\). If equality holds, then \(\text{cf}(j_U(\gamma)) = \gamma^+\), so \(j_U\) is discontinuous at \(\gamma\). If strict inequality holds, then \(U\) has the \((\gamma^+, \lambda)\)-covering property for some \(\lambda < j_U(\gamma)\), by \cref{CofinalityRegularity}, which implies again that \(j_U\) is discontinuous at \(\gamma\).
\end{proof}
\end{thm}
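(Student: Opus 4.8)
The statement is equivalent to showing that $j_U$ is discontinuous at $\gamma$, that is, $\sup j_U[\gamma] < j_U(\gamma)$: from such a discontinuity one obtains a partition of $\gamma^+$ into $\gamma$ pieces none of which lies in $U$ simply by representing any ordinal in $j_U(\gamma)\setminus j_U[\gamma]$ as $[\text{id}]$-type value $[f]_U$ of some $f:\gamma^+\to\gamma$ and taking the level sets of $f$. So I would aim to prove this discontinuity, imitating the cofinality dichotomy used in the proof of \cref{CofinalityRegularity}.

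The first step is the bound $\text{cf}^{M_U}(\sup j_U[\gamma^+]) \le j_U(\gamma)$. Since $U$ is uniform on $\gamma^+$, every tail $\gamma^+\setminus\alpha$ lies in $U$, so $\sup j_U[\gamma^+]\le [\text{id}]_U$; and $\gamma^+\in U$ gives $[\text{id}]_U < j_U(\gamma^+)$. Hence $\sup j_U[\gamma^+] < j_U(\gamma^+)$, and as $j_U(\gamma^+)$ is the successor of $j_U(\gamma)$ in $M_U$ by elementarity, the ordinal $\sup j_U[\gamma^+]$ has $M_U$-cardinality, hence $M_U$-cofinality, at most $j_U(\gamma)$. (Note $\textsc{crt}(U)$ is a measurable, hence inaccessible, cardinal that is $\le\gamma^+$, so in fact $\textsc{crt}(U)\le\gamma$ and $j_U(\gamma)\ge\gamma^+$; in particular the bound is not vacuous.) Set $\lambda = \text{cf}^{M_U}(\sup j_U[\gamma^+])$.

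Now split on whether $\lambda < j_U(\gamma)$ or $\lambda = j_U(\gamma)$. If $\lambda < j_U(\gamma)$, apply \cref{CofinalityRegularity} with $\delta = \gamma^+$ to get that $M_U$ has the $(\gamma^+,\lambda)$-covering property; cover $j_U[\gamma]$ by $B\in M_U$ with $|B|^{M_U}\le\lambda$, and since $j_U(\gamma)$ is regular in $M_U$ the set $B\cap j_U(\gamma)$ is bounded in $j_U(\gamma)$, whence $\sup j_U[\gamma] < j_U(\gamma)$. If instead $\lambda = j_U(\gamma)$, fix in $M_U$ an increasing cofinal map $j_U(\gamma)\to\sup j_U[\gamma^+]$; since $j_U\restriction\gamma^+$ also enumerates a cofinal subset of $\sup j_U[\gamma^+]$ of order type $\gamma^+$, comparing these two cofinal sequences in $V$ forces $\text{cf}^V(j_U(\gamma)) = \text{cf}^V(\sup j_U[\gamma^+]) = \gamma^+$; were $j_U$ continuous at $\gamma$ we would instead get $\text{cf}^V(j_U(\gamma)) = \text{cf}^V(\sup j_U[\gamma]) = \text{cf}^V(\gamma) = \gamma$, a contradiction. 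Either way $j_U$ is discontinuous at $\gamma$. The only real subtlety is keeping straight the model in which each cofinality is computed — $\text{cf}^{M_U}$ for the cardinality bound and the covering appeal, $\text{cf}^V$ for the contradiction in the second case; beyond that bookkeeping there is no deeper obstacle.
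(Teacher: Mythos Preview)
Your proposal is correct and follows essentially the same approach as the paper's proof: bound $\text{cf}^{M_U}(\sup j_U[\gamma^+])$ by $j_U(\gamma)$, then split on equality versus strict inequality, using \cref{CofinalityRegularity} in the strict case and a cofinality comparison in the equality case. You have simply supplied the details the paper leaves implicit, including the reduction of $\gamma$-decomposability to discontinuity at $\gamma$ and the careful tracking of $\text{cf}^{M_U}$ versus $\text{cf}^V$.
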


\begin{prp}[UA]\label{GCHEasy}
Suppose \(U\) is the least ultrafilter on a regular cardinal \(\delta\) and for all \(\gamma\) with \(\gamma < \delta\), \(2^\gamma = \gamma^+\). Then \(\textsc{crt}(U)\) is \(\delta\)-strongly compact.
\begin{proof}[Sketch]
We will use the fact that if \(\gamma\) is regular and \(\gamma^+\) carries a uniform \(\bar \kappa\)-complete ultrafilter, so does \(\gamma\). 

Let \(\bar \kappa\) be least such that for some \(W\), \(j_W(\bar \kappa) > \delta\). One shows \(\bar \kappa\) is a strong limit cardinal that is closed under ultrapowers. Therefore \(\bar \kappa \leq \kappa\). We show \(\bar \kappa\) is \(\delta\)-strongly compact. This will imply the theorem: since \(\bar \kappa\) is \(\delta\)-strongly compact, \(U\) is \((\bar \kappa,\delta)\)-regular by \cref{KetonenTheorem}, so \(\kappa \leq \bar \kappa\), and hence \(\kappa = \bar \kappa\) is \(\delta\)-strongly compact.

Let \(\lambda\) be the largest limit cardinal with \(\lambda \leq \delta\). It suffices by GCH and the first sentence of this proof to show that every regular \(\gamma\) with \(\bar \kappa \leq \gamma < \lambda\) carries a \(\bar \kappa\)-complete ultrafilter. Using \cref{exponential} and GCH, one shows that the space of the least ultrafilter \(W\) sending \(\bar \kappa\) above \(\gamma^+\) is at most \(\gamma^{++}\). On the other hand \(\textsc{sp}(W)\geq \gamma\) since \(\bar \kappa^{<\gamma} = \gamma\). Moreover since every ultrafilter in \(\textnormal{Un}_{<\bar \kappa}\) fixes \(\gamma^+\), \(\textnormal{Un}_{<\bar \kappa} \I W\) by \cref{FixedPointInternal}. Thus \(W\) is \(\bar \kappa\)-complete by \cref{ClosedCompleteness}. Hence \(\gamma\) carries a uniform \(\bar \kappa\)-complete ultrafilter by the first sentence.
\end{proof}
\end{prp}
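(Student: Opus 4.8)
The plan is to follow the template set in \cref{EasyRegularity} and \cref{ZeroTheorem}: isolate the cardinal that is responsible for mapping things past \(\delta\), show directly that it is \(\delta\)-strongly compact, and then recover \(\textsc{crt}(U)\) from Ketonen's regularity dichotomy. Write \(\kappa=\textsc{crt}(U)\), which is also the completeness of \(U\) since \(U\) is \(0\)-order, hence weakly normal. Let \(\bar\kappa\) be the least ordinal such that \(j_W(\bar\kappa)>\delta\) for some \(W\in\textnormal{Un}\), and let \(\lambda\) be the largest limit cardinal with \(\lambda\leq\delta\). The same routine analysis indicated at the opening of \cref{EasyRegularity}, now using GCH below \(\delta\) together with the minimality of \(\bar\kappa\), shows that \(\bar\kappa\) is a strong limit cardinal closed under ultrapowers, and in particular \(\bar\kappa\leq\kappa\). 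Granting that every regular cardinal in \([\bar\kappa,\delta]\) carries a uniform countably complete ultrafilter, \cref{KetonenTheorem} gives that \(U\) is \((\bar\kappa,\delta)\)-regular, so \(\kappa\leq\bar\kappa\) and hence \(\kappa=\bar\kappa\); since Ketonen's characterization of strong compactness says exactly that \(\bar\kappa\) is then \(\delta\)-strongly compact, it therefore suffices to produce, for every regular \(\iota\) with \(\bar\kappa\leq\iota\leq\delta\), a uniform \(\bar\kappa\)-complete ultrafilter on \(\iota\).

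The heart of the matter is a regular \(\gamma\) with \(\bar\kappa\leq\gamma<\lambda\). Let \(W\) be the \(\swo\)-least ultrafilter with \(j_W(\bar\kappa)>\gamma^+\). First I bound its space. Since \(\bar\kappa\) is a strong limit and GCH holds below \(\delta\), one computes \(\bar\kappa^{<\gamma}=\gamma\), so no ultrafilter on an ordinal of cardinality \({<}\gamma\) can send \(\bar\kappa\) past \(\gamma^+\); hence \(\textsc{sp}(W)\geq\gamma\). On the other hand, applying \cref{exponential} to an arbitrary witness reduces it to a witness whose underlying set has size at most \(2^{\gamma^+}=\gamma^{++}\) (using \(\gamma^+<\lambda\leq\delta\)), so by \cref{SpaceLemma} \(\textsc{sp}(W)\leq\gamma^{++}\). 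Next, every \(Z\in\textnormal{Un}_{<\bar\kappa}\) fixes \(\gamma^{++}\): because \(\textsc{sp}(Z)<\bar\kappa\leq\gamma\), GCH below \(\delta\) forces \(|j_Z(\alpha)|\leq\gamma^+\) for every \(\alpha<\gamma^{++}\), whence \(j_Z\) is continuous at \(\gamma^{++}\) with \(j_Z(\gamma^{++})=\gamma^{++}\). Since \(W\) is the \(\swo\)-least ultrafilter sending \(\bar\kappa\) past \(\gamma^+\), \cref{FixedPointInternal} yields \(\textnormal{Un}_{<\bar\kappa}\I W\), and then \cref{ClosedCompleteness}, applicable because \(\bar\kappa\) is a strong limit closed under ultrapowers, gives that \(W\) is \(\bar\kappa\)-complete.

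To finish this case I descend from \(W\) to \(\gamma\): \(W\) is a uniform \(\bar\kappa\)-complete ultrafilter with \(\textsc{sp}(W)\) in the interval \([\gamma,\gamma^{++}]\); reducing through a cofinal map lands on a uniform \(\bar\kappa\)-complete ultrafilter on a regular cardinal among \(\gamma,\gamma^+,\gamma^{++}\), and since \(\gamma\) and \(\gamma^+\) are regular, \cref{SuccessorDecomposable} --- via the standard observation that a \(\mu\)-decomposable \(\bar\kappa\)-complete ultrafilter yields, by deriving from an ordinal in the gap below the image of \(\mu\), a uniform \(\bar\kappa\)-complete ultrafilter on \(\mu\) --- steps one or two successors down to a uniform \(\bar\kappa\)-complete ultrafilter on \(\gamma\) itself. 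The remaining regular cardinals \(\iota\) with \(\lambda\leq\iota\leq\delta\) are handled the same way: \(U\) itself is a uniform \(\bar\kappa\)-complete ultrafilter on \(\delta\) (as \(\bar\kappa\leq\kappa\)), every \(\iota\) in \((\lambda,\delta)\) is a successor cardinal since \(\lambda\) is the largest limit cardinal \({\leq}\delta\), so iterating the \cref{SuccessorDecomposable} descent downward from \(U\) covers all of them, and if \(\lambda\) happens to be regular it covers \(\lambda\) as well. Thus every regular cardinal in \([\bar\kappa,\delta]\) carries a uniform \(\bar\kappa\)-complete ultrafilter, \(\bar\kappa\) is \(\delta\)-strongly compact, and the first paragraph completes the proof.

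I expect the obstacles to be arithmetic rather than structural. Establishing the basic facts about \(\bar\kappa\) (strong limit, closed under ultrapowers, \(\bar\kappa\leq\kappa\)), the two-sided bound on \(\textsc{sp}(W)\), and the claim that every ultrafilter in \(\textnormal{Un}_{<\bar\kappa}\) fixes \(\gamma^{++}\) all hinge on GCH below \(\delta\) and must be checked carefully, the more so near \(\delta\) itself where GCH is not available --- this is exactly why \(\lambda\) is singled out and the range \([\lambda,\delta]\) is treated separately through \cref{SuccessorDecomposable}. There is also genuine care needed in the descent, namely in arranging that the \(\bar\kappa\)-complete ultrafilter one produces ends up on \(\gamma\) and not merely on some nearby cardinal, which may call for running this step as an induction on the regular cardinals below \(\lambda\). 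The one conceptual point --- and the reason GCH is worth so much here --- is the recognition that the \(\swo\)-least ultrafilter pushing \(\bar\kappa\) past \(\gamma^+\) cannot help being \(\bar\kappa\)-complete, because GCH makes every ultrafilter below \(\bar\kappa\) fix \(\gamma^{++}\), hence internal to it via \cref{FixedPointInternal}, after which completeness is forced by \cref{ClosedCompleteness}.
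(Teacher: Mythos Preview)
Your proposal is correct and follows essentially the same route as the paper's sketch: define \(\bar\kappa\) as the least ordinal some ultrapower sends past \(\delta\), bound the space of the \(\swo\)-least \(W\) with \(j_W(\bar\kappa)>\gamma^+\) between \(\gamma\) and \(\gamma^{++}\) via GCH and \cref{exponential}, invoke \cref{FixedPointInternal} and \cref{ClosedCompleteness} to get \(\bar\kappa\)-completeness, and descend to \(\gamma\) via \cref{SuccessorDecomposable}. The only cosmetic difference is that the paper records the fixed point as \(\gamma^+\) rather than \(\gamma^{++}\); either works once one notes that fixing \(\gamma^+\) entails fixing \(\gamma^++1\), which is the \(\lambda\) actually needed in \cref{FixedPointInternal} for \(W\) defined by the strict inequality \(j_W(\bar\kappa)>\gamma^+\).
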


This result suffices for the analysis under GCH of higher strongly compact cardinals, so the reader who is not interested in the fine structure of least ultrafilters under UA without assuming GCH can skip ahead to \cref{SecondStronglyCompact}.

Without GCH, we will show the following:

\begin{thm}[UA]\label{LocalTheorem}
Suppose \(\gamma < \delta\) are regular cardinals and \(\textnormal{Un}_\delta,\textnormal{Un}_\gamma\neq \emptyset\). Then the least ultrafilter on \(\delta\) is \(\gamma\)-supercompact.
\end{thm}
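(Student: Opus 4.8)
Write $U$ for the least ultrafilter on $\delta$ and $\kappa=\textsc{crt}(U)$. If $\gamma<\kappa$, then $j_U\restriction\gamma$ is the identity and $M_U$ is closed under $\gamma$-sequences (as $U$ is $\kappa$-complete), so $U$ is $\gamma$-supercompact for trivial reasons; thus we may assume $\kappa\le\gamma<\delta$. Let $W$ be the least ultrafilter on $\gamma$ (which exists since $\textnormal{Un}_\gamma\ne\emptyset$) and put $\bar\kappa=\textsc{crt}(W)$. The plan is to imitate the proof of \cref{ZeroTheorem}, with $\gamma$ playing the role of $\delta$, after first recovering — without GCH — the local strong compactness that \cref{GCHEasy} supplies under GCH.

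The first step is to show $\bar\kappa\le\kappa$ (so that $U$ is $\bar\kappa$-supercompact, since then $j_U[\bar\kappa]=\bar\kappa\in M_U$) and that $\bar\kappa$ is $\mu$-strongly compact for every $\mu<\delta$. By \cref{EasyRegularity}, $U$ is $(\kappa,\delta)$-regular, so by \cref{KetonenTheorem} every regular cardinal in $[\kappa,\delta]$ carries a countably complete uniform ultrafilter; in particular so does every regular cardinal in $[\kappa,\gamma]$, whence \cref{KetonenTheorem} makes the $0$-order ultrafilter $W$ $(\kappa,\gamma)$-regular. Were $\bar\kappa>\kappa$ then $j_W$ would fix $\kappa$, so $(\kappa,\gamma)$-regularity would force $\textnormal{cf}^{M_W}(\sup j_W[\gamma])<j_W(\kappa)=\kappa$, contradicting $\textnormal{cf}^{M_W}(\sup j_W[\gamma])\ge\gamma\ge\kappa$. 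Hence $\bar\kappa\le\kappa$. For the strong compactness: \cref{EasyRegularity} applied to $\gamma$ (its hypothesis met by taking the witness to be the $\swo$-least ultrafilter sending $\bar\kappa$ above the first fixed point of $j_W$ past $\bar\kappa$ and invoking \cref{FixedPointInternal}) gives that $W$ is $(\bar\kappa,\gamma)$-regular, so every regular $\iota\in[\bar\kappa,\gamma]$ carries a countably complete uniform ultrafilter; combined with the previous sentence, every regular cardinal in $[\bar\kappa,\delta]$ carries one. Each such ultrafilter can be upgraded to a $\bar\kappa$-complete one: $\bar\kappa$ is inaccessible, hence $2^\theta<\bar\kappa\le\iota$ for $\theta<\bar\kappa$, so every ultrafilter in $\textnormal{Un}_{<\bar\kappa}$ fixes $\iota$; \cref{FixedPointInternal} then places the $\swo$-least ultrafilter sending $\bar\kappa$ above $\iota$ internally over every element of $\textnormal{Un}_{<\bar\kappa}$, and since $\bar\kappa$ is a strong limit closed under ultrapowers (by \cref{EasyRegularity}), \cref{ClosedCompleteness} makes it $\bar\kappa$-complete. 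By Ketonen's characterization of strong compactness (used in the proof of \cref{Strength}), $\bar\kappa$ is $\mu$-strongly compact for all $\mu<\delta$.

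Next, for every $W'\in\textnormal{Un}_{<\delta}$ one has $W'\cap M_U\in M_U$: by \cref{ZeroInternal} this holds iff $j^{M_U}_{W'\cap M_U}$ is continuous at $\textnormal{cf}^{M_U}(\sup j_U[\delta])$, and since $\textnormal{cf}^{M_U}(\sup j_U[\delta])\ge\delta>\textsc{sp}(W'\cap M_U)$, continuity is automatic. Feeding this into \cref{Strength} — with $\bar\kappa$ the strongly compact cardinal, $U$ the $\bar\kappa$-supercompact ultrafilter, and a $V$-regular (hence $M_U$-regular) cardinal $\eta\in[\bar\kappa,\delta)$ in the role of its parameter — and using that $\bar\kappa$ is $\lambda$-strongly compact for all $\lambda<\delta$, we obtain $\bigcup_{\eta<\delta}P(\eta)\subseteq M_U$ (taking $\eta=\gamma$ and target $\lambda=\gamma$ already gives $P(\gamma)\subseteq M_U$). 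Finally, by \cref{CofinalityRegularity}, $M_U$ has the $(\gamma,\lambda_\gamma)$-covering property for $\lambda_\gamma:=\textnormal{cf}^{M_U}(\sup j_U[\gamma])$, so $j_U[\gamma]$ is covered by some $B\in M_U$ with $|B|^{M_U}=\lambda_\gamma$; granting the bound $\lambda_\gamma<\delta$, we have $P(\lambda_\gamma)\subseteq M_U$, and transporting $j_U[\gamma]$ through an $M_U$-bijection $B\to\lambda_\gamma$ shows $j_U[\gamma]\in M_U$, i.e.\ $U$ is $\gamma$-supercompact.

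The hard part is precisely the bound $\lambda_\gamma<\delta$ — equivalently, the statement that $M_U$ does not have excessively good covering for $j_U[\gamma]$. One half of this is easy (a club of length $\ge\delta$ in $\sup j_U[\gamma]$ lying in $M_U$ is incompatible with the regularity of $\delta$ and the cofinality of $j_U[\gamma]$ being $\gamma$, exactly as in the analysis of regular $\gamma<\delta$ in the proof of \cref{ZeroTheorem}, so $\lambda_\gamma\neq\delta$), but ruling out $\lambda_\gamma>\delta$ is where the argument goes genuinely beyond the GCH case: it must be extracted from the Ketonen theory of $0$-order ultrafilters, by analyzing the weakly normal ultrafilter on $\gamma$ derived from $U$ via \cref{ZeroInternal}, and is naturally organized as an induction on $\gamma<\delta$. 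The ultrafilter-theoretic inputs of the earlier steps (\cref{EasyRegularity}, \cref{FixedPointInternal}, \cref{ClosedCompleteness}, \cref{Strength}, \cref{CofinalityRegularity}) are by comparison routine to assemble once the strong compactness of $\bar\kappa$ below $\delta$ is in hand.
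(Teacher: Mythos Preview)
Your argument has a genuine gap at its very first substantive step. You write ``By \cref{EasyRegularity}, $U$ is $(\kappa,\delta)$-regular,'' but \cref{EasyRegularity} has a nontrivial hypothesis: one must exhibit a countably complete $W'$ with $U\I W'$ and $j_{W'}(\bar\kappa')>\kappa$ (where $\bar\kappa'$ is the least ordinal any ultrafilter sends above $\kappa$). You do not supply such a $W'$, and it is not clear one exists from the bare hypotheses $\textnormal{Un}_\gamma,\textnormal{Un}_\delta\neq\emptyset$. The natural attempt via \cref{FixedPointInternal} would take $W'$ to be the $\swo$-least ultrafilter sending $\bar\kappa'$ past the first fixed point $\lambda$ of $j_U$ above $\kappa$, but nothing guarantees any ultrafilter does this. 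Your parenthetical justification of the second invocation of \cref{EasyRegularity} (at $\gamma$) has exactly the same problem. Establishing this hypothesis is precisely the content of the chain \cref{HardRegularity} $\to$ \cref{RegularCor} $\to$ \cref{QPoints} $\to$ \cref{InternalCompact}, which is the technical heart of the section and which you have bypassed entirely.

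Your diagnosis of the ``hard part'' is also off. Once $\kappa$ is known to be $\delta$-strongly compact, \cref{ZeroTheorem} gives tight covering at $\delta$, and then $\lambda_\gamma=\textnormal{cf}^{M_U}(\sup j_U[\gamma])\le\delta$ follows immediately; strict inequality holds because $\textnormal{cf}^V(\lambda_\gamma)=\textnormal{cf}^V(\sup j_U[\gamma])=\gamma<\delta$ while $\delta$ is regular. So the bound $\lambda_\gamma<\delta$ is routine, not the crux. The paper's proof in fact avoids this entire covering calculation: it uses \cref{DiscontinuityLemma} to show $U\not\I W$, then applies \cref{InternalCompact} at $\gamma$ (with $U$ as the witness) to get that $\textsc{crt}(W)$ is $\gamma$-supercompact and closed under ultrapowers, then uses \cref{ClosedCompleteness} and \cref{SupercompactTransfer} to propagate $\gamma$-supercompactness from $W$ to $U$. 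The work is done at $\gamma$, not at $\delta$, and the propagation step replaces your covering-and-power-set argument.
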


Before proving \cref{LocalTheorem}, which will take several pages, we give some applications.

\begin{cor}[UA]\label{GCHFree}
Suppose \(U\) is the least ultrafilter on a regular cardinal \(\delta\). Let \(\kappa\) be the completeness of \(U\).
\begin{enumerate}[(1)]
\item If \(\delta\) is the successor of a regular cardinal \(\lambda\), then \(U\) is \(\delta\)-supercompact.
\item If \(\delta\) is the successor of a singular limit \(\lambda\) of regular \(\gamma\) with \(\textnormal{Un}_\gamma\neq \emptyset\), then \(U\) is \(\delta\)-supercompact.
\item Therefore if \(\delta\) is the successor of a singular cardinal \(\lambda\) with \(\textnormal{cf}(\lambda) < \kappa\), then \(U\) is \(\delta\)-supercompact.
\item If \(\delta\) is weakly inaccessible and \(\sup j_U[\delta]\) is not regular in \(M_U\), then \(U\) is \({<}\delta\)-supercompact.
\item Therefore if \(\delta\) is weakly inaccessible but not weakly \({<}\delta^+\)-Mahlo, then \(U\) is \({<}\delta\)-supercompact.
\end{enumerate}
\begin{proof}[Proof of (1)]
Since \(\lambda\) is regular and \(\textnormal{Un}_{\lambda^+}\neq\emptyset\), \(\textnormal{Un}_\lambda\neq\emptyset\). Hence \(U\) is \(\lambda\)-supercompact by \cref{LocalTheorem}. Since \(\kappa\) is \(\lambda\)-supercompact and \(\delta\) carries a \(\kappa\)-complete uniform ultrafilter, \(\kappa\) is \(\delta\)-strongly compact by Ketonen's characterization of strong compactness. Hence \(U\) is \(\delta\)-supercompact by \cref{ZeroTheorem}.
\end{proof}
\begin{proof}[Proof of (2)]
Again \(U\) is \({<}\lambda\)-supercompact by \cref{LocalTheorem}. Since \(\kappa\) is \(\lambda\)-supercompact and \(\delta\) carries a \(\kappa\)-complete uniform ultrafilter, \(\kappa\) is \(\delta\)-strongly compact by Ketonen's characterization of strong compactness. Hence \(U\) is \(\delta\)-supercompact by \cref{ZeroTheorem}.
\end{proof}
\begin{proof}[Proof of (3)]
By a result of Ketonen \cite{Ketonen}, the hypotheses of (2) follow from those of (3) using \cref{CofinalityRegularity}. The proof is similar to the proof of (4) below.
\end{proof}
\begin{proof}[Proof of (4)]
If \(\sup j_U[\delta]\) is not regular in \(M_U\), then for some \(\iota < \delta\), \[\text{cf}^{M_U}(\sup j_U[\delta]) < j_U(\iota)\] In other words \(U\) is \((\iota,\delta)\)-regular, and hence discontinuous at every regular \(\gamma\in [\iota,\delta]\). Hence every regular \(\gamma\in [\iota,\delta]\) carry countably complete uniform ultrafilters. So \(U\) is \({<}\delta\)-supercompact by \cref{LocalTheorem}. 
\end{proof}
\begin{proof}[Proof of (5)]
Suppose \(U\) is not \({<}\delta\)-supercompact. Then by (4), \(\sup j_U[\delta]\) is regular in \(M_U\). Therefore \(U\) concentrates on regular cardinals. Since \(U\) is weakly normal, \(U\) is closed under decreasing diagonal intersections and every set in \(U\) is stationary. 

Moreover \(U\) is closed under the Mahlo operation by a well-known argument. To see this it suffices to show that for any \(X\in U\), \(j_U(X)\) reflects to \(\sup j_U[\delta]\). We may assume that \(X\subseteq \text{Reg}\). Suppose \(C\subseteq \sup j_U[\delta]\) is club. Let \(\bar C = j_U^{-1}[C]\). Then since \(j_U\) is continuous at all sufficiently large regular cardinals, \(\text{lim}(\bar C)\cap \text{Reg}\subseteq \bar C\). Since \(X\) is stationary, there is some \(\gamma\in \text{lim}(\bar C)\cap X\). But \(\gamma\in \bar C\) since \(X\subseteq \text{Reg}\). Thus \(j_U(\gamma) \in C\cap j_U(X)\). Since \(C\) was arbitrary, it follows that \(j_U(X)\) reflects to \(\sup j_U[\delta]\), as desired.

It follows that \(\delta\) is \({<}\delta^+\)-Mahlo.
\end{proof}
\end{cor}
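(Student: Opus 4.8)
The plan is to derive all five clauses from \cref{LocalTheorem}, which delivers $\gamma$-supercompactness of $U$ as soon as one knows $\textnormal{Un}_\gamma \neq \emptyset$, bootstrapping to full $\delta$-supercompactness through \cref{ZeroTheorem} in the successor cases. The common pattern is: exhibit cofinally many $\gamma$ below the relevant cardinal with $\textnormal{Un}_\gamma \neq \emptyset$; apply \cref{LocalTheorem} to push the supercompactness of $U$ up to that cardinal; and, in clauses (1)--(3), note that $\textsc{crt}(U)$ then inherits enough supercompactness that Ketonen's characterization of strong compactness yields that $\textsc{crt}(U)$ is $\delta$-strongly compact, so that \cref{ZeroTheorem} applies --- the ``not strongly inaccessible'' hypothesis there being automatic since $\delta$ is a successor.

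For (1): since $U \in \textnormal{Un}_{\lambda^+}$ with $\lambda$ regular, \cref{SuccessorDecomposable} shows $U$ is $\lambda$-decomposable, hence $j_U$ is discontinuous at $\lambda$, and the ultrafilter derived from $U$ using $\sup j_U[\lambda]$ witnesses $\textnormal{Un}_\lambda \neq \emptyset$. Then \cref{LocalTheorem} with $\gamma = \lambda$ gives that $U$ is $\lambda$-supercompact, so $\textsc{crt}(U)$ is $\lambda$-supercompact, hence $\lambda$-strongly compact; together with the $\textsc{crt}(U)$-complete uniform ultrafilter $U$ on $\delta = \lambda^+$, every regular cardinal in $[\textsc{crt}(U),\delta]$ carries a uniform $\textsc{crt}(U)$-complete ultrafilter, so $\textsc{crt}(U)$ is $\delta$-strongly compact by Ketonen, and \cref{ZeroTheorem} finishes. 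For (2): cofinally many regular $\gamma < \lambda$ have $\textnormal{Un}_\gamma \neq \emptyset$ by hypothesis, so \cref{LocalTheorem} makes $U$ be $\gamma$-supercompact for all such $\gamma$ and hence, as $\lambda$ is a singular limit cardinal, $\lambda$-supercompact; now repeat the bootstrap of (1) verbatim.

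For (3) the aim is to reduce to (2) by showing $\lambda$ is a limit of regular $\gamma$ with $\textnormal{Un}_\gamma \neq \emptyset$. Since $\textnormal{cf}(\lambda) < \textsc{crt}(U)$, the embedding $j_U$ is continuous at $\lambda$; a theorem of Ketonen, extracted from the covering behaviour of $M_U$ recorded in \cref{CofinalityRegularity}, then forces $U$ to be discontinuous at cofinally many regular $\gamma < \lambda$, each of which carries the derived uniform ultrafilter, so (2) applies. For (4): if $\sup j_U[\delta]$ is singular in $M_U$, then since $U$ is $0$-order we have $[\textnormal{id}]_U = \sup j_U[\delta]$, whence $\textnormal{cf}^{M_U}(\sup j_U[\delta]) < j_U(\iota)$ for some $\iota < \delta$, i.e.\ $U$ is $(\iota,\delta)$-regular; \cref{KetonenTheorem} then gives a uniform countably complete ultrafilter on every regular cardinal in $[\iota,\delta]$, and since $\delta$ is weakly inaccessible these $\gamma$ are cofinal in $\delta$, so \cref{LocalTheorem} makes $U$ be $\gamma$-supercompact for all regular $\gamma \in [\iota,\delta)$, that is, ${<}\delta$-supercompact.

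Clause (5) is the contrapositive of a reflection argument. If $U$ fails to be ${<}\delta$-supercompact, then by (4) $\sup j_U[\delta]$ is regular in $M_U$, so $U$ concentrates on regular cardinals and, being weakly normal, every set in $U$ is stationary in $\delta$. Moreover $U$ is continuous at every sufficiently large regular $\gamma < \delta$: a discontinuity at such $\gamma$ would yield a uniform ultrafilter on $\gamma$, and \cref{LocalTheorem} would then force $U$ to be $\gamma$-supercompact cofinally often, contradicting the failure of ${<}\delta$-supercompactness past some $\gamma_0$. This continuity powers the standard argument that $U$ is closed under the Mahlo operation --- given $X \in U$ with $X \subseteq \textnormal{Reg}$ and an $M_U$-club $C$ of $\sup j_U[\delta]$, the preimage $j_U^{-1}[C]$ contains a club of $\delta$ and all its sufficiently large regular limit points, hence meets the stationary set $X$, and the $j_U$-image of such a point lies in $C \cap j_U(X)$ --- so $\delta$ is ${<}\delta^+$-Mahlo. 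I expect the two delicate points to be the Ketonen reduction in (3) --- pinning down precisely which cofinality computation turns $\textnormal{cf}(\lambda) < \textsc{crt}(U)$ into discontinuities cofinal in $\lambda$ --- and the bookkeeping in (5) that $M_U$-clubs of $\sup j_U[\delta]$ behave like clubs of $\delta$ in $V$ and that closure under the Mahlo operation genuinely upgrades to ${<}\delta^+$-Mahloness.
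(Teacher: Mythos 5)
Your proposal is correct and follows essentially the same route as the paper: \cref{SuccessorDecomposable} (or the given hypothesis) to get \(\textnormal{Un}_\lambda\neq\emptyset\), \cref{LocalTheorem} for supercompactness below \(\delta\), Ketonen's characterization plus \cref{ZeroTheorem} to bootstrap to \(\delta\)-supercompactness in the successor cases, the \((\iota,\delta)\)-regularity argument for (4), and the stationarity/Mahlo-operation reflection for (5). The extra details you supply (why \(j_U\) is continuous at all sufficiently large regular cardinals below \(\delta\) in (5), and the explicit reduction of (3) to (2) via continuity at \(\lambda\) and \cref{CofinalityRegularity}) are exactly the steps the paper leaves implicit, and the points you flag as delicate are handled just as tersely in the original.
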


We do not believe the previous corollary exhausts the supercompactness provable from UA alone. For example, if the least ultrafilter on an inaccessible \(\delta\) fails to be \({<}\delta\)-supercompact, the consequences are truly bizarre:

\begin{prp}[UA]
Suppose \(\delta\) is a strongly inaccessible cardinal such that the least ultrafilter \(U\) on \(\delta\) is not \({<}\delta\)-supercompact. Let \(\kappa = \textsc{crt}(U)\) and \(\delta_* = \sup j_U[\delta]\).
\begin{enumerate}[(1)]
\item \(j_U(\kappa) > \delta\).
\item \(\delta\) is \({<}\delta^+\)-Mahlo.
\item \(U\) is the unique countably complete ultrafilter on \(\delta\) extending the club filter.
\item \(\delta\) is not measurable.
\item For all sufficiently large singular strong limits \(\gamma < \delta\) of cofinality less than \(\kappa\), \(2^\gamma \geq \gamma^{+\kappa}\).
\item For all sufficiently large regular \(\gamma < \delta\), \(\textnormal{Refl}(S^\delta_\gamma)\).
\item For any countably complete \(M_U\)-ultrafilter \(W\) with \(\textsc{sp}(W) < \delta_*\), \(W\in M_U\).
\item In particular \(U\cap M_U\in M_U\).
\end{enumerate}
If \(\delta\) is the least such cardinal, then \(U\cap M_U\) is a normal ultrafilter in \(M_U\).
\end{prp}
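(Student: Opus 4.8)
The plan is to identify $W:=U\cap M_U$ as the least ultrafilter on $\delta$ of $M_U$, to use the minimality of $\delta$ together with the elementarity of $j_U$ to see that this ultrafilter is ${<}\delta$-supercompact in $M_U$, and finally to upgrade ${<}\delta$-supercompactness to normality.

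First I would set up the relevant factor embedding. By part (8), $W\in M_U$; form $M^{M_U}_W=\textnormal{Ult}(M_U,W)$ inside $M_U$ (so $M^{M_U}_W\subseteq M_U$) and define $k\colon M^{M_U}_W\to M_U$ by $k([f]^{M_U}_W)=j_U(f)([\textnormal{id}]_U)$. Since $W=U\cap M_U\subseteq U$, the map $k$ is well defined and elementary, $k\circ j^{M_U}_W=j_U\restriction M_U$, and $k([\textnormal{id}]^{M_U}_W)=[\textnormal{id}]_U=\delta_*$. Applying $k$ to $\sup j^{M_U}_W[\delta]\le [\textnormal{id}]^{M_U}_W$ and observing that $\sup k\bigl[j^{M_U}_W[\delta]\bigr]=\sup j_U[\delta]=\delta_*=k([\textnormal{id}]^{M_U}_W)$ forces $[\textnormal{id}]^{M_U}_W=\sup j^{M_U}_W[\delta]$, so $W$ is weakly normal in $M_U$; and if $[\textnormal{id}]^{M_U}_W$ carried a uniform countably complete ultrafilter $\mathcal Z$ in $M^{M_U}_W$, then $k(\mathcal Z)$ would be one on $\delta_*$ in $M_U$, contradicting that $U$ is $0$-order. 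Thus $W$ is $0$-order in $M_U$. Moreover $\delta$ is strongly inaccessible in $M_U$: it is regular in $M_U$ and $(2^\gamma)^{M_U}\le 2^\gamma<\delta$ for $\gamma<\delta$. Hence, since $M_U$ satisfies UA, \cref{ZeroUnique} shows that $W$ is the least ultrafilter on $\delta$ of $M_U$.

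Next I would exploit the minimality of $\delta$. By part (1), $\delta<j_U(\kappa)\le j_U(\delta)$, so $\delta<j_U(\delta)$. Applying $j_U$ to the statement ``$\delta$ is the least strongly inaccessible cardinal whose least ultrafilter is not ${<}\delta$-supercompact'' shows that $j_U(\delta)$ is the least such cardinal in $M_U$; as $\delta$ is a strongly inaccessible cardinal of $M_U$ lying strictly below $j_U(\delta)$ and carrying the least ultrafilter $W$, it follows that $W$ is ${<}\delta$-supercompact in $M_U$.

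It remains to prove, working inside $M_U$, that $W$ is normal on $\delta$; by the weak normality already shown this is exactly the assertion $\textsc{crt}(j^{M_U}_W)=\delta$, which I expect to be the hard part. The plan is a proof by contradiction. Assume $\kappa':=\textsc{crt}(j^{M_U}_W)<\delta$. Since $W$ is ${<}\delta$-supercompact, $P(\gamma)^{M_U}\subseteq M^{M_U}_W$ for all $\gamma<\delta$, so $M^{M_U}_W$ and $M_U$ share the same $H_\delta$ and $\delta$ is strongly inaccessible in $M^{M_U}_W$; combining this with Ketonen's characterization of strong compactness and the covering analysis of \cref{ZeroTheorem}, \cref{CofinalityRegularity}, \cref{KetonenTheorem} and \cref{CoverDichotomy} applied to $W$ inside $M_U$, one extracts a cardinal $\bar\kappa\in[\kappa',\delta)$ that is $\delta$-strongly compact in $M_U$ and is the least cardinal sent above $\delta$ by $j^{M_U}_W$, so that $\bar\kappa<\delta<j^{M_U}_W(\bar\kappa)$. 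The goal is then to show that $\delta$ is again ``bad'' — its least ultrafilter is not ${<}\delta$-supercompact — in $M^{M_U}_W$, and to contradict the minimality of $\delta$ a second time by running the argument of the previous paragraph with the elementary embedding $j^{M_U}_W\colon M_U\to M^{M_U}_W$ in place of $j_U$, using \cref{ZeroInternal} to keep track of which ultrafilters of the iterate are internal. The principal obstacle is this final reduction: ${<}\delta$-supercompactness of a $0$-order ultrafilter on a strongly inaccessible cardinal does not by itself force normality in an arbitrary model, so ruling out $\textsc{crt}(j^{M_U}_W)<\delta$ must re-use the minimality of $\delta$ rather than relying only on the general structure theory of least ultrafilters, and carrying out this descent without circularity is the delicate point.
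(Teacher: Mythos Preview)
The paper explicitly omits the proof of this proposition, so there is no ``paper's approach'' to compare against; I will simply assess your argument for the final claim on its own merits.

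Your Steps 1 and 2 are sound. The factor map \(k:M^{M_U}_W\to M_U\) is set up correctly, the weak normality calculation is right (using that \(k\) is order-preserving to rule out \(\sup j^{M_U}_W[\delta]<[\textnormal{id}]^{M_U}_W\)), the \(0\)-order verification via \(k\) is clean, and the appeal to elementarity and \(\delta<j_U(\delta)\) to get \({<}\delta\)-supercompactness of \(W\) in \(M_U\) is exactly the right move.

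Step 3, however, is not just delicate but genuinely incomplete as written. The difficulty is that the ``descent'' you outline does not terminate in a contradiction. You propose to show that \(\delta\) is again bad in \(M^{M_U}_W\) and then ``contradict the minimality of \(\delta\) a second time.'' But minimality of \(\delta\) is a \(V\)-statement; via \(j_U\) it transfers to the statement that \(j_U(\delta)\) is least bad in \(M_U\), and this is precisely what you already spent in Step 2 to conclude that \(\delta\) is \emph{not} bad in \(M_U\). Passing to \(M^{M_U}_W\) by \(j^{M_U}_W\) would tell you that \(j^{M_U}_W(j_U(\delta))\) is least bad in \(M^{M_U}_W\); it tells you nothing new about \(\delta\) there, and in any case you have no contradiction available: even if \(\delta\) \emph{were} bad in \(M^{M_U}_W\), that is perfectly compatible with \(\delta<j^{M_U}_W(j_U(\delta))\). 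So the plan as stated does not close.

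A more promising line for Step 3 is to exploit the factor embedding \(k\) directly rather than iterate the minimality argument. Once you know \(W\) is \({<}\delta\)-supercompact in \(M_U\), you have \(V_\delta^{M^{M_U}_W}=V_\delta^{M_U}\); if \(\kappa'<\delta\) you can analyze \(\textsc{crt}(k)\) (using \(k\circ j^{M_U}_W=j_U\restriction M_U\) and part (1)) and aim for a Kunen-type obstruction, or alternatively argue via \cref{ZeroTheorem} in \(M_U\) (your observation that \(\kappa'\) is \(\delta\)-strongly compact in \(M_U\) is correct and gives tight covering for \(W\)) together with the pathological-ultrafilter analysis. Either way, the argument you need here is structurally different from a repeat of Step 2.
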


We omit the proof, parts of which are similar to \cref{GCHEasy} and \cref{GCHFree} (5).

\begin{conj}
It is provable from \textnormal{ZFC + UA} that the least ultrafilter on a strongly inaccessible cardinal \(\delta\) is \({<}\delta\)-supercompact.
\end{conj}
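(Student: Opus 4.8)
The plan is a proof by contradiction built on the proposition stated just above. Suppose $\delta$ is strongly inaccessible and the least ultrafilter $U$ on $\delta$ is not ${<}\delta$-supercompact, and choose $\delta$ least with this property; write $\kappa=\textsc{crt}(U)$ and $\delta_*=\sup j_U[\delta]$. By \cref{GCHFree}(4), $\delta_*$ is regular in $M_U$, so all conclusions of the proposition hold: $j_U(\kappa)>\delta$, every countably complete $M_U$-ultrafilter of space ${<}\delta_*$ lies in $M_U$, $U\cap M_U\in M_U$, and---using the minimality of $\delta$---$U\cap M_U$ is a normal measure on $\delta$ in $M_U$, so $\delta$ is measurable in $M_U$. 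Two preliminary reductions frame the problem. First, \cref{LocalTheorem} already yields that $U$ is $\gamma$-supercompact for every regular $\gamma<\delta$ with $\textnormal{Un}_\gamma\neq\emptyset$ (singular cardinals largely reducing to regular ones), so the failure is localized at regular cardinals carrying no countably complete uniform ultrafilter. Second, since $\delta_*$ is regular in $M_U$ the ultrafilter $U$ is not $(\gamma,\delta)$-regular for any $\gamma<\delta$, so by \cref{KetonenTheorem} there are cofinally many regular $\gamma<\delta$ with $\textnormal{Un}_\gamma=\emptyset$; in particular $\kappa$ is not ${<}\delta$-strongly compact, and the easy route through Ketonen's characterization of strong compactness (used in \cref{ZeroTheorem} and \cref{GCHFree}) is closed off.

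The principal line of attack is to force the contradiction obtained at the start of the proof of \cref{ZeroTheorem}: if one shows $P(\delta)\subseteq M_U$, then $U\cap M_U=U\in M_U$, which is impossible. The engine is \cref{Strength} with $\lambda=\delta$, which needs an $M_U$-cardinal $\gamma$ with $\textnormal{cf}^{M_U}(\gamma)\geq\kappa$ and $(2^\gamma)^{M_U}\geq\delta$ such that $W\cap M_U\in M_U$ for all $W\in\textnormal{Un}_{\leq\gamma}$, together with the $\delta$-strong compactness of $\kappa$. The auxiliary ingredients look obtainable from the proposition: $P(\gamma)\subseteq M_U$ for cofinally many $\gamma<\delta$ by running \cref{Strength} ``below $\delta$''; internality of small $M_U$-ultrafilters by part (7); and the badly failing power function below $\delta$ (part (5)) to push some $(2^\gamma)^{M_U}$ up to $\delta$. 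What is genuinely missing---and is exactly why the statement is still a conjecture---is the strong compactness of $\kappa$, which we have just seen fails. The work is therefore to find a version of \cref{Strength} that avoids full strong compactness, using instead that the particular $\kappa$-complete filters produced by a Hausdorff independent family (\cref{IndependentFamily}) are realized by ultrafilters whose traces on $M_U$ already belong to $M_U$; UA should give leverage here, since in this regime the internal relation $\textnormal{Un}_{{<}\kappa}\I U$ tightly constrains which countably complete ultrafilters exist.

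I expect this to be the decisive obstacle, for a structural reason: each individual pathology of the proposition (Mahlo-ness of $\delta$, stationary reflection below $\delta$, failure of SCH, $U\cap M_U\in M_U$, even the measurability of $\delta$ in $M_U$) is consistent with ZFC in isolation, so the contradiction can only come from the global comparison structure that UA imposes, not from any one consequence. An alternative, more structural route---perhaps the one that really works---would exploit the rigidity of translation functions and \cref{mindef}: from the measurability of $\delta$ in $M_U$ one would try to show that $M_U$ itself witnesses a failure of ${<}\delta$-supercompactness, reflect this down using parts (2) and (6) to contradict the minimality of $\delta$, or else iterate the construction along the critical sequence of $U$ and contradict the wellfoundedness of the seed order. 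Making either the ``strong-compactness-free \cref{Strength}'' idea or the ``reflect the failure'' idea precise is the heart of the matter.
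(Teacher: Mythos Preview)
The statement you are attempting is labeled a \emph{Conjecture} in the paper; there is no proof given. The paper only records, in the proposition immediately preceding it, the list of pathological consequences that would follow from a counterexample (items (1)--(8) and the final clause about $U\cap M_U$ being normal in $M_U$), and then states the conjecture without argument. So there is nothing to compare your attempt against.

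Your write-up is not a proof either, and you say so yourself: you correctly isolate the obstruction, namely that the hypothesis of \cref{ZeroTheorem} (that $\kappa$ is $\delta$-strongly compact) is exactly what fails in the putative counterexample, so the \cref{Strength} mechanism cannot be invoked at $\lambda=\delta$. Your two suggested workarounds---a strong-compactness-free variant of \cref{Strength}, or a reflection/iteration argument exploiting the measurability of $\delta$ in $M_U$---are reasonable directions, but neither is carried out, and the paper gives no indication that either succeeds. In short, your proposal is an accurate diagnosis of why the problem is open rather than a solution; that matches the paper's own position.
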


In our first step toward \cref{LocalTheorem}, we prove a very simple weakening of \cref{LocalTheorem} that serves as a local version of \cref{KetonenTheorem}. 

\begin{lma}[UA]\label{DiscontinuityLemma}
Suppose \(U\) is the least ultrafilter on a regular cardinal \(\delta\). Let \(\kappa\) be the completeness of \(U\). Suppose \(\bar \delta\in [\kappa,\delta]\) is regular and \(\textnormal{Un}_{\bar \delta}\) is nonempty. Then \(U\) is discontinuous at \(\bar \delta\).
\begin{proof}
Assume towards a contradiction that \(\delta\) is the least cardinal at which the lemma fails. Let \(\bar \delta\in [\kappa,\delta]\) witness this. Of course \(\bar \delta \in (\kappa,\delta)\). 

Let \(\bar U\) be the least ultrafilter on \(\bar \delta\). Let \(\bar\kappa\) be the completeness of \(\bar U\). Since \(U\) is continuous at \(\bar \delta\), \(U\I \bar U\) by \cref{ZeroInternal}. Since \(\bar \delta < \delta\), \(\bar U\I U\) by \cref{ZeroInternal}. Thus by \cref{InternalCommutativity}, \(U\) and \(\bar U\) commute. It follows in particular that \(j_{\bar U}(\kappa) = \kappa\) and \(j_U(\bar \kappa) = \kappa\).

Since \(\bar \kappa < \delta\) is a strong limit cardinal fixed by \(U\), \(\bar \kappa < \kappa\) by \cref{Kunen}.

Now \(\bar U\) is the least ultrafilter on the regular cardinal \(\bar \delta\), but \(\bar U\) fixes the measurable cardinal \(\kappa \in [\bar \kappa,\bar \delta]\). This contradicts the minimality of \(\delta\).
\end{proof}
\end{lma}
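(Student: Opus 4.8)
\emph{Proof proposal.} I would argue by induction on $\delta$, so suppose the lemma holds for every regular cardinal below $\delta$ and, toward a contradiction, that the least ultrafilter $U$ on $\delta$ (with completeness $\kappa$) is continuous at some regular $\bar\delta\in[\kappa,\delta]$ with $\textnormal{Un}_{\bar\delta}\neq\emptyset$. First I would observe that $\bar\delta\in(\kappa,\delta)$: $U$ is discontinuous at $\kappa=\textsc{crt}(U)$ since $j_U\restriction\kappa$ is the identity, and discontinuous at $\delta$ since weak normality gives $\sup j_U[\delta]=[\text{id}]_U<j_U(\delta)$. Let $\bar U$ be the least ultrafilter on $\bar\delta$, which exists and is unique by \cref{ZeroUnique}, and let $\bar\kappa=\textsc{crt}(\bar U)$.

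The crux is to show $U$ and $\bar U$ commute. For $\bar U\I U$: by \cref{PushforwardLemma}, $s_U(\bar U)$ is an $M_U$-ultrafilter inducing the embedding $j_{\bar U}\restriction M_U$, and by \cref{ZeroInternal} this ultrafilter lies in $M_U$ iff $j_{\bar U}\restriction M_U$ is continuous at $\lambda^*:=\textnormal{cf}^{M_U}(\sup j_U[\delta])$. A short cofinality computation gives $\textnormal{cf}^V(\lambda^*)=\delta$, so any function from an ordinal of size $<\delta$ into $\lambda^*$ is bounded; since $\textsc{sp}(\bar U)=\bar\delta<\delta$, the embedding $j_{\bar U}$ is automatically continuous at $\lambda^*$. (The same argument shows $\textnormal{Un}_{<\delta}\I U$, which I will reuse.) For $U\I\bar U$: symmetrically, $s_{\bar U}(U)$ induces $j_U\restriction M_{\bar U}$, and by \cref{ZeroInternal} applied in $M_{\bar U}$, $s_{\bar U}(U)\in M_{\bar U}$ iff $j_U$ is continuous at $\mu^*:=\textnormal{cf}^{M_{\bar U}}(\sup j_{\bar U}[\bar\delta])$; a cofinality computation gives $\textnormal{cf}^V(\mu^*)=\bar\delta$, and since $U$ is assumed continuous at the regular cardinal $\bar\delta$, it is continuous at every ordinal of cofinality $\bar\delta$, in particular $\mu^*$. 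Now \cref{InternalCommutativity} yields $j^{M_U}_{j_U(\bar U)}=j_{\bar U}\restriction M_U$ and $j^{M_{\bar U}}_{j_{\bar U}(U)}=j_U\restriction M_{\bar U}$; comparing critical points on the two sides of each equation gives that $j_{\bar U}$ fixes $\kappa$ and $j_U$ fixes $\bar\kappa$.

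Next I would show $\bar\kappa<\kappa$. Being measurable, $\bar\kappa$ is a strong limit cardinal, and it is fixed by $j_U$; moreover $\textnormal{Un}_{<\bar\kappa}\I U$ by the computation above (as $\bar\kappa<\delta$). Hence \cref{Kunen} places $\bar\kappa$ strictly below the first fixed point of $j_U$ above $\textsc{crt}(U)=\kappa$; since the only fixed points of $j_U$ below that point are the ordinals less than $\kappa$, we get $\bar\kappa<\kappa$.

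Finally I would derive the contradiction. Now $\bar U$ is the least ultrafilter on the regular cardinal $\bar\delta<\delta$, so by the induction hypothesis it is discontinuous at every regular cardinal in $[\bar\kappa,\bar\delta]$ carrying a countably complete uniform ultrafilter. But $\kappa$ is such a cardinal: it is measurable and $\bar\kappa<\kappa\leq\bar\delta$. On the other hand $\kappa$ is a fixed point of $j_{\bar U}$, hence a continuity point of $j_{\bar U}$, so $\bar U$ is continuous at $\kappa$ --- contradiction. I expect the commutativity step to be the main obstacle: extracting both $\bar U\I U$ and $U\I\bar U$ from \cref{ZeroInternal} hinges on pinning down the cofinalities $\textnormal{cf}^{M}(\sup j[\delta])$ in the two ultrapowers, and it is precisely the continuity hypothesis on $U$ that powers the direction $U\I\bar U$.
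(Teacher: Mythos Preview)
Your proof is correct and follows essentially the same route as the paper's: set up a minimal counterexample, use \cref{ZeroInternal} in both directions to get $U\I\bar U$ and $\bar U\I U$, invoke \cref{InternalCommutativity} to fix the two critical points, apply \cref{Kunen} to force $\bar\kappa<\kappa$, and then contradict minimality via the measurable fixed point $\kappa\in[\bar\kappa,\bar\delta]$. You supply more detail than the paper does---the cofinality computations $\textnormal{cf}^V(\lambda^*)=\delta$ and $\textnormal{cf}^V(\mu^*)=\bar\delta$ that justify the two applications of \cref{ZeroInternal}, and the explicit check that $\textnormal{Un}_{<\bar\kappa}\I U$ needed for \cref{Kunen}---but the architecture is identical.
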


The second step toward \cref{LocalTheorem} is a result that looks like \cref{EasyRegularity} but is really much more complicated.

\begin{thm}[UA]\label{RegularCor}
Let \(U\) be the least ultrafilter on a regular cardinal \(\delta\). Let \(\kappa\) be its completeness. Suppose \(W\) is a countably complete ultrafilter with \(j_W(\kappa) > \kappa\) and \(U\I W\). Then \(\kappa\) is closed under ultrapowers and \(U\) and \(W\) are \((\kappa,\delta)\)-regular.
\end{thm}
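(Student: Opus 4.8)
The plan is to follow the proof of \cref{EasyRegularity}; the essential new difficulty, which is presumably why this is ``much more complicated,'' is that we are not handed the minimality of $\kappa$. So the argument has two parts. Part one shows that $\kappa$ is closed under ultrapowers, equivalently that $\kappa$ is the \emph{least} ordinal moved strictly above $\kappa$ by a countably complete ultrafilter. Part two then reruns the argument of \cref{EasyRegularity} with $\kappa$ in the role played there by $\bar\kappa$: the only features of $\bar\kappa$ used in that proof are that it is a strong limit closed under ultrapowers and that some internally related ultrafilter sends it past $\kappa$, and both are now available for $\kappa$ itself, the latter being precisely our hypothesis $j_W(\kappa) > \kappa$.

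For part one, let $\bar\kappa\le\kappa$ be least such that $j_Z(\bar\kappa) > \kappa$ for some $Z\in\textnormal{Un}$; this exists because $W$ witnesses it. Standard considerations — any such $Z$ is discontinuous at $\bar\kappa$, since $j_Z[\bar\kappa]$ is bounded in $\kappa+1$ while $j_Z(\bar\kappa) > \kappa$ — show that $\bar\kappa$ is a strong limit cardinal closed under ultrapowers, and in particular is measurable. The goal is $\bar\kappa = \kappa$. The key step is to produce an ultrafilter $Z_0$ with $U\I Z_0$ and $j_{Z_0}(\bar\kappa) > \kappa$: then \cref{EasyRegularity} applies directly to the pair $(U,Z_0)$ — whose ``$\bar\kappa$'' is exactly our $\bar\kappa$ — and yields both $\bar\kappa = \kappa$ and the $(\bar\kappa,\delta)$-regularity of $U$. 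To get $Z_0$: one checks that $\bar\kappa$ is mapped arbitrarily high (iterate a $Z_1$ with $j_{Z_1}(\bar\kappa) > \kappa$, noting this forces $j_{Z_1}(\kappa) > \kappa$ and hence $\textsc{crt}(Z_1) = \bar\kappa$; alternatively, if $\rho := \sup\{j_Z(\bar\kappa) : Z\in\textnormal{Un}\}$ is a set, then $j_U(\rho)=\rho$, exhibiting a fixed point of $j_U$ above $\kappa$), and then lets $Z_0$ be the $\swo$-least ultrafilter sending $\bar\kappa$ to at least some fixed point $\lambda$ of $j_U$ with $\kappa < \lambda$, at which point $U\I Z_0$ follows from \cref{FixedPointInternal}. (Alternatively, and perhaps more robustly, $\bar\kappa < \kappa$ can be refuted using the minimality of $\delta$, exactly as in \cref{DiscontinuityLemma}, by locating a $0$-order ultrafilter on a regular cardinal below $\delta$ whose completeness is moved above itself by an internally related ultrafilter.) Granting $\bar\kappa = \kappa$, the closure of $\kappa$ under ultrapowers is immediate: for $\alpha < \kappa$ and $Z\in\textnormal{Un}$ minimality gives $j_Z(\alpha)\le\kappa$, and if $j_Z(\alpha)=\kappa$ then $j_Z(\alpha^+)\ge(\kappa^+)^{M_Z} > \kappa$ with $\alpha^+<\kappa$, contradicting minimality; so $j_Z(\alpha)<\kappa$.

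For part two, knowing now that $\kappa$ is a strong limit closed under ultrapowers: since $U\I W$, \cref{InternalTranslations} gives $t_W(U) = s_W(U)$ and \cref{GeneralInternal} gives that $t_W(U)$ is the least ultrafilter of $M_W$ on $\delta_* := \sup j_W[\delta]$, so combining \cref{PushforwardLemma} with \cref{Trivia} applied inside $M_W$ yields $j_U\restriction M_W = j^{M_W}_{U_*}$, where $U_*$ is the least ultrafilter of $M_W$ on $\gamma_* := \textnormal{cf}^{M_W}(\delta_*)$ — an ultrafilter whose ultrapower embedding $j^{M_W}_{U_*}=j_U\restriction M_W$ has critical point $\kappa$. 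If $W$ failed to be $(\kappa,\delta)$-regular we would have $\gamma_*\ge j_W(\kappa)$; but in $M_W$, $j_W(\kappa)$ is a strong limit closed under ultrapowers by elementarity, and $\textnormal{Un}^{M_W}_{<j_W(\kappa)}\I U_*$ by \cref{ZeroInternal}, so \cref{ClosedCompleteness} applied in $M_W$ makes $U_*$ a $j_W(\kappa)$-complete ultrafilter — forcing $\textsc{crt}(j^{M_W}_{U_*})\ge j_W(\kappa) > \kappa$, a contradiction. Hence $W$ is $(\kappa,\delta)$-regular; together with part one and the $(\kappa,\delta)$-regularity of $U$ established there, this finishes the proof. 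The main obstacle is part one, the step $\bar\kappa = \kappa$: this is exactly the content absent from \cref{EasyRegularity}, and the delicate point is arranging an ultrafilter that is at once internally related to $U$ and moves $\bar\kappa$ high enough for \cref{EasyRegularity} to bite — a tension one resolves either through a careful choice of fixed point of $j_U$ together with \cref{FixedPointInternal}, or through the inductive hypothesis on $\delta$.
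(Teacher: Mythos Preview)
Your approach is correct and takes a genuinely different (and shorter) route than the paper's. The paper isolates the step ``$\kappa$ is closed under ultrapowers'' as \cref{HardRegularity} and proves it by a long argument: assuming $\bar\kappa<\kappa$, it replaces $W$ by the $\swo$-least ultrafilter with $j_W(\kappa)>\kappa$ and $U\I W$, establishes $\textnormal{Un}_{<\kappa}\I W$ and $j_W(\bar\kappa)<\kappa$, and then splits into two cases (according to whether $\kappa$ is a limit of regular cardinals carrying uniform ultrafilters), each of which is pushed to a Kunen-style contradiction via an inductive supercompactness argument. By contrast, you observe that one can manufacture a witness $Z_0$ to the hypothesis of \cref{EasyRegularity} directly: choose a fixed point $\lambda>\kappa$ of $j_U$, let $Z_0$ be $\swo$-least with $j_{Z_0}(\bar\kappa)\geq\lambda$, and invoke \cref{FixedPointInternal} to get $U\I Z_0$; then \cref{EasyRegularity} applied to $Z_0$ immediately gives $\bar\kappa=\kappa$. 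This is precisely the maneuver the paper uses in the \emph{global} corollary following \cref{EasyRegularity}, transplanted to the local setting --- and it works, because $\bar\kappa$ is mapped arbitrarily high (compose any $Z_1$ sending $\bar\kappa$ past $\kappa$ with iterates of a normal measure on the measurable $\kappa$) and fixed points of $j_U$ above $\kappa$ always exist. Your Part two is then literally a second invocation of \cref{EasyRegularity}, now with the original $W$, exactly as in the paper's deduction of \cref{RegularCor} from \cref{HardRegularity}.

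What each approach buys: yours is much shorter and, incidentally, shows that the hypothesis ``there exists $W$ with $j_W(\kappa)>\kappa$ and $U\I W$'' is not actually needed for the conclusion that $\kappa$ is closed under ultrapowers --- that holds for \emph{every} least ultrafilter on a regular cardinal. The paper's case analysis in \cref{HardRegularity}, while avoidable here, develops machinery (e.g., the inductive supercompactness climb in \cref{bigcase}) that resurfaces in later arguments.
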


To simplify the notation below, we note that to prove \cref{RegularCor}, it suffices to prove the following lemma:

\begin{lma}[UA]\label{HardRegularity}
Suppose \(U\) is the least ultrafilter on a regular cardinal \(\delta\). Let \(\kappa\) be its completeness. Suppose there is a countably complete ultrafilter \(W\) satisfying \(j_W(\kappa) > \kappa\) and \(U\I W\). Then \(\kappa\) is closed under ultrapowers.
\end{lma}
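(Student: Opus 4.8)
The plan is to reduce the lemma to the method of \cref{EasyRegularity}. Since $\kappa$ is measurable (it is $\textsc{crt}(j_U)$), the ordinal $\bar\kappa$ of \cref{EasyRegularity} is defined: $\bar\kappa\leq\kappa$ is least such that $j_Z(\bar\kappa)>\kappa$ for some $Z\in\textnormal{Un}$. It suffices to prove $\bar\kappa=\kappa$; this says that no ordinal below $\kappa$ is mapped above $\kappa$, hence that $\kappa$ is closed under ultrapowers, and together with the hypothesized $W$ it also yields the regularity clause of \cref{RegularCor} by \cref{EasyRegularity}. If $\bar\kappa=\kappa$ we are already done, since then $W$ itself witnesses the hypothesis of \cref{EasyRegularity}. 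So assume toward a contradiction that $\bar\kappa<\kappa$, and fix $Z_0\in\textnormal{Un}$ with $j_{Z_0}(\bar\kappa)>\kappa$.

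The next step is to extract the internal picture of $U$ inside $M_W$. Since $U\I W$, \cref{InternalTranslations} gives $t_W(U)=s_W(U)$; as $\delta$ is a limit ordinal, $\delta_*:=\sup j_W[\delta]$ has no largest element, so $s_W(U)$ is nonprincipal, and \cref{GeneralInternal} then forces $t_W(U)=s_W(U)$ to be the least ultrafilter of $M_W$ on $\delta_*$. Applying \cref{Trivia} inside $M_W$ and \cref{PushforwardLemma}, this yields $j_U\restriction M_W=j^{M_W}_{U_*}$, where $U_*$ is the least ultrafilter of $M_W$ on the $M_W$-regular cardinal $\lambda_*:=\textnormal{cf}^{M_W}(\delta_*)$; moreover $\textsc{crt}(j^{M_W}_{U_*})=\textsc{crt}(j_U)=\kappa$, so inside $M_W$ the ultrafilter $U_*$ is a least ultrafilter of completeness $\kappa$, and $j_W(\bar\kappa)$ is, by elementarity, a strong limit cardinal closed under ultrapowers in $M_W$ with $\kappa<j_W(\bar\kappa)$.

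The crux is to manufacture, out of $W$ and $Z_0$, an ultrafilter $W'$ with $j_{W'}(\bar\kappa)>\kappa$ and $U\I W'$. Granting such a $W'$, I would finish exactly as in \cref{EasyRegularity}, with $W'$ in place of $W$: running the previous paragraph for $W'$ gives $j_U\restriction M_{W'}=j^{M_{W'}}_{U_*'}$ for a least ultrafilter $U_*'$ of $M_{W'}$ of completeness $\kappa$; if $W'$ were not $(\bar\kappa,\delta)$-regular then $\textnormal{cf}^{M_{W'}}(\sup j_{W'}[\delta])\geq j_{W'}(\bar\kappa)$, and since $j_{W'}(\bar\kappa)$ is a strong limit closed under ultrapowers in $M_{W'}$ with $\textnormal{Un}^{M_{W'}}_{<j_{W'}(\bar\kappa)}\I U_*'$ by \cref{ZeroInternal}, \cref{ClosedCompleteness} would make $U_*'$ be $j_{W'}(\bar\kappa)$-complete, contradicting $\textsc{crt}(j^{M_{W'}}_{U_*'})=\kappa<j_{W'}(\bar\kappa)$. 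Hence $W'$ is $(\bar\kappa,\delta)$-regular, so every regular cardinal in $[\bar\kappa,\delta]$ carries a uniform countably complete ultrafilter, so by \cref{KetonenTheorem} the $0$-order ultrafilter $U$ is itself $(\bar\kappa,\delta)$-regular; but then the completeness $\kappa$ of $U$ satisfies $\kappa\leq\bar\kappa$, contradicting $\bar\kappa<\kappa$.

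The main obstacle — and the reason the lemma is ``much more complicated'' than \cref{EasyRegularity} — is precisely the construction of $W'$. The obvious candidates, such as a natural sum $Z_0\oplus t_{Z_0}(W)$, or the $\swo$-least ultrafilter sending $\bar\kappa$ beyond a fixed point of $j_U$ above $\kappa$ (whose internality to $U$ one would hope to get from \cref{FixedPointInternal}), do move $\bar\kappa$ past $\kappa$, but there is no reason they should be internal to $U$: internality does not in general descend along the division order. I expect the correct route is instead a minimal-counterexample argument on $\delta$, in the style of \cref{DiscontinuityLemma}: take the least regular $\delta$ for which the lemma fails and use the internal picture in $M_W$ — which presents a least ultrafilter $U_*$ on a regular cardinal whose completeness $\kappa$ is moved strictly above itself — to locate, via \cref{InternalCommutativity} and \cref{Kunen}, a configuration of the same kind occurring below $\delta$ inside $M_W$, contradicting minimality. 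Keeping $\lambda_*=\textnormal{cf}^{M_W}(\delta_*)$ and the internal relation under control through this descent is the technical heart of the argument.
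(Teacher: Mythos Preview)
Your proposal has a genuine gap: you correctly reduce the problem to producing some $W'$ with $j_{W'}(\bar\kappa)>\kappa$ and $U\I W'$, but then you never construct such a $W'$. The last paragraph is speculation, not argument; the candidates you consider are rightly dismissed, and the minimal-$\delta$ descent you sketch is not carried out and does not match what the paper actually does.

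The paper's route is quite different from what you anticipate. Rather than seeking a $W'$ that pushes $\bar\kappa$ above $\kappa$, the paper replaces $W$ by the $\swo$-\emph{least} ultrafilter with $j_W(\kappa)>\kappa$ and $U\I W$, and proves the opposite: $j_W(\bar\kappa)<\kappa$. This uses two preliminary claims. First, $\textnormal{Un}_{<\kappa}\I W$: for $Z\in\textnormal{Un}_{<\kappa}$ one has $U\I Z$ by \cref{CommutingUltrapowers}, hence $U\I Z\vee W$ by \cref{CanonicalInternal}, and then the $\swo$-minimality of $W$ forces $t_Z(W)=j_Z(W)$, so $Z\I W$. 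Second, $j_W(\bar\kappa)<\kappa$: equality to $\kappa$ is ruled out because $\textnormal{Un}_{<\kappa}\I W$ lets one reflect ``$\bar\kappa$ is mapped arbitrarily high below $\kappa$'' down below $\bar\kappa$, and $j_W(\bar\kappa)>\kappa$ is ruled out by \cref{EasyRegularity} itself.

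The contradiction then comes from a case split on whether $\kappa$ is a limit of regular cardinals carrying uniform countably complete ultrafilters. If not, one shows $\kappa=\delta$ and $W$ is discontinuous at $\kappa$; then inside $M_W$ one finds a fixed point $\xi\in(\kappa,j_W(\kappa))$ of $t_W(U)$, takes $Z$ to be the $\swo^{M_W}$-least ultrafilter sending $j_W(\bar\kappa)$ above $\xi$, gets $t_W(U)\I^{M_W}Z$ from \cref{FixedPointInternal}, and applies \cref{EasyRegularity} in $M_W$ to force $j_W(\bar\kappa)=\textsc{crt}(t_W(U))=\kappa$, contradicting the key claim. If $\kappa$ \emph{is} such a limit, the argument is an inductive bootstrap: setting $\iota_0=\kappa$ and $\iota_{n+1}=j_W(\iota_n)$, one proves by induction that $U$, $\bar\kappa$, and $W$ are all ${<}\iota_n$-supercompact, using \cref{SupercompactTransfer} and the internal relations granted by \cref{ZeroInternal}. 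At $\lambda=\sup_n\iota_n$ one obtains $W$ $\lambda$-supercompact with $j_W(\lambda)=\lambda$ and $\textsc{crt}(W)<\lambda$, contradicting Kunen. This supercompactness induction is the technical heart you did not reach, and it is not a descent on $\delta$.
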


\begin{proof}[Proof of \cref{RegularCor} given \cref{HardRegularity}]
By \cref{HardRegularity}, \(\kappa\) is closed under ultrapowers. Therefore \(\kappa\) itself is the least ordinal \(\bar \kappa\) such that \(j_Z(\bar \kappa) > \kappa\) for some \(Z\in \textnormal{Un}\). Now an application of \cref{EasyRegularity} implies the theorem.
\end{proof}

\begin{proof}[Proof of \cref{HardRegularity}]
Assume towards a contradiction that \(\bar \kappa < \kappa\) is the least ordinal such that for some \(Z\), \(j_Z(\bar \kappa) > \kappa\). Since \(\kappa\) is measurable, \(\bar \kappa\) is also the least ordinal mapped arbitrarily high below \(\kappa\).

Denote by \(W\) the \(\swo\)-least uniform countably complete ultrafilter such that \(j_W(\kappa) > \kappa\) and \(U\I W\).

\begin{clm}\label{InternalClm}
\(\textnormal{Un}_{{<}\kappa}\I W\).
\begin{proof}[Proof of \cref{InternalClm}]
Fix \(Z\in \textnormal{Un}_{{<}\kappa}\). By Kunen's \cref{CommutingUltrapowers}, \(j_Z(j_U) = j_U\restriction M_Z\) and in particular \(U\I Z\). Since \(U \I Z\) and \(U\I W\), we have \(U\I Z\vee W\) by \cref{CanonicalInternal}. This implies \(s_Z(U) \I^{M_Z} t_Z(W)\); that is, \[j_Z(U)\I^{M_Z} t_Z(W)\] Moreover \(j^{M_Z}_{t_Z(W)}(j_Z(\kappa)) = j^{M_W}_{t_W(Z)}(j_W(\kappa)) > \kappa = j_Z(\kappa)\), or more briefly: \[j^{M_Z}_{t_Z(W)}(j_Z(\kappa)) > j_Z(\kappa)\] Therefore in \(M_Z\), \(t_Z(W)\) satisfies the conditions for which \(W\) was minimized with their parameters moved by \(j_Z\). Hence \(j_Z(W)\wo^{M_Z} t_Z(W)\). By the definition of translation functions, \(t_Z(W)\wo^{M_Z} j_Z(W)\), so \(t_Z(W) = j_Z(W)\). By \cref{jInternal}, \(Z\I W\).
\end{proof}
\end{clm}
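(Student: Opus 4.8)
The plan is to reduce the claim to a single translation computation. Fix an arbitrary $Z\in\textnormal{Un}_{{<}\kappa}$; I want to show $Z\I W$, and by \cref{jInternal} it is enough to prove $t_Z(W)=j_Z(W)$. Since $j_Z(W)$ is one of the $M_Z$-ultrafilters $W_*$ with $Z^-(W_*)=W$, and $t_Z(W)$ is by definition the $\swo^{M_Z}$-least such, the inequality $t_Z(W)\wo^{M_Z}j_Z(W)$ is automatic. So the whole problem is the reverse inequality $j_Z(W)\wo^{M_Z}t_Z(W)$, which I would obtain by checking that $t_Z(W)$, computed inside $M_Z$, satisfies the $j_Z$-images of the two properties for which $W$ was minimized; minimality of $W$ together with elementarity of $j_Z$ then finishes it.

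The steps are as follows. First, since $\textsc{sp}(Z)<\kappa=\textsc{crt}(U)$, Kunen's \cref{CommutingUltrapowers} applies to the pair $(Z,U)$, so $U$ and $Z$ commute; in particular $U\I Z$ and $Z\I U$, whence $s_Z(U)=j_Z(U)$ by \cref{InternalCommutativity}. Second, from $U\I Z$ together with $U\I W$ (which is built into the definition of $W$), deduce $U\I Z\vee W$ by \cref{CanonicalInternal}, and transport this statement into $M_Z$ using $M_{Z\vee W}=M^{M_Z}_{t_Z(W)}$: it becomes $s_Z(U)\I^{M_Z}t_Z(W)$, that is, $j_Z(U)\I^{M_Z}t_Z(W)$ by the first step. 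Third, the commutativity half of $Z\vee W=Z\oplus t_Z(W)=W\oplus t_W(Z)$ from \cref{Reciprocity} gives $j^{M_Z}_{t_Z(W)}\circ j_Z=j^{M_W}_{t_W(Z)}\circ j_W$, and applying both sides to $\kappa$ yields
\[
j^{M_Z}_{t_Z(W)}(j_Z(\kappa))=j^{M_W}_{t_W(Z)}(j_W(\kappa))\geq j_W(\kappa)>\kappa=j_Z(\kappa),
\]
so $t_Z(W)$ moves $j_Z(\kappa)$ strictly upward. Hence, working inside $M_Z$, the ultrafilter $t_Z(W)$ witnesses the conditions defining $W$ with their parameters moved by $j_Z$ --- it moves $j_Z(\kappa)$ up and has $j_Z(U)$ internal to it --- so minimality of $W$ gives $j_Z(W)\wo^{M_Z}t_Z(W)$. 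Combined with the automatic inequality, $t_Z(W)=j_Z(W)$, and therefore $Z\I W$ by \cref{jInternal}.

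The step I expect to be the real obstacle is the second one: making precise the passage from ``$U\I Z\vee W$'' to the internal statement ``$s_Z(U)\I^{M_Z}t_Z(W)$'', together with the identity $s_Z(U)=j_Z(U)$. This needs care about which embeddings implement $\I$, about how $s_Z$, $t_Z$ and the natural sum $\oplus$ interact (an analogue for the internal relation of \cref{DivisionAbsoluteness}), and about the absoluteness of $\I$ between $V$ and $M_Z$. Once that bookkeeping is in place, the cardinal computation of the third step and the appeal to the minimality of $W$ are routine.
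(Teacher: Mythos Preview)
Your proposal is correct and follows essentially the same approach as the paper: reduce to showing $t_Z(W)=j_Z(W)$, use \cref{CommutingUltrapowers} and \cref{CanonicalInternal} to get $j_Z(U)\I^{M_Z}t_Z(W)$, use the Reciprocity identity to show $t_Z(W)$ moves $j_Z(\kappa)$, and then invoke the $\swo$-minimality of $W$ via elementarity. Your extra invocation of \cref{InternalCommutativity} to pin down $s_Z(U)=j_Z(U)$ is a clean way to handle what the paper leaves implicit, and your flagged ``obstacle'' (transporting $U\I Z\vee W$ into $M_Z$) is indeed the only step requiring care, but your outline of it is right.
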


\begin{clm} \(j_W(\bar \kappa) < \kappa\).\label{KeyClm}\end{clm}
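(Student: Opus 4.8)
The plan is to argue that \(W\) is in fact \(\kappa\)-complete. Granting this, \(\textsc{crt}(W)\geq\kappa\); but \(j_W(\kappa)>\kappa\) shows \(\kappa\) is moved, so \(\textsc{crt}(W)\leq\kappa\), whence \(\textsc{crt}(W)=\kappa\). Since \(\bar\kappa<\kappa\), this immediately gives \(j_W(\bar\kappa)=\bar\kappa<\kappa\), which is the claim.

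The key step is therefore to apply \cref{ClosedCompleteness} with \(\kappa\) (the completeness of \(U\)) in the role of the strong limit cardinal and \(W\) in the role of the ultrafilter. The input \(\textnormal{Un}_{<\kappa}\I W\) is exactly \cref{InternalClm}, so all that needs checking is that \(\kappa\) is a strong limit cardinal that is closed under ultrapowers. Since \(\kappa\) is the completeness of a nonprincipal countably complete ultrafilter, it is \(\textsc{crt}(j_U)\) and hence measurable, so it is inaccessible and in particular a strong limit. Closure under ultrapowers is the routine computation that for any \(Z\in\textnormal{Un}\) with \(\textsc{sp}(Z)<\kappa\), every function from \(\textsc{sp}(Z)\) into \(\kappa\) is bounded below \(\kappa\) (as \(\kappa\) is regular above \(\textsc{sp}(Z)\)) and \(j_Z(\beta)<\kappa\) for every \(\beta<\kappa\) (as \(\kappa\) is a strong limit), so \(j_Z(\kappa)=\sup_{\beta<\kappa}j_Z(\beta)=\kappa\). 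Thus \cref{ClosedCompleteness} applies and yields that \(W\) is \(\kappa\)-complete.

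I do not expect a real obstacle in this step: the substantive work has already been packaged into \cref{InternalClm} and \cref{ClosedCompleteness}, and \cref{KeyClm} is the short bridging observation that identifies \(\textsc{crt}(W)=\kappa\) and so makes the critical sequence of \(W\) — and in particular the fact that \(W\) fixes \(\bar\kappa\) — available for the contradiction to be drawn in the remainder of the proof of \cref{HardRegularity}. (If one wanted to avoid invoking closure under ultrapowers of \(\kappa\) directly, the same conclusion can be reached by first showing \(\textsc{crt}(W)\leq\bar\kappa\) is impossible unless \(\textsc{crt}(W)=\bar\kappa\) — via \cref{ClosedCompleteness} applied at \(\bar\kappa\) together with \cref{InternalClm} — and then ruling out \(\textsc{crt}(W)=\bar\kappa\); but the direct argument above is cleaner.)
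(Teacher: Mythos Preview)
Your argument has a genuine gap: you have misread the hypothesis of \cref{ClosedCompleteness}. In the paper, ``\(\kappa\) is closed under ultrapowers'' means that \(j_Z[\kappa]\subseteq\kappa\) for \emph{every} countably complete ultrafilter \(Z\), not merely for those with \(\textsc{sp}(Z)<\kappa\). (This is what makes \cref{ClosedCompleteness} follow from \cref{Kunen}: one needs the critical sequence of the ultrafilter in question to stay below \(\kappa\), and that ultrafilter need not lie in \(\textnormal{Un}_{<\kappa}\).) The weak closure you verify---that a measurable cardinal is fixed by ultrapowers on smaller spaces---is always true and does not suffice.

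More to the point, under the standing contradiction hypothesis of \cref{HardRegularity} we have \(\bar\kappa<\kappa\) and there exists \(Z\) with \(j_Z(\bar\kappa)>\kappa\); so \(\kappa\) is explicitly \emph{not} closed under ultrapowers in the required sense. Proving that \(\kappa\) is closed under ultrapowers is precisely the conclusion of \cref{HardRegularity}, so invoking it here is circular. Consequently there is no reason to expect \(W\) to be \(\kappa\)-complete; later in the proof the paper only obtains \(\textsc{crt}(W)\geq\bar\kappa\) (by applying \cref{ClosedCompleteness} at \(\bar\kappa\), which \emph{is} closed under ultrapowers by its minimality). Your stronger conclusion \(j_W(\bar\kappa)=\bar\kappa\) does not follow.

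The paper's actual proof of \cref{KeyClm} proceeds in two steps: first, \(j_W(\bar\kappa)>\kappa\) is ruled out by invoking \cref{EasyRegularity} (which would force \(\bar\kappa=\kappa\)); second, \(j_W(\bar\kappa)=\kappa\) is ruled out by a reflection argument---using \cref{InternalClm} and \cref{exponential} one shows that \(\bar\kappa\) is mapped arbitrarily high below \(\kappa\) by ultrapowers of \(M_W\), and then elementarity would produce some \(\alpha<\bar\kappa\) mapped arbitrarily high below \(\bar\kappa\) in \(V\), contradicting the minimality of \(\bar\kappa\).
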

\begin{proof}[Proof of \cref{KeyClm}]
If \(j_W(\bar \kappa) > \kappa\) then \(\bar \kappa= \kappa\) by \cref{EasyRegularity}, a contradiction. Thus \(j_W(\bar \kappa) \leq \kappa\), and we must show the inequality is strict. 

By \cref{InternalClm}, \(\textnormal{Un}_{{<}\kappa}\I W\). Using \cref{exponential}, it follows that \(\bar \kappa\) is mapped arbitrarily high below \(\kappa\) by ultrapowers in \(M_W\). If \(j_W(\bar \kappa) = \kappa\), then by elementarity, in \(V\) some ordinal \(\alpha < \bar \kappa\) is mapped arbitrarily high below \(\bar \kappa\). But \(\alpha\) is then mapped arbitrarily high below \(\kappa\), contradicting the minimality of \(\bar \kappa\).
\end{proof}

We now break into two cases, based on whether or not \(\kappa\) is a limit of regular cardinals carrying uniform countably complete ultrafilters.

\begin{case} \(\kappa\) is {\it not} a limit of regular cardinals that carry uniform countably complete ultrafilters. \label{smallcase}\end{case}
\begin{proof}[Proof in \cref{smallcase}]
We claim \(\kappa = \delta\). Otherwise the normal ultrafilter on \(\kappa\) derived from \(U\) belongs to \(M_U\) by \cref{ZeroInternal} and so \(\kappa\) is easily a limit of measurable cardinals and much more.

We can also show in this case that \(W\) is discontinuous at \(\kappa\): otherwise since \(\delta = \kappa\), \(W\I U\), so \(W\) and \(U\) commute by \cref{InternalCommutativity}. This means \(j_W(j_U) = j_U\restriction M_W\), which contradicts that \(j_W(\textsc{crt}(U)) \neq \textsc{crt}(U)\). 

Since \(U\I W\), \(t_W(U) = s_W(U)\) by \cref{InternalTranslations}. Therefore \(\textsc{sp}(t_W(U)) = \sup j_W[\kappa]\). Since \(W\) is discontinuous at \(\kappa\), we conclude \(\textsc{sp}(t_W(U)) < j_W(\kappa)\). Since \(j_W(\kappa)\) is inaccessible, we can therefore find a fixed point \(\xi\) of \(t_W(U)\) above \(\kappa\) and below \(j_W(\kappa)\). 

Let \(Z\) be \(\swo^{M_W}\)-least such that \(j^{M_W}_Z(j_W(\bar \kappa)) > \xi\), which exists since in \(M_W\), \(j_W(\bar \kappa)\) is mapped arbitrarily high below \(j_W(\kappa)\) by elementarity. By \cref{FixedPointInternal}, we have \(t_W(U)\I Z\). 

Since \(j_W(\bar\kappa)\) is least mapped above \(j_W(\kappa)\) in \(M_W\), \(j_W(\bar \kappa)\) is closed under ultrapowers in \(M_W\), and hence is least mapped above \(\kappa\) in \(M_W\). Thus \(Z\) witnesses the hypotheses of \cref{EasyRegularity} in \(M_W\). It follows that \(j_W(\bar \kappa) = \textsc{crt}(t_W(U))\). But \(\textsc{crt}(t_W(U)) = \textsc{crt}(U) = \kappa\). This contradicts \cref{KeyClm}.
\end{proof}

\begin{case} \(\kappa\) is a limit of regular cardinals that carry uniform countably complete ultrafilters. \label{bigcase}\end{case}
\begin{proof}[Proof in \cref{bigcase}]
Let \(\delta' = \text{cf}^{M_W}(\sup j_W[\delta])\). Assume towards a contradiction that \(j_W(\bar \kappa)\) is \(\delta'\)-strongly compact in \(M_W\). Then by \cref{GeneralInternal} and \cref{KetonenTheorem}, \(t_W(U)\) has critical point less than or equal to \(j_W(\bar \kappa)\). But \(j_W(\bar \kappa) < \kappa\) while \(\textsc{crt}(t_W(U)) = \textsc{crt}(U) = \kappa\), a contradiction. Therefore our assumption was false, so \(j_W(\bar\kappa)\) is not \(\delta'\)-strongly compact in \(M_W\).

Let \(\iota_0 = \kappa\) and for each \(n < \omega\), let \(\iota_{n+1} = j_W(\iota_n)\). 

\begin{clm} For all \(n < \omega\):
\begin{enumerate}[(1)]
\item  \(U\) is \({<}\iota_n\)-supercompact.
\item \(\bar \kappa\) is \({<}\iota_n\)-supercompact.
\item \(W\) is \({<}\iota_n\)-supercompact.
\item \(\iota_n\) is strongly inaccessible.
\end{enumerate}
\label{induction}
\end{clm}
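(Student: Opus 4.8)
The plan is to prove statements (1)--(4) together by induction on $n$.

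For the base case $n=0$ (so $\iota_0=\kappa$): (4) holds since $\kappa$, being the completeness of the nonprincipal ultrafilter $U$, is measurable, hence strongly inaccessible; (1) is trivial because $\textsc{crt}(U)=\kappa$, so $j_U$ fixes everything below $\kappa$; (2) follows from the fact that $\bar\kappa$ is the least ordinal mapped arbitrarily high below $\kappa$ --- the local form of the argument behind \cref{GlobalLeastSupercompact} gives that such an ordinal is ${<}\kappa$-supercompact (concretely, for $\gamma<\kappa$ the $\swo$-least ultrafilter sending $\bar\kappa$ past $\gamma$ is $\bar\kappa$-complete by \cref{FixedPointInternal} and \cref{ClosedCompleteness} and witnesses $\gamma$-supercompactness of $\bar\kappa$); and (3) then follows from (2), since for $\gamma<\kappa$ a $\gamma$-supercompactness ultrafilter $A$ on $\bar\kappa$ has $\textsc{sp}(A)<\kappa$ (as $\kappa$ is a strong limit), so $A\I W$ by \cref{InternalClm}, and \cref{SupercompactTransfer}, applied with $A$ in the role of $U$ there and using that $W$ is ${<}\bar\kappa$-supercompact, transfers $\gamma$-supercompactness from $A$ to $W$.

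For the inductive step, assume (1)--(4) at $n$ and set $\iota_{n+1}=j_W(\iota_n)$. By (3) at $n$, $M_W$ is closed under ${<}\iota_n$-sequences and under $\omega$-sequences, hence correct about strong inaccessibility and about $\bar\kappa$-completeness throughout the range below $\iota_n$; also $j_W(\bar\kappa)<\kappa\le\iota_n$ by \cref{KeyClm}, and inside $M_W$ the ordinal $j_W(\bar\kappa)$ is again the least ordinal mapped arbitrarily high below $j_W(\kappa)=\iota_1$. I would prove the four statements at $n+1$ in the order (2), (1), (3), (4). For (2): apply $j_W$ to (2) at $n$ --- a statement about the ordinal $\bar\kappa$, to which $j_W$ applies --- to get that $j_W(\bar\kappa)$ is ${<}\iota_{n+1}$-supercompact in $M_W$, and reflect this to $V$ using that $j_W(\bar\kappa)<\iota_n$ and $M_W$ is ${<}\iota_n$-closed. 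For (1): fix $\lambda<\iota_{n+1}$, use (2) at $n+1$ to obtain a $\lambda$-supercompactness ultrafilter $A$ on $\bar\kappa$ with $A\I U$ (immediate from \cref{CommutingUltrapowers} when $\lambda<\kappa$, and otherwise from the UA-structure of supercompactness witnesses together with \cref{InternalTranslations}), and then apply \cref{BasicPropagation} to $A\I U$ --- $U$ is trivially ${<}\bar\kappa$-supercompact and $j_A(\bar\kappa)>\lambda$ --- to get $\textnormal{Ord}^\lambda\cap M_A\subseteq M_U$, which gives $\textnormal{Ord}^\lambda\subseteq M_U$ since $M_A$ is $\lambda$-closed, i.e.\ $U$ is $\lambda$-supercompact. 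For (3): since $U\I W$, $\textsc{crt}(U)=\kappa$, and $W$ is ${<}\kappa$-supercompact by (3) at $n=0$, \cref{SupercompactTransfer} propagates $\lambda$-supercompactness from $U$ to $W$ for every $\lambda<\iota_{n+1}$. Finally (4): $M_W$ is now ${<}\iota_{n+1}$-closed, and $\iota_{n+1}=j_W(\iota_n)$ is strongly inaccessible in $M_W$ by elementarity from (4) at $n$.

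The hard part is transporting supercompactness of $U$ and $\bar\kappa$ \emph{past} $\iota_n$ up to $\iota_{n+1}=j_W(\iota_n)$: the inductive hypotheses only reach $\iota_n$, so one must use $j_W$ for $\bar\kappa$ and the internal-relation transfer lemmas for $U$, and the genuinely delicate point is that $U\notin M_W$, which forces the argument about $U$ to be routed through $s_W(U)\in M_W$ via \cref{InternalTranslations} and \cref{PushforwardLemma}. The two subsidiary things that require care are: (i) producing, for arbitrary $\lambda<\iota_{n+1}$, a $\lambda$-supercompactness ultrafilter on $\bar\kappa$ that is internal to $U$ --- clear via \cref{CommutingUltrapowers} when $\lambda<\kappa$ but in general a UA-style argument; and (ii) the absoluteness of ``$\theta$-supercompact'' between $M_W$ and $V$, which works precisely because $M_W$ is ${<}\iota_n$-closed and $\omega$-closed. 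A minor separate point is that the base case for (3) uses that $W$ is ${<}\bar\kappa$-supercompact, obtained by a short nested induction down the ordinals mapped arbitrarily high below $\kappa$.
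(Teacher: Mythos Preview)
Your inductive step has a genuine gap in the order $(2)\Rightarrow(1)$. For (2) you apply $j_W$ to the inductive hypothesis, obtaining that $j_W(\bar\kappa)$ is ${<}\iota_{n+1}$-supercompact \emph{in $M_W$}, and then you try to ``reflect to $V$'' using that $M_W$ is ${<}\iota_n$-closed. But the witnesses to $\lambda$-supercompactness for $\lambda$ close to $\iota_{n+1}$ are ultrafilters on $P_{j_W(\bar\kappa)}(\lambda)$, and for such an $M_W$-ultrafilter to be an ultrafilter in $V$ you would need $P(P_{j_W(\bar\kappa)}(\lambda))\subseteq M_W$, which requires closure far beyond ${<}\iota_n$. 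So (2) at $n+1$ is not established this way. Your step (1) then inherits this problem, and has an additional one: even granting (2), you need a $\lambda$-supercompactness witness $A$ for $\bar\kappa$ with $A\I U$ for $\lambda\geq\kappa$, and the appeal to ``UA-structure together with \cref{InternalTranslations}'' is not an argument --- \cref{ZeroInternal} would require $A$ to be continuous at $\text{cf}^{M_U}(\sup j_U[\delta])$, and nothing in your setup controls this.

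The paper proceeds in the opposite order and, crucially, works inside $M_W$ throughout. The point you are missing is that the paragraph preceding the claim establishes that $j_W(\bar\kappa)$ is \emph{not} $\delta'$-strongly compact in $M_W$, and hence $\iota_{n+1}<\delta'$. This is what makes the transfer go: for any $M_W$-regular $\gamma<\iota_{n+1}$, a $\gamma$-supercompactness witness $Z\in\textnormal{Un}^{M_W}_\gamma$ for $j_W(\bar\kappa)$ satisfies $Z\I^{M_W} t_W(U)$ by \cref{ZeroInternal} applied in $M_W$ (since $t_W(U)$ is the least ultrafilter on $\delta'$ there and $\gamma<\delta'$). \cref{SupercompactTransfer} in $M_W$ then makes $t_W(U)$ itself ${<}\iota_{n+1}$-supercompact in $M_W$, and since $t_W(U)=s_W(U)$ one has $j^{M_W}_{t_W(U)}\restriction\text{Ord}=j_U\restriction\text{Ord}$, so $j_U\restriction\xi\in M^{M_W}_{t_W(U)}\subseteq M_U$ for $\xi<\iota_{n+1}$. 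This gives (1); then (2) follows from (1) together with the base-case fact that $\bar\kappa$ is ${<}\kappa$-supercompact (reflect through $j_U$, whose critical point is $\kappa$ and which is now ${<}\iota_{n+1}$-supercompact), and (3), (4) follow as you indicate. You never invoke $\iota_{n+1}<\delta'$, and without it there is no mechanism to push supercompactness past $\iota_n$.
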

\begin{proof}[Proof of \cref{induction}]
The proof is by induction. 

We begin with the case \(n = 0\). Since \(U\) is \(\kappa\)-complete, \(U\) is \(\kappa\)-supercompact, which yields (1). Since \(\kappa\) is an inaccessible limit of regular cardinals carrying uniform countably complete ultrafilters, \cref{GlobalLeastSupercompact} applied in \(V_\kappa\) implies that \(\bar \kappa\) is \({<}\kappa\)-supercompact, which yields (2). Since \(\textnormal{Un}_{<\kappa}\I W\) by  \cref{InternalClm} and since \(\textsc{crt}(W) \geq \bar \kappa\) by \cref{ClosedCompleteness}, it follows from \cref{SupercompactTransfer} that \(W\) is \({<}\kappa\)-supercompact, which yields (3). Since \(\kappa\) is measurable, \(\kappa\) is inaccessible, which yields (4).

Suppose the claim holds when \(n = k\), and we prove it is true for \(n = k + 1\).

By elementarity, \(j_W(\bar \kappa)\) is \({<}\iota_{k+1}\)-supercompact in \(M_W\). Since \(\bar \kappa\) is not \(\delta'\)-strongly compact in \(M_W\), \(\iota_{k+1} < \delta'\). For any \(M_W\)-regular \(\gamma < \iota_{k+1}\), fix \(Z\in \textnormal{Un}^{M_W}_\gamma\) witnessing \(j_W(\bar \kappa)\) is \(\gamma\)-supercompact. Since \(\gamma < \delta'\), \(Z\I t_W(U)\) by \cref{ZeroInternal}. Since \(\textsc{crt}(Z) = j_W(\bar\kappa) < \kappa = \textsc{crt}(t_W(U))\), by \cref{SupercompactTransfer}, this implies \(t_W(U)\) is \(\gamma\)-supercompact in \(M_W\). Since \(\iota_{k+1}\) is strongly inaccessible in \(M_W\), it follows that \(t_W(U)\) is \({<}\iota_{k+1}\)-supercompact in \(M_W\). 

Thus for all \(\xi < \iota_{k+1}\), \(j^{M_W}_{t_W(U)}\restriction \xi \in M^{M_W}_{t_W(U)}\subseteq M_U\). Since \(j^{M_W}_{t_W(U)}\restriction \text{Ord} = j_U\restriction \text{Ord}\), it follows that \(U\) is \({<}\iota_{k+1}\)-supercompact. This shows (1). 

Since \(\bar\kappa\) is \({<}\kappa\)-supercompact, the fact that \(U\) is \({<}\iota_{k+1}\)-supercompact implies that \(\bar \kappa\) is \({<}\iota_{k+1}\)-supercompact. This shows (2).

Finally since \(W\) is \({<}\kappa\)-supercompact and \(U\I W\), \cref{SupercompactTransfer} implies that \(W\) is \({<}\iota_{k+1}\)-supercompact as well. This shows (3). 

It follows that \(\iota_{k+1}\) is inaccessible: \(\iota_{k+1} = j_W(\iota_k)\) is inaccessible in \(M_W\) by elementarity and our inductive hypothesis. This is absolute to \(V\) since \(\bigcup_{\gamma < \iota_{k+1}} P(\gamma)\subseteq M_W\) by the supercompactness of \(W\). This shows (4).
\end{proof}

Let \(\lambda = \sup_n \iota_n\). Then \(W\) is \({<}\lambda\)-supercompact by \cref{induction}. Since \(M_W\) is closed under countable sequences, \(W\) is \(\lambda\)-supercompact. But \(j_W(\lambda) = \lambda\) and \(\textsc{crt}(W) < \lambda\). This contradicts Kunen's inconsistency theorem.
\end{proof}
We therefore reach contradictions in \cref{smallcase} and \cref{bigcase}. It follows that our assumption was false, which completes the proof of \cref{HardRegularity}.
\end{proof}

In the next theorem we obtain the hypotheses of \cref{HardRegularity} from a large cardinal axiom that appears to be {\it one ultrafilter away from optimal}.

\begin{thm}[UA]\label{QPoints}
Suppose \(\delta\) is a regular cardinal. Suppose there are distinct countably complete ultrafilters extending the club filter on \(\delta\). Let \(U_0 \swo U_1\) be the \(\swo\)-least two. 

Let \(\kappa \leq \delta\) be the least ordinal such that for some countably complete ultrafilter \(Z\), \(j_Z(\kappa) > \delta\). Then the following hold:
\begin{enumerate}[(1)]
\item \(U_0\) is \(\kappa\)-complete and \((\kappa,\delta)\)-regular.
\item \(U_1\) is the weakly normal ultrafilter of a normal fine \(\kappa\)-complete ultrafilter on \(P_\kappa(\delta)\), and \(U_0 \mo U_1\).
\end{enumerate}
\end{thm}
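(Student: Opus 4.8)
I would first note that $U_0$ is the least ($0$-order) ultrafilter on $\delta$: every weakly normal uniform ultrafilter on the regular cardinal $\delta$ extends the club filter, the $0$-order ultrafilter on $\delta$ exists (since $\textnormal{Un}_\delta\neq\emptyset$) and is $\swo$-minimal in $\textnormal{Un}_\delta$ by Ketonen's characterization, hence by linearity of $\swo$ it is $\swo$-least among the ultrafilters extending the club filter, so it equals $U_0$. Write $\mu=\textsc{crt}(U_0)$. The overall plan is to use the \emph{second} club-extender $U_1$ to realize the hypotheses of \cref{RegularCor} (equivalently \cref{HardRegularity}) for $U_0$, obtaining part (1); and then to bootstrap through Ketonen's strong-compactness characterization (\cref{KetonenTheorem}) and \cref{ZeroTheorem} to $\delta$-supercompactness, identifying $U_1$ itself as the supercompactness measure for part (2).

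\textbf{Part (1).} The target is a countably complete ultrafilter $W$ with $U_0\I W$ and $j_W(\mu)>\mu$; then \cref{RegularCor} gives that $\mu$ is closed under ultrapowers and that $U_0$ (and $W$) is $(\mu,\delta)$-regular. I expect $W$ to come from $U_1$: either $W=U_1$, after checking $U_0\I U_1$ — here $U_1\not\I U_0$ by \cref{SeedExtendInternal}, while $U_0\I U_1$ would follow by analyzing $t_{U_1}(U_0)$ with \cref{GeneralInternal} (it is either $P^{M_{U_1}}_{\delta_*}$ or the least ultrafilter of $M_{U_1}$ on $\delta_*:=\sup j_{U_1}[\delta]$, so it lies in $M_{U_1}$) and identifying it with $s_{U_1}(U_0)$ via \cref{jInternal} — or else $W$ is the $\swo$-least ultrafilter with $j_W(\mu)\geq\lambda$, where $\lambda$ is the least fixed point of $j_{U_0}$ above $\mu$, so that $U_0\I W$ by \cref{FixedPointInternal}; the role of the second club-extender is precisely to guarantee that $\mu$ is moved above $\lambda$ by some ultrafilter at all. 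That $j_W(\mu)>\mu$ is immediate for the fixed-point construction, and for $W=U_1$ it holds because otherwise $j_{U_1}$ fixes $\mu$ and \cref{InternalCommutativity} (applied to $U_1$ and the normal measure derived from $U_0$) yields $U_1\I U_0$, a contradiction. Finally $\mu=\kappa$: $(\mu,\delta)$-regularity gives $j_{U_0}(\mu)>\textnormal{cf}^{M_{U_0}}(\sup j_{U_0}[\delta])\geq\delta$, so $\mu$ is moved above $\delta$ and $\kappa\leq\mu$; and since $\mu$ is closed under ultrapowers, no ordinal below $\mu$ is moved above $\delta\geq\mu$, so were $\kappa<\mu$ we would have a contradiction, giving $\mu\leq\kappa$. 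Since $\mu$ is the completeness of $U_0$, this is (1).

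\textbf{Part (2).} From $(\kappa,\delta)$-regularity of the $0$-order ultrafilter $U_0$, \cref{KetonenTheorem} gives that every regular cardinal in $[\kappa,\delta]$ carries a uniform countably complete ultrafilter; since $\kappa$ is closed under ultrapowers, \cref{ClosedCompleteness} upgrades these to $\kappa$-complete ultrafilters, so $\kappa$ is $\delta$-strongly compact. Then \cref{ZeroTheorem} gives that $U_0$ has the tight covering property at $\delta$ and that $\kappa$ is $<\delta$-supercompact (and $\delta$-supercompact when $\delta$ is not strongly inaccessible). To identify $U_1$, I would show $j_{U_1}$ is continuous at $\delta$ and covers it tightly, so that $j_{U_1}[\delta]\in M_{U_1}$ once $P(\delta)\subseteq M_{U_1}$ is secured via \cref{Strength} applied to $U_1$; the fine ultrafilter $\mathcal W$ on $P_\kappa(\delta)$ derived from $j_{U_1}$ using $j_{U_1}[\delta]$ is then normal, $\kappa$-complete, and has $U_1$ as its weakly normal projection. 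Since $U_1$ is $\delta$-supercompact, $M_{U_1}$ is closed under $\delta$-sequences, so $U_0\in M_{U_1}$, i.e.\ $U_0\mo U_1$.

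\textbf{Main obstacle.} I expect the crux to be the production of $W$ in part (1) — extracting from ``there are two club-extenders on $\delta$'' both that $\mu=\textsc{crt}(U_0)$ is moved high by a countably complete ultrafilter and that the relevant factor ultrafilter is internal, which is the bridge to \cref{RegularCor}. A secondary difficulty is part (2) when $\delta$ is strongly inaccessible, since then \cref{ZeroTheorem} alone does not deliver $\delta$-supercompactness; there one must use the minimality of $U_1$ among club-extenders strictly above $U_0$ (via \cref{op}) together with the regularity and internal-relation structure around $U_0$ to force $U_1$ to be the $P_\kappa(\delta)$-supercompactness projection.
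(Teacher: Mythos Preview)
Your overall architecture is right --- use $U_1$ as the witness $W$ in \cref{RegularCor} to get (1), then bootstrap to (2) --- but two steps in Part~(1) do not go through as written.

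\textbf{The argument for $U_0\I U_1$ is in the wrong direction.} To conclude $U_0\I U_1$ via \cref{jInternal} you must show $t_{U_0}(U_1)=j_{U_0}(U_1)$, not analyze $t_{U_1}(U_0)$. \cref{GeneralInternal} describes translations \emph{of} the least ultrafilter $U_0$, i.e.\ $t_W(U_0)$, and gives no leverage on $t_{U_0}(U_1)$. The paper's key move (which in fact proves the stronger statement $W\I U_1$ for \emph{every} $W\swo U_1$) is to use \cref{NormalGeneration} with $\mathcal F$ the club filter: it forces $t_W(U_1)$ to extend the club filter on $j_W(\delta)$; since $W^-(t_W(U_1))=U_1\neq U_0=W^-(j_W(U_0))$ we have $t_W(U_1)\neq j_W(U_0)$, so $j_W(U_0)\swo^{M_W} t_W(U_1)$, and now the minimality of $j_W(U_1)$ as the \emph{second} club-extender in $M_W$ gives $j_W(U_1)\wo^{M_W} t_W(U_1)$, hence equality and $W\I U_1$. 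Your proposal never invokes \cref{NormalGeneration}, and without it the minimality of $U_1$ among club-extenders cannot be exploited.

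\textbf{The case $j_{U_1}(\mu)=\mu$ is not a contradiction.} Your sketch claims \cref{InternalCommutativity} applied to $U_1$ and the normal measure $D$ on $\mu$ yields $U_1\I U_0$; but \cref{InternalCommutativity} only returns commutation of $U_1$ and $D$, not $U_1\I U_0$, and there is no contradiction there. The paper does not rule this case out. Instead, assuming $j_{U_1}(\kappa_0)=\kappa_0$, it works in $M_{U_1}$: letting $\delta'=\text{cf}^{M_{U_1}}(\sup j_{U_1}[\delta])$ and $U_*$ the least ultrafilter on $\delta'$ there (equivalent to $t_{U_1}(U_0)$ by \cref{Trivia}), one has $\delta'<j_{U_1}(\delta)$, so $U_*\I^{M_{U_1}} j_{U_1}(U_0)$ by \cref{ZeroInternal}, and $j_{U_1}(U_0)$ moves $\kappa_0=\textsc{crt}(U_*)$. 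Thus \cref{RegularCor} applies \emph{inside} $M_{U_1}$, giving $(\kappa_0,\delta')$-regularity of $U_*$; this pulls back to $(\kappa_0,\delta)$-regularity of $U_0$ in $V$ since $j^{M_{U_1}}_{U_*}=j_{U_0}\restriction M_{U_1}$. Closure of $\kappa_0$ under ultrapowers in $V$ then follows from the fact (established above) that every $W\swo U_1$ is internal to $U_1$. Your alternative via \cref{FixedPointInternal} does not escape this: you would still need to rule out that the $\swo$-least $W$ with $j_W(\mu)\geq\lambda$ fixes $\mu$, and your justification that the second club-extender ``guarantees'' $\mu$ is moved above $\lambda$ is not substantiated.
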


(1) has strong consequences just short of supercompactness by \cref{ZeroTheorem}. We will use this below to show that (1) implies (2).

For the proof of \cref{QPoints}, we use a lemma that is probably part of the folklore, at least in the special case that \(\mathcal F\) is a normal ultrafilter on \(\lambda\). Here we just use the case that \(\mathcal F\) is the club filter on \(\lambda\), but we prove the lemma at a higher level of generality since we need the more general version in \cref{SecondStronglyCompact}. 

\begin{lma}\label{NormalGeneration}
Suppose \(\mathcal F\) is a normal fine filter on \(P(\lambda)\). Suppose \(D\) is a countably complete ultrafilter on \(\lambda\). Let \(B = \{\sigma\in P^{M_D}(j_D(\lambda)) : [\textnormal{id}]_D\in \sigma\}\). Then \(j_D[\mathcal F]\cup \{B\}\) generates \(j_D(\mathcal F)\).
\begin{proof}
Suppose \(X\in j_D(\mathcal F)\). Then \(X = j_D(\langle X_\alpha : \alpha < \lambda\rangle)([\text{id}]_D)\) with \(X_\alpha\in \mathcal F\) for all \(\alpha < \lambda\). By normality \(\triangle_{\alpha < \lambda}X_\alpha\in \mathcal F\). We claim \(j_D(\triangle_{\alpha < \lambda}X_\alpha) \cap B\subseteq X\). Suppose \(\sigma\in j_D(\triangle_{\alpha < \lambda}X_\alpha) \cap B\). Since \(\sigma\in j_D(\triangle_{\alpha < \lambda}X_\alpha)\), we have \(\sigma\in \bigcap_{\alpha\in \sigma} X'_\alpha\) where \(\langle X'_\alpha : \alpha < j_D(\lambda)\rangle = j_D(\langle X_\alpha : \alpha < \lambda\rangle)\). Since \(\sigma \in B\), \([\text{id}]_D\in \sigma\), so \[\sigma \in X'_{[\text{id}]_D} = j_D(\langle X_\alpha : \alpha < \lambda\rangle)([\text{id}]_D) = X\] as desired.
\end{proof}
\end{lma}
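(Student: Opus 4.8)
The plan is to prove that the filter $G$ generated by $j_D[\mathcal F]\cup\{B\}$ coincides with $j_D(\mathcal F)$, checking the two inclusions separately. For $G\subseteq j_D(\mathcal F)$, I would first observe that $j_D(X)\in j_D(\mathcal F)$ for every $X\in\mathcal F$ by elementarity, and then that $B\in j_D(\mathcal F)$ as well: by fineness the map $\alpha\mapsto\{\sigma:\alpha\in\sigma\}$ sends every $\alpha<\lambda$ into $\mathcal F$, and $j_D$ applied to this map and evaluated at the seed $[\textnormal{id}]_D$ is exactly $B$, so $B$ lies in $j_D(\mathcal F)$ by Los's Theorem. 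Since $j_D(\mathcal F)$ is a filter containing every generator of $G$, it contains $G$.

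The substantive direction is $j_D(\mathcal F)\subseteq G$. Given $X\in j_D(\mathcal F)$, I want to find $Y\in\mathcal F$ with $j_D(Y)\cap B\subseteq X$; since $j_D(Y)$ and $B$ are generators of $G$, this exhibits $X$ as an element of $G$. Write $X=j_D(\langle X_\alpha:\alpha<\lambda\rangle)([\textnormal{id}]_D)$, and, using that membership in $j_D(\mathcal F)$ is decided $D$-almost everywhere, arrange that $X_\alpha\in\mathcal F$ for all $\alpha<\lambda$. Then take $Y=\triangle_{\alpha<\lambda}X_\alpha$, which belongs to $\mathcal F$ by normality. To finish I would verify $j_D(Y)\cap B\subseteq X$: for $\sigma$ in this intersection, transferring the definition of diagonal intersection along $j_D$ gives $\sigma\in X'_\alpha$ for every $\alpha\in\sigma$, where $\langle X'_\alpha:\alpha<j_D(\lambda)\rangle$ is the $j_D$-image of $\langle X_\alpha:\alpha<\lambda\rangle$; and since $\sigma\in B$ just says $[\textnormal{id}]_D\in\sigma$, instantiating $\alpha=[\textnormal{id}]_D$ yields $\sigma\in X'_{[\textnormal{id}]_D}=X$.

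I do not expect a real obstacle here: the content of the lemma is that the seed $[\textnormal{id}]_D$ behaves like a generic ``test point'' in $j_D(\lambda)$, and normality is precisely what converts a $\lambda$-indexed family drawn from $\mathcal F$ into a single member $Y$ of $\mathcal F$ whose image, cut down by $B$, sits below the prescribed set $X$. The only place that calls for a little care is the elementarity bookkeeping: confirming that $j_D(\triangle_{\alpha<\lambda}X_\alpha)$ is computed in $M_D$ as the diagonal intersection of the image sequence over $j_D(\lambda)$, so that the defining clause $[\textnormal{id}]_D\in\sigma$ of $B$ can legitimately be fed into ``$\sigma\in X'_\alpha$ for all $\alpha\in\sigma$.''
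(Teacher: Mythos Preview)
Your proposal is correct and follows essentially the same argument as the paper: represent $X\in j_D(\mathcal F)$ by a $\lambda$-sequence from $\mathcal F$, take its diagonal intersection $Y$, and verify $j_D(Y)\cap B\subseteq X$ by instantiating $\alpha=[\textnormal{id}]_D$. The only difference is that you also spell out the easy inclusion $G\subseteq j_D(\mathcal F)$ (in particular that $B\in j_D(\mathcal F)$ via fineness), which the paper leaves implicit.
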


\begin{proof}[Proof of \cref{QPoints}]
We first show that for all \(W\swo U_1\), \(W\I U_1\). Suppose \(W\swo U_1\). Then by \cref{NormalGeneration} with \(\mathcal F\) the club filter on \(\delta\), \(t_{W}(U_1)\) extends the club filter on \(j_{W}(\delta)\). Of course \(t_{W}(U_1)\neq j_{W}(U_0)\), since \(W^-(j_{W}(U_0)) = U_0\) while \(W^-(t_{W}(U_1)) = U_1\). So \(j_W(U_0)\swo^{M_W} t_W(U_1)\) and therefore \(j_{W}(U_1)\wo^{M_{W}} t_{W}(U_1)\) since in \(M_W\), \(j_W(U_1)\) is the \(\swo^{M_W}\)-least ultrafilter extending the club filter on \(j_W(\delta)\) apart from \(j_W(U_0)\). By definition \(t_W(U_1)\wo^{M_W}j_W(U_1)\), so \(t_W(U_1) = j_W(U_1)\). This implies \(W \I U_1\) by \cref{jInternal}. 

Since in particular \(U_0\I U_1\), it follows that \(\sup j_{U_1}[\delta]\) carries a weakly normal ultrafilter in \(M_{U_1}\). So the weakly normal ultrafilter on \(\delta\) derived from \(U_1\) is not equal to \(U_0\). Since this derived weakly normal ultrafilter extends the club filter on \(\delta\), it is equal to \(U_1\) by the minimality of \(U_1\). Hence \(U_1\) is weakly normal.

We next show that (1) implies (2). Note that if (1) holds, we may apply \cref{ZeroTheorem} to obtain that \(U_0\) has the tight \(\delta\)-covering property. Note that \(U_1\) is \(\kappa\)-complete since \(\kappa\) is closed under ultrapowers. Since \(U_0\I U_1\), \cref{CompactPropagation} implies that \(U_1\) has the tight covering property at \(\delta\). We repeat the argument of \cref{Strength} to show that \(P(\delta)\subseteq M_{U_1}\). Fix a \(\kappa\)-independent family of subsets of \(\delta\), \(\langle X_\alpha : \alpha < \delta\rangle\in M_{U_1}\), which exists since \(M_{U_1}\) is closed under \(\kappa\)-sequences and \((\delta^{<\kappa})^{M_{U_1}} = \delta\). For any \(A\subseteq \delta\), there is some \(W\leq_\text{RK} U_0\) on \(\delta\) such that \(X_\alpha\in W\) if and only if \(\alpha\in A\). Since \(U_0\I U_1\), we have \(W\I U_1\), and therefore \(A\in M_{U_1}\). It follows that \(P(\delta)\subseteq M_{U_1}\), which combined with the tight covering property implies that \(U_1\) is \(\delta\)-supercompact.

Note that \(j_{U_1}(\kappa) > \kappa\): otherwise \(\kappa\) is \({<}j_{U_1}(\delta)\)-supercompact in \(M_{U_1}\), and this implies that there are many weakly normal ultrafilters \(W\) on \(\delta\) in \(M_{U_1}\), and these are truly weakly normal and internal to \(U_1\) since \(U_1\) is \(\delta\)-supercompact, and this contradicts the \(\swo\)-minimality of \(U_1\) since the seed order extends the internal relation on \(\textnormal{Un}_\delta\). Now \(j_{U_1}(\kappa) > \delta\) since otherwise \(\kappa\) is huge, which contradicts that \(\kappa\) is closed under ultrapowers. Thus \(U_1\) is the weakly normal ultrafilter of a normal fine ultrafilter on \(P_\kappa(\delta)\). Finally \(U_0\mo U_1\) since \(U_0\I U_1\) and \(U_1\) is \(\delta\)-supercompact.

Thus (1) implies (2).

We finally prove (1). Let \(\kappa_0 = \textsc{crt}(U_0)\). If \(j_{U_1}(\kappa_0) > \kappa_0\), then (1) follows from \cref{HardRegularity}. Assume instead \(j_{U_1}(\kappa_0) = \kappa_0\). Then in \(M_{U_1}\), \(j_{U_1}(U_0)\) has critical point \(\kappa_0\). Let \(\delta' = \text{cf}^{M_{U_1}}(\sup j_{U_1}[\delta])\), and let \(U_*\) denote the least ultrafilter on \(\delta'\) as computed in \(M_{U_1}\). By \cref{Trivia}, \(U_*\) is equivalent in \(M_{U_1}\) to \(t_{U_1}(U_0) = s_{U_1}(U_0)\), and hence \(j^{M_{U_1}}_U = j_{U_0}\restriction M_{U_1}\).

Since \(\delta' < j_{U_1}(\delta)\), \cref{ZeroInternal} implies \(U_*\I^{M_{U_1}} j_{U_1}(U_0)\). Hence in \(M_{U_1}\), \(W = j_{U_1}(U_0)\) witnesses the hypothesis of \cref{RegularCor} with respect to \(U\).  It follows that in \(M_{U_1}\), \(\kappa_0\) is closed under ultrapowers and \(U_*\) is \((\kappa_0,\delta')\)-regular. Since for all \(W\swo U_1\), \(W\I U_1\), this implies \(\kappa_0\) is closed under ultrapowers in \(V\). Moreover, since \(U_*\) is \((\kappa_0,\delta')\)-regular in \(M_{U_1}\) and \(j^{M_{U_1}}_{U_*} = j_{U_0}\restriction M_{U_1}\), \(U_0\) is discontinuous at every regular cardinal in the interval \([\kappa_0,\delta]\). Hence \(U_0\) is \((\kappa_0,\delta)\)-regular by \cref{KetonenTheorem}. Therefore (1) holds in this case as well.
\end{proof}

As a corollary, if a least ultrafilter \(U\) interacts nontrivially with an ultrafilter above it, then \(U\) is well-behaved in the sense of \cref{ZeroTheorem}:

\begin{cor}[UA]\label{InternalCompact}
Let \(U\) be the least ultrafilter on a regular cardinal \(\delta\), and let \(\kappa\) be its completeness. Suppose there is a countably complete ultrafilter that is neither divisible by \(U\) nor internal to \(U\). Then \(\kappa\) is \(\delta\)-supercompact and closed under ultrapowers.
\begin{proof}
Let \(W\) be such an ultrafilter. If \(W\) is continuous at \(\delta\), then \(W\I U\) by \cref{ZeroInternal}. Consider the weakly normal ultrafilter \(D\) on \(\delta\) derived from \(W\). To finish, it suffices by \cref{QPoints} to show that \(D\neq U\). But if \(D = U\), then \(U\) divides \(W\), this time by \cref{GeneralInternal}.
\end{proof}
\end{cor}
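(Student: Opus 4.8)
The plan is to reduce the statement to \cref{QPoints}. Let $W$ be a countably complete ultrafilter that is neither divisible by $U$ nor internal to $U$. Since $U$ is the least --- hence $0$-order --- ultrafilter on the regular cardinal $\delta$, it is weakly normal, so it extends the club filter on $\delta$, and it is the $\swo$-least element of $\textnormal{Un}_\delta$, hence \emph{a fortiori} the $\swo$-least countably complete ultrafilter extending the club filter. Thus it suffices to produce one more such ultrafilter, distinct from $U$: then \cref{QPoints} applies with $U_0 = U$, and its conclusions yield both that the completeness $\kappa$ of $U$ is closed under ultrapowers (clause (1), together with the identification there of $\kappa$ with the least ordinal mapped above $\delta$ by an ultrapower) and that $\kappa$ is $\delta$-supercompact (clause (2), via the normal fine $\kappa$-complete ultrafilter on $P_\kappa(\delta)$ whose weakly normal ultrafilter is $U_1$).

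First I would argue that $W$ must be discontinuous at $\delta$, so in particular $\textsc{sp}(W) > \delta$ and $\delta_* := \sup j_W[\delta]$ lies below $j_W(\delta)$. Otherwise, if $W$ were continuous at $\delta$ (which happens automatically when $\textsc{sp}(W) \leq \delta$, as $\delta$ is regular), then using $j^{M_U}_{s_U(W)} = j_W\restriction M_U$ from \cref{PushforwardLemma} one checks that $j^{M_U}_{s_U(W)}$ is continuous at $\textnormal{cf}^{M_U}(\sup j_U[\delta])$, so $s_U(W)\in M_U$ by \cref{ZeroInternal} and hence $W\I U$ by \cref{SInternal}, contradicting the choice of $W$. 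Transferring the continuity hypothesis correctly --- exploiting that $U$ is $0$-order, so that $\sup j_U[\delta] = [\textnormal{id}]_U$ carries no uniform ultrafilter in $M_U$ --- is the delicate point, and I expect it to be the main obstacle.

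Given that $W$ is discontinuous at $\delta$, form the weakly normal ultrafilter $D = \{A\subseteq\delta : \delta_*\in j_W(A)\}$ on $\delta$ derived from $W$; it extends the club filter on $\delta$, and the proof is complete once $D\neq U$ is verified. If instead $D = U$, then the Rudin--Keisler factor embedding $k : M_U\to M_W$ with $k\circ j_U = j_W$ sends $[\textnormal{id}]_U$ to $\delta_*$; since $j_U[\delta]$ is cofinal in $[\textnormal{id}]_U$ and $k[j_U[\delta]] = j_W[\delta]$ is cofinal in $\delta_*$, the embedding $k$ is continuous at $\sup j_U[\delta] = [\textnormal{id}]_U$. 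By the characterization of the definable factor embeddings of a least ultrapower (the theorem just before \cref{ZeroInternal}, whose proof uses \cref{GeneralInternal} to force $t_W(U) = P^{M_W}_{\delta_*}$), $k$ is then definable over $M_U$, so $k = j^{M_U}_{t_U(W)}$ and $U$ divides $W$ --- again contradicting the choice of $W$. Hence $D\neq U$.

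Finally, $U$ and $D$ witness the hypothesis of \cref{QPoints}: there are distinct countably complete ultrafilters extending the club filter on $\delta$, with $U_0 = U$ the $\swo$-least. Reading off clause (1) gives that $U$ is $\kappa$-complete with $\kappa$ closed under ultrapowers, and clause (2) gives that $\kappa$ is $\delta$-supercompact. This completes the argument.
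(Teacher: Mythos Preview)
Your proof is correct and follows essentially the same route as the paper: show $W$ is discontinuous at $\delta$ (else $W\I U$ via \cref{ZeroInternal}), derive the weakly normal $D$ on $\delta$, show $D\neq U$ (else $U\D W$ via \cref{GeneralInternal} or equivalently the definable-embedding theorem you cite), and invoke \cref{QPoints}. The ``main obstacle'' you flag is not really one: since $j_U[\delta]$ is cofinal in $\sup j_U[\delta]$, the $V$-cofinality of $\gamma=\mathrm{cf}^{M_U}(\sup j_U[\delta])$ is $\delta$, and continuity of $j_W$ at any ordinal is determined by continuity at its $V$-cofinality, so $j_W\restriction M_U$ is continuous at $\gamma$ immediately.
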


Using \cref{DiscontinuityLemma} and \cref{InternalCompact}, we can prove \cref{LocalTheorem}.

\begin{proof}[Proof of \cref{LocalTheorem}]
Let \(W\) be the least ultrafilter on \(\gamma\). Let \(\kappa\) be its completeness. Clearly \(W\) does not divide \(U\). By \cref{ZeroInternal}, \(W\I U\). By \cref{DiscontinuityLemma}, \(U\) is discontinuous at \(\gamma\), so since \(W\) is \(0\)-order with respect to \(\gamma\), \(U\not \I W\).

Now by \cref{InternalCompact}, \(\kappa\) is \(\gamma\)-supercompact and closed under ultrapowers. By \cref{ClosedCompleteness}, \(U\) is \(\kappa\)-complete. Since \(\textnormal{Un}_{\gamma}\I U\) by \cref{ZeroInternal}, \(U\) is \(\gamma\)-supercompact by \cref{SupercompactTransfer}.
\end{proof}

\subsection{Some cardinal arithmetic}
Part of the reason for proving these theorems with as few cardinal arithmetic assumptions as we can manage is that it allows us to improve the result that UA implies GCH.

We begin by mentioning a result that suffices to prove GCH above a supercompact.

\begin{defn}
We say \(\gamma\) is \(M\)-commanded if every \(A\subseteq P(\gamma)\) belongs to \(M_W\) for some  \(W\in \textnormal{Un}_{\leq\gamma}\).
\end{defn}

Of course \(M\)-command follows from supercompactness by an argument due to Solovay:

\begin{thm}[Solovay]\label{SolovayMuMeasure}
Suppose that \(U\) is \(\lambda\)-supercompact, \(\textnormal{cf}(\lambda)\geq \textsc{crt}(U)\), and the pre-normal ultrafilter \(D\) on \(\lambda\) derived from \(U\) belongs to \(M_U\). Then \(\lambda\) is \(M\)-commanded in \(M_U\).
\begin{proof}
Suppose not. Let \(k : M_D\to M_U\) be the factor embedding. Since \(k(\lambda) = \lambda\), \(\lambda\) is not \(M\)-commanded in \(M_D\). Take \(A\subseteq P(\lambda)\) with \(A\) in \(M_D\) such that for no \(Z\in  \textnormal{Un}^{M_D}_{\leq\lambda}\) does \(A\) belong to \(M^{M_D}_Z\). Then since \(k(A) = A\), for no \(Z\in \textnormal{Un}^{M_U}_{\leq\lambda}\) does \(A\) belong to \(M^{M_U}_Z\). But \(A\in M^{M_U}_D\): by Kunen's inconsistency theorem there is some inaccessible \(\kappa \leq \delta\) with \(j_D(\kappa) > \delta\), and \[A\in V^{M_D}_{j_D(\kappa)} =  j_D(V_{\kappa}) = j_D(V^{M_U}_{\kappa}) = V^{M^{M_U}_D}_{j_D(\kappa)}\] Since \(D\in \textnormal{Un}^{M_U}_{\leq\lambda}\), this is a contradiction.
\end{proof}
\end{thm}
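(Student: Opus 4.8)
The plan is a proof by contradiction that makes essential use of the hypothesis \(D\in M_U\). Suppose \(\lambda\) is not \(M\)-commanded in \(M_U\). I will exhibit a single \(A\in M_U\) with \(A\subseteq P(\lambda)\) such that, on the one hand, \(A\notin M^{M_U}_Z\) for every \(Z\in\textnormal{Un}^{M_U}_{\leq\lambda}\) (so \(A\) witnesses the failure of \(M\)-command), and, on the other hand, \(A\in M^{M_U}_D\); since \(D\in\textnormal{Un}^{M_U}_{\leq\lambda}\), these two facts contradict each other.

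The first move is to pass to the factor embedding \(k:M_D\to M_U\) with \(k\circ j_D=j_U\) and \(k([\textnormal{id}]_D)\) equal to the \(\lambda\)-seed of \(U\). The structural input I take for granted --- this is exactly where ``\(D\) pre-normal'' and \(\textnormal{cf}(\lambda)\geq\textsc{crt}(U)\) enter --- is that \(k\) fixes \(\lambda\) and, more importantly, fixes every subset of \(P(\lambda)\) lying in \(M_D\), i.e.\ \(\textsc{crt}(k)\) is above the rank of all such sets. Granting this, elementarity of \(k\) transports the failure of \(M\)-command downward: \(\lambda\) is not \(M\)-commanded in \(M_D\), so I fix \(A\subseteq P(\lambda)\) in \(M_D\) with \(A\notin M^{M_D}_Z\) for every \(Z\in\textnormal{Un}^{M_D}_{\leq\lambda}\). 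Then \(k(A)=A\); and since each \(Z\in\textnormal{Un}^{M_U}_{\leq\lambda}\) has rank below \(\textsc{crt}(k)\), it is an element of \(M_D\) fixed by \(k\), with \(M^{M_U}_Z=k\bigl(M^{M_D}_Z\bigr)\). Pulling back through \(k\), it follows that \(A\notin M^{M_U}_Z\) for every such \(Z\) --- the first half of the contradiction.

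For the second half I show \(A\in M^{M_U}_D\). Applying the Kunen inconsistency theorem to \(j_D\), pick an inaccessible \(\bar\kappa\leq\lambda\) with \(j_D(\bar\kappa)>\lambda\); then \(A\in V^{M_D}_{j_D(\bar\kappa)}=j_D(V_{\bar\kappa})\). Because \(U\) is \(\lambda\)-supercompact, \(M_U\) is closed under \(\lambda\)-sequences, which gives \(V_{\bar\kappa}=V^{M_U}_{\bar\kappa}\), and --- using \(D\in M_U\) --- that \(j_D\restriction M_U=j^{M_U}_D\) and that \(j_D\) acts identically on ordinals whether computed in \(V\) or in \(M_U\). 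Hence \(j_D(V_{\bar\kappa})=j^{M_U}_D\bigl(V^{M_U}_{\bar\kappa}\bigr)=V^{M^{M_U}_D}_{j_D(\bar\kappa)}\subseteq M^{M_U}_D\), so \(A\in M^{M_U}_D\), completing the argument. The main obstacle is precisely the interleaving bookkeeping in this last paragraph together with the verification of the structural fact about \(k\): one has to be careful about which model each occurrence of \(j_D\), \(V_\alpha\), and \(P(\lambda)\) is evaluated in, and the identifications \(V_{\bar\kappa}=V^{M_U}_{\bar\kappa}\), \(j_D\restriction M_U=j^{M_U}_D\), and \(k(A)=A\) are the points where \(\lambda\)-supercompactness of \(U\), membership of \(D\) in \(M_U\), and pre-normality of \(D\) actually do the work; the rest is a short diagram chase.
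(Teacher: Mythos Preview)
Your proof follows the paper's approach exactly: factor through $k:M_D\to M_U$, use $k(\lambda)=\lambda$ and $k(A)=A$ to transfer the failure of $M$-command from $M_U$ down to $M_D$ and back, then place $A$ in $M^{M_U}_D$ via the chain $A\in V^{M_D}_{j_D(\bar\kappa)}=j_D(V_{\bar\kappa})=j^{M_U}_D(V^{M_U}_{\bar\kappa})$ using Kunen's theorem and the $\lambda$-closure of $M_U$.

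One small correction: your claim that each $Z\in\textnormal{Un}^{M_U}_{\leq\lambda}$ lies in $M_D$ merely because its rank is below $\textsc{crt}(k)$ is not justified --- having small rank does not by itself give membership in $M_D$, and there is no reason to expect $P(P(\lambda))^{M_U}\subseteq M_D$. This intermediate step is unnecessary, however: the conclusion $A\notin M^{M_U}_Z$ for all such $Z$ follows immediately by applying the elementarity of $k$ to the single first-order sentence ``$\forall Z\in\textnormal{Un}_{\leq\lambda}\ A\notin M_Z$'', using only $k(\lambda)=\lambda$ and $k(A)=A$. This is how the paper proceeds, and it sidesteps the issue entirely.
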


The first proof of GCH above a supercompact from UA and large cardinals was based on the following fact.

\begin{thm}[UA]
Suppose \(\gamma\) is \(M\)-commanded. Suppose \(\delta > \gamma\) is a regular cardinal that carries a uniform countably complete ultrafilter. Then \(2^\gamma < 2^{\delta}\).
\begin{proof}[Sketch]
Let \(U\) be the least ultrafilter on \(\delta\). Assume towards a contradiction that \(2^\gamma = 2^{\delta}\). We can then code \(U\) by \(A\subseteq P(\gamma)\), so fix \(W\in \textnormal{Un}_{\leq\gamma}\) such that \(U\mo W\). Note that \(W\I U\) by \cref{ZeroInternal} and similarly \(U\I^{M_W} j_W(U)\) since \(j_W(\delta) > \delta\) by Kunen's inconsistency theorem. This implies \(j^{M_W}_U\restriction \text{Ord}\) is amenable to \(M_U\). In particular, \(j_U\restriction \gamma = j^{M_W}_U\restriction\gamma \in M_U\) so \(U\) is \(\gamma\)-supercompact. But then \(W\I U\) implies \(W\mo U\), contradicting the strictness of the Mitchell order.
\end{proof}
\end{thm}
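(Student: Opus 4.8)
The plan is a proof by contradiction. Since $\gamma<\delta$ we automatically have $2^\gamma\le 2^\delta$, so it suffices to rule out $2^\gamma=2^\delta$. Assume $2^\gamma=2^\delta$ and let $U$ be the least ultrafilter on $\delta$, which exists by the discussion around \cref{ZeroUnique} because $\delta$ carries a uniform countably complete ultrafilter. The first step is to capture $U$ by an ultrafilter on a set of size ${\le}\gamma$. Since $|P(\delta)|=2^\delta=2^\gamma=|P(\gamma)|$, one codes the set $U\subseteq P(\delta)$ --- together with a witnessing bijection $P(\delta)\cong P(\gamma)$ --- as a single subset $A\subseteq P(\gamma)$, arranged so that $U$ is recoverable from $A$ inside any inner model containing $A$. (The auxiliary pairing and coding maps on $P(\gamma)$ can be chosen definably from $\gamma$, so only the one non-definable bijection supplied by the hypothesis $2^\gamma=2^\delta$ needs to be folded into $A$.) Applying the $M$-command of $\gamma$ to $A$ produces $W\in\textnormal{Un}_{\le\gamma}$ with $A\in M_W$, hence $U\in M_W$; that is, $U\mo W$.

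Next I would extract internal relations. Since $W\in\textnormal{Un}_{<\delta}$ and $U$ is the least ultrafilter on $\delta$, \cref{ZeroInternal} gives $W\I U$. Also $j_W(\delta)>\delta$: this follows from Kunen's inconsistency theorem, using $\textsc{crt}(W)\le\gamma<\delta$ together with the fact that $U\in M_W$ exhibits $\delta$ as carrying a $0$-order ultrafilter inside $M_W$ (were $\delta$ a fixed point of $j_W$, the presence of $U$ in $M_W$ would be inconsistent with Kunen's theorem). Working inside $M_W$, then, $j_W(U)$ is the least ultrafilter on $j_W(\delta)$ and $U$ is an $M_W$-ultrafilter on $\delta<j_W(\delta)$, so \cref{ZeroInternal} applied in $M_W$ gives $U\I^{M_W}j_W(U)$. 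Combining $W\I U$ (the amenability of $j_W\restriction M_U$ to $M_U$) with $U\I^{M_W}j_W(U)$ shows that $j^{M_W}_U\restriction\text{Ord}$ is amenable to $M_U$; since $j_U\restriction\gamma=j^{M_W}_U\restriction\gamma$, we conclude $j_U\restriction\gamma\in M_U$, and hence, using that $U$ is $\kappa$-complete for its completeness $\kappa$, that $U$ is $\gamma$-supercompact.

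Finally, since $U$ is $\gamma$-supercompact and $W\in\textnormal{Un}_{\le\gamma}$, the internal relation agrees with the generalized Mitchell order on ultrafilters below the degree of supercompactness, so $W\I U$ upgrades to $W\mo U$. Together with $U\mo W$ from the first step this yields a $2$-cycle in $\mo$, contradicting that the Mitchell order is a strict (well-founded) partial order. Hence $2^\gamma\neq 2^\delta$, and therefore $2^\gamma<2^\delta$. I expect the main obstacle to be the coding step: the hypothesis $2^\gamma=2^\delta$ is used precisely through a bijection $P(\delta)\cong P(\gamma)$ that is not definable, so one must be careful that enough of it is absorbed into the $M$-commanded set $A$ for $M_W$ to reconstruct $U$ --- it is the definability from $\gamma$ of the routine coding bijections on $P(\gamma)$ that makes this go through. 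A secondary delicate point is the chain in the middle paragraph passing from the two internal relations to the $\gamma$-supercompactness of $U$, which rests on the compatibility of the ultrapowers of $M_W$ and of $V$ by $U$.
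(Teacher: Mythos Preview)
Your proposal is correct and follows essentially the same route as the paper's own sketch: code $U$ into $P(\gamma)$ to obtain $W\in\textnormal{Un}_{\le\gamma}$ with $U\mo W$, use \cref{ZeroInternal} twice (once in $V$ for $W\I U$, once in $M_W$ for $U\I^{M_W} j_W(U)$) to get $j_U\restriction\gamma\in M_U$, and close the Mitchell-order cycle. Your added detail on the coding step and on the passage from the two internal relations to $\gamma$-supercompactness matches what the paper leaves implicit; the one place where your elaboration is slightly off is the justification of $j_W(\delta)>\delta$, which the paper also leaves as a bare appeal to Kunen --- your parenthetical about ``the presence of $U$ in $M_W$'' being inconsistent with Kunen is not quite the right mechanism, but the conclusion is what both proofs need and use.
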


Buried in the reductio was the first hint that UA might prove the supercompactness of least ultrafilters.

In fact, using the theory of the internal relation developed here, one can actually prove the following theorem:
\begin{thm}[UA]\label{UltrafilterBound}
Suppose \(\delta\) is an infinite cardinal. Suppose \(W\in \textnormal{Un}_{\delta}\) and \(\gamma\) is such that \(\textnormal{Un}_{\leq\gamma}\I W\). Then \(|\textnormal{Un}_{\leq\gamma}| \leq 2^\delta\).
\begin{proof}
Assume by induction that the theorem is true for all \(\alpha < \gamma\). Assume towards a contradiction that \(|\textnormal{Un}_{\leq\gamma}| \geq (2^{\delta})^+\). For all \(\alpha < \gamma\), \(|\textnormal{Un}_{\leq\xi}| \leq 2^\delta\), and so since we must have \(\gamma < \delta\),  \(|\textnormal{Un}_{<\gamma}| \leq \gamma\cdot 2^\delta = 2^\delta\). Hence \(|\textnormal{Un}_\gamma| \geq (2^{\delta})^+\).

Note that for any \(U\in \textnormal{Un}_\gamma\), if \(D\swo U\) then there is a sequence \(\langle D_\alpha : \alpha < \gamma\rangle\) such that \(\textsc{sp}(D_\alpha)\leq \alpha\) for all \(\alpha < \gamma\) with the following property: \[D = \left\{X\subseteq \textsc{sp}(D) : \{\alpha <\gamma: X\cap \textsc{sp}(D_\alpha)\in D_\alpha\right\}\in U\}\]
(Using Los's theorem this just says \(D = U^-([D_\alpha]_U)\).) 
Thus \(U\) has at most \(\prod_{\alpha < \gamma} |\textnormal{Un}_{\leq\alpha}|\leq (2^\delta)^\gamma = 2^\delta\) predecessors in the seed order. It follows that the seed order on \(\textnormal{Un}_\gamma\) has order type exactly \((2^\delta)^+\): it is a wellorder of cardinality \((2^\delta)^+\) with initial segments of cardinality \(2^\delta\).

For \(U\in \textnormal{Un}\), let \(|U|_S\) denote the rank of \(U\) in the seed order. For any \(U,D\in \textnormal{Un}\), we claim \(|U|_S \leq |t_D(U)|^{M_D}_S\). To see this, note that \(t_D\) maps the \(\swo\)-predecessors of \(U\) into the \(\swo^{M_D}\)-predecessors of \(t_D(U)\), preserving the seed order, by \cref{op}. In particular if \(U\) is nonprincipal, then \[|U|_S \leq |t_U(U)|^{M_U}_S < |j_U(U)|^{M_U}_S = j_U(|U|_S)\] In other words, {\it every nonprincipal ultrafilter moves its own seed rank}. (More generally if \(j_D(|U|_S) = |U|_S\) then \(D\I U\). In many cases, for example for ultrafilters extending the club filter, we can also show the converse.)

Let \(\eta = |U|_S\) where \(U\) is least on \(\gamma\). If \(\eta \leq |D|_S < (2^\delta)^+\), then \(D\) is a uniform ultrafilter on \(\gamma\), and in particular \(D\) is nonprincipal, so \(D\) moves \(|D|_S\). It follows that every \(\alpha\) such that \(\eta \leq \alpha < (2^\delta)^+\) is moved by some \(D\in \textnormal{Un}_\gamma\). (We remark that this hypothesis can be obtained in ZFC from \(\delta\)-compactness, and this is due to Kunen.)

Note that \(j_W((2^\delta)^+) = (2^\delta)^+\) since \(\textsc{sp}(W) = \delta\). Therefore \(W\) has an \(\omega\)-club of fixed points below \((2^\delta)^+\). Moreover since \(|\textnormal{Un}_{<\gamma}| \leq 2^\delta\) and each \(D\in \textnormal{Un}_{<\gamma}\) fixes \(\delta\), the set of common fixed points of elements of \(\textnormal{Un}_{<\gamma}\) is \(\omega\)-club in \((2^\delta)^+\).  Let \(\xi \in [\eta, (2^\delta)^+)\) be fixed by \(W\) and by all ultrafilters in \(\textnormal{Un}_{<\gamma}\).

Let \(\bar W\) be the least ultrafilter moving \(\xi\). Then \(\bar W\in \textnormal{Un}_\gamma\). By \cref{FixedPointInternal}, \(W\I \bar W\) and \(\textnormal{Un}_{<\gamma}\I \bar W\). By assumption, \(\bar W\I W\). So by \cref{InternalCommutativity}, \(W\) and \(\bar W\) commute.

Let \(\bar \kappa = \textsc{crt}(\bar W)\) and \(\kappa = \textsc{crt}(W)\). Commutativity implies \(j_W(\bar \kappa) = \bar \kappa\) and \(j_{\bar W}(\kappa) = \kappa\). In particular \(\bar \kappa \neq \kappa\).

Also \(\kappa \neq \gamma\) since \(j_{\bar W}\) fixes \(\kappa\) but not \(\gamma\). We cannot have \(\gamma < \kappa\) since \(|\textnormal{Un}_\gamma| > 2^\delta > \kappa\) while \(\kappa\) is strongly inaccessible. It follows that \(\kappa < \gamma\). 

Note that \(\kappa\) is a strong limit cardinal fixed by \(j_{\bar W}\) and \(\textnormal{Un}_{<\kappa}\subseteq \textnormal{Un}_{<\gamma}\I \bar W\). By \cref{Kunen}, it follows that \(\kappa \leq \bar \kappa\).

Similarly, \(\bar \kappa\) is a strong limit cardinal fixed by \(j_{W}\) and \(\textnormal{Un}_{<\bar \kappa}\subseteq \textnormal{Un}_{\leq\gamma}\I W\). By \cref{Kunen}, it follows that \(\bar \kappa \leq \kappa\).

Since \(\bar \kappa \neq \kappa\), we have reached a contradiction.
\end{proof}
\end{thm}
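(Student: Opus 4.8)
The plan is to induct on \(\gamma\), assuming the bound holds for all smaller ordinals and supposing toward a contradiction that \(|\textnormal{Un}_{\leq\gamma}| \geq (2^\delta)^+\). First I would collect the easy reductions: since \(W\) is nonprincipal and the internal relation is irreflexive on nonprincipal ultrafilters, \(\textnormal{Un}_{\leq\gamma}\I W\) forces \(\gamma<\delta\); together with the induction hypothesis this gives \(|\textnormal{Un}_{<\gamma}|\leq\gamma\cdot 2^\delta=2^\delta\), so \(|\textnormal{Un}_\gamma|\geq(2^\delta)^+\). Then I would bound the \(\swo\)-predecessors of any \(U\in\textnormal{Un}_\gamma\): each is of the form \(U^-([D_\alpha]_U)\) for a sequence \(\langle D_\alpha:\alpha<\gamma\rangle\) with \(\textsc{sp}(D_\alpha)\leq\alpha\), so there are at most \(\prod_{\alpha<\gamma}|\textnormal{Un}_{\leq\alpha}|\leq(2^\delta)^\gamma=2^\delta\) of them. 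Since \(\textnormal{Un}_{\leq\gamma}\) is a \(\swo\)-initial segment by \cref{SpaceLemma} and \(\swo\) is a wellorder by \cref{Linearity}, it follows that \((\textnormal{Un}_{\leq\gamma},\swo)\) has order type exactly \((2^\delta)^+\).

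The key point is that every nonprincipal ultrafilter moves its own seed rank. Write \(|U|_S\) for the \(\swo\)-rank of \(U\). By order-preservation of translation functions (\cref{op}), \(|U|_S\leq|t_U(U)|^{M_U}_S\); moreover \(t_U(U)\wo^{M_U}j_U(U)\) by minimality of \(t_U(U)\), and this is strict since \(t_U(U)=j_U(U)\) would give \(U\I U\) by \cref{jInternal}. Hence \(|U|_S\leq|t_U(U)|^{M_U}_S<|j_U(U)|^{M_U}_S=j_U(|U|_S)\). Letting \(\eta\) be the rank of the \(\swo\)-least element of \(\textnormal{Un}_\gamma\), it follows that every ordinal \(\xi\in[\eta,(2^\delta)^+)\) is moved by the ultrafilter of rank \(\xi\), which lies in \(\textnormal{Un}_\gamma\).

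Next I would force a forbidden configuration. Since \(\textsc{sp}(W)=\delta<(2^\delta)^+\), the embedding \(j_W\) fixes \((2^\delta)^+\), so its fixed points form an \(\omega\)-club below \((2^\delta)^+\); the same holds for each of the at most \(2^\delta\) members of \(\textnormal{Un}_{<\gamma}\). Intersecting, choose \(\xi\in[\eta,(2^\delta)^+)\) fixed by \(W\) and by all of \(\textnormal{Un}_{<\gamma}\), and let \(\bar W\) be the \(\swo\)-least ultrafilter moving \(\xi\); by the previous paragraph \(\bar W\in\textnormal{Un}_\gamma\). Applying \cref{FixedPointInternal} (with the pair \(\xi,\xi+1\)) gives \(W\I\bar W\) and \(\textnormal{Un}_{<\gamma}\I\bar W\). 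Since \(\bar W\in\textnormal{Un}_{\leq\gamma}\), the hypothesis gives \(\bar W\I W\), so \(W\) and \(\bar W\) commute by \cref{InternalCommutativity}. Writing \(\kappa=\textsc{crt}(W)\) and \(\bar\kappa=\textsc{crt}(\bar W)\), commutativity yields \(j_W(\bar\kappa)=\bar\kappa\) and \(j_{\bar W}(\kappa)=\kappa\), so \(\kappa\neq\bar\kappa\). Since \(j_{\bar W}\) moves \(\gamma\) but fixes \(\kappa\), \(\kappa\neq\gamma\); and \(\gamma<\kappa\) is impossible because then \(\textnormal{Un}_\gamma\in V_\kappa\), forcing \(|\textnormal{Un}_\gamma|<\kappa\leq\delta<2^\delta<|\textnormal{Un}_\gamma|\); so \(\kappa<\gamma\). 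Now \(\kappa\) is a strong limit fixed by \(j_{\bar W}\) with \(\textnormal{Un}_{<\kappa}\subseteq\textnormal{Un}_{<\gamma}\I\bar W\), so \cref{Kunen} gives \(\kappa\leq\bar\kappa\); symmetrically, using \(\bar\kappa\leq\gamma\) and \(\textnormal{Un}_{<\bar\kappa}\subseteq\textnormal{Un}_{\leq\gamma}\I W\), \cref{Kunen} gives \(\bar\kappa\leq\kappa\). This contradicts \(\kappa\neq\bar\kappa\) and finishes the induction.

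The main obstacle I anticipate is seeing how to convert the purely cardinality hypothesis \(|\textnormal{Un}_\gamma|\geq(2^\delta)^+\) into structural information: the realization that it makes the seed order on \(\textnormal{Un}_\gamma\) a wellorder of length \((2^\delta)^+\) in which every ultrafilter moves its own rank, and that this is exactly the right input for a pressing-down argument producing \(\bar W\) with \(W\I\bar W\) and \(\bar W\I W\) simultaneously. After that the proof is driven by \cref{InternalCommutativity}: commuting ultrapowers have distinct critical points, each fixing the other's, and this rigidity clashes with the Kunen-style bound of \cref{Kunen}. The fiddly part of the endgame is the critical-point bookkeeping — eliminating \(\kappa=\gamma\) and \(\gamma<\kappa\) so that \cref{Kunen} applies on both sides.
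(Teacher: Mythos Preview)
Your proposal is correct and follows the paper's proof essentially step for step: the same induction and counting of \(\swo\)-predecessors to pin the order type at \((2^\delta)^+\), the same observation that nonprincipal ultrafilters move their own seed rank, the same fixed-point/\(\omega\)-club argument to produce \(\bar W\), and the same endgame via \cref{InternalCommutativity} and \cref{Kunen}. Your write-up is in places more explicit than the paper's (the reason \(\gamma<\delta\), the strictness \(t_U(U)\swo^{M_U} j_U(U)\) via \cref{jInternal}, and the parameters \((\xi,\xi+1)\) for \cref{FixedPointInternal}), but there is no substantive difference in approach.
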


The following generalization of \cref{ZeroInternal} to singular cardinals is useful for obtaining the hypotheses of \cref{UltrafilterBound} (for example in \cref{PredecessorBound} and \cref{GCHTheorem}) at singular cardinals. On the other hand, it seems quite possible that the hypotheses of \cref{SingularIPoints} actually imply \(\lambda\) is regular. In any case, the idea behind \cref{SingularIPoints} might lead to a proof of this.

\begin{thm}[UA]\label{SingularIPoints}
Suppose \(\lambda\) is a cardinal that carries a strongly uniform ultrafilter but \(\lambda\) is not a singular limit of cardinals carrying strongly uniform ultrafilters. Suppose \(W\) is the least strongly uniform ultrafilter on \(\lambda\). Then \(\textnormal{Un}_{<\lambda}\I W\).
\begin{proof}
Suppose \(D\in \textnormal{Un}_{<\lambda}\). Then \(t_D(W)\) is equivalent to a strongly uniform ultrafilter on a cardinal \(\lambda_*\geq \sup j_D[\lambda]\) since \(W\) divides \(D\oplus t_D(W)\). On the other hand \(\lambda_*\leq j_D(\lambda)\) since \(t_D(W)\wo^{M_D} j_D(W)\). 

Assume \(\lambda_* < j_D(\lambda)\) towards a contradiction. Then \(D\) is discontinuous at \(\lambda\), so \(\lambda\) is singular. Moreover by the usual reflection argument, \(\lambda\) is a limit of cardinals carrying strongly uniform ultrafilters, contradicting the minimality of \(\lambda\). Therefore our assumption was false, and \(\lambda_* = j_D(\lambda)\). 

Thus \(t_D(W)\) is equivalent to a strongly uniform ultrafilter on \(j_D(\lambda)\). Since \(t_D(W)\wo^{M_D} j_D(W)\) and \(j_D(W)\) is the least strongly uniform ultrafilter on \(j_D(\lambda)\), \(t_D(W) = j_D(W)\). Thus \(D\I W\) by \cref{jInternal}.
\end{proof}
\end{thm}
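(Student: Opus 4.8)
The plan is to fix an arbitrary $D\in\textnormal{Un}_{<\lambda}$ and prove $D\I W$, which by \cref{jInternal} reduces to showing $t_D(W)=j_D(W)$ in $M_D$. First I would record the easy inequality. Since $\textsc{sp}(j_D(W))=j_D(\lambda)$ and $\lambda$ is the least ordinal with $j_D(\lambda)\leq j_D(\lambda)$, the $D$-limit $D^-(j_D(W))$ lives on $\lambda$, and for $A\subseteq\lambda$ we have $A\in D^-(j_D(W))\iff j_D(A)\in j_D(W)\iff A\in W$ by {\L}o\'s's theorem; hence $D^-(j_D(W))=W$, so the minimality in the definition of the translation function gives $t_D(W)\wo^{M_D}j_D(W)$. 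The whole content of the theorem is the reverse inequality.

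Next I would analyze the space of $t_D(W)$. Because $W$ is strongly uniform on $\lambda$ and $W$ divides $D\oplus t_D(W)$, the (internal) factor embedding of $M_W$ into $M_{D\oplus t_D(W)}=M^{M_D}_{t_D(W)}$ transports strong uniformity, so $t_D(W)$ is Rudin--Keisler equivalent in $M_D$ to a strongly uniform $M_D$-ultrafilter on some cardinal $\lambda_*$. I would then bound $\lambda_*$ from both sides: $\lambda_*\leq j_D(\lambda)$ follows from $t_D(W)\wo^{M_D}j_D(W)$ together with \cref{SpaceLemma} (a strictly larger space would force $j_D(W)\swo^{M_D}t_D(W)$), while $\lambda_*\geq\sup j_D[\lambda]$ follows from the division witness: it sends the $W$-seed, which by strong uniformity dominates $j_W(\bar\lambda)$ for every $\bar\lambda<\lambda$, into $M^{M_D}_{t_D(W)}$, forcing the space of the strongly uniform representative up to $\sup j_D[\lambda]$. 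Thus $\sup j_D[\lambda]\leq\lambda_*\leq j_D(\lambda)$.

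Now I would run the dichotomy on whether $\lambda_*=j_D(\lambda)$. If $\lambda_*<j_D(\lambda)$, then in particular $\sup j_D[\lambda]<j_D(\lambda)$, i.e.\ $j_D$ is discontinuous at $\lambda$. A uniform countably complete ultrafilter on an ordinal below $\lambda$ is automatically indecomposable, hence continuous, at every regular cardinal above its space (for trivial cardinality reasons), so $\lambda$ must be singular. Moreover, for every $\bar\lambda<\lambda$ we have $j_D(\bar\lambda)<\sup j_D[\lambda]\leq\lambda_*<j_D(\lambda)$, so $M_D$ sees a cardinal strictly between $j_D(\bar\lambda)$ and $j_D(\lambda)$ carrying a strongly uniform ultrafilter; by elementarity of $j_D$, $V$ sees a cardinal in $(\bar\lambda,\lambda)$ carrying a strongly uniform ultrafilter. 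Hence $\lambda$ is a singular limit of cardinals carrying strongly uniform ultrafilters, contradicting the hypothesis. Therefore $\lambda_*=j_D(\lambda)$; since a $t_D(W)$-measure-one set has size $j_D(\lambda)$, necessarily $\textsc{sp}(t_D(W))=j_D(\lambda)$ and $t_D(W)$ is itself a strongly uniform ultrafilter on $j_D(\lambda)$ in $M_D$. As $j_D(W)$ is the $\wo^{M_D}$-least strongly uniform ultrafilter on $j_D(\lambda)$ by elementarity, we get $j_D(W)\wo^{M_D}t_D(W)$, so $t_D(W)=j_D(W)$, and $D\I W$ by \cref{jInternal}.

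The step I expect to be the main obstacle is the spectral analysis of $t_D(W)$ in the second paragraph --- transferring strong uniformity from $W$ to $t_D(W)$ across the division $W\mid D\oplus t_D(W)$ and pinning $\sup j_D[\lambda]\leq\lambda_*\leq j_D(\lambda)$. Once that is in hand, the rest is the rigidity of the seed order on strongly uniform ultrafilters of a fixed cardinal plus a standard reflection argument, both of which are routine.
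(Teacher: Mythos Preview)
Your proposal is correct and follows essentially the same route as the paper's proof: bound the cardinal $\lambda_*$ on which $t_D(W)$ is (equivalent to) a strongly uniform ultrafilter between $\sup j_D[\lambda]$ and $j_D(\lambda)$, use the hypothesis on $\lambda$ to rule out $\lambda_*<j_D(\lambda)$ via the singularity/reflection argument, and then invoke the seed-minimality of $j_D(W)$ among strongly uniform ultrafilters on $j_D(\lambda)$ to conclude $t_D(W)=j_D(W)$ and apply \cref{jInternal}. You have in fact filled in a couple of details the paper leaves implicit, notably the deduction that $\textsc{sp}(t_D(W))=j_D(\lambda)$ so that $t_D(W)$ is itself strongly uniform rather than merely equivalent to one.
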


The following corollary generalizes a well-known fact regarding the Mitchell order on normal ultrafilters. A direct generalization of this that does not use UA will only show that \(U\in \textnormal{Un}_\delta\) has \(2^{2^{<\delta}}\) many predecessors in the seed order. This is for good reason: suppose \(2^\delta = 2^{\delta^+}\), and \(|\text{Un}_\delta| > 2^\delta\), and \(\text{Un}_{\delta^+}\neq \emptyset\), a hypothesis that is easily proved consistent with ZFC from the existence of a cardinal \(\delta\) that is \(2^\delta\)-supercompact. Then by \cref{SpaceLemma}, any \(W\in \textnormal{Un}_{\delta^+}\) lies above every \(U\in \textnormal{Un}_\delta\) in the seed order, and hence has more than \(2^\delta\) predecessors. So \cref{PredecessorBound} is not provable in ZFC. 

\begin{cor}[UA]\label{PredecessorBound}
Suppose \(\delta\) is a cardinal that carries a strongly uniform ultrafilter. Then any \(U\in \textnormal{Un}_\delta\) has at most \(2^\delta\) predecessors in the seed order.
\begin{proof}
Assume by induction that the theorem holds at all cardinals below \(\delta\).

If \(\delta\) is not a singular limit of cardinals carrying strongly uniform ultrafilters, then using \cref{SingularIPoints}, the least strongly uniform \(W\) on \(\delta\) satisfies the hypotheses of \cref{UltrafilterBound} with respect to any \(\alpha < \delta\), and therefore \(|\textnormal{Un}_\alpha|\leq 2^\delta\) for all \(\alpha < \delta\). Suppose instead that the set \(A\subseteq \delta\) of cardinals carrying strongly uniform ultrafilters is unbounded in \(\delta\). Let \(B\) be the set of successor elements of \(A\). Then every \(\lambda\in B\) satisfies the hypotheses of \cref{UltrafilterBound} by \cref{SingularIPoints}. Now for any \(\alpha \leq \delta\), take \(\lambda\in B\) with \(\alpha < \lambda\), and note that again by \cref{UltrafilterBound}, \(|\textnormal{Un}_\alpha|\leq 2^\lambda \leq 2^\delta\).

In either case, therefore, \(|\text{Un}_{<\delta}|\leq 2^\delta\). By the counting argument from the beginning of the proof of \cref{UltrafilterBound}, it follows that any \(U\in \textnormal{Un}_\delta\) has at most \(\prod_{\alpha < \delta} \textnormal{Un}_{\leq \alpha} \leq 2^\delta\) predecessors.
\end{proof}
\end{cor}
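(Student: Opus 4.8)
The plan is to deduce this from two things already in hand: \cref{UltrafilterBound}, which says \(|\textnormal{Un}_{\leq\gamma}|\leq 2^\delta\) whenever \(W\in\textnormal{Un}_\delta\) and \(\textnormal{Un}_{\leq\gamma}\I W\); and the counting bound extracted in the course of proving \cref{UltrafilterBound}, namely that every \(D\swo U\) has the form \(U^-([D_\alpha]_U)\) for some sequence \(\langle D_\alpha:\alpha<\delta\rangle\) with \(\textsc{sp}(D_\alpha)\leq\alpha\), so that \(U\in\textnormal{Un}_\delta\) has at most \(\prod_{\alpha<\delta}|\textnormal{Un}_{\leq\alpha}|\) predecessors in the seed order. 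Since \((2^\delta)^\delta=2^\delta\), the entire corollary reduces to the single estimate \(|\textnormal{Un}_{<\delta}|\leq 2^\delta\): granting this, \(|\textnormal{Un}_{\leq\alpha}|\leq 2^\delta\) for every \(\alpha<\delta\), and the product collapses to \(2^\delta\). One can organize the argument as an induction on cardinals \(\delta\) carrying a strongly uniform ultrafilter, but the real content is a case analysis at \(\delta\) in which \cref{SingularIPoints} supplies the witness needed to invoke \cref{UltrafilterBound}.

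To establish \(|\textnormal{Un}_{<\delta}|\leq 2^\delta\), split into two cases. If \(\delta\) is \emph{not} a singular limit of cardinals carrying strongly uniform ultrafilters, then \cref{SingularIPoints} applies directly at \(\delta\): the least strongly uniform ultrafilter \(W\) on \(\delta\) satisfies \(\textnormal{Un}_{<\delta}\I W\), hence \(\textnormal{Un}_{\leq\alpha}\I W\) for every \(\alpha<\delta\), so \cref{UltrafilterBound} gives \(|\textnormal{Un}_{\leq\alpha}|\leq 2^\delta\), and therefore \(|\textnormal{Un}_{<\delta}|\leq\delta\cdot 2^\delta=2^\delta\). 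If instead \(\delta\) \emph{is} such a singular limit, let \(A\) be the set of cardinals below \(\delta\) carrying a strongly uniform ultrafilter (necessarily unbounded in \(\delta\)), and let \(B\subseteq A\) consist of the non-limit-points of \(A\). Then \(B\) is still cofinal in \(\delta\), since for any \(\mu<\delta\) the least element of \(A\) strictly above \(\mu\) lies in \(B\). Each \(\lambda\in B\) fails to be a limit of cardinals carrying strongly uniform ultrafilters, so a fortiori is not a singular such limit, and \cref{SingularIPoints} applies at \(\lambda\): the least strongly uniform ultrafilter \(W_\lambda\) on \(\lambda\) satisfies \(\textnormal{Un}_{<\lambda}\I W_\lambda\), whence \cref{UltrafilterBound} yields \(|\textnormal{Un}_{\leq\alpha}|\leq 2^\lambda\leq 2^\delta\) for every \(\alpha<\lambda\). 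Since every \(\alpha<\delta\) lies below some \(\lambda\in B\), again \(|\textnormal{Un}_{<\delta}|\leq\delta\cdot 2^\delta=2^\delta\).

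Finally, with \(|\textnormal{Un}_{\leq\alpha}|\leq 2^\delta\) for all \(\alpha<\delta\), the counting bound gives any \(U\in\textnormal{Un}_\delta\) at most \(\prod_{\alpha<\delta}|\textnormal{Un}_{\leq\alpha}|\leq(2^\delta)^\delta=2^\delta\) predecessors in the seed order, as desired. I do not expect a genuine obstacle here: the substance is entirely in \cref{UltrafilterBound} and \cref{SingularIPoints}, and the only points requiring a moment's care are the verification that the non-limit-points of \(A\) remain cofinal in \(\delta\) in the singular-limit case, and keeping track of exactly when \(\delta\) itself may be fed to \cref{SingularIPoints} --- which is precisely when it is not a singular limit of cardinals carrying strongly uniform ultrafilters, i.e.\ the first case.
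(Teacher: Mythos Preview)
Your proposal is correct and follows essentially the same route as the paper: the same two-case split on whether \(\delta\) is a singular limit of cardinals carrying strongly uniform ultrafilters, the same use of \cref{SingularIPoints} to feed \cref{UltrafilterBound}, the same cofinal set \(B\) of non-limit-points of \(A\) in the singular-limit case, and the same final counting bound \(\prod_{\alpha<\delta}|\textnormal{Un}_{\leq\alpha}|\leq 2^\delta\). Your observation that the induction hypothesis is not actually invoked is accurate; the paper states it but the argument goes through without it.
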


An old observation of Solovay is that the linearity of the Mitchell order on normal ultrafilters on a cardinal \(\kappa\) that carries \(2^{2^\kappa}\) normal ultrafilters implies \(2^{2^\kappa} = 2^{\kappa^+}\). By the previous theorem, one has the following generalization:

\begin{thm}[UA]\label{SolovayBound}
Suppose \(|\textnormal{Un}_\delta| = 2^{2^\delta}\). Then \(2^{2^\delta} = (2^\delta)^+\).
\begin{proof}[Sketch]
Let \(\bar \delta \leq \delta\) be least such that \(|\text{Un}_{\leq\bar \delta}| = 2^{2^\delta}\). Easily \(|\text{Un}_{<\bar \delta}| < 2^{2^\delta}\), so \(|\text{Un}_{\bar \delta}| = 2^{2^\delta}\). If \(\bar \delta\) does not carry a strongly uniform ultrafilter then an easy counting argument implies \(|\text{Un}_{\bar \delta}| \leq [\bar \delta]^{<\bar \delta}\cdot |\text{Un}_{<\bar \delta}| < 2^{2^\delta}\), a contradiction. So \(\bar \delta\) carries a strongly uniform ultrafilter. Therefore by \cref{PredecessorBound}, \(\text{Un}_{\bar \delta}\) is wellordered by the seed order with initial segments of cardinality \(2^{\bar \delta}\). It follows that \(2^{2^\delta} = (2^{\bar \delta})^+\) as desired.
\end{proof}
\end{thm}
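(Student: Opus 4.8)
The plan is to carry out the classical ``linearity of a definable order bounds a power set'' argument --- in the form pioneered by Solovay for normal ultrafilters on a measurable cardinal, adapted here through \cref{PredecessorBound}. Concretely, I would locate the cardinal \(\bar\delta\le\delta\) at which \(\textnormal{Un}\) first swells to size \(2^{2^\delta}\), check that it carries a strongly uniform ultrafilter, read off \(2^{2^\delta}=(2^{\bar\delta})^+\) from the fact that the seed order wellorders \(\textnormal{Un}_{\bar\delta}\) with small proper initial segments, and finish with a short cardinal computation upgrading this to \(2^{2^\delta}=(2^\delta)^+\).

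First I would set up the relevant cardinal. Since each element of \(\textnormal{Un}_\alpha\) is a subset of \(P(\alpha)\), we have \(|\textnormal{Un}_\alpha|\le 2^{2^\alpha}\le 2^{2^\delta}\) for all \(\alpha\le\delta\), hence \(|\textnormal{Un}_{\le\delta}|=2^{2^\delta}\), and we may let \(\bar\delta\le\delta\) be least with \(|\textnormal{Un}_{\le\bar\delta}|=2^{2^\delta}\). By König's theorem \(\mathrm{cf}(2^{2^\delta})>2^\delta\ge\delta\ge\bar\delta\), so the union \(\textnormal{Un}_{<\bar\delta}=\bigcup_{\beta<\bar\delta}\textnormal{Un}_\beta\) of \(\bar\delta\)-many sets each of cardinality \(<2^{2^\delta}\) (by minimality of \(\bar\delta\)) again has cardinality \(<2^{2^\delta}\); therefore \(|\textnormal{Un}_{\bar\delta}|=2^{2^\delta}\). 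Moreover \(\bar\delta\) must be a cardinal: otherwise, fixing a bijection \(g:\bar\delta\to|\bar\delta|\), the assignment sending \(U\in\textnormal{Un}_{\bar\delta}\) to the restriction of \(g_*(U)\) to its space embeds \(\textnormal{Un}_{\bar\delta}\) into \(\textnormal{Un}_{\le|\bar\delta|}\subseteq\textnormal{Un}_{<\bar\delta}\), forcing \(|\textnormal{Un}_{\bar\delta}|<2^{2^\delta}\), a contradiction.

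Next I would show \(\bar\delta\) carries a strongly uniform ultrafilter. If it does not, then every \(U\in\textnormal{Un}_{\bar\delta}\) contains some \(A_U\) with \(|A_U|<\bar\delta\), hence with order type \(\mathrm{ot}(A_U)<\bar\delta\); transporting \(U\restriction A_U\) along the order isomorphism \(A_U\to\mathrm{ot}(A_U)\) and restricting to its space yields an element of \(\textnormal{Un}_{<\bar\delta}\), and \(U\) is recovered from the pair consisting of \(A_U\) and this ultrafilter. Thus \(|\textnormal{Un}_{\bar\delta}|\le|[\bar\delta]^{<\bar\delta}|\cdot|\textnormal{Un}_{<\bar\delta}|\le 2^{\bar\delta}\cdot|\textnormal{Un}_{<\bar\delta}|\), and since \(2^{\bar\delta}\le 2^\delta<2^{2^\delta}\) and \(|\textnormal{Un}_{<\bar\delta}|<2^{2^\delta}\), this product is \(<2^{2^\delta}\), contradicting the previous paragraph. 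So \(\bar\delta\) is a cardinal carrying a strongly uniform ultrafilter, and \cref{PredecessorBound} applies: every \(U\in\textnormal{Un}_{\bar\delta}\) has at most \(2^{\bar\delta}\) predecessors in the seed order.

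Finally I would run the order-type count. Under UA the seed order is linear (\cref{Linearity}) and it is wellfounded, so \(\swo\) restricted to \(\textnormal{Un}_{\bar\delta}\) is a wellorder all of whose proper initial segments have cardinality \(\le 2^{\bar\delta}\); such a wellorder has order type \(\le(2^{\bar\delta})^+\), whence \(2^{2^\delta}=|\textnormal{Un}_{\bar\delta}|\le(2^{\bar\delta})^+\). On the other hand \(\bar\delta\le\delta\) gives \((2^{\bar\delta})^+\le(2^\delta)^+\le 2^{2^\delta}\), the last inequality because \(2^{2^\delta}>2^\delta\). Chaining these, \(2^{2^\delta}=(2^{\bar\delta})^+=(2^\delta)^+\), which is the desired conclusion. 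All of the combinatorially substantive content is packaged into \cref{PredecessorBound} (itself resting on \cref{UltrafilterBound}); I expect the only points in the argument above requiring genuine care to be the cofinality bookkeeping in the setup and the counting estimate \(|\textnormal{Un}_{\bar\delta}|\le|[\bar\delta]^{<\bar\delta}|\cdot|\textnormal{Un}_{<\bar\delta}|\) in the strongly-uniform step, both of which are routine.
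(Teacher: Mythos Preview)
Your proposal is correct and follows essentially the same approach as the paper's sketch: locate the minimal \(\bar\delta\), argue it carries a strongly uniform ultrafilter via the counting estimate, and then apply \cref{PredecessorBound} to bound the order type of \((\textnormal{Un}_{\bar\delta},\swo)\). You have simply fleshed out details the sketch leaves implicit---the K\"onig cofinality argument for \(|\textnormal{Un}_{<\bar\delta}|<2^{2^\delta}\), the verification that \(\bar\delta\) is a cardinal, and the closing chain \((2^{\bar\delta})^+\le(2^\delta)^+\le 2^{2^\delta}\)---all of which are routine.
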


In fact \cref{UltrafilterBound} implies much more interesting instances of GCH.

\begin{prp}[UA]\label{OriginalGCH}
Suppose \(\delta\) is a cardinal, \(|\textnormal{Un}_\delta| = 2^{2^\delta}\), and \(|\textnormal{Un}_{\delta^+}| > 2^{\delta^+}\). Let \(\lambda > \delta^+\) be least carrying a strongly uniform ultrafilter. Then \(2^\delta < \lambda\).
\begin{proof}
Since \(\textnormal{Un}_{\leq\delta}\I U\) where \(U\) is least on \(\delta^+\) by \cref{ZeroInternal}, we can apply \cref{UltrafilterBound} to obtain \[2^{2^\delta} \leq |\text{Un}_{\leq\delta}| \leq 2^{\delta^+}\]
Since \(\textnormal{Un}_{<\lambda}\I U\) where \(U\) is the \(\swo\)-least strongly uniform ultrafilter on \(\lambda\) by \cref{SingularIPoints}, we can apply \cref{UltrafilterBound} to obtain 
\[2^{\delta^+} < |\textnormal{Un}_{\delta^+}| \leq 2^\lambda\]
Combining these facts, \(2^{2^\delta} < 2^\lambda\). Hence \(2^\delta < \lambda\).
\end{proof}
\end{prp}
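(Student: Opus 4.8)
The plan is to extract two inequalities from the counting bound \cref{UltrafilterBound}, one anchored at $\delta^+$ and one at $\lambda$, and then finish with a one-line computation in cardinal arithmetic. First I would let $U$ be the least (equivalently, $0$-order weakly normal) ultrafilter on $\delta^+$; it exists since $|\textnormal{Un}_{\delta^+}| > 2^{\delta^+} \geq 1$. Because every ultrafilter in $\textnormal{Un}_{\leq\delta}$ fixes the regular cardinal $\delta^+$, \cref{ZeroInternal} gives $\textnormal{Un}_{\leq\delta}\I U$. Applying \cref{UltrafilterBound} with $W = U$ (so $\textsc{sp}(W) = \delta^+$) and $\gamma = \delta$ then yields $|\textnormal{Un}_{\leq\delta}| \leq 2^{\delta^+}$, and in particular $2^{2^\delta} = |\textnormal{Un}_\delta| \leq 2^{\delta^+}$.

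Next I would let $W$ be the $\swo$-least strongly uniform ultrafilter on $\lambda$, which exists by the choice of $\lambda$. The key observation is that $\lambda$ is not a singular limit of cardinals carrying strongly uniform ultrafilters: any such cardinal is either $> \delta^+$, hence $\geq \lambda$ by the minimality of $\lambda$, or $\leq \delta^+$, and the supremum of the cardinals of the latter kind is at most $\delta^+ < \lambda$. So \cref{SingularIPoints} applies and gives $\textnormal{Un}_{<\lambda}\I W$, in particular $\textnormal{Un}_{\leq\delta^+}\I W$. A second application of \cref{UltrafilterBound}, now with $\textsc{sp}(W) = \lambda$ and $\gamma = \delta^+$, gives $|\textnormal{Un}_{\leq\delta^+}| \leq 2^\lambda$, so $2^{\delta^+} < |\textnormal{Un}_{\delta^+}| \leq 2^\lambda$.

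Combining the two bounds, $2^{2^\delta} \leq 2^{\delta^+} < 2^\lambda$, so $2^{2^\delta} < 2^\lambda$; since $x \mapsto 2^x$ is nondecreasing, $2^\delta \geq \lambda$ would force $2^{2^\delta} \geq 2^\lambda$, a contradiction, and therefore $2^\delta < \lambda$. I expect the arithmetic to be entirely routine; the one place that needs care is the applicability of the internal-relation inputs — specifically the non-singular-limit clause of \cref{SingularIPoints} for $\lambda$, and the verification (through the fixed-point machinery underlying \cref{ZeroInternal} and \cref{FixedPointInternal}) that the least ultrafilter on $\delta^+$ really does pull all of $\textnormal{Un}_{\leq\delta}$ into its target model. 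Everything else is bookkeeping.
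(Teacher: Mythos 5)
Your proof is correct and follows essentially the same route as the paper's: the two applications of \cref{UltrafilterBound}, anchored at the least ultrafilter on \(\delta^+\) (via \cref{ZeroInternal}) and at the least strongly uniform ultrafilter on \(\lambda\) (via \cref{SingularIPoints}), followed by the same cardinal arithmetic. One small caveat: your stated reason for \(\textnormal{Un}_{\leq\delta}\I U\) --- that every ultrafilter in \(\textnormal{Un}_{\leq\delta}\) fixes \(\delta^+\) --- is not true in general (a uniform ultrafilter on \(\delta\) can move \(\delta^+\) when \(2^\delta > \delta^+\), which is not yet excluded at this point); what \cref{ZeroInternal} actually needs is that \(j_W\restriction M_U\) is continuous at \(\textnormal{cf}^{M_U}(\sup j_U[\delta^+])\), which holds because that cofinality is at least \(\delta^+ > \textsc{sp}(W)\), and this is how the paper invokes the lemma.
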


In the case that \(\lambda = \delta^{++}\), we have the following corollary:
\begin{thm}[UA]\label{Suboptimal}
Suppose \(\delta\) is a cardinal, \(|\textnormal{Un}_\delta| = 2^{2^\delta}\), and \(|\textnormal{Un}_{\delta^+}| > 2^{\delta^+}\), and \(\textnormal{Un}_{\delta^{++}}\neq\emptyset\). Then \(2^\delta = \delta^+\).
\end{thm}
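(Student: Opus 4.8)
The plan is to deduce this immediately from \cref{OriginalGCH}. First I would observe that, under the hypothesis $\textnormal{Un}_{\delta^{++}}\neq\emptyset$, the cardinal $\lambda$ appearing in \cref{OriginalGCH} --- namely the least cardinal strictly above $\delta^+$ carrying a strongly uniform ultrafilter --- is exactly $\delta^{++}$. The key point is that $\delta^{++}$ is regular, so every $U\in\textnormal{Un}_{\delta^{++}}$ is automatically strongly uniform: if some $A\in U$ had $|A|\leq\delta^+<\textnormal{cf}(\delta^{++})$, then $A$ would be bounded in $\delta^{++}$, and uniformity of $U$ would force its cofinal complement into $U$, a contradiction. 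Hence $\delta^{++}$ carries a strongly uniform ultrafilter, so $\lambda\leq\delta^{++}$; and since $\lambda>\delta^+$ while there are no cardinals strictly between $\delta^+$ and $\delta^{++}=(\delta^+)^+$, we conclude $\lambda=\delta^{++}$.

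With this identification in hand, I would simply invoke \cref{OriginalGCH}. The remaining hypotheses $|\textnormal{Un}_\delta| = 2^{2^\delta}$ and $|\textnormal{Un}_{\delta^+}| > 2^{\delta^+}$ are precisely those required by that proposition, so it yields $2^\delta<\lambda=\delta^{++}$, that is, $2^\delta\leq\delta^+$. Combined with Cantor's theorem, which gives $2^\delta\geq\delta^+$ (as $2^\delta$ is a cardinal strictly above $\delta$), this forces $2^\delta=\delta^+$, as desired.

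I do not expect any genuine obstacle here: essentially all of the work is already packaged into \cref{OriginalGCH} (which itself rests on \cref{UltrafilterBound} and \cref{SingularIPoints}), and the present statement is just the instantiation of that proposition in the case $\lambda=\delta^{++}$. The only point that deserves a careful line is the verification that membership in $\textnormal{Un}_{\delta^{++}}$ entails strong uniformity, which is where the regularity of the successor cardinal $\delta^{++}$ enters.
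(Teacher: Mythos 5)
Your proof is correct and matches the paper's intent exactly: the paper presents this theorem as the instantiation of \cref{OriginalGCH} in the case \(\lambda = \delta^{++}\), which is precisely what you carry out. The one detail you rightly flag --- that uniformity on the regular cardinal \(\delta^{++}\) implies strong uniformity, so that \(\lambda = \delta^{++}\) --- is the only verification needed, and you handle it correctly.
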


A somewhat subtler argument using the techniques of this paper improves the hypotheses above.
\begin{thm}[UA]\label{GCHTheorem}
Suppose \(\delta\) is a cardinal and \(\textnormal{Un}_{(2^\delta)^+}\neq\emptyset\). Then \(2^\delta = \delta^+\).
\begin{proof}
Let \(\delta\) be the least cardinal at which the theorem fails. 

Let \(U\) be the least ultrafilter on \((2^\delta)^+\). Let \(\iota = \text{cf}(2^\delta)\), so \(\iota > \delta\) by Konig's theorem. Note that \(U\) is \(\iota\)-supercompact: if \(U\) is continuous at \(2^\delta\) then \(U\) is \((2^\delta)^+\)-supercompact by the argument of \cref{GCHFree} (3), and if \(U\) is discontinuous at \(2^\delta\) then \(\text{Un}_\iota\neq \emptyset\) so we can appeal to \cref{LocalTheorem} to conclude that \(U\) is \(\iota\)-supercompact.

Assume first that \(\delta\) is a singular cardinal. Then by the \(\iota\)-supercompactness of \(U\), \(\delta\) is a limit of regular cardinals carrying uniform ultrafilters.  By \cref{SolovayMuMeasure}, at all sufficiently large regular \(\bar \delta < \delta\), one has in \(M_U\) the hypotheses of \cref{GCHTheorem} for some \(\lambda < \delta\). Hence \(2^{\bar \delta} < \lambda\). It follows that \(\delta\) is a strong limit cardinal. Therefore by Solovay's theorem \cite{Solovay} on SCH, \(2^\delta = \delta^+\) since \(\delta\) is singular.

We may therefore assume that \(\delta\) is regular. By \cref{SolovayMuMeasure}, \[M_U\vDash |\textnormal{Un}_\gamma| = 2^{2^\gamma}\] for \(\gamma \in \{\delta,\delta^+\}\).

Since \(\delta^{+} < 2^\delta\), if \(U\) is \(\delta^{++}\)-supercompact, then by \cref{SolovayMuMeasure} we have the hypotheses of \cref{Suboptimal} in \(M_U\), so that \(2^\delta = \delta^+\) in \(M_U\), which is absolute to \(V\) since \(P(\delta)\subseteq M_U\). We may therefore assume \(\iota < \delta^{++}\), so that \(\iota = \delta^+\).

If \(2^\delta\) is regular then the fact that \(\iota = \delta^+\) implies the theorem. So we may assume \(2^\delta\) is singular and in particular is a limit cardinal. 

Assume first that \(U\) has the tight covering property at \((2^\delta)^+\). Then since \(j_U(2^\delta) > (2^\delta)^+\), \(U\) is \((2^\delta,(2^\delta)^+)\)-regular, and hence \(U\) is discontinuous at cofinally many regular cardinals below \(2^\delta\). It follows that \(U\) is \((2^\delta)^+\)-supercompact by \cref{GCHFree} (2).

Therefore we may assume that \(U\) does not have the tight covering property at \((2^\delta)^+\).

Assume first that \(((2^\delta)^+)^{M_U} = (2^\delta)^+\). Then the hypotheses of the theorem remain true in \(M_U\), since \(U\cap M_U\in M_U\) by \cref{CoverDichotomy}. Note that \(j_U(\delta) > \delta\) by Kunen's inconsistency theorem, and so \(\delta\) is below the least failure of the theorem in \(M_U\). Therefore \(2^\delta = \delta^+\) in \(M_U\), and so since \(P(\delta)\subseteq M_U\), \(2^\delta = \delta^+\) in \(V\), a contradiction.

Finally assume that \(((2^\delta)^+)^{M_U} < (2^\delta)^+\). Work in \(M_U\). Since \(|\textnormal{Un}_\delta| = 2^{2^\delta}\), we have \(2^{2^\delta} = (2^\delta)^+\). Hence \(2^{\delta^+} \leq (2^\delta)^+\).

Returning to \(V\), since \(P(\delta^+)\subseteq M_U\), the fact that \(M_U\vDash 2^{\delta^+} \leq (2^\delta)^+\) implies \(2^{\delta^+} \leq |((2^\delta)^+)^{M_U}| = 2^\delta\). Therefore \(2^{\delta^+} = 2^\delta\). By Konig's theorem, this contradicts the fact that \(\text{cf}(2^\delta) = \iota = \delta^+\).
\end{proof}
\end{thm}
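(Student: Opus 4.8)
The plan is to run a minimal-counterexample argument. Let $\delta$ be the least cardinal at which the theorem fails, and let $U$ be the least ($0$-order) ultrafilter on $(2^\delta)^+$, which exists and is unique by \cref{ZeroUnique}. Set $\iota = \mathrm{cf}(2^\delta)$; König's theorem gives $\iota > \delta$. The first step is to show that $U$ is $\iota$-supercompact: if $U$ is continuous at $2^\delta$ then the cardinal-arithmetic argument behind \cref{GCHFree} makes $U$ even $(2^\delta)^+$-supercompact, while if $U$ is discontinuous at $2^\delta$ then $\mathrm{Un}_\iota \neq \emptyset$ and \cref{LocalTheorem} applies. Since $\mathrm{cf}(2^\delta) = \iota \geq \delta^+ > \mathrm{crt}(U)$, this gives $P(\delta^+) \subseteq M_U$, so cardinal arithmetic at and below $\delta^+$ is absolute between $V$ and $M_U$; moreover \cref{SolovayMuMeasure} (applied to the pre-normal ultrafilter on $\iota$ derived from $U$) yields $M_U \vDash |\mathrm{Un}_\gamma| = 2^{2^\gamma}$ for $\gamma \in \{\delta,\delta^+\}$.

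Next I would dispose of the case that $\delta$ is singular. Here $\iota$-supercompactness of $U$ forces $\delta$ to be a limit of regular cardinals carrying uniform countably complete ultrafilters, and \cref{SolovayMuMeasure} lets one run the hypotheses of the present theorem inside $M_U$ at cofinally many regular $\bar\delta < \delta$ with target cardinal $\lambda < \delta$; by minimality of $\delta$ this gives $2^{\bar\delta} < \lambda < \delta$, so $\delta$ is a strong limit, and Solovay's theorem on SCH closes this case. So henceforth $\delta$ is regular.

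The remaining work is a case split governed by how $U$ covers $(2^\delta)^+$. If $U$ is $\delta^{++}$-supercompact, then since $\delta^+ < 2^\delta$ the hypotheses of \cref{Suboptimal} hold in $M_U$, giving $2^\delta = \delta^+$ in $M_U$, hence in $V$ since $P(\delta) \subseteq M_U$. So we may assume $\iota < \delta^{++}$, i.e. $\iota = \delta^+$; if $2^\delta$ were regular we would be done, so $2^\delta$ is a singular limit cardinal. Tight covering of $U$ at $(2^\delta)^+$ would make $U$ be $(2^\delta,(2^\delta)^+)$-regular (since $j_U(2^\delta) > (2^\delta)^+$), hence discontinuous at cofinally many regular cardinals below $2^\delta$, so $U$ would be $(2^\delta)^+$-supercompact by \cref{GCHFree}, in particular $\delta^{++}$-supercompact --- already handled. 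Thus $U$ fails tight covering at $(2^\delta)^+$, and \cref{CoverDichotomy} gives $U \cap M_U \in M_U$. If $((2^\delta)^+)^{M_U} = (2^\delta)^+$, the theorem's hypotheses persist to $M_U$, which lies strictly below its own least failure because $j_U(\delta) > \delta$; so $2^\delta = \delta^+$ in $M_U$ and hence in $V$. If instead $((2^\delta)^+)^{M_U} < (2^\delta)^+$, then in $M_U$ the relation $|\mathrm{Un}_\delta| = 2^{2^\delta}$ forces $2^{2^\delta} = (2^\delta)^+$ as in \cref{SolovayBound}, so $2^{\delta^+} \leq (2^\delta)^+$ in $M_U$; pulling this back via $P(\delta^+) \subseteq M_U$ gives $2^{\delta^+} \leq |((2^\delta)^+)^{M_U}| = 2^\delta$, whence $2^{\delta^+} = 2^\delta$, contradicting König's theorem since $\mathrm{cf}(2^\delta) = \delta^+$.

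The main obstacle I anticipate is bookkeeping: pinning down exactly which instances of the earlier supercompactness results apply, and --- most delicately --- controlling the cardinal arithmetic of $M_U$, in particular whether $(2^\delta)^+$ is computed correctly there. That last point is precisely the fork handled by \cref{CoverDichotomy}. The self-referential appeals to the theorem inside $M_U$ (legitimate only because $j_U(\delta) > \delta$ places $\delta$ below the least $M_U$-failure) and the reliance on Solovay's SCH theorem in the singular case are the other spots demanding care.
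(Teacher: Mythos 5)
Your proposal follows the paper's own proof essentially step for step: the same minimal-counterexample setup, the same $\iota$-supercompactness dichotomy via \cref{GCHFree} and \cref{LocalTheorem}, the same disposal of the singular case via SCH, and the same final case split on tight covering and on whether $((2^\delta)^+)^{M_U}$ is computed correctly, ending in the same König contradiction. It is correct and matches the paper's argument.
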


Assume UA. If \(\kappa\) is \(2^\kappa\)-supercompact, can \(2^\kappa\) be weakly inaccessible? The previous theorem does not give much insight since \((2^\delta)^+\) is a bit of a moving target. Our next theorem rules this out.

\begin{thm}[UA]\label{LocalGCH}
Suppose \(\delta\) is a regular cardinal and \(\delta^{++}\) carries two countably complete ultrafilters extending the club filter. Then \(2^\delta = \delta^+\). 
\begin{proof}
Let \(W\) be the {\it second} such ultrafilter on \(\delta^{++}\). By \cref{QPoints}, \(W\) is \(\delta^{++}\)-supercompact and equivalent to a normal fine \(\kappa\)-complete ultrafilter on \(P_\kappa(\delta^{++})\) for some \(\kappa < \delta^{++}\).

Suppose \(\gamma = \delta\) or \(\gamma = \delta^+\). (The argument that follows works in either case.) Let \(U\) be the pre-normal ultrafilter on \(\gamma\) derived from \(W\); thus \(U\) is equivalent to a normal fine \(\kappa\)-complete ultrafilter on \(P_\kappa(\gamma)\). Note that \(U\mo W\) by the proof of \cref{QPoints}. By an argument due to Solovay, for any \(A\subseteq P(\gamma)\), if \(A\in M_W\), then for some normal fine normal fine \(\kappa\)-complete ultrafilter \(\mathcal D\) on \(P_\kappa(\gamma)\) with \(\mathcal D\mo W\), \(A\in M^{M_W}_\mathcal D\). Such an ultrafilter \(\mathcal D\) satisfies \(|P(P(\gamma)) \cap M^{M_W}_\mathcal D| = 2^\gamma\), and so a simple counting argument implies that \(M_W\) thinks there are \(2^{2^\gamma}\) such ultrafilters. Since each such ultrafilter is equivalent to a unique weakly normal ultrafilter on \(\gamma\) by Solovay's lemma, it follows that \(M_W\) satisfies \(|\textnormal{Un}_\gamma| = 2^{2^\gamma}\).

But now \(M_W\) satisfies the hypotheses of \cref{GCHTheorem} at \(\delta\). Since \(\lambda = \delta^{++}\) carries a uniform ultrafilter in \(M_W\) by \cref{QPoints}, in \(M_W\), \(2^\delta < \delta^{++}\). Hence \(M_W\) thinks \(2^\delta = \delta^+\). Since \(P(\delta)\subseteq M_W\), in fact \(2^\delta = \delta^+\) in \(V\).
\end{proof}
\end{thm}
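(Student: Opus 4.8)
The plan is to run \cref{QPoints} at the regular cardinal $\delta^{++}$ and then transport the resulting supercompactness down into the ultrapower it produces, where the hypotheses of \cref{Suboptimal} can be checked. Let $U_0 \swo W$ be the $\swo$-least two countably complete ultrafilters on $\delta^{++}$ extending the club filter. By \cref{QPoints}, $W$ is $\delta^{++}$-supercompact and, up to Rudin--Keisler equivalence, a normal fine $\kappa$-complete ultrafilter on $P_\kappa(\delta^{++})$, where $\kappa\le\delta^{++}$ is least such that $j_Z(\kappa)>\delta^{++}$ for some countably complete $Z$; moreover $U_0\mo W$. The first thing to record is that in fact $\kappa\le\delta$: by \cref{QPoints} the ultrafilter $U_0$ is $\kappa$-complete and $(\kappa,\delta^{++})$-regular, so $\textsc{crt}(U_0)=\kappa$, hence $\kappa$ is measurable and in particular is not a successor cardinal; since $\kappa\le\delta^{++}$, this forces $\kappa\le\delta$. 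Thus both $\delta$ and $\delta^+$ lie in $[\kappa,\delta^{++}]$.

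Fix $\gamma\in\{\delta,\delta^+\}$; the argument is uniform in the two cases. Let $U$ be the pre-normal ultrafilter on $\gamma$ derived from $W$, so that $U$ is equivalent to a normal fine $\kappa$-complete ultrafilter on $P_\kappa(\gamma)$, and $U\mo W$ exactly as in the proof of \cref{QPoints}; in particular $U\in M_W$. Since $W$ is $\gamma$-supercompact, $\textnormal{cf}(\gamma)=\gamma\ge\kappa=\textsc{crt}(W)$, and the derived pre-normal ultrafilter $U$ on $\gamma$ lies in $M_W$, \cref{SolovayMuMeasure} (with $W,\gamma,U$ in the roles of $U,\lambda,D$) shows that $\gamma$ is $M$-commanded in $M_W$: every $A\subseteq P(\gamma)$ lying in $M_W$ belongs to $M_Z$ for some $Z\in\textnormal{Un}^{M_W}_{\le\gamma}$. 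Running Solovay's counting argument inside $M_W$ — each normal fine $\kappa$-complete ultrafilter $\mathcal D$ on $P_\kappa(\gamma)$ with $\mathcal D\mo W$ satisfies $|P(P(\gamma))\cap M^{M_W}_{\mathcal D}|=2^\gamma$, while the $2^{2^\gamma}$-many subsets of $P(\gamma)$ must all be captured by such measures — yields that $M_W$ believes there are $2^{2^\gamma}$ such $\mathcal D$, each corresponding via Solovay's lemma to a distinct weakly normal ultrafilter on $\gamma$. Hence $M_W\vDash|\textnormal{Un}_\gamma|=2^{2^\gamma}$.

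Applying the previous paragraph at $\gamma=\delta$ and $\gamma=\delta^+$ gives $M_W\vDash|\textnormal{Un}_\delta|=2^{2^\delta}$ and $M_W\vDash|\textnormal{Un}_{\delta^+}|=2^{2^{\delta^+}}>2^{\delta^+}$. Also $M_W\vDash\textnormal{Un}_{\delta^{++}}\ne\emptyset$, since $U_0\mo W$ puts $U_0$ into $M_W$, where it remains a countably complete uniform ultrafilter on $\delta^{++}$ (using $P(\delta^+)\subseteq M_W$ to see $\delta^{++}$ is still a cardinal there). So all the hypotheses of \cref{Suboptimal} hold in $M_W$ at $\delta$, whence $M_W\vDash 2^\delta=\delta^+$; since $W$ is $\delta^+$-supercompact, $P(\delta^+)\subseteq M_W$ and so $M_W$ computes $2^\delta$ correctly, giving $2^\delta=\delta^+$ in $V$. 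The main obstacle is the middle step: identifying the derived $P_\kappa(\gamma)$-supercompactness measures correctly (which is exactly where the observation $\kappa\le\delta$ is needed, so that $\delta$ and $\delta^+$ are genuine $P_\kappa$-supercompactness points of $W$), and verifying via \cref{SolovayMuMeasure} plus the Solovay counting argument that $M_W$ genuinely sees $|\textnormal{Un}_\gamma|=2^{2^\gamma}$ for both $\gamma\in\{\delta,\delta^+\}$, so that \cref{Suboptimal} can be invoked internally.
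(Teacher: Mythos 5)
Your proof is correct and follows essentially the same route as the paper's: derive the pre-normal (normal fine) measures on \(\gamma \in \{\delta,\delta^+\}\) from \(W\), run Solovay's reflection-and-counting argument to get \(M_W \vDash |\textnormal{Un}_\gamma| = 2^{2^\gamma}\), and apply \cref{Suboptimal} inside \(M_W\) before transferring down via \(P(\delta^+)\subseteq M_W\) (your explicit observation that \(\kappa\le\delta\) is a useful detail the paper leaves implicit). One caution: the literal conclusion of \cref{SolovayMuMeasure} --- capture of each \(A\subseteq P(\gamma)\) in \(M_W\) by \(M_Z\) for some \(Z\in\textnormal{Un}^{M_W}_{\le\gamma}\) --- is too weak to power the counting, since for an arbitrary such \(Z\) (e.g.\ a principal one, or one of high strength) there is no bound \(|P(P(\gamma))\cap M^{M_W}_Z|\le 2^\gamma\); what the counting needs, and what the paper's ``argument due to Solovay'' supplies, is the variant in which the capturing measures are normal fine \(\kappa\)-complete ultrafilters on \(P_\kappa(\gamma)\) Mitchell-below \(W\), proved by the same factor-embedding reflection as \cref{SolovayMuMeasure} using that the pre-normal \(U\) derived from \(W\) is itself such a measure lying in \(M_W\).
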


\begin{cor}[UA]
If \(\delta\) is a regular cardinal and \(2^\delta\) carries a strongly uniform countably complete ultrafilter, then \(2^\delta \leq \delta^{++}\).
\end{cor}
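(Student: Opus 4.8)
The plan is to argue by contradiction and reduce to \cref{LocalGCH}. Suppose $\delta$ is regular and $2^\delta$ carries a strongly uniform countably complete ultrafilter but $2^\delta > \delta^{++}$, so $2^\delta \geq \delta^{+++}$. Let $W$ be the $\swo$-least strongly uniform countably complete ultrafilter on $2^\delta$ and put $\kappa = \textsc{crt}(W)$. The first step is the arithmetic observation that $\kappa \leq \delta$: since $2^\delta$ is a limit ordinal, $W$ is nonprincipal, so $\kappa$ is measurable, hence strongly inaccessible, so $2^{<\kappa} = \kappa$; but $\kappa \leq \textsc{sp}(W) = 2^\delta$, so if we had $\delta < \kappa$ then $2^\delta < 2^{<\kappa} = \kappa \leq 2^\delta$, a contradiction. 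Thus $\kappa$ is a measurable cardinal $\leq \delta$, and in particular $\kappa < \delta^{++}$.

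The goal of the main argument is to show that, under these hypotheses, $\delta^{++}$ carries two countably complete ultrafilters extending the club filter; granting this, \cref{LocalGCH} forces $2^\delta = \delta^+$, contradicting $2^\delta \geq \delta^{+++}$. To produce these ultrafilters I would first reduce, via the standard reflection argument together with $\textnormal{cf}(2^\delta) > \delta$ (Konig), to the case in which $2^\delta$ is not a singular limit of cardinals carrying strongly uniform ultrafilters — in the remaining case, replace $W$ by the least strongly uniform ultrafilter on a successor element of the (then unbounded-in-$2^\delta$) set of such cardinals, which still lies strictly between $\delta^{++}$ and $2^\delta$, and rerun the argument. Now \cref{SingularIPoints} applies and gives $\textnormal{Un}_{<2^\delta}\I W$; in particular every countably complete uniform ultrafilter of space at most $\delta^{++}$ is internal to $W$, and by the reflection-via-exponentials machinery of \cref{exponential} and \cref{Kunen2}/\cref{Kunen}, $\kappa$ is mapped cofinally into $2^\delta$ by countably complete ultrapowers; in particular $\delta^{++}$ itself carries a countably complete uniform ultrafilter and $j_W(\kappa) > \delta^{++}$. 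At this point \cref{DiscontinuityLemma} shows the least ultrafilter on $\delta^{++}$ is discontinuous at every regular cardinal in $[\kappa,\delta^{++}]$ that carries a uniform ultrafilter, so \cref{LocalTheorem} (and then the argument of \cref{ZeroTheorem}, using \cref{CompactPropagation} and \cref{SupercompactTransfer}) upgrades it to full $\delta^{++}$-supercompactness of some measurable $\kappa' < \delta^{++}$; then \cref{QPoints} — or directly, a normal fine $\kappa'$-complete ultrafilter on $P_{\kappa'}(\delta^{++})$ together with the $\swo$-least ultrafilter on $\delta^{++}$ — furnishes the two required weakly normal ultrafilters on $\delta^{++}$ extending the club filter. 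In certain sub-cases one can shortcut this: e.g. if $2^\delta$ is the successor of a cardinal of the form $2^{\bar\delta}$ with $\bar\delta < \delta$, then \cref{GCHTheorem} applied at $\bar\delta$ (using that $\textnormal{Un}_{(2^{\bar\delta})^+} = \textnormal{Un}_{2^\delta} \ni W$) yields $2^{\bar\delta} = \bar\delta^+$ and hence $2^\delta \leq \delta^{++}$ outright.

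The main obstacle is exactly this descent — extracting, from the bare existence of a strongly uniform countably complete ultrafilter on the large cardinal $2^\delta$, the two club-extending ultrafilters on the much smaller $\delta^{++}$ needed to invoke \cref{LocalGCH}. This is where essentially all of the internal-relation technology of \cref{BasicTheory} is deployed — translation functions, the propagation of supercompactness and strong compactness along $\I$ (\cref{SupercompactTransfer}, \cref{CompactPropagation}), and the interaction of $\I$ with the seed order — alongside reflection and SCH-type arguments to keep the cardinal arithmetic under control; the singular-limit bookkeeping surrounding \cref{SingularIPoints} is a minor but unavoidable complication.
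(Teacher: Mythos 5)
Your overall strategy---reduce to producing two countably complete ultrafilters on \(\delta^{++}\) extending the club filter and then invoke \cref{LocalGCH}---is sensible, and your opening steps are sound: the argument that \(\textsc{crt}(W)\leq\delta\), and the reduction to a cardinal in \((\delta^{++},2^\delta]\) carrying a strongly uniform ultrafilter that is not a singular limit of such cardinals, so that \cref{SingularIPoints} gives \(\textnormal{Un}_{<\lambda}\I W\). But the step you yourself identify as the crux is exactly where the argument breaks, and the lemmas you cite cannot deliver it. From \(\textnormal{Un}_{<2^\delta}\I W\), \cref{Kunen2} only yields that the first fixed point of \(j_W\) above \(\kappa\) exceeds some \(\nu\) with \(\textnormal{Un}_{\leq 2^\nu}\not\I W\), hence some \(\nu\) with \(2^\nu\geq 2^\delta\); the least such \(\nu\) is at most \(\delta\), so this only places the critical sequence of \(j_W\) above something \(\leq\delta\), not above \(\delta^{++}\). \cref{Kunen} is even less help: it applies only to strong limit cardinals \(\mu\) with \(\textnormal{Un}_{<\mu}\I W\), and there are no strong limit cardinals whatsoever in the interval \((\delta,2^\delta]\). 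So neither ``\(j_W(\kappa)>\delta^{++}\)'' nor---what you actually need---``\(\delta^{++}\) carries a countably complete uniform ultrafilter'' follows. The latter requires some countably complete ultrapower to be \emph{discontinuous} at \(\delta^{++}\) (i.e., \(\delta^{++}\)-decomposability), and the only discontinuity that strong uniformity of \(W\) on \(2^\delta\) gives for free is at \(\textnormal{cf}(2^\delta)\), which K\"onig only forces to be \(\geq\delta^+\). Descending from an ultrafilter on \(2^\delta\) to ultrafilters on \(\delta^{++}\) and \(\delta^+\) is the entire content of the corollary, and that descent is asserted rather than proved.

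Two secondary issues. First, \cref{QPoints} \emph{presupposes} two distinct club-extending ultrafilters on \(\delta^{++}\), so it cannot be used to produce them; and pairing the \(\swo\)-least ultrafilter on \(\delta^{++}\) with a normal fine ultrafilter on \(P_{\kappa'}(\delta^{++})\) does not obviously give two \emph{distinct} ultrafilters, since by \cref{GCHFree}(1) the \(\swo\)-least ultrafilter on \(\delta^{++}\) is itself \(\delta^{++}\)-supercompact and hence itself equivalent to a normal fine ultrafilter. What you want here is Solovay's counting argument showing there are at least two normal fine ultrafilters on \(P_{\kappa'}(\delta^{++})\), whose derived weakly normal ultrafilters are then distinct. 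Second, your shortcut through \cref{GCHTheorem} when \(2^\delta=(2^{\bar\delta})^+\) is correct but covers only that special case.
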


\begin{cor}[UA]\label{LimitGCH}
Suppose \(\lambda\) is a limit of regular cardinals carrying uniform countably complete ultrafilters. Then \textnormal{GCH} holds on a tail below \(\lambda\).
\end{cor}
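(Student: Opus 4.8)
The plan is to reduce the statement to \cref{GCHTheorem} and verify its hypothesis on a tail of cardinals below $\lambda$. First observe that $\lambda$, being a limit of regular cardinals (each of which is a cardinal), is a limit cardinal; also every successor cardinal, in particular $(2^\delta)^+$, is regular. Thus it suffices to find $\mu<\lambda$ such that for every cardinal $\delta\in[\mu,\lambda)$ one has both $2^\delta<\lambda$ and $\textnormal{Un}_{(2^\delta)^+}\neq\emptyset$: then $(2^\delta)^+$ is a regular cardinal below $\lambda$ carrying a uniform countably complete ultrafilter, so \cref{GCHTheorem} yields $2^\delta=\delta^+$, i.e.\ \textnormal{GCH} at $\delta$. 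So the whole proof comes down to two claims: (a) $\lambda$ is a strong limit cardinal, and (b) the class of cardinals below $\lambda$ carrying a uniform countably complete ultrafilter contains a tail.

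For the structural input I would fix a strictly increasing cofinal sequence $\langle\gamma_i : i<\textnormal{cf}(\lambda)\rangle$ of regular cardinals below $\lambda$ carrying uniform countably complete ultrafilters, with least ultrafilters $U_i$ on $\gamma_i$. By \cref{LocalTheorem}, each $U_i$ is $\gamma_j$-supercompact for every $j<i$, and feeding this through \cref{GCHFree}~(2),(3) upgrades it to: $U_i$ is $\bar\gamma$-supercompact for every $\bar\gamma<\sup_{j<i}\gamma_j$, so the $M_{U_i}$ are closed under longer and longer sequences as $i$ increases. To prove (a) I would run a least-counterexample argument: let $\delta\in[\mu_0,\lambda)$ be least with $2^\delta\geq\lambda$ (where $\mu_0$ is the threshold from (b)); then for all cardinals $\delta'<\delta$ above $\mu_0$ we get $(2^{\delta'})^+<\lambda$, hence $\textnormal{Un}_{(2^{\delta'})^+}\neq\emptyset$, hence $2^{\delta'}={\delta'}^+$ by \cref{GCHTheorem}, so $2^{<\delta}<\lambda$. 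If $\delta$ is singular, Solovay's theorem on \textnormal{SCH} (cited in the proof of \cref{GCHTheorem}) forces $2^\delta=\delta^+<\lambda$, a contradiction; if $\delta$ is regular, one picks $\gamma_i>\delta$ and uses that $U_i$ is $\delta$-supercompact together with Kunen's inconsistency (to locate a strongly inaccessible cardinal of $M_{U_i}$ above $\delta$) to bound $(2^\delta)^{M_{U_i}}$, hence $2^\delta$, strictly below $\lambda$, again a contradiction. For (b), if some regular $\eta$ with $\textsc{crt}(U_i)\leq\eta<\gamma_i$ carried no uniform countably complete ultrafilter, then $U_i$ would be continuous at $\eta$ by \cref{DiscontinuityLemma}; analysing where this continuity point sits (via \cref{ZeroInternal} and \cref{CofinalityRegularity}) against the covering and supercompactness behaviour of $U_i$ guaranteed by \cref{ZeroTheorem}, and against Ketonen's characterization \cref{KetonenTheorem}, should be incompatible with the existence of the cofinal family $\langle\gamma_i\rangle$, yielding a threshold $\mu_0<\lambda$ as required.

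Combining (a) and (b) with the reduction of the first paragraph finishes the proof, and setting $\mu=\mu_0$ gives \textnormal{GCH} on the tail $[\mu,\lambda)$. The main obstacle is clearly (b): showing that the ultrafilter-carrying cardinals below $\lambda$ form a tail rather than merely a cofinal set — this is exactly the place where the full local theory of $0$-order ultrafilters (\cref{LocalTheorem}, \cref{ZeroTheorem}, \cref{DiscontinuityLemma}) and Ketonen's regularity characterization must be combined, and it is what rules out the apparent counterexample of a limit of measurables separated by cardinals carrying no countably complete ultrafilter. An alternative organization would route the regular case through \cref{LocalGCH} instead of \cref{GCHTheorem}, reflecting the supercompactness of the $U_i$ down to produce two distinct weakly normal ultrafilters on $\delta^{++}$; but this still first requires (b) to guarantee at least one such ultrafilter, so the crux is unchanged. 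Everything else is routine cardinal arithmetic and the least-counterexample pattern already present in the proof of \cref{GCHTheorem}.
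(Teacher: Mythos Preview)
Your claim (b) is false in general under UA, and this breaks the argument. Take an inner model $L[\vec U]$ in which $\lambda$ is the supremum of measurable cardinals $\mu_0<\mu_1<\cdots$ and these are the only measurables below $\lambda$. UA holds there, yet the regular cardinals below $\lambda$ carrying uniform countably complete ultrafilters are exactly the $\mu_i$; no successor cardinal between them carries one, so the set is cofinal but not a tail. Your sketch of (b) also misreads \cref{DiscontinuityLemma}: that lemma asserts discontinuity of $U_i$ at $\bar\delta$ \emph{given} $\textnormal{Un}_{\bar\delta}\neq\emptyset$, not continuity given $\textnormal{Un}_{\bar\delta}=\emptyset$. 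And in the bare-measurables model one has $\textsc{crt}(U_{\mu_i})=\mu_i$, so the interval $[\textsc{crt}(U_i),\gamma_i)$ on which you propose to run the argument is empty. The ``apparent counterexample'' you flag is a genuine counterexample to (b); nothing in the local theory of $0$-order ultrafilters excludes it. Since your reduction to \cref{GCHTheorem} needs $(2^\delta)^+$ --- a successor cardinal --- to carry a uniform ultrafilter for every $\delta$ on a tail, the failure of (b) is fatal to the plan as written.

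The paper gives no proof; \cref{LimitGCH} is stated as a bare corollary of \cref{LocalGCH}. The intended route is presumably to first produce some $\kappa<\lambda$ that is ${<}\lambda$-supercompact and then, for each regular $\delta\in[\kappa,\lambda)$, use $\delta^{+3}$-supercompactness of $\kappa$ to obtain two weakly normal ultrafilters on $\delta^{++}$ and invoke \cref{LocalGCH}, handling singular $\delta$ via Solovay's SCH. This works whenever such a $\kappa$ exists, and in every place the paper actually \emph{uses} \cref{LimitGCH} some local supercompactness or strong compactness has already been established beforehand. But the existence of such a $\kappa$ is exactly what fails in the bare-measurables scenario, so neither your route through \cref{GCHTheorem} nor a naive appeal to \cref{LocalGCH} settles the fully general statement; if you want to salvage the argument you must either rule that scenario out by a different UA argument or weaken (b) to something that still feeds into \cref{LocalGCH}.
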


As a corollary of this, if \(\lambda\) is singular and \(\lambda^+\) carries a two countably complete ultrafilters extending the club filter then \(2^\lambda = \lambda^+\): by \cref{QPoints}, some \(\kappa < \lambda\) is \(\lambda^+\)-supercompact, so by \cref{LimitGCH}, \(\lambda\) is a strong limit cardinal, and hence by Solovay \cite{GCH}, \(2^\lambda = \lambda^+\).

The following theorem appears in \cite{MitchellOrder}:

\begin{thm}[UA]\label{LinearityMO}
Suppose \(\lambda\) satisfies \(2^{<\lambda} = \lambda\). Then the internal relation is linear on normal fine ultrafilters on \(P(\lambda)\).
\end{thm}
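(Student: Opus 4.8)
The plan is to reduce the statement to the linearity of the seed order. The seed order is linear under UA (\cref{Linearity}), and $U\I W$ always implies $U\swo W$ in ZFC (the remark following \cref{SeedExtendInternal}); so it will suffice to show that, on the class of ultrafilters in question, $U\swo W$ conversely implies $U\I W$.

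First I would pass from normal fine ultrafilters on $P(\lambda)$ to uniform countably complete ultrafilters on ordinals. Each normal fine $\kappa$-complete ultrafilter $\mathcal U$ on $P_\kappa(\lambda)$ is Rudin--Keisler equivalent to its weakly normal version $U$, a uniform ultrafilter on an ordinal with $\textsc{crt}(U)=\kappa$ for which $M_U$ is closed under $\lambda$-sequences. Under UA, $j_U$, $M_U$, and hence the relation $U\I W$, depend only on the Rudin--Keisler classes of $U$ and $W$ (using \cref{Commutativity} for $j_U=j_{U'}$ when $U\equiv U'$), so the internal relation is invariant under Rudin--Keisler equivalence, and it is enough to prove $\I$ is linear on the class $\mathcal N$ of weakly normal versions of normal fine ultrafilters on $P(\lambda)$. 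The hypothesis $2^{<\lambda}=\lambda$ — which says that $\lambda$ is either a successor with GCH holding just below it or a limit strong limit cardinal — is what keeps the cardinal arithmetic of $P_\kappa(\lambda)$ and of $P(P_\kappa(\lambda))$ under control, and it will be used again in the counting step below.

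Now fix $U,W\in\mathcal N$ with $U\swo W$; the goal is $U\I W$, i.e., by \cref{SInternal}, $s_W(U)\in M_W$. I would prove this by showing $t_W(U)=s_W(U)$: since $t_W(U)\in\textnormal{Un}^{M_W}\subseteq M_W$, this yields $s_W(U)\in M_W$ immediately. Both $t_W(U)$ and $s_W(U)$ are $W$-limits whose $W$-limit is $U$, with space at most $[\textnormal{id}]_W=\sup j_W[\lambda]$. Using \cref{NormalGeneration} with $\mathcal F$ the club filter on $P(\lambda)$, so that $j_W[\mathcal F]$ together with the seed generates $j_W(\mathcal F)$, one argues as in \cref{GeneralInternal} that $t_W(U)$ is, as computed in $M_W$, either principal or the weakly normal version of a normal fine ultrafilter on $P(\sup j_W[\lambda])$ with critical point $\kappa=\textsc{crt}(U)$. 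In the principal case \cref{InternalTranslations} and \cref{Reciprocity} give $U\I W$ directly. In the nonprincipal case, $s_W(U)$ is such an ultrafilter too — the canonical pushforward of the weakly normal $U$ along $j_W$ — and by \cref{LimitEmbeddings} and the $\lambda$-closure of $M_U$, the ultrapower of $M_W$ by any normal fine $M_W$-ultrafilter on $P(\sup j_W[\lambda])$ of critical point $\kappa$ with $W$-limit $U$ must be $M_U$ equipped with the embedding $j_U\restriction M_W$; combined with the minimality defining $t_W(U)$, this forces $t_W(U)=s_W(U)$.

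The main obstacle is exactly this last step — showing the $\swo^{M_W}$-minimal $W$-limit returning $U$ is the canonical pushforward $s_W(U)$ and not some strictly $\swo^{M_W}$-smaller normal fine ultrafilter of $M_W$. This is the nontrivial analog of what is a triviality for normal measures on a cardinal $\kappa$: there $j_W\restriction\kappa$ is the identity, $W^-$ acts as the identity on $M_W$-ultrafilters on $\kappa$, and so $U\swo W$ literally says $U\in M_W$. I expect $2^{<\lambda}=\lambda$ to enter here through a counting argument along the lines of \cref{PredecessorBound}, bounding the normal fine ultrafilters of $M_W$ on $P(\sup j_W[\lambda])$ of critical point $\kappa$ lying $\swo^{M_W}$-below $s_W(U)$ and showing there are none; equivalently, one shows $t_U(W)=j_U(W)$ and concludes $U\I W$ by \cref{jInternal}, the reverse inequality $j_U(W)\wo^{M_U} t_U(W)$ coming again from the normality-plus-minimality argument applied inside $M_U$. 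Making that minimality precise is, I believe, the heart of the proof.
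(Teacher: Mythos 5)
First, a point of reference: the paper does not actually prove \cref{LinearityMO} --- it is quoted from \cite{MitchellOrder} --- so there is no in-paper argument to measure yours against. Judged on its own terms, your reduction is the right shape: both weakly normal (or pre-normal) versions live in \(\textnormal{Un}_\lambda\), so by \cref{SpaceLemma}/\cref{SeedExtendInternal} and the linearity of \(\swo\) it does suffice to prove that \(U\swo W\) implies \(U\I W\) on this class, and aiming at \(t_W(U)=s_W(U)\) via \cref{NormalGeneration} is exactly the pattern used elsewhere in the paper (\cref{QPoints}, \cref{IrredGCH}). The invariance of \(\I\) under Rudin--Keisler equivalence and the passage to weakly normal versions are also fine.

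The gap is exactly where you locate it, and the justification you offer for bridging it does not go through as stated. You assert that ``the ultrapower of \(M_W\) by any normal fine \(M_W\)-ultrafilter on \(P(\sup j_W[\lambda])\) of critical point \(\kappa\) with \(W\)-limit \(U\) must be \(M_U\) equipped with \(j_U\restriction M_W\).'' What \cref{LimitEmbeddings} gives for such a \(Z\) is only a factor embedding \(k:M_U\to M^{M_W}_Z\) with \(k\circ j_U=j^{M_W}_Z\circ j_W\) and \(k([\text{id}]_U)=[\text{id}]_Z\); to conclude \(M^{M_W}_Z=M_U\) one must show \(k\) is surjective, i.e.\ that \(j^{M_W}_Z[M_W]\) lies in the hull of \(j^{M_W}_Z\circ j_W[V]\cup\{[\text{id}]_Z\}\), which comes down to getting \(j^{M_W}_Z(j_W[\lambda])\) into \(\text{ran}(k)\). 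That is not a consequence of ``\(\lambda\)-closure of \(M_U\)''; it is essentially the content of the theorem (compare the surjectivity argument in \cref{PushforwardLemma}, which works precisely because there \(Z=s_W(U)\) is the pushforward by construction). Likewise the only place the hypothesis \(2^{<\lambda}=\lambda\) appears in your argument is as an expectation (``a counting argument along the lines of \cref{PredecessorBound}''), not as a step. So what you have is a correct reduction together with an honest description of the open core; the core itself --- pinning down the \(\swo^{M_W}\)-least translation as the canonical pushforward --- is missing.
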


As we explained there, this is essentially the same as saying that the Mitchell order is linear. By the results here we can remove the GCH hypothesis in all but two cases. Actually that was the original impetus for this work, although its other applications turned out to be much more interesting. As a corollary of the local GCH results, in many cases the hypothesis \(2^{<\lambda} = \lambda\) can be omitted since it simply follows from the existence of a normal fine ultrafilter on \(P(\lambda)\).

\begin{thm}[UA]
Suppose \(\lambda\) is a limit cardinal, the successor of a singular cardinal, or the double successor of a cardinal of cofinality greater than or equal to the least \(\lambda\)-supercompact cardinal. Then the internal relation is linear on normal fine ultrafilters on \(P(\lambda)\).
\end{thm}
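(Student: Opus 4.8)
The whole content of the theorem is the cardinal arithmetic statement $2^{<\lambda}=\lambda$; once this is established the conclusion is immediate from \cref{LinearityMO}. If $P(\lambda)$ carries no normal fine ultrafilter there is nothing to prove, so assume it does; then some $\kappa\leq\lambda$ is $\lambda$-supercompact, so $\kappa$ is $\gamma$-supercompact for every $\gamma\leq\lambda$ and in particular every regular cardinal in $[\kappa,\lambda)$ carries a uniform $\kappa$-complete ultrafilter. Write $\kappa_0$ for the least $\lambda$-supercompact cardinal.

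\emph{Case 1: $\lambda$ is a limit cardinal.} The successor cardinals in $[\kappa,\lambda)$ are regular and cofinal in $\lambda$, so $\lambda$ is a limit of regular cardinals carrying uniform countably complete ultrafilters. By \cref{LimitGCH}, GCH holds on a tail below $\lambda$; since $\lambda$ is a limit cardinal this forces $2^\mu<\lambda$ for \emph{every} $\mu<\lambda$ (a failure at some small $\mu$ would persist above the tail), so $2^{<\lambda}=\lambda$. \emph{Case 2: $\lambda=\nu^+$ with $\nu$ singular.} Since $\lambda$ is not measurable and $\nu$ is not regular, $\kappa<\nu$, and then $\nu$ is a singular limit of regular cardinals carrying uniform countably complete ultrafilters, so the same use of \cref{LimitGCH} makes $\nu$ a strong limit cardinal. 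As $\kappa<\nu$ is strongly compact (indeed $\lambda$-supercompact) and $\nu$ is a singular strong limit cardinal above $\kappa$, Solovay's theorem on SCH \cite{GCH} gives $2^\nu=\nu^+=\lambda$, so $2^{<\lambda}=2^\nu=\lambda$.

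\emph{Case 3: $\lambda=\rho^{++}$ with $\textnormal{cf}(\rho)\geq\kappa_0$.} Here $2^{<\lambda}=2^{\rho^+}$, so I must show $2^{\rho^+}=\rho^{++}$. Fix a $\lambda$-supercompact ultrafilter $U$ with $\textsc{crt}(U)=\kappa_0$. Then $P(\rho^+)\subseteq M_U$ and $M_U$ computes $\rho^{++}$ correctly, so it suffices to show $M_U\vDash 2^{\rho^+}=\rho^{++}$. Since $U$ is $\lambda$-supercompact we have $j_U\restriction\rho^{++}\in M_U$, and since $\textnormal{cf}(\rho)\geq\kappa_0=\textsc{crt}(U)$ and $\textnormal{cf}(\rho^+)=\rho^+>\kappa_0$, the pre-normal ultrafilters on $\rho$ and $\rho^+$ derived from $U$ belong to $M_U$; hence \cref{SolovayMuMeasure} (applied with ``$\lambda$'' $=\rho$ and $=\rho^+$) shows $\rho$ and $\rho^+$ are $M$-commanded in $M_U$. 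Running the counting argument from the proof of \cref{LocalGCH} inside $M_U$ — the normal fine ultrafilters that are internal to $U$ and Mitchell-below it account for $2^{2^\gamma}$-many weakly normal ultrafilters on $\gamma$, for $\gamma\in\{\rho,\rho^+\}$ — puts $M_U$ in position to run the argument of \cref{GCHTheorem} at $\delta=\rho^+$, yielding $M_U\vDash 2^{\rho^+}=(\rho^+)^+=\rho^{++}$. Since $P(\rho^+)\subseteq M_U$ this descends to $V$, so $2^{<\lambda}=\lambda$ and \cref{LinearityMO} finishes the proof.

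I expect essentially all of the difficulty to lie in Case 3. The delicate points are verifying that the derived pre-normal ultrafilters genuinely belong to $M_U$, so that \cref{SolovayMuMeasure} is applicable, and fitting the hypothesis $\textnormal{cf}(\rho)\geq\kappa_0$ to exactly the form needed to run the \cref{GCHTheorem} argument inside $M_U$ — in particular controlling whether $(2^{\rho^+})^+$ is preserved by the ultrapower, which is the bookkeeping that forces the proof of \cref{GCHTheorem} itself to branch into several sub-cases. A minor additional point is checking that Solovay's SCH theorem applies in Case 2 from the partial strong compactness available there.
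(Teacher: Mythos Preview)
Your overall strategy---reduce to $2^{<\lambda}=\lambda$ and invoke \cref{LinearityMO}---is exactly the paper's, and Cases 1 and 2 are fine. Case 3, however, has a genuine gap in the invocation you make.

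You want $2^{\rho^+}=\rho^{++}$, and you propose to ``run the argument of \cref{GCHTheorem} at $\delta=\rho^+$'' inside $M_U$. But that argument requires an ultrafilter on $(2^{\rho^+})^+$; you do not have one, and your $M$-command facts at $\rho$ and $\rho^+$ do not produce one. The point at which \cref{GCHTheorem} uses \cref{SolovayMuMeasure} is for $\gamma\in\{\delta,\delta^+\}=\{\rho^+,\rho^{++}\}$, not for $\{\rho,\rho^+\}$, so your setup does not slot into that proof. The case split in \cref{GCHTheorem} over whether $(2^\delta)^+$ is preserved is not the obstruction---the obstruction is that the very starting hypothesis $\textnormal{Un}_{(2^{\rho^+})^+}\neq\emptyset$ is unavailable.

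The fix is short. First, you may assume there are at least two normal fine ultrafilters on $P(\lambda)$, else linearity is vacuous; then $\rho^{++}$ carries two weakly normal ultrafilters, and by \cref{QPoints} the second one $V_1$ has critical point exactly $\kappa_0$, is $\rho^{++}$-supercompact, and satisfies $V_0\mo V_1$. Take $U=V_1$. Now in $M_U$ you have everything you set up plus $\textnormal{Un}_{\rho^{++}}\neq\emptyset$ (witnessed by $V_0$). Apply \cref{Suboptimal} in $M_U$ at $\delta=\rho$ (not $\rho^+$): the hypotheses $|\textnormal{Un}_\rho|=2^{2^\rho}$, $|\textnormal{Un}_{\rho^+}|=2^{2^{\rho^+}}>2^{\rho^+}$, $\textnormal{Un}_{\rho^{++}}\neq\emptyset$ give $2^\rho=\rho^+$. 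Then \cref{SolovayBound} at $\rho$ gives $2^{2^\rho}=(2^\rho)^+$, i.e.\ $2^{\rho^+}=\rho^{++}$ in $M_U$, and this descends to $V$ since $P(\rho^+)\subseteq M_U$. The hypothesis $\textnormal{cf}(\rho)\geq\kappa_0$ is used exactly once: to verify the cofinality requirement in \cref{SolovayMuMeasure} at $\gamma=\rho$.
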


In other words, the only cases we cannot handle by current techniques are successors of inaccessible cardinals and double successors of singulars of small cofinality. 
\section{The Next Strongly Compact Cardinal}\label{SecondStronglyCompact}
The point of this section is to extend the global results of the previous section beyond the least strongly compact cardinal and the local results beyond the least ultrafilters. Our target theorem is the following:
\begin{thm}[UA]\label{Global}
Suppose \(\kappa\) is strongly compact. Then either \(\kappa\) is supercompact or \(\kappa\) is a measurable limit of supercompact cardinals.
\end{thm}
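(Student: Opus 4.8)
The plan is to argue by induction on $\kappa$, with the base case furnished by the theorem that the least strongly compact cardinal is supercompact (\cref{GlobalLeastSupercompact}). So assume the conclusion for all strongly compact cardinals below $\kappa$, and suppose toward a contradiction that $\kappa$ is strongly compact but neither supercompact nor a measurable limit of supercompact cardinals. Since $\kappa$ is measurable, the latter says $\mu_0:=\sup\{\mu<\kappa:\mu\text{ supercompact}\}<\kappa$. Two facts follow from the induction hypothesis. First, no cardinal in the open interval $(\mu_0,\kappa)$ is strongly compact: such a $\nu$ would, by induction, be supercompact, which is impossible as $\nu>\mu_0$, or a measurable limit of supercompacts, which is impossible as those supercompacts are cofinal in $\nu$ but all lie below $\mu_0$. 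Second, there \emph{is} a supercompact cardinal below $\kappa$: otherwise there are no strongly compact cardinals below $\kappa$ at all, so $\kappa$ is the least strongly compact cardinal and \cref{GlobalLeastSupercompact} finishes the proof directly. Write $\kappa_0$ for the least supercompact, equivalently least strongly compact, cardinal, so $\kappa_0\le\mu_0<\kappa$.

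Now fix a regular cardinal $\lambda\ge\kappa$; it suffices to show $\kappa$ is $\lambda$-supercompact, since then, $\lambda$ being arbitrary, $\kappa$ is supercompact, a contradiction. Choose a successor cardinal $\delta>\lambda$. Since $\kappa$ is $\delta$-strongly compact, $\delta$ carries the $\swo$-least (hence $0$-order) ultrafilter $U_\delta$, and $\delta$ also carries a $\kappa$-complete weakly normal ultrafilter $V$ (derived from a $\kappa$-complete uniform ultrafilter on $\delta$, which exists by Ketonen's characterization of strong compactness). Because $\kappa_0$ is $\delta$-strongly compact, every regular cardinal in $[\kappa_0,\delta]$ carries a uniform countably complete ultrafilter, so $U_\delta$ is $(\kappa_0,\delta)$-regular by \cref{KetonenTheorem}, hence $\kappa_0$-decomposable, so $\textsc{crt}(U_\delta)\le\kappa_0<\kappa$; in particular $U_\delta\ne V$. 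Thus $\delta$ carries at least two countably complete ultrafilters extending the club filter, and \cref{QPoints} applies: letting $U_0\swo U_1$ be the $\swo$-least two (so $U_0=U_\delta$) and $\mu_\delta$ the least cardinal mapped above $\delta$ by a countably complete ultrafilter, $U_1$ is $\delta$-supercompact with $\textsc{crt}(U_1)=\mu_\delta$, and $U_0\mo U_1$. Since $\kappa$ is $\delta$-strongly compact, some ultrafilter maps $\kappa$ above $\delta$, so $\mu_\delta\le\kappa$; and if $\mu_\delta=\kappa$ then $U_1$ witnesses that $\kappa$ is $\delta$-supercompact, finishing the proof. So we may assume $\mu_\delta<\kappa$.

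The remaining work is to \emph{propagate supercompactness up to $\kappa$ itself}. The obstacle is that, in the presence of the supercompact cardinal $\kappa_0<\kappa$, the least ultrafilter on any large $\delta$ has critical point at most $\kappa_0$, so it only re-detects the supercompact cardinals below $\kappa$ and never $\kappa$. To reach $\kappa$, I would relativize: pass to the ultrapower $j\colon V\to M$ by a normal fine supercompactness ultrafilter on $P_{\mu^*}(\delta)$ for a supercompact $\mu^*$ close to $\mu_0$; then $j(\mu^*)>\delta$, so $\mu^*$ is $\delta$-supercompact in $M$, while inside $M$ the span of relevant large cardinals below $\kappa$ has genuinely contracted, so that the least ultrafilter on the appropriate successor cardinal \emph{computed in $M$} has critical point pushed strictly above $\mu^*$. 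Iterating this relativization, and at each stage transporting the witnesses back to $V$ using that the internal relation propagates supercompactness and covering (\cref{SupercompactTransfer}, \cref{CompactPropagation}, together with \cref{InternalTranslations}, \cref{GeneralInternal}, \cref{RegularCor}) and using that $\kappa$, being strongly compact, forces the relevant ultrapowers to compute $P(\lambda)$, the critical points should climb all the way to $\kappa$; once a least ultrafilter with critical point $\kappa$ appears, \cref{ZeroTheorem} (or \cref{QPoints}) delivers that $\kappa$ is $\lambda$-supercompact, which is the desired contradiction.

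The main obstacle is precisely the bookkeeping in the previous paragraph: one must control the non-decreasing sequence of critical points produced by the repeated relativization and rule out that it stabilizes at some cardinal strictly between $\mu_0$ and $\kappa$ --- that is, rule out an intermediate obstruction. This is exactly the point at which the least strongly compact cardinal has to have been handled first, by the separate argument of \cref{GlobalLeastSupercompact}, as the introduction emphasizes, and it is where the argument is genuinely delicate. A secondary technical matter is the strongly inaccessible case for $\delta$, which I would sidestep by always choosing $\delta$ to be a successor cardinal and reducing $\lambda$-supercompactness for arbitrary $\lambda$ to $\delta$-supercompactness for such $\delta$, using the covering properties of \cref{CofinalityRegularity} where needed; this avoids the open problem about least ultrafilters on strongly inaccessible cardinals noted in the excerpt.
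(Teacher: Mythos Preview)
Your setup through the application of \cref{QPoints} is fine, but the heart of the argument---the ``propagation'' of the last two paragraphs---is not actually carried out. You acknowledge that the sequence of critical points might stabilize strictly between $\mu_0$ and $\kappa$ and say only that this ``is where the argument is genuinely delicate''; that is precisely the content of the theorem, and no mechanism is given to rule it out. Iterating ultrapowers by supercompactness measures below $\kappa$ and hoping the critical points climb is not a proof: inside each such ultrapower $\kappa$ is still strongly compact and there are still supercompacts below it, so you are back in the same situation with no visible progress measure. The tools you cite (\cref{SupercompactTransfer}, \cref{CompactPropagation}, \cref{InternalTranslations}, \cref{GeneralInternal}, \cref{RegularCor}) propagate supercompactness \emph{downward or sideways} along the internal relation; none of them manufactures a $\kappa$-complete supercompactness witness from $\mu$-complete ones for $\mu<\kappa$.

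The paper's argument bypasses this obstacle by a different idea you do not use: irreducible factorization. Instead of the least ultrafilter on $\delta$, one takes the $\swo$-least \emph{$\kappa$-complete} weakly normal ultrafilter $U$ on $\lambda=\gamma^+$ for a strong limit $\gamma>\kappa$ of uncountable cofinality (so SCH above $\kappa$ makes \cref{IrredSingular} available). If $U$ is irreducible, \cref{IrredSingular} gives $\kappa$ is ${<}\gamma$-supercompact directly. If not, \cref{Factorization} yields a nonprincipal divisor $D$ with $t_D(U)$ irreducible in $M_D$; one checks $\textsc{crt}(D)=\kappa$ and that $t_D(U)$ has critical point $\kappa_*\in[\kappa,j_D(\kappa))$, so \cref{IrredSingular} in $M_D$ makes $\kappa_*$ ${<}j_D(\gamma)$-supercompact there, and the usual reflection shows $\kappa$ is a limit of ${<}\gamma$-supercompacts. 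A pigeonhole over $\gamma$ then gives the dichotomy. The missing ingredient in your sketch is exactly this pair \cref{Factorization}/\cref{IrredSingular}; once you have it, no iteration or bookkeeping is needed.
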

In a sense this is best possible, since every measurable limit of supercompact compact cardinals is strongly compact by a construction of Menas. Better yet, UA + GCH yields a fairly satisfying local analysis of strong compactness that implies that {\it the only way to obtain strong compactness in the absence of supercompactness is Menas's construction.}

\subsection{Factorization into irreducibles}
\begin{defn}
An ultrafilter \(W\) is {\it irreducible} if for all \(U\D W\), either \(U\) is principal or \(U\equiv W\).
\end{defn}

A key structural consequence of UA, which relatively easy compared to the results of this paper, is an ultrafilter factorization theorem:

\begin{thm}[UA]\label{Factorization}
Every countably complete ultrafilter factors as a finite iteration of irreducible ultrafilters.
\begin{proof}
Suppose towards a contradiction that \(U\in \textnormal{Un}\) is \(\swo\)-least at which the theorem fails. Then \(U\) is reducible. Let \(D\) be a proper divisor of \(U\). Then without loss of generality \(D\swo U\). Moreover by \cref{CanonicalComparison}, \(U\equiv D \oplus t_D(U)\). Since \(D\D U\) and \(D\) is nonprincipal, \(D\not \I U\): otherwise \(j_D\restriction \text{Ord}\) is amenable to \(M_U\subseteq M_D\), a contradiction. Hence \(t_D(U) \neq j_D(U)\) by \cref{jInternal}. It follows that \(t_D(U) \swo^{M_D} j_D(U)\) by the definition of translation functions and the linearity of the seed order.

By the minimality of \(U\), \(D\) factors as a finite iteration of irreducible ultrafilters. In \(M_D\), by the minimality of \(j_D(U)\), \(t_D(U)\) factors as a finite iteration of irreducible ultrafilters. Composing the two factorizations yields a factorization of \(U\) as a finite iteration of irreducible ultrafilters, a contradiction.
\end{proof}
\end{thm}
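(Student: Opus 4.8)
The plan is an induction on the seed order. Assuming the theorem fails, use the wellfoundedness of \(\swo\) to fix a \(\swo\)-least counterexample \(U\in\textnormal{Un}\). A principal ultrafilter, and any irreducible ultrafilter, is a (trivial, length-one) iteration of irreducibles, so \(U\) must be reducible: it has a proper nonprincipal divisor \(D\). Since \(D\D U\) we have \(D\vee U\D U\) while trivially \(U\D D\vee U\), so by \cref{CanonicalComparison} and the definition of \(\vee\), \(U\equiv D\oplus t_D(U)\); here \(t_D(U)\) is nonprincipal in \(M_D\), since otherwise \(D\oplus t_D(U)\equiv D\) and \(D\) would not be a proper divisor. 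Using \cref{SumSeed} (comparing \(t_D(U)\) with the ultrafilter principal at \(0\) in \(M_D\), which is \(\swo^{M_D}\)-minimal by \cref{SpaceLemma}), together with the Rudin--Keisler invariance of both the statement and of \(\swo\) on nonprincipal ultrafilters, we may assume \(D\swo U\).

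The next step is to place the factor \(t_D(U)\) strictly below \(j_D(U)\) in the seed order of \(M_D\). First, \(D\not\I U\): otherwise \(j_D\restriction\text{Ord}\) would be an amenable class of \(M_U\), hence of \(M_D\supseteq M_U\), contradicting that \(D\) is nonprincipal. By \cref{jInternal} this forces \(t_D(U)\neq j_D(U)\). On the other hand \(D^-(j_D(U))=U\), so \(j_D(U)\) is one of the ultrafilters over which \(t_D(U)\) is minimized; hence \(t_D(U)\wo^{M_D} j_D(U)\). Combining these with the linearity of \(\swo^{M_D}\) (\cref{Linearity} relativized to \(M_D\)) yields \(t_D(U)\swo^{M_D} j_D(U)\).

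Now assemble the factorization. By the minimality of \(U\) in the seed order of \(V\), \(D\) factors as a finite iteration of irreducibles. Applying \(j_D\) to the induction hypothesis, in \(M_D\) every countably complete ultrafilter that is \(\swo^{M_D} j_D(U)\) factors as a finite iteration of irreducibles; since \(t_D(U)\) is such an ultrafilter, it so factors inside \(M_D\). Concatenating the factorization of \(D\) with the (\(M_D\)-internal) factorization of \(t_D(U)\) exhibits \(U\) as a finite iteration of irreducibles, contradicting the choice of \(U\); this concatenation is legitimate because \(U\equiv D\oplus t_D(U)\), the operation \(\oplus\) is associative under UA (via \cref{Commutativity}), and \(M_D\) is precisely the base model appropriate to the second block of the iteration.

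The main obstacle, and essentially the only place UA is used, is the middle step: one must rule out that the nonprincipal divisor \(D\) is internal to \(U\), and then upgrade \(t_D(U)\wo^{M_D}j_D(U)\) to a \emph{strict} inequality so that the induction hypothesis, transported by \(j_D\), applies to \(t_D(U)\) in \(M_D\). This is exactly the content of \cref{jInternal} together with the linearity of the seed order; without linearity one could not pass from \(t_D(U)\neq j_D(U)\) and \(t_D(U)\wo^{M_D}j_D(U)\) to \(t_D(U)\swo^{M_D}j_D(U)\), and the induction would not close.
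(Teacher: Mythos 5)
Your proof is correct and follows essentially the same route as the paper: a $\swo$-least counterexample, the decomposition $U\equiv D\oplus t_D(U)$ via \cref{CanonicalComparison}, the observation that $D\not\I U$ forcing $t_D(U)\swo^{M_D} j_D(U)$ via \cref{jInternal} and linearity, and then the induction hypothesis applied in $V$ to $D$ and (via elementarity of $j_D$) in $M_D$ to $t_D(U)$. The extra details you supply --- why $t_D(U)$ is nonprincipal and why one may assume $D\swo U$ --- are correct fillings-in of steps the paper leaves implicit.
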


We acknowledge that the argument in \cref{Factorization} that \(t_D(U)\neq j_D(U)\) does not really require using the internal relation, let alone UA. We are ultimately reproving with our notation the standard fact that if \(U\) and \(D\) are countably complete ultrafilters and \(D\) is nonprincipal then \(U\neq D\times U\). Obvious as it may appear, \cref{Factorization} itself is not provable in ZFC by a theorem of Gitik \cite{Gitik}.

The factorization theorem is only useful if one can analyze irreducible ultrafilters. In the next section we will show that they are supercompact up to their spaces. For this we need a different sort of factorization lemma, which states, assuming enough GCH, that {\it ultrafilters factor at their continuity points}. Its proof is the source of most of our cardinal arithmetic woes.

\begin{thm}[UA]\label{ContinuityFactor}
Suppose \(U\) is a \(\kappa\)-complete ultrafilter and \(\delta\) is a regular cardinal such that \(j_U(\delta) = \sup j_U[\delta]\). Let \(\lambda = \textnormal{ot}(\textnormal{Un}_{<\delta},<_S)\). Suppose \(2^{\lambda} < \delta^{+\kappa}\). Then \(U\) factors as \(D\oplus Z\) where \(\textsc{sp}(D) < \delta\) and \(\textsc{crt}(Z) > j_D(\delta)\).
\begin{proof}
Take \(p : \textsc{sp}(U)\to 2^{\lambda}\) such that letting \(W = p_*(U)\) and \(k: M_W\to M_U\) be the factor embedding, \(\textsc{crt}(k) > \lambda\). Note that if \(U\) projects to a uniform ultrafilter on a cardinal between \(\delta\) and \(\delta^{+\kappa}\), then \(U\) is discontinuous at \(\delta\). Hence we may replace \(W\) with an equivalent uniform ultrafilter \(D\) on some cardinal \(\gamma < \delta\). 

Note that the map \(Z\mapsto D\oplus Z\) is an order embedding from \((\textnormal{Un}^{M_D}_{<j_D(\delta)},<^{M_D}_S)\) to \((\textnormal{Un}_{<\delta},<_S)\) by \cref{SumSeed}. Therefore  \(j_D(\lambda) = \text{ot}(\textnormal{Un}^{M_D}_{<j_D(\delta)},<^{M_D}_S) \leq \lambda\), so \(j_D(\lambda) = \lambda\).  Thus since \(\textsc{crt}(k) > \lambda\), \(\text{ot}(\textnormal{Un}^{M_U}_{<j_U(\delta)},<^{M_U}_S) = \lambda\). It follows that
\(\textnormal{Un}^{M_U}_{<j_U(\delta)}\subseteq \text{ran}(k)\). By \cref{Reciprocity}, \(t_U(D) \wo P^{M_U}_{k([\text{id}]_D)}\). But since \(\textsc{sp}(D) = \gamma\), we have \([\text{id}]_D < j_D(\gamma)\), so \(k([\text{id}]_D) < j_D(\delta)\). So \(t_U(D)\in \textnormal{Un}^{M_U}_{<j_U(\delta)}\).

Hence \(t_U(D)\in \text{ran}(k)\). Let \(D_* = k^{-1}(t_U(D))\). It is easy to see that \(D^{-}(D_*) = D\). So by the definition of translation functions, \[D_*\lwo^{M_D} t_D(D) = P^{M_D}_{[\text{id}]_D}\] On the other hand since \(t_U(D) \wo^{M_U} P^{M_U}_{k([\text{id}]_D)}\), \[D_* \wo^{M_D} P^{M_D}_{[\text{id}]_D}\] It follows that \(D_* = P^{M_D}_{[\text{id}]_D}\). Hence \(t_U(D) = P^{M_U}_{k([\text{id}]_D)}\), so \(D\) divides \(U\) and moreover \(k = j^{M_D}_{t_D(U)}\) by \cref{Reciprocity}. 

Taking \(Z = t_D(U)\) we therefore have \(U \equiv D\oplus Z\) where \(\textsc{sp}(D) < \delta\) and \(\textsc{crt}(Z) > \lambda = j_D(\lambda) > j_D(\delta)\). This proves the theorem.
\end{proof}
\end{thm}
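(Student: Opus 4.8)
The plan is to produce the factorization by exhibiting a divisor $D$ of $U$ whose space is strictly below $\delta$; the cofactor will then be forced to be $Z=t_D(U)$, and the hypothesis that $j_U$ (hence $j_D$) is continuous at $\delta$ will be used to see that $\textsc{crt}(Z)$ is large. The guiding picture is that continuity of $j_U$ at $\delta$ means $U$ has no ``uniform component'' sitting at $\delta$, so any projection of $U$ onto a sufficiently small set must in fact land below $\delta$; and the content of ``$D$ divides $U$'' is, via \cref{Reciprocity}, that $t_U(D)$ is principal in $M_U$.

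First I would pass to an ultrafilter on a small set. Using \cref{exponential} and its corollary, fix $p\colon\textsc{sp}(U)\to 2^\lambda$ so that, setting $W=p_*(U)$ and letting $k\colon M_W\to M_U$ be the factor embedding, $\textsc{crt}(k)>\lambda$. Then I would argue that $W$ is equivalent to a uniform ultrafilter $D$ on some cardinal $\gamma<\delta$: since $U$ is $\kappa$-complete and $j_U$ is continuous at $\delta$, $U$ cannot project onto a uniform ultrafilter on any cardinal in $[\delta,\delta^{+\kappa})$ (such a projection would force $j_U$ to be discontinuous at $\delta$), and $\textsc{sp}(W)\le 2^\lambda<\delta^{+\kappa}$ leaves only the option $\gamma<\delta$.

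Next I would identify $\lambda$ and show it is fixed by $j_D$. By \cref{SumSeed}, the map $Z\mapsto D\oplus Z$ is an order embedding of $(\textnormal{Un}^{M_D}_{<j_D(\delta)},\swo^{M_D})$ into $(\textnormal{Un}_{<\delta},\swo)$ — here one uses continuity of $j_D$ at $\delta$ to see that sums $D\oplus Z$ with $\textsc{sp}(Z)<j_D(\delta)$ have space below $\delta$ — whence $j_D(\lambda)=\textnormal{ot}(\textnormal{Un}^{M_D}_{<j_D(\delta)},\swo^{M_D})\le\lambda$, so $j_D(\lambda)=\lambda$. Pushing this through $k$ and using $\textsc{crt}(k)>\lambda$, one gets $\textnormal{ot}(\textnormal{Un}^{M_U}_{<j_U(\delta)},\swo^{M_U})=j_U(\lambda)=\lambda$, and since $k$ fixes everything below $\lambda$, applying $k$ to the $\swo^{M_D}$-increasing enumeration of $\textnormal{Un}^{M_D}_{<j_D(\delta)}$ yields $\textnormal{Un}^{M_U}_{<j_U(\delta)}\subseteq\textnormal{ran}(k)$.

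Finally I would show $D\D U$ with $t_D(U)$ of large critical point. By \cref{Reciprocity}, $t_U(D)\wo^{M_U}P^{M_U}_{k([\text{id}]_D)}$; since $\textsc{sp}(D)=\gamma<\delta$ we have $[\text{id}]_D<j_D(\gamma)\le j_D(\delta)$, so $k([\text{id}]_D)<j_U(\delta)$ and hence $t_U(D)\in\textnormal{Un}^{M_U}_{<j_U(\delta)}\subseteq\textnormal{ran}(k)$. Setting $D_*=k^{-1}(t_U(D))$, a routine check gives $D^-(D_*)=D$, so by minimality of translates $D_*\lwo^{M_D}t_D(D)=P^{M_D}_{[\text{id}]_D}$, while pulling back $t_U(D)\wo^{M_U}P^{M_U}_{k([\text{id}]_D)}$ through $k$ gives $D_*\wo^{M_D}P^{M_D}_{[\text{id}]_D}$; hence $D_*=P^{M_D}_{[\text{id}]_D}$, so $t_U(D)=P^{M_U}_{k([\text{id}]_D)}$ is principal, $D\D U$, and $k=j^{M_D}_{t_D(U)}$ by \cref{Reciprocity}. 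Taking $Z=t_D(U)$ yields $U\equiv D\oplus Z$ with $\textsc{sp}(D)<\delta$ and $\textsc{crt}(Z)=\textsc{crt}(k)>\lambda\ge j_D(\delta)$. The step I expect to be most delicate is the cardinal-arithmetic bookkeeping around $2^\lambda<\delta^{+\kappa}$: one must verify carefully that projecting onto $2^\lambda$ really traps the image strictly below $\delta$ rather than somewhere in $[\delta,\delta^{+\kappa})$, and keep straight that $\lambda\ge\delta$ so that $j_D(\delta)\le j_D(\lambda)=\lambda$ and the bound on $\textsc{crt}(Z)$ is actually the one claimed.
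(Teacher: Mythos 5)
Your proposal is correct and follows the paper's own proof essentially step for step: project $U$ onto $2^\lambda$ to get $D\in\textnormal{Un}_{<\delta}$, use \cref{SumSeed} to see $j_D(\lambda)=\lambda$ and hence $\textnormal{Un}^{M_U}_{<j_U(\delta)}\subseteq\textnormal{ran}(k)$, pull $t_U(D)$ back through $k$ to conclude it is principal, and take $Z=t_D(U)$. Your closing remarks (that $k([\textnormal{id}]_D)<j_U(\delta)$ and that $\lambda\ge\delta$ gives $j_D(\delta)\le j_D(\lambda)=\lambda<\textsc{crt}(k)$) are in fact slightly more careful than the paper's phrasing of those same steps.
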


\subsection{The structure of irreducible ultrafilters}
The main theorem of this subsection is a structure theorem for irreducible ultrafilters. This structure leads almost immediately to \cref{Global}.

\begin{thm}[UA + GCH]\label{IrredGCH}
Suppose \(W\) is an irreducible strongly uniform ultrafilter on \(\lambda\). Then for every successor cardinal \(\delta \leq \lambda\), \(W\) is \(\delta\)-supercompact.
\end{thm}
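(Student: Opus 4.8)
The plan is to induct on the successor cardinal $\delta\le\lambda$: in the inductive step I first use \cref{ContinuityFactor} together with the irreducibility and strong uniformity of $W$ to force $W$ to be \emph{discontinuous} at $\delta$, and then bootstrap $\delta$-supercompactness from that discontinuity using Ketonen's covering theorem (\cref{CofinalityRegularity}) and a Hausdorff independent family argument in the style of \cref{Strength}. For the reductions: if $W$ is principal the conclusion is trivial, so assume $W$ is nonprincipal, with completeness $\kappa=\textsc{crt}(W)$ measurable, hence inaccessible. Fix a successor cardinal $\delta=\bar\delta^+\le\lambda$; if $\delta\le\kappa$ then $\delta<\kappa$ (as $\kappa$ is a limit cardinal) and $j_W\restriction\delta$ is the identity, so we may assume $\kappa<\delta$ and induct on $\delta$ among the successor cardinals in $(\kappa,\lambda]$.

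From the inductive hypothesis, $W$ is $\gamma$-supercompact for every cardinal $\gamma<\bar\delta$: when $\kappa\le\gamma<\bar\delta$ we have $\kappa<\gamma^+\le\bar\delta<\delta$, so $W$ is $\gamma^+$-supercompact, hence $\gamma$-supercompact, and $\gamma<\kappa$ is trivial. Thus $M_W$ is closed under ${<}\bar\delta$-sequences and $P(\gamma)\subseteq M_W$ for $\gamma<\bar\delta$; using the supercompactness measures on $P_\kappa(\gamma)$ derived from $W$ and, for $\bar\delta$ regular, \cref{SuccessorDecomposable} applied to the uniform ultrafilter on $\delta$ derived from $W$, one also gets that $\kappa$ is $\delta$-strongly compact via Ketonen's characterization. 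Now suppose toward a contradiction that $W$ is continuous at $\delta$. Then \cref{ContinuityFactor} applies: its arithmetic hypothesis $2^{\lambda_0}<\delta^{+\kappa}$ with $\lambda_0=\mathrm{ot}(\textnormal{Un}_{<\delta},\swo)$ holds since, under GCH, $|\textnormal{Un}_\gamma|\le 2^{2^\gamma}=\gamma^{++}\le\delta^+$ for $\gamma<\delta$, so $|\lambda_0|\le\delta^+$ and $2^{\lambda_0}\le\delta^{++}<\delta^{+\kappa}$ as $\kappa$ is inaccessible. So $W\equiv D\oplus Z$ with $\textsc{sp}(D)<\delta$ and $\textsc{crt}(Z)>j_D(\delta)$. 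Since $D\D W$ and $W$ is irreducible, $D$ is principal or $D\equiv W$; but $D\equiv W$ would make $W$ Rudin--Keisler equivalent to an ultrafilter on $\textsc{sp}(D)<\delta\le\lambda$, contradicting strong uniformity. So $D$ is principal, $j_D=\mathrm{id}$, $W\equiv Z$, and $\kappa=\textsc{crt}(W)>\delta$, contradicting $\kappa<\delta$. Hence $W$ is discontinuous at $\delta$.

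Write $\delta_*=\sup j_W[\delta]<j_W(\delta)$. By \cref{CofinalityRegularity}, $W$ has the $(\delta,\mu)$-covering property for $\mu=\mathrm{cf}^{M_W}(\delta_*)$, with $\delta\le\mu<j_W(\delta)=(j_W(\bar\delta)^+)^{M_W}$; combining this with the discontinuity of $W$ at the regular cardinals in $[\kappa,\bar\delta]$ and its ${<}\bar\delta$-supercompactness should pin $\mu$ down to $\delta$, so that $W$ has tight covering at $\delta$ and there is $B\in M_W$ with $j_W[\delta]\subseteq B$, $|B|^{M_W}=\delta$. It then suffices to show $P(\delta)\subseteq M_W$, for then a bijection $B\leftrightarrow\delta$ in $M_W$ puts $j_W[\delta]\in M_W$. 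To get $P(\delta)\subseteq M_W$ I would rerun \cref{Strength}: $W$ is a $\kappa$-supercompact ultrafilter and $\kappa$ is $\delta$-strongly compact, and for a suitable $M_W$-cardinal $\gamma$ with $\mathrm{cf}^{M_W}(\gamma)\ge\kappa$ and $(2^\gamma)^{M_W}\ge\delta$, every $Z\in\textnormal{Un}_{\le\gamma}$ has $Z\cap M_W\in M_W$; then a genuinely $\kappa$-independent family of subsets of $\gamma$ of length $(2^\gamma)^{M_W}$ obtained from \cref{IndependentFamily} inside $M_W$, together with the $\delta$-strong compactness of $\kappa$, yields $P(\delta)\subseteq M_W$ exactly as in \cref{Strength}. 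Verifying that the relevant $Z\in\textnormal{Un}_{\le\gamma}$ satisfy $Z\cap M_W\in M_W$ is where the inductive hypothesis, \cref{ZeroInternal}-style internality, and the $U$-limit of an \emph{external} fine ultrafilter (\cref{Limit}) are used — the external limit serves to build and compare the auxiliary ultrafilters on $\gamma$ whose $M_W$-traces must be captured.

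The main obstacle is this last part of the argument. The cardinal $\gamma$ over which one must control the ultrafilters sits strictly above $\bar\delta$ in $M_W$ (roughly between $\bar\delta$ and $j_W(\bar\delta)$), so it is not covered by the crude inductive hypothesis, and pinning down both "$Z\cap M_W\in M_W$ for $Z\in\textnormal{Un}_{\le\gamma}$'' and "$\mu=\delta$'' requires the generalized fine-ultrafilter limit plus careful GCH bookkeeping — which, as the paper warns concerning \cref{ContinuityFactor}, is the source of most of the cardinal-arithmetic difficulty.
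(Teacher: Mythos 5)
Your first half is correct and matches the paper: the reduction to $\kappa=\textsc{crt}(W)<\delta$, the verification of the arithmetic hypothesis of \cref{ContinuityFactor} under GCH, and the argument that continuity of $W$ at $\delta$ would force the small factor $D$ to be principal (by irreducibility plus strong uniformity) and hence $\textsc{crt}(W)>\delta$, a contradiction. So $W$ is discontinuous at $\delta$, exactly as in the paper.

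The gap is in the second half, and it is exactly the step you flag as ``the main obstacle.'' Rerunning \cref{Strength} with $M_W$ in place of $M_U$ requires that $Z\cap M_W\in M_W$ for every $Z\in\textnormal{Un}_{\le\gamma}$ with $(2^\gamma)^{M_W}\ge\delta$; but the only tool the paper has for producing such amenability is \cref{ZeroInternal}, which applies to \emph{least} ($0$-order) ultrafilters, and $W$ is an arbitrary irreducible strongly uniform ultrafilter on $\lambda$, possibly far above $\delta$. No amount of appealing to the external-limit machinery of \cref{Limit} fills this in; there is no reason an arbitrary $Z$ on $\gamma$ should trace into $M_W$. (Your ``should pin $\mu$ down to $\delta$'' for tight covering is likewise unsupported, though it turns out not to be needed.) The paper's actual route is structurally different: it introduces the least ultrafilter $U$ on $\delta$, observes that the discontinuity of $W$ at $\delta$ makes $W$ neither divisible by nor internal to $U$, so \cref{InternalCompact} and \cref{ZeroTheorem} give that $U$ is $\delta$-supercompact; it then sets $W_*=t_U(W)$ and proves $\textsc{crt}(W_*)>\delta$, which immediately yields $\textnormal{Ord}^\delta\subseteq M^{M_U}_{W_*}\subseteq M_W$. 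Proving $\textsc{crt}(W_*)>\delta$ is the real content: one factors $W_*$ across $\delta$ in $M_U$ via \cref{ContinuityFactor}, and rules out a nonprincipal small factor $D$ by showing (using \cref{Reciprocity2}, the identification of $t_W(U)$ with the least ultrafilter $U'$ on $\textnormal{cf}^{M_W}(\sup j_W[\delta])$, the computation $\gamma=j_D(\delta)$, and a Solovay-set argument giving $D^-(\mathcal U')=\mathcal U$ and hence $U'=j_D(U)$) that $D$ nonprincipal would put $j_D(U)$ into $M_D$, violating \cref{DivisionCharacterization}. None of this transfer-through-$U$ machinery appears in your proposal, and without it the proof does not close.
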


Before proving \cref{IrredGCH}, we prove a well-known fact that is a version of Solovay's Lemma.

\begin{lma}[Solovay]\label{LocalSolovay}
There is a formula \(\varphi(x)\) in the language of set theory with an extra predicate \(\dot S\) with the following property:

Suppose \(\delta\) is a regular cardinal and \(\vec S = \langle S_\alpha : \alpha < \delta\rangle\) is a partition of \(S^\delta_\omega\) into stationary sets. Set \(A = \{\sigma : (P_\delta(\delta),\in,\vec S)\vDash \varphi(\sigma)\}\). Then the \(\sup\) function is one-to-one on \(A\) any normal fine ultrafilter on \(P_\delta(\delta)\) contains \(A\).
\begin{proof}
We let \(\varphi(x)\) be the following formula: \[x = \{\alpha \in \text{Ord}: \dot S_\alpha\text{ meets every closed cofinal subset of }\sup (x)\}\] This works by the proof of Solovay's Lemma.
\end{proof}
\end{lma}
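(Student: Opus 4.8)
The plan is to take $\varphi(x)$ to be exactly the formula the author names,
\[x = \{\alpha \in \text{Ord} : \dot S_\alpha\text{ meets every closed cofinal subset of }\sup(x)\},\]
and to verify its two properties. Injectivity of $\sup$ on $A$ is a one-line observation: if $\sigma,\tau\in A$ and $\sup\sigma = \sup\tau$, then both $\sigma$ and $\tau$ are, by the definition of $A$, equal to the set of $\alpha$ such that $S_\alpha$ meets every club in that common supremum, so $\sigma = \tau$. The real work is showing $A\in\mathcal U$ for an arbitrary normal fine ultrafilter $\mathcal U$ on $P_\delta(\delta)$, and the approach is the usual one: move to the ultrapower $j = j_{\mathcal U}\colon V\to M_{\mathcal U}$ and apply Los's theorem.

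The steps, in order, are: (1) Record the standard facts that a normal fine ultrafilter on $P_\delta(\delta)$ (with $\delta$ regular) is $\delta$-complete, so $\textsc{crt}(j) = \delta$ and $M_{\mathcal U}$ is closed under ${<}\delta$-sequences; and that by normality and fineness $[\textnormal{id}]_{\mathcal U} = j[\delta]$, which since $\textsc{crt}(j) = \delta$ is simply the ordinal $\delta$, so $\sup j[\delta] = \delta$. (2) Reduce, via Los's theorem, to showing $(P^{M_{\mathcal U}}_{j(\delta)}(j(\delta)),\in,j(\vec S))\vDash\varphi(\delta)$, i.e.\ that in $M_{\mathcal U}$ one has $\delta = \{\alpha < j(\delta) : j(\vec S)_\alpha\text{ meets every club of }\delta\}$. (3) For $\alpha = \beta < \delta$, compute $j(\vec S)_\beta = j(S_\beta)\supseteq j(S_\beta)\cap\delta = S_\beta$, and argue that $S_\beta$, being stationary in $V$, meets every $M_{\mathcal U}$-club of $\delta$ — here using that every $M_{\mathcal U}$-club of $\delta$ is a genuine club, since $M_{\mathcal U}\subseteq V$ and closedness and cofinality in $\delta$ are absolute. (4) For $\delta\le\alpha<j(\delta)$, use that the $j(\vec S)_\alpha$ partition $(S^{j(\delta)}_\omega)^{M_{\mathcal U}}$ in $M_{\mathcal U}$, while the pieces indexed by $\beta<\delta$ already exhaust $(S^{\delta}_\omega)^{M_{\mathcal U}} = (S^\delta_\omega)^V = \bigsqcup_{\beta<\delta}S_\beta$ below $\delta$ (cofinality $\omega$ being absolute by countable completeness); hence $j(\vec S)_\alpha\cap\delta = \emptyset$ and such a piece misses the club $\delta$ trivially. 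Combining (3) and (4) gives the equality in (2), hence $A\in\mathcal U$.

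The step I expect to need the most care is (3)--(4), specifically the absoluteness ingredients: that cofinality $\omega$ and ``being stationary in $\delta$'' transfer correctly between $V$ and $M_{\mathcal U}$, together with the identification $[\textnormal{id}]_{\mathcal U} = j[\delta] = \delta$. All of these are standard consequences of $\delta$-completeness of normal fine ultrafilters on $P_\delta(\delta)$ (and of the paper's standing convention that ultrapowers are wellfounded), so once they are isolated the argument is just the ultrapower reformulation of Solovay's classical lemma --- which is exactly what the author's pointer ``by the proof of Solovay's Lemma'' signals.
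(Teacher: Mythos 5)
Your reduction via {\L}o\'s to checking \(\varphi\) at \([\textnormal{id}]_{\mathcal U}\) in the ultrapower is the right move, and the injectivity of \(\sup\) on \(A\) is handled correctly. The gap is in your step (1): it is \emph{not} true that a normal fine ultrafilter on \(P_\delta(\delta)\) must be \(\delta\)-complete with \(\textsc{crt}(j)=\delta\). If \(\kappa<\delta\) is \(\delta\)-supercompact and \(\mathcal W\) is a normal fine \(\kappa\)-complete ultrafilter on \(P_\kappa(\delta)\), then \(\{X\subseteq P_\delta(\delta): X\cap P_\kappa(\delta)\in\mathcal W\}\) is a normal fine ultrafilter on \(P_\delta(\delta)\) (fineness and the constancy of choice functions pass up directly), and its ultrapower has critical point \(\kappa\), so it is not even \(\kappa^+\)-complete. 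These are exactly the ultrafilters the lemma is aimed at: in the proof of \cref{IrredGCH} the lemma is applied to the normal fine ultrafilter derived from a \(\delta\)-supercompact ultrafilter whose critical point is in general far below \(\delta\). Your argument silently restricts to the degenerate case \(j\restriction\delta=\textnormal{id}\), where \([\textnormal{id}]_{\mathcal U}=j[\delta]=\delta\), \(\sup j[\delta]=\delta\), and \(M\)-clubs of \(\delta\) are true clubs of \(\delta\); none of this survives when \(\textsc{crt}(j)=\kappa<\delta\).

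In the general case \([\textnormal{id}]_{\mathcal U}=j[\delta]\) is a proper, non-transitive subset of \(\delta_*=\sup j[\delta]>\delta\), and the content of Solovay's argument is precisely that
\(j[\delta]=\{\alpha<j(\delta): j(\vec S)_\alpha\text{ meets every club of }\delta_*\text{ in }M\}\).
For \(\supseteq\) one uses that \(j[\delta]\) is \(\omega\)-closed and cofinal in \(\delta_*\), that \(\text{cf}(\delta_*)=\delta>\omega\), and that for a club \(C\subseteq\delta_*\) the preimage \(j^{-1}[C]\) is \(\omega\)-closed unbounded in \(\delta\) and hence meets each stationary \(S_\beta\subseteq S^\delta_\omega\), giving \(j(S_\beta)\cap C\neq\emptyset\). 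For \(\subseteq\) one checks that if \(\alpha\notin j[\delta]\) then \(j(\vec S)_\alpha\) misses the club \(\overline{j[\delta]}\): every point of \(\overline{j[\delta]}\) of \(M\)-cofinality \(\omega\) already lies in \(j[\delta]\) (using closure of \(M\) under \(\delta\)-sequences, which \emph{does} follow from normality plus fineness since \(j[\delta]=[\textnormal{id}]_{\mathcal U}\in M\)), and such a point is \(j(\eta)\) for some \(\eta\in S^\delta_\omega\), hence is captured by \(j(S_\beta)\) for the unique \(\beta\) with \(\eta\in S_\beta\), so it cannot lie in \(j(\vec S)_\alpha\). You should redo steps (1), (3), (4) along these lines; step (2) and the injectivity observation can stand.
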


\begin{defn}
Suppose \(\delta\) is a regular cardinal and \(\vec S\) is a partition of \(S^\delta_\omega\) into stationary sets. The {\it Solovay set defined from \(\vec S\) at \(\delta\)} is the set \(A\) defined by the formula \(\varphi\) of \cref{LocalSolovay}.
\end{defn}

\begin{proof}[Proof of \cref{IrredGCH}]
We may assume without loss of generality that \(\textsc{crt}(W) < \delta\).

We first prove that \(W\) is discontinuous at \(\delta\); assume towards a contradiction that it is not. Otherwise by \cref{ContinuityFactor}, \(W\equiv D \oplus Z\) where \(D\in \textnormal{Un}_{<\delta}\) and \(Z\in \textnormal{Un}^{M_D}\) is \({\leq}j_D(\delta)\)-complete. Since \(W\) is irreducible, either \(D\) or \(Z\) is principal. Since \(W\) is strongly uniform on \(\lambda \geq \delta\), \(D\) is principal. Thus \(\textsc{crt}(W) > \delta\), a contradiction.

Let \(U\) be the least ultrafilter on \(\delta\). Note that \(W\) is neither divisible by \(U\) nor internal to \(U\) (since \(W\) is discontinuous at \(\delta\)), so by \cref{InternalCompact}, \(\textsc{crt}(U)\) is \(\delta\)-supercompact. By \cref{ZeroTheorem}, since \(\delta\) is a successor, \(U\) is \(\delta\)-supercompact. 

Let \[W_* = t_U(W)\] We will show \(\textsc{crt}(W_*) > \delta\). This implies the theorem: since \(\text{Ord}^\delta\subseteq M_U\), \(\textsc{crt}(W_*) > \delta\) implies \(\text{Ord}^\delta \subseteq M^{M_U}_{W_*}\subseteq M_W\), which implies \(W\) is \(\delta\)-supercompact, as desired. 

Since \(\delta\) carries no uniform ultrafilters in \(M_U\), by \cref{ContinuityFactor}, \(W_*\) factors in \(M_U\) as \(D\oplus Z\) for \(D\in \textnormal{Un}^{M_U}_{{<}\delta}\) and \(Z\in \textnormal{Un}^{(M_D)^{M_U}}\) with \(\textsc{crt}(Z) > j_D(\delta)\). We will prove \(D\) is principal, and hence \(\textsc{crt}(W_*) > \delta\). Assume towards a contradiction that \(D\) is nonprincipal.

Note that \(D\) is a (total) ultrafilter since \(U\) is \(\delta\)-supercompact. Moreover \(D\I U\), so \(t_U(D) = s_U(D)\) divides \(W_*\) in \(M_U\). Let \[U_* = t_W(U)\] Since \(t_U(D)\) divides \(W_*\) in \(M_U\), \(t_W(D)\) divides \(U_*\) in \(M_W\) by \cref{Reciprocity2}. 

Let \[\gamma = \text{cf}^{M_W}(\sup j_W[\delta])\]  By \cref{GeneralInternal}, \(U_*\) is either principal or is the least ultrafilter on \(\sup j_W[\delta]\) in \(M_W\), and therefore by \cref{Trivia} is equivalent to the least ultrafilter on \(\gamma\), which is irreducible in \(M_W\) by \cref{GeneralInternal} applied in \(M_W\). Since \(D\) is nonprincipal by assumption, and since \(W\) is irreducible, \(t_W(D)\) is nonprincipal in \(M_W\). Since \(U_*\) is either principal or irreducible in \(M_W\), and \(t_W(D)\) divides \(U_*\) in \(M_W\), \(t_W(D) \equiv^{M_W} U_*\). It follows in particular that \(U_*\) is nonprincipal and hence equivalent to the least ultrafilter on \(\gamma\). (This is easy to prove directly.)

We next show that \(\gamma = j_D(\delta)\). Since \(U_*\) has the tight covering property at \(\gamma\) in \(M_W\) and \(\text{cf}^{M_W}(\sup j^{M_W}_{U_*}\circ j_W[\delta]) = \text{cf}^{M_W}(\sup j_W[\delta]) = \gamma\),
\[\gamma = \text{cf}^{M^{M_W}_{U_*}}(\sup j^{M_W}_{U_*}\circ j_W[\delta]) \]
Now we calculate:
\begin{align*}
\text{cf}^{M^{M_W}_{U_*}}(\sup j^{M_W}_{U_*}\circ j_W[\delta]) &= \text{cf}^{M^{M_U}_{W_*}}(\sup j^{M_U}_{W_*}\circ j_U[\delta]) \\
&= \text{cf}^{M^{M_U}_{W_*}}(\sup j^{M^{M_U}_{D}}_Z\circ j^{M_U}_{D}\circ j_U[\delta])\\
&= \text{cf}^{M^{M_U}_{W_*}}( j^{M^{M_U}_{D}}_Z\circ j^{M_U}_{D}(\sup j_U[\delta]))\\
&= j^{M^{M_U}_{D}}_Z\circ j^{M_U}_{D}(\text{cf}^{M_U}(\sup j_U[\delta]))\\
&= j^{M^{M_U}_{D}}_Z(j^{M_U}_{D}(\delta))\\
&= j^{M_U}_{D}(\delta)\\
&= j_D(\delta)
\end{align*}

Let \(U'\) be the least ultrafilter on \(\gamma\) in \(M_W\), so \(U'\), \(U_*\), and \(t_W(D)\) are all equivalent in \(M_W\). 

\begin{clm}\(U' = j_D(U)\).\end{clm}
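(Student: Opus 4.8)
\emph{Plan.} The two objects to be identified are both instances of ``the least ultrafilter on $\gamma$'': $U'$ as computed in $M_W$, and $j_D(U)$ — which, by elementarity of $j_D\colon V\to M_D$ and the fact that $U$ is the least ultrafilter on $\delta$, is the least ultrafilter on $j_D(\delta)=\gamma$ as computed in $M_D$. Since $U$ is $\delta$-supercompact, $M_U$ is correct about $P(\delta)$ and about functions from ordinals below $\delta$ into the ordinals, so $(M_D)^{M_U}$ agrees with $M_D$ about $P(\gamma)$; hence $j_D(U)$ is the least ultrafilter on $\gamma$ in $(M_D)^{M_U}$ as well. First I would push this down into $M^{M_U}_{W_*}=M^{(M_D)^{M_U}}_Z$. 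Because $\textsc{crt}(Z)>j_D(\delta)=\gamma$, the internal ultrapower $j^{(M_D)^{M_U}}_Z$ fixes $\gamma$ and every subset of $\gamma$, so $M^{M_U}_{W_*}$ has the same subsets of $\gamma$ as $(M_D)^{M_U}$ while (being a submodel) omitting possibly more ultrafilters on ordinals $\le\gamma$. Applying $j^{(M_D)^{M_U}}_Z$ to the definable object ``the least ultrafilter on $\gamma$'', elementarity gives that the least ultrafilter on $\gamma$ in $M^{M_U}_{W_*}$ equals $j^{(M_D)^{M_U}}_Z(j_D(U))$, and this set contains $j_D(U)$ pointwise (as $j_Z$ fixes each member of $j_D(U)\subseteq P(\gamma)$); since both are ultrafilters on $\gamma$, they coincide. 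Thus $j_D(U)\in M^{M_U}_{W_*}$ and is the least ultrafilter on $\gamma$ there.

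Next I would transfer this to $M_W$. Since $W=U^-(W_*)$, \cref{LimitEmbeddings} supplies a factor embedding $k\colon M_W\to M^{M_U}_{W_*}$ with $k\circ j_W=j^{M_U}_{W_*}\circ j_U$ and $k([\textnormal{id}]_W)=[\textnormal{id}]_{W_*}$. The crucial point is that $k$ fixes $\gamma$ and is the identity on $P^{M_W}(\gamma)$. Granting that, $M_W$ and $M^{M_U}_{W_*}$ have the same subsets of $\gamma$, and $k(U')$ — the image of the least ultrafilter $U'$ on $\gamma$ in $M_W$ — is by elementarity the least ultrafilter on $k(\gamma)=\gamma$ in $M^{M_U}_{W_*}$, namely $j_D(U)$ by the previous paragraph; but $k$ fixes $P^{M_W}(\gamma)$ pointwise, so $U'\subseteq k(U')$, and two ultrafilters on $\gamma$, one contained in the other, must be equal. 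Hence $U'=k(U')=j_D(U)$, which is the claim.

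The real content, and the step I expect to be the main obstacle, is showing $k$ fixes $\gamma$ and $P^{M_W}(\gamma)$, i.e.\ $\textsc{crt}(k)>\gamma$. Here I would exploit that $\gamma=\textnormal{cf}^{M_W}(\sup j_W[\delta])$ together with the precise shape of the factorization $W_*=D\oplus Z$ coming from \cref{ContinuityFactor}: $\textsc{sp}(D)<\delta$ forces $j^{M_U}_D$ (equivalently $D$, as $\delta$ is regular) to be continuous at $\delta$ and hence at $\sup j_U[\delta]$, while $\textsc{crt}(Z)>j_D(\delta)$ makes $j^{(M_D)^{M_U}}_Z$ continuous there as well, so $j^{M_U}_{W_*}=j^{(M_D)^{M_U}}_Z\circ j^{M_U}_D$ is continuous at $\sup j_U[\delta]$. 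Via $k\circ j_W=j^{M_U}_{W_*}\circ j_U$ this translates into continuity of $k$ at $\sup j_W[\delta]$, and then a cofinality computation parallel to the displayed calculation that proved $\gamma=j_D(\delta)$ yields $k(\gamma)=\gamma$; that $k$ then fixes $P^{M_W}(\gamma)$ pointwise follows from $\textsc{crt}(k)\ge\gamma$ together with the counting afforded by the GCH hypothesis (the same hypothesis that made \cref{ContinuityFactor}, and thus the factorization $W_*=D\oplus Z$, available in the first place). This continuity/cofinality bookkeeping for $k$ is where the delicate work lies; the two uses of the ``ultrafilter containing an ultrafilter must equal it'' trick are then routine.
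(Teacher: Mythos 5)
Your first paragraph (identifying $j_D(U)$ as the least ultrafilter on $\gamma$ inside $M^{M_U}_{W_*}$, using $\textsc{crt}(Z)>j_D(\delta)=\gamma$) is essentially sound, but the argument collapses at the step you yourself flag as the main obstacle: the factor embedding $k\colon M_W\to M^{M_U}_{W_*}$ does \emph{not} fix $\gamma$, is not the identity on $P^{M_W}(\gamma)$, and is not even continuous at $\sup j_W[\delta]$. The obstruction is already sitting in the surrounding proof: under the standing reductio hypothesis that $D$ is nonprincipal, it has been shown that $U_*=t_W(U)$ is nonprincipal and equivalent in $M_W$ to $U'$, a uniform countably complete ultrafilter on $\gamma$. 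Since $M^{M_U}_{W_*}=M^{M_W}_{U_*}$ and $j^{M_W}_{U_*}$ is an internal, hence definable, embedding of $M_W$ into that model with the same target as $k$, \cref{mindef} gives $j^{M_W}_{U_*}(\alpha)\leq k(\alpha)$ for every ordinal $\alpha$. But $j^{M_W}_{U_*}$ has critical point at most $\gamma$ and is discontinuous at $\gamma$, hence at $\sup j_W[\delta]$; therefore $\textsc{crt}(k)\leq\gamma$ and
\[k(\sup j_W[\delta])\ \geq\ j^{M_W}_{U_*}(\sup j_W[\delta])\ >\ \sup j^{M_W}_{U_*}[\sup j_W[\delta]]\ =\ \sup k[\sup j_W[\delta]],\]
the last equality holding because $k\circ j_W=j^{M_W}_{U_*}\circ j_W$ and $j_W[\delta]$ is cofinal in $\sup j_W[\delta]$. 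Your inference from continuity of $j^{M_U}_{W_*}$ at $\sup j_U[\delta]$ to continuity of $k$ at $\sup j_W[\delta]$ is a non sequitur: the commuting square only controls $k$ on $\textnormal{ran}(j_W)$, i.e.\ it computes $\sup k[\sup j_W[\delta]]$, not $k(\sup j_W[\delta])$. More structurally, $M^{M_U}_{W_*}$ is (up to equivalence) the ultrapower of $M_W$ by $U'$ itself, so one cannot hope to recognize $U'$ inside it by transferring $P^{M_W}(\gamma)$ identically across $k$.

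The paper argues in the opposite direction: instead of pushing $U'$ forward along the factor embedding, it pulls $U'$ back along $j_D$. One first verifies $D^-(U')=U$ directly, by checking that $D^-(U')$ is weakly normal and concentrates on the set of ordinals carrying no uniform countably complete ultrafilter, and then invoking the uniqueness of the $0$-order ultrafilter on $\delta$ (\cref{ZeroUnique}). This is upgraded to $D^-(\mathcal U')=\mathcal U$ for the derived normal fine ultrafilters via Solovay sets (\cref{LocalSolovay}), and \cref{NormalGeneration} then yields $\mathcal U'=j_D(\mathcal U)$, whence $U'=j_D(U)$. If you want to repair your outline, the pull-back computation along $j_D$ is the step to develop; the factor embedding $k$ cannot carry the load you assign it.
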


The claim leads immediately to a contradiction: since \(U' \equiv^{M_W} t_W(D)\), if \(U' \in M_D\) then \(U'\) is principal by \cref{DivisionCharacterization}, while clearly \(j_D(U)\) is nonprincipal. Therefore to complete the proof of the theorem, we just need to prove the claim.

Note that \(\text{Ord}^{\gamma}\cap M_D = \text{Ord}^{\gamma}\cap M_W\) since \(t_D(W)\equiv Z\) and \(t_W(D) \equiv U'\) are \(\gamma\)-supercompact in \(M_D\) and \(M_W\) respectively. Let \(\mathcal U\) be the normal fine ultrafilter on \(P(\delta)\) derived from \(U\) and \(\mathcal U'\) the \(M_W\)-normal fine ultrafilter on \(P^{M_W}_\gamma(\gamma)\) derived from \(U'\). Since \(\text{Ord}^{\gamma}\cap M_D = \text{Ord}^{\gamma}\cap M_W\), we have \(P^{M_W}_\gamma(\gamma) = P^{M_D}_\gamma(\gamma)\) and \(P^{M_W}(P^{M_W}_\gamma(\gamma)) = P^{M_D}(P^{M_D}_\gamma(\gamma))\). Thus \(\mathcal U'\) is an \(M_D\)-normal fine ultrafilter on \(P^{M_D}_\gamma(\gamma)\). 

We claim that \(D^-(\mathcal U') = \mathcal U\). It then follows from \cref{NormalGeneration} that \(\mathcal U' = j_D(\mathcal U)\): we have \(j_D[\mathcal U]\subseteq \mathcal U'\) since \(D^-(\mathcal U') = \mathcal U\) and \(B = \{\sigma\in P^{M_D}_\gamma(\gamma) : [\text{id}]_D\in \sigma\}\in \mathcal U'\) since \(\mathcal U'\) is fine.  But then the weakly normal ultrafilter on \(\gamma\) derived from \(\mathcal U'\) is equal to \(j_D(U)\), or in other words \(U' = j_D(U)\), as claimed.

To show that \(D^-(\mathcal U') = \mathcal U\), we first show that \(D^-(U') = U\). It suffices by \cref{ZeroUnique} to show that \(D^-(U')\) is weakly normal and concentrates on the set \(A\subseteq \delta\) of ordinals that do not carry uniform countably complete ultrafilters. Note that \(j_D(A)\) is the set of ordinals that do not carry uniform countably complete ultrafilters in \(M_D\), which is the same as the set of ordinals that do not carry uniform countably complete ultrafilters in \(M_W\). Thus \(j_D(A)\in U'\) since \(U'\) is the least ultrafilter on \(\gamma\) in \(M_W\). To show \(D^-(U')\) is weakly normal, note that \([\text{id}]^{M_D}_{U'} = [\text{id}]^{M_W}_{U'} = \sup j^{M_W}_{U'}[\gamma] = \sup j^{M_D}_{U'}\circ j_D[\delta]\) and hence \([\text{id}]_{D^-(U')} = \sup j_{D^-(U')}[\delta]\).

We finally show \(D^-(\mathcal U') = \mathcal U\). Fix a stationary partition \(\vec S\) of \(S^\delta_\omega\). Let \(A\) be the Solovay set defined from \(\vec S\) at \(\delta\). Then in \(M_D\), \(j_D(A)\) is a the Solovay set defined from \(j_D(\vec S)\) at \(\gamma\). Note that \(j_D(\vec S)\in M_W\) and \(j_D(\vec S)\) is a stationary partition of \(S^\gamma_\omega\) in \(M_W\), since \(P^{M_D}(\gamma) = P^{M_W}(\gamma)\). Since \(P^{M_W}_\gamma(\gamma) = P^{M_D}_\gamma(\gamma)\), \(j_D(A)\) is a the Solovay set defined from \(j_D(\vec S)\) in \(M_W\) at \(\gamma\). It follows from \cref{LocalSolovay} applied in \(M_W\) that \(j_D(A)\in \mathcal U'\). 

For any \(X\subseteq P_\delta(\delta)\),
\begin{align*}X\in D^-(\mathcal U')&\iff j_D(X)\in \mathcal U'\\
&\iff \{\sup \sigma : \sigma\in j_D(X)\cap j_D(A)\}\in U'\\
&\iff j_D(\{\sup \sigma : \sigma\in X\cap A\})\in U'\\
&\iff \{\sup \sigma : \sigma\in X\cap A\}\in D^-(U')\\
&\iff \{\sup \sigma : \sigma\in X\cap A\}\in U\\
&\iff X\in \mathcal U
\end{align*}
The second and the last equivalences follow from \cref{LocalSolovay}.
\end{proof}

\subsection{More supercompact cardinals}
In order to prove \cref{Global}, we prove a special case of \cref{IrredGCH} that requires a more manageable form of GCH. The proof is a minor variant on the proof of \cref{IrredGCH}. We prove only what is needed for \cref{Global} and leave it to the reader to figure out exactly the optimal local result one can get out of this variant argument.

\begin{prp}[UA]\label{IrredSingular}
Suppose \(W\) is an irreducible strongly uniform ultrafilter of completeness \(\kappa\) on a cardinal \(\lambda\). Suppose \(\nu\) is a strong limit singular cardinal of countable cofinality and \(2^\nu = \nu^+ < \lambda\). Then \(W\) is \(\nu^+\)-supercompact.
\end{prp}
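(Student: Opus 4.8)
The plan is to set $\delta = \nu^+$, a successor cardinal with $\nu^+ < \lambda$, and to run the proof of \cref{IrredGCH} essentially verbatim with this $\delta$. First, as in \cref{IrredGCH}, we may assume $\kappa \leq \nu$ (i.e.\ $\textsc{crt}(W) < \delta$): otherwise, since $\kappa$ is a measurable cardinal and hence not a successor, $\kappa \geq \nu^{++}$, so $W$ is $\nu^+$-complete and thus trivially $\nu^+$-supercompact. Now the only place the full GCH hypothesis of \cref{IrredGCH} is used is in its two appeals to \cref{ContinuityFactor}: once to show $W$ is discontinuous at $\delta$, and once inside $M_U$ to factor $W_* = t_U(W)$. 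Both times \cref{ContinuityFactor} requires only that $2^{\mu} < \delta^{+\kappa}$ where $\mu = \textnormal{ot}(\textnormal{Un}_{<\delta},\swo)$. So it suffices to verify this arithmetic — in $V$ and in $M_U$ — from $2^\nu = \nu^+$ together with $\nu$ being a strong limit of cofinality $\omega$; everything else in the proof of \cref{IrredGCH} (the appeals to \cref{InternalCompact} and \cref{ZeroTheorem}, the analysis of $t_W(U)$ via \cref{GeneralInternal} and \cref{Trivia}, the cofinality computation $\gamma = j_D(\delta)$, and the Solovay-set argument identifying $U'$ with $j_D(U)$ via \cref{LocalSolovay} and \cref{NormalGeneration}) is pure UA and carries over without change.

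The key observation is that $\textnormal{Un}_{<\nu^+} = \textnormal{Un}_{<\nu}$ has cardinality at most $\nu$. Indeed, since $\text{cf}(\nu) = \omega$, no countably complete ultrafilter on $\nu$ can be uniform: fixing a cofinal $\omega$-sequence in $\nu$ and the associated partition of $\nu$ into $\omega$ bounded intervals, countable completeness puts one interval into the ultrafilter, contradicting uniformity in even the weak sense; hence $\textnormal{Un}_\nu = \emptyset$. Moreover, for each $\gamma < \nu$ there are at most $2^{2^\gamma}$ ultrafilters on ordinals $\leq \gamma$, and $2^{2^\gamma} < \nu$ since $\nu$ is a strong limit, so summing over $\gamma < \nu$ gives $|\textnormal{Un}_{<\nu}| \leq \nu$. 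Since the seed order is a wellorder on this set under UA, $\mu = \textnormal{ot}(\textnormal{Un}_{<\nu^+},\swo) < \nu^+$, whence $2^{\mu} \leq 2^\nu = \nu^+ < \nu^{++} \leq (\nu^+)^{+\kappa}$. Together with the assumption (toward a contradiction) that $W$ is continuous at $\delta$, \cref{ContinuityFactor} now gives $W \equiv D \oplus Z$ with $D \in \textnormal{Un}_{<\delta}$ and $\textsc{crt}(Z) > j_D(\delta)$; irreducibility of $W$ forces $D$ principal (it divides $W$, is nonprincipal would contradict $W$ being strongly uniform on $\lambda > \nu$), so $\textsc{crt}(W) > \delta$, a contradiction. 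Thus $W$ is discontinuous at $\delta$, hence $\delta$-decomposable (witnessed by $W$), so $\delta$ carries a uniform countably complete ultrafilter and we may let $U$ be the least ultrafilter on $\delta$.

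As in \cref{IrredGCH}, since $W$ is neither divisible by $U$ nor internal to $U$ (the latter because $W$ is discontinuous at $\delta$), \cref{InternalCompact} shows $\textsc{crt}(U)$ is $\delta$-supercompact, and then \cref{ZeroTheorem} (using that $\delta=\nu^+$ is not strongly inaccessible) shows $U$ itself is $\delta$-supercompact. In particular $M_U$ is closed under $\nu^+$-sequences, so $\nu^+$ is still a cardinal in $M_U$, $\nu$ is still a strong limit singular of cofinality $\omega$ there, and $P(\nu)\subseteq M_U$ gives $(2^\nu)^{M_U} = \nu^+$. Hence the counting argument above, run inside $M_U$, gives $\textnormal{ot}(\textnormal{Un}^{M_U}_{<\nu^+},\swo^{M_U}) < \nu^+$ and so $(2^{\mu'})^{M_U} < ((\nu^+)^{+\kappa})^{M_U}$. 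Also $\nu^+$ carries no uniform countably complete ultrafilter in $M_U$: by $0$-order-ness of $U$, $\sup j_U[\delta] = [\textnormal{id}]_U$ carries none, and it has $M_U$-cofinality $\delta$ (since $j_U[\delta]\in M_U$ has order type $\delta$ and $\delta$ is $M_U$-regular), so a uniform ultrafilter on $\delta$ would push forward to one on $\sup j_U[\delta]$. Therefore $W_*=t_U(W)$ is continuous at $\delta$ in $M_U$ and \cref{ContinuityFactor} applies there, yielding $W_* \equiv D\oplus Z$ in $M_U$ with $D\in\textnormal{Un}^{M_U}_{<\delta}$ and $\textsc{crt}(Z)>j_D(\delta)$; the remainder of the proof of \cref{IrredGCH} then shows $D$ is principal, so $\textsc{crt}(W_*)>\delta$, and since $\text{Ord}^\delta\subseteq M_U$ this gives $\text{Ord}^\delta\subseteq M^{M_U}_{W_*}\subseteq M_W$, i.e.\ $W$ is $\nu^+$-supercompact.

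The main obstacle — really the only non-routine point — is the bookkeeping in the $M_U$ step: one must be certain that enough of the relevant cardinal arithmetic ($2^\nu=\nu^+$, $\text{cf}(\nu)=\omega$, $\nu$ a strong limit) is reflected into $M_U$, and that this reflection (which comes from the $\nu^+$-supercompactness of $U$) is already available at the moment \cref{ContinuityFactor} is invoked inside $M_U$ — which it is, in exactly the same order as in \cref{IrredGCH}. A secondary point to check is simply that the GCH of \cref{IrredGCH} genuinely enters \emph{only} through \cref{ContinuityFactor}; a careful pass through that proof (in particular through the Solovay-set identification of $U'$ with $j_D(U)$ and the cofinality calculation $\gamma = j_D(\delta)$) confirms this, as those steps use only UA and the supercompactness already in hand.
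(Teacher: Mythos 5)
There is a genuine gap, and it is exactly at the point you flag as "the key observation." Your plan is to reuse \cref{ContinuityFactor} by bounding \(\mu = \textnormal{ot}(\textnormal{Un}_{<\nu^+},\swo)\), and you do this by asserting \(\textnormal{Un}_{<\nu^+} = \textnormal{Un}_{<\nu}\) on the grounds that \(\textnormal{Un}_\nu = \emptyset\). But \(\textnormal{Un}_{<\nu^+}\) is the union of \(\textnormal{Un}_\alpha\) over \emph{all} ordinals \(\alpha < \nu^+\), and the ordinals in the interval \((\nu,\nu^+)\) carry plenty of uniform countably complete ultrafilters: every successor ordinal below \(\nu^+\) carries a (uniform, by the paper's convention) principal ultrafilter, and every \(\alpha<\nu^+\) whose cofinality is a measurable cardinal below \(\nu\) carries nonprincipal ones obtained by pushing a measure along a cofinal map. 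So \(|\textnormal{Un}_{<\nu^+}|\) is at least \(\nu^+\) and in general can be as large as \(2^{\nu^+}\); since each such ordinal has cardinality \(\nu\), the only a priori bound is \(\nu^+\cdot 2^{2^\nu} = 2^{\nu^+}\), and the hypothesis \(2^\nu=\nu^+\) says nothing about \(2^{\nu^+}\). Consequently \(\mu\) can exceed \(\nu^+\) and \(2^{\mu}\) is not bounded by \((\nu^+)^{+\kappa}\), so the hypothesis of \cref{ContinuityFactor} cannot be verified from the stated assumptions, either in \(V\) or in \(M_U\). This is not a repairable bookkeeping slip within your framework: the whole point of the proposition is to get by without GCH at and above \(\nu^+\), which is precisely the regime \cref{ContinuityFactor} needs to control.

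The paper's proof avoids this by proving a separate factoring lemma (\cref{ContinuityFactor2}) tailored to the hypotheses actually available: one projects \(W\) via \cref{exponential} to an ultrafilter on \(2^\nu=\nu^+\) with factor embedding of critical point above \(\nu\), uses the continuity of \(W\) at \(\nu\) and \(\nu^+\) to replace it by an equivalent \(D\in\textnormal{Un}_{<\nu}\) (so that \(j_D(\nu^+)=\nu^+\)), and then uses the strong-limitness of \(\nu\) to conclude \(t_W(D)\in M_D\), hence \(t_W(D)\) is principal by \cref{DivisionCharacterization} and \(D\) divides \(W\). No counting of \(\textnormal{Un}_{<\delta}\) occurs. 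A secondary inaccuracy: you assert that GCH enters \cref{IrredGCH} only through \cref{ContinuityFactor}, but it is also used there to give \(U_*=t_W(U)\) the tight covering property at \(\gamma\) in \(M_W\) in the computation \(\gamma=j_D(\delta)\); in the present setting this particular use happens to be dispensable because \(D\in\textnormal{Un}_{<\nu}\) forces \(j_D(\nu^+)=\nu^+\) outright, but your "careful pass" misidentifies where GCH is consumed.
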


We use the following version of \cref{ContinuityFactor}.

\begin{lma}[UA]\label{ContinuityFactor2}
Suppose \(W\in \textnormal{Un}\) is continuous at \(\nu^+\) where \(\nu\) is a strong limit singular cardinal of cofinality less than the critical point \(\kappa\) of \(W\). Assume \(2^\nu = \nu^+\). Then \(W\) factors as \(D\oplus Z\) where  \(D\in \textnormal{Un}_{<\nu}\) and \(\textsc{crt}(Z) > j_D(\nu^+)\).
\begin{proof}
By \cref{exponential}, there is a function \(p : \textsc{sp}(W)\to \nu^+\) such that the factor embedding \(k: M_{p_*(W)}\to M_W\) has critical point greater than \(\nu\). Since \(W\) is continuous at \(\nu\) and \(\nu^+\), there is some \(D\in \textnormal{Un}_{<\nu}\) with \(D \equiv p_*(W)\). Note that \(j_D(\nu^+) = \nu^+\) so in fact \(\textsc{crt}(k) > j_D(\nu^+)\). Moreover \[t_W(D)\in \textnormal{Un}^{M_U}_{<j_U(\nu)} = \textnormal{Un}^{M_U}_{<\nu}\] Since \(\nu\) is a strong limit and \(\textsc{crt}(k) > \nu\), \(t_W(D)\in M_D\). Hence \(t_W(D)\) is principal by \cref{DivisionCharacterization}. This implies \(D\) divides \(W\), so fix \(Z\) witnessing this. Then by \cref{mindef}, \(j_Z\restriction \text{Ord}\leq k\restriction \text{Ord}\). It follows that \(\textsc{crt}(Z) > j_D(\nu^+)\).
\end{proof}
\end{lma}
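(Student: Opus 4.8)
The plan is to run the argument of \cref{ContinuityFactor} almost verbatim, with the role of $\lambda = \textnormal{ot}(\textnormal{Un}_{<\delta},<_S)$ and the counting hypothesis $2^\lambda < \delta^{+\kappa}$ played instead by $\nu$ and the hypothesis $2^\nu = \nu^+$. The singularity and strong limit-ness of $\nu$ are what let us sidestep any bookkeeping about how many countably complete ultrafilters live below $\nu$.

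First I would apply \cref{exponential} (or rather its corollary) with the cardinal $\nu$ to get $p : \textsc{sp}(W)\to 2^\nu = \nu^+$ such that the factor embedding $k : M_{p_*(W)}\to M_W$ has $\textsc{crt}(k) > \nu$. The next step is to see that $p_*(W)$ is Rudin--Keisler equivalent to some $D \in \textnormal{Un}_{<\nu}$. Since $\textsc{cf}(\nu) < \kappa = \textsc{crt}(W)$, $W$ is continuous at $\nu$, and by hypothesis it is continuous at $\nu^+$; hence the seed of $p_*(W)$ lies below $\sup j_W[\nu^+] = j_W(\nu^+)$, so $p_*(W)$ concentrates on some ordinal $<\nu^+$. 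Take the uniform Rudin--Keisler representative on the least such ordinal $\xi_0$ (or $D = P_0$ if $p_*(W)$ is principal). If $\xi_0 < \nu$ we are done; if $\xi_0 \in [\nu,\nu^+)$, then $|\xi_0| = \nu$, so transporting along a bijection $\xi_0\to\nu$ yields a $\kappa$-complete ultrafilter on $\nu$, which cannot be uniform on the singular cardinal $\nu$ since $\textsc{cf}(\nu) < \kappa$, and so the representative again concentrates below $\nu$. Fix $D \in \textnormal{Un}_{<\nu}$ with $D \equiv p_*(W)$; then $M_D = M_{p_*(W)}$ and $k : M_D \to M_W$.

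Now I would record two arithmetic facts. Because $\nu$ is a strong limit singular of cofinality $<\kappa\le\textsc{crt}(D)$, $j_D$ sends every ordinal below $\nu$ below $\nu$, so $j_D(\nu) = \nu$; and since $\nu^+$ stays a cardinal in $M_D$ with nothing between $\nu$ and $\nu^+$ there, $j_D(\nu^+) = (\nu^+)^{M_D} = \nu^+$. Moreover $\textsc{crt}(k)$, being the critical point of an elementary embedding, is inaccessible in $M_D$, whereas $(\nu^+)^{M_D} = \nu^+$ is a successor cardinal there; hence $\textsc{crt}(k) > \nu$ upgrades automatically to $\textsc{crt}(k) > \nu^+ = j_D(\nu^+)$. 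With $\gamma := \textsc{sp}(D) < \nu$, the computation of \cref{ContinuityFactor} (Los's theorem together with \cref{Reciprocity}) gives $t_W(D)\wo^{M_W}P^{M_W}_{k([\textnormal{id}]_D)}$ with $k([\textnormal{id}]_D) < j_W(\gamma) = j_D(\gamma) < \nu$ (using that $\nu$ is a strong limit and $\textsc{crt}(k)>\nu$), so $t_W(D)$ lives on an ordinal below $\nu < \textsc{crt}(k)$ and therefore $t_W(D) \in V^{M_W}_{\textsc{crt}(k)} = V^{M_D}_{\textsc{crt}(k)} \subseteq M_D$. By \cref{DivisionCharacterization}, $t_W(D)$ is then principal and $D$ divides $W$; fixing a witness $Z \in \textnormal{Un}^{M_D}$, we get $W \equiv D \oplus Z$ with $\textsc{sp}(D) < \nu$. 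Finally, $j^{M_D}_Z$ and $k$ are both elementary embeddings $M_D\to M_W$ and $j^{M_D}_Z$ is definable over $M_D$, so \cref{mindef} gives $j^{M_D}_Z\restriction\textnormal{Ord} \le k\restriction\textnormal{Ord}$; since $\textsc{crt}(k) > \nu^+$ this forces $j^{M_D}_Z$ to fix every ordinal $\le\nu^+$, i.e.\ $\textsc{crt}(Z) > \nu^+ = j_D(\nu^+)$, which is exactly what is claimed.

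The step I expect to be most delicate is reducing $p_*(W)$ to a uniform ultrafilter genuinely on an ordinal below $\nu$, together with the case split on whether $p_*(W)$ is principal: this is where the singularity of $\nu$ and $\textsc{cf}(\nu) < \kappa$ are actually used, and it is easy to forget that a priori $p_*(W)$ could concentrate on the interval $[\nu,\nu^+)$. Everything downstream is routine tracking of which ordinals $k$ fixes, the one point worth isolating being that a critical point that clears the strong limit $\nu$ must also clear $\nu^+$, because critical points are never successor cardinals.
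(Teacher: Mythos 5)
Your proof is correct and follows the paper's argument for this lemma essentially line for line: the same appeal to \cref{exponential}, the same use of continuity at \(\nu\) and \(\nu^+\) to drop \(p_*(W)\) to some \(D\in\textnormal{Un}_{<\nu}\), the same route through \(t_W(D)\in M_D\) and \cref{DivisionCharacterization} to conclude that \(D\) divides \(W\), and the same application of \cref{mindef} to bound \(\textsc{crt}(Z)\). You in fact supply more detail than the paper at the two points it leaves implicit, namely why the projected ultrafilter lands below \(\nu\) rather than merely below \(\nu^+\), and why \(\textsc{crt}(k)>\nu\) upgrades to \(\textsc{crt}(k)>\nu^+\).
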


\begin{proof}[Sketch of \cref{IrredSingular}]
The proof is very similar to that of \cref{IrredGCH}, with \(\delta = \nu^+\), so we only highlight the differences.

We may assume without loss of generality that \(\kappa < \nu\). 

The only difference in showing that \(\nu^+\) carries a uniform ultrafilter and that the least \(U\in \textnormal{Un}_{\nu^+}\) is \(\nu^+\)-supercompact lies in replacing \cref{ContinuityFactor} with \cref{ContinuityFactor2}.

As in \cref{IrredGCH}, we factor \(W_* = t_U(W)\) across \(\nu^+\): since \(U\) is \(0\)-order, by \cref{ContinuityFactor2} applied in \(M_U\), \(t_U(W)\) factors as \(D\oplus Z\) where \(D\in \textnormal{Un}^{M_U}_{<\nu}\) and \(Z\in \textnormal{Un}^{M^{M_U}_D}\) is \(j^{M_U}_D(\nu^+)\)-complete. To apply \cref{ContinuityFactor2} here, we need that \(2^\nu = \nu^+\) in \(M_U\), but this follows from the fact that \(2^\nu = \nu^+\) in \(V\) combined with the \(\nu^+\)-supercompactness of \(U\).

We now prove by contradiction that \(D\) is principal, which implies \(W_*\) has critical point above \(\nu^+\), yielding the theorem. The only part of what remains of \cref{IrredGCH} that uses GCH is the fact that \(U_* = t_W(U)\) has the tight covering property at \(\gamma = \text{cf}^{M_W}(\sup j_W[\nu^+])\) in \(M_W\), which we would like to use to prove that \(\gamma = j_D(\nu^+)\). But actually the situation is a bit easier in the current context: since \(\nu\) is a strong limit singular cardinal and \(D\in \textnormal{Un}_{<\nu}\), \(j_D(\nu^+) = \nu^+\). The argument there establishes \(\gamma \leq j_D(\nu^+)\), but obviously \(\nu^+\leq \gamma\), and hence \(\gamma = \nu^+\).

The remainder of the proof is identical to that of \cref{IrredGCH}.
\end{proof}

\begin{proof}[Proof of \cref{Global}]
Suppose \(\kappa\) is strongly compact. By a theorem due to Solovay \cite{Solovay}, SCH holds above \(\kappa\), which is enough to justify all our uses of \cref{IrredSingular} below.

Let \(\gamma > \kappa\) be a strong limit cardinal of uncountable cofinality and let \(\lambda = \gamma^+\). Let \(U\) be the \(\swo\)-least \(\kappa\)-complete ultrafilter on \(\lambda\). By a variant of \cref{KetonenTheorem} due to Ketonen \cite{Ketonen}, \(U\) is \((\kappa,\lambda)\)-regular.

If \(U\) is irreducible, then by \cref{IrredSingular}, \(U\) is \({<}\gamma\)-supercompact and hence \(\kappa\) is \({<}\gamma\)-supercompact. Suppose \(U\) is not irreducible. By \cref{Factorization}, let \(D\) be a divisor of \(U\) such that \(U_* = t_D(U)\) is irreducible. Of course \(U \equiv D\oplus U_*\) by \cref{CanonicalComparison}. Since \(U\) is weakly normal, \(D\in \textnormal{Un}_{<\delta}\). Note that  \(U_*\swo j_D(U)\) since \(U\not \I D\). 

By \cref{NormalGeneration}, \(U_*\) extends the club filter on \(j_D(\lambda)\), and hence is uniform on \(j_D(\lambda)\). Let \(\kappa_* = \textsc{crt}(U_*)\). Since \(U\equiv D \oplus U_*\), \(\kappa_*\geq \kappa\). On the other hand \(\kappa_* < j_D(\kappa)\) since \(j_D(U) \slwo U_*\) is the \(\swo^{M_D}\)-least \(j_D(\kappa)\)-complete uniform ultrafilter on \(j_D(\lambda)\). By \cref{IrredSingular} applied in \(M_D\), \(U_*\) witnesses \(\kappa_*\in [\kappa,j_D(\kappa))\) is \({<}j_D(\gamma)\)-supercompact in \(M_D\).

Obviously \(\textsc{crt}(D) \geq \kappa\), but since as we have seen \(\kappa \leq \kappa_* < j_D(\kappa)\), in fact \(j_D(\kappa)\neq \kappa\), so \(\textsc{crt}(D) = \kappa\). Since in \(M_D\) there is a \({<}j_D(\gamma)\)-supercompact cardinal \(\kappa_* \in [\kappa,j_D(\kappa))\), the usual reflection argument implies \(\kappa\) is a limit of \({<}\gamma\)-supercompact cardinals. 

The theorem is proved by taking \(\gamma\) to absolute infinity and using a simple pigeonhole argument.
\end{proof}
\subsection{Some applications}
In this section we give a few applications of \cref{IrredGCH}: a characterization of weakly normal ultrafilters and an application to huge cardinals.

Our first application is essentially a restatement of \cref{IrredGCH} that clarifies how it is related to Solovay's program described in the introduction.

\begin{defn}
A countably complete strongly uniform ultrafilter \(U\) on a cardinal \(\lambda\) is {\it pre-normal} if it is Rudin-Keisler minimal among all strongly uniform ultrafilters on \(\lambda\).
\end{defn}

If \(\lambda\) is regular, then \(U\) is pre-normal if and only if \(U\) is weakly normal. On the other hand, if \(\lambda\) is singular, then no weakly normal ultrafilter on \(\lambda\) is strongly uniform by \cref{Trivia}. 

\begin{defn}
Suppose \(\lambda\) is a cardinal and \(U\) is a \(\lambda\)-decomposable countably complete ultrafilter. The {\it pre-normal ultrafilter on \(\lambda\) derived from \(U\)} is the ultrafilter derived from \(U\) using the least generator \(\theta\) of \(U\) such that \(\theta \geq \sup j_U[\lambda]\).
\end{defn}

The name is inspired by the following theorem from \cite{MitchellOrder}:

\begin{thm}\label{DerivedSuper}
Suppose \(U\) is a \(\lambda\)-supercompact, \(\lambda\)-decomposable ultrafilter. Then the pre-normal ultrafilter on \(\lambda\) derived from \(U\) is equivalent to a normal fine ultrafilter on \(P(\lambda)\).
\end{thm}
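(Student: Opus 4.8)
The plan is to exhibit a Rudin-Keisler equivalence between the pre-normal ultrafilter $D$ on $\lambda$ derived from $U$ and the normal fine ultrafilter $\mathcal{U}$ on $P(\lambda)$ that the $\lambda$-supercompactness of $U$ furnishes. Write $j=j_U$, and recall that by definition $D=D_\theta:=\{A\subseteq\lambda:\theta\in j(A)\}$, where $\theta$ is the least generator of $U$ with $\theta\geq\sup j[\lambda]$; since $U$ is $\lambda$-decomposable, $j$ is discontinuous at $\lambda$, so $\sup j[\lambda]<j(\lambda)$ and this makes sense.

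First I would build $\mathcal{U}$. As $U$ is $\lambda$-supercompact, $j\restriction\lambda\in M_U$, so the seed $\sigma_0:=j[\lambda]$ belongs to $M_U$ and has $M_U$-cardinality $\lambda<j(\lambda)$. Put $\mathcal{U}=\{X\subseteq P(\lambda):\sigma_0\in j(X)\}$. By the classical argument of Solovay this is a normal fine ultrafilter concentrating on $P_\lambda(\lambda)$: it is fine because $j(\alpha)\in\sigma_0$ for every $\alpha<\lambda$, and normal because any $f$ with $f(\sigma)\in\sigma$ on a $\mathcal{U}$-large set has $j(f)(\sigma_0)=j(\beta)$ for a unique $\beta<\lambda$, whence $f\equiv_{\mathcal{U}}\beta$.

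Next I would identify $D$ with the pushforward $\sup_*(\mathcal{U})$. For $A\subseteq\lambda$, $A\in\sup_*(\mathcal{U})$ iff $\{\sigma:\sup\sigma\in A\}\in\mathcal{U}$ iff $\sup\sigma_0=\sup j[\lambda]\in j(A)$, so $\sup_*(\mathcal{U})$ is the ultrafilter $D_\eta$ derived from $U$ at $\eta:=\sup j[\lambda]$. Fineness of $\mathcal{U}$ forces $D_\eta$ to be uniform on $\lambda$ (a cobounded $A$ has $\{\sigma:\sup\sigma\notin A\}$ contained in some $\{\sigma:\sup\sigma<\beta_0\}\notin\mathcal{U}$). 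The crucial point is that $\eta$ is a generator of $U$: otherwise $\eta=j(g)(s)$ for a finite set $s$ of generators all below $\eta=\sup j[\lambda]$, hence $s\subseteq j(\beta_0)$ for some $\beta_0<\lambda$, so $D_\eta$ would be a pushforward of the ultrafilter on $[\beta_0]^{<\omega}$ derived at $s$, and thus Rudin-Keisler below an ultrafilter on $\beta_0<\lambda$ --- contradicting its uniformity on $\lambda$. Therefore $\eta$ is the least generator $\geq\sup j[\lambda]$, i.e.\ $\eta=\theta$, so $D=D_\eta=\sup_*(\mathcal{U})$; in particular $D\leq_{\text{RK}}\mathcal{U}$.

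For the reverse reduction I would invoke Solovay's Lemma. Take $\lambda$ regular first. Fix a partition $\vec S$ of $S^{\lambda}_{\omega}$ into stationary sets and let $A$ be the Solovay set defined from $\vec S$ at $\lambda$; by \cref{LocalSolovay}, $A\in\mathcal{U}$ and $\sup\restriction A$ is injective. Choosing $h:\lambda\to P(\lambda)$ with $h(\sup\sigma)=\sigma$ for $\sigma\in A$, the composite $h\circ\sup$ is the identity on the $\mathcal{U}$-large set $A$, so $h_*(D)=h_*(\sup_*(\mathcal{U}))=\mathcal{U}$ and hence $\mathcal{U}\leq_{\text{RK}}D$; combined with the previous paragraph, $D\equiv\mathcal{U}$. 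For singular $\lambda$ one runs the same argument through $\mathrm{cf}(\lambda)$, using \cref{Trivia} and forming the Solovay set at $\mathrm{cf}(\lambda)$. I expect the third step to be the main obstacle: nailing down the seed defining $D$ and proving it equals $\sup j[\lambda]$ --- equivalently, that $\sup j[\lambda]$ is a generator --- is exactly where $\lambda$-supercompactness, not just $\lambda$-decomposability, is used, and the reduction to $\mathrm{cf}(\lambda)$ in the singular case is a routine but fussy addendum.
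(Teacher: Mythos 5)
Your argument for \emph{regular} \(\lambda\) is correct, and it is the standard proof of Solovay's Lemma: deriving \(\mathcal U\) from the seed \(j[\lambda]\), computing \(\sup_*(\mathcal U)=D_{\sup j[\lambda]}\), checking that \(\sup j[\lambda]\) is a generator (note that your uniformity argument genuinely uses regularity, since a set of size \(|\beta_0|<\lambda\) is bounded in \(\lambda\) only when \(\lambda\) is regular), and inverting \(\sup\) on the Solovay set. For what it is worth, the paper itself gives no proof of this theorem --- it is imported from \cite{MitchellOrder} --- so the only question is whether your argument stands on its own.

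It does not in the singular case, which is exactly the content of the theorem: the paper says it ``should be seen as a generalization of Solovay's Lemma to singular cardinals.'' If \(\lambda\) is singular with \(\mu=\mathrm{cf}(\lambda)<\lambda\), fix a continuous cofinal \(p:\mu\to\lambda\); then \(\sup j[\lambda]=j(p)(\sup j[\mu])\) with \(\sup j[\mu]<\sup j[\lambda]\), so \(\sup j[\lambda]\) is \emph{not} a generator of \(U\). Hence \(\theta>\sup j[\lambda]\), and your identification \(D=\sup_*(\mathcal U)\) fails outright: \(\sup_*(\mathcal U)\) is the weakly normal ultrafilter on \(\lambda\), which by \cref{Trivia} is equivalent to an ultrafilter on \(\mu\), and it is not the pre-normal ultrafilter. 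Your proposed repair --- ``run the same argument through \(\mathrm{cf}(\lambda)\), forming the Solovay set at \(\mathrm{cf}(\lambda)\)'' --- only shows that the weakly normal ultrafilter on \(\mu\) derived from \(U\) is equivalent to a normal fine ultrafilter on \(P(\mu)\); it says nothing about \(P(\lambda)\) or about the seed \(\theta\). What is actually required is a replacement for \(\sup\): a function on \(P(\lambda)\) (for instance one coding \(\sigma\) by the sequence \(\langle\sup(\sigma\cap p(i)):i<\mu\rangle\) together with stationary-partition data at each \(p(i)\)) that is injective on a \(\mathcal U\)-large set and whose value at the seed \(j[\lambda]\) can be proved to be the least generator of \(U\) above \(\sup j[\lambda]\). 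Producing and analyzing such a function uses \(\lambda\)-supercompactness in an essential new way; it is the missing idea, not a routine addendum.
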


This should be seen as a generalization of Solovay's Lemma to singular cardinals.

Returning to the discussion in the introduction, suppose one wanted to generalize the proof that if \(\kappa\) carries a \(\kappa\)-complete ultrafilter, then \(\kappa\) carries a \(\kappa\)-complete normal ultrafilter. That proof really shows:

\begin{prp}\label{Normal}
If \(U\) is a \(\kappa\)-complete, \(\kappa\)-decomposable ultrafilter then the pre-normal ultrafilter on \(\kappa\) derived from \(U\) is normal.
\end{prp}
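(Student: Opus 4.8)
The plan is to unwind the definition of the pre-normal ultrafilter and then run the classical argument that a \(\kappa\)-complete uniform ultrafilter on \(\kappa\) carries a normal refinement, organized so that normality falls out of the specific choice of generator \(\theta\) in the definition. The first task is to show that \(\textsc{crt}(U)=\kappa\). Since \(U\) is \(\kappa\)-decomposable it is nonprincipal, and a partition witnessing \(\kappa\)-decomposability yields \(f\colon\textsc{sp}(U)\to\kappa\) with \(f^{-1}(\alpha)\notin U\) for all \(\alpha<\kappa\), equivalently \([f]_U\neq j_U(\alpha)\) for all \(\alpha<\kappa\). From \(\kappa\)-completeness, \(\textsc{crt}(U)\geq\kappa\); and if \(\textsc{crt}(U)>\kappa\) then \(j_U\restriction\kappa=\text{id}\) and \(j_U(\kappa)=\kappa\), so \([f]_U<j_U(\kappa)=\kappa\) would be an ordinal below \(\kappa\) distinct from every \(\alpha<\kappa\), which is impossible. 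Hence \(\textsc{crt}(U)=\kappa\), so \(\sup j_U[\kappa]=\kappa<j_U(\kappa)\) and \(\kappa\notin\text{ran}(j_U)\); pushing \(U\) forward by \(f\) moreover exhibits \(f_*(U)\) as a \(\kappa\)-complete uniform ultrafilter on \(\kappa\), so \(\kappa\) is measurable.

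Next I would identify the pre-normal ultrafilter explicitly. No ordinal \(<\kappa=\textsc{crt}(U)\) is a generator of \(U\) (each is fixed by \(j_U\)), while \(\kappa\) is a generator since \(\kappa\notin\text{ran}(j_U)\); hence the least generator \(\theta\) of \(U\) with \(\theta\geq\sup j_U[\kappa]=\kappa\) is \(\theta=\kappa\). Moreover \(j_U(\delta)=\delta\leq\kappa<j_U(\kappa)\) for every \(\delta<\kappa\), so the ultrafilter derived from \(U\) using \(\theta=\kappa\) is genuinely an ultrafilter on \(\kappa\), namely \(\mathcal U=\{A\subseteq\kappa : \kappa\in j_U(A)\}\). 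Standard derived-ultrafilter computations, together with \(\kappa\notin\text{ran}(j_U)\), show \(\mathcal U\) is \(\kappa\)-complete and uniform on \(\kappa\); this is the pre-normal ultrafilter on \(\kappa\) derived from \(U\).

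It then remains to check that \(\mathcal U\) is normal, which is the point where \(\theta=\kappa\) does the work. Suppose \(h\colon\kappa\to\kappa\) is regressive on a set in \(\mathcal U\). Unwinding, \(\kappa\in j_U(\{\xi<\kappa : h(\xi)<\xi\})=\{\xi<j_U(\kappa) : j_U(h)(\xi)<\xi\}\) says exactly that \(\beta:=j_U(h)(\kappa)<\kappa\). Since \(\beta<\kappa=\textsc{crt}(U)\) we have \(j_U(\beta)=\beta\), so \(j_U(h)(\kappa)=j_U(\beta)\), i.e.\ \(\kappa\in j_U(\{\xi<\kappa : h(\xi)=\beta\})\), i.e.\ \(\{\xi<\kappa : h(\xi)=\beta\}\in\mathcal U\); thus \(h\) is constant on a \(\mathcal U\)-large set and \(\mathcal U\) is normal. (Equivalently, the factor embedding \(k\colon M_{\mathcal U}\to M_U\) satisfies \(k\restriction\kappa=\text{id}\) and \(k([\text{id}]_{\mathcal U})=\kappa\); since uniformity of \(\mathcal U\) gives \([\text{id}]_{\mathcal U}\geq\kappa\) and \(k\) is strictly increasing on ordinals, \(\kappa=k([\text{id}]_{\mathcal U})\geq k(\kappa)\geq\kappa\) forces \([\text{id}]_{\mathcal U}=\kappa\), which is equivalent to normality.)

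There is no serious obstacle here: the only thing to be careful about is the bookkeeping in the second step, namely verifying that \(\kappa\) is a generator of \(U\) and hence that \(\theta=\kappa\), so that the pre-normal ultrafilter is exactly the seed-\(\kappa\) derived ultrafilter — and recognizing that \(\kappa\)-decomposability is invoked precisely to guarantee \(\textsc{crt}(U)=\kappa\). Once that identification is in place, the normality verification is the one-line reflection of regressive functions below \(\kappa\) given above.
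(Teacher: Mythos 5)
Your argument is correct and is exactly the classical derivation that the paper has in mind: \cref{Normal} is stated there as what ``that proof really shows,'' namely the standard argument that a \(\kappa\)-complete \(\kappa\)-decomposable ultrafilter yields a normal measure on \(\kappa\), and your write-up supplies precisely that, with the right bookkeeping identifying the least generator \(\theta\geq\sup j_U[\kappa]\) as \(\kappa\) itself so that the pre-normal ultrafilter is the seed-\(\kappa\) derived ultrafilter. No gaps.
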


One might attempt to generalize this by starting with an arbitrary countably complete \(\lambda\)-decomposable ultrafilter, deriving its pre-normal ultrafilter \(D\) on \(\lambda\), and trying to prove that \(D\) is equivalent to a normal fine ultrafilter on \(P(\lambda)\).

This cannot work for several reasons. One is that Magidor's independence result shows that such a statement cannot be provable from ZFC alone. Another is that there are pre-normal ultrafilters that are not equivalent to normal ultrafilters. All the known examples, for example those produced by Menas, are built by hitting a small ultrafilter to produce a failure of supercompactness and then hitting a large ultrafilter to produce strong uniformity, and finally showing that the resulting iterated ultrapower is equivalent to its derived pre-normal ultrafilter. There is no known provable example that is irreducible. 

There is a good reason for this: as a corollary of our results, assuming UA + GCH, the naive attempt to generalize \cref{Normal} succeeds under the simplest condition that rules out Menas's counterexamples:

\begin{thm}[UA + GCH]
Suppose \(\lambda\) is an accessible cardinal. Suppose \(U\) is a \(\lambda\)-decomposable ultrafilter that is not divisible by any \(W\in \textnormal{Un}_{<\lambda}\). Then the pre-normal ultrafilter on \(\lambda\) derived from \(U\) is equivalent to a normal fine ultrafilter on \(P(\lambda)\).
\begin{proof}
By \cref{DerivedSuper}, it suffices to show that \(U\) is \(\lambda\)-supercompact.

The proof of \cref{IrredGCH} shows that \(U\) is \(\lambda\)-supercompact if \(\lambda\) is a successor and \({<}\lambda\)-supercompact if \(\lambda\) is a limit. In the latter case since \(\lambda\) is accessible, \(\lambda\) is singular, so in fact \(U\) is \(\lambda\)-supercompact in this case as well.
\end{proof}
\end{thm}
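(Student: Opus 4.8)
The plan is to reduce the statement to a single supercompactness assertion and then harvest that assertion from the argument already set up for \cref{IrredGCH}. By \cref{DerivedSuper}, since $U$ is assumed $\lambda$-decomposable, the pre-normal ultrafilter on $\lambda$ derived from $U$ is automatically equivalent to a normal fine ultrafilter on $P(\lambda)$ once we know $U$ is $\lambda$-supercompact. So the entire content of the theorem is: \emph{$U$ is $\lambda$-supercompact}.

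To prove this I would run the proof of \cref{IrredGCH} with $U$ playing the role of the ultrafilter called $W$ there. That proof shows $\delta$-supercompactness of $W$ for each successor cardinal $\delta\le\lambda$, and it invokes the hypotheses ``$W$ irreducible'' and ``$W$ strongly uniform on $\lambda$'' in exactly two places. First, to deduce via \cref{ContinuityFactor} that $W$ is discontinuous at $\delta$: if $W$ were continuous at $\delta$ it would factor as $D\oplus Z$ with $\textsc{sp}(D)<\delta$ and $\textsc{crt}(Z)>j_D(\delta)$, and irreducibility plus strong uniformity force $D$ to be principal, whence $\textsc{crt}(W)>\delta$. Second, in the step showing the low factor of $t_U(W)$ across $\delta$ is principal, which again passes through a factorization that a nontrivial divisor of small space would obstruct. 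For the ultrafilter $U$ of the present theorem the first use is free at $\delta=\lambda$, since $\lambda$-decomposability \emph{is} discontinuity at $\lambda$; and at a successor $\delta<\lambda$ one still gets the conclusion, because by \cref{ContinuityFactor} a nonprincipal low factor would lie in $\textnormal{Un}_{<\lambda}$ and divide $U$, contrary to hypothesis, while if the low factor is principal then $\delta<\textsc{crt}(U)$ and $\delta$-supercompactness is automatic. The second use is replaced verbatim by our standing hypothesis that $U$ is not divisible by any ultrafilter in $\textnormal{Un}_{<\lambda}$: every place the original argument appeals to irreducibility merely to rule out a nonprincipal divisor of space $<\lambda$, we appeal to non-divisibility instead, and the delicate cofinality computations together with the identification of $U'$ with $j_D(U)$ (via \cref{NormalGeneration} and \cref{LocalSolovay}) go through unchanged. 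Thus if $\lambda$ is a successor cardinal, $U$ is $\lambda$-supercompact.

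If $\lambda$ is a limit cardinal, then, $\lambda$ being accessible, $\lambda$ is singular. Applying the previous paragraph to each successor $\delta<\lambda$ — the non-divisibility hypothesis for $\textnormal{Un}_{<\delta}$ following from that for $\textnormal{Un}_{<\lambda}$ — we get that $U$ is ${<}\lambda$-supercompact. Finally ${<}\lambda$-supercompactness upgrades to $\lambda$-supercompactness for singular $\lambda$ by the standard covering argument, exactly as in the last part of the proof of \cref{ZeroTheorem}: one covers $j_U[\lambda]$ using a cofinal subset of $\lambda$ of order type $\textnormal{cf}(\lambda)<\lambda$, together with the facts that $P(\gamma)\subseteq M_U$ for all $\gamma<\lambda$ and that $j_U\restriction P_{\textsc{crt}(U)}(\textnormal{cf}(\lambda))\in M_U$. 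Either way $U$ is $\lambda$-supercompact, and the theorem follows.

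I expect the real obstacle to be the bookkeeping in the middle paragraph: one must walk line by line through the cardinal-arithmetic-heavy portion of the proof of \cref{IrredGCH} and confirm that irreducibility and strong uniformity are used there \emph{only} to exclude nonprincipal divisors of space $<\lambda$ (and, at $\delta=\lambda$, only to produce discontinuity, which we now have for free), so that the weaker hypotheses of this theorem genuinely suffice. Everything downstream of the reduction to $\lambda$-supercompactness — the appeal to \cref{DerivedSuper} and the singular-cardinal covering step — is routine.
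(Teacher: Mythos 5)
Your proposal is correct and follows the paper's own (much terser) proof exactly: reduce to \(\lambda\)-supercompactness via \cref{DerivedSuper}, rerun the proof of \cref{IrredGCH} after checking that irreducibility and strong uniformity enter only to exclude nonprincipal divisors of space below \(\lambda\) (supplied by your non-divisibility hypothesis) and to give discontinuity at \(\lambda\) (supplied by \(\lambda\)-decomposability), and upgrade \({<}\lambda\)- to \(\lambda\)-supercompactness when \(\lambda\) is an accessible limit, hence singular. The one line your bookkeeping should also cover is the appeal to \cref{InternalCompact} at \(\delta=\lambda\), where the least ultrafilter on \(\lambda\) is not itself in \(\textnormal{Un}_{<\lambda}\); under GCH that step is in any case subsumed by \cref{GCHEasy} together with \cref{ZeroTheorem}.
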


Our next application derives large cardinal strength in the realm of huge cardinals from a simpler ultrafilter theoretic statement.

\begin{lma}[UA]
Suppose \(\delta\) is a regular cardinal. Suppose \(W\) is the \(\swo\)-least countably complete weakly normal ultrafilter on \(\delta\) concentrating on \(S_\kappa^\delta\) for some \(\kappa < \delta\). Then \(W\) is irreducible.
\begin{proof}
Suppose \(D\) divides \(W\) and \(D\swo W\). Let \(W_* = t_D(W)\). Since \(W\) is pre-normal, \(\textsc{sp}(D) < \delta\). It follows from \cref{NormalGeneration} that \(W_*\) lies on \(j_D(\delta)\). Note that \(W_*\) is weakly normal since \([\text{id}]_{W_*} = [\text{id}]_W = \sup j_W[\delta] = \sup j^{M_D}_{W_*}[j_D(\delta)]\). Moreover \(j_W(S^\delta_\kappa) \in W_*\) since \(D^-(W_*) = W\). Therefore \(j_D(W)\wo W_*\). It follows that \(W_* = j_D(W)\), so \(D\I W\) by \cref{jInternal}. Hence \(D\) is principal.
\end{proof}
\end{lma}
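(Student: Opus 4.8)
The plan is to prove the contrapositive: given a divisor $D\D W$ that is nonprincipal, I will show $D\equiv W$. First reduce to the case $D\swo W$: writing $W\equiv D\oplus W_*$ with $W_*\in\textnormal{Un}^{M_D}$, if $W_*$ is principal then $D\equiv W$ and we are done, and otherwise \cref{SumSeed} forces $D\swo W$. Now take $W_*=t_D(W)$, so that $W\equiv D\oplus W_*$ by \cref{CanonicalComparison}. The whole argument reduces to the identification $W_*=j_D(W)$: once this is known, \cref{jInternal} gives $D\I W$, and then — exactly as in the proof of \cref{Factorization} — combining $D\D W$ (so $M_W\subseteq M_D$) with $D\I W$ (so $j_D\restriction\textnormal{Ord}$ is amenable to $M_W$) contradicts the nonprincipality of $D$; hence $D\equiv W$ after all.

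To obtain $W_*=j_D(W)$ I would argue as in the first part of \cref{QPoints}. First, $\textsc{sp}(D)<\delta$: since $D$ is a Rudin--Keisler factor of $W$ and $W$, being weakly normal on $\delta$, is Rudin--Keisler minimal among uniform countably complete ultrafilters on $\delta$, a divisor of space $\delta$ would be equivalent to $W$, against $D\swo W$. Consequently $j_D$ is continuous at $\delta$, and \cref{NormalGeneration} with $\mathcal F$ the club filter on $\delta$ shows $W_*=t_D(W)$ extends the club filter on $j_D(\delta)$, so $W_*$ is a uniform $M_D$-ultrafilter on $j_D(\delta)$. Next, $W_*$ concentrates on $j_D(S^\delta_\kappa)=(S^{j_D(\delta)}_{j_D(\kappa)})^{M_D}$, since $D^-(W_*)=W$ and $S^\delta_\kappa\in W$ give $j_D(S^\delta_\kappa)\in W_*$. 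Then $W_*$ is weakly normal in $M_D$: using $j_W=j^{M_D}_{W_*}\circ j_D$ together with the cofinality of $j_D[\delta]$ in $j_D(\delta)$, one has $[\textnormal{id}]^{M_D}_{W_*}=[\textnormal{id}]_W=\sup j_W[\delta]=\sup j^{M_D}_{W_*}[j_D(\delta)]$, the last equality being precisely weak normality of $W_*$. Finally, by elementarity $j_D(W)$ is the $\swo^{M_D}$-least countably complete weakly normal ultrafilter on $j_D(\delta)$ concentrating on $S^{j_D(\delta)}_{j_D(\kappa)}$, so $j_D(W)\wo^{M_D}W_*$; conversely $W_*=t_D(W)\wo^{M_D}j_D(W)$ by definition of the translation function, since $D^-(j_D(W))=W$. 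Hence $W_*=j_D(W)$.

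The step I expect to be the main obstacle is the weak normality of $W_*$, i.e. the equality $[\textnormal{id}]^{M_D}_{W_*}=[\textnormal{id}]_W$. The inequality $[\textnormal{id}]^{M_D}_{W_*}\geq[\textnormal{id}]_W$ is clean: the factor embedding $k\colon M_W\to M^{M_D}_{W_*}=M_W$ from \cref{LimitEmbeddings} fixes $j_W[V]$ pointwise (using $W\equiv D\oplus W_*$, \cref{UltrafilterSum}, and \cref{Commutativity}), so $k(\alpha)\geq\alpha$ on the ordinals by \cref{mindef}. The reverse inequality is the delicate point: it requires that $k$ not move $[\textnormal{id}]_W=\sup j_W[\delta]$ upward, i.e. an appropriate continuity of $k$, and this is exactly where $\textsc{sp}(D)<\delta$ is genuinely used, via the fact that $j_D$ is continuous at $\delta$ and hence $j_D[\delta]$ is cofinal in $j_D(\delta)$. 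Everything else — the reduction to $D\swo W$, the uses of \cref{NormalGeneration} and \cref{jInternal}, and the closing amenability argument copied from \cref{Factorization} — is routine bookkeeping with the machinery already developed.
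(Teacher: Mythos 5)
Your argument has the same skeleton as the paper's: reduce to a divisor \(D\swo W\), get \(\textsc{sp}(D)<\delta\) from pre-normality, use \cref{NormalGeneration} to place \(W_*=t_D(W)\) on \(j_D(\delta)\), check that \(W_*\) is weakly normal and concentrates on \(j_D(S^\delta_\kappa)\), conclude \(W_*=j_D(W)\) by elementarity plus the definition of the translation function, and finish with \cref{jInternal} and the amenability argument from \cref{Factorization}. All of that matches the paper, and you correctly isolate the one step that needs care. But the justification you propose for that step does not work, and that is a genuine gap.

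The problem is the inequality \([\textnormal{id}]^{M_D}_{W_*}\leq\sup j^{M_D}_{W_*}[j_D(\delta)]\), i.e.\ the weak normality of \(W_*\) in \(M_D\). Continuity of \(j_D\) at \(\delta\) buys you only the identity \(\sup j^{M_D}_{W_*}[j_D(\delta)]=\sup j^{M_D}_{W_*}[j_D[\delta]]=\sup j_W[\delta]\); it says nothing about whether the factor embedding \(k\) is continuous at \(\sup j_W[\delta]\). For a general \(Z\in\textnormal{Un}^{M_D}\) with \(D^-(Z)=W\), the associated embedding is typically discontinuous there --- \([\textnormal{id}]^{M_D}_Z\) can be any sufficiently large generator --- and nothing about \(D\) prevents this; so the property you need is special to \(t_D(W)\) and must come from its defining minimality, which you never invoke here. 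The repair: let \(W_*'\) be the weakly normal ultrafilter on \(j_D(\delta)\) derived from \(j^{M_D}_{W_*}\) using \(\alpha=\sup j^{M_D}_{W_*}[j_D(\delta)]=\sup j_W[\delta]\). Since \(W\) is weakly normal, the ultrafilter derived from \(j_W=j^{M_D}_{W_*}\circ j_D\) using \(\sup j_W[\delta]=[\textnormal{id}]_W\) is \(W\) itself, so \(D^-(W_*')=W\); and if \([\textnormal{id}]^{M_D}_{W_*}>\alpha\) then \(W_*'\swo^{M_D}W_*\) (it is the \(W_*\)-limit of a principal ultrafilter with space \(\leq[\textnormal{id}]^{M_D}_{W_*}\)), contradicting the \(\swo^{M_D}\)-minimality of \(t_D(W)\). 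Hence \([\textnormal{id}]^{M_D}_{W_*}=\alpha\), which is exactly the weak normality you need. With that substitution the rest of your proof goes through and coincides with the paper's.
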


\begin{cor}[UA + GCH]\label{HyperHugeGCH}
Suppose \(\delta\) is a regular cardinal carrying a countably complete ultrafilter concentrating on \(S_\kappa^\delta\) for some \(\kappa < \delta\). The \(\swo\)-least such ultrafilter \(W\) is \({<}\delta\)-supercompact. If \(\delta\) is a successor, then \(W\) is \(\delta\)-supercompact. In any case, \(W\) has the tight covering property at \(\delta\).
\begin{proof}
This is all immediate except for the tight covering property, which is only relevant when \(\delta\) is inaccessible. In fact, in this case, it is not hard to show that {\it every} ultrafilter has the tight covering property at \(\delta\):
\begin{lma}[UA + GCH]
Suppose \(\delta\) is inaccessible or a successor of a strong limit cardinal of countable cofinality. Then every countably complete ultrafilter has the tight covering property at \(\delta\).
\begin{proof}
Suppose \(\delta\) carries no countably complete ultrafilter. Then using \cref{ContinuityFactor} or \cref{ContinuityFactor2}, it is easy to see that every ultrapower fixes \(\delta\) and hence has the tight covering property.

Otherwise let \(U\) be the least ultrafilter on \(\delta\). Suppose \(W\in \textnormal{Un}\). In \(M_U\), \(\delta\) carries no countably complete ultrafilter, so \(t_U(W)\) fixes \(\delta\). As in \cref{IrredGCH}, one calculates that \(\text{cf}^{M_W}(\sup j_W[\delta]) = j^{M_U}_{t_U(W)}(\delta)\), but \( j^{M_U}_{t_U(W)}(\delta) = \delta\), so we are done by \cref{CofinalityRegularity}.
\end{proof} 
\end{lma}
This completes the proof of \cref{HyperHugeGCH}.
\end{proof}
\end{cor}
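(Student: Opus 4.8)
The aim is to show $M_W$ has the $(\delta,\delta)$-covering property for every $W\in\textnormal{Un}$. Since $\delta$ is regular in both cases (inaccessibles and successors are regular), Ketonen's \cref{CofinalityRegularity} reduces this to showing $\text{cf}^{M_W}(\sup j_W[\delta])=\delta$; and $\delta$ remains regular in $M_W\subseteq V$ (a shorter cofinal map would lie in $V$), so it suffices to check that $j_W$ is continuous at $\delta$ with $j_W(\delta)=\delta$, i.e.\ that $j_W$ \emph{fixes} $\delta$. I would split on whether $\delta$ carries a uniform countably complete ultrafilter.

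\textbf{Case 1: $\delta$ carries no uniform countably complete ultrafilter.} First, $j_W$ is continuous at $\delta$: otherwise $\sup j_W[\delta]<j_W(\delta)$, and then $\{A\subseteq\delta : \sup j_W[\delta]\in j_W(A)\}$ is a uniform countably complete ultrafilter on $\delta$, a contradiction. Granting continuity, I invoke the relevant factorization lemma. If $\delta$ is inaccessible, then $\textnormal{ot}(\textnormal{Un}_{<\delta},\swo)\le\delta$ (a routine count: $|\textnormal{Un}_\gamma|\le 2^{2^\gamma}<\delta$ for $\gamma<\delta$ since $\delta$ is a strong limit, and the pieces concatenate in the seed order by \cref{SpaceLemma}), so under GCH the hypothesis $2^{\textnormal{ot}(\textnormal{Un}_{<\delta},\swo)}<\delta^{+\textsc{crt}(W)}$ of \cref{ContinuityFactor} holds and $W\equiv D\oplus Z$ with $\textsc{sp}(D)<\delta$ and $\textsc{crt}(Z)>j_D(\delta)$; moreover $j_D(\delta)=\delta$ for any $D\in\textnormal{Un}_{<\delta}$, since $\delta$ inaccessible forces every $f\colon\textsc{sp}(D)\to\delta$ to be bounded with $[f]_D<\delta$. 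If instead $\delta=\nu^+$ with $\nu$ a strong limit of countable cofinality, then $2^\nu=\nu^+$ by GCH and \cref{ContinuityFactor2} gives $W\equiv D\oplus Z$ with $D\in\textnormal{Un}_{<\nu}$ and $\textsc{crt}(Z)>j_D(\nu^+)=\nu^+=\delta$ (countable completeness makes $j_D$ continuous at the cofinality-$\omega$ cardinal $\nu$, so $j_D(\nu)=\nu$). In either case $j_W(\delta)=j^{M_D}_Z(j_D(\delta))=j^{M_D}_Z(\delta)=\delta$, so $j_W$ fixes $\delta$, and \cref{CofinalityRegularity} finishes.

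\textbf{Case 2: $\delta$ carries a uniform countably complete ultrafilter.} Let $U$ be the least ultrafilter on $\delta$. By \cref{GCHEasy}, GCH below $\delta$ makes $\textsc{crt}(U)$ be $\delta$-strongly compact, so by \cref{ZeroTheorem} $U$ has the tight covering property at $\delta$, i.e.\ $\text{cf}^{M_U}(\sup j_U[\delta])=\delta$. Since $U$ is $0$-order, $[\textnormal{id}]_U=\sup j_U[\delta]$ carries no uniform countably complete ultrafilter in $M_U$, hence neither does its $M_U$-cofinality $\delta$; so $\delta$ carries no uniform countably complete ultrafilter in $M_U$. Running the argument of Case 1 inside $M_U$ --- which still sees $\delta$ as inaccessible (resp.\ as the successor of a strong limit of countable cofinality) and inherits enough GCH below $\delta$ from the $<\delta$-supercompactness of $U$ --- applied to the $M_U$-ultrafilter $t_U(W)$, yields that $t_U(W)$ fixes $\delta$, i.e.\ $j^{M_U}_{t_U(W)}(\delta)=\delta$. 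Finally, adapting the cofinality computation from the proof of \cref{IrredGCH} --- via the embedding $k\colon M_W\to M^{M_U}_{t_U(W)}$ with $k\circ j_W=j^{M_U}_{t_U(W)}\circ j_U$ together with the tight covering of $U$ at $\delta$ --- one obtains $\text{cf}^{M_W}(\sup j_W[\delta])=j^{M_U}_{t_U(W)}(\delta)=\delta$, and \cref{CofinalityRegularity} applies. I expect the main obstacle to be precisely this passage into $M_U$: confirming that $\delta$ is a genuine non-carrier there, that \cref{ContinuityFactor}/\cref{ContinuityFactor2} remain available in $M_U$ (which needs a suitable fragment of GCH to persist, supplied by the supercompactness of $U$), and the bookkeeping identifying $\text{cf}^{M_W}(\sup j_W[\delta])$ with $j^{M_U}_{t_U(W)}(\delta)$ --- all of which should follow from the already-established supercompactness and tight-covering behavior of least ultrafilters.
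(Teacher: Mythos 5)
Your proof is correct and takes essentially the same route as the paper's: split on whether \(\delta\) carries a uniform countably complete ultrafilter, use \cref{ContinuityFactor} (inaccessible case) or \cref{ContinuityFactor2} (successor case) to see that every ultrapower — or, in the second case, \(t_U(W)\) computed in the least ultrapower \(M_U\) — fixes \(\delta\), and then transfer this to \(\text{cf}^{M_W}(\sup j_W[\delta])=\delta\) via the cofinality computation of \cref{IrredGCH} and \cref{CofinalityRegularity}; the extra details you supply (the order-type bound on \(\textnormal{Un}_{<\delta}\), \(j_D(\delta)=\delta\), the persistence of the hypotheses to \(M_U\)) all check out. Like the paper, you treat the supercompactness clauses as immediate from the preceding irreducibility lemma together with \cref{IrredGCH}, which is fine.
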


Similarly one can show the following fact for singular cardinals:

\begin{cor}[UA+GCH]
Suppose there is a cardinal \(\delta\) as in \cref{HyperHugeGCH}. Then there is an almost huge cardinal.
\begin{proof}
Let \(\delta\) be such a cardinal and \(W\) the least countably complete weakly normal ultrafilter concentrating on \(S^\delta_\kappa\) for some \(\kappa < \delta\). We then have \(j_W(\kappa) = \text{cf}^{M_W}(\sup j_W[\delta])= \delta\), where the first equality follows from Los's theorem and the second follows from the tight covering property, which holds by \cref{HyperHugeGCH}.

Note that \(\textsc{crt}(W) \leq \kappa\). Since \(W\) is \({<}\delta\)-supercompact and \(j_W(\textsc{crt}(W))\leq j_W(\kappa) = \delta\), \(W\) witnesses that \(\kappa\) is almost huge.
\end{proof}
\end{cor}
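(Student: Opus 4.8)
The plan is to read off an almost huge embedding directly from the $\swo$-least ultrafilter supplied by \cref{HyperHugeGCH}. Fix a regular cardinal $\delta$ carrying a countably complete ultrafilter concentrating on $S^\delta_\kappa$ for some $\kappa<\delta$, and let $W$ be the $\swo$-least such ultrafilter. A routine minimality argument (derive the weakly normal ultrafilter from $W$ using the seed $\sup j_W[\delta]$ and appeal to $\swo$-leastness) shows that $W$ is weakly normal, so $[\textnormal{id}]_W=\sup j_W[\delta]$. By \cref{HyperHugeGCH}, $W$ is ${<}\delta$-supercompact and has the tight covering property at $\delta$.

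The heart of the argument is the computation $j_W(\kappa)=\delta$. Since $W$ concentrates on $S^\delta_\kappa$, Los's theorem gives $\textnormal{cf}^{M_W}([\textnormal{id}]_W)=j_W(\kappa)$, and hence $\textnormal{cf}^{M_W}(\sup j_W[\delta])=j_W(\kappa)$ by weak normality. Now $\textnormal{cf}^{M_W}(\sup j_W[\delta])\geq\delta$, since $\delta$ is regular and a shorter cofinal sequence lying in $M_W$ would also lie in $V$; and $\textnormal{cf}^{M_W}(\sup j_W[\delta])\leq\delta$ by the tight covering property, since a set $B\in M_W$ covering $j_W[\delta]$ with $|B|^{M_W}\leq\delta$ has $B\cap\sup j_W[\delta]$ cofinal in $\sup j_W[\delta]$ of $M_W$-cardinality at most $\delta$. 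Hence $j_W(\kappa)=\delta$.

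It remains to unwind the consequences. Since $j_W(\kappa)=\delta>\kappa$, the critical point $\lambda=\textsc{crt}(W)$ satisfies $\lambda\leq\kappa$, and so $j_W(\lambda)\leq j_W(\kappa)=\delta$. Being ${<}\delta$-supercompact, $W$ is $\gamma$-supercompact for every $\gamma<\delta$, whence ${}^{<\delta}M_W\subseteq M_W$; since $j_W(\lambda)\leq\delta$ this upgrades to ${}^{<j_W(\lambda)}M_W\subseteq M_W$. Therefore $j_W\colon V\to M_W$ witnesses that $\lambda=\textsc{crt}(W)$ is almost huge, so there is an almost huge cardinal. I expect the only delicate point to be the cofinality computation of the second paragraph, where one must combine the weak normality of $W$ (to identify its seed with $\sup j_W[\delta]$) with the tight covering property from \cref{HyperHugeGCH} (to pin the $M_W$-cofinality of that supremum exactly to $\delta$ rather than merely bounding it above by $j_W(\kappa)$); everything after that is routine manipulation of ultrapower closure.
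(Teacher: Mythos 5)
Your proof is correct and follows essentially the same route as the paper: compute \(j_W(\kappa)=\textnormal{cf}^{M_W}(\sup j_W[\delta])=\delta\) via Los's theorem plus the tight covering property from \cref{HyperHugeGCH}, then combine \({<}\delta\)-supercompactness with \(j_W(\textsc{crt}(W))\leq\delta\) to get almost hugeness at \(\textsc{crt}(W)\); you merely spell out the two cofinality inequalities that the paper leaves implicit. The only superfluous wrinkle is your opening "routine minimality argument" that the \(\swo\)-least ultrafilter concentrating on \(S^\delta_\kappa\) is weakly normal — to know the derived ultrafilter at \(\sup j_W[\delta]\) still concentrates on \(S^\delta_\kappa\) you would already need \([\textnormal{id}]_W=\sup j_W[\delta]\), so it is cleaner to take \(W\) to be the least \emph{weakly normal} such ultrafilter from the outset, as the paper does.
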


If one assumes \(\delta\) is weakly inaccessible, \cref{HyperHugeGCH} is provable without assuming GCH:

\begin{thm}[UA]
Suppose \(\delta\) is a weakly inaccessible cardinal carrying a countably complete ultrafilter concentrating on \(S_\kappa^\delta\) for some \(\kappa < \delta\). The \(\swo\)-least such ultrafilter \(W\) is \({<}\delta\)-supercompact.
\begin{proof}
Let \(W\) be the \(\swo\)-least such ultrafilter. As above it is easy to show that \(W\) is irreducible. 

Note that \(\text{cf}^{M_U}(\sup j_U[\delta]) = j_U(\kappa) < \sup j_U[\delta]\). Therefore \(\delta\) is a limit of ultrafiltered regular cardinals, so by \cref{GCHFree}, the least ultrafilter \(U\) on \(\delta\) is \({<}\delta\)-supercompact and has critical point less than or equal to \(\textsc{crt}(W)\). By \cref{LimitGCH}, we have GCH on a tail below \(\delta\). (If one assumes \(\delta\) is strongly inaccessible, one can just use Solovay's theorem on SCH here.) It follows that we can apply \cref{IrredSingular} to conclude that \(W\) is \({<}\delta\)-supercompact.
\end{proof}
\end{thm}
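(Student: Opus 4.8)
The plan is to run the argument of \cref{HyperHugeGCH} without assuming GCH, using the weak inaccessibility of \(\delta\) to recover the cardinal arithmetic that argument needs. First I would let \(W\) be the \(\swo\)-least such ultrafilter. As for \cref{HyperHugeGCH}, \(W\) is weakly normal — hence strongly uniform on \(\delta\) — and irreducible, by the argument of the lemma immediately preceding \cref{HyperHugeGCH}. Since \(W\) concentrates on \(S^\delta_\kappa\), Los's theorem gives \(\text{cf}^{M_W}(\sup j_W[\delta]) = j_W(\kappa) < j_W(\delta)\), so \(W\) is \((\kappa',\delta)\)-regular for every \(\kappa'\) with \(\kappa < \kappa' \le \delta\).

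Next I would extract the cardinal arithmetic needed to feed \cref{IrredSingular}. From the \((\kappa',\delta)\)-regularity of \(W\), every regular cardinal in \([\kappa',\delta]\) carries a uniform countably complete ultrafilter; since \(\delta\) is weakly inaccessible, hence a limit cardinal, \(\delta\) is a limit of regular cardinals carrying uniform countably complete ultrafilters. Therefore \cref{LimitGCH} applies and GCH holds on a tail below \(\delta\). (As a byproduct, \cref{KetonenTheorem} shows the least ultrafilter \(U\) on \(\delta\) is \((\kappa',\delta)\)-regular, so \(\sup j_U[\delta]\) is singular in \(M_U\), and then \cref{GCHFree} gives that \(U\) is itself \({<}\delta\)-supercompact with completeness at most \(\textsc{crt}(W)\); this is not needed below. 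If one is willing to assume \(\delta\) is strongly inaccessible, Solovay's theorem on SCH can be substituted for \cref{LimitGCH}.)

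Finally I would fix \(\gamma < \delta\) and show \(W\) is \(\gamma\)-supercompact. If \(\gamma < \textsc{crt}(W)\) this is immediate from the completeness of \(W\). Otherwise, since \(\delta\) is a limit cardinal and GCH holds on a tail below it, I pick a strong limit singular cardinal \(\nu\) of countable cofinality with \(\textsc{crt}(W) < \gamma \le \nu < \delta\) and \(2^\nu = \nu^+\). Then \(2^\nu = \nu^+ < \delta = \textsc{sp}(W)\), so \cref{IrredSingular}, applied to the irreducible strongly uniform ultrafilter \(W\), yields that \(W\) is \(\nu^+\)-supercompact, in particular \(\gamma\)-supercompact. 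Since \(\gamma < \delta\) was arbitrary, \(W\) is \({<}\delta\)-supercompact.

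I expect the middle step to be the main obstacle. Without strong inaccessibility there is no prior control on the continuum function below \(\delta\), so \cref{IrredSingular} is inapplicable until GCH is secured on a tail below \(\delta\). The key observation is that the very hypothesis of the theorem — a countably complete ultrafilter concentrating on \(S^\delta_\kappa\) — forces, via the cofinality computation \(\text{cf}^{M_W}(\sup j_W[\delta]) = j_W(\kappa)\) and the theory of regular ultrafilters, that \(\delta\) is a limit of ultrafiltered regular cardinals, which is exactly the hypothesis of \cref{LimitGCH}. Everything after that is bookkeeping about which singular cardinals below \(\delta\) to feed into \cref{IrredSingular}.
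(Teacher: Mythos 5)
Your proof is correct and follows essentially the same route as the paper's: irreducibility via the preceding lemma, the cofinality computation showing \(\delta\) is a limit of ultrafiltered regular cardinals, \cref{LimitGCH} for GCH on a tail, and then \cref{IrredSingular}. The only differences are that you spell out the choice of the strong limit \(\nu\) of countable cofinality below \(\delta\) (which the paper leaves implicit) and you correctly note that the \({<}\delta\)-supercompactness of the least ultrafilter on \(\delta\) is a byproduct rather than a needed step.
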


As our last application, we show one can rule out the existence of cardinality-preserving embeddings of the universe into an inner model assuming UA; this sort of embedding is considered in \cite{Caicedo}. This is somewhat interesting in that the hypothesis does not explicitly involve ultrafilters.

Caicedo observed that the following lemma is useful in this context: 

\begin{lma}\label{SuccessorLemma} If \(j:V\to M\) is elementary, \(\tau\) is a successor cardinal, \(j(\tau)\) is a cardinal, and \(j(\tau) \neq\tau\), then \(\sup j[\tau] < j(\tau)\).
\begin{proof}
Since \(\tau\) is a successor cardinal, \(j(\tau)\) is a successor cardinal of \(M\), and hence since \(j(\tau)\) is a cardinal, it must be a successor cardinal in \(V\). Therefore \(j(\tau)\) is regular, so \(\sup j[\tau] < j(\tau)\).
\end{proof}
\end{lma}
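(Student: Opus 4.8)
The plan is to reduce the statement to the single claim that $j(\tau)$ is a \emph{regular} cardinal of $V$; granting that, the conclusion is an immediate cardinality count. Indeed, $j$ is elementary with transitive target, so $j(\tau)\ge\tau$, and the hypothesis $j(\tau)\ne\tau$ upgrades this to $\tau<j(\tau)$; then $j[\tau]$ is a subset of the regular cardinal $j(\tau)$ of size $|\tau|=\tau<j(\tau)$, hence bounded below it, that is, $\sup j[\tau]<j(\tau)$.

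So the real work is to see that $j(\tau)$ is regular. First I would write $\tau=\sigma^+$ for a cardinal $\sigma$ and apply elementarity: $M$ satisfies that $j(\tau)$ is the cardinal successor of $j(\sigma)$, so $j(\tau)$ is a successor cardinal \emph{of $M$}. The one step with any content is transferring this downward to $V$. Since $M\subseteq V$, every ordinal $\gamma<j(\tau)$ has $|\gamma|^M\le j(\sigma)$, and any surjection from $j(\sigma)$ onto $\gamma$ lying in $M$ also lies in $V$, so $|\gamma|^V\le|j(\sigma)|^V<j(\tau)$ for all such $\gamma$. Hence $j(\tau)$, being a cardinal of $V$ by hypothesis, is the least cardinal of $V$ strictly above $|j(\sigma)|^V$ — a successor cardinal of $V$, and therefore regular in $V$.

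I do not expect any genuine obstacle here: the whole argument is soft. The only point worth a moment's care is the absoluteness observation that a successor cardinal of the inner model $M$ which happens to remain a cardinal in $V$ is itself a successor cardinal of $V$; this needs no closure hypothesis on $M$, only $M\subseteq V$. It is also worth flagging where the assumption ``$j(\tau)$ is a cardinal'' is essential: without it $j(\tau)$ could be collapsed in $V$ to have cofinality at most $\tau$ — for example if $j$ is continuous at $\tau$ — and then $\sup j[\tau]=j(\tau)$ would be possible.
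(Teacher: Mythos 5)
Your proposal is correct and follows exactly the paper's argument: elementarity makes $j(\tau)$ a successor cardinal of $M$, the hypothesis that it remains a cardinal in $V$ (together with $M\subseteq V$) makes it a successor cardinal of $V$, hence regular, and then $j[\tau]$ is a subset of size $\tau<j(\tau)$ and so bounded. You have merely filled in the details the paper leaves implicit (the downward transfer of ``successor cardinal'' and the use of $j(\tau)\neq\tau$), and done so correctly.
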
 

We will use this several times. The following improves a lemma in Caicedo's paper, using \cref{SuccessorDecomposable}.

\begin{lma}
Suppose \(j : V\to M\) is an elementary embedding and \(\lambda\) is its first fixed point above its critical point \(\kappa\). Suppose that for all \(\gamma \in [\kappa,\lambda]\), \(j(\gamma)\) is a cardinal. Then \(j\) is discontinuous at every regular cardinal in \([\kappa,\lambda]\).
\begin{proof}
Note that \(\lambda\) is a limit cardinal since the elements of the critical sequence of \(j\) are cardinals. By \cref{SuccessorLemma}, \(j\) is continuous at every successor cardinal in \([\kappa,\lambda]\). Suppose \(\delta\in [\kappa,\lambda]\) is regular. Let \(U\) be the weakly normal ultrafilter derived from \(j\) on \(\delta^+\). Then \(U\) is \(\delta\)-decomposable. So since \(j\) factors through \(j_U\), \(j\) is discontinuous at \(\delta\).  
\end{proof}
\end{lma}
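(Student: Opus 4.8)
The plan is to push the question up one cardinal, to $\delta^+$, where discontinuity is forced by \cref{SuccessorLemma}, and then pull it back down to $\delta$ using \cref{SuccessorDecomposable}.

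First I would record that $\lambda$ is a limit cardinal. The critical sequence $\kappa_0=\kappa$, $\kappa_{n+1}=j(\kappa_n)$ of $j$ consists of cardinals by hypothesis, and $\lambda=\sup_n\kappa_n$, so $\lambda$ is a limit cardinal with $\operatorname{cf}(\lambda)=\omega$. In particular $\lambda$ is not itself regular, so a regular cardinal in $[\kappa,\lambda]$ actually lies in $[\kappa,\lambda)$; and since $\lambda$ is a limit cardinal, for any such $\delta$ its successor satisfies $\delta^+<\lambda$.

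Next, fixing a regular $\delta\in[\kappa,\lambda)$, I would apply \cref{SuccessorLemma} at $\tau=\delta^+$. This $\tau$ is a successor cardinal lying in $(\kappa,\lambda)$, so $j(\delta^+)$ is a cardinal by hypothesis, and $j(\delta^+)\neq\delta^+$ because $\lambda$ is the \emph{first} fixed point of $j$ above $\kappa$. Hence $\sup j[\delta^+]<j(\delta^+)$, i.e.\ $j$ is discontinuous at $\delta^+$. Consequently $\sup j[\delta^+]$ is a seed lying strictly above $j(\xi)$ for every $\xi<\delta^+$ and strictly below $j(\delta^+)$, so the countably complete ultrafilter $U$ on $\delta^+$ derived from $j$ using this seed (the weakly normal ultrafilter on $\delta^+$ induced by $j$) is uniform, and there is a factor embedding $k\colon M_U\to M$ with $k\circ j_U=j$.

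Finally I would invoke \cref{SuccessorDecomposable} with $\gamma=\delta$: since $\delta$ is regular and $U$ is a countably complete uniform ultrafilter on $\delta^+$, $U$ is $\delta$-decomposable, which is to say $j_U$ is discontinuous at $\delta$. Since $j=k\circ j_U$ and $k$ is elementary, hence order-preserving, this discontinuity transfers: choosing $\eta$ with $j_U[\delta]\subseteq\eta<j_U(\delta)$ we get $\sup j[\delta]=\sup k[j_U[\delta]]\le k(\eta)<k(j_U(\delta))=j(\delta)$, so $j$ is discontinuous at $\delta$. As $\delta$ was an arbitrary regular cardinal in $[\kappa,\lambda)$, this finishes the proof. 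I do not expect a genuine obstacle here: the argument is short, and the only points requiring care are verifying $\delta^+<\lambda$ so that \cref{SuccessorLemma} applies at $\delta^+$, and confirming that the ultrafilter derived from $j$ at $\delta^+$ is genuinely a uniform countably complete ultrafilter so that \cref{SuccessorDecomposable} is applicable.
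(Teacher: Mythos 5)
Your proof is correct and follows essentially the same route as the paper's: use \cref{SuccessorLemma} to get discontinuity at \(\delta^+\), derive the uniform weakly normal ultrafilter \(U\) on \(\delta^+\), apply \cref{SuccessorDecomposable} to get \(\delta\)-decomposability, and transfer the discontinuity back through the factor embedding. The only difference is that you fill in details the paper leaves implicit (that \(\delta^+<\lambda\), that \(U\) is uniform, and the order-preservation argument for the factor embedding), and you correctly state that \cref{SuccessorLemma} yields \emph{dis}continuity at successor cardinals, where the paper's text contains an evident typo.
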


\begin{lma}[UA]\label{Assumption}
Suppose \(j : V\to M\) is an elementary embedding and \(\lambda\) is its first fixed point above its critical point \(\kappa\). Suppose \(\lambda\) is a strong limit cardinal and \(2^\lambda = \lambda^+\). Then \(j\) is continuous at \(\lambda^{+\kappa+1}\).
\begin{proof}
Suppose not, and let \(U\) be the weakly normal ultrafilter on \(\lambda^{+\kappa+1}\) derived from \(j\). By \cref{Factorization}, \(U\) can be factored as \(D\oplus Z\) where \(D\in \textnormal{Un}_{<\lambda^{+\kappa+1}}\) and \(Z\in j_D(\textnormal{Un}_{\lambda^{+\kappa+1}})\) is irreducible in \(M_Z\). 

Note that \(j_D(\lambda) = \lambda\) and \(j_Z(\lambda) = \lambda\) since \(j\) factors through \(j_Z\circ j_D\) and fixes \(\lambda\). By \cref{IrredSingular}, \(Z\) is therefore \(\lambda^+\)-supercompact in \(M_D\). We have \(\textsc{crt}(Z) < \lambda\) since in \(M_D\) there are no measurable cardinals in the interval \([\lambda, j_D(\lambda^{+\kappa + 1})]\). But the existence of such a \(Z\) contradicts Kunen's inconsistency theorem applied in \(M_Z\).
\end{proof}
\end{lma}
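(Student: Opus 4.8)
The plan is to argue by contradiction, mimicking the proof of \cref{Global}. Suppose $j$ is discontinuous at $\delta := \lambda^{+\kappa+1}$, which is a successor cardinal and hence regular. Then $\sup j[\delta] < j(\delta)$, so we may let $U$ be the (countably complete, $\delta$-decomposable) weakly normal ultrafilter on $\delta$ derived from $j$ using the seed $\sup j[\delta]$; then $j = k\circ j_U$ for an elementary $k\colon M_U\to M$, and in particular $j_U(\lambda)\leq j(\lambda)=\lambda$ forces $j_U(\lambda)=\lambda$. Since $\lambda$ is the first fixed point of $j$ above $\kappa$ it is the supremum of the critical sequence of $j$, hence a strong limit singular cardinal of cofinality $\omega$, and by hypothesis $2^\lambda=\lambda^+$; note also $\lambda<\delta$. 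These are precisely the ingredients needed to invoke \cref{IrredSingular}.

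Next I would factor $U$ using \cref{Factorization}: peeling off the top irreducible factor, write $U\equiv D\oplus Z$ with $D\in\textnormal{Un}_{<\delta}$ (using that $U$ is weakly normal on the regular cardinal $\delta$) and $Z$ irreducible in $M_D$. Here $Z$ is nonprincipal --- otherwise $U\equiv D$ would not be uniform on $\delta$ --- so $Z$ is uniform, and in fact weakly normal, hence strongly uniform, on $\textsc{sp}(Z)=j_D(\delta)$. Because $j$ factors through $j^{M_D}_Z\circ j_D$ and $j$ fixes $\lambda$, we get $j_D(\lambda)=\lambda$ and $j^{M_D}_Z(\lambda)=\lambda$. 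Since $M_D\subseteq V$ underestimates powersets, $(2^\gamma)^{M_D}\leq 2^\gamma$ for $\gamma\leq\lambda$, so $\lambda$ remains a strong limit singular cardinal of countable cofinality with $2^\lambda=\lambda^+$ in $M_D$; moreover $j_D(\delta)=\lambda^{+(j_D(\kappa)+1)}>\lambda^+$, so $\lambda^+<\textsc{sp}(Z)$. Applying \cref{IrredSingular} inside $M_D$, with $\lambda$ in the role of the singular strong limit cardinal and $j_D(\delta)=\textsc{sp}(Z)$, then yields that $Z$ is $\lambda^+$-supercompact in $M_D$.

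The final step, which I expect to be the main obstacle, is to pin $\textsc{crt}(Z)$ strictly below $\lambda$ so as to trigger Kunen's inconsistency. As $Z$ is nonprincipal, $\textsc{crt}(Z)$ is measurable in $M_D$, and $\textsc{crt}(Z)\leq\textsc{sp}(Z)=j_D(\delta)$. But in $V$ there is no measurable cardinal in $[\lambda,\delta]=[\lambda,\lambda^{+\kappa+1}]$: each $\lambda^{+\alpha}$ with $1\leq\alpha\leq\kappa+1$ is either a successor cardinal or has cofinality $\mathrm{cf}(\alpha)\leq\kappa<\lambda^{+\alpha}$ and so is singular, while $\lambda$ itself is singular. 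By elementarity $M_D$ has no measurable cardinal in $[j_D(\lambda),j_D(\delta)]=[\lambda,j_D(\delta)]$, so $\textsc{crt}(Z)<\lambda$. Now $\lambda$ is a fixed point of $j^{M_D}_Z$ lying above its critical point, so the first fixed point of $j^{M_D}_Z$ above $\textsc{crt}(Z)$ is at most $\lambda\leq\lambda^+$; since $Z$ is $\lambda^+$-supercompact it is then supercompact at its own first fixed point, contradicting Kunen's inconsistency theorem applied in $M_Z$ (as in the proof of \cref{SupercompactTransfer}). This contradiction establishes that $j$ is continuous at $\lambda^{+\kappa+1}$. The delicate points to nail down are the transfer up to $M_D$ of the statements \emph{$\lambda$ is a strong limit with $2^\lambda=\lambda^+$} and \emph{$Z$ is strongly uniform on $j_D(\delta)$}, together with the no-measurable-cardinals bookkeeping that forces $\textsc{crt}(Z)<\lambda$.
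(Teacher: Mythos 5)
Your proposal is correct and follows essentially the same route as the paper's proof: derive the weakly normal ultrafilter $U$ on $\lambda^{+\kappa+1}$ from $j$, factor it via \cref{Factorization} as $D\oplus Z$ with $Z$ irreducible, apply \cref{IrredSingular} in $M_D$ to get $\lambda^+$-supercompactness of $Z$, locate $\textsc{crt}(Z)$ below $\lambda$ using the absence of measurables in $[\lambda, j_D(\lambda^{+\kappa+1})]$, and contradict Kunen's inconsistency theorem. You in fact supply more detail than the paper does on the transfer of the hypotheses of \cref{IrredSingular} to $M_D$ and on the no-measurables bookkeeping.
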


\begin{cor}[UA]
Suppose \(j : V\to M\) is an elementary embedding and \(\lambda\) is its first fixed point above its critical point \(\kappa\). Then for some cardinal \(\gamma \leq \lambda^{+\kappa+1}\), \(j(\gamma)\) is not a cardinal.
\begin{proof}
Suppose not. By our assumptions, every element of the critical sequence of \(j\) is a cardinal. So \(\lambda\) is a limit cardinal. Since \(\kappa\) is \(\lambda\)-strongly compact, it follows by \cref{LimitGCH} that \(\lambda\) is a strong limit cardinal. Let \(U\) be the ultrafilter on \(\lambda^{+\kappa+1}\) derived from \(j\).  Again since \(\kappa\) is \(\lambda\)-strongly compact, the least ultrafilter on \(\lambda^{+\kappa+1}\) is \(\lambda\)-supercompact by \cref{GCHFree}. Since \(\lambda\) has countable cofinality, this ultrafilter is \(\lambda^+\)-supercompact. Therefore \(2^\lambda = \lambda^+\) by Solovay's theorem.

Applying \cref{Assumption}, \(j\) is continuous at \(\lambda^{+\kappa+1}\). Since \(j(\lambda^{+\kappa+1})\) is a cardinal and \(j(\lambda^{+\kappa+1}) > j(\lambda^{+\kappa}) \geq \lambda^{+\kappa+1}\), this contradicts \cref{SuccessorLemma}.
\end{proof}
\end{cor}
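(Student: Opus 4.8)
The plan is to argue by contradiction: suppose $j(\gamma)$ is a cardinal for every cardinal $\gamma \le \lambda^{+\kappa+1}$, and derive a violation of \cref{SuccessorLemma} at the successor cardinal $\tau = \lambda^{+\kappa+1}$. First I would collect the soft consequences of this hypothesis. Since $j$ maps the critical sequence $\langle \kappa_n \rangle$ of $j$ cofinally into $\lambda$ and each $\kappa_n < \lambda \le \tau$, every element of the critical sequence is a cardinal, so $\lambda = \sup_n \kappa_n$ is a limit cardinal. By the discontinuity lemma preceding this corollary, $j$ is discontinuous at every regular cardinal $\gamma$ in $[\kappa,\lambda]$, and the ultrafilter derived from $j$ at such $\gamma$ (using $\sup j[\gamma]$) is uniform, countably complete, and $\kappa$-complete; so by Ketonen's characterization of strong compactness, $\kappa$ is ${<}\lambda$-strongly compact, hence $\lambda$-strongly compact since $\textnormal{cf}(\lambda) = \omega < \kappa$.

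The substantive step is to arrange the hypotheses of \cref{Assumption}, i.e. to show $\lambda$ is a strong limit cardinal and $2^\lambda = \lambda^+$. Since $\lambda$ is a limit of regular cardinals carrying uniform countably complete ultrafilters, \cref{LimitGCH} gives GCH on a tail below $\lambda$, so $\lambda$ is a strong limit. For the value of $2^\lambda$: let $U$ be the ultrafilter on $\lambda^{+\kappa+1}$ derived from $j$; since $\kappa$ is $\lambda$-strongly compact and $\lambda^{+\kappa+1}$ carries a uniform countably complete ultrafilter, \cref{GCHFree} shows the least ultrafilter on $\lambda^{+\kappa+1}$ is $\lambda$-supercompact, and as $\textnormal{cf}(\lambda) = \omega < \kappa$ it is in fact $\lambda^+$-supercompact; Solovay's theorem on SCH then forces $2^\lambda = \lambda^+$.

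Now \cref{Assumption} applies and yields that $j$ is continuous at $\tau = \lambda^{+\kappa+1}$, i.e. $\sup j[\tau] = j(\tau)$. On the other hand $\tau$ is a successor cardinal, $j(\tau)$ is a cardinal by the contradiction hypothesis, and $j(\tau) > j(\lambda^{+\kappa}) \ge \lambda^{+\kappa+1} = \tau$ because $j(\lambda^{+\kappa})$ is a cardinal that is at least $\lambda^{+\kappa}$ and distinct from it; so \cref{SuccessorLemma} gives $\sup j[\tau] < j(\tau)$, the desired contradiction. I expect the main obstacle to be the middle paragraph: threading together the supercompactness results for least ultrafilters (\cref{GCHFree}), the local GCH result (\cref{LimitGCH}), and Solovay's SCH theorem to pin down $2^\lambda = \lambda^+$, and in particular checking that the correct case of \cref{GCHFree} is available, given that $\lambda^{+\kappa+1}$ is the successor of the regular cardinal $\lambda^{+\kappa}$ sitting above the singular strong limit $\lambda$.
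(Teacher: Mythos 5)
Your proof is correct and follows essentially the same route as the paper's: the discontinuity lemma and Ketonen's characterization give \(\lambda\)-strong compactness of \(\kappa\), \cref{LimitGCH} gives the strong limit, \cref{GCHFree} plus the countable cofinality of \(\lambda\) plus Solovay's SCH theorem pin down \(2^\lambda=\lambda^+\), and then \cref{Assumption} and \cref{SuccessorLemma} produce the contradiction at \(\tau=\lambda^{+\kappa+1}\). One small slip in your closing remark: \(\lambda^{+\kappa}\) is singular of cofinality \(\kappa\), not regular, but this is harmless since all that is needed is \(\lambda\)-supercompactness of the least ultrafilter on \(\lambda^{+\kappa+1}\), which follows (via \cref{LocalTheorem} and the countable closure of the ultrapower) from the abundance of uniform countably complete ultrafilters on regular cardinals below \(\lambda\).
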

\subsection{Pathological Ultrafilters}\label{LastWord}
We begin this subsection by showing quite easily that assuming UA + GCH, the internal relation and the Mitchell order essentially coincide.

\begin{lma}\label{InternalInterval}
Suppose \(\textnormal{Un}_{<\lambda}\I U\), \(W\in \textnormal{Un}_\lambda\), and \(Z\swo W \I U\). Then \(Z\I U\).
\begin{proof}[Sketch]
Fix \(Z_*\in \textnormal{Un}^{M_W}_{<j_W(\lambda)}\) with \(Z = W^-(Z_*)\). Since \(\textnormal{Un}_{<\lambda}\I U\), \[Z_*\in \textnormal{Un}^{M_W}_{<j_W(\lambda)} = j_W(\textnormal{Un}_{<\lambda})\I j_W(U)\]
Therefore \(Z_*\I^{M_W} j_W(U)\). Since \(M^{M_W}_{j_W(U)}\subseteq M_U\), it follows that \(W\oplus Z_*\I U\). Since \(Z\leq_{\text{RK}} W\oplus Z_*\), \(Z\I U\).
\end{proof}
\end{lma}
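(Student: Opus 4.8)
The plan is to peel $Z$ off of $W$ via the seed order, transfer the hypothesis into $M_W$ by elementarity, and then reassemble using the pushforward characterization of the internal relation from \cref{SInternal}. By definition of $\swo$, fix $Z_*\in \textnormal{Un}^{M_W}$ with $\textsc{sp}(Z_*)\le [\text{id}]_W$ and $Z = W^-(Z_*)$; since $W$ is uniform on $\lambda$ we have $[\text{id}]_W < j_W(\lambda)$, so $Z_*\in \textnormal{Un}^{M_W}_{<j_W(\lambda)} = j_W(\textnormal{Un}_{<\lambda})$. It then suffices to prove two things: first, $W\oplus Z_*\I U$; and second, $Z\I U$, which will follow from $Z\le_{\text{RK}} W\oplus Z_*$ (immediate from \cref{LimitEmbeddings} and \cref{UltrafilterSum}, as $Z = W^-(Z_*)$ is a Rudin-Keisler projection of the sum) together with the fact that the internal relation is preserved downward under Rudin-Keisler reductions in its first argument.

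For the first claim, note that ``$V\I U$'' is a first-order property of the pair $(V,U)$: by \cref{SInternal} it says $s_U(V)\in M_U$, and $s_U(V)$, $j_U$, and $M_U$ are all uniformly definable from the set parameters $U$ and $V$. Hence the hypothesis $\textnormal{Un}_{<\lambda}\I U$ transfers under $j_W$ to $j_W(\textnormal{Un}_{<\lambda})\I^{M_W} j_W(U)$, and in particular $Z_*\I^{M_W} j_W(U)$. Since $W\I U$, \cref{PushforwardLemma} gives $j_W\restriction M_U = j^{M_U}_{s_U(W)}$ with $s_U(W)\in M_U$, so $M^{M_W}_{j_W(U)} = M^{M_U}_{s_U(W)}$ is a definable inner model of $M_U$, and in particular $M^{M_W}_{j_W(U)}\subseteq M_U$. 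Now factor $j_{W\oplus Z_*}\restriction M_U = \big(j^{M_W}_{Z_*}\restriction M^{M_W}_{j_W(U)}\big)\circ\big(j_W\restriction M_U\big)$. The inner factor is a definable class of $M_U$; the outer factor equals $j^{M^{M_W}_{j_W(U)}}_{s^{M_W}_{j_W(U)}(Z_*)}$ by \cref{PushforwardLemma} applied inside $M_W$ (using $Z_*\I^{M_W} j_W(U)$), hence is definable over the inner model $M^{M_W}_{j_W(U)}$ from a parameter lying in $M^{M_W}_{j_W(U)}\subseteq M_U$, hence is a definable class of $M_U$ as well. A composite of definable class embeddings of $M_U$ is a definable, in particular amenable, class of $M_U$, so $W\oplus Z_*\I U$.

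For the second claim, writing $Z = g_*(W\oplus Z_*)$ for a suitable $g$, a direct {\L}o{\'s}-style computation shows that for $A\in P^{M_U}(\sup j_U[\textsc{sp}(Z)])$ one has $A\in s_U(Z)$ if and only if the pullback $(j_U(g))^{-1}[A]$, restricted to $\sup j_U[\textsc{sp}(W\oplus Z_*)]$, lies in $s_U(W\oplus Z_*)$; since $j_U(g)$ and $s_U(W\oplus Z_*)$ both belong to $M_U$, this exhibits $s_U(Z)$ as a definable subset of $M_U$, so $s_U(Z)\in M_U$ and $Z\I U$ by \cref{SInternal}. The step I expect to be the main (if mild) obstacle is the amenability bookkeeping in the first claim: one has to be careful that classes amenable to, or definable over, the inner model $M^{M_W}_{j_W(U)}$ remain amenable to $M_U$, and that the composite embedding is genuinely $M_U$-definable from set parameters. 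Funneling everything through the $s_U$-characterization of \cref{SInternal} --- rather than verifying amenability of $j_{W\oplus Z_*}\restriction M_U$ directly --- is what keeps this under control.
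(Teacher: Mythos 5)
Your proposal is correct and follows exactly the paper's route: peel off $Z_*$ via the seed order, push the hypothesis into $M_W$ by elementarity to get $Z_*\I^{M_W} j_W(U)$, use $M^{M_W}_{j_W(U)}\subseteq M_U$ to conclude $W\oplus Z_*\I U$, and finish via $Z\leq_{\textnormal{RK}} W\oplus Z_*$. The paper's proof is only a sketch, and the details you supply --- the factorization of $j_{W\oplus Z_*}\restriction M_U$ into two $M_U$-definable embeddings via \cref{PushforwardLemma}, and the {\L}o{\'s} computation showing $s_U(\cdot)$ is downward closed under Rudin--Keisler projections --- are precisely the ones it omits, and they check out.
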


\begin{prp}[UA]\label{IUniformSuper}
Suppose \(\delta\) is a regular cardinal, \(\textnormal{Un}_{<{\delta}}\I U\), and for some \(W\in \textnormal{Un}_{\delta}\), \(W\I U\). Then \(U\) is \(\delta\)-supercompact.
\begin{proof}
If \(\textsc{crt}(U) \geq \delta\), we are done, so assume not.

Let \(Z\) be the least ultrafilter on \(\delta\). By \cref{InternalInterval}, \(Z\I U\). Assume towards a contradiction that \(U\I Z\). Then \(U,Z\) commute by \cref{InternalCommutativity}, and therefore fix each other's critical points, contradicting \cref{Kunen}. Therefore \(U\not \I Z\). By \cref{InternalCompact}, it follows that \(Z\) is \(\kappa\)-complete and \((\kappa,\delta)\)-regular for some \(\kappa\leq \delta\) closed under ultrapowers. Note that it follows from \cref{Kunen} that \(\textsc{crt}(U)\geq \kappa\) Using this we can show by the argument of \cref{Strength} that \(P(\delta)\subseteq M_U\). Moreover by \cref{CompactPropagation}, \(U\) has the tight covering property at \(\delta\). Hence \(U\) is \(\delta\)-supercompact.
\end{proof}
\end{prp}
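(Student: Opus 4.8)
The plan is to reduce to the least ultrafilter on $\delta$ and then invoke the internal-relation machinery of \cref{FirstStronglyCompact}. We may assume $\textsc{crt}(U)<\delta$, since otherwise $j_U\restriction\delta$ is the identity, so $j_U[\delta]=\delta\in M_U$ and $U$ is trivially $\delta$-supercompact. Let $Z$ be the least (that is, $0$-order) ultrafilter on $\delta$; it exists by Ketonen's theorem as $\textnormal{Un}_\delta\neq\emptyset$, is unique by \cref{ZeroUnique}, and is nonprincipal since $\delta$ is a limit ordinal. Because $Z$ is $\swo$-minimal in $\textnormal{Un}_\delta$ we have $Z\wo W$, so from $W\I U$ with $W\in\textnormal{Un}_\delta$ and $\textnormal{Un}_{<\delta}\I U$, \cref{InternalInterval} gives $Z\I U$.

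Next I would show $U\not\I Z$. Suppose toward a contradiction that $U\I Z$. Together with $Z\I U$, \cref{InternalCommutativity} shows $U$ and $Z$ commute, so $j_U$ fixes $\textsc{crt}(Z)$ and $j_Z$ fixes $\textsc{crt}(U)$; both critical points are measurable, hence strongly inaccessible, with $\textsc{crt}(Z)\le\delta$ and $\textsc{crt}(U)<\delta$. Since $\textsc{crt}(Z)$ is a strong limit cardinal with $\textnormal{Un}_{<\textsc{crt}(Z)}\subseteq\textnormal{Un}_{<\delta}\I U$, \cref{Kunen} places $\textsc{crt}(Z)$ strictly below the first fixed point of $j_U$ above $\textsc{crt}(U)$; as $j_U$ fixes $\textsc{crt}(Z)$, this rules out $\textsc{crt}(Z)\geq\textsc{crt}(U)$, so $\textsc{crt}(Z)<\textsc{crt}(U)$, and the symmetric use of \cref{Kunen} applied to $Z$ (with $j_Z$ fixing the strong limit cardinal $\textsc{crt}(U)>\textsc{crt}(Z)$) now yields a contradiction. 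Hence $U\not\I Z$. Moreover $Z$ does not divide $U$: a nonprincipal ultrafilter internal to $U$ cannot divide $U$, by the argument in \cref{Factorization}. Thus $U$ is a countably complete ultrafilter that is neither divisible by $Z$ nor internal to $Z$, so \cref{InternalCompact}, applied to the least ultrafilter $Z$ on the regular cardinal $\delta$, yields that the completeness $\kappa$ of $Z$ is $\delta$-supercompact and closed under ultrapowers. In particular $\kappa$ is $\delta$-strongly compact, so by \cref{ZeroTheorem}, $Z$ is ${<}\delta$-supercompact and has the tight covering property at $\delta$.

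It remains to transfer both $P(\delta)\subseteq M_U$ and tight covering to $U$. Since $\kappa$ is a strong limit cardinal closed under ultrapowers with $\textnormal{Un}_{<\kappa}\subseteq\textnormal{Un}_{<\delta}\I U$, \cref{Kunen} forces $\textsc{crt}(U)\ge\kappa$; hence $U$ is $\kappa$-complete, $M_U$ is correct about $\kappa$-independence, and $(\delta^{<\kappa})^{M_U}=\delta$. Now rerun the argument of \cref{Strength}: fix in $M_U$ a $\kappa$-independent family $\langle A_\alpha:\alpha<\delta\rangle$ of subsets of $\delta$; given $X\subseteq\delta$, the $\kappa$-complete filter generated by $\{A_\alpha:\alpha\in X\}\cup\{\delta\setminus A_\alpha:\alpha\in\delta\setminus X\}$ extends, using the $\delta$-strong compactness of $\kappa$ together with the $(\kappa,\delta)$-regularity of $Z$, to a $\kappa$-complete ultrafilter $D\le_{\text{RK}}Z$ on $\delta$ with $A_\alpha\in D\iff\alpha\in X$; since $Z\I U$ this yields $D\I U$, so $D\cap M_U\in M_U$ and $X=\{\alpha<\delta:A_\alpha\in D\cap M_U\}\in M_U$. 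Therefore $P(\delta)\subseteq M_U$. Finally, $Z\I U$ and the tight covering property of $Z$ at $\delta$ give, via \cref{CompactPropagation}, that $U$ has the tight covering property at $\delta$; thus $j_U[\delta]$ is contained in some $B\in M_U$ with $|B|^{M_U}=\delta$, and since $P(\delta)\subseteq M_U$ this gives $j_U[\delta]\in M_U$, i.e.\ $U$ is $\delta$-supercompact.

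I expect the main obstacle to be the step $U\not\I Z$: extracting a contradiction from $U$ and $Z$ fixing each other's critical points requires comparing $\textsc{crt}(U)$ with $\textsc{crt}(Z)$ and invoking \cref{Kunen} in both directions, since a priori neither ultrafilter need be $0$-order nor complete at the other's critical point. A secondary subtlety is the coding step: one needs $D\le_{\text{RK}}Z$, so that $D\I U$ follows from $Z\I U$, and this is precisely where the $(\kappa,\delta)$-regularity of $Z$ — not merely the strong compactness of $\kappa$ — is used.
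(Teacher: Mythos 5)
Your proof is correct and follows essentially the same route as the paper's: reduce to the least ultrafilter \(Z\) on \(\delta\) via \cref{InternalInterval}, rule out \(U\I Z\) using \cref{InternalCommutativity} and \cref{Kunen}, apply \cref{InternalCompact}, rerun the \cref{Strength} coding argument with ultrafilters \(D\leq_{\textnormal{RK}}Z\), and finish with \cref{CompactPropagation} and tight covering. You are in fact slightly more explicit than the paper in a few spots the paper glosses over --- verifying that \(Z\) does not divide \(U\) (needed for \cref{InternalCompact}) and isolating the role of \((\kappa,\delta)\)-regularity in getting \(D\leq_{\textnormal{RK}}Z\) --- with the only elided detail being that the symmetric application of \cref{Kunen} to \(Z\) uses \(\textnormal{Un}_{<\delta}\I Z\), which is immediate from \cref{ZeroInternal}.
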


The following gives in many cases a converse to Kunen's commuting ultrapowers lemma, \cref{CommutingUltrapowers}.

\begin{lma}\label{CommutingConverse}
Suppose for \(i = 0,1\) that \(U_i\in \textnormal{Un}_{\delta_i}\) with completeness \(\kappa_i\). Suppose \(U_i\) fixes no measurable cardinals in \([\kappa_i,\delta_i]\). Assume \(j_{U_0}(j_{U_1})= j_{U_1}\restriction M_{U_0}\) and \(j_{U_1}(j_{U_0})= j_{U_0}\restriction M_{U_1}\). Then \(\delta_0 < \kappa_1\) or \(\delta_1 < \kappa_0\). 
\begin{proof}
If \(j_{U_0}(j_{U_1})= j_{U_1}\restriction M_{U_0}\) and \(j_{U_1}(j_{U_0})= j_{U_0}\restriction M_{U_1}\) then in particular \(j_{U_0}(\kappa_1) = \kappa_1\) and \(j_{U_1}(\kappa_0) = \kappa_0\). It follows that \(\kappa_1\notin [\kappa_0,\delta_0]\) and \(\kappa_0\notin [\kappa_1,\delta_1]\). It follows that \(\delta_0 < \kappa_1\) or \(\delta_1 < \kappa_0\). 
\end{proof}
\end{lma}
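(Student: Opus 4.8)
The plan is to boil the two commuting hypotheses down to a single arithmetic fact---that each ultrapower fixes the completeness of the other---and then to play that fact off against the assumption that neither \(U_i\) fixes a measurable cardinal in \([\kappa_i,\delta_i]\).

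First I would compare critical points in the equation \(j_{U_0}(j_{U_1}) = j_{U_1}\restriction M_{U_0}\). The right-hand side is an elementary embedding of \(M_{U_0}\) with critical point \(\kappa_1\), since restricting \(j_{U_1}\) to an inner model containing all the ordinals does not change which ordinals move. The left-hand side, under the conventions of the paper, is the internal ultrapower \(j^{M_{U_0}}_{j_{U_0}(U_1)}\), whose critical point is the completeness of \(j_{U_0}(U_1)\) inside \(M_{U_0}\), namely \(j_{U_0}(\kappa_1)\) by elementarity. Hence \(j_{U_0}(\kappa_1) = \kappa_1\), and symmetrically \(j_{U_1}(\kappa_0) = \kappa_0\). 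I would note in passing that if either \(U_i\) is principal---say \(U_1 = P_\alpha\)---then \(j_{U_1}\) is the identity and the desired conclusion holds trivially, so we may assume both are nonprincipal and therefore that \(\kappa_0\) and \(\kappa_1\) are measurable.

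Next I would feed these fixed points into the no-fixed-measurable hypotheses: since \(U_0\) fixes the measurable \(\kappa_1\), we get \(\kappa_1 \notin [\kappa_0,\delta_0]\), and since \(U_1\) fixes the measurable \(\kappa_0\), we get \(\kappa_0 \notin [\kappa_1,\delta_1]\). The conclusion then drops out of a two-line case analysis: either \(\kappa_1 > \delta_0\), giving \(\delta_0 < \kappa_1\) directly; or \(\kappa_1 < \kappa_0\), and then \(\kappa_0 \notin [\kappa_1,\delta_1]\) forces \(\kappa_0 > \delta_1\), i.e. \(\delta_1 < \kappa_0\). So one of the two inequalities always holds, which is exactly the hypothesis of \cref{CommutingUltrapowers} for one of the two orderings, confirming that the lemma is a genuine converse.

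I do not expect a real obstacle here; the subtlest point is the critical-point bookkeeping in the first step, and in particular being explicit that \(j_{U_0}(j_{U_1})\) is to be read as \(j^{M_{U_0}}_{j_{U_0}(U_1)}\) so that its critical point is available by elementarity, together with the observation that the principal case can be dispatched at once so that \(\kappa_0,\kappa_1\) may be assumed measurable.
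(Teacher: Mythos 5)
Your proposal is correct and follows the paper's own argument essentially verbatim: extract the fixed points \(j_{U_0}(\kappa_1)=\kappa_1\) and \(j_{U_1}(\kappa_0)=\kappa_0\) from the commuting equations, invoke the no-fixed-measurable hypothesis to get \(\kappa_1\notin[\kappa_0,\delta_0]\) and \(\kappa_0\notin[\kappa_1,\delta_1]\), and finish by the two-case analysis. The extra care you take with the critical-point bookkeeping and the principal case is detail the paper leaves implicit, but it is the same proof.
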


\begin{thm}[UA + GCH]\label{InternalIrred}
Suppose \(U\) and \(W\) are irreducible strongly uniform ultrafilters. Then \(U\I W\) if and only if \(U\mo W\) or \(\textsc{sp}(W) < \textsc{crt}(U)\).
\begin{proof}
Let \(\lambda = \textsc{sp}(U)\). 

For the reverse direction, assume \(U\mo W\) or \(\textsc{sp}(W) < \textsc{crt}(U)\). Obviously if the latter holds then \(U\I W\). We need to show that if \(U\mo W\) then \(W\) is \(\lambda\)-supercompact. Note that if \(U\mo W\) then \(P(\lambda)\subseteq M_W\). In particular by GCH, \(\textsc{sp}(W)\geq \lambda\). If \(\lambda\) is not inaccessible then \(W\) is \(\lambda\)-supercompact by \cref{IrredGCH}. If \(\lambda\) is inaccessible then \(W\) has tight covering at \(\lambda\), but since \(U\mo W\), \(P(\lambda)\subseteq M_W\), and therefore \(W\) is \(\lambda\)-supercompact.

We now prove the forward direction. Assume \(U\I W\) and we will show that \(U\mo W\) or \(\textsc{sp}(W) < \textsc{crt}(U)\).

Suppose first that \(U\I W\) and \(W\not \I U\). We claim \(U\mo W\). It suffices to show that \(W\) is \(\lambda\)-supercompact. Note that since \(W\not \I U\), we must have \(\textsc{sp}(W)\geq \lambda\). If \(\lambda\) is not inaccessible then by \cref{IrredGCH}, this implies \(W\) is \(\lambda\)-supercompact. When \(\lambda\) is inaccessible, we can only conclude that \(W\) is \({<}\lambda\)-supercompact and has the tight covering property at \(\lambda\). In this case we can apply \cref{IUniformSuper} to conclude that \(W\) is \(\lambda\)-supercompact.

Suppose instead that \(U\I W\) and \(W \I U\). An irreducible ultrafilter fixes no regular cardinal between its critical point and its space by \cref{IrredGCH}. Therefore by \cref{CommutingConverse}, either \(\textsc{sp}(W) < \textsc{crt}(U)\) or \(\textsc{sp}(U) < \textsc{crt}(W)\). If the former holds, we are done. If the latter holds then \(U\mo W\), so we are done again. This concludes the proof of the forwards direction.
\end{proof}
\end{thm}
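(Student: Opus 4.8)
The plan is to treat the two implications separately, writing $\lambda=\textsc{sp}(U)$, and to reduce almost everything to establishing that $W$ is $\lambda$-supercompact: once that is in hand, the earlier proposition asserting that a $\delta$-supercompact $W$ satisfies $U\I W\iff U\mo W$ for every $U\in\textnormal{Un}_{\leq\delta}$ closes the gap between $\I$ and $\mo$. The alternative $\textsc{sp}(W)<\textsc{crt}(U)$ is harmless on both sides: it gives $U\I W$ immediately by Kunen's commuting ultrapowers lemma (\cref{CommutingUltrapowers}). So the real content is the interplay between $U\mo W$ and the $\lambda$-supercompactness of $W$, and the only genuinely delicate point will be the case where $\lambda$ is strongly inaccessible, since there \cref{IrredGCH} stops short of full $\lambda$-supercompactness.

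For the reverse direction, assume $U\mo W$. This forces $P(\lambda)\subseteq M_W$, and hence, by GCH, $\textsc{sp}(W)\geq\lambda$; thus $W$ is an irreducible strongly uniform ultrafilter whose space is at least $\lambda$. If $\lambda$ is not strongly inaccessible, \cref{IrredGCH} already yields that $W$ is $\delta$-supercompact for every successor cardinal $\delta\leq\textsc{sp}(W)$: this gives $\lambda$-supercompactness outright when $\lambda$ is a successor, and $<\lambda$-supercompactness — hence, since $\lambda$ is then singular, full $\lambda$-supercompactness — when $\lambda$ is a limit. If $\lambda$ is strongly inaccessible, then (under UA + GCH) $W$ has the tight covering property at $\lambda$, so $j_W[\lambda]$ is covered by a set of $M_W$-cardinality $\lambda$; combined with $P(\lambda)\subseteq M_W$ this again gives $\lambda$-supercompactness. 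In all cases $U\I W$ follows from the cited proposition.

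For the forward direction, assume $U\I W$ and split on whether $W\I U$. If $W\not\I U$, then $\textsc{sp}(W)\geq\lambda$: otherwise $W\in\textnormal{Un}_{<\lambda}$, and since $U$ is supercompact below $\lambda$ by \cref{IrredGCH} and GCH, one would get $W\I U$, a contradiction. Now the same dichotomy applies — when $\lambda$ is not inaccessible, \cref{IrredGCH} gives directly that $W$ is $\lambda$-supercompact; when $\lambda$ is inaccessible, \cref{IrredGCH} yields only $<\lambda$-supercompactness together with tight covering at $\lambda$, and one upgrades to $\lambda$-supercompactness via \cref{IUniformSuper}, whose hypotheses are met ($\textnormal{Un}_{<\lambda}\I W$ from the $<\lambda$-supercompactness of $W$ and GCH, and $U\in\textnormal{Un}_\lambda$ with $U\I W$). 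Either way $W$ is $\lambda$-supercompact, so $U\mo W$. If instead $W\I U$ as well, then by \cref{InternalCommutativity} the embeddings $j_U$ and $j_W$ are mutually internal in exactly the sense \cref{CommutingConverse} requires; since an irreducible ultrafilter fixes no regular cardinal in the interval from its critical point to its space (by \cref{IrredGCH}), that lemma forces $\textsc{sp}(W)<\textsc{crt}(U)$ or $\textsc{sp}(U)<\textsc{crt}(W)$. The first disjunct is one of the conclusions we want; in the second, $\textsc{crt}(W)>\lambda$ gives $U\in V_{\textsc{crt}(W)}\subseteq M_W$, so again $U\mo W$. The hard step, as anticipated, is making the inaccessible case go through: there \cref{IrredGCH} is not enough by itself, and one must lean on the tight covering property together with either $P(\lambda)\subseteq M_W$ or \cref{IUniformSuper} to reach full $\lambda$-supercompactness.
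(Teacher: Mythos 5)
Your proof is correct and follows essentially the same route as the paper: the same case split (handling $U\mo W$ via $\lambda$-supercompactness of $W$, with the strongly inaccessible case upgraded through tight covering and \cref{IUniformSuper}, and the mutually internal case dispatched by \cref{InternalCommutativity} plus \cref{CommutingConverse}). You are somewhat more explicit than the paper in verifying the hypotheses of \cref{IUniformSuper} and in sourcing the commutativity needed for \cref{CommutingConverse}, but the argument is the same.
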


Since every ultrafilter factors into irreducible strongly uniform ultrafilters, this leads to a somewhat complicated characterization of the internal relation in terms of the Mitchell orders of ultrapowers. Even the statement is a bit tedious, so we omit it. 

We now turn to the main subject of this section: pathological ultrafilters.

\begin{qst}[UA + GCH]
Suppose \(U\) is an irreducible ultrafilter on an inaccessible cardinal \(\delta\). Must \(U\) be \(\delta\)-supercompact?
\end{qst}

We note that this is the same as the question of whether the {\it least} ultrafilter on \(\delta\) is \(\delta\)-supercompact. This uses a very easy version of the argument of \cref{IrredGCH}. 

\begin{prp}[UA + GCH]\label{InternalCrt}
Suppose \(\textnormal{Un}_{{<}\delta}\I W\) and \(U\) is the least ultrafilter on \(\delta\). Then \(\textsc{crt}(t_U(W)) > \delta\).
\begin{proof}
Since \(\text{Un}_{{<}\delta}\I U,W\), by \cref{CanonicalInternal}, \(\text{Un}_{{<}\delta}\I U\vee W\). It follows that \(\text{Un}^{M_U}_{{<}\delta} \I^{M_U} t_U(W)\). But \(t_U(W)\) is continuous at \(\delta\) since \(\delta\) carries no uniform ultrafilter in \(M_U\). Therefore by \cref{ContinuityFactor}, in \(M_U\), \(t_U(W)\) factors as \(D\oplus Z\) where \(D\in \textnormal{Un}_{<\delta}\) and \(\textsc{crt}(Z) > \delta\). But \(\text{Un}_{{<}\delta}\I t_U(W)\) implies \(D\I t_U(W)\), which implies \(D\) is principal.
\end{proof}
\end{prp}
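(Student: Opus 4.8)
The plan is to analyze $t_U(W)$ inside $M_U$ using the continuity factorization \cref{ContinuityFactor}, and to show that its factor below $\delta$ must be principal, exploiting both that $U$ is the least ($0$-order) ultrafilter on $\delta$ and the hypothesis $\textnormal{Un}_{<\delta}\I W$. We may assume $\delta$ is regular and $t_U(W)$ is nonprincipal, since a principal ultrafilter is $\gamma$-complete for every $\gamma$ and the conclusion is vacuous otherwise; write $\kappa$ for the completeness of $t_U(W)$, so $\kappa\geq\omega_1$.

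First I would transfer the internal relation into $M_U$. Since $U$ is the least ultrafilter on $\delta$, the ordinal $\delta$ carries no uniform countably complete ultrafilter in $M_U$, and moreover $\textnormal{Un}_{<\delta}\I U$ — this is the observation used in the proof of \cref{ZeroTheorem}, via \cref{ZeroInternal}, since any $Z\in\textnormal{Un}_{<\delta}$ has space well below $\textnormal{cf}^{M_U}(\sup j_U[\delta])\geq\delta$. Combining $\textnormal{Un}_{<\delta}\I U$ with the hypothesis $\textnormal{Un}_{<\delta}\I W$ and \cref{CanonicalInternal} gives $\textnormal{Un}_{<\delta}\I U\vee W$. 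Now $U\vee W=U\oplus t_U(W)$ by definition, so $M_{U\vee W}=M^{M_U}_{t_U(W)}$; unwinding the internal relation through the $U$-ultrapower (using \cref{SInternal} together with \cref{CanonicalInternal} inside $M_U$, much as in \cref{InternalInterval} and \cref{DivisionAbsoluteness}) yields $\textnormal{Un}^{M_U}_{<\delta}\I^{M_U}t_U(W)$.

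Next I would factor $t_U(W)$ at $\delta$. In $M_U$, the cardinal $\delta$ is regular and carries no uniform countably complete ultrafilter, so $t_U(W)$ is continuous at $\delta$, i.e. $j^{M_U}_{t_U(W)}(\delta)=\sup j^{M_U}_{t_U(W)}[\delta]$. Under GCH one verifies the cardinal-arithmetic side condition $2^\lambda<\delta^{+\kappa}$ of \cref{ContinuityFactor}, where $\lambda=\textnormal{ot}(\textnormal{Un}^{M_U}_{<\delta},<_S)$: GCH bounds $|\textnormal{Un}^{M_U}_{<\delta}|$, hence $\lambda$, just a few cardinals above $\delta$, whereas $\delta^{+\kappa}\geq\delta^{+\omega_1}$ is far larger. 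Applying \cref{ContinuityFactor} in $M_U$ to $t_U(W)$ therefore yields $t_U(W)\equiv^{M_U}D\oplus Z$ with $D\in\textnormal{Un}^{M_U}_{<\delta}$ and $\textsc{crt}(Z)>j^{M_U}_D(\delta)\geq\delta$.

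Finally I would kill the low factor. By the first step, $D\I^{M_U}t_U(W)$, and $D$ divides $t_U(W)$ in $M_U$. If $D$ were nonprincipal then $j^{M_U}_D\restriction\textnormal{Ord}$ would be an amenable class of $M^{M_U}_{t_U(W)}$, and since $M^{M_U}_{t_U(W)}\subseteq M^{M_U}_D$ by division this contradicts the standard fact (already used in \cref{Factorization}) that a nonprincipal ultrapower embedding is not amenable to its own target model. Hence $D$ is principal, $t_U(W)\equiv^{M_U}Z$, and $\textsc{crt}(t_U(W))=\textsc{crt}(Z)>\delta$, as desired. I expect the main obstacle to be the bookkeeping in the first step — confirming that internal-ness to $U$ and to $U\oplus t_U(W)$ for every $Z\in\textnormal{Un}_{<\delta}$ genuinely delivers internal-ness of the particular factor $D$ produced by \cref{ContinuityFactor} to $t_U(W)$ in $M_U$ — together with checking that the side condition of \cref{ContinuityFactor} really does hold in $M_U$ given only GCH in $V$.
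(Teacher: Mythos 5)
Your proposal follows the paper's proof essentially verbatim: transfer $\textnormal{Un}_{<\delta}\I U\vee W$ into $M_U$ to get $\textnormal{Un}^{M_U}_{<\delta}\I^{M_U}t_U(W)$, factor $t_U(W)$ at $\delta$ via \cref{ContinuityFactor}, and conclude the low factor $D$ is principal because it both divides and is internal to $t_U(W)$. The only differences are that you spell out details the paper elides (that $\textnormal{Un}_{<\delta}\I U$ via \cref{ZeroInternal}, the GCH verification of the side condition of \cref{ContinuityFactor} in $M_U$, and the amenability contradiction showing $D$ is principal), all of which are correct.
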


\begin{prp}[UA + GCH]\label{IrredCrt}
Suppose the least ultrafilter \(U\) on \(\delta\) is \(\delta\)-supercompact. Then every irreducible strongly uniform ultrafilter \(W\) on \(\lambda\geq \delta\) is \(\delta\)-supercompact.
\begin{proof}[Sketch]
By \cref{IrredGCH}, \(W\) is \({<}\delta\)-supercompact. It follows that \(\text{Un}_{{<}\delta}\I W\). Therefore by \cref{InternalCrt}, \(\textsc{crt}(t_U(W)) > \delta\). Hence \(\text{Ord}^\delta = \text{Ord}^\delta\cap M_U\subseteq M_W\).
\end{proof}
\end{prp}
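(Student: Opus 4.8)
The plan is to run a stripped-down version of the proof of \cref{IrredGCH}, outsourcing its one hard ingredient (``the small factor is principal'') to \cref{InternalCrt}. First I would dispose of the case that $\delta$ is a successor cardinal: then $W$ is $\delta$-supercompact immediately by \cref{IrredGCH}, so there is nothing to prove. Hence assume $\delta$ is a limit cardinal. By \cref{IrredGCH}, $W$ is $\delta'$-supercompact for every successor cardinal $\delta' < \delta$, and since these are cofinal in $\delta$, $W$ is ${<}\delta$-supercompact. Under GCH this upgrades to $\textnormal{Un}_{{<}\delta}\I W$: given $Z\in\textnormal{Un}_\gamma$ with $\gamma<\delta$, we have $2^\gamma = \gamma^+ < \delta$ (as $\delta$ is a limit cardinal), so $W$ is $(2^\gamma)$-supercompact; hence $P(\gamma)\subseteq M_W$, and $Z$, being a subset of $P(\gamma)$ of size at most $\gamma^+<\delta$ all of whose elements lie in $M_W$, belongs to $M_W$. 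Then $s_W(Z)\in M_W$, so $j_Z\restriction M_W = j^{M_W}_{s_W(Z)}$ is amenable to $M_W$ by \cref{PushforwardLemma}, i.e.\ $Z\I W$ by \cref{SInternal}.

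Next I would apply \cref{InternalCrt}: since $\textnormal{Un}_{{<}\delta}\I W$ and $U$ is the least ultrafilter on $\delta$, we obtain $\textsc{crt}(t_U(W)) > \delta$. From here the conclusion follows exactly as in the closing paragraph of the proof of \cref{IrredGCH}. Concretely: since $U$ is $\delta$-supercompact, $M_U$ is closed under $\delta$-sequences and $\textnormal{Ord}^\delta\cap M_U = \textnormal{Ord}^\delta$; and since $\textsc{crt}(t_U(W))>\delta$, every $\delta$-sequence of elements of $M^{M_U}_{t_U(W)}$ is of the form $[f]^{M_U}_{t_U(W)}$ for a suitably bundled $f\in M_U$, so $M^{M_U}_{t_U(W)}$ is $\delta$-closed as well; in particular $\textnormal{Ord}^\delta\subseteq M^{M_U}_{t_U(W)}$. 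Since $U^-(t_U(W))=W$, \cref{LimitEmbeddings} supplies a comparison embedding $M_W\to M^{M_U}_{t_U(W)}$, which — just as at the end of \cref{IrredGCH} — is the identity; this gives $\textnormal{Ord}^\delta\subseteq M_W$, so $W$ is $\delta$-supercompact.

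The main obstacle, and really the only point requiring care, is this last step: verifying that the comparison map $M_W\to M^{M_U}_{t_U(W)}$ is surjective (equivalently, $M^{M_U}_{t_U(W)}\subseteq M_W$), so that $W$ genuinely inherits all the $\delta$-sequences. But this is precisely the content of the final paragraph of the proof of \cref{IrredGCH} — where one shows that $\textsc{crt}(t_U(W))>\delta$ together with the $\delta$-supercompactness of the least ultrafilter on $\delta$ forces $W$ to be $\delta$-supercompact — so no genuinely new argument is needed beyond reusing that paragraph verbatim. This is why \cref{IrredCrt} is indeed ``a very easy version'' of \cref{IrredGCH}: the difficult factorization step is replaced by a citation of \cref{InternalCrt}, and all that is left is the routine passage from $<\delta$-supercompactness (plus GCH) to $\textnormal{Un}_{<\delta}\I W$.
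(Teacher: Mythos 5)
Your proposal follows the paper's proof exactly: \cref{IrredGCH} gives \({<}\delta\)-supercompactness of \(W\), this upgrades (as you correctly verify using GCH and the \(s_W\)/\cref{SInternal} characterization) to \(\textnormal{Un}_{<\delta}\I W\), \cref{InternalCrt} gives \(\textsc{crt}(t_U(W))>\delta\), and then \(\textnormal{Ord}^\delta=\textnormal{Ord}^\delta\cap M_U\subseteq M^{M_U}_{t_U(W)}\subseteq M_W\). One correction to your final step: the comparison embedding \(k:M_W\to M^{M_U}_{t_U(W)}\) supplied by \cref{LimitEmbeddings} is not the identity in general, and its surjectivity is not equivalent to the inclusion \(M^{M_U}_{t_U(W)}\subseteq M_W\) — the fact you actually need is just that inclusion, which holds because \(M^{M_U}_{t_U(W)}=M_{U\oplus t_U(W)}=M_{W\oplus t_W(U)}=M^{M_W}_{t_W(U)}\subseteq M_W\) by the Reciprocity Theorem \cref{Reciprocity} and \cref{UltrafilterSum}; this is also how the corresponding step of \cref{IrredGCH} is justified.
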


\begin{defn}
Suppose \(\delta\) is a regular cardinal. An ultrafilter \(U\in \textnormal{Un}_\delta\) is called {\it pathological} if \(U\) is \({<}\delta\)-supercompact and has the tight covering property at \(\delta\) but is not \(\delta\)-supercompact.
\end{defn}

An immediate consequence of our main theorems place stringent constraints on the type of pathologies that can arise under UA + GCH. 

\begin{prp}[UA + GCH]
Suppose \(\delta\) is a regular cardinal. An irreducible ultrafilter on \(\delta\) is either \(\delta\)-supercompact or pathological. In the latter case \(\delta\) must be strongly inaccessible.
\end{prp}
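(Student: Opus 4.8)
The plan is to deduce almost everything from \cref{IrredGCH} together with the observation that, under GCH, a regular limit cardinal is strongly inaccessible. First note that since $\delta$ is regular, $W$ is strongly uniform and (being on an infinite cardinal) nonprincipal, so we may assume $\textnormal{Un}_\delta\neq\emptyset$. By \cref{IrredGCH}, $W$ is $\delta'$-supercompact for every successor cardinal $\delta'\leq\delta$. If $\delta$ is itself a successor cardinal, $W$ is $\delta$-supercompact and we are in the first alternative. So the interesting case is $\delta$ a limit cardinal; then $\delta$ is regular limit, hence weakly inaccessible, hence strongly inaccessible by GCH --- which already yields the last sentence of the proposition. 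Since successor cardinals are cofinal in $\delta$, \cref{IrredGCH} gives moreover that $W$ is ${<}\delta$-supercompact. Thus either $W$ is $\delta$-supercompact (first alternative), or $W$ is ${<}\delta$-supercompact but not $\delta$-supercompact, and it remains only to show that in the latter case $W$ has the tight covering property at $\delta$ --- which then places $W$ in the ``pathological'' alternative.

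For the tight covering property I would argue as in the (unlabeled) lemma embedded in the proof of \cref{HyperHugeGCH}. Let $U$ be the least, equivalently $0$-order, ultrafilter on $\delta$; it exists because $\delta$ carries $W$. Since $U$ is $0$-order, $\delta$ carries no uniform countably complete ultrafilter in $M_U$, and $\delta$ is still strongly inaccessible there; in particular $t_U(W)$ is continuous at $\delta$ in $M_U$. One checks that the cardinal-arithmetic hypothesis of \cref{ContinuityFactor} holds for $\delta$ inside $M_U$: under GCH each $\textnormal{Un}^{M_U}_\gamma$ with $\gamma<\delta$ has size at most $2^{2^\gamma}=\gamma^{++}<\delta$, so $\textnormal{ot}(\textnormal{Un}^{M_U}_{<\delta},\swo)$ has power at most $2^\delta=\delta^+$, comfortably below $\delta^{+\omega_1}\leq\delta^{+\textsc{crt}(t_U(W))}$. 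Applying \cref{ContinuityFactor} in $M_U$ to $t_U(W)$ factors it as $D\oplus Z$ with $\textsc{sp}^{M_U}(D)<\delta$ and $\textsc{crt}(Z)>j_D(\delta)$. Because $\delta$ is inaccessible in $M_U$ and $\textsc{sp}(D)<\delta$, the embedding $j_D$ fixes $\delta$; and since $\textsc{crt}(Z)>\delta$, so does $j_Z$; hence $j^{M_U}_{t_U(W)}(\delta)=\delta$.

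The final step is the cofinality computation carried out in the proof of \cref{IrredGCH}: using \cref{Reciprocity} to identify $M^{M_U}_{t_U(W)}$ with $M^{M_W}_{t_W(U)}$ and the common embedding $j^{M_U}_{t_U(W)}\circ j_U=j^{M_W}_{t_W(U)}\circ j_W$, and using \cref{GeneralInternal} to see that $t_W(U)$ is either principal or the least ultrafilter on $\sup j_W[\delta]$ in $M_W$, one obtains $\textnormal{cf}^{M_W}(\sup j_W[\delta])=j^{M_U}_{t_U(W)}(\delta)=\delta$. By \cref{CofinalityRegularity}, $M_W$ then has the $(\delta,\delta)$-covering property, i.e.\ $W$ has the tight covering property at $\delta$, completing the verification that $W$ is pathological. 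Assembling the cases: $W$ is $\delta$-supercompact, or it is ${<}\delta$-supercompact with tight covering but not $\delta$-supercompact, i.e.\ pathological, and the latter forces $\delta$ strongly inaccessible.

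The one step that requires care is this cofinality chase together with the verification that \cref{ContinuityFactor} is genuinely applicable in $M_U$ (the GCH bound on $\textnormal{ot}(\textnormal{Un}_{<\delta},\swo)$ must fall below $\delta^{+\kappa}$); everything else --- the successor case, the ${<}\delta$-supercompactness, and the fact that ``pathological'' forces strong inaccessibility --- is immediate from \cref{IrredGCH} and GCH. In fact one can bypass the cofinality computation entirely by quoting the lemma inside the proof of \cref{HyperHugeGCH}, which already states that every countably complete ultrafilter has the tight covering property at an inaccessible cardinal carrying an ultrafilter.
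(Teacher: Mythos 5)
Your overall structure is right and is surely what the paper intends by calling this proposition ``an immediate consequence of our main theorems'': the successor case and the \({<}\delta\)-supercompactness in the limit case come straight from \cref{IrredGCH}, GCH upgrades weak inaccessibility to strong inaccessibility, and the only remaining content is the tight covering property at an inaccessible \(\delta\), which is exactly the unlabeled lemma embedded in the proof of \cref{HyperHugeGCH}. Your fallback of simply citing that lemma yields a complete proof.

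In your expanded version of that lemma's argument, however, one justification does not work as stated: you claim that because \(U\) is \(0\)-order, \(\delta\) carries no uniform countably complete ultrafilter in \(M_U\). Being \(0\)-order only says that \([\textnormal{id}]_U = \sup j_U[\delta]\) carries no uniform countably complete ultrafilter in \(M_U\); when \(\sup j_U[\delta] > \delta\) this says nothing about \(\delta\) itself. Indeed, by \cref{CoverDichotomy}, if \(U\) failed the tight covering property at \(\delta\) then \(U\cap M_U\in M_U\) would itself be a uniform countably complete ultrafilter on \(\delta\) in \(M_U\) --- precisely the scenario contemplated in the paper's proposition listing consequences (1)--(8) for a non-\({<}\delta\)-supercompact least ultrafilter on an inaccessible. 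So the justification quietly presupposes a statement of the same kind as the one being proved. The gap is fillable with the paper's machinery under GCH: by \cref{GCHEasy}, \(\textsc{crt}(U)\) is \(\delta\)-strongly compact, so by \cref{ZeroTheorem} the least ultrafilter \(U\) has the tight covering property at \(\delta\); hence \(\textnormal{cf}^{M_U}(\sup j_U[\delta]) = \delta\), and only now does the \(0\)-order property transfer down to \(\delta\) (a uniform countably complete ultrafilter in \(M_U\) on \(\delta = \textnormal{cf}^{M_U}(\sup j_U[\delta])\) would push forward to one on \(\sup j_U[\delta]\)). The same identity \(\textnormal{cf}^{M_U}(\sup j_U[\delta]) = \delta\) is also needed, and not justified in your write-up, at the step of the cofinality chase where \(j^{M_U}_{t_U(W)}\bigl(\textnormal{cf}^{M_U}(\sup j_U[\delta])\bigr)\) is evaluated as \(j^{M_U}_{t_U(W)}(\delta)\). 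With these repairs (or by quoting the embedded lemma outright, as you note), the proof is correct.
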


The following variant of our main question is open in ZFC.

\begin{qst}
Is it consistent (relative to large cardinals) that there is a pathological ultrafilter?
\end{qst}

A very interesting special case of this question asks whether one can in fact prove in ZFC that a \(\kappa\)-complete uniform ultrafilter on \(\kappa^+\) with the tight covering property is \(\kappa^+\)-supercompact. Of course this follows from UA by \cref{GCHFree}.

Under UA, to understand pathological ultrafilters it to some extent suffices to understand weakly normal pathological ultrafilters:

\begin{thm}[UA+GCH]
Suppose that \(U\) is a pathological ultrafilter on \(\delta\) and \(W\) is the weakly normal ultrafilter on \(\delta\) derived from \(U\). Then \(W\) is a pathological ultrafilter and \(\textnormal{Ord}^\delta\cap M_W = \textnormal{Ord}^\delta\cap M_U\).
\begin{proof}
Since \(\delta\) carries a pathological ultrafilter. Let \(k : M_W\to M_U\) be the factor embedding. Note that \(k(\delta) = \delta\) since \(k(\sup j_W[\delta]) = \sup j_U[\delta]\) and \(\text{cf}^{M_W}(\sup j_W[\delta]) = \text{cf}^{M_U}(\sup j_U[\delta]) = \delta\) by tight covering.

We first show that \(W\) is irreducible. By \cref{Factorization}, there is some divisor \(D\in \text{Un}_{<\delta}\) of \(W\) such that \(M_D\cap V_\delta = M_W\cap V_\delta\): one obtains \(D\) by ``iterating the least factor" until one reaches a strongly uniform ultrafilter on \(\delta\). To see that \(W\) is irreducible it suffices to show that \(D\) is principal. Note that \(k\circ j_D\restriction V_\delta\) is an elementary embedding from \(V_\delta\) to itself. By Kunen's inconsistency theorem, it is the identity. Hence \(D\) is principal. So \(W\) is irreducible, and therefore \(W\) is pathological.

Since \(W\) is irreducible, \(V_\delta \cap M_W = V_\delta\). Therefore the same argument shows \(k\restriction \delta\) is the identity. Since \(W\) has the tight covering property at \(\delta\), \(\delta^{+M_W} = \delta^+\). It follows that \(\textsc{crt}(k) > \delta^+\). Since we assume GCH, it follows that \(P(\delta)\cap M_W = P(\delta) \cap M_U\). Combined with the tight covering property, it follows that \(\text{Ord}^\delta\cap M_U = \text{Ord}^\delta \cap M_W\).
\end{proof}
\end{thm}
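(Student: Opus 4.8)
The plan is to study the factor embedding $k\colon M_W\to M_U$ arising from the fact that $W$ is the ultrafilter derived from $U$ via the generator $\sup j_U[\delta]$: thus $k\circ j_W=j_U$ and $k([\mathrm{id}]_W)=\sup j_U[\delta]$, while weak normality of $W$ gives $[\mathrm{id}]_W=\sup j_W[\delta]$. First I would record that $\delta$ is strongly inaccessible (using the preceding analysis of pathological ultrafilters, together with the fact that under GCH the least ultrafilter on a non-strongly-inaccessible regular cardinal is fully supercompact by \cref{GCHFree} and \cref{ZeroTheorem}), and then pin down the behaviour of $k$ at $\delta$: since $U$ has tight covering at $\delta$ and $\delta$ is regular, $\textnormal{cf}^{M_U}(\sup j_U[\delta])=\delta$ (the inequality $\geq\delta$ being automatic), and pulling this back through $k$ gives $\textnormal{cf}^{M_W}(\sup j_W[\delta])=\delta$, hence $k(\delta)=\delta$; by \cref{CofinalityRegularity} this already shows $W$ has tight covering at $\delta$.

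Next I would show $W$ is irreducible. Using \cref{Factorization} and iterating the operation of splitting off the $\swo$-least proper divisor, one obtains a divisor $D\in\textnormal{Un}_{<\delta}$ of $W$ with $M_D\cap V_\delta=M_W\cap V_\delta$; it then suffices to prove $D$ principal. Since $U$ is ${<}\delta$-supercompact and $\delta$ is strongly inaccessible, $V_\delta\subseteq M_U$, and because $k(\delta)=\delta$ the composite $k\circ j_D\restriction V_\delta$ is an elementary embedding of $V_\delta$ into itself; Kunen's inconsistency theorem forces it to be the identity, so $j_D\restriction V_\delta=\mathrm{id}$ and $D$ is principal. Thus $W$ is irreducible, and then \cref{IrredGCH} gives that $W$ is ${<}\delta$-supercompact; together with tight covering at $\delta$ and the failure of $\delta$-supercompactness (established below) this shows $W$ is pathological.

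Finally I would locate the critical point of $k$ and deduce the two remaining assertions. As $W$ is irreducible, $V_\delta\subseteq M_W$, so running the same Kunen-inconsistency argument directly on $k$ shows $k\restriction V_\delta=\mathrm{id}$; combined with $k(\delta)=\delta$ and $\delta^{+M_W}=\delta^+$ (from tight covering) this forces $\textsc{crt}(k)>\delta^+$. Under GCH this yields $P(\delta)\cap M_W=P(\delta)\cap M_U$, and combining with the tight covering property of $W$ (and of $U$) gives $\textnormal{Ord}^\delta\cap M_W=\textnormal{Ord}^\delta\cap M_U$. It also gives the non-$\delta$-supercompactness of $W$: if $j_W[\delta]\in M_W$, then since $\textsc{crt}(k)>\delta$ one computes $k(j_W[\delta])=j_U[\delta]$, placing $j_U[\delta]\in M_U$ and contradicting that $U$ is pathological.

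The step I expect to be the main obstacle is the irreducibility argument: making precise the ``iterate the least factor'' construction of a divisor $D\in\textnormal{Un}_{<\delta}$ with $M_D\cap V_\delta=M_W\cap V_\delta$, and verifying that $k\circ j_D\restriction V_\delta$ really does map $V_\delta$ into $V_\delta$ so that Kunen's inconsistency applies. Once irreducibility and the bound $\textsc{crt}(k)>\delta^+$ are secured, the remaining identities are routine bookkeeping with the critical point of $k$.
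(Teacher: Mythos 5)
Your proposal follows the paper's proof essentially step for step: the same factor embedding $k$, the same cofinality computation giving $k(\delta)=\delta$ and tight covering for $W$, the same factorization-plus-Kunen-inconsistency argument for irreducibility of $W$, and the same $\textsc{crt}(k)>\delta^+$ computation yielding the agreement of $P(\delta)$ and hence of $\textnormal{Ord}^\delta$. The only differences are that you spell out two points the paper leaves implicit (that $\delta$ is strongly inaccessible, and that $W$ fails to be $\delta$-supercompact via $k(j_W[\delta])=j_U[\delta]$), both of which are correct.
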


Must every irreducible pathological ultrafilter be weakly normal? One can show assuming UA + GCH that if \(U\) is \(\delta\)-supercompact, then either \(U\) is divisible by its derived pre-normal ultrafilter \(D\) on \(\delta\) or else \(D\I U\). Since pathological ultrafilters have no uniform ultrafilters on \(\delta\) internal to them, if one could generalize this fact to pathological ultrafilters, one would rule out pathology that is not essentially reducible to a weakly normal ultrafilter. But perhaps an alternate hierarchy of pathological ultrafilters emerges on inaccessible cardinals, distinct from the familiar hierarchy of supercompact ultrafilters under the Mitchell order. 

By \cref{QPoints}, if a regular cardinal \(\delta\) carries two weakly normal ultrafilters, the second is not pathological. Denote the least two weakly normal ultrafilters by \(U_0\swo U_1\). In \(M_{U_1}\), \(\delta\) carries a unique weakly normal ultrafilter, while in \(M_{U_0}\), \(\delta\) carries no ultrafilter whatsoever. By \cref{ZeroInternal}, any \(W\) with the property that \(\delta\) carries no weakly normal ultrafilter in \(M_W\) is divisible by \(U_0\). Does this generalize to \(U_1\): suppose \(\delta\) carries a unique weakly normal ultrafilter in \(M_W\). Is \(W\) divisible by \(U_1\)? 

If the answer is positive, then the least ultrafilter is the only source of pathology on \(\delta\):

\begin{prp}[UA + GCH]
Suppose \(U_0\swo U_1\) are the least weakly normal ultrafilters on \(\delta\). Assume that every \(W\in \textnormal{Un}_\delta\) such that \(\delta\) carries a unique weakly normal ultrafilter in \(M_W\) is divisible by the \(U_1\). Then every pathological ultrafilter on \(\delta\) is divisible by \(U_0\).
\begin{proof}
We first show that given such an ultrafilter \(W\), \(\delta\) does not carry a unique weakly normal ultrafilter in \(M_W\). Assume towards a contradiction that it does. Then \(U_1\) divides \(W\). Note that \(t_{U_1}(W)\) is \(\delta\)-strong and continuous at \(\delta\) (since otherwise \(U_0\D^{M_{U_1}} t_{U_1}(W)\) contradicting that \(\delta\) carries an ultrafilter in \(M_W\)). Therefore \(\textsc{crt}(t_{U_1}(W)) > \delta\). This contradicts that \(W\) is not \(\delta\)-supercompact.

We now finish the proof in general. Suppose \(\delta\) carries a second weakly normal ultrafilter \(U^*_1\) in \(M_W\). By assumption, \(U_1\) divides \(W\oplus U^*_1\). Moreover since \(U^*_1\) is \(\delta\)-supercompact in \(M_W\), \(W' = W\oplus U^*_1\) is pathological and \(\delta\) carries a unique weakly normal ultrafilter in \(M_{W'}\). But the first paragraph rules out the existence of such a  \(W'\).
\end{proof}
\end{prp}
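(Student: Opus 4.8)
\emph{Proof proposal.} Fix a pathological ultrafilter $W$ on $\delta$. A regular cardinal carrying a pathological ultrafilter is strongly inaccessible, so $\delta$ is strongly inaccessible and $U_0$ is the least ($0$-order) ultrafilter on $\delta$. The plan is to show that $\delta$ carries no weakly normal ultrafilter in $M_W$; granting this, $U_0 \D W$ by the consequence of \cref{ZeroInternal} that an ultrafilter whose ultrapower omits a weakly normal ultrafilter on $\delta$ is divisible by the least ultrafilter on $\delta$. I would argue by cases on the number of weakly normal ultrafilters that $\delta$ carries in $M_W$: zero, exactly one, or at least two. The case of zero is the desired conclusion, so the content lies in the other two.

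Suppose $\delta$ carries exactly one weakly normal ultrafilter in $M_W$. Then the hypothesis of the proposition gives $U_1 \D W$, and I write $W \equiv U_1 \oplus t_{U_1}(W)$ via \cref{CanonicalComparison}. The key claim is that $t_{U_1}(W)$ is continuous at $\delta$. Indeed, if it were discontinuous at $\delta$, then by \cref{GeneralInternal} applied inside $M_{U_1}$ --- where $U_0$ remains the least ultrafilter on $\delta$, as $U_0 \mo U_1$ --- the ultrafilter $U_0$ would divide $t_{U_1}(W)$ in $M_{U_1}$, and composing with $U_1$ this would exhibit a uniform ultrafilter on $\delta$ internal to $M_W$. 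That is impossible for a pathological $W$: since $\delta$ is strongly inaccessible and $W$ is ${<}\delta$-supercompact we have $\textnormal{Un}_{{<}\delta}\I W$, so if some $Z\in\textnormal{Un}_\delta$ were internal to $W$ then \cref{IUniformSuper} would force $W$ to be $\delta$-supercompact, a contradiction. Granting continuity of $t_{U_1}(W)$ at $\delta$, a continuity-point factorization in $M_{U_1}$ (\cref{ContinuityFactor}), together with the fact that $W$ has no nonprincipal divisor concentrated below $\delta$, yields $\textsc{crt}(t_{U_1}(W)) > \delta$. Since $U_1$ is $\delta$-supercompact, $\text{Ord}^\delta \subseteq M_{U_1}$, so $\text{Ord}^\delta \subseteq M^{M_{U_1}}_{t_{U_1}(W)} = M_W$, making $W$ $\delta$-supercompact --- contradicting pathology. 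Hence $\delta$ does not carry exactly one weakly normal ultrafilter in $M_W$.

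Suppose finally that $\delta$ carries a second weakly normal ultrafilter $U^*_1$ in $M_W$. By \cref{QPoints} applied inside $M_W$, $U^*_1$ is $\delta$-supercompact in $M_W$. Put $W' = W \oplus U^*_1$. I claim $W'$ is again pathological on $\delta$: it is ${<}\delta$-supercompact since $W$ is and $U^*_1$ is $\delta$-supercompact in $M_W$; it has the tight covering property at $\delta$ by the cofinality computation of \cref{IrredGCH}, which gives $\text{cf}^{M_{W'}}(\sup j_{W'}[\delta]) = \delta$; and it is not $\delta$-supercompact, because $M_{W'} \subseteq M_W$ and $W$ has the tight covering property, so $\text{Ord}^\delta \subseteq M_{W'}$ would make $W$ $\delta$-supercompact. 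Moreover $\delta$ carries a unique weakly normal ultrafilter in $M_{W'} = M^{M_W}_{U^*_1}$, namely the one derived from $j_{U^*_1}$, again by \cref{QPoints} inside $M_W$. This contradicts the previous paragraph, which showed that no pathological ultrafilter on $\delta$ has an ultrapower carrying exactly one weakly normal ultrafilter on $\delta$. So $\delta$ carries no weakly normal ultrafilter in $M_W$, and $U_0 \D W$.

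The main obstacle I anticipate is the continuity argument in the second paragraph: making precise, through \cref{GeneralInternal} and \cref{DivisionCharacterization}, why discontinuity of $t_{U_1}(W)$ at $\delta$ actually produces a uniform ultrafilter on $\delta$ inside $M_W$, and why continuity at $\delta$ plus the $\delta$-strongness inherited from $W$ pushes $\textsc{crt}(t_{U_1}(W))$ strictly above $\delta$. The secondary delicate point is verifying that $W\oplus U^*_1$ inherits all three clauses of pathology, the tight covering computation relating a cofinality computed in $M_{W'}$ to $\delta$ being the part that needs care.
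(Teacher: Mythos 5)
Your architecture is the same as the paper's: trichotomize on the number of weakly normal ultrafilters on \(\delta\) in \(M_W\), kill the ``exactly one'' case through \(U_1\D W\) and the translation \(t_{U_1}(W)\), reduce the ``two or more'' case to it via \(W\oplus U_1^*\), and conclude \(U_0\D W\) from the ``zero'' case. The second and third cases are handled essentially as in the paper. The problem is the step you yourself flag as the main obstacle, and it is a genuine gap: from ``\(U_0\) divides \(t_{U_1}(W)\) in \(M_{U_1}\)'' you cannot conclude that some \(Z\in\textnormal{Un}_\delta\) is internal to \(W\). Division and the internal relation point in opposite directions: \(U_0\D^{M_{U_1}}t_{U_1}(W)\) yields an internal embedding from \(M^{M_{U_1}}_{U_0}\) into a model equal to \(M_W\), i.e.\ \(M_W\subseteq M^{M_{U_1}}_{U_0}\), whereas \(Z\I W\) would require \(j_Z\restriction M_W\) to be amenable to \(M_W\); nothing of the sort is produced. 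Indeed \cref{IUniformSuper} says precisely that no \(Z\in\textnormal{Un}_\delta\) can be internal to a pathological \(W\), so the ultrafilter you propose to exhibit cannot exist, and the argument cannot close this way. The contradiction actually runs in the opposite direction: since \(U_0\) is the \(0\)-order ultrafilter on \(\delta\) and \(U_1\) is \(\delta\)-supercompact (so \(M_{U_1}\) computes \(P(\delta)\) correctly), \(\delta\) carries no uniform countably complete ultrafilter in \(M^{M_{U_1}}_{U_0}\); hence \(M_W\subseteq M^{M_{U_1}}_{U_0}\) contradicts the standing case hypothesis that \(\delta\) carries a weakly normal ultrafilter in \(M_W\). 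That is the paper's argument, and it is what you need here.

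A smaller repair: in deducing \(\textsc{crt}(t_{U_1}(W))>\delta\) from continuity, the nonprincipal factor \(D\) produced by \cref{ContinuityFactor} lives in \(\textnormal{Un}^{M_{U_1}}_{<\delta}\), not in \(V\), so ``\(W\) has no nonprincipal divisor concentrated below \(\delta\)'' does not literally apply. Instead note that \(D\D^{M_{U_1}}t_{U_1}(W)\) would give \(M_W\subseteq M^{M_{U_1}}_D\) while \(D\in V_\delta\subseteq M_W\), contradicting that a nonprincipal countably complete ultrafilter cannot belong to its own ultrapower.
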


The Mitchell order yields perhaps the most comprehensive abstract analysis of supercompact ultrafilters, especially given that it is linear on normal ultrafilters. \cref{IUniformSuper} shows that we cannot use the Mitchell order (or the internal relation) to understand pathological ultrafilters: they are all incomparable. It is tempting to try to use a weaker Mitchell relation: if \(W\) is a pathological ultrafilter, for which \(U\) can \(U\cap M_W\) belong to \(M_W\)? Note that if some cardinal is strong compact past \(\delta\), then every irreducible ultrafilter on \(\delta\) in \(M_W\) is of the form \(U\cap M_W\) for some \(U\) (and perhaps many).

As we have remarked, it is not even clear that this relation is irreflexive on nonprincipal ultrafilters in ZFC. But our main point here is that assuming UA + GCH, even under this weaker Mitchell relation, there is an enormous amount of incomparability.
\begin{lma}
Suppose \(U\) is a \(\kappa\)-complete ultrafilter with the tight covering property at \(\delta\). Then there is a function \(f : \delta \to \sup j_U[\delta]\) such that \(f\in M_U\) and \(\{\alpha < \delta : f(\alpha) = j_U(\alpha)\}\) is \({<}\kappa\)-club.
\begin{proof}
In fact if \(f : \delta \to \sup j_U[\delta]\) is any increasing continuous cofinal function with \(f\in M_U\) then since \(j_U[\delta]\) is \({<}\kappa\)-club, \(f(\alpha) = j_U(\alpha)\) on a \({<}\kappa\)-club of \(\alpha\). 
\end{proof}
\end{lma}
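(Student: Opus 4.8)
The plan is to make the one‑line sketch precise; the only real content is to manufacture the function $f\in M_U$ explicitly and then control the set on which it agrees with $j_U\restriction\delta$. Write $\beta=\sup j_U[\delta]$. The key preliminary observation is that $\kappa$‑completeness of $U$ forces $j_U$ to be continuous at every ordinal of cofinality ${<}\kappa$; consequently $j_U[\delta]$ is a ${<}\kappa$‑club in $\beta$ (it is cofinal by definition, and closed under suprema of ${<}\kappa$‑sequences since $\sup_{\xi<\gamma}j_U(\alpha_\xi)=j_U(\sup_{\xi<\gamma}\alpha_\xi)$ whenever $\gamma<\kappa$ and $\sup_{\xi<\gamma}\alpha_\xi<\delta$). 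This is the conceptual reason the lemma holds, but the argument below will use the continuity of $j_U$ directly rather than this reformulation.

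First I would use the tight covering property to produce $f$. Applying the $(\delta,\delta)$‑covering property of $M_U$ to the set $j_U[\delta]$, which has size $\delta$ in $V$, yields $C\in M_U$ with $j_U[\delta]\subseteq C$ and $|C|^{M_U}\le\delta$; intersecting with $\beta$ we may assume $C\subseteq\beta$. Since $j_U[\delta]\subseteq C$ has order type $\delta$, we have $\text{ot}(C)\ge\delta$, so I let $f:\delta\to C$ be the increasing enumeration of the first $\delta$ elements of $C$. Then $f$ is definable in $M_U$ from $C$ and $\delta$, so $f\in M_U$, and $f:\delta\to\beta$ as required.

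Next I would identify where $f$ and $j_U$ agree. For $\alpha<\delta$ set $h(\alpha)=\text{ot}(C\cap j_U(\alpha))$, the rank of $j_U(\alpha)$ in the increasing enumeration of $C$ (well defined since $j_U(\alpha)\in C$); then $f(\alpha)=j_U(\alpha)$ precisely when $h(\alpha)=\alpha$. The function $h$ is monotone; it satisfies $h(\alpha)\ge\alpha$ because $j_U[\alpha]\subseteq C\cap j_U(\alpha)$; and it is continuous at every limit $\alpha$ of cofinality ${<}\kappa$, because there $j_U(\alpha)=\sup j_U[\alpha]$, hence $C\cap j_U(\alpha)=\bigcup_{\alpha'<\alpha}(C\cap j_U(\alpha'))$ is an increasing union and $h(\alpha)=\sup_{\alpha'<\alpha}h(\alpha')$. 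A routine pressing‑down argument — closure under ${<}\kappa$‑suprema is immediate from ${<}\kappa$‑continuity, and unboundedness follows by iterating $h$ $\omega$ times from an arbitrary starting point, using $\kappa\ge\aleph_1$ — then shows that $E=\{\alpha<\delta:h(\alpha)=\alpha\}$ is a ${<}\kappa$‑club in $\delta$. Since $f(\alpha)=j_U(\alpha)$ for all $\alpha\in E$, the pair $f,E$ witnesses the lemma.

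I expect the main (and essentially only) subtlety to be organizational: one must choose the cover $C$ so that it actually contains $j_U[\delta]$ — this is what makes the reparametrization function $h$ well behaved, forcing $h(\alpha)\ge\alpha$ and the continuity identity above — rather than, say, enumerating an arbitrary club of $M_U$, which would create a mismatch between $\text{ran}(f)$ and $j_U[\delta]$. The lone point requiring a word of care is the unboundedness of $E$, which needs $\text{cf}(\delta)>\omega$; this is harmless in every application of the lemma, where $\delta$ is a regular cardinal. No deeper obstacle arises.
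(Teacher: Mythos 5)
Your overall strategy---two increasing, cofinal, ${<}\kappa$-continuous functions from $\delta$ into $\sup j_U[\delta]$ must agree on a ${<}\kappa$-club---is the right one and is what the paper's one-line proof invokes. But your construction of $f$ has a genuine gap: you take an \emph{arbitrary} cover $C\in M_U$ of $j_U[\delta]$ with $|C|^{M_U}\le\delta$ and let $f$ enumerate its first $\delta$ elements, and nothing guarantees that those first $\delta$ elements are cofinal in $\sup j_U[\delta]$. Concretely, if $C_0$ is any legitimate cover, then so is $C=C_0\cup\delta$ (the ordinal $\delta$ viewed as a set of ordinals; the union still has $M_U$-cardinality $\delta$ and still contains $j_U[\delta]$). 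For this $C$ your $f$ is the identity on $\delta$, so your agreement set is $\{\alpha<\delta: j_U(\alpha)=\alpha\}$, which is bounded for, say, any $(\kappa,\delta)$-regular $U$ --- precisely the situation in the paper's applications. Equivalently, your $h(\alpha)=\text{ot}(C\cap j_U(\alpha))$ need not map $\delta$ into $\delta$ (here $h(\alpha)\ge\delta$ once $j_U(\alpha)\ge\delta$), so the iteration $\alpha_{n+1}=h(\alpha_n)$ escapes $\delta$ and the unboundedness step fails.

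The fix is to drop the requirement $j_U[\delta]\subseteq\text{ran}(f)$, which your closing paragraph wrongly identifies as the crux; it is not needed. What tight covering actually buys (together with regularity of $\delta$) is $\text{cf}^{M_U}(\sup j_U[\delta])=\delta$: any cover of $j_U[\delta]$ of $M_U$-cardinality $\le\delta$ gives $\le\delta$, and $M_U\subseteq V$ gives $\ge\delta$. Hence $M_U$ contains an increasing \emph{continuous cofinal} $f:\delta\to\sup j_U[\delta]$, and the standard interleaving argument --- choose $\alpha_{n+1}>\alpha_n$ with $f(\alpha_{n+1})>j_U(\alpha_n)$ and $j_U(\alpha_{n+1})>f(\alpha_n)$, then pass to $\alpha_\omega$, where both functions are continuous since $\omega<\kappa$ --- uses only that $f$ and $j_U\restriction\delta$ are increasing, cofinal, and continuous at points of cofinality ${<}\kappa$. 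This is exactly the paper's proof; your ${<}\kappa$-club bookkeeping for the agreement set is otherwise correct.
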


\begin{cor}
Suppose \(U\) is a \(\kappa\)-complete ultrafilter with the tight covering property at \(\delta\). Suppose \(W\) is a countably complete ultrafilter extending the \({<}\kappa\)-club filter on \(\delta\). Then \(j^{M_U}_{W\cap M_U} = j_W\restriction M_U\).
\begin{proof}
Let \(k : M^{M_U}_{W\cap M_U}\to j_W(M_U)\) be the factor embedding. By \cref{PushforwardLemma} it suffices to show that \(j_W(j_U)([\text{id}]_W)\in \text{ran}(k)\). Take \(f\in M_U\) such that \(\{\alpha < \delta : f(\alpha) = j_U(\alpha)\}\) is \({<}\kappa\)-club. Then \(k(j^{M_U}_{W\cap M_U}(f)([\text{id}]^{M_U}_{W\cap M_U})) = j_W(f)([\text{id}]_W)\) by the definition of the factor embedding, and \( j_W(f)([\text{id}]_W) = j_W(j_U)([\text{id}]_W)\) since \(f = j_U\) almost everywhere with respect to \(W\), since \(W\) extends the \({<}\kappa\)-club filter.
\end{proof}
\end{cor}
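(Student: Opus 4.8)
The plan is to study the canonical factor embedding relating the two ultrapower representations of $M_U$ that are in play here — one by the $M_U$-ultrafilter $W\cap M_U$, the other by $j_W$ — and to show it is the identity by a surjectivity argument, exactly in the style of the proof of \cref{PushforwardLemma}, with the tight covering hypothesis supplying the one missing seed. First I would note that since $U$ is $\kappa$-complete, $M_U$ is closed under $\omega$-sequences, so $W\cap M_U$ is a countably complete $M_U$-ultrafilter on $\delta$ and $j^{M_U}_{W\cap M_U}$ has wellfounded target. Let $k\colon M^{M_U}_{W\cap M_U}\to j_W(M_U)$ be the factor embedding, $k([g]^{M_U}_{W\cap M_U})=j_W(g)([\text{id}]_W)$ for $g\in M_U$; this is well defined and elementary because for $A\subseteq\delta$ in $M_U$ one has $A\in W\cap M_U\iff A\in W\iff[\text{id}]_W\in j_W(A)$, and it satisfies $k\circ j^{M_U}_{W\cap M_U}=j_W\restriction M_U$. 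Since both models are transitive, it suffices to show that $k$ is surjective.

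Next I would pin down what generates $j_W(M_U)$ over $\text{ran}(k)$. Applying $j_W$ to the fact that $M_U$ is the definable hull of $j_U[V]\cup\{[\text{id}]_U\}$, and using the commutativity $j_W(j_U)\circ j_W=j_W\circ j_U$, one finds (as in the proof of \cref{PushforwardLemma}) that $j_W(M_U)$ is the definable hull of $j_W[M_U]\cup\{j_W(j_U)([\text{id}]_W)\}$; note that $j_W([\text{id}]_U)$ already lies in $j_W[M_U]$. Now $j_W[M_U]\subseteq\text{ran}(k)$ by the identity $k\circ j^{M_U}_{W\cap M_U}=j_W\restriction M_U$, and $\text{ran}(k)$ is closed under the Skolem functions of $j_W(M_U)$, so it is enough to prove that $j_W(j_U)([\text{id}]_W)\in\text{ran}(k)$.

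This last point is where the tight covering property enters, via the lemma immediately preceding the corollary: fix $f\in M_U$ with $f\colon\delta\to\sup j_U[\delta]$ and $\{\alpha<\delta:f(\alpha)=j_U(\alpha)\}$ a ${<}\kappa$-club. On the one hand $k([f]^{M_U}_{W\cap M_U})=j_W(f)([\text{id}]_W)$, so $j_W(f)([\text{id}]_W)\in\text{ran}(k)$. On the other hand, since $W$ extends the ${<}\kappa$-club filter on $\delta$, the set $\{\alpha<\delta:f(\alpha)=j_U(\alpha)\}$ belongs to $W$, so $f$ and $j_U\restriction\delta$ represent the same element mod $W$; as $[\text{id}]_W<j_W(\delta)$ and $j_W(j_U\restriction\delta)=j_W(j_U)\restriction j_W(\delta)$, this gives $j_W(f)([\text{id}]_W)=j_W(j_U)([\text{id}]_W)$. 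Hence $j_W(j_U)([\text{id}]_W)\in\text{ran}(k)$, so $k$ is surjective, and therefore $j^{M_U}_{W\cap M_U}=j_W\restriction M_U$.

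The routine bookkeeping about the two representations is not the real difficulty; everything reduces to the fact, already isolated in the preceding lemma, that $\kappa$-completeness forces $j_U[\delta]$ to be ${<}\kappa$-club in $\sup j_U[\delta]$, so that any $M_U$-internal continuous cofinal map into $\sup j_U[\delta]$ agrees with $j_U$ on a ${<}\kappa$-club. The one thing to be careful about is matching the filter that $W$ extends with the completeness of $U$: it is precisely because that ${<}\kappa$-club is $W$-large that one can trade the $M_U$-internal seed $j_W(f)([\text{id}]_W)$ for the external seed $j_W(j_U)([\text{id}]_W)$, which is exactly the point of $j_W(M_U)$ not already present in $j_W[M_U]$.
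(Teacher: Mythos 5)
Your proof is correct and follows essentially the same route as the paper's: reduce to showing $j_W(j_U)([\text{id}]_W)\in\text{ran}(k)$ via the generation argument from the proof of \cref{PushforwardLemma}, then obtain that seed from the function $f\in M_U$ supplied by the preceding lemma, using that the ${<}\kappa$-club on which $f$ agrees with $j_U$ lies in $W$. The extra bookkeeping you include (well-definedness of $k$, the definable-hull computation for $j_W(M_U)$) is exactly what the paper leaves implicit.
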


By \cref{IUniformSuper}, we have the following corollary.

\begin{cor}[UA + GCH]
Suppose \(U\) is a \(\kappa\)-complete pathological ultrafilter. Then for any countably complete ultrafilter \(W\) extending the \({<}\kappa\)-club filter on \(\delta\), \(W\cap M_U\notin M_U\). In particular, for any weakly normal ultrafilter \(W\) on \(\delta\) such that \(j_W(\kappa) > \delta\), \(W\cap M_U\notin M_U\).
\end{cor}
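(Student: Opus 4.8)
The plan is to reduce the statement to the corollary immediately preceding it together with \cref{IUniformSuper}. Let $U$ be a $\kappa$-complete pathological ultrafilter and put $\delta = \textsc{sp}(U)$; recall that $\delta$ is a regular, indeed strongly inaccessible, cardinal. Let $W$ be a countably complete ultrafilter extending the ${<}\kappa$-club filter on $\delta$, and suppose toward a contradiction that $W\cap M_U\in M_U$. Since each set $\delta\setminus\xi$ ($\xi<\delta$) is a ${<}\kappa$-club, $W$ is uniform on $\delta$, so $W\in\textnormal{Un}_\delta$. As $W\cap M_U$ is an $M_U$-ultrafilter lying in $M_U$, the ultrapower $j^{M_U}_{W\cap M_U}\colon M_U\to M^{M_U}_{W\cap M_U}$ is an internal ultrapower embedding of $M_U$, and in particular is amenable to $M_U$. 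Because $U$ is $\kappa$-complete with the tight covering property at $\delta$ (it is pathological) and $W$ extends the ${<}\kappa$-club filter on $\delta$, the preceding corollary gives $j^{M_U}_{W\cap M_U}=j_W\restriction M_U$. Hence $j_W\restriction M_U$ is an amenable class of $M_U$, that is, $W\I U$.

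With $W\I U$ in hand I would finish via \cref{IUniformSuper}. Since $U$ is pathological it is ${<}\delta$-supercompact, and as $\delta$ is strongly inaccessible this yields $\textnormal{Un}_{{<}\delta}\I U$: for $Z\in\textnormal{Un}_\gamma$ with $\gamma<\delta$ the embedding $j_Z$ is determined by its restriction to a set of hereditary size below $\delta$, and that set belongs to $M_U$ by ${<}\delta$-supercompactness. Now $\delta$ is regular, $\textnormal{Un}_{{<}\delta}\I U$, and $W\in\textnormal{Un}_\delta$ satisfies $W\I U$, so \cref{IUniformSuper} forces $U$ to be $\delta$-supercompact, contradicting pathology. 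Thus the assumption $W\cap M_U\in M_U$ was false, proving the first assertion.

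For the ``in particular,'' it suffices to check that a weakly normal ultrafilter $W$ on $\delta$ with $j_W(\kappa)>\delta$ extends the ${<}\kappa$-club filter on $\delta$; the first part then applies verbatim. By weak normality, $[\textnormal{id}]_W=\sup j_W[\delta]$, so for a ${<}\kappa$-club $C\subseteq\delta$ one has $C\in W$ iff $\sup j_W[\delta]\in j_W(C)$, and since $j_W(C)$ is a ${<}j_W(\kappa)$-club of $M_W$ that is cofinal in $\sup j_W[\delta]$, this comes down to $\textnormal{cf}^{M_W}(\sup j_W[\delta])<j_W(\kappa)$. I would derive this from $j_W(\kappa)>\delta$ using Ketonen's analysis of $\textnormal{cf}^{M_W}(\sup j_W[\delta])$, exploiting that $\kappa$ is ${<}\delta$-strongly compact, which follows from the ${<}\delta$-supercompactness of $U$. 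Pinning down $\textnormal{cf}^{M_W}(\sup j_W[\delta])$ in $M_W$ is the one spot calling for genuine care; everything else is assembly of results already established.
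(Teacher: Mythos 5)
Your argument for the first assertion is exactly the paper's: the paper's entire proof is ``By \cref{IUniformSuper},'' meaning precisely the chain you give. If \(W\cap M_U\in M_U\), the immediately preceding corollary makes \(j_W\restriction M_U=j^{M_U}_{W\cap M_U}\) an internal, hence amenable, embedding of \(M_U\), so \(W\I U\); pathology gives \({<}\delta\)-supercompactness and tight covering, GCH (or the strong inaccessibility of \(\delta\)) upgrades \({<}\delta\)-supercompactness to \(\textnormal{Un}_{<\delta}\I U\) as you indicate, and \cref{IUniformSuper} then forces \(U\) to be \(\delta\)-supercompact, contradicting pathology. This part is correct and complete.

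The only unfinished step is the ``in particular,'' and you are right to flag it as the one point needing care. Your reduction is correct: for weakly normal \(W\) and a \({<}\kappa\)-club \(C\), \(C\in W\) iff \(\sup j_W[\delta]\in j_W(C)\), and since \(j_W(C)\cap \sup j_W[\delta]\) is cofinal in \(\sup j_W[\delta]\) while \(j_W(C)\) is \({<}j_W(\kappa)\)-closed in \(M_W\), everything comes down to \(\text{cf}^{M_W}(\sup j_W[\delta])<j_W(\kappa)\). But do not expect this inequality to fall out of \(j_W(\kappa)>\delta\) alone: a weakly normal ultrafilter concentrating on \(S^\delta_{\kappa'}\) for some regular \(\kappa'\geq\kappa\) satisfies \(\text{cf}^{M_W}(\sup j_W[\delta])=j_W(\kappa')\geq j_W(\kappa)\) by Los's theorem, and such a \(W\) does not extend the \({<}\kappa\)-club filter (it avoids the \({<}\kappa\)-club \(S^\delta_{<\kappa}\)). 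So one must either rule out such \(W\) in the presence of a pathological ultrafilter on \(\delta\), or read the ``in particular'' as aimed at the \(W\) for which the inequality is immediate --- for instance any \(W\) with the tight covering property at \(\delta\), where \(\text{cf}^{M_W}(\sup j_W[\delta])=\delta<j_W(\kappa)\), and in particular the weakly normal ultrafilters derived from normal fine ultrafilters on \(P_\kappa(\delta)\). The paper itself offers no justification for this sentence, so your honesty about the gap is appropriate; just be aware that ``Ketonen's analysis'' will not deliver the cofinality bound from \(j_W(\kappa)>\delta\) without some such additional input.
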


\section{Questions}
This paper probably represents only the beginning of the structure theory past strong compactness that can be established assuming the Ultrapower Axiom. There are many combinatorial questions we expect to be solvable above the least strongly compact.

\begin{qst}[UA + GCH]
Let \(\kappa\) be the least strongly compact cardinal.
\begin{enumerate}[(1)]
\item Do pathological ultrafilters exist? Or is the Mitchell order linear on irreducible ultrafilters? Is every irreducible ultrafilter Dodd solid?
\item Is there a nonreflecting stationary subset of \(S^{\kappa^{++}}_\kappa\)?
\item The linearity of the Mitchell order yields \(\diamondsuit(S^{\delta^{++}}_\delta)\) whenever \(\textsc{cf}(\delta)\geq \kappa\). Does \(\diamondsuit^+(\kappa^+)\) hold? What about \(\diamondsuit(S^{\kappa^+}_\kappa)\)?
\item A theorem of Usuba \cite{Usuba} states that if there is a hyperhuge cardinal, the generic multiverse has a minimum element. Does this follow from the existence of a strongly compact cardinal assuming UA?
\item A theorem from \cite{SO} states that for any \(X\subseteq \kappa\) coding \(V_\kappa\), \(V = \text{HOD}_X\). Letting \(\delta = \kappa^{++}\), Vopenka's theorem implies that \(V\) is a generic extension of HOD by a partial order of cardinality \(\delta\). Is \(V\) a \(\kappa\)-cc extension of \(\text{HOD}\)? Does GCH hold above \(\kappa\) in \(\text{HOD}\)?
\item Suppose \(\gamma\) is the least tall cardinal above \(\kappa\). Is \(\gamma\) a strong cardinal?
\item Suppose \(\lambda\) is least such that there is an elementary embedding \(j : V\to M\) such that \(j(\lambda) = \lambda\), \(\textsc{crt}(j) < \lambda\), and \(M\) computes cofinalities correctly below \(\lambda\). Does \(I_2\) hold at \(\lambda\)?
\item Suppose \(U,W\in \textnormal{Un}\). Must \(\textsc{crt}(U\vee W) = \min\{\textsc{crt}(U),\textsc{crt}(W)\}\)?
\end{enumerate}
\end{qst}

These are just the first questions that come to mind. There are many more raised implicitly in this paper. Of course there are the rather technical questions in the style of \cref{NextUltrafilter} of what can be proved from UA without assuming GCH, most interestingly whether there is a GCH free proof of \cref{IrredGCH}, which seems extremely likely.

\bibliography{seedorder}{}
\bibliographystyle{unsrt}

\end{document}